\tikzset{>=latex}
\newcommand{\cev}[1]{\reflectbox{\ensuremath{\vec{\reflectbox{\ensuremath{#1}}}}}}
\newtheorem{theorem}{Theorem}[chapter]
\newtheorem*{theorem-nn}{Theorem}
\newtheorem{corollary}[theorem]{Corollary}
\newtheorem{lemma}[theorem]{Lemma}
\newtheorem{proposition}[theorem]{Proposition}
\newtheorem{question}[theorem]{Question}
\newtheorem*{question-nn}{Question}
\newtheorem{conjecture}[theorem]{Conjecture}
\newtheorem*{conjecture-nn}{Conjecture}
\theoremstyle{definition}
\newtheorem{definition}[theorem]{Definition}
\newtheorem*{definition-nn}{Definition}
\theoremstyle{remark}
\newtheorem*{claim*}{Claim}
\newtheorem{claim}{Claim}
\newtheorem{remark}[theorem]{Remark}
\newtheorem{example}[theorem]{Example}
\newtheorem*{example-nn}{Example}
\newtheorem*{ack}{Aknowledgments}
\newtheorem*{funding}{Funding}
\numberwithin{section}{chapter}
\numberwithin{equation}{chapter}
\newenvironment{cproof}{\begin{proof}[Proof of the claim]}{\end{proof}}
\newcommand{\derived}{\mathfrak{D}}
\newcommand{\fgr}[1]{[#1\mkern1.5mu]}
\newcommand{\lfgr}[1]{[#1\mkern1.5mu]_{1}}
\newcommand{\pfgr}{\mathrm{PF}}
\newcommand{\pafgr}[1]{[\mkern-5mu[#1\mkern1.5mu]\mkern-4.5mu]}
\newcommand{\isom}[1]{\mathrm{Iso}^{\star}_{#1}}
\newcommand{\Aut}{\mathrm{Aut}}
\newcommand{\Autm}[1]{\mathrm{Aut}^{\star}_{#1}}
\newcommand{\ind}{\mathcal{I}}
\newcommand{\einv}{\mathrm{EINV}}
\newcommand{\comm}{\mathfrak{C}}
\newcommand{\acts}{\curvearrowright}
\newcommand{\gap}{\mathrm{gap}}
\DeclareMathOperator*{\supp}{supp}
\newcommand{\proj}{\mathrm{proj}}
\newcommand{\ismph}{\simeq}
\newcommand{\inv}{^{-1}}
\newcommand{\abs}[1]{\left\lvert #1\right\rvert}
\newcommand{\norm}[1]{\left\lVert #1\right\rVert}
\newcommand{\snorm}[1]{\lVert #1\rVert}
\newcommand{\winv}{\mathrm{inv}}
\newcommand{\reor}[1]{\tilde{#1}}
\newcommand{\MAlg}{\mathrm{MAlg}}
\newcommand{\LL}{\mathrm{L}}
\newcommand{\R}{\mathbb{R}}
\newcommand{\Z}{\mathbb{Z}}
\newcommand{\Q}{\mathbb{Q}}
\newcommand{\N}{\mathbb{N}}
\newcommand{\tprank}{\mathrm{rk}}
\newcommand{\eqr}{\mathcal{R}}
\newcommand{\ceqr}{\mathcal{R}} 
\newcommand{\copious}{\xi}
\DeclareMathOperator{\dom}{\mathrm{dom}}
\DeclareMathOperator{\rng}{\mathrm{rng}}
\DeclareMathOperator*{\esssup}{ess\,sup}
\title{\( \mathbf{L}^{\mathbf{1}} \) Full Groups of Flows}
\author{Fran\c{c}ois Le Ma\^\i{}tre and Konstantin Slutsky}
\date{}
\begin{document}

\maketitle

\frontmatter

\chapter*{Abstract}
  We introduce the concept of an \( \LL^{1} \) full group associated with a measure-preserving
  action of a Polish normed group on a standard probability space. These groups carry a natural
  Polish group topology induced by an \( \LL^{1} \) norm.  Our construction generalizes
  \( \LL^{1} \) full groups of actions of discrete groups, which have been studied recently by the
  first author.

  We show that under minor assumptions on the actions, topological derived subgroups of
  \( \LL^{1} \) full groups are topologically simple and --- when the acting group is locally
  compact and amenable --- are whirly amenable and generically two-generated.  \(\LL^{1}\) full
  groups of actions of compactly generated locally compact Polish groups are shown to remember the
  \(\LL^{1}\) orbit equivalence class of the action.

  For measure-preserving actions of the real line (also often called measure-preserving flows), the
  topological derived subgroup of the \( \LL^{1} \) full group is shown to coincide with the kernel
  of the index map, which implies that \( \LL^{1} \) full groups of free measure-preserving flows
  are topologically finitely generated if and only if the flow admits finitely many ergodic
  components. The latter is in striking contrast to the case of \( \mathbb{Z} \)-actions, where the
  number of topological generators is controlled by the entropy of the action. We also prove a
  reconstruction-type result: the \(\LL^{1}\) full group completely characterizes the associated
  ergodic flow up to flip Kakutani equivalence.

  Finally, we study the coarse geometry of \(\LL^{1}\) full groups.  The \(\LL^{1}\) norm on the
  derived subgroup of the \(\LL^{1}\) full group of an aperiodic action of a locally compact
  amenable group is proved to be maximal in the sense of C.~Rosendal.  For measure-preserving flows,
  this holds for the \(\LL^{1}\) norm on all of the \(\LL^{1}\) full group.


\begin{funding}
  François Le Maître's research was partially supported by the ANR project AGRUME
  (ANR-17-CE40-0026), the ANR Project AODynG (ANR-19-CE40-0008) and the IdEx Université de Paris
  (ANR-18-IDEX-0001).  Konstantin Slutsky's research was partially supported by the ANR project
  AGRUME (ANR-17-CE40-0026) and NSF Grant DMS-2153981.
\end{funding}


\tableofcontents

\mainmatter


\chapter{Introduction}
\label{chap:introduction}

Full groups were introduced by H.~Dye~\cite{dyeGroupsMeasurePreserving1959} in the framework of
measure-preserving actions of countable groups as measurable analogues of unitary groups of von
Neumann algebras, by mimicking the fact that the latter are stable under countable cutting and
pasting of partial isometries. These Polish groups have since been recognized as important
invariants as they encode the induced partition of the space into orbits. A similar viewpoint
applies in the setup of minimal homeomorphisms on the Cantor space~\cite{MR1710743}, where likewise
the full groups are responsible for the orbit equivalence class of the action.

Full groups are defined to consist of transformations which act by a permutation on each orbit. When
the action is free, one can associate with an element \( h \) of the full group a cocycle defined by
the equation \( h(x) = \rho_h(x)\cdot x \). From the point of view of topological dynamics, it is
natural to consider the subgroup of those \( h \) for which the cocycle map is continuous, which is
the defining condition for the so-called topological full groups. The latter has a much tighter
control of the action, and encodes minimal homeomorphisms of the Cantor space up to flip-conjugacy
(see~\cite{MR1710743}).

A celebrated result of H.~Dye states that all ergodic \( \mathbb{Z} \)-actions produce the same
partition up to isomorphism, and hence the associated full groups are all isomorphic. The first
named author has been motivated by the above to seek for the analog of topological full groups in
the context of ergodic theory, which was achieved in~\cite{MR3810253} by imposing integrability
conditions on the cocycle. In particular, he introduced \( \LL^1 \) full groups of
measure-preserving ergodic transformations, and showed based on the result of
R.~M.~Belinskaja~\cite{MR0245756} that they also determine the action up to flip-conjugacy. Unlike
in the context of Cantor dynamics, these \( \LL^1 \) full groups are uncountable, but they carry a
natural Polish topology.

In this work, we widen the concept of an \( \LL^{1} \) full group and associate such an object with
any measure-preserving Borel action of a Polish normed group (the reader may consult
Appendix~\ref{chap:normed-groups} for a concise reminder about group norms). Quasi-isometric
compatible norms will result in the same \( \LL^{1} \) full groups, so actions of Polish boundedly
generated groups have canonical \( \LL^{1} \) full groups associated with them based on to the work
of C.~Rosendal~\cite{MR4327092}. Our study also parallels the generalization of the full group
construction introduced by A.~Carderi and the first named author in~\cite{MR3464151}, where full
groups were defined for Borel measure-preserving actions of Polish groups.

\section{Main results}
\label{sec:main-results}

Let \( G \) be a Polish group with a compatible norm \( \snorm{\cdot} \) and consider a Borel
measure-preserving action \( G \acts X \) on a standard probability space \( (X, \mu) \). The group
action defines an orbit equivalence relation \( \eqr_{G} \) by declaring points
\( x_{1}, x_{2} \in X \) equivalent whenever \( G \cdot x_{1} = G \cdot x_{2} \). The norm induces a
metric onto each \( \eqr_{G} \)-class via
\( D(x_{1}, x_{2}) = \inf_{g \in G} \{ \snorm{g} : gx_{1} = x_{2}\} \). Following~\cite{MR3464151},
the \textbf{full group} of the action is denoted by \( \fgr{\eqr_{G}} \) and is defined as the
collection of all measure-preserving \( T \in \Aut(X, \mu) \) that satisfy \( x \eqr_{G} Tx \) for
all \( x \in X \). The \( \LL^{1} \) \textbf{full group} \( \lfgr{G \acts X} \) is given by those
\( T \in \fgr{\eqr_{G}} \) for which the map \( X \ni x \mapsto D(x, Tx) \) is integrable. This
defines a subgroup of \( \fgr{\eqr_{G}} \), and we show in
Theorem~\ref{thm:l1-full-groups-are-Polish} that these groups are Polish in the topology of the norm
\( \snorm{T} = \int_{X}D(x, Tx)\, d\mu(x) \). The strategy of establishing this statement is analogous
to that of~\cite{MR3748570}, where the Polish topology for full groups \( \fgr{\eqr_{G}} \) was
defined.

Understanding of the structure of various types and variants of full groups often hinges on
examining their derived subgroups (also called commutator subgroups). This holds true for our setup
as well. Since we're dealing with full groups equipped with non-discrete topologies, we focus on
their \emph{topological} derived subgroup, defined as the \emph{closure} of the subgroup generated
by commutators.

\begin{theorem}\label{thm:derived-gen-by-invol}
  The topological derived subgroup of any aperiodic \( \LL^1 \) full group is equal to the closed
  subgroup generated by involutions.
\end{theorem}

The argument needed for Theorem~\ref{thm:derived-gen-by-invol} is quite robust. We extract the
idea used in~\cite{MR3810253}, isolate the class of finitely full groups, and show that under mild
assumptions on the action, Theorem~\ref{thm:derived-gen-by-invol} holds for such groups. We
provide these arguments in Section~\ref{sec:app-fin-ful} and in
Corollary~\ref{cor:all-subgroups-are-equal} in particular. Alongside we mention
Corollary~\ref{cor:l1-topological-simplicity} which implies that \( \LL^{1} \) full groups of
ergodic actions are topologically simple.

For the rest of our results we narrow down the generality of the acting groups, and consider
\emph{locally compact} Polish normed groups.  In Chapter~\ref{chap:l1-locally-compact}, we show that
if \( H < G \) is a dense subgroup of a locally compact Polish normed group \( G \) then
\( \lfgr{H \acts X} \) is dense in \( \lfgr{G \acts X} \).  In fact, we prove a considerably
stronger statement by showing that for each \( T \in \fgr{G \acts X} \) and \( \epsilon > 0 \) there
is \( S \in \fgr{H \acts X} \) such that \( \esssup_{x \in X} D(Tx, Sx) < \epsilon \).

Recall that a topological group is \textbf{amenable} if
all of its continuous actions on compact spaces preserve some Radon probability measure, and that it
is \textbf{whirly amenable} if it is amenable and moreover every invariant Radon measure is
supported on the set of fixed points. The following is a combination of
Theorem~\ref{thm:derived-whirly-amenable} and Corollary~\ref{cor:amenability-equivalences}.

\begin{theorem}
  Let \( G \acts X \) be a measure-preserving action of a locally compact Polish normed group.
  Consider the following three statements:
  \begin{enumerate}
    \item\label{item:intro-G-amenable} \( G \) is amenable;
    \item\label{item:intro-derived-L1-full-group-whirly-amenable} the (topological) derived subgroup
          \( \derived(\lfgr{G\acts X}) \) is whirly amenable;
    \item\label{item:intro-L1-full-group-amenable} the \( \LL^1 \) full group \( \lfgr{G\acts X} \) is
          amenable.
  \end{enumerate}
  The  implications~\eqref{item:intro-G-amenable}\( \implies
  \)~\eqref{item:intro-derived-L1-full-group-whirly-amenable}\( \implies
  \)~\eqref{item:intro-L1-full-group-amenable}
  always hold. If \( G \) is unimodular and the action is free, then the three statements above are
  all equivalent.
\end{theorem}

When the acting group is amenable and the orbits of the action are uncountable, we are able to
compute the
\textbf{topological rank}\index{Polish group!topological rank}\index{Topological rank}
of the derived \( \LL^1 \) full groups --- that is, the minimal number
of elements required to generate a dense subgroup of the closure of the commutator subgroup.
Theorem~\ref{thm:generically-2-generated-derived} provides a stronger version of the following.

\begin{theorem}\label{thm:topo-rank-derived}
  Let \( G \acts X \) be a measure-preserving action of an amenable locally compact Polish normed
  group on a standard probability space \( (X, \mu) \). If all orbits of the action are uncountable,
  then the topological rank of the derived \( \LL^{1} \) full group \( \derived(\lfgr{G \acts X}) \) is
  equal to \( 2 \).
\end{theorem}

It is instructive to contrast the situation with the actions of finitely generated groups, where
finiteness of the topological rank of the derived \( \LL^1 \) full group is equivalent to finiteness
of the Rokhlin entropy of the action~\cite{MR4398251}.

Our most refined understanding of \( \LL^{1} \) full groups is achieved for free actions of
\( \mathbb{R} \), which are known as \textbf{flows}. All the results we described so far are valid
for all compatible norms on the acting group. When it comes to the actions of \( \mathbb{R} \),
however, we consider only the standard Euclidean norm on it. Just like the actions of
\( \mathbb{Z} \), flows give rise to an important homomorphism, known as the \textbf{index map}.
Assuming the flow is ergodic, the index map can be described most easily as
\( \lfgr{\mathbb{R} \acts X} \ni T \mapsto \int_{X} |\rho_{T}|\, d\mu \), where \( \rho_{T} \) is
the cocycle of \( T \). Chapter~\ref{chap:index-map} is devoted to the analysis of the index map for
general \( \mathbb{R} \)-flows.

The most technically challenging result of our work is summarized in
Theorem~\ref{thm:index-kernel-is-derived-subgroup}, which identifies the derived \( \LL^{1} \) full
group of a flow with the kernel of the index map, and describes the abelianization of
\( \lfgr{\mathbb{R} \acts X} \).

\begin{theorem}\label{thm:kernel-index-map}
  Let \( \mathcal F \) be a measure-preserving flow on \( (X,\mu) \). The kernel of the index map is
  equal to the derived \( \LL^1 \) full group of the flow, and the topological abelianization of
  \( \lfgr{\mathcal{F}} \) is \( \mathbb{R} \).
\end{theorem}

Theorem~\ref{thm:kernel-index-map} parallels the known results for
\( \mathbb{Z} \)-actions from~\cite{MR3810253}. The structure of its proof, however, has an
important difference. We rely crucially on the fact that each element of the full group
acts in a measure-preserving manner on each orbit. This allows us
to use Hopf decomposition (described in
Appendix~\ref{sec:hopf-decomposition-appendix}) in order to separate any given element
\( T \in \lfgr{\mathbb{R} \acts X} \) into two parts --- recurrent and dissipative. If the acting
group were discrete, the recurrent part would reduce to periodic orbits only.
This is not at all the case
for non-discrete groups, hence we need a new machinery to understand non-periodic recurrent
transformations. To cope with this, we introduce the concept of an \textbf{intermitted}
transformation, which plays the central role in Chapter~\ref{chap:intermitted-transformations}, and
which we hope will find other applications.

Theorems~\ref{thm:topo-rank-derived} and~\ref{thm:kernel-index-map} can be combined to obtain
estimates for the topological rank of the whole \( \LL^1 \) full groups of flows, which is the
content of Proposition~\ref{prop:rank-inequality-l1-full-group}.

\begin{theorem}
  Let \( \mathcal{F} \) be a free measure-preserving flow on a standard probability space
  \( (X, \mu) \). The topological rank \( \tprank(\lfgr{\mathcal{F}}) \) is finite if and only if
  the flow has finitely many ergodic components. Moreover, if \( \mathcal{F} \) has exactly \( n \)
  ergodic components then
  \[ n+1 \le \tprank(\lfgr{\mathcal{F}}) \le n+3. \]
\end{theorem}

In particular, the topological rank of the \( \LL^1 \) full group of an ergodic flow is equal to
either \( 2 \), \( 3 \) or \( 4 \). We conjecture that it is always equal to \( 2 \), and more
generally that the topological rank of the \( \LL^1 \) full group of any measure-preserving flow is
equal to \( n+1 \) where \( n \) is the number of ergodic components.

Our work connects to the notion of \(\LL^1\) orbit equivalence, an intermediate notion between orbit
equivalence and conjugacy.  It goes back to the work of R.~M.~Belinskaja~\cite{MR0245756}
but recently attracted more attention.  Stated in our framework, two flows are \(\LL^1\) orbit
equivalent if they can be conjugated so that the first flow is contained in the \(\LL^1\) full group
of the second and vice versa.  A symmetric version of Belinskaja's theorem is that ergodic
\(\Z\)-actions are \(\LL^1\) orbit equivalent if and only if they are flip conjugate.  It is very
natural to wonder whether this amazing result has a version for flows.  Our
Theorem~\ref{thm:flip-Kakutani-equivalence-same-orbits} implies the following.

\begin{theorem}\label{thm:l1oe-implies-flip-kak}
  If two measure-preserving ergodic flows are \(\LL^1\) orbit equivalent, then they admit some
  cross-sections whose induced transformations\footnote{We refer the reader to
    Definition~\ref{def:flip-kak-eq} and the paragraph that follows it for details on the
    measure-preserving transformation one associates to a cross-section.} are flip-conjugate.
\end{theorem}

We do not know whether the above result is optimal, that is, whether having flip-conjugate
cross-sections implies \(\LL^1\) orbit equivalence, but it seems unlikely.  It is tempting to think
that the correct analogue of Belinskaja's theorem would be a positive answer to the following
question.

\begin{question}
  Let \( \mathcal{F}_{1} \) and \( \mathcal{F}_{2} \) be free ergodic measure-preserving flows which
  are \(\LL^1\) orbit equivalent.  Is it true that there is \( \alpha\in\R^* \) such that
  \( \mathcal{F}_{1} \) and \( \mathcal{F}_{2}\circ m_\alpha \) are isomorphic, where \( m_\alpha \)
  denotes the multiplication by \( \alpha \)?
\end{question}

Let us also mention that Theorem~\ref{thm:l1oe-implies-flip-kak} implies that there are
uncountably many \(\LL^1\) full groups of ergodic free measure-preserving flows up to (topological)
group isomorphism (see Corollary~\ref{cor:flip-kakutani-equivalence-same-l1} and the paragraph right
after its proof).

Finally, we also investigate the coarse geometry of the \(\LL^{1}\) full groups.  We establish that
the \(\LL^{1}\) norm is maximal (in the sense of C.~Rosendal~\cite{MR4327092}, see also
Appendix~\ref{sec:maximal-norms}) on the derived subgroup of an \(\LL^{1}\) full group of an
aperiodic measure-preserving action of any locally compact amenable Polish group
(Theorem~\ref{thm:lc-amenable-derived-maximal}).  For the measure-preserving flows, the \(\LL^{1}\)
norm is, in fact, maximal on the whole full group (Theorem~\ref{thm:l1-norm-maximal}).

\begin{ack}
We are deeply thankful to the referee for their careful reading and numerous
detailed remarks, which led to many improvements of the present monograph.
\end{ack}

\section{Preliminaries}
\label{sec:preliminaries}

We assume the reader is familiar with the fundamentals of real analysis and measure theory, as presented in standard
textbooks such as~\cite{Folland,Royden}. For the necessary results in descriptive set theory, we
primarily rely on~\cite{kechris_classical_1995}.

\subsection{Ergodic theory}
Our work belongs to the field of ergodic theory, which means that all the constructions are defined
and results are proven up to null sets. We occasionally phrase our results as holding ``for all
\( x \)'' when strictly speaking they hold only ``for almost all \( x \)''. The only part where
certain care needs to be exercised in this regard appears in Chapter~\ref{chap:full-groups-metric},
where we define \( \LL^{1} \) full groups for Borel measure-preserving actions of Polish normed
groups.  As in~\cite{MR3464151}, these definitions require genuine actions rather than \emph{boolean
  actions}---a distinction that we clarify at the end of this section.  This technicality vanishes
when considering the more restrictive setting of measure-preserving actions of locally compact
Polish groups.

By a \textbf{standard probability space}, we mean a unique (up to isomorphism) separable atomless
measure space \( (X, \mu) \) with \( \mu(X) = 1 \), i.e., the unit interval \( [0,1] \) equipped
with the Lebesgue measure. Occasionally, in Chapter~\ref{chap:derived-l1-full-group} and
Appendices~\ref{sec:disintegration-measure} and~\ref{chap:appendix-actions-lcsc}, we refer to a
\textbf{standard Lebesgue space}, by which we mean a separable finite measure space,
\( \mu(X) < \infty \).  Unlike a standard probability space, this concept allows for the presence of
atoms and does not require normalization.

Throughout, we frequently work with spaces of measurable functions identified up to sets of measure
zero. To simplify notation, we omit explicit references to the underlying measure \( \mu \). For
example, we write \( \LL^1(X, \R) \) instead of \( \LL^1(X, \mu, \R) \) to denote the Banach space
of \( \mu \)-integrable functions \( X \to \R \).

We denote by \(\Aut(X,\mu)\) the group of all measure-preserving bijections of \((X,\mu)\) up to
measure zero.  This is a Polish group when equipped with the \textbf{weak topology}, defined by
\(T_n\to T\) if and only if for all \(A\subseteq X\) Borel, \(\mu(T_n(A)\bigtriangleup
T(A))\to0\). The weak topology is a Polish group topology, see~\cite[Sec.~1]{MR2583950}.  Given
\(T\in\Aut(X,\mu)\), its \textbf{support} is the set
\[ \supp T=\{x\in X : T(x)\neq x\}.\]
We often refer to measure-preserving bijections as (measure-preserving)
\textbf{transformations},\index{Transformation} although they could more precisely be called
\emph{invertible transformations}. Since this work does not involve non-invertible transformations,
this terminology should not cause confusion.  We also consider (measure-preserving)
\textbf{partial transformations}\index{Partial transformation}\index{Transformation!partial}, which are Borel
bijections \(T:A\to B\) between Borel subsets \(A\), \(B\) of \(X\) satisfying
\(\mu(T^{-1}(C))=\mu(C)\) for all Borel \(C\subseteq B\). The set \(A\) is called the domain of
\(T\), denoted \(\dom T\), and \(B\) is its range, denoted \(\rng T\).

A measure-preserving bijection \(T\) is called
\textbf{periodic}\index{Transformation!periodic}\index{Periodic transformation} when almost all of
its orbits are finite. The cardinalities of the finite orbits of \(T\) are called the
\textbf{periods}\index{Period of a transformation} of \(T\).  Periodicity implies the existence of a
\textbf{fundamental domain} \(A\) for \(T\), which is a measurable set that intersects every
\(T\)-orbit at exactly one point.  Since the ambient measure \(\mu\) is finite, the existence of a
fundamental domain actually characterizes periodicity.  We recall that a transformation \(T\) is
called \textbf{aperiodic}\index{Transformation!aperiodic}\index{Aperiodic!transformation} if it has
no periodic points, meaning that all of its orbits are infinite.

When considering actions of full group elements on orbits, we also need to deal with bijections that
preserve only the measure class on a possibly infinite \(\sigma\)-finite standard measured space.
Such bijections are referred to as \textbf{non-singular transformations}\index{Transformation!non-singular}.

As explained at the beginning of this section, full groups are constructed for (Borel)
\textbf{measure-preserving actions}\index{Measure-preserving action} of a given Polish group \(G\)
on a standard probability space \((X,\mu)\).  These actions, called \emph{spatial actions}, are
Borel maps \(\alpha:G\times X\to X\) such that for each \(g\in G\), the transformation \(\alpha(g)\)
is measure-preserving.  A related notions is that of \emph{boolean actions}, which are continuous
group homomorphisms \(G \to \Aut(X,\mu)\). Unlike spatial actions, boolean actions identify
transformations up to null sets. Consequently, a boolean action can a priori be lifted to an action
map \(\alpha\) such that, given \(g, h \in G\), \(\alpha(gh)x=\alpha(g)\alpha(h)x\) holds merely for
\emph{almost} every \(x\in X\).  As discovered by E.~Glasner, B.~Tsirelson and B.~Weiss, boolean
actions (also called near actions) of Polish groups do not admit Borel realizations in general, and
even when they do, it could happen that different realizations yield different full groups. This
subtlety disappears once we shift our attention to locally compact group actions, which is the case
for Chapter~\ref{chap:l1-locally-compact} and onwards. All boolean actions of locally compact Polish
groups admit Borel realizations which are all conjugate up to measure zero (and hence have
isomorphic full groups), so null sets can be neglected just as they always are in ergodic theory.
We refer the reader
to~\cite{glasnerautomorphismgroupGaussian2005,glasnerSpatialNonspatialActions2005} for more
information on this topic.

\subsection{Orbit equivalence relations}

Any group action \( G \acts X \) induces the orbit equivalence relation \( \eqr_{G \acts X} \),
where two points \( x, y \in X \) are \( \eqr_{G \acts X} \)-equivalent whenever
\( G \cdot x = G \cdot y \). We will usually write this equivalence relation simply as \( \eqr_G \)
for brevity. For the actions \( \mathbb{Z} \acts X \) generated by an automorphism
\( T \in \Aut(X, \mu) \), we denote the corresponding orbit equivalence relation by \( \eqr_{T} \).
For clarity, we may sometimes want to name a measure-preserving action as \(\alpha\) and write
\(G\overset{\alpha}{\acts} X\). Then for all \(g\in G\) we denote by \(\alpha(g)\) the
measure-preserving transformation of \((X,\mu)\) induced by the action of \(g\).

We encounter various equivalence relations throughout this monograph. An equivalence class of a
point \( x \in X \) under the relation \( \eqr \) is denoted by \( [x]_{\eqr} \) and the
\textbf{saturation}\index{Set!saturation} of a set \( A \subseteq X \) is denoted by \( [A]_{\eqr} \) and is defined to be
the union of \( \eqr \)-equivalence classes of the elements of \( A \):
\( [A]_{\eqr} = \bigcup_{x \in A} [x]_{\eqr} \). In particular, \( [x]_{\eqr_{T}} \) is the orbit of
\( x \) under the action of \( T \). The reader may notice that the notation for a saturation
\( [A]_{\eqr} \) resembles that for the full group of an action \( \fgr{G \acts X} \) (see
Chapter~\ref{chap:full-groups-metric}). Both notations are standard, and we hope that confusion will
not arise, as it applies to objects of different nature --- sets and actions, respectively.

\subsection{Actions of locally compact groups}

Consider a measure-preserving action of a locally compact Polish (equivalently, second-countable)
group \( G \) on a standard Lebesgue space \( (X,\mu) \). A \textbf{complete section} for the action
is a measurable set \( \mathcal{C} \subseteq X \) that intersects almost every orbit, i.e.,
\( \mu(X \setminus G \cdot \mathcal{C}) = 0 \). A \textbf{cross-section} is a complete section
\( \mathcal{C} \subseteq X \) such that for some non-empty neighborhood of the identity
\( U \subseteq G \) we have \( Uc \cap Uc' = \varnothing \) whenever \( c, c' \in \mathcal{C} \) are
distinct. When the need to mention such a neighborhood \( U \) explicitly arises, we say that
\( \mathcal{C} \) is a \textbf{\( U \)-lacunary} cross-section.

With any cross-section \( \mathcal{C} \) one associates a decomposition of the phase space known as
the Voronoi tessellation. Slightly more generally, Appendix~\ref{sec:tessellations} defines the
concept of a tessellation over a cross-section, which corresponds to a set
\( \mathcal{W} \subseteq \mathcal{C} \times X \) for which the fibers
\( \mathcal{W}_{c} = \{ x \in X : (c, x) \in \mathcal{W}\} \), \( c \in \mathcal{C} \), partition
the phase space. Every tessellation \( \mathcal{W} \) gives rise to an equivalence relation
\( \eqr_{\mathcal{W}} \), where points \( x, y \in X \) are deemed equivalent whenever they belong
to the same fiber \( \mathcal{W}_{c} \).  The projection map
\( \pi_{\mathcal{W}} : X \to \mathcal{C} \) associates with each \( x \in X \) the unique
\( c \in \mathcal{C} \) satisfying \( x \in \mathcal{W}_{c} \), and it is therefore defined by the
condition \( (\pi_{\mathcal{W}}(x), x) \in \mathcal{W} \) for all \( x \in X \).

When the action \( G \acts X \) is free, each orbit \( G \cdot x \) can be identified with the
acting group. Such a correspondence \( g \mapsto g\cdot x \) depends on the choice of the anchor
point~\( x \) within the orbit, but suffices to transfer structures invariant under the right
translations from the group \( G \) onto the orbits of the action. For instance, if the acting group
is locally compact, then a right-invariant Haar measure \( \lambda \) can be pushed onto orbits by
setting \( \lambda_{x}(A) = \{g \in G: g\cdot x \in A \} \) as discussed in
Section~\ref{sec:orbit-transf}.  Freeness of the action \( G \acts X \) gives rise to the
\textbf{cocycle map} \( \rho : \eqr_{G\acts X} \to G \), which is well-defined by the condition
\( \rho(x, y) \cdot x = y \). Elements of the full group \( \fgr{G \acts X} \) are characterized as
measure-preserving transformations \( T\in\Aut(X,\mu) \) such that \( (T(x),x)\in\eqr_{G\acts X} \)
for all \( x\in X \). With each \( T \in \fgr{G \acts X} \) one may therefore associate the map
\( \rho_{T} : X \to G \), also known as the \textbf{cocycle map}, and defined by
\( \rho_{T}(x) = \rho(x, Tx) \). Both the context and the notation will clarify which cocycle map is
being referred to.

\subsection{Measure-preserving free flows}\label{sec:prelim-flows}

All the previous concepts appear prominently in the chapters that deal with free measure-preserving
\emph{flows}, namely free measure-preserving actions of \( \mathbb{R} \) on standard probability
spaces. We use the additive notation for such actions:
\( \mathbb{R} \times X \ni (r,x) \mapsto x + r \in X \). The group \( \mathbb{R} \) carries a
natural linear order that is invariant under the group operation and can therefore be transferred
onto orbits. More specifically, given a free measure-preserving flow \( \mathbb{R} \acts X \), we use
the notation \( x < y \) whenever \( x \) and \( y \) belong to the same orbit and \( y = x + r \)
for some \( r > 0 \). Every cross-section \( \mathcal{C} \) of a free flow intersects each orbit in
a bi-infinite fashion --- each \( c \in \mathcal{C} \) has a unique successor and a unique
predecessor in the order of the orbit. One therefore has a bijection
\( \sigma_{\mathcal{C}} : \mathcal{C} \to \mathcal{C} \), called the \textbf{first return map} or
the \textbf{induced map}\index{Induced map}, which sends \( c \in \mathcal{C} \) to the next element of the
cross-section within the same orbit. We also make use of the gap function that measures the lengths
of intervals of the cross-section, i.e.,
\( \gap_{\mathcal{C}}(c) = \rho(c, \sigma_{\mathcal{C}}(c)) \), and of the projection function
\( \pi_{\mathcal{C}}: X\to \mathcal{C} \) which takes every \(x\in X\) to the largest
\( c\in\mathcal{C} \) such that \( c \leq x \).

There is a canonical tessellation associated with a cross-section \( \mathcal{C} \) which
partitions each orbit into intervals between adjacent points of \( \mathcal{C} \) and is given by
\[ \mathcal{W}_{\mathcal{C}} = \{(c, x) \in \mathcal{C} \times X : c \le x < \sigma_{\mathcal{C}}(c)
  \}.\]
The associated equivalence relation \( \eqr_{\mathcal{W}_{\mathcal{C}}} \) is denoted simply by
\( \eqr_{\mathcal{C}} \). It groups points \( (x, y) \in \eqr_{\mathbb{R} \acts X} \) which belong
to the same interval of the tessellation, \( \pi_{\mathcal{C}}(x) = \pi_{\mathcal{C}}(y) \). The
\( \eqr_{\mathcal{C}} \)-equivalence class of \( x \in X \) is equal to
\( [x]_{\eqr_{\mathcal{C}}} = \pi_{\mathcal{C}}(x) + \bigl[0,
\gap_{\mathcal{C}}(\pi_{\mathcal{C}}(x))\bigr) \).

Often enough we need to restrict sets and functions to an \( \eqr_{\mathcal{C}} \)-class. Since such
a need arises very frequently, especially in Chapter~\ref{chap:peri-appr-monot}, we adopt the
following shorthand notations. Given a set \( A \subseteq X \) and a point \( c \in \mathcal{C} \),
the intersection \( A \cap [c]_{\eqr_{\mathcal{C}}} \) is denoted simply by \( A(c) \).
Likewise,
\( \lambda_{c}^{\mathcal{C}}(A) \) stands for
\( \lambda(\{t \in \mathbb{R} : c + t \in A \cap [c]_{\eqr_{\mathcal{C}}}\}) \) and corresponds to
the Lebesgue measure of the set \( A \cap [c]_{\eqr_{\mathcal{C}}} \).

The phase space \(X\) can be identified with the subset
\(Z_{\mathcal{C}} \subseteq \mathcal{C} \times \mathbb{R}\),
\[Z_{\mathcal{C}} = \{(c,t) : 0\leq t < \gap_{\mathcal{C}}(c)\},\]
via the map \(\Psi : Z_{\mathcal{C}} \to X\) given by \(\Psi(c, t) = c + t\).  Through this
identification, \(\lambda_c\) corresponds to the Lebesgue measure on the fiber of
\(Z_{\mathcal{C}}\) over \(c\).  Moreover, uniqueness of the Lebesgue measure implies uniqueness of
a finite measure \(\nu\) on \(\mathcal{C}\) such that \(\mu\) is the push-forward by \(\Psi\) of the
restriction of \(\nu \times \lambda\) to \(Z_{\mathcal{C}}\) (see, for
instance,~\cite[Prop.~4.3]{kyedBettiNumbersLocally2015}).  The natural disintegration of
\((\mathcal{C} \times \mathbb{R}, \nu \otimes \lambda)\) corresponds to the disintegration of
\(\mu\) of the form \( \mu(A) = \int_{\mathcal{C}} \lambda_{c}^{\mathcal{C}}(A)\, d\nu(c) \) (see
Appendix~\ref{sec:disintegration-measure} for an overview of measure disintegration).



\chapter{\texorpdfstring{\( \LL^{1} \)}{L1} full groups of Polish group actions}
\label{chap:full-groups-metric}

We begin by introducing the central concept of this work, namely the \( \LL^{1} \) full groups of
Borel measure-preserving actions of Polish normed groups on a standard probability space. While our
primary focus will be on actions of locally compact groups, especially flows, the notion of an
\( \LL^{1} \) full group can be introduced for actions of arbitrary Polish normed groups. In
Section~\ref{sec:l1-full-groups}, we present the definitions in this general setting.

In Section~\ref{sec:l1-full-groups-bound}, we examine the case when there is a natural choice
(up to quasi-isometry) of a norm on \(G\), allowing us to
speak of \emph{the} \(\LL^1\) full group of a \(G\)-action.
This framework is applicable to \(G=\R\), yielding the definition of \(\LL^1\) full
groups of measure-preserving flows.

Returning to the general setting, the \(\LL^1\) full group of an action of a Polish normed group is
itself equipped with a natural norm. In Section~\ref{sec:L1-embedding}, we demonstrate that the
resulting metric space is remarkably large: it contains an isometric copy of the
infinite-dimensional Banach space \( \LL^1(X,\R) \).

We conclude this chapter in Section~\ref{sec:stab-under-first} by establishing the closure of the
\(\LL^1\) full group under the operation of taking induced transformations. This result will play a pivotal
role in the subsequent chapters.

\section{\texorpdfstring{\( \LL^{1} \)}{L1} full groups of Polish normed group actions}
\label{sec:l1-full-groups}

A \textbf{Polish normed group}\index{Polish group!normed} is a Polish group equipped with a
compatible norm (see Appendix~\ref{sec:norms-on-groups}). Let \( (G, \norm{\cdot}) \) be a Polish
normed group, and let \( G \acts X \) be a Borel measure-preserving action on a standard probability
space \( (X, \mu) \). Using the norm, we define a metric \(D\) on \(X\), which
may take the value \(+\infty\), as follows:
\begin{equation}\label{eq:def-D}
  D(x,y) = \inf_{u \in G} \{\, \norm{u} : ux = y \,\} \text{ for } (x,y) \in X\times X.
\end{equation}
\begin{remark}
  By definition, the infimum of the empty set is \(+\infty\). Thus, \(D(x,y)=+\infty\) if and only
  if \(x\) and \(y\) belong to distinct \(G\)-orbits.
\end{remark}

The fact that \(D\) is a metric is straightforward except, possibly, for the implication
\( D(x,y) = 0 \implies x = y \). To justify the latter, let \( u_{n} \in G \),
\( n \in \mathbb{N} \), be a sequence such that \( u_{n} \to e \) and \( u_{n}x = y \). The elements
\( u^{-1}_{n}u_{0} \), \( n \in \mathbb{N} \), belong to the stabilizer of \( x \). By Miller's
theorem~\cite{MR440519}, stabilizers of all points are closed, whence
\( u_{0} = \lim_{n}u_{n}^{-1}u_{0} \) fixes \( x \). Thus, \( u_{0}x = x \), and \( x = y \) as
claimed.

\begin{remark}
  When the \(G\)-orbit of \(x_0 \in X\) is identified with the homogeneous space \(G/H\), where
  \(H\) is the stabilizer of \(x_0\), the restriction of the metric \(D\) to the \(G\)-orbit of
  \(x_0\) corresponds to the quotient metric on \(G/H\) induced by the right-invariant metric
  associated with the given norm on \(G\).
\end{remark}

We can then use \(D\) to define a norm (which may take the value \(+\infty\)) on \(\Aut(X,\mu)\),
leading to the definition of \(\LL^1\) full groups.

\begin{definition}
  \label{def:l1-full-group}
  Let \( G \acts X \) be a Borel measure-preserving action of a Polish normed group
  \( (G, \norm{\cdot}) \) on a standard probability space \( (X,\mu) \), and let
  \( D : X \times X \to \mathbb{R}^{\ge 0} \cup \{+\infty\} \) be the associated metric on
  \(X\). The \textbf{\(\LL^{1}\)-norm}\index{L1@\(\LL^{1}\)!norm}\index{Norm!L1@\(\LL^{1}\)} of an
  automorphism \( T \in \Aut(X,\mu) \) is denoted by \( \norm{T}_{1} \) and is defined as the
  integral
  \[
    \norm{T}_{1} = \int_{X} D(x, Tx)\, d\mu(x).
  \]
  In general, many elements of \(\Aut(X,\mu)\) may have an infinite norm. The \textbf{\(\LL^{1}\)
    full group}\index{L1@\(\LL^{1}\)!full group} of the action consists of those automorphisms for
  which the norm is finite:
  \[
    \lfgr{G \acts X} = \{T \in \Aut(X,\mu) : \norm{T}_{1} < +\infty \}.
  \]
\end{definition}

Elements of \( \lfgr{G \acts X} \) form a group under composition, as can be readily verified using
the triangle inequality for \( D \) and the fact that the transformations in the \(\LL^{1}\) full
group are measure-preserving. Moreover, it is straightforward to check that \( \norm{\cdot}_1 \)
defines a norm on \( \lfgr{G \acts X} \). Our primary objective is to prove that this norm
\( \norm{\cdot}_{1} \) induces a Polish group topology on \( \lfgr{G \acts X} \). Before doing so,
however, we establish a connection to the \textbf{full group}\index{Full group} of
the action \( G \acts X \), defined by
\[ [G \acts X] = \bigl\{ T \in \Aut(X, \mu) : (x, T(x)) \in \mathcal{R}_G \textrm{
  for almost all \(x \in X\)} \bigr\}.
\]
Note that since the \(\LL^1\)-norm is given by \(\norm{T}_1 = \int_{X} D(x, Tx) \, d\mu(x)\), if
\(\norm{T}_1 < +\infty\), then \( D(x, Tx) < +\infty \) holds almost surely, implying
\( T \in \fgr{G \acts X} \). Thus, the \(\LL^1\) full group \( \lfgr{G \acts X} \) is a subgroup of
the full group \( \fgr{G \acts X} \).

The full group of the action was shown to be a Polish group by A.~Carderi and the first-named
author in \cite{MR3464151} (where it is referred to as the \emph{orbit} full group). Furthermore,
when the fixed group norm \(\norm{\,\cdot\,}\) is bounded, so is \( D \), which implies that
\( \lfgr{G \acts X} = \fgr{G \acts X} \). Consequently, \(\LL^1\) full groups encompass the full
groups studied in~\cite{MR3464151}. To demonstrate that \(\LL^1\) full groups are Polish, we will
adopt the same approach as in~\cite{MR3464151}. We first provide an alternative definition of the
\(\LL^1\) full group, from which the Polishness of the topology will follow directly, and then show
that the two definitions yield isometrically isomorphic structures. This alternative definition
relies on understanding the cocycles associated with elements of the \(\LL^1\) full group.

Let us introduce some notation from Appendix~\ref{sec:spaces-and-groups-of-measurable-maps}. For a
standard Borel space \(Y\), we denote by \(\LL^0(X,Y)\) the space of measurable maps \(X \to
Y\). When \(Y\) is a Polish normed group \((G, \norm{\cdot})\), we define \(\LL^1(X,G)\) to be the
set of all \(f \in \LL^0(X,G)\) satisfying
\[
  \int_X \norm{f(x)} \, d\mu(x) < +\infty.
\]
Furthermore, given a Borel measure-preserving action \(G \acts X\), we define the map
\(\Phi: \LL^0(X,G) \to \LL^0(X,X)\) by \(\Phi(f)(x) = f(x) \cdot x\) for \(f \in \LL^0(X,G)\) and
\(x \in X\).

\begin{definition}
  Let \( G \acts X \) be a Borel measure-preserving action of a Polish group \( G \) on a standard
  probability space \( (X,\mu) \). A function \( c \in \LL^0(X, G) \) is called a
  \textbf{cocycle}\index{Cocycle} of \( T \in \fgr{G \acts X} \) if \(T(x) = c(x) \cdot x\) holds
  for almost every \( x \in X \).
\end{definition}

Note that if we identify \(\Aut(X, \mu)\) with a subset of \(\LL^0(X, X)\), then \( c \) is a
cocycle of \( T \in [G \acts X] \) precisely when \( T = \Phi(c) \).

\begin{definition}
  Consider a Borel measure-preserving action \( G \acts X \) of a Polish normed group
  \( (G, \norm{\cdot}) \) on a standard probability space \( (X, \mu) \). The \textbf{\(\LL^1\)
    pre-full group}\index{Pre-full group} \( \pfgr^{1} \) is defined as
  \[
    \pfgr^{1} = \Phi^{-1}(\Aut(X, \mu)) \cap \LL^1(X, G).
  \]
\end{definition}

In other words, the \(\LL^1\) pre-full group consists of all \emph{integrable} cocycles.

\begin{remark}
  When the group norm \(\norm{\cdot}\) on \( G \) is bounded, the integrability condition becomes
  trivial.  In this case, \( \pfgr^{1} = \Phi^{-1}(\Aut(X, \mu)) \) coincides with the pre-full
  group \( \pfgr \) as defined in~\cite[p.~91]{MR3464151}. The latter was shown to be Polish in the
  topology of convergence in measure induced by \(\LL^0(X,G)\), and our next result encompasses this
  in view of Proposition~\ref{prop:polish-topo-cv-measure}.
\end{remark}

We equip the \(\LL^1\) pre-full group with the topology induced by \(\LL^1(X,G)\), which arises from
the norm
\[
  \norm{f}_{1}^{\LL^{1}(X, G)} = \int_X \norm{f(x)} \, d\mu(x).
\]
By Proposition~\ref{prop:l1-is-a-Polish-group}, \(\LL^1(X,G)\) is a Polish normed group under
pointwise multiplication.

Following the approach in~\cite[p.~91]{MR3464151}, we lift the composition law from \(\Aut(X,\mu)\)
to cocycles as follows: for \( f, g \in \pfgr^1 \) and \( x \in X \), define the multiplication by
\[
  (f * g)(x) = f(\Phi(g)(x))g(x),
\]
and the inverse\footnote{The symbol \( f^{-1} \) is already reserved for the pointwise inverse on
  \(\LL^{1}(X, G)\). To avoid confusion, we introduce a distinct notation for this new operation.}
by
\[
  \winv(f)(x) = f(\Phi(f)^{-1}(x))^{-1}.
\]

\begin{proposition}
  \label{prop:pf-is-Polish}
  \( \pfgr^{1} \) is a Polish group with the multiplication \( (f, g) \mapsto (f * g) \) and the
  inverse \( f \mapsto \winv(f) \). The function \( f \mapsto \norm{f}_{1}^{\LL^{1}(X, G)} \) is a
  compatible group norm on \( \pfgr^{1} \), and
  \( \Phi \restriction_{\pfgr^{1}} : \pfgr^{1} \to \Aut(X, \mu) \) is a continuous homomorphism.
\end{proposition}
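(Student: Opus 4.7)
My plan is to verify, in order: (i) $\pfgr^1$ is closed under $*$ and $\winv$ with $\norm{\cdot}_1$ satisfying the group-norm axioms; (ii) $\pfgr^1$ is Polish in the subspace topology from $\LL^1(X,G)$; (iii) the group operations are continuous in this topology; and (iv) $\norm{\cdot}_1$ is compatible with this topology and $\Phi \circ \iota$ is a continuous homomorphism. The central technical tool throughout is that $\Phi(g) \in \Aut(X,\mu)$ preserves $\mu$ for every $g \in \pfgr$, which licenses substitutions $y = \Phi(g)(x)$ inside $\mu$-integrals.

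For step (i), the change of variables yields
\[
\norm{f * g}_1 = \int_X \norm{f(\Phi(g)(x))\,g(x)}\,d\mu \le \int_X \norm{f(\Phi(g)(x))}\,d\mu + \norm{g}_1 = \norm{f}_1 + \norm{g}_1,
\]
and analogously $\norm{\winv(f)}_1 = \int_X \norm{f(\Phi(f)^{-1}(x))^{-1}}\,d\mu = \norm{f}_1$, using $\snorm{u\inv} = \snorm{u}$ on $G$. This shows $\pfgr^1$ is a subgroup of $\pfgr$ and verifies the triangle and symmetry axioms for $\norm{\cdot}_1$; the separation axiom is immediate since $\norm{f}_1 = 0$ forces $f \equiv e_G$ almost everywhere. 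For step (ii), the inclusion $\iota: \LL^1(X,G) \to \LL^0(X,G)$ is continuous and $\pfgr$ is $G_\delta$ in $\LL^0(X,G)$ (recalled just above the proposition), so $\pfgr^1 = \iota^{-1}(\pfgr)$ is $G_\delta$ in the Polish space $\LL^1(X,G)$ (Proposition~\ref{prop:l1-is-a-Polish-group}), hence Polish.

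Step (iii) is where I expect the main obstacle, because $d_G$ is not in general bi-invariant, so $\tilde{d}_G(f_n * g_n, f * g)$ does not reduce to $\tilde{d}_G(f_n, f) + \tilde{d}_G(g_n, g)$ by naive change of variables. My approach is to pass to the left-invariant pseudo-metric $d_\ell(f,g) := \norm{\winv(g) * f}_1$ induced by the group norm from step (i). Left-translation by any $f$ is automatically an isometry for $d_\ell$, so continuity of $*$ in the norm topology reduces to continuity at the identity, which is immediate from subadditivity: $d_\ell(g_n * f_n, \hat e) = \norm{g_n * f_n}_1 \le \norm{g_n}_1 + \norm{f_n}_1 \to 0$. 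Continuity of $\winv$ at $\hat e$ follows from $\norm{\winv(f_n)}_1 = \norm{f_n}_1$, and continuity elsewhere then follows by a standard normed-group argument. It remains to verify that the topology generated by $d_\ell$ coincides with the subspace $\LL^1$ topology on $\pfgr^1$. At the identity these topologies have the same basic neighborhoods $\{f : \norm{f}_1 < \varepsilon\}$, since $\tilde{d}_G(f, \hat e) = \norm{f}_1 = d_\ell(f, \hat e)$. The agreement extends globally by a triangle-inequality decomposition combined with the $\LL^0$-continuity of $*$ on $\pfgr$ from~\cite{MR3464151} and a dominated-convergence estimate using the subadditivity from step (i) as the integrable bound.

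For step (iv), compatibility of $\norm{\cdot}_1$ is precisely the agreement of topologies established in step (iii). The map $\Phi \circ \iota : \pfgr^1 \to \Aut(X,\mu)$ is a homomorphism by the defining identities $\Phi(f*g) = \Phi(f) \circ \Phi(g)$ and $\Phi(\winv(f)) = \Phi(f)^{-1}$ on $\pfgr$, and is continuous as the composition of the continuous inclusion $\iota$ with the continuous homomorphism $\Phi : \pfgr \to \Aut(X,\mu)$ recalled from~\cite{MR3464151}.
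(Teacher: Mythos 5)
Your steps (i) and (ii), and the homomorphism part of (iv), are correct and coincide with the paper's argument: the change of variables gives $\norm{f*g}_{1}\le\norm{f}_{1}+\norm{g}_{1}$ and $\norm{\winv(f)}_{1}=\norm{f}_{1}$, so $\pfgr^{1}$ is a subgroup and $\norm{\cdot}_{1}$ is a group norm, and $\pfgr^{1}=\iota^{-1}(\pfgr)$ is $G_{\delta}$ in the Polish space $\LL^{1}(X,G)$, hence Polish.

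Step (iii), however, contains a genuine gap. For a general group norm, the reduction ``left translations are isometries for $d_{\ell}$, hence joint continuity of $*$ reduces to continuity at the identity'' is false. Subadditivity only gives continuity of multiplication at the pair $(\hat{e},\hat{e})$: for convergence at a general point, writing $u_{n}=\winv(g)*g_{n}$ and $v_{n}=\winv(f)*f_{n}$ with $\norm{u_{n}}_{1},\norm{v_{n}}_{1}\to 0$, one finds
\[
d_{\ell}(g_{n}*f_{n},\,g*f)=\norm{\winv(f)*u_{n}*f*v_{n}}_{1}\le\norm{\winv(f)*u_{n}*f}_{1}+\norm{v_{n}}_{1},
\]
and nothing in the norm axioms controls the conjugation term $\norm{\winv(f)*u_{n}*f}_{1}$. (This is exactly why a left-invariant metric on a group need not induce a group topology.) The same problem affects $\winv$: the identity $\norm{\winv(f_{n})}_{1}=\norm{f_{n}}_{1}$ gives continuity of inversion only at $\hat{e}$, while at a general $f$ one has $d_{\ell}(\winv(f_{n}),\winv(f))=\norm{f_{n}*\winv(f)}_{1}$, a \emph{right}-invariant distance that is not comparable to the left-invariant one in general. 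Finally, the asserted global agreement of the $d_{\ell}$-topology with the subspace $\LL^{1}$-topology is precisely as hard as the continuity you are trying to establish, and the appeal to $\LL^{0}$-continuity of $*$ cannot close it, since the $\LL^{0}$ topology is strictly coarser than the $\LL^{1}$ one. The paper avoids all of this by a direct factorization: $(f*g)(x)=f(\Phi(g)(x))\,g(x)$, where $(f,g)\mapsto f\circ\Phi(g)$ is continuous because $\Phi\circ\iota:\pfgr^{1}\to\Aut(X,\mu)$ is continuous and the action of $\Aut(X,\mu)$ on $\LL^{1}(X,G)$ by precomposition is continuous (Proposition~\ref{prop:aut-acts-continuously}), and the remaining pointwise product is continuous because $\LL^{1}(X,G)$ is a topological group under pointwise operations (Proposition~\ref{prop:l1-is-a-Polish-group}); the inverse $\winv(f)=\bigl(\Phi(f)\cdot f\bigr)^{-1}$ is handled the same way. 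You should replace your metric argument by this factorization, after which compatibility of the norm follows since $\tilde{d}_{G}(f,\hat{e})=\norm{f}_{1}$.
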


\begin{proof}
  First of all, we need to show that these operations are well-defined in the sense that the
  functions \( f * g \) and \( \winv(f) \) belong to \( \LL^{1}(X, G) \) whenever \( f \) and
  \( g \) do. To this end, observe that for \( f, g \in \pfgr^{1} \),
  \begin{displaymath}
    \begin{aligned}
      \snorm{f * g}_{1}^{\LL^{1}(X, G)}
      &= \int_{X} \norm{f(\Phi(g)(x))g(x)}\, d\mu(x) \\
      &\le \int_{X} \norm{f(\Phi(g)(x))}\, d\mu(x) + \int_{X} \norm{g(x)}\, d\mu(x).
    \end{aligned}
  \end{displaymath}
  Now note that since \( \Phi(g) \) is measure-preserving, we have
  \[
    \int_{X} \norm{f(\Phi(g)(x))}\, d\mu(x) = \int_{X} \norm{f(x)}\, d\mu(x),
  \]
  and therefore
  \[
    \norm{f * g}_{1}^{\LL^{1}(X, G)} \le \int_{X} \norm{f(x)}\, d\mu(x) + \int_{X} \norm{g(x)}\,
    d\mu(x) = \norm{f}_{1}^{\LL^{1}(X, G)} + \norm{g}_{1}^{\LL^{1}(X, G)}.
  \]
  In particular, \( f * g \in \LL^{1}(X, G) \), and thus \( \pfgr^{1} \) is closed under
  multiplication. Similarly, \( \Phi(f) \in \Aut(X, \mu) \) implies
  \begin{displaymath}
    \begin{aligned}
      \norm{\winv(f)}^{\LL^{1}(X, G)}_{1}
      &= \int_{X} \snorm{f(\Phi(f)^{-1}(x))^{-1}}\, d\mu(x) \\
      &= \int_{X} \snorm{f(x)^{-1}}\, d\mu(x) = \norm{f}^{\LL^{1}(X, G)}_{1}.
    \end{aligned}
  \end{displaymath}
  Thus \( \pfgr^{1} \) is closed under taking inverses. We leave it to the reader to verify that the
  operation \(*\) endows \(\pfgr^1\) with a group structure, where the inverse of an element
  \(f \in \pfgr^1\) is given by \(\winv(f)\). Additionally, we have shown that
  \( \snorm{\cdot}^{\LL^{1}(X, G)}_{1} \) defines a group norm on \( \pfgr^{1} \).
  It remains to establish the following properties:
  \begin{itemize}
  \item The topology induced by \(\LL^1(X,G)\) on \(\pfgr^1\) is Polish.
  \item The topology induced by \(\LL^1(X,G)\) on \(\pfgr^1\) is a group topology.
  \item The restriction of the norm \( \norm{\cdot}^{\LL^{1}(X, G)}_{1} \) to \( \pfgr^{1} \) is
    compatible with this group topology.
  \end{itemize}
  Note that the third property follows directly from the first two, as the norm
  \(\norm{\cdot}^{\LL^{1}(X, G)}_{1}\) determines the topology of \(\LL^1(X,G)\), even though the
  latter is equipped with distinct group operations.

  We are thus left with verifying that the topology induced by \(\LL^1(X,G)\) on \(\pfgr^{1}\) is a
  Polish group topology. To achieve this, we equip \(X\) with a Polish topology that induces its
  standard Borel structure and ensures the continuity of the \(G\)-action on \(X\). Such a topology
  is guaranteed by a well-known result of H.~Becker and
  A.~S.~Kechris~\cite[Thm.~5.2.1]{MR1425877}. With this topology in place, \(\LL^0(X,X)\) can be
  endowed with the topology of convergence in measure, and the evaluation map
  \[
    \Phi : \LL^{0}(X, G) \to \LL^{0}(X, X), \quad \Phi(f)(x) = f(x) \cdot x,
  \]
  becomes continuous by Lemma~\ref{lem:continuity-on-L0}.

  Since the topology of \(\LL^1(X,G)\) refines that of \(\LL^0(X,G)\), the restriction of \(\Phi\)
  to \(\LL^1(X,G)\) remains continuous under its Polish group topology. The continuity of the group
  operations now follows directly from the continuity of \(\Phi\), combined with the continuity of
  the \(\Aut(X,\mu)\)-action on \(\LL^1(X,G)\) (Proposition~\ref{prop:aut-acts-continuously}) and
  the fact that \(\LL^1(X,G)\) forms a topological group under pointwise multiplication
  (Proposition~\ref{prop:l1-is-a-Polish-group}).

  We now establish that the induced topology on \(\pfgr^{1}\) is Polish by invoking Alexandrov's
  theorem. This theorem states that a subspace of a Polish space is Polish under the induced
  topology if and only if it is a \(G_\delta\) set
  (see~\cite[Thm.~3.9]{kechris_classical_1995}). The forward direction of Alexandrov's theorem,
  together with Proposition~\ref{prop:Aut-embeds}, implies that the Polish group \(\Aut(X,\mu)\) is
  a \(G_\delta\) subset of \(\LL^0(X,X)\). By the continuity of \(\Phi\), it follows that
  \(\pfgr^{1} = \Phi^{-1}(\Aut(X,\mu)) \cap \LL^1(X,G)\) is a \(G_\delta\) subset of
  \(\LL^1(X,G)\). Consequently, the reverse implication of Alexandrov's theorem ensures that
  \(\pfgr^{1}\) is Polish.
\end{proof}

\begin{remark}
  Our arguments rely fundamentally on the results of H.~Becker and A.~S.~Kechris~\cite{MR1425877},
  which allow us to transform Borel actions into continuous ones. We note that the application
  of~\cite[Thm.~5.2.1]{MR1425877} in the proof of Proposition~\ref{prop:pf-is-Polish} can be
  replaced with the more straightforward result~\cite[Thm.~2.6.6]{MR1425877}: every Borel
  \(G\)-action admits a Borel embedding into a continuous \(G\)-action on a compact Polish space. By
  equipping this space with the push-forward measure, we can proceed with our analysis, as the Borel
  embedding ensures that the actions are isomorphic up to a null set.
\end{remark}

Let \( K \trianglelefteq \pfgr^{1} \) denote the kernel of \( \Phi \restriction_{\pfgr^{1}} \), and
let \( \norm{\cdot}^{\pfgr^{1}/K}_{1} \) denote the quotient norm induced by
\( \norm{\cdot}^{\LL^{1}(X, G)}_{1} \) (see Proposition~\ref{prop:quotient-norm} regarding the
properties of the quotient norm). The factor group
\( (\pfgr^{1}/K, \norm{ \cdot }^{\pfgr^{1}/K}_{1}) \) is evidently a Polish normed group, and it
turns out to be isometrically isomorphic to the \( \LL^{1} \) full group introduced in
Definition~\ref{def:l1-full-group}, as we will now see. Let
\( \tilde{\Phi} : \pfgr^{1}/K \to \Aut(X, \mu) \) denote the homomorphism induced by
\( \Phi \restriction_{\pfgr^{1}} \) onto the factor group.

\begin{proposition}
  \label{prop:pf-over-K-is-l1-full-group}
  The homomorphism \( \tilde{\Phi} : \pfgr^{1}/K \to \Aut(X, \mu) \) establishes an isometric
  isomorphism between \( (\pfgr^{1}/K, \norm{ \cdot }^{\pfgr^{1}/K}_{1}) \) and
  \( (\lfgr{G \acts X}, \norm{\cdot}_{1}) \).
\end{proposition}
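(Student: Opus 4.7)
The plan is to verify three things: (a) $\tilde{\Phi}$ takes values in $\lfgr{G \acts X}$ and is norm-decreasing, (b) $\tilde{\Phi}$ is surjective, and (c) the reverse norm inequality holds. Injectivity is automatic since we are quotienting by $K = \ker(\Phi\circ\iota\restriction_{\pfgr^{1}})$, so the real content lies in (b) and (c). Both will follow from a single measurable selection argument, which I expect to be the main technical obstacle.

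For (a), fix $f \in \pfgr^{1}$ and set $T = \Phi(f) \in \Aut(X,\mu)$. Since $Tx = f(x)\cdot x$ lies in the orbit $G\cdot x$, the automorphism $T$ belongs to $\fgr{G \acts X}$, and the definition of $D$ in~\eqref{eq: def D} gives the pointwise bound $D(x, Tx) \le \norm{f(x)}$. Integrating yields $\norm{T}_{1} \le \norm{f}_{1}^{\LL^{1}(X, G)}$, so $T \in \lfgr{G \acts X}$. Taking the infimum over representatives of the coset $[f] \in \pfgr^{1}/K$ produces $\norm{\tilde{\Phi}([f])}_{1} \le \norm{[f]}_{1}^{\pfgr^{1}/K}$.

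The heart of the proof is the following measurable selection claim: for every $T \in \lfgr{G \acts X}$ and every $\epsilon > 0$, there exists a measurable $g : X \to G$ such that $g(x)\cdot x = Tx$ and $\norm{g(x)} \le D(x, Tx) + \epsilon$ for almost every $x$. Granting this, $g$ lies in $\pfgr^{1}$ because $\Phi(g) = T \in \Aut(X, \mu)$ and $\int_{X} \norm{g(x)}\, d\mu(x) \le \norm{T}_{1} + \epsilon < \infty$, which yields surjectivity of $\tilde{\Phi}$. Letting $\epsilon \to 0$ along a sequence of such selectors one obtains $\norm{[g]}_{1}^{\pfgr^{1}/K} \le \norm{T}_{1}$, which, combined with (a), gives the isometry and hence completes the proof.

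To establish the claim, observe that the set $A = \{(x, u) \in X \times G : u\cdot x = Tx\}$ is Borel because the action map and $T$ are Borel, and the pointwise distance $x \mapsto D(x, Tx) = \inf\{\norm{u} : (x,u) \in A\}$ is $\mu$-measurable (one can realise the infimum along a countable dense subset of $G$, exploiting that the norm is compatible with the Polish topology). The set $B = \{(x, u) \in A : \norm{u} < D(x, Tx) + \epsilon\}$ is then analytic with nonempty $x$-sections almost everywhere, so the Jankov--von Neumann uniformisation theorem produces a $\mu$-measurable selector $g$; after modifying it on a null set we may assume it is Borel. The only delicate point is that stabilisers can be nontrivial, but this causes no trouble since $D$ is defined as an infimum over all of $G$ and any selection is admissible as long as it lands in the fibre of $A$.
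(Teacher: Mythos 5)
Your proposal is correct and follows essentially the same route as the paper: the easy inequality comes from the pointwise bound \( D(x,\Phi(f)(x)) \le \norm{f(x)} \), and both surjectivity and the reverse inequality rest on a Jankov--von Neumann selection of an almost-optimal cocycle, exactly as in the paper's proof (you merely fold the two applications of the uniformization theorem into a single \( \epsilon \)-selection claim, and add a welcome remark on the measurability of \( x \mapsto D(x,Tx) \)).
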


\begin{proof}
  We begin by showing that \( \snorm{gK}^{\pfgr/K}_{1} = \snorm{\tilde{\Phi}(gK)}_{1} \) holds for
  any \( gK \in \pfgr^{1}/K \). By the definition of the quotient norm,
  \[ \norm{gK}^{\mathrm{PF^{1}/K}}_{1} = \inf_{k \in K} \int_{X} \norm{g(x)k(x)}\, d\mu(x). \]
  For any fixed \( k \in K \), we have \( g(x)k(x) \cdot x = g(x) \cdot x \), and therefore
  \[ D(x, g(x) \cdot x) \le \norm{g(x)k(x)} \quad \textrm{for almost every \( x \in X \)}. \]
  This readily implies the inequality
  \( \snorm{\tilde{\Phi}(gK)}_{1} \le \norm{gK}^{\pfgr^{1}/K}_{1} \). For the other direction, let
  \( \epsilon > 0 \) and consider the set
  \[ \{(x,u) \in X \times G : g(x) \cdot x = u \cdot x \textrm{ and } \norm{u} \le D(x, g(x) \cdot
    x) + \epsilon\}. \]
  Using the Jankov--von Neumann uniformization theorem, one may pick a measurable map
  \( g_{0} : X \to G \) that satisfies \( g_{0}(x) \cdot x = g(x) \cdot x \) and
  \( \norm{g_{0}(x)} \le D(x, g(x) \cdot x) + \epsilon \) for almost all \( x \in X \). Since
  \( x \mapsto g(x)^{-1}g_{0}(x) \in K \), we have
  \begin{displaymath}
    \begin{aligned}
      \snorm{\tilde{\Phi}(gK)}_{1}
      &= \int_{X}D(x,g(x) \cdot x)\, d\mu(x) \\
      &\ge \int_{X}\norm{g(x)g(x)^{-1}g_{0}(x)}\, d\mu(x) - \epsilon \\
      &\ge \norm{gK}_{1}^{\pfgr^{1}/K} - \epsilon.
    \end{aligned}
  \end{displaymath}
  As \( \epsilon \) is an arbitrary positive real, we conclude that
  \( \norm{gK}_{1}^{\pfgr^{1}/K} = \snorm{\tilde{\Phi}(gK)}_{1} \).

  It remains to check that \( \tilde{\Phi} \) is surjective. For an automorphism
  \( T \in \lfgr{G \acts X} \), consider the set
  \[ \{(x,u) \in X \times G : Tx = u \cdot x \textrm{ and } \norm{u} \le D(x, Tx) + 1\}. \]
  Applying the Jankov--von Neumann uniformization theorem once again, we get a map
  \( g \in \LL^{0}(X, G) \) such that \( \Phi(g) = T \) and \( \norm{g(x)} \le D(x,Tx) + 1 \). The
  latter inequality, together with the assumption that \( T \in \lfgr{G \acts X} \), easily implies
  that \( g \in \LL^{1}(X, G) \), and thus \( gK \in \pfgr^{1}/K \) is the preimage of \( T \) under
  \( \tilde{\Phi} \).
\end{proof}

Results discussed thus far can be summarized as follows.

\begin{theorem}
  \label{thm:l1-full-groups-are-Polish}
  Let \( G \acts X \) be a Borel measure-preserving action of a Polish normed group
  \( (G, \norm{\cdot}) \) on a standard probability space. The \( \LL^{1} \) full group
  \( \lfgr{G \acts X} \) is a Polish normed group relative to the norm
  \( \norm{T}_{1} = \int_{X} D(x, Tx)\, d\mu(x) \).
\end{theorem}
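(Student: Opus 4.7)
The plan is to deduce the theorem as a direct consequence of Propositions~\ref{prop:pf-is-Polish} and~\ref{prop:pf-over-K-is-l1-full-group}. Since the latter provides an isometric group isomorphism $\tilde{\Phi} : (\pfgr^{1}/K, \norm{\cdot}_{1}^{\pfgr^{1}/K}) \to (\lfgr{G \acts X}, \norm{\cdot}_{1})$, it suffices to show that the quotient $\pfgr^{1}/K$ is a Polish normed group under the quotient norm, and then transfer the structure through $\tilde{\Phi}$.

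First, I would verify that $K = \ker(\Phi \circ \iota\restriction_{\pfgr^{1}})$ is a closed normal subgroup of $\pfgr^{1}$. Normality is automatic; closedness follows from the continuity of $\Phi \circ \iota\restriction_{\pfgr^{1}} : \pfgr^{1} \to \Aut(X, \mu)$ established in Proposition~\ref{prop:pf-is-Polish}, combined with the fact that $\{\mathrm{id}\}$ is closed in $\Aut(X, \mu)$. Next, I would invoke the standard fact that the quotient of a Polish group by a closed normal subgroup is again Polish in the quotient topology, applied to $\pfgr^{1}/K$.

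The remaining point is that the quotient of a compatible group norm by a closed normal subgroup is again a compatible group norm on the quotient, generating the quotient topology. Since Proposition~\ref{prop:pf-is-Polish} guarantees that $\norm{\cdot}_{1}^{\LL^{1}(X, G)}$ is a compatible norm on $\pfgr^{1}$, it follows that $\norm{\cdot}_{1}^{\pfgr^{1}/K}$ is a compatible norm on $\pfgr^{1}/K$ (see Appendix~\ref{sec:normed-groups}). Thus $(\pfgr^{1}/K, \norm{\cdot}_{1}^{\pfgr^{1}/K})$ is a Polish normed group, and via the isometric isomorphism $\tilde{\Phi}$, so is $(\lfgr{G \acts X}, \norm{\cdot}_{1})$.

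Essentially no obstacle remains at this stage — all the nontrivial work (ensuring that $\pfgr^{1}$ is Polish, that the $\LL^{1}$-norm is compatible, that the multiplication and inversion defined via $*$ and $\winv$ yield a topological group structure, and that the identification with $\lfgr{G \acts X}$ is isometric) has already been absorbed into Propositions~\ref{prop:pf-is-Polish} and~\ref{prop:pf-over-K-is-l1-full-group}. The theorem amounts to a clean packaging of those results, and I would state it as such, with a short proof that simply cites both propositions and the elementary quotient facts above.
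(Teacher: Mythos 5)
Your proposal is correct and matches the paper's own argument: the theorem is explicitly presented there as a summary of the section, obtained by combining Proposition~\ref{prop:pf-is-Polish}, the fact that \( \pfgr^{1}/K \) is a Polish normed group under the quotient norm (Proposition~\ref{prop:quotient-norm}), and the isometric isomorphism of Proposition~\ref{prop:pf-over-K-is-l1-full-group}. Nothing is missing.
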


\begin{remark}
  When the acting group is finitely generated and equipped with the word length metric with respect
  to a finite generating set, it can be shown that the left-invariant metric induced by the norm
  on the \(\LL^1\) full group is complete (see~\cite[Prop.~3.4 and 3.5]{MR3810253} and the remark
  thereafter for a more general statement). Nevertheless, \(\LL^1\) full groups generally do not
  admit compatible complete left-invariant metrics, i.e., they are not necessarily CLI groups. For
  instance, if \(G = \R\) is acting by rotation on the circle, the \(\LL^1\) full group of the
  action is all of \(\Aut(\mathbb S^1,\lambda)\), which is not CLI.
\end{remark}

Let us point out a possibility to generalize our framework. Given a standard probability space
\((X,\mu)\), consider an extended Borel metric \(D\) on \(X\), i.e., a Borel metric that is allowed
to take the value \(+\infty\) (Eq.~\eqref{eq:def-D} provides such an example). Note that the
relation \(D(x,y)<+\infty\) is an equivalence relation. One can now define the \(\LL^1\) full group
of \(D\) in complete analogy with Definition~\ref{def:l1-full-group} as the group of all
\(T\in\Aut(X,\mu)\) whose norm \(\norm{T}_{D} = \int_X D(x,T(x))\, d\mu(x) \) is finite.

\begin{question}
  Suppose that \(D\) restricts to a complete separable metric on each equivalence class
  \(\{y\in X : D(x,y)<+\infty\}\), \(x \in X\). Is the \(\LL^1\) full group of \(D\) Polish in the
  topology of the norm \(\norm{\cdot}_{D}\)?
\end{question}

\section{\texorpdfstring{\( \LL^{1} \)}{L1} full groups and quasi-metric structures}
\label{sec:l1-full-groups-bound}

When viewed as a normed group, the \(\LL^{1}\) full group \(\lfgr{G \acts X}\) depends on the choice
of a compatible norm on \(G\). The topological structure on \(\lfgr{G \acts X}\), however, depends
only on the quasi-metric structure of the acting group.  Recall that two norms \( \snorm{\cdot} \)
and \( \snorm{\cdot}' \) on a Polish group \( G \) are
\textbf{quasi-isometric}\index{Norm!quasi-isometric} if there exists a constant \( C>0 \) such that
for all \( g\in G \),
\[
  \frac{1}{C} \snorm{g} - C \le \snorm{g}' \le C \snorm{g} + C.
\]
\begin{lemma}\label{lem:quasi-metric-structure-same-L1}
  Let \( \snorm\cdot \) and \( \snorm{\cdot}' \) be two quasi-isometric compatible norms on a Polish
  group \( G \), and let \( G\acts (X,\mu) \) be a Borel measure-preserving action on a standard
  probability space. The \( \LL^1 \) full groups associated with the two norms are equal as
  topological groups.
\end{lemma}

\begin{proof}
  The quasi-isometry condition implies that a function \( f: X \to G \) satisfies
  \( \int_X \snorm{f(x)}\, d\mu(x) < +\infty \) if and only if
  \( \int_X \snorm{f(x)}' \, d\mu(x) < +\infty \). In particular, the \( \LL^1 \) full groups
  associated with these norms are equal as abstract groups.

  Both topologies make the inclusion of \(\lfgr{G \acts X}\) into \(\Aut(X,\mu)\) continuous by
  Proposition~\ref{prop:pf-is-Polish}, and, in particular, the inclusion map is Borel.  Since
  injective images of Borel sets by Borel maps are Borel (see, for
  example,~\cite[Thm.~15.1]{kechris_classical_1995}), it follows that both topologies induce the
  same Borel structure on \( \lfgr{G\acts X} \), which also coincides with the one induced by the
  weak topology on \(\Aut(X,\mu)\).  A standard automatic continuity result (originally due to
  S.~Banach~\cite[Thm.~4 p.~23]{banachTheorieOperationsLineaires1932}) then yields equality of the
  two topologies (see also the second paragraph following~\cite[Lem.~1.2.6]{MR1425877}).
\end{proof}

When a Polish group \(G\) admits a canonical choice of the quasi-metric structure, \(\LL^{1}\) full
groups \(\lfgr{G \acts X}\) are unambiguously defined as topological groups without the need to
choose any particular norm on \(G\).  This is the case for
\emph{boundedly generated}\index{Polish group!boundedly generated} Polish groups---the class of
groups identified and studied by C.~Rosendal in his treatise~\cite{MR4327092}.
Appendix~\ref{sec:maximal-norms} provides a succinct review of the concept of maximal norms on
boundedly generated Polish groups.

An example of this situation is given by \( G = \mathbb{R} \), where the usual Euclidean norm is
maximal in the sense of Definition~\ref{def:maximal-norm}.

\begin{remark}
  We will see in the last chapter that the natural \(\LL^1\) norm on the \(\LL^1\) full groups of
  \(\R\)-actions is \emph{maximal}\index{Norm!maximal} so that it defines a quasi-metric structure
  which is canonically associated with the topological group structure.
\end{remark}

\section{Embedding \texorpdfstring{\(\LL^1\)}{L1} isometrically in \texorpdfstring{\(\LL^1\)}{L1}
  full groups}\label{sec:L1-embedding}

We now present a general result on the geometry of \(\LL^{1}\) full groups equipped with the
\(\LL^{1}\) norm \( \norm{\cdot}_{1} \), demonstrating that these groups are quite large.

Given a \(\sigma\)-finite measured space \((X, \mathcal{B}, \lambda)\), let
\(\MAlg_{f}(X, \lambda)\)\index{MAlg@\(\MAlg\)} denote the space of all finite-measure subsets
\(B \in \mathcal{B}\), identified up to measure zero.  Endow \(\MAlg_{f}(X, \lambda)\) with the
metric \(d_{\lambda}(B_{1}, B_{2}) = \lambda(B_{1} \bigtriangleup B_{2})\), where \(\bigtriangleup\)
denotes the symmetric difference.

\begin{proposition}
  \label{prop:embedding-L1}
  Let \( G \acts X \) be a Borel measure-preserving action of a Polish normed group
  \( (G, \norm{ \cdot }) \).  If \[\lfgr{G\acts X}\neq [G\acts X],\]
  then the metric space \((\MAlg_f(\R, \lambda),d_\lambda)\) embeds isometrically into the \(\LL^1\)
  full group of \(G\acts X\) endowed with its \(\LL^1\) metric, and hence so does
  \(\LL^1(X,\mu,\R)\).
\end{proposition}

\begin{proof}
  \([G\acts X]\) is a full group, so any of its elements can be written as a product of three
  involutions belonging to \([G\acts X]\)
  by~\cite{ryzhikovRepresentationTransformationsPreserving1985}.  By assumption,
  \(\lfgr{G\acts X} \neq [G\acts X]\), so there must be an involution \(U \in [G\acts X]\) which
  does not belong to \(\lfgr{G\acts X}\).  Denote by \(\mathcal{B}_{U}\) the \(\sigma\)-algebra on
  \(\supp U\) consisting of \(U\)-invariant sets, endowed with the measure given by
  \(\lambda_{U}(A)=\norm{U_{A}}_{1}\), where \(U_{A}(x)=U(x)\) if \(x \in A\) and \(U_{A}(x)=x\)
  otherwise.  Since \(\supp U=\bigcup_n\{x\in \supp U: D(x,U(x))\leq n\}\), the measure
  \(\lambda_U\) is \(\sigma\)-finite. Also, \(\lambda_U\) is non-atomic, because so is \(\mu\), and
  infinite, because \(U\not\in\lfgr{G\acts X}\).  There is only one \(\sigma\)-finite standard
  atomless infinite measured space up to isomorphism (namely \((\R,\mathcal B(\R),\lambda)\)), so we
  conclude that \((\MAlg_f(\supp U, \lambda_{U}),d_{\lambda_{U}})\) is isometric to
  \((\MAlg_f(\mathbb{R}, \lambda),d_{\lambda})\).  Composing this isometry with \(A\mapsto U_A\), we
  get the desired isometric embedding \((\MAlg_f(\R, \lambda),d_\lambda)\to \lfgr{G\acts X}\).

  Finally, we observe that \(\LL^1(X,\mu,\R)\) can be embedded into
  \(\MAlg_f(X\times\R,\mu\otimes\lambda)\) by taking a function \(f\) to its epigraph, namely the
  set of all \((x,y)\in X\times \R\) such that \(f(x)\leq y\leq 0\) or \(0\leq y\leq f(x)\).  Since
  there is again only one infinite \(\sigma\)-finite standard atomless measured space and
  \((X\times\R,\mu\otimes\lambda)\) is such a space, we get an isometric embedding
  \(\LL^1(X,\mu,\R)\to \MAlg_f(\R,\lambda)\) as wanted.
\end{proof}

\begin{remark}
  Full groups of actions of Polish groups are always coarsely
  bounded\index{Polish group!coarsely bounded}.  In fact, they are coarsely
  bounded even as discrete groups\footnote{Being coarsely bounded as a discrete group is also called
    the Bergman property.}, which is a result due to M.~Droste, W.~C.~Holland and
  G.~Ulbrich~\cite{drosteFullGroupsMeasurepreserving2008} (see
  also~\cite[Section~I.8]{millerFullGroupsClassification2004} for a more general statement which
  encompasses the non-ergodic case). In particular, the above result is actually a sharp dichotomy:
  every \(\LL^1\) full group of a Polish normed group action is either coarsely bounded, or it
  contains an isometric copy of \(\LL^1(X,\mu,\R)\).
\end{remark}

\begin{remark}
  Proposition~\ref{prop:embedding-L1} significantly improves~\cite[Prop.~6.9]{MR4398251}, since
  \(\R^n\), endowed with the \(\ell^1\) norm, embeds isometrically into \(\LL^1(X,\mu,\R)\).
\end{remark}

\section{Stability under taking induced transformations}\label{sec:stab-under-first}

Some of the basic properties of \(\LL^{1}\) full groups are discussed---in the wider generality of
induction friendly finitely full groups---in Chapter~\ref{sec:app-fin-ful}.  The often-used
fundamental fact is the closure of \(\LL^{1}\) full groups under taking the induced transformations,
which is a generalization of~\cite[Prop.~3.6]{MR3810253}.  We formulate this in
Proposition~\ref{prop:stability-under-first-return-map}.

Let \( T \in \Aut(X, \mu) \) be a measure-preserving transformation.  Recall that for a measurable
subset \(A \subseteq X\), the \textbf{induced transformation}\index{Transformation!induced}\index{Induced transformation}
\( T_{A} \) is supported on \(A\)
and is defined to be \( T^n(x) \) for \( x\in A \) where \( n \geq 1 \) is the smallest
integer such that \( T^n(x)\in A \).  By the Poincaré recurrence theorem, such a map yields a
well-defined measure-preserving transformation.

\begin{proposition}
  \label{prop:stability-under-first-return-map}
  Let \( G \acts X \) be a Borel measure-preserving action of a Polish normed group
  \( (G, \norm{ \cdot }) \).  For any element \( T \in \lfgr{G \acts X} \) and any measurable set
  \( A \subseteq X \), the induced transformation \( T_{A} \) belongs to \( \lfgr{G \acts X} \) and
  moreover \( \norm{T_{A}}_{1} \le \norm{T}_{1}\).
\end{proposition}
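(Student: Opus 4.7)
First, I would check that $T_A$ lies in the full group $\fgr{G \acts X}$. Since $T \in \fgr{G \acts X}$, every $G$-orbit is $T$-invariant, so iterating $T$ keeps points inside their $G$-orbits and $T_A x \in G \cdot x$ for every $x$. Combined with the standing fact that $T_A \in \Aut(X, \mu)$ by Poincar\'e recurrence, this gives $T_A \in \fgr{G \acts X}$. The substance of the proposition is therefore the integrability bound $\norm{T_A}_1 \le \norm{T}_1$.

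The plan is to bound $D(x, T_A x)$ by a telescoping triangle inequality along the $T$-orbit and then to exploit the disjointness of the Kakutani tower over $A$. For each $n \ge 1$ set $A_n = \{x \in A : T^k x \notin A \text{ for } 1 \le k \le n-1, \text{ and } T^n x \in A\}$, so that $A = \bigsqcup_{n \ge 1} A_n$ up to a null set by Poincar\'e recurrence. A short case analysis on $(k, n) \ne (k', n')$ shows that the family $\{T^k(A_n) : n \ge 1,\ 0 \le k \le n-1\}$ is pairwise disjoint: if $T^k x = T^{k'} y$ with $x \in A_n$, $y \in A_{n'}$ and say $k > k'$, then $y = T^{k-k'} x$ would be an element of $A$ of the form $T^j x$ with $1 \le j \le n-1$, contradicting $x \in A_n$.

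For $x \in A_n$ the triangle inequality for $D$ yields
\[ D(x, T_A x) = D(x, T^n x) \le \sum_{k=0}^{n-1} D(T^k x, T^{k+1} x). \]
Integrating over $A_n$, summing over $n$, and using that $T$ preserves $\mu$ to perform the change of variables $y = T^k x$, I would obtain
\[ \norm{T_A}_1 = \sum_{n \ge 1} \int_{A_n} D(x, T^n x) \, d\mu(x) \le \sum_{n \ge 1} \sum_{k=0}^{n-1} \int_{T^k(A_n)} D(y, Ty) \, d\mu(y). \]
By the disjointness established above, the right-hand side is $\int_E D(y, Ty)\, d\mu(y)$ with $E = \bigsqcup_{n,k} T^k(A_n) \subseteq X$, which is bounded above by $\norm{T}_1$. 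In particular $T_A \in \lfgr{G \acts X}$.

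I do not anticipate a substantive obstacle. The only moderately delicate points are the almost-everywhere finiteness of the return time (Poincar\'e recurrence, already invoked in the paragraph defining $T_A$) and the disjointness of the Kakutani tower pieces, both of which are classical; everything else is elementary manipulation with the metric $D$ and the fact that $T$ is measure-preserving.
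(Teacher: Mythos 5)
Your proof is correct and follows essentially the same route as the paper: decompose $A$ by return time into the sets $A_n$, apply the triangle inequality for $D$ along the $T$-orbit, and change variables using that $T$ preserves $\mu$ together with the disjointness of the Kakutani tower. The only difference is that you verify the disjointness explicitly and observe that the tower need only be a subset of $X$ (which still yields the desired inequality), whereas the paper states the tower partitions $X$ outright.
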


\begin{proof}
  For \( n \geq 1 \), let \( A_n \) be the set of elements of \( A \) whose return time is equal to
  \( n \); note that \( X = \bigsqcup_{n\geq 1}\bigsqcup_{i=0}^{n-1}T^i(A_n) \). Let as before
  \( D : \mathcal{R}_{G} \to \mathbb{R}^{\ge 0} \) be the metric induced by the group norm
  \( \norm{\cdot} \) on the orbits of the action. To estimate the value of \( \norm{T_{A}}_{1} \),
  observe that
  \begin{displaymath}
    \begin{aligned}
      \norm{T_{A}}_{1}
      &= \int_{X}D(x,T_{A}x)\, d\mu(x)
        = \sum_{n=1}^{\infty}\int_{A_{n}}\mkern-10mu D(x, T_{A}x)\, d\mu(x)\\
      &= \sum_{n=1}^{\infty}\int_{A_{n}}\mkern-10mu D(x, T^{n}x)\, d\mu(x).
    \end{aligned}
  \end{displaymath}
  Using the triangle inequality, we get
  \begin{displaymath}
    \begin{aligned}
      \norm{T_{A}}_{1}
      &\le \sum_{n=1}^{\infty} \sum_{i=0}^{n-1}\int_{A_{n}}\mkern-10mu D(T^{i}x,
        T^{i+1}x)\, d\mu(x)\\
      &= \sum_{n=1}^{\infty} \sum_{i=0}^{n-1}\int_{T^{i}(A_{n})}\mkern-35mu D(x, Tx)\,
        d(\mu \circ T^{-i})(x) \\
      \because\ T \textrm{ preserves } \mu
      &= \sum_{n=1}^{\infty}\sum_{i=0}^{n-1}\int_{T^{i}(A_{n})}\mkern-35mu D(x,Tx)\, d\mu(x)
        = \int_{X}D(x, Tx)\, d\mu(x) = \norm{T}_{1}.
    \end{aligned}
  \end{displaymath}
  Thus \( T_{A} \in \lfgr{G \acts X} \) and \( \norm{T_{A}}_{1} \le \norm{T}_{1} \) as claimed.
\end{proof}


\chapter{Polish finitely full groups}\label{sec:app-fin-ful}

The primary focus of this work is the study of \(\LL^{1}\) full groups of Borel measure-preserving
actions of Polish normed groups. However, several results are valid in the more general context of
what we call Polish finitely full groups. This generalization encompasses \(\LL^1\) full groups and
provides a unified and extended context for addressing topics such as topological simplicity and
maximal norms, building on the results of~\cite{MR3810253,MR4398251}.

Beginning with a Polish finitely full group as defined in Section~\ref{sec:full-and-finfull}, we
construct in Section~\ref{sec:derived-equals-symmetric} a natural closed subgroup, termed the
symmetric subgroup, which is analogous to V.~Nekrashevych's symmetric and alternating topological
full groups~\cite{MR3904185}. We show that this closed subgroup coincides with the topological
derived subgroup under a mild hypothesis, satisfied by \(\LL^1\) full groups, which we call
induction friendliness.  Section~\ref{sec:topol-simpl-symm} explores closed normal subgroups of the
symmetric subgroup: we establish their correspondence to invariant sets---a fact that easily yields
topological simplicity when the ambient Polish finitely full group is ergodic.  Finally, in
Section~\ref{sec:maxim-metr-deriv}, we provide a condition on a \emph{normed} induction friendly
Polish finitely full group that guarantees the maximality of its norm's restriction to the symmetric
subgroup. Maximality is understood in the sense of C.~Rosendal, and Appendix~\ref{sec:maximal-norms}
contains a brief reminder of the relevant notions.

\section{Polish full and finitely full groups}\label{sec:full-and-finfull}

H.~Dye defined a subgroup \(\mathbb G\leq \Aut(X,\mu)\) as being
\textbf{full}\index{Transformation group!full} when it is stable under the cutting and pasting of
its elements along a \emph{countable partition}: given any partition \( (A_n)_{n} \) of \(X\) and
any sequence \( (g_n)_{n}\) such that the family \( (g_n(A_n))_{n}\) also partitions \(X\), the
element \( T \in \Aut(X,\mu) \) obtained as the reunion over \( n \in \N \) of the restrictions
\( g_{n}\restriction_{A_n} \) belongs to \( \mathbb G \). In particular, the group \(\Aut(X,\mu)\)
itself is full.

Given any \(\mathbb G\leq\Aut(X,\mu)\), the group obtained by cutting and pasting elements of
\(\mathbb G\) along countable partitions is the smallest full subgroup containing \(\mathbb G\).  We
denote it by \([\mathbb G]\) and call it the \textbf{full group generated} by \(\mathbb G\).

Recall that the uniform topology on \(\Aut(X,\mu)\) is the topology induced by the uniform metric
\(d_u\) defined by
\[
d_u(T_1,T_2) = \mu(\{x\in X : T_1x\neq T_2x\}).
\]
The following can essentially be traced back to
H.~Dye~\cite[Lem.~5.4]{dyeGroupsMeasurePreserving1959}.

\begin{proposition}
  The metric \(d_u\) is complete on any full group \(\mathbb G\), and it is separable if and only if
  the full group is generated by a countable group.
\end{proposition}
\begin{proof}
  Suppose that \((T_n)_{n}\) is a Cauchy sequence in the full group \(\mathbb G\).  Taking a
  subsequence, we may assume that \(d_u(T_n,T_{n+1})<2^{-n}\) for all \(n\). By the Borel--Cantelli
  lemma, for almost every \(x\in X\) there is some \(N\in\N\) such that \(T_nx=T_Nx\) for all
  \(n\geq N\).  Let \(Tx=T_Nx\) for such \(N = N(x)\), and note that \(T\) is a measure-preserving
  bijection\footnote{ This also follows from the fact due to
    P.~Halmos~\cite{halmosLecturesErgodicTheory2017} that \(\Aut(X,\mu)\) is \(d_u\)-complete.} and
  \(d_u(T_n,T)\leq 2^{-n+1}\).  By construction, \(T\) is obtained by cutting and pasting the
  elements \(T_n\) of \(\mathbb G\) along a countable partition so \(T \in \mathbb G\), since
  \(\mathbb{G}\) is full.

  Suppose \(\mathbb G\) is separable and let \(\Gamma\) be a countable dense subgroup. The group
  \([\Gamma]\) is a countably generated full group which is dense in \(\mathbb G\), so
  \(\mathbb G=[\Gamma]\) by completeness.  The converse is obtained by noting that if \(\Gamma\)
  generates \(\mathbb G\), then one can view \(\mathbb G\) as the full group of the equivalence
  relation generated by a realization of the action of \(\Gamma\) on \((X,\mu)\), which is
  \(d_u\)-separable by~\cite[Prop.~3.2]{MR2583950}.
\end{proof}

The \(\LL^1\) full groups that we are considering are not full in the sense of H.~Dye unless the
norm on the acting Polish group is bounded, a case which was considered earlier in~\cite{MR3464151}.
They nevertheless satisfy the following weaker property.

\begin{definition}
  A group \(\mathbb G\leq \Aut(X,\mu)\) of measure-preserving transformations is \textbf{finitely
    full}\index{Transformation group!finitely full}\index{Finitely full group} if for any partition
  \( X = A_{1} \sqcup \cdots \sqcup A_{n} \) and all elements \(g_{1}, \ldots, g_{n}\in \mathbb G\)
  such that the sets \(g_{1}A_{1}, \ldots, g_{n}A_{n}\) also partition \(X\), the transformation
  \( T \in \Aut(X,\mu) \), obtained as the reunion over \( i \in \{1, \ldots, n\} \) of the
  restrictions \( g_{i}\restriction_{A_i} \), belongs to \( \mathbb G \).
\end{definition}

We have the following useful relationship between fullness and finite fullness.

\begin{proposition}\label{prop:du-closure-of-fin-full-is-full}
  The \(d_u\)-closure of any finitely full group \(\mathbb G\) is equal to the full group
  \([\mathbb G]\) generated by \(\mathbb G\). Moreover, every element \(T\in[\mathbb G]\)
  is a \(d_u\)-limit
  of elements of \(\mathbb G\) whose support is contained in the support of \(T\).
\end{proposition}

\begin{proof}
  Since full groups are \(d_u\)-closed and using the definition of fullness, it suffices to show
  that every element \(T\in[\mathbb G]\) is a limit of elements of \(\mathbb G\) that belong to the
  full group generated by \(T\).

  Since every \(T\in[\mathbb G]\) is a product of three involutions in \([T]\)\footnote{In fact, we
    only need the much easier fact that every element is a limit of products of two involutions from
    its full group, which follows by combining Theorem 3.3 and Sublemma 4.3 from~\cite{MR2583950}.
  }~\cite{ryzhikovRepresentationTransformationsPreserving1985}, it suffices to show that every
  involution in \([\mathbb G]\) is a limit of elements of \(\mathbb G\) whose support is contained
  in the support of that involution.  Let \(U\) be such an involution, let \((A_n)_{n}\) be a
  partition of \(X\), and let \((g_n)_{n}\) in \(\mathbb G\) be such that \(Ux=g_nx\) for all
  \(x\in A_n\).  Pick a fundamental domain \(B\) for \(U\), i.e., \(B \cap U(B) = \varnothing\) and
  \(\supp U = B \cup U(B)\).  If \(B_n=A_n\cap B\), then \(Ux=g_nx\) for all \(x\in B_n\), and,
  since \(U\) is an involution, \(Ux=g_n\inv x\) for all \(x\in U(B_n)\).  Let
  \[ U_nx=\left\{
      \begin{array}{cl}
        Ux & \text{ if }x\in\bigcup_{m\leq n} \left(B_m\cup U(B_m)\right),\\
        x & \text{ otherwise.}
      \end{array}
    \right.
  \]
  Clearly \(U_{n} \in \mathbb{G}\), since \(\mathbb{G}\) is finitely full.
  Furthermore, \(U_n\xrightarrow{d_u} U\)
   and \(\supp U_n\subseteq \supp U\) by construction, which finishes the proof.
\end{proof}

Consider a finitely full group \( \mathbb G \) which is a Borel subset of \(\Aut(X,\mu)\) and
therefore inherits the structure of a standard Borel space.  If \(\mathbb{G}\) is Polishable, i.e.,
if it admits a Polish group topology compatible with the Borel structure, then such topology is
necessarily unique and must refine the weak topology inherited from \(\Aut(X,\mu)\) (standard
automatic continuity results can be found, for instance, in~\cite[Sec.~1.6]{MR1425877}).  We refer
to such Polishable groups \( \mathbb G \) endowed with their unique Polish group topology refining
the weak topology as
\textbf{Polish finitely full groups}\index{Polish group!finitely full}.
In this monograph, our motivating example for introducing this class is of course \(\LL^1\) full groups.

\begin{remark}
  For clarity, we adopt the notation \(T_n\xrightarrow{\mathbb{G}} T\) to mean convergence of the
  sequence \((T_n)\) to \(T\) in the Polish topology of \(\mathbb{G}\).
\end{remark}

For any subgroup \( G\leq \Aut(X,\mu) \), there is the smallest finitely full group containing
\( G \). Note that if \( H \le \Aut(X,\mu) \) is a finite group, then the finitely full group it
generates coincides with the full group it generates. This, in particular, applies to the group
generated by a periodic transformation with bounded periods.

\begin{proposition}\label{prop:unifor-topology-on-periodic-bounded}
  Suppose \( \mathbb G \) is a Polish finitely full group, and \( U \in \mathbb{G} \) is a
  periodic transformation with bounded periods.  The topology induced by \( \mathbb G \) on the full
  group of \( U \) is equal to the uniform topology.
\end{proposition}
\begin{proof}
  The weak and uniform topologies on \(\fgr{U}\) coincide because \(U\) is periodic. We have already
  mentioned that the topology of \(\mathbb G\) refines the weak topology.  Since \(\fgr{U}\) is
  Polish in the uniform topology, by the automatic continuity result~\cite[Thm.~1.2.6]{MR1425877},
  the topology induced by \(\mathbb G\) on the full group of \(U\) is refined by the uniform
  topology. Consequently, the uniform topology and the topology induced from \(\mathbb{G}\) onto
  \(\fgr{U}\) must coincide.
\end{proof}

We conclude this preliminary discussion with a definition of aperiodicity, which applies to
arbitrary subgroups of \(\Aut(X,\mu)\).  Such a notion was already worked out by
H.~Dye~\cite[Sec.~2]{dyeGroupsMeasurePreserving1959} when he introduced type II subgroups.  An
equivalent version, which suffices for our purposes, is as follows.

\begin{definition}\label{def:general-aperiodicity}
  A subgroup \(G\leq \Aut(X,\mu)\) is \textbf{aperiodic}\index{Transformation group!aperiodic}\index{Aperiodic!subgroup of \(\Aut(X,\mu)\)} if it
  contains a countable subgroup whose action on \((X,\mu)\) has no finite orbits.
\end{definition}

Since the weak topology on \(\Aut(X,\mu)\) is separable and metrizable, every group
\(G\leq \Aut(X,\mu)\) contains a countable weakly dense subgroup.  Therefore, every aperiodic \(G\)
contains a countable \emph{weakly dense} subgroup whose action on \((X,\mu)\) has no finite orbits.
Further discussion of aperiodicity can be found in Appendix~\ref{sec:aperiodicity}.

\section{Derived subgroup and symmetric subgroup}\label{sec:derived-equals-symmetric}

Recall that the \emph{algebraic} derived subgroup of a group \(G\) is the subgroup generated by all
commutators. If \(G\) is additionally equipped with a group topology, the \textbf{topological
  derived subgroup}\index{Topological derived subgroup} is defined as the
closure of the algebraic derived subgroup. In this work, we do not consider algebraic derived
subgroups and use the term \textbf{derived subgroup}\index{Derived subgroup} exclusively to refer to
the \emph{topological} derived subgroup.

Our goal in this section is to determine when the derived subgroup of a Polish finitely full group
is topologically generated by involutions---that is, when involutions generate a dense subgroup of
the derived subgroup. We begin by noting that aperiodic finitely full groups admit many involutions
in the sense of~\cite[p.~384]{MR2459668}.

\begin{lemma}\label{lem:many-involutions}
  Let \(\mathbb G\) be a finitely full aperiodic group. For every measurable nontrivial
  \(A\subseteq X\), there is a nontrivial involution \(g\in \mathbb G\) whose support is contained
  in \(A\).
\end{lemma}
\begin{proof}
  By Lemma~\ref{lem:aperiodic-action-has-many-involutions}, there is an involution
  \(T\in[\mathbb G]\) whose support is equal to \(A\).  By the moreover part of
  Proposition~\ref{prop:du-closure-of-fin-full-is-full}, \(T\) is the \(d_u\)-limit of
  \(g_n\in\mathbb G\) supported in \(A\). In particular, one of the \(g_n\)'s is nontrivial and
  \(g=g_n\) satisfies the statement of the lemma.
\end{proof}

The first and second items of the following definition constitute analogues of V.~Nekrashevych's
symmetric and alternating topological full groups~\cite{MR3904185}, respectively.  In the setup of
\(\LL^{1}\) full groups, however, these notions coincide, as we will see shortly.

\begin{definition}\label{def:three-algebraic-groups}
  Given a Polish finitely full group \(\mathbb G\), we let
  \begin{itemize}
  \item \(\mathfrak S(\mathbb G)\) be the closed subgroup of \(\mathbb G\) generated by involutions,
    which we call the \textbf{symmetric subgroup of \(\mathbb G\)}\index{Symmetric subgroup}.
  \item \(\mathfrak A(\mathbb G)\) be the closed subgroup of \(\mathbb G\) generated by
    \(3\)-cycles, i.e., generated by periodic transformations whose non-trivial orbits have size
    \(3\).
  \item \(\derived(\mathbb{G})\) be the closed subgroup generated by commutators, called
    the \textbf{derived subgroup}.
  \end{itemize}
\end{definition}

All these groups are closed normal subgroups of \( \mathbb{G} \), and
\( \mathfrak A(\mathbb G)\leq\mathfrak S(\mathbb G) \cap \derived(\mathbb{G}) \) because every \(3\)-cycle
is a commutator of two involutions from its full group.

\begin{proposition}\label{prop:symmetric-equals-alternating}
  \(\mathfrak A(\mathbb G)=\mathfrak S(\mathbb G)\) for any aperiodic finitely full group
  \( \mathbb{G} \).
\end{proposition}
\begin{proof}
  We need to show that every involution is a limit of products of \( 3 \)-cycles. Let
  \(U\in\mathbb G\) be an involution, and let \(D\) denote a fundamental domain of \(U\);
  thus \(\supp U=D \sqcup U(D)\).
  By Lemma~\ref{lem:aperiodic-action-has-many-involutions}, one can find
  an involution \(V\in \fgr{\mathbb{G}}\) whose support is equal to \(D\). Since \( \mathbb{G} \) is
  finitely full, we may write \( D \) as an increasing union \(D=\bigcup_n D_n\),
  \(D_n\subseteq D_{n+1}\), where each \(D_n\) is \(V\)-invariant, and for every
  \( n \in \mathbb{N} \) the transformation \(V_n\) induced by \(V\) on \(D_n\) belongs to the group
  \(\mathbb G\) itself. Let \( U_{n}= U_{D_n\sqcup U(D_n)}\)
  denote the transformation induced by \( U \) onto \(D_n\sqcup U(D_n)\)
  and note that \(U_{n}\to U\) in the uniform topology, and hence also in the topology of
  \( \mathbb{G} \) by Proposition~\ref{prop:unifor-topology-on-periodic-bounded}. Our plan is to use
  the following permutation identity
  \begin{equation}
    \label{eq:involution-product-of-3-cycles}
    (12)(34)=(12)(23)(24)(23)=(123)(423),
  \end{equation}
  where \(U_{n}\) corresponds to \((12)(34)\), \(V_n\) to \((13)\), and \(U_{n}V_{n}U_{n}\)
  corresponds to \((24)\). To this end, let \(C_{n}\) be a fundamental domain for \(V_{n}\), put
  \(W_{n}=U_{C_{n}\sqcup U(C_{n})}\) (which corresponds to the involution \((12)\)),
  and, at last, set \(S_{n}=W_{n}V_{n}W_{n}\) (corresponding to \((23)=(12)(13)(12)\)).
  Figure~\ref{fig:involution-product-3-cycles} illustrates the relations between these sets and
  transformations.

  \begin{figure}[htb]
          \centering
          \begin{tikzpicture}
          \draw (0,0) rectangle (2.0,2.0);
          \draw (1,0.7) circle (3mm) node {1};
          \draw (1,1.4) node {\( C_{n} \)};
          \draw[<->] (1,2) -- (1,3.5) node[pos=0.5, anchor=east] {\( W_{n} \)};
          \draw (0,3.5) rectangle (2.0,5.5);
          \draw (1,4.2) circle (3mm) node {2};
          \draw (1,4.9) node {\( U_{n}(C_{n}) \)};
          \draw[<->] (2,1) -- (3.5,1) node[pos=0.5, anchor=north] {\( V_{n} \)};
          \draw (3.5,0) rectangle (5.5,2.0);
          \draw (4.5,0.7) circle (3mm) node {3};
          \draw (4.5,1.4) node {\( V_{n}(C_{n}) \)};
          \draw[<->] (2,4.5) -- (3.5,4.5) node[pos=0.5, anchor=south] {\( U_{n}V_{n}U_{n} \)};
          \draw (3.5,3.5) rectangle (5.5,5.5);
          \draw (4.5,4.2) circle (3mm) node {4};
          \draw (4.5,4.9) node {\( U_{n}V_{n}(C_{n}) \)};
          \draw[<->] (2,3.5) -- (3.5,2) node[pos=0.5, anchor=south,xshift=2mm, yshift=-1mm]
          {\( S_{n} \)};
          \draw [decorate,decoration={brace,amplitude=6pt},yshift=5pt]
          (0,5.5) -- (5.5, 5.5) node [pos=0.5, anchor=south, yshift=2mm] {\( U_{n}(D_{n}) \)};
          \draw [decorate,decoration={brace,amplitude=6pt,mirror},yshift=-5pt]
          (0,0) -- (5.5, 0) node [pos=0.5, anchor=north, yshift=-2mm] {\( D_{n} \)};
          \end{tikzpicture}
          \caption{The involution \( U_{n} \) is a products of \( 3 \)-cycles via
                  \( (12)(34) = (123)(234) \).}
          \label{fig:involution-product-3-cycles}
  \end{figure}

  Eq.~\eqref{eq:involution-product-of-3-cycles} translates into
  \( U_{n} = \bigl(W_n S_{n}\bigr) \bigl((U_{n}V_{n}U_{n})S_{n}\bigr) \), so \(U_{n}\) is a product
  of two \(3\)-cycles, hence it belongs to \(\mathfrak A(\mathbb G)\). Since
  \(U_{n}\xrightarrow{\mathbb G} U\), we conclude that \( U \in \mathfrak{A}(\mathbb{G}) \).
\end{proof}

We do not know whether \(\mathfrak A(\mathbb G)=\derived(\mathbb{G})\) holds for all finitely full
groups, but here is a convenient sufficient condition.

\begin{definition}\label{def:induction-friendly}
  A Polish finitely full group \(\mathbb G\) is called
  \textbf{induction friendly}\index{Induction friendly group}\index{Transformation group!induction friendly} if it is stable
  under taking induced transformations and, furthermore, whenever \(T\in\mathbb G\) and
  \((A_n)_{n}\) is an increasing sequence of \(T\)-invariant sets such that \(\bigcup_n A_n= A\),
  then \(T_{A_n}\xrightarrow{\mathbb G} T_A\).
\end{definition}

In the above definition, we require stability under taking the induced transformations, and so
\(T_{A_{n}}\) always belongs to \(\mathbb{G}\).  However, for \(T\)-invariant \(A_{n}\), the
assertion \(T_{A_{n}} \in \mathbb{G}\) is already a consequence of \(\mathbb{G}\) being finitely
full.

Observe that \(\LL^1\) full groups of measure-preserving actions of Polish normed groups are
finitely full and also induction friendly. Indeed, finite fullness follows from a straightforward
computation, while induction friendliness is a direct consequence of
Proposition~\ref{prop:stability-under-first-return-map} and the Lebesgue dominated convergence
theorem.

\begin{lemma}\label{lem:periodic-elements-are-in-S(G)}
  In an induction friendly Polish finitely full group \(\mathbb G\), every periodic element belongs
  to \(\mathfrak S(\mathbb G)\).
\end{lemma}
\begin{proof}
  Suppose \(T\) is periodic. For \(n\in\N\), let \(A_n\) be the set of \(x\in X\) whose \(T\)-orbit
  has cardinality at most \(n\). Each \(A_n\) is \(T\)-invariant and \(\bigcup_n A_n=X\).  Moreover,
  \(T_{A_n}\) is periodic, so it can be written as a product of two involutions from its full group
  (see e.g.,~\cite[Sublem.~4.3]{MR2583950}).  Since \(\mathbb G\) is finitely full and the periods
  of \(T_{A_n}\) are bounded, these two involutions belong to \(\mathbb G\), hence
  \(T_{A_n}\in \mathfrak S(\mathbb G)\).  By induction friendliness,
  \(T_{A_n}\xrightarrow{\mathbb G} T\), which finishes the proof since \(\mathfrak S(\mathbb G)\) is
  closed in \(\mathbb G\).
\end{proof}

\begin{lemma}\label{lem:equality-in-the-factor-by-symmetric}
  Let \(\mathbb{G}\) be an induction friendly Polish finitely full group.  Let \(T \in \mathbb{G}\)
  and \(F \subseteq X\) be the aperiodic part of \(T\), i.e.,
  \[F = \{x \in X : T^{k}x \ne x \textrm{ for all } k \ne 0\}.\]
  For any \(A \subseteq X\) such that \(F \subseteq \bigcup_{k \in \mathbb{Z}}T^{k}(A)\) one has
  \(T_{A}\mathfrak{S}(\mathbb{G}) = T\mathfrak{S}(\mathbb{G})\).
\end{lemma}
\begin{proof}
  Since \(F \subseteq \bigcup_{k \in \mathbb{Z}}T^{k}(A)\), the transformation \(T^{-1}T_{A}\) is
  periodic and therefore belongs to \(\mathfrak{S}(\mathbb{G})\) by
  Lemma~\ref{lem:periodic-elements-are-in-S(G)}. Hence
  \[T\mathfrak{S}(\mathbb{G}) = T T^{-1} T_{A} \mathfrak{S}(\mathbb{G}) =
    T_{A}\mathfrak{S}(\mathbb{G}).\qedhere\]
\end{proof}

\begin{remark}
  The usefulness of the above lemma stems from the following simple observation.  If \(T,T', U, U'\)
  satisfy \(T\mathfrak{S}(\mathbb{G}) = T'\mathfrak{S}(\mathbb{G})\) and
  \(U\mathfrak{S}(\mathbb{G}) = U' \mathfrak{S}(\mathbb{G})\), then
  \([T,U] \in \mathfrak{S}(\mathbb{G})\) if and only if \([T', U'] \in \mathfrak{S}(\mathbb{G})\).
  In particular, for \(A\) as in Lemma~\ref{lem:equality-in-the-factor-by-symmetric},
  \([T,U] \in \mathfrak{S}(\mathbb{G})\) whenever \([T_{A},U] \in \mathfrak{S}(\mathbb{G})\).  This
  fact is used in the proof of the next lemma.\label{rem:commutator-in-symmetric-group}
\end{remark}

\begin{lemma}\label{lem:aperiodic-commutator-symmetric-group}
  Suppose \(\mathbb{G}\) is an induction friendly Polish finitely full group.  If
  \(T, U \in \mathbb{G}\) are aperiodic on their supports, then
  \([T,U] \in \mathfrak{S}(\mathbb{G})\).
\end{lemma}

\begin{proof}
  Let \( C \) be a cross-section for the restriction of \(\eqr_{T}\) onto \(\supp T\).  In other
  words, \(C \subseteq X\) is a measurable set satisfying
  \(\bigcup_{i \in \mathbb{Z}}T^{i}(C) = \supp T\).  The induced transformation
  \( U_{X \setminus C} \) commutes with \( T_{C} \), since their supports are disjoint. We would be
  done if \( \supp U \subseteq \bigcup_{i \in \mathbb{Z}}U^{i}(X \setminus C) \). Indeed, in this
  case \( T \mathfrak{S}(\mathbb{G}) =T_{C} \mathfrak{S}(\mathbb{G}) \),
  \( U \mathfrak{S}(\mathbb{G}) = U_{X \setminus C} \mathfrak{S}(\mathbb{G}) \) by
  Lemma~\ref{lem:equality-in-the-factor-by-symmetric} and \( [T_{C},U_{X \setminus C}] \) is
  trivial, hence \( [T,U] \in \mathfrak{S}(\mathbb{G}) \).

  Motivated by this observation, we argue as follows. Pick a vanishing nested sequence
  \( (C_{n})_{n \in \mathbb{N}} \) of cross-sections for \(\eqr_{T}\restriction_{\supp T}\), i.e.,
  \( C_{n} \supseteq C_{n+1} \), \( \bigcup_{k\in \mathbb{Z}}T^{k}(C_{n}) = \supp T \) for all
  \( n \in \mathbb{N} \), and \( \bigcap_{n \in \mathbb{N}} C_{n} = \varnothing \) (see also
  Lemma~\ref{lem:marker-lemma}). Such a sequence of cross-sections exists since \( T \) is assumed
  to be aperiodic on its support. Define inductively sets \( B'_{n} \), \( n \in \mathbb{N} \), by
  setting \( B'_{0} = X \setminus C_{0} \), and letting \(B'_{n}\) be the part of
  \(X \setminus C_{n}\) that does not belong to the \(U\)-saturation of any \(B'_{k}\), \(k < n\),
  \[ B'_{n} = (X \setminus C_{n}) \setminus \bigcup_{k < n} \bigcup_{i \in \mathbb{Z}}
    U^{i}(B'_{k}). \]
  By construction, saturations under \( U \) of the sets \( B'_{n} \) are pairwise disjoint, and the
  saturation of their union is the whole space,
  \( \bigcup_{i \in \mathbb{Z}}U^{i}\bigl(\bigcup_{n \in \mathbb{N}} B'_{n}\bigr) = X \), because
  sets \( C_{n} \) vanish.

  Let \( B_{n} = \bigsqcup_{k < n} B'_{k} \), \( B = \bigsqcup_{k \in \mathbb{N}} B'_{k} \), and
  note that \( U_{B_{n}}, U_{B} \in \mathbb{G} \), and \( U_{B_{k}} \xrightarrow{\mathbb G} U_{B} \)
  by the induction friendliness of \( \mathbb{G} \). By construction, the transformations
  \( T_{C_{n}} \) and \( U_{B_{n}} \) have disjoint supports for each \(n\) and, therefore,
  commute. Since all sets \( C_{n} \) are cross-sections for \(\eqr_{T}\restriction_{\supp T}\), one
  has \( [T,U_{B_{n}}] \in \mathfrak{S}(\mathbb{G}) \) by
  Lemma~\ref{lem:equality-in-the-factor-by-symmetric} and
  Remark~\ref{rem:commutator-in-symmetric-group}. Taking the limit as \( n \to \infty \), this
  yields \( [T, U_{B}] \in \mathfrak{S}(\mathbb{G}) \).  Finally, the \(U\)-saturation of \( B \) is
  all of \(X\), and we use Lemma~\ref{lem:equality-in-the-factor-by-symmetric} and
  Remark~\ref{rem:commutator-in-symmetric-group} once again to conclude that
  \( [T, U] \in \mathfrak{S}(\mathbb{G}) \), as claimed.
\end{proof}

\begin{proposition}\label{prop:induction-friendly-implies-groups-coincide}
  If \(\mathbb G\) is an aperiodic induction friendly Polish finitely full group, then
  \(\mathfrak{S}(\mathbb G)=\derived(\mathbb{G})\).
\end{proposition}
\begin{proof}
  The inclusion \( \mathfrak{A}(\mathbb{G}) \le \derived(\mathbb{G}) \) holds for any Polish
  finitely full group, and Proposition~\ref{prop:symmetric-equals-alternating} gives
  \( \mathfrak{S}(\mathbb{G}) \le \derived(\mathbb{G}) \).  We therefore concentrate on proving the
  reverse inclusion: given \(T, U \in \mathbb{G}\), we need to check that
  \([T,U] \in \mathfrak{S}(\mathbb{G})\).  Let \(F_{T}\) and \(F_{U}\) be the aperiodic parts of
  \(T\) and \(U\) respectively, so that
  \(T\mathfrak{S}(\mathbb{G}) = T_{F_{T}}\mathfrak{S}(\mathbb{G})\),
  \(U\mathfrak{S}(\mathbb{G}) = U_{F_{U}}\mathfrak{S}(\mathbb{G})\) by
  Lemma~\ref{lem:equality-in-the-factor-by-symmetric}.  By construction, \(T_{F_{T}}\) and
  \(U_{F_{U}}\) are aperiodic on their supports and therefore
  \([T_{F_{T}}, U_{F_{U}}] \in \mathfrak{S}(\mathbb{G})\) by
  Lemma~\ref{lem:aperiodic-commutator-symmetric-group}.  It remains to use
  Remark~\ref{rem:commutator-in-symmetric-group} to conclude that necessarily
  \([T,U] \in \mathfrak{S}(\mathbb{G})\), as needed.
\end{proof}

\begin{corollary}\label{cor:all-subgroups-are-equal}
  Let \(G\) be a Polish normed group, and let \(G \acts X\) be an aperiodic Borel measure-preserving
  action on a standard probability space \( (X, \mu) \). The three subgroups of
  \( \lfgr{G \acts X} \) introduced in Definition~\ref{def:three-algebraic-groups} coincide:
  \[ \derived(\lfgr{G\acts X})=\mathfrak A(\lfgr{G\acts X})=\mathfrak S(\lfgr{G\acts X}). \]
  Moreover, they are all equal to the closure of the group generated by periodic elements of
  \(\lfgr{G\acts X}\).
\end{corollary}
\begin{proof}
  The equality
  \( \derived(\lfgr{G\acts X})=\mathfrak A(\lfgr{G\acts X})=\mathfrak S(\lfgr{G\acts X}) \) follows
  immediately from Propositions~\ref{prop:symmetric-equals-alternating}
  and~\ref{prop:induction-friendly-implies-groups-coincide}, since \(\lfgr{G\acts X}\) is both
  finitely full and induction friendly. All these groups are equal to the closure of the group
  generated by periodic elements of \(\lfgr{G\acts X}\) in view of
  Lemma~\ref{lem:periodic-elements-are-in-S(G)} and the fact that this group obviously contains
  \(\mathfrak S(\lfgr{G\acts X})\).
\end{proof}

\section{Topological simplicity of the symmetric group}\label{sec:topol-simpl-symm}

We now move on to showing that symmetric subgroups of ergodic Polish finitely full groups are always
topologically simple.  More generally, we describe the closed normal subgroups of symmetric
subgroups of aperiodic Polish finitely full groups.  Our argument abstracts
from~\cite[Sec.~3.4]{MR3810253}.  In particular, we rely on conditional measures associated with
subgroups of \(\Aut(X,\mu)\), whose construction and basic properties are recalled in
Appendix~\ref{chap:conditional-measures-appendix}.  We begin with two lemmas on involutions.

\begin{lemma}\label{lem:approxconj}
  Let \(\mathbb G\) be an aperiodic Polish finitely full group, let \(U, V\in\mathbb G\) be two
  involutions whose supports are disjoint and have the same \(\mathbb G\)-conditional measure.  Then
  \(U\) and \(V\) are approximately conjugate in \(\mathfrak S(\mathbb G)\), i.e., there are
  \(T_{n} \in \mathfrak{S}(\mathbb{G})\) such that \(T_{n}UT_{n}^{-1} \xrightarrow{\mathbb G} V\).
\end{lemma}

\begin{proof}
  Let \(A\) (resp.\ \(B\)) be a fundamental domain of the restriction of \(U\) (resp.\ \(V\)) to its
  support. Then \(\mu_{\mathbb G}(A)=\mu_{\mathbb G}(B)\), and there is an involution
  \(T\in [\mathbb G]\) such that \(T(A)=B\).

  Since \(\mathbb G\) is finitely full, there is an increasing sequence \((A_n)_{n}\) of subsets of
  \(A\) such that the involutions \(T'_n\) induced by \(T\) on \(A_n\cup U(A_n)\) belong to
  \(\mathbb G\), and \(\bigcup_n A_n=A\). Let \(B_n=T(A_n)=T'_n(A_n)\) and define involutions
  \(T_n\in\mathbb G\) which almost conjugate \(U\) to \(V\) as follows. For \(x\in X\), let
  \[
    T_nx= \left\{
      \begin{array}{cl}
        Tx & \text{if }x\in A_n\sqcup B_n \\
        VTUx & \text{if } x\in U(A_n) \\
        UTVx&\text{if }x\in V(B_n) \\x & \text{otherwise.}\end{array}\right.
  \]
  For all \(n\in\N\) and all \(x\in X\), an easy calculation yields that:
  \begin{itemize}
  \item if \(x\in (A\cup U(A))\setminus (A_n\cup U(A_n))\), then \(T_nUT_nx=Ux\);
  \item if \(x\in B_n\cup V(B_n)\), then \(T_nUT_nx=Vx\);
  \item and \(T_nUT_nx=x\) in all other cases.
  \end{itemize}
  In particular, \(d_u(T_nUT_n,V)\to 0\) and
  Proposition~\ref{prop:unifor-topology-on-periodic-bounded}, applied to the full group of the
  involution \(UV\) (which contains both \(U\) and \(V\)), guarantees that
  \(T_nUT_n\xrightarrow{\mathbb G} V\).
\end{proof}

\begin{lemma}\label{lem:commutator-trick}
  Let \(\mathbb G\) be an aperiodic Polish finitely full group, let \(U\in \mathbb G\) be an
  involution, and let \(A\) be a \(U\)-invariant subset contained in \(\supp U\).  Suppose that
  there exists an involution \(V\in\mathbb G\) such that \(V(A)\) is disjoint from \(\supp U\).
  Then for all \(\mathbb G\)-invariant functions \(f\leq 2 \mu_{\mathbb G}(A)\), there is an
  involution \(W\in \mathbb G\) such that \(UWUW\) is an involution whose support has
  \(\mathbb G\)-conditional measure \(f\).
\end{lemma}
\begin{proof}
  Let \(B\subseteq A\) be a fundamental domain for the restriction of \(U\) to \(A\) and note that
  \(\mu_{\mathbb G}(B)=\mu_\mathbb G(A)/2\).  By Maharam's lemma (Theorem~\ref{thm:maharam-lemma}),
  there is \(C\subseteq B\) such that \(\mu_{\mathbb G}(C)= f/4\).  The set \(D= C\sqcup U(C)\) is
  \(U\)-invariant and satisfies \(\mu_{\mathbb G}(D)= f/2\).  Consider the involution
  \(W\in\mathbb G\) defined by
  \[
    Wx= \left\{
      \begin{array}{cl}
        Vx & \text{if }x\in D\sqcup V(D) \\
        x & \text{otherwise.}\end{array}\right.
  \]
  A straightforward computation shows that \(UWUW\) is an involution that coincides with \(U\) on
  \(D\), with \(VUV\) on \(V(D)\), and is trivial elsewhere. Hence, the support of \(UWUW\) is equal
  to \(D\sqcup V(D)\) and has \(\mathbb G\)-conditional measure \(f\).
\end{proof}

Given a subgroup \(G \le \Aut(X, \mu)\) and a \(G\)-invariant set \(A\), we let \(G_{A}\) stand for
the subgroup \(\{T \in G : \supp T \subseteq A\}\).  Note that \(G_{A}\) is a normal subgroup of
\(G\).  Our focus is on the case \(G = \mathfrak{S}(\mathbb{G})\), where \(\mathbb{G}\) is an
aperiodic Polish finitely full group.  Every subgroup \(\mathfrak{S}(\mathbb{G})_{A}\) is
necessarily closed, because the topology of \(\mathbb G\) refines the weak topology.  We show in
Theorem~\ref{thm:normal-subgroups-description} that all closed normal subgroups of
\(\mathfrak S(\mathbb G)\) arise in this way.

\begin{proposition}\label{prop:S-conjugates-of-T-contain-S}
  Let \(\mathbb G\) be an aperiodic Polish finitely full group, let \(T\in\mathbb G\), and let \(A\)
  denote the \(\mathbb G\)-saturation of \(\supp T\).  Then the closed subgroup of \(\mathbb G\)
  generated by the \(\mathfrak S(\mathbb G)\)-conjugates of \(T\) contains
  \(\mathfrak S(\mathbb G)_A\).
\end{proposition}

\begin{proof}
  Let the closed subgroup of \(\mathbb G\) generated by the \(\mathfrak S(\mathbb G)\)-conjugates of
  \(T\) be denoted by \(G\). By~\cite[Lem.~7.2]{friedmanIntroductionErgodicTheory1970}, we can find
  a set \(B\subseteq \supp T\) whose \(T\)-translates cover \(\supp T\) and which satisfies
  \(B\cap T(B)=\varnothing\).  Since \(T\)-translates of \(B\) cover \(\supp T\), we conclude that
  the \(\mathbb G\)-translates of \(B\) cover \(A\), and so \(\mu_{\mathbb G}(B)(x)>0\) for all
  \(x\in A\).  By Maharam's lemma (Theorem~\ref{thm:maharam-lemma}), we can find \(C\subseteq B\)
  whose \(\mathbb G\)-conditional measure is everywhere at most \(1/4\), and is strictly positive on
  \(A\).  Take \(V\in \fgr{\mathbb G}\) to be an involution such that \(V(C\sqcup T(C))\) is
  disjoint from \(C\sqcup T(C)\).  Such an involution exists because
  \(\mu_{\mathbb G}(C\sqcup T(C))\leq 1/2\), and so, by Maharam's lemma, we can find a subset
  contained in \(X\setminus \left(C\sqcup T(C)\right)\) having the same conditional measure as
  \(C\sqcup T(C)\).  We can then apply the last item from
  Proposition~\ref{prop:construct-involutions}.

  Let \(W\in [\mathbb G]\) be an involution such that \(\supp W= C\), whose existence is guaranteed
  by Lemma~\ref{lem:aperiodic-action-has-many-involutions}.  Using the facts that \(\mathbb G\) is
  finitely full, that \(T\in\mathbb G\) and that \(V,W\in[\mathbb G]\), one can find an increasing
  sequence \((C_{n})_{n}\) of \(W\)-invariant subsets of \(C\) such that \(\bigcup_n C_n=C\) and for
  each \(n\in\N\) both \(W_{C_n}\in \mathbb G\) and
  \(V_{C_n\sqcup T(C_n)\sqcup V(C_n\sqcup T(C_n))}\in\mathbb G\).  The transformations
  \(W_{C_n}TW_{C_n}T\inv\) belong to \(G\), and are, in fact, involutions whose support is equal to
  \(C_n\sqcup T(C_n)\) and has conditional measure at most \(2\mu_{\mathbb G}(C)\leq 1/2\). Let us
  define for brevity
  \[
    \tilde U_n=W_{C_n}TW_{C_n}T\inv \in G \quad \text{ and } \quad \tilde V_n=V_{C_n\sqcup T(C_n)\sqcup
      V(C_n\sqcup T(C_n))}\in \mathbb G.
  \]

  For every \(n\in\N\), let \(A_n\) denote the \(\mathbb G\)-saturation of \(C_n\).  Note that
  \(A=\bigcup_n A_n\) and the union is increasing.  Every involution supported on \(A\) is thus the
  uniform limit of the involutions it induces on \(A_n\)'s. By
  Proposition~\ref{prop:unifor-topology-on-periodic-bounded}, it therefore suffices to show that
  \(G\) contains all the involutions which are supported on some \(A_n\).

  Let \(U\) be an involution supported on some \(A_n\). Let \(D\) be a fundamental domain for
  the restriction of \(U\) to its support. Using Maharam's lemma repeatedly, we can partition \(D\)
  into a countable family \((D_k)_{k}\) such that
  \begin{equation}\label{ineq:measure-Dn}
    \mu_{\mathbb G}(D_k)\leq \mu_{\mathbb G}(\supp \tilde U_n)/2 \quad \textrm{ for all \(k \in \mathbb{N}\)}.
  \end{equation}
  If we let \(E_k=D_k\sqcup U(D_k)\), the sequence \((E_k)_{k}\) forms a partition of \(\supp U\)
  into \(U\)-invariant sets. In particular, \(U=\lim_{k} \prod_{i=0}^k U_{E_k}\) in the uniform
  topology and therefore in the topology of \(\mathbb G\) as well by
  Proposition~\ref{prop:unifor-topology-on-periodic-bounded}. Moreover, the support of \(U_{E_k}\)
  has \(\mathbb G\)-conditional measure at most \(\mu_{\mathbb G}(\supp \tilde U_n)\) by
  Eq.~\eqref{ineq:measure-Dn}.  The set \(\tilde V_n(\supp \tilde U_n)\) is disjoint from
  \(\supp\tilde U_n\) by construction.  Lemma~\ref{lem:commutator-trick} applies and provides an
  involution in \(G\) whose support has the same conditional measure as that of \(U_{E_k}\).
  Lemma~\ref{lem:approxconj} shows that each \(U_{E_k}\) belongs to \(G\) and therefore also
  \(U\in G\), as needed.
\end{proof}

\begin{theorem}\label{thm:normal-subgroups-description}
  Let \( \mathbb{G} \le \Aut(X, \mu) \) be an aperiodic Polish finitely full group. For any closed
  normal subgroup \( N \le \mathfrak{S}(\mathbb{G}) \), there is a unique \( \mathbb{G} \)-invariant
  set \( A \) such that \( N = \mathfrak{S}(\mathbb{G})_A \).
\end{theorem}

\begin{proof}
  First, observe that for \(\mathbb{G}\)-invariant \(A_1\) and \(A_2\), any involution
  \(U\in\mathbb G\) supported in \(A_1\cup A_2\) decomposes into the product of one involution
  supported in \(A_1\), and one supported in \(A_2\).  It follows that the closed group generated by
  \(\mathfrak S(\mathbb G)_{A_1}\cup\mathfrak S(\mathbb G)_{A_2}\) is equal to
  \(\mathfrak S(\mathbb G)_{A_1\cup A_2}\).  Also, by
  Proposition~\ref{prop:unifor-topology-on-periodic-bounded}, whenever \((A_n)_{n}\) is an
  increasing sequence of \(\mathbb G\)-invariant sets, one has
  \[\overline{\bigcup_n \mathfrak S(\mathbb G)_{A_n}}=\mathfrak S(\mathbb G)_{\bigcup_n A_n}.\]
  The set
  \(\{A\in\MAlg(X,\mu) : A\text{ is }\mathbb G\text{-invariant and }\mathfrak S(\mathbb G)_A\leq
  N\}\) is thus directed and is closed under the countable unions. It therefore admits a unique
  maximum element, which is the set \(A\) we seek.  Indeed, \(\mathfrak S(\mathbb G)_{A}\leq N\),
  and the reverse inclusion is a direct consequence of
  Proposition~\ref{prop:S-conjugates-of-T-contain-S}.

  It remains to argue that the set \(A\) satisfying \(N = \mathfrak{S}(\mathbb{G})_{A}\) is
  unique. Suppose towards a contradiction that
  \(\mathfrak S(\mathbb G)_{A_1}=\mathfrak S(\mathbb G)_{A_2}\)for \(A_1\neq A_2\). By symmetry, we
  may assume that \(\mu(A_1\setminus A_2)>0\).  Lemma~\ref{lem:many-involutions} provides an
  involution \(V\in\mathbb G\) whose support is nontrivial and is contained in \(A_1\setminus A_2\),
  thus \(V\in \mathfrak S(\mathbb G)_{A_1}\) but \(V\not \in\mathfrak S(\mathbb G)_{A_2}\),
  contradicting \(\mathfrak S(\mathbb G)_{A_1}=\mathfrak S(\mathbb G)_{A_2}\).
\end{proof}

\begin{corollary}\label{cor:S-topologically-simple-action-ergodic}
  Let \(\mathbb G\leq \Aut(X,\mu)\) be an aperiodic Polish finitely full group. The group
  \(\mathfrak S(\mathbb G)\) is topologically simple if and only if \(\mathbb G\) is ergodic.
\end{corollary}

\begin{proof}
  If \(\mathbb G\) is ergodic, then \(\mathfrak S(\mathbb G)\) is topologically simple by
  Theorem~\ref{thm:normal-subgroups-description}.  Conversely, suppose that \(\mathbb G\) is not
  ergodic and let \(A\subseteq X\) be a \(\mathbb G\)-invariant set with
  \(\mu(A)\not\in\{0,1\}\). Then \(\mathfrak S(\mathbb G)_A\) is a normal subgroup of \(\mathbb G\)
  which is neither trivial nor equal to \(\mathfrak S(\mathbb G)\) as a consequence of
  Lemma~\ref{lem:many-involutions} applied to \(A\) and its complement.
\end{proof}

Specifying the corollary above to \(\LL^1\) full groups and using
Corollary~\ref{cor:all-subgroups-are-equal}, we obtain the following result.

\begin{corollary}\label{cor:l1-topological-simplicity}
  Let \(G\) be a Polish normed group, and let \(G \acts X\) be an aperiodic Borel measure-preserving
  action on a standard probability space \( (X, \mu) \). The topological derived subgroup of the
  \(\LL^1\) full group of the action is topologically simple if and only if the action is ergodic.
\end{corollary}

\section{Maximal norms on the derived subgroup}\label{sec:maxim-metr-deriv}

The purpose of this section is to establish sufficient conditions for a norm on the derived subgroup
of an induction friendly Polish finitely full group to be maximal in the sense of
Section~\ref{sec:l1-full-groups-bound}.  Our argument follows closely the one given
in~\cite[Sec.~6.2]{MR4398251} for amenable graphings.  The main application of
Proposition~\ref{prop:induction-friendly-maximal-norm} will be given in
Theorem~\ref{thm:lc-amenable-derived-maximal}, but we hope that the setup of this section can be
useful in other contexts, such as \(\varphi\)-integrable full groups~\cite{2201.06662}.

\begin{definition}\label{def:additive-norm}
  A norm \(\norm{\cdot}\) on a subgroup \(\mathbb{G} \le \Aut(X,\mu)\) is
  \textbf{additive}\index{Norm!additive} if \(\norm{TS} = \norm{T} + \norm{S}\) for all
  \(T,S \in \mathbb{G}\) with disjoint supports.
\end{definition}

The following lemma parallels~\cite[Lem.~6.4]{MR4398251} and is the key to showing that the norm on
the derived subgroup is both coarsely proper and large-scale geodesic.

\begin{lemma}\label{lem:product-of-small}
  Let \(\mathbb{G} \le \Aut(X,\mu)\) be a finitely full Polish group, and suppose that
  \(\norm{\cdot}\) is a compatible additive norm on \(\mathbb{G}\).  For any periodic
  \(U \in \mathbb{G}\) with bounded periods and for every \( n \in \N \), there are periodic
  elements \( U_1, \ldots ,U_n\in \mathbb{G} \) such that \[U = U_1\cdots U_n\text{ and }
  \norm{U_i}=\frac{\norm{U}}{n}\text{ for every } 1 \le i \le n.\]
\end{lemma}

\begin{proof}
  Let \( M = \norm{U} \) and \(A \subseteq X\) be a fundamental domain for \(U\).  We may identify
  \(A\) with the interval \([0, \mu(A)]\) endowed with the Lebesgue measure. Put
  \(A_{t} = [0,t] \cap A\), \(0 \le t \le \mu(A)\), and let
  \(B_{t} = \bigcup_{n \in \mathbb{Z}} U^{n}(A_{t})\) be the \(U\)-saturation of \(A_{t}\).  Note
  that \(U_{B_{t}} \in \mathbb{G}\) for all \(t \in [0, \mu(A)]\) since \(B_{t}\) is \(U\)-invariant
  and \(\mathbb{G}\) is finitely full, and that \(t\mapsto B_t\) is continuous.

  The map \( [0, \mu(A)] \ni t \mapsto U_{B_{t}} \in \fgr{U} \subseteq \mathbb{G}\) is thus continuous
  with respect to the uniform topology on \(\fgr{U}\), and therefore also with respect to the
  topology of \(\mathbb{G}\) by Proposition~\ref{prop:unifor-topology-on-periodic-bounded}.  Whence
  the function \( \psi : [0, \mu(A)] \to \mathbb{R} \) given by \(\psi(t) = \norm{U_{B_{t}}}\) is
  also continuous.

  We have \(\psi(0)=0\) and \(\psi(\mu(A)) = M\), so the intermediate value theorem yields existence
  of reals \(0 = t_0 < t_1< \cdots < t_{n-1} < t_n = \mu(A)\) such that \(\psi(t_i)=\frac{iM}n\) for
  all \(i \in \{0, \ldots ,n\} \).  Set \(C_i=B_{t_i}\setminus B_{t_{i-1}}\) for \(i\in\{1, \ldots ,n\}\).  By
  construction, each \(C_i\) is \(U\)-invariant and \(X = \bigsqcup_{i=1}^n C_i\).  Putting
  \(U_i=U_{A_i}\), we get \(U=\prod_{i=1}^n U_i\).  Finally for each \(i\in\{1, \ldots ,n\}\) the
  equality \(C_i=B_{t_i}\setminus B_{t_{i-1}}\) and additivity of the norm gives
  \[\psi(t_{i}) = \snorm{U_{B_{t_{i}}}} = \snorm{U_{i}U_{B_{t_{i-1}}}} = \snorm{U_{i}} +
    \snorm{U_{B_{t_{i-1}}}} = \snorm{U_{i}} + \psi(t_{i-1}),\]
  hence \(\norm{U_{i}} = \frac{\norm{U}}{n}\) for all \(i \le n\), as needed.
\end{proof}

\begin{proposition}\label{prop:induction-friendly-maximal-norm}
  Let \(\mathbb{G} \le \Aut(X,\mu)\) be an induction friendly Polish finitely full group and let
  \(\norm{\cdot}\) be a compatible additive norm on it. If the set of periodic elements is dense in
  \(\derived(\mathbb{G})\), then \(\norm{\cdot}\) is a maximal norm on \(\derived(\mathbb{G})\).
\end{proposition}

\begin{proof}
  In view of Proposition~\ref{prop:chara-maximal-norm}, it suffices to show that \(\norm{\cdot}\) is
  both large-scale geodesic\index{Norm!large-scale geodesic} (see
  Definition~\ref{def:large-scale-geodesic}) and coarsely proper\index{Norm!coarsely proper} (see
  Definition~\ref{def:coarsely-proper}).  Note that induction friendliness yields density in
  \(\derived(\mathbb{G})\) of periodic automorphisms with bounded periods.

  To see that \(\norm\cdot\) is large-scale geodesic (with constant \(K=2\)), let us take a
  non-trivial \( T\in \derived(\mathbb{G}) \) and pick a periodic \(U \in \derived(\mathbb{G}) \) with bounded
  periods such that \(\norm{TU^{-1}} < \min\{2,\norm{T}/2\}\).  Note that
  \begin{equation}
    \label{eq:U-norm-esitmate}
    \snorm{U} = \snorm{U^{-1}} = \snorm{T^{-1}TU^{-1}} \le \snorm{T^{-1}} +
    \snorm{TU^{-1}} \le 3\snorm{T}/2
  \end{equation}

  Fix \(n\in\N\) large enough to ensure \(\frac{3\norm T}{2n}\leq 2\).  By
  Lemma~\ref{lem:product-of-small}, we may decompose \(U\) into a product of \(n\) elements
  \(U_1,\dots,U_n\) each of norm at most \(\frac{3\norm{T}}{2n}\leq 2\).  Therefore
  \[T=(TU^{-1}) \cdot U_1\cdots U_n,\]
  where \(TU^{-1}\) and each of \(U_i\), \(1 \le i \le n\), has norm at most \(2\) and, in view of
  Eq.~\eqref{eq:U-norm-esitmate},
  \[
    \snorm{TU^{-1}} + \sum_{i=1}^n\norm{U_i}\leq \frac{\norm T}{2} + \snorm{U} \le 2\snorm{T},
  \]
  thus concluding the proof that \(\norm\cdot\) is large-scale geodesic.

  We now show that \(\norm\cdot\) is coarsely proper. Fix \(\epsilon>0\) and \(R>0\).  Let
  \(n\in\N\) be so large that \(n\epsilon \ge R+\epsilon\).  Then every element
  \(T \in \derived(\mathbb{G})\) of norm at most \(R\) is a product of \(n+1\) elements of norm at
  most \(\epsilon\), namely one element \(TU^{-1}\) of norm at most \(\epsilon \), where \(U\) is
  periodic with bounded periods as provided by density, and \(U = U_1\cdots U_n\), where each
  \(U_i\) has norm at most \(\frac{R+\epsilon} n\leq \epsilon\) as per
  Lemma~\ref{lem:product-of-small}.  Thus \(\norm{\cdot}\) is both coarsely proper and large-scale
  geodesic, and hence is maximal by Proposition~\ref{prop:chara-maximal-norm}.
\end{proof}

\begin{remark}
  We do not have an example of an induction friendly Polish finitely full group \(\mathbb G\) for
  which the periodic elements are not dense in \(\derived(\mathbb{G})\). A potential candidate might
  be the \(\LL^1\) full group of a free action of the free group on \(2\) generators, endowed with
  the norm given by the word length with respect to the canonical generating set.
\end{remark}



\chapter[Locally compact group actions]{Full groups of locally compact group actions}
\label{chap:l1-locally-compact}

In this chapter, we narrow down the generality of the narrative and focus on actions of
\emph{locally compact} Polish groups, or equivalently, of locally compact second-countable
groups. Such restrictions enlarge our toolbox in a number of ways. For instance, all locally compact
Polish group actions admit cross-sections to which the so-called Voronoi tessellations can be
associated. We use this to show in Section~\ref{sec:dense-subgr-full-groups} a natural density
result for subsets of \(\LL^1\) full groups defined from dense subsets of the acting group
(Theorem~\ref{thm:approximation-by-cocycle-in-dense-set} and
Corollary~\ref{cor:l1-approximation-by-cocycle-in-dense-set}).  For the reader's convenience,
Appendix~\ref{sec:tessellations} contains a concise reminder of the needed facts about
tessellations.

Another key property of free\footnote{Motivated by our focus on \(\mathbb{R}\)-flows, this monograph
  primarily concentrates on \emph{free} actions.  We note, however, that each orbit of a Borel
  action of a locally compact Polish group is a homogeneous space, since point stabilizers are
  necessarily closed.  In particular, orbits can be endowed with the Haar measure, even without the
  freeness assumption.} actions of locally compact groups is the existence of a Haar measure on each
individual orbit. As we discuss in Section~\ref{sec:orbit-transf}, elements of the full group act by
non-singular transformations and, in particular, admit the Hopf decomposition (see
Appendix~\ref{sec:hopf-decomposition-appendix}).  Section~\ref{sec:hopf-decomp-elem} explains how
these orbitwise decompositions can be understood globally, yielding a natural generalization of the
periodic/aperiodic partition for elements of the full group of a measure-preserving action of a
\emph{discrete} group.  The periodic part in the latter case corresponds to the conservative piece
of the Hopf decomposition, which generally exhibits a much more complicated dynamical behavior.  We
return to this in Chapters~\ref{chap:example-transf} and~\ref{chap:intermitted-transformations}.

In the final Section~\ref{sec:texorpdfstr-full-gro}, we connect \(\LL^1\) full groups to the notion
of \(\LL^1\) orbit equivalence for actions of locally compact \emph{compactly generated} Polish
groups.

\section{Dense subgroups in \texorpdfstring{\(\LL^1\)}{L1} full groups}
\label{sec:dense-subgr-full-groups}

Our goal in this section is to prove that any element of the full group \(\fgr{G \acts X}\) can be
approximated arbitrarily well by an automorphism that piecewise acts by elements of a given dense
subset of \(G\).

\begin{definition}
  \label{def:H-decomposable}
  A measure-preserving transformation \( T : A \to B \) between two measurable sets
  \( A, B \subseteq X \) is said to be
  \textbf{\( H \)-decomposable}\index{Transformation!decomposable@$H$-decomposable}, where
  \(H \subseteq \Aut(X,\mu)\), if there exist a measurable partition
  \( A = \bigsqcup_{k \in \mathbb{N}} A_{k} \) and elements \( h_{k} \in H \) such that
  \( T \restriction_{A_{k}} = h_{k}\restriction_{A_{k}} \) for all \( k \in \mathbb{N} \).
\end{definition}

The property of being \(H\)-decomposable is similar to being an element of the full group generated
by \(H\), except that we do not require the transformation to be defined on all of \(X\).

\begin{theorem}
  \label{thm:approximation-by-cocycle-in-dense-set}
  Let \( G \acts X \) be a measure-preserving action of a locally compact Polish group.  Let
  \( \norm{ \cdot } \) be a compatible norm on \( G \) with the associated metric on the orbits
  \( D : \eqr_{G} \to \mathbb{R}^{\ge 0} \), and let \( H \subseteq G \) be a dense set. For any
  \( T \in \fgr{G \acts X} \) and any \( \epsilon > 0 \), there exists an \( H \)-decomposable
  transformation \( S \in \fgr{G \acts X} \) such that \( \esssup_{x \in X} D(Tx, Sx) < \epsilon \).
\end{theorem}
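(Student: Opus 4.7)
My plan is to approximate the Borel cocycle of $T$ by an $H$-valued step function, then perform a Borel adjustment to restore the bijection property of the resulting transformation.

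First, by Jankov--von Neumann uniformization applied to the Borel set $\{(x,u)\in X\times G : u\cdot x = Tx\}$, select a Borel map $g : X \to G$ satisfying $g(x)\cdot x = Tx$ almost everywhere. Next, using that $G$ is separable and $H$ is dense, fix a countable subset $\{h_k\}_{k\in\mathbb{N}}\subseteq H$ whose right-balls $B_R(h_k,\epsilon/2) = \{u\in G : \snorm{uh_k\inv} < \epsilon/2\}$ cover $G$. Build a Borel partition $G = \bigsqcup_k V_k$ by $V_k = B_R(h_k,\epsilon/2)\setminus\bigcup_{j<k}V_j$, and let $A_k = g\inv(V_k)$. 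Each $V_k$ has right-diameter less than $\epsilon$, and because group norms are symmetric under inversion, $\snorm{h_k g(x)\inv} < \epsilon/2$ for $x\in A_k$; hence $D(Tx, h_k\cdot x) < \epsilon/2$ uniformly on $A_k$. The naive map $S_0(x) := h_k\cdot x$ on $A_k$ is then Borel and uniformly $\epsilon/2$-approximates $T$ in $D$, but may fail to be injective or surjective and thus need not lie in $\fgr{G\acts X}$.

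To restore the bijection property while keeping the construction $H$-decomposable and the approximation uniform, I would exploit the flexibility that for each $x$ any element of $H\cap B_R(g(x),\epsilon)$ is admissible. Fix a $U$-lacunary cross section $\mathcal{C}$ for $G\acts X$ (using the appendix's machinery) and its Voronoi tessellation. Since $T$ is measure-preserving, $\mu(A_k) = \mu(T(A_k)) = \mu(h_k\cdot A_k)$, so there is no global mass imbalance; the obstruction is only local. Within each cell (or a finite cluster of adjacent cells chosen so that all $\epsilon$-displacements land in the cluster), perform a measurable matching: refine each $A_k$ into countably many Borel pieces $A_{k,j}$ and choose $h_{k,j}\in H\cap V_k$ by a Borel selection (Lusin--Novikov) so that the images $\{h_{k,j}\cdot A_{k,j}\}$ tile $X$ modulo null sets. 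Patching across cells, and appealing to measurability of the selection, produces an $H$-decomposable $S\in\fgr{G\acts X}$ with $\esssup_{x\in X} D(Tx,Sx) < \epsilon$.

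The main obstacle is the measurable matching step. It requires careful Borel combinatorics on the orbit equivalence relation to guarantee simultaneously that (i) the resulting $S$ is a genuine measure-preserving bijection, (ii) every cocycle value lies in $H$, and (iii) every cocycle value lies within the right-ball of radius $\epsilon$ around $g(x)$. The cross section and Voronoi tessellation machinery from the appendices is essential here, because it reduces the global matching to a collection of local problems on sets of bounded combinatorial complexity, where Hall-type selection and measurable uniformization can be applied in a genuinely Borel manner.
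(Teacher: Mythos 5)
Your first two steps (uniformizing the cocycle via Jankov--von Neumann and partitioning $X$ into sets $A_k$ on which a single $h_k\in H$ approximates $T$ to within $\epsilon/2$ in the orbit metric) are correct, and your overall architecture --- perturb the cocycle first, then repair bijectivity --- is a legitimate reorganization of the problem. But the proof has a genuine gap exactly where you flag "the main obstacle": the measurable matching step is not a technical afterthought, it is essentially the entire content of the theorem, and the proposal does not supply the argument. Two things are missing. First, you do not identify the correct local invariant that makes a matching possible. Global equality $\mu(A_k)=\mu(h_k\cdot A_k)$ is far from sufficient: any candidate $S\in\fgr{G\acts X}$ must preserve orbits, and to keep $D(Tx,Sx)<\epsilon$ it must in fact preserve the cells of a fine tessellation. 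The obstruction is therefore fiberwise, and the invariant one needs is equality of the \emph{conditional} measures $\mu_c$ obtained by disintegrating $\mu$ over the tessellation (what the paper calls $\mathcal{W}$-equimeasurability); part of the work is verifying that the pieces you propose to match are equimeasurable in this sense.

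Second, and more seriously, the exhaustion/matching argument can only run if one knows the following: given two positive-measure sets $A$, $B$ with equal (or proportionate) conditional measures over the same fibers, there exists $h\in H$ with $\mu(hA\cap B)>0$ and with $hx$ remaining in the prescribed cell for $x\in A$. For a general dense \emph{set} $H$ (not a subgroup) and a general non-free measure-preserving action, this is not routine; Lusin--Novikov gives Borel uniformizations of sets with countable sections but says nothing about whether any element of $H$ achieves a positive-measure overlap. Without this, your "Hall-type selection" can stall: the exhaustion may terminate with a positive-measure residue that no element of $H$ can move into its target. The paper's proof devotes Lemmas 3.6--3.9 to exactly this point, deducing the existence of such an $h$ from Becker's dichotomy for suitable actions (every measure-preserving action of a locally compact Polish group is suitable), combined with lacunarity of the tessellation to force the displacement into a small neighborhood. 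Some substitute for this input is indispensable, and the proposal does not provide one.
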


Theorem~\ref{thm:approximation-by-cocycle-in-dense-set} establishes the density of
\(H\)-decomposable transformations in the very strong uniform topology given by \(\esssup\). In
particular, this result also applies to the \(\LL^{1}\) topology.

\begin{corollary}
  \label{cor:l1-approximation-by-cocycle-in-dense-set}
  Let \( G \acts X \) be a measure-preserving action of a locally compact Polish group, let
  \( \snorm{ \cdot } \) be a compatible norm on \( G \), and let \( H \subseteq G \) be a dense
  subgroup. The \(\LL^{1}\) full group \( \lfgr{ H \acts X } \) is dense in
  \( \lfgr{ G \acts X } \).
\end{corollary}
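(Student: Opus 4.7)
The plan is to invoke Theorem~\ref{thm:approximation-by-cocycle-in-dense-set} directly with parameter \( \epsilon \), and then verify that the resulting \( H \)-decomposable transformation \( S \) lies in \( \lfgr{H \acts X} \) and is close to \( T \) in the topology of \( \lfgr{G \acts X} \).

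Given \( T \in \lfgr{G \acts X} \) and \( \epsilon > 0 \), the theorem produces an \( H \)-decomposable \( S \in \fgr{G \acts X} \), witnessed by a measurable partition \( X = \bigsqcup_{k} A_{k} \) and elements \( h_{k} \in H \) with \( S\restriction_{A_{k}} = h_{k} \), such that \( \esssup_{x} D(Tx, Sx) < \epsilon \). Measure-preservation of \( T \) then upgrades this essential supremum estimate to an \( \LL^{1} \) one:
\[
    \norm{T^{-1}S}_{1} = \int_{X} D(x, T^{-1}Sx)\,d\mu = \int_{X} D(Tx, Sx)\,d\mu < \epsilon,
\]
placing \( S \) within \( \epsilon \) of \( T \) in \( \lfgr{G\acts X} \). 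Membership \( S \in \fgr{H \acts X} \) is immediate, since each piece of \( S \) is induced by an element of \( H \).

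The delicate step, which I expect to be the main obstacle, is verifying that \( S \in \lfgr{H \acts X} \), namely that \( \int_{X} D_{H}(x, Sx)\,d\mu < \infty \), where \( D_{H} \) is the orbit metric on \( H \)-orbits induced by the restriction of \( \snorm{\cdot} \) to \( H \). Because \( D_{H}(x, Sx) \le \snorm{h_{k}} \) for \( x \in A_{k} \), the matter reduces to the summability of \( \sum_{k} \snorm{h_{k}}\mu(A_{k}) \). In the free case this summability follows cleanly: uniqueness of the \( G \)-cocycle forces \( \snorm{h_{k}} = D(x, Sx) \) for every \( x \in A_{k} \), so by the triangle inequality \( \snorm{h_{k}} \le D(x, Tx) + \epsilon \), and integration yields \( \sum_{k} \snorm{h_{k}}\mu(A_{k}) \le \norm{T}_{1} + \epsilon < \infty \). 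For general (non-free) actions, where \( \snorm{h_{k}} \) can exceed \( D(x, h_{k} x) \) due to stabilizer contributions, I would refine each \( A_{k} \) through a Jankov--von Neumann measurable selection applied to the analytic set \( \{(x, h) \in A_{k} \times H : h \cdot x = h_{k} \cdot x\} \), replacing \( h_{k} \) by elements of \( H \) of nearly minimal norm along a countable sub-partition \( A_{k} = \bigsqcup_{\ell} A_{k, \ell} \); the refined \( H \)-decomposition of \( S \) then enjoys the same integrability estimate as in the free case.
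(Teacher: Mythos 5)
Your overall route is the intended one: the corollary is meant to follow directly from Theorem~\ref{thm:approximation-by-cocycle-in-dense-set}, and you correctly isolate the two points that need checking, namely that \( S \) is close to \( T \) and that \( S \in \lfgr{H\acts X} \). One small bookkeeping slip: the change of variables \( x \mapsto Tx \) turns \( \int_X D(Tx,Sx)\,d\mu \) into \( \norm{ST^{-1}}_1 \), not \( \norm{T^{-1}S}_1 \) (the orbit metric \( D \) is not invariant under elements of the full group, so these genuinely differ pointwise). This is harmless, since the right-invariant metric \( (S,T)\mapsto\norm{ST^{-1}}_1 \) is also compatible with the topology of \( \lfgr{G\acts X} \), so the density conclusion stands. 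Your treatment of the free case is correct and complete: freeness forces \( \snorm{h_k}=D(x,Sx)=D_H(x,Sx) \) on \( A_k \), and the triangle inequality then gives \( \int_X D_H(x,Sx)\,d\mu\le\norm{T}_1+\epsilon \).

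The non-free case, however, is not closed by your proposed refinement, and this is a genuine gap as written. The Jankov--von Neumann selection of nearly minimal-norm elements of \( \{h\in H: hx=Sx\} \) produces a decomposition with \( \snorm{h_{k,\ell}}\le D_H(x,Sx)+\delta \) on each refined piece, so integrating it only yields \( \int_X\snorm{h(x)}\,d\mu\le\int_X D_H(x,Sx)\,d\mu+\delta \) --- but \( \int_X D_H(x,Sx)\,d\mu<\infty \) is precisely the statement you are trying to prove, so the argument is circular at this point. The free-case estimate does not transfer because it rests on the identity \( D_H(x,Sx)=D(x,Sx) \), which can fail badly when stabilizers are nontrivial: the set \( \{h\in H: hx=Sx\} \) is the intersection of \( H \) with a coset of the (closed) stabilizer of \( x \), and density of \( H \) in \( G \) does not imply that this intersection contains elements of norm close to \( D(x,Sx) \). (For instance, the graph of a discontinuous \( \mathbb{Q} \)-linear functional is a dense subgroup of \( \mathbb{R}^{2} \) meeting the subgroup \( \{0\}\times\mathbb{R} \) only at the origin.) To repair this one has to control the norms \( \snorm{h_k} \) at the moment the decomposition is built --- e.g.\ by rerunning the construction of Theorem~\ref{thm:approximation-by-cocycle-in-dense-set} using only elements of \( H \) whose norm is bounded in terms of the cocompactness constants of the tessellations at each stage --- rather than trying to recover integrability afterwards from the \( \esssup \) bound alone. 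Alternatively, if one reads \( \lfgr{H\acts X} \) as metrized by the ambient metric \( D \) restricted to \( \eqr_H \), your argument goes through verbatim, but that is not the literal reading of Definition~\ref{def:l1-full-group} applied to \( (H,\snorm{\cdot}) \).
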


\begin{remark}
  \label{rem:density-in-full-groups}
  Theorem~\ref{thm:approximation-by-cocycle-in-dense-set} is an improvement upon the conclusion
  of~\cite[Thm.~2.1]{MR3748570}, which shows that \( \fgr{H \acts X} \) is dense in
  \( \fgr{G \acts X} \) whenever \( H \) is a dense subgroup of \( G \). While the proof, which we
  present below, establishes density in a much stronger topology through more elementary means, we
  note that, as already mentioned in~\cite[Thm.~2.3]{MR3748570}, their methods apply to all
  \emph{suitable} (in the sense of~\cite{MR2995370}; see also
  Definition~\ref{def:suitable-action}) actions of Polish groups, whereas our approach here
  crucially uses local compactness of the acting group to guarantee existence of various
  cross-sections.
\end{remark}

Let \( \mathcal{C} \) be a cross-section for a measure-preserving action \( G \acts X \), and let
\( \mathcal{W} \) be a tessellation over \( \mathcal{C} \) (in the sense of
Appendix~\ref{sec:tessellations}).  Let \( \nu_{\mathcal{W}}\) be the push-forward measure
\( (\pi_{\mathcal{W}})_{*}\mu \) on the cross-section, and let \( (\mu_{c})_{c \in \mathcal{C}} \)
be the disintegration of \( \mu \) over \( (\pi_{\mathcal{W}}, \nu_{\mathcal{W}}) \) (see
Appendix~\ref{sec:disintegration-measure} and Theorem~\ref{thm:disintegration-of-measure},
specifically).  Without loss of generality, we assume, whenever convenient, that the set \(H\) in
the statement of Theorem~\ref{thm:approximation-by-cocycle-in-dense-set} is countable.

\begin{definition}
  \label{def:proportionate-and-equimeasurable}
  Two Borel sets \( A, B \subseteq X \) are said to be
  \begin{itemize}
    \item \textbf{\( \mathcal{W} \)-proportionate} if the equivalence
          \( \mu_{c}(A) = 0 \iff \mu_{c}(B) = 0 \) holds for
          \( \nu_{\mathcal{W}} \)-almost all \( c \in \mathcal{C} \);
    \item \textbf{\( \mathcal{W} \)-equimeasurable} if \( \mu_{c}(A) = \mu_{c}(B) \) for
          \( \nu_{\mathcal{W}} \)-almost all \( c \in \mathcal{C} \).
  \end{itemize}

\end{definition}

For the context of Lemmas~\ref{lem:propornionate-containment}
through~\ref{lem:equimeasurable-match}, we let \(N\) denote an open \emph{symmetric} neighborhood of
the identity of \(G\), and \(\mathcal{W}\) stands for an \(N\)-lacunary tessellation.  The following
lemma relies on the key fact that for any two \(\mathcal{W}\)-proportionate Borel sets
\(A, B \subseteq N \cdot \mathcal{C}\), the equivalence
\(A \cap (N \cdot c) \neq \varnothing \iff B \cap (N \cdot c) \neq \varnothing\) holds for all
\(c \in \mathcal{C}\) after changing \(A\) and \(B\) on a null set.

\begin{lemma}
  \label{lem:propornionate-containment}
  If \( A, B \subseteq N \cdot \mathcal{C} \) are \( \mathcal{W} \)-proportionate Borel sets then
  \[ \mu(B \setminus N^{2} \cdot A) = 0. \]
\end{lemma}

\begin{proof}
  By the defining property of the disintegration,
  \[\mu(B\setminus N^{2} \cdot A) = \int_{\mathcal{C}} \mu_{c}(B \setminus N^{2} \cdot A)\,
    d\nu_{\mathcal{W}}(c),\]
  and so we need to check that \(\mu_{c}(B \setminus N^{2} \cdot A) = 0\) for
  \(\nu_{\mathcal{W}}\)-almost all \(c\).  Since \(A\) and \(B\) are \(\mathcal{W}\)-proportionate,
  it suffices to show that \(\mu_{c}(B \setminus N^{2} \cdot A) = 0\) whenever \(\mu_{c}(A) \ne 0\).
  For any \(c \in \mathcal{C}\) satisfying the latter, one necessarily has \(c \in N \cdot A\)
  (because \(A \subseteq N \cdot \mathcal{C}\) and \(\mathcal{W}\) is \(N\)-lacunary, by
  assumption), and thus \(N \cdot c \subseteq N^{2} \cdot A\).  In particular,
  \((B \setminus N^{2} \cdot A) \cap N \cdot c = \varnothing\). It remains to use the inclusion
  \(B \subseteq N \cdot \mathcal{C}\), which, together with the \(N\)-lacunarity of \(\mathcal{W}\),
  guarantees that
  \[\mu_{c}(B \setminus N^{2} \cdot A)
    = \mu_{c}((B \setminus N^{2} \cdot A) \cap N\cdot c) = 0. \qedhere\]
\end{proof}

For the proof of the next lemma, we need the notion of a suitable action, introduced by
H.~Becker~\cite[Def.~1.2.7]{MR2995370}.

\begin{definition}
  \label{def:suitable-action}
  A measure-preserving Borel action \( G \acts X \) of a Polish group~\(G\) is \textbf{suitable}
  \index{Suitable action}\index{Measure-preserving action!suitable} if for
  all Borel sets \( A, B \subseteq X \) one of the following two options holds:
  \begin{enumerate}
  \item\label{item:suitable-intersect} for any open neighborhood of the identity \( M \subseteq G \)
    there exists \( g \in M \) such that \( \mu(gA \cap B) > 0 \);
  \item\label{item:suitable-disjoint} there exist Borel sets \( A' \subseteq A \),
    \( B' \subseteq B \) such that \( \mu(A \setminus A') = 0 = \mu(B \setminus B') \) and an open
    neighborhood of the identity \( M \subseteq G \) such that
    \( M \cdot A' \cap B' = \varnothing \).
  \end{enumerate}
\end{definition}

All measure-preserving actions of locally compact Polish groups are known to be suitable
(see~\cite[Thm.~1.2.9]{MR2995370}).

\begin{lemma}
  \label{lem:shift-proportionate-sets-open}
  For all non-negligible \( \mathcal{W} \)-proportionate Borel sets
  \( A, B \subseteq N\cdot \mathcal{C} \), there exists an open set \( U \subseteq N^{3} \)
  such that \( \mu(gA \cap B) > 0 \) for all \( g \in U \).
\end{lemma}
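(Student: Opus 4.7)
The central observation is that the function $\Phi : G \to \mathbb{R}_{\geq 0}$, defined by $\Phi(g) := \mu(gA \cap B)$, is continuous: the Koopman representation of a Borel measure-preserving Polish group action is strongly continuous, so $g \mapsto \mathbf{1}_{gA}$ is continuous from $G$ into $\LL^{2}(X,\mu)$, and hence $\Phi(g) = \langle \mathbf{1}_{gA}, \mathbf{1}_{B}\rangle$ is continuous. In particular the set $U_{0} := \{g \in G : \Phi(g) > 0\}$ is open, and it suffices to show $U_{0} \cap N^{2} \ne \varnothing$; the set $U := U_{0} \cap N^{2}$ then satisfies the required conclusion. Before starting the core argument I replace $N$ with $N \cap N^{-1}$ to make it symmetric (so $N \cdot N^{-1} = N^{2}$) and shrink it further to a relatively compact set, ensuring $\lambda(N^{2}) < \infty$ for a fixed left Haar measure $\lambda$ on $G$; these reductions preserve the standing hypotheses on $A$, $B$ and $\mathcal{W}$.

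To show that $U_{0} \cap N^{2}$ is nonempty I plan to carry out a Fubini computation:
\[
  \int_{N^{2}} \Phi(g)\, d\lambda(g) \;=\; \int_{B} \lambda\bigl(\{g \in N^{2} : g^{-1}y \in A\}\bigr)\, d\mu(y).
\]
For $\mu$-almost every $y \in B$, set $c := \pi_{\mathcal{W}}(y)$; by $\mathcal{W}$-proportionality, and with the help of Lemma~\ref{lem:propornionate-containment}, one has $c \in \pi_{\mathcal{W}}(A)$ almost surely. The $N$-lacunarity of $\mathcal{W}$ together with the inclusions $A,B \subseteq N \cdot \mathcal{C}$ gives the identification $\mathcal{W}_{c} \cap N \cdot \mathcal{C} = N \cdot c$, so $y = n_{y} c$ for some $n_{y} \in N$ and $A \cap \mathcal{W}_{c}$ is parametrized by $A'_{c} := \{n \in N : n c \in A\}$. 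For any $n_{x} \in A'_{c}$, the element $g := n_{y} n_{x}^{-1} \in N \cdot N^{-1} = N^{2}$ satisfies $g^{-1} y = n_{x} c \in A$, giving
\[
  \{g \in N^{2} : g^{-1}y \in A\} \;\supseteq\; n_{y} \cdot (A'_{c})^{-1}.
\]

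By left-invariance of $\lambda$ and the Haar inversion formula (using that the modular function is bounded on the relatively compact set $N$), the Haar measure of $n_{y}(A'_{c})^{-1}$ is bounded below by a positive multiple of $\lambda(A'_{c})$. The latter is in turn proportional to $\mu_{c}(A)$ via the parametrization $n \mapsto n c$ of the tile $\mathcal{W}_{c} \cap N \cdot \mathcal{C}$. Now $\mathcal{W}$-proportionality and the non-negligibility of $A$ and $B$ ensure that the set of $c \in \mathcal{C}$ with $\mu_{c}(A) > 0$ and $\mu_{c}(B) > 0$ has positive $\nu_{\mathcal{W}}$-measure, and for $\mu$-almost every $y \in B$ the projection $\pi_{\mathcal{W}}(y)$ lies in this set. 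Consequently the right-hand side of the Fubini identity is strictly positive, so $\Phi > 0$ on a Haar-positive subset of $N^{2}$, which is in particular nonempty; continuity of $\Phi$ then yields the desired open set $U$. The main obstacle will be the careful bookkeeping interlacing the tessellation $\mathcal{W}$, the $N$-lacunarity of $\mathcal{C}$, and the disintegration $(\mu_{c})_{c \in \mathcal{C}}$: the crucial technical input is the identity $\mathcal{W}_{c} \cap N \cdot \mathcal{C} = N c$ inherent to an $N$-lacunary tessellation, which confines the tile intersections of $A$ and $B$ to a single translate $Nc$ of the Haar neighborhood and makes the ratio $n_{y} n_{x}^{-1}$ land inside $N^{2}$.
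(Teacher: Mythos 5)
Your overall architecture is sound and genuinely different from the paper's: you try to prove non-emptiness of the open set \( \{g : \mu(gA\cap B)>0\} \cap N^{2} \) by a direct Fubini/Haar-measure computation, whereas the paper invokes Becker's dichotomy for \emph{suitable} actions together with the countable dense set \( H \), which lets it conclude intersection for \emph{some} \( g \) without ever comparing \( \mu \) to Haar measure along orbits. The continuity of \( g \mapsto \mu(gA\cap B) \), the Fubini identity, the identification \( \mathcal{W}_{c}\cap N\cdot\mathcal{C}=N\cdot c \), and the observation that \( n_{y}n_{x}^{-1} \) lands in \( N\cdot N^{-1} \) are all correct.

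The gap is the step where you pass from \( \mu_{c}(A)>0 \) to \( \lambda(A'_{c})>0 \) (let alone proportionality). You assert that \( \lambda(A'_{c}) \) is ``proportional to \( \mu_{c}(A) \) via the parametrization \( n\mapsto nc \)'', but this is exactly the nontrivial content of the lemma and nothing in the paper supplies it at this level of generality. The disintegration \( (\mu_{c})_{c\in\mathcal{C}} \) is produced abstractly by Maharam's theorem; the claim that each \( \mu_{c} \) is absolutely continuous with respect to the pushforward of Haar measure under the orbit map \( n\mapsto nc \) is a local structure theorem for actions near a lacunary cross section. It is true for \emph{free} actions (it is the cross-section disintegration used elsewhere in the paper, but only stated there for free flows), and for non-free actions — which Section~3 explicitly allows, cf.\ the care taken with stabilizers in Lemma~\ref{lem:countable-section-measure-zero} — it requires an additional argument involving the stabilizers \( G_{c} \) and quasi-invariant measures on \( G/G_{c} \). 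Note that the weaker global fact one can extract from a Fubini argument (for a.e.\ \( y\in B \), \( \lambda(\{g\in G: gy\in A\})>0 \)) does not suffice: you need the positive-measure set of such \( g \) to send \( y \) into the \emph{same tile}, and localizing to the tile is again the unproven comparison. Until this step is supplied, the right-hand side of your Fubini identity has no positive lower bound. A secondary, fixable wrinkle: symmetrizing \( N \) destroys the hypothesis \( A,B\subseteq N\cdot\mathcal{C} \), so as written your argument produces \( U\subseteq N\cdot N^{-1} \) rather than \( U\subseteq N^{2} \); this matters only insofar as Lemma~\ref{lem:shift-proportionate-sets-dense} consumes the conclusion in the form \( U\subseteq N^{2} \) with a prescribed lacunarity budget.
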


\begin{proof}
  Let \(H_1=\{h_n :  n \in \N\}\) be a countable dense subset of \( N^{2} = NN^{-1}\), and put
  \( A_{1} = H_1 \cdot A \).  We apply the dichotomy in the definition of a suitable action to the
  sets \( A_{1}, B \) and show that item~\eqref{item:suitable-disjoint} cannot hold.

  Indeed, suppose there exist \( A_{1}' \subseteq A_{1} \), \( B' \subseteq B \) satisfying
  \[ \mu(A_{1} \setminus A'_{1}) = 0 = \mu(B\setminus B'), \]
  and an open neighborhood of the identity \( M \subseteq G \) such that
  \( (M \cdot A'_{1}) \cap B' = \varnothing \). Set
  \( A' = \bigcap_{n} (h_{n}^{-1}A'_{1} \cap A) \), and note that
  \( \mu(A \setminus A') = 0 \) and \( (MH_1 \cdot A') \cap B' = \varnothing \),
  simply because
  \( C_1 \cdot A' \subseteq A'_{1} \).  Since \( C_1 \) is dense in \( N^{2} \), we have
  \( N^{2} \subseteq MH_1 \) and thus \( (N^{2} \cdot A') \cap B' = \varnothing
  \). Lemma~\ref{lem:propornionate-containment}, applied to \(A'\) and \(B'\), guarantees that
  \(\mu(B' \setminus N^{2} \cdot A') = 0\), which is possible only when \(\mu(B') = 0\),
  contradicting the assumption that \(B\) is non-negligible.

  We are left with the alternative of the item~\eqref{item:suitable-intersect}, and so there has to
  exist some \( g \in N \) such that \( \mu(gA_{1} \cap B) > 0 \). Since \( A_{1} = H_1 \cdot A \),
  there exists \( h \in H_{1} \) such that \( \mu(gh A \cap B) > 0 \).  It remains to observe that
  \(gh \in N^{3}\) and that \( \mu(g'A \cap B) > 0 \) is an open condition on \(g'\).  This follows
  from the continuity in the weak topology of the group homomorphism \(G\to\Aut(X,\mu)\) associated
  with the measure-preserving action of \( G \) on \( (X,\mu) \) (see, for
  instance,~\cite[Lem.~1.2]{MR3748570}).
\end{proof}

\begin{lemma}
  \label{lem:shift-one-set-open}
  For any non-empty open \( V \subseteq N \) and for any non-negligible Borel set
  \( A \subseteq X \), there exists \( h \in H \) such that
  \[ \mu(\{ x \in A : hx \in V\cdot \mathcal{C} \textrm{ and
    } \pi_{\mathcal{W}}(x) = \pi_{\mathcal{W}}(hx) \}) > 0. \]
\end{lemma}
\begin{proof}
  Let \( \zeta : X \to \mathcal{W} \) be the Borel bijection
  \( \zeta(x) = (\pi_{\mathcal{W}}(x), x) \) and consider the push-forward measure
  \( \zeta_{*}\mu \), which for \( Z \subseteq \mathcal{W} \) can be expressed as
  \[ \zeta_{*}\mu(Z) = \int_{\mathcal{C}} \mu_{c}(Z_{c})\, d\nu_{\mathcal{W}}(c). \]
  Let \( (h_{n})_{n \in \mathbb{N}} \) be an enumeration of \( H \) and set
  \[ W_{n} = \{ (c, x) \in \mathcal{W} : \pi_{\mathcal{W}}(x) = \pi_{\mathcal{W}}(h_{n}x) \textrm{
      and } h_{n}x \in V \cdot \mathcal{C} \}. \]
  We claim that \( \bigcup_{n}W_{n} = \mathcal{W} \).  Indeed, for each \( (c,x) \in \mathcal{W} \)
  the set of \( g \in G \) such that \( gx \in V \cdot c \) is non-empty and open, hence there is
  \( h_{n} \in H \) such that \( h_{n}x \in V \cdot c \).

  Finally, \(A\) is non-negligible by assumption, i.e., \( 0 < \mu(A) = \zeta_{*}\mu (\zeta(A)) \),
  so there exists \( W_{n} \) such that \( \zeta_{*}\mu (\zeta(A) \cap W_{n}) > 0 \), which
  translates into the required
  \[ \mu(\{ x \in A : h_{n}x \in V \cdot \mathcal{C} \textrm{ and
    } \pi_{\mathcal{W}}(x) = \pi_{\mathcal{W}}(h_{n}x) \}) > 0. \qedhere \]
\end{proof}

\begin{lemma}
  \label{lem:shift-proportionate-sets-dense}
  For all non-negligible \( \mathcal{W} \)-proportionate Borel sets \( A, B \subseteq X \), there
  exists \( h \in H \) such that
  \[ \mu(\{ x \in A : hx \in B \textrm{ and } \pi_{\mathcal{W}}(x) = \pi_{\mathcal{W}}(hx)\}) > 0. \]
\end{lemma}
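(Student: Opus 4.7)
The plan is to exhibit a non-empty open subset \( G^+ \subseteq G \) all of whose elements coherently shift a positive-measure piece of \( A \) into \( B \); density of \( H \) then yields the required \( h \in H \cap G^+ \). Writing \( A_c := A \cap \mathcal{W}_c \), \( B_c := B \cap \mathcal{W}_c \) and using the disintegration \( \mu = \int \mu_c\, d\nu_{\mathcal{W}}(c) \), the \( \mathcal{W} \)-proportionality and non-negligibility of \( A, B \) force the set
\[ \mathcal{C}_0 := \{ c \in \mathcal{C} : \mu_c(A_c) > 0 \text{ and } \mu_c(B_c) > 0 \} \]
to have positive \( \nu_{\mathcal{W}} \)-measure. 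The coherence condition \( \pi_{\mathcal{W}}(hx) = \pi_{\mathcal{W}}(x) \) combined with \( hx \in B \) forces \( hx \in B_c \) for \( c = \pi_{\mathcal{W}}(x) \), so the quantity to control is
\[ \mu(D_h) = \int_{\mathcal{C}} \mu_c(A_c \cap h^{-1}B_c)\, d\nu_{\mathcal{W}}(c), \qquad D_h := \{x \in A : hx \in B,\ \pi_{\mathcal{W}}(hx) = \pi_{\mathcal{W}}(x)\}. \]

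Set \( \varphi_c(g) := \mu_c(gA_c \cap B_c) \). Identifying each orbit with \( G \) via the orbit map (using the measures \( \lambda_c \) from Appendix~\ref{sec:orbit-transf} to handle non-freeness), \( \varphi_c \) is a convolution-type function in \( g \); continuity of the \( G \)-action on the measure algebra of each orbit gives continuity of \( \varphi_c \), while a Fubini calculation yields \( \int_G \varphi_c(g)\, d\lambda(g) = \lambda_c(A_c)\,\lambda_c(B_c) > 0 \) for \( c \in \mathcal{C}_0 \). Hence \( E^c := \{ g \in G : \varphi_c(g) > 0 \} \) is a non-empty open subset of \( G \) with positive Haar measure. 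Joint measurability of \( (g, c) \mapsto \varphi_c(g) \) follows from the disintegration, and a further Fubini gives
\[ \int_G \nu_{\mathcal{W}}(\{c : g \in E^c\})\, d\lambda(g) = \int_{\mathcal{C}_0} \lambda(E^c)\, d\nu_{\mathcal{W}}(c) > 0, \]
so \( G^+ := \{ g \in G : \nu_{\mathcal{W}}(\{c : g \in E^c\}) > 0 \} \) has positive Haar measure and is in particular non-empty.

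To see that \( G^+ \) is open, suppose \( g_n \to g \) in \( G \). Continuity of each \( \varphi_c \) gives \( \varphi_c(g_n) \to \varphi_c(g) \), hence \( \liminf_n \mathbf{1}_{E^c}(g_n) \ge \mathbf{1}_{E^c}(g) \) for every \( c \); Fatou's lemma applied to \( \nu_{\mathcal{W}} \) then yields \( \liminf_n \nu_{\mathcal{W}}(\{c : g_n \in E^c\}) \ge \nu_{\mathcal{W}}(\{c : g \in E^c\}) \), showing that the indicator of \( G^+ \) is lower semicontinuous. So \( G^+ \) is open and meets \( H \); for any \( h \in H \cap G^+ \), the displayed integral for \( \mu(D_h) \) is positive, since \( \mu_c(A_c \cap h^{-1}B_c) = \mu_c(hA_c \cap B_c) \) (by left-invariance of the orbit measures) is positive on a set of positive \( \nu_{\mathcal{W} }\)-measure. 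The main technical obstacle I anticipate is cleanly identifying the disintegration measures \( \mu_c \) with Haar-like orbit measures so that the Steinhaus/convolution reasoning and the continuity of the measure-algebra action transfer through; this is transparent under freeness, and in general follows from the apparatus of Appendices~\ref{sec:tessellations} and~\ref{sec:disintegration-measure}.
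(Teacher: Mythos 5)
Your overall strategy — find a non-empty open set of group elements $g$ with $\mu_c(gA_c\cap B_c)>0$ on a positive-measure set of tiles, then intersect with the dense set $H$ — is genuinely different from the paper's proof, which instead first uses Lemma~\ref{lem:shift-one-set-open} to translate positive-measure pieces of $A$ and $B$ coherently into a thin neighborhood $V\cdot\mathcal{C}$ of the cross section and then invokes Lemma~\ref{lem:shift-proportionate-sets-open}, whose engine is Becker's theorem that every measure-preserving action of a locally compact Polish group is \emph{suitable}. Unfortunately, your route has a genuine gap exactly at the point you flag as a "technical obstacle": the identification of the disintegration measures $\mu_c$ with Haar-like measures on the orbits. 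The measures $\mu_c$ are produced by abstract disintegration of $\mu$ over $(\pi_{\mathcal{W}},\nu_{\mathcal{W}})$ (Theorem~\ref{thm:disintegration-of-measure}); nothing in Section~\ref{sec:l1-locally-compact} assumes the action is free, and Appendix~\ref{sec:orbit-transf} constructs the orbit measures $\lambda_x$ \emph{only} for free actions. Without an absolute-continuity statement $\mu_c\ll\lambda_x$ on each tile, both pillars of your argument collapse: the Fubini identity $\int_G\mu_c(gA_c\cap B_c)\,d\lambda(g)=\lambda_c(A_c)\lambda_c(B_c)$ has no meaning (and the correct computation, $\int_G\mu_c(gA_c\cap B_c)\,d\lambda(g)=\int_{B_c}\widehat{\lambda}_y(A_c)\,d\mu_c(y)$, can vanish if $A_c$ is $\widehat{\lambda}_y$-null while $\mu_c(A_c)>0$), and the continuity of $g\mapsto\mu_c(gA_c\cap B_c)$ is no longer a Steinhaus-type convolution statement. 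Indeed the conditional measures may be purely atomic (the action can have countable orbits; atomlessness is only established in Corollary~\ref{cor:tessellation-atomless} under the extra hypothesis that orbits are uncountable, which is not assumed here), in which case $\varphi_c$ is not continuous. The continuity of the $G$-action on the measure algebra of $(X,\mu)$ does not descend to the individual fibers $(\mathcal{W}_c,\mu_c)$, which are not $G$-invariant.

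A secondary, related slip: you invoke "left-invariance of the orbit measures" to write $\mu_c(A_c\cap h^{-1}B_c)=\mu_c(hA_c\cap B_c)$, but even in the free case the pushed-forward Haar measures are only invariant under full-group elements when $G$ is unimodular (Proposition~\ref{prop:orbital-transformations} records the Radon--Nikodym correction $\Delta(c_{T^{-1}})$ otherwise), and $\mu_c$ itself lives on a tile that $h$ does not preserve. Your argument does go through essentially as written for free measure-preserving flows of $\mathbb{R}$ (or, more generally, free actions of unimodular groups, where the cross-section disintegration does identify $\mu_c$ with normalized Haar measure on the tiles), so it is a clean alternative in that special case; but the lemma is needed, and the paper proves it, for arbitrary measure-preserving actions of locally compact Polish groups. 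To repair your proof in that generality you would need precisely the input that the paper's argument imports wholesale via the suitability theorem, and I see no way to avoid it.
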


\begin{proof}
  The plan is to reduce the setup of this lemma to that of
  Lemma~\ref{lem:shift-proportionate-sets-open}. Let \( V \subseteq N \) be a symmetric neighborhood
  of the identity that is furthermore small enough to guarantee that \( \mathcal{W} \) is
  \( V^{4} \)-lacunary. Apply Lemma~\ref{lem:shift-one-set-open} to find \( h_{1}\in H \) such that
  for
  \[ A' = \{ x \in A : h_{1}x \in V\cdot \mathcal{C} \textrm{ and } \pi_{\mathcal{W}}(x) =
    \pi_{\mathcal{W}}(h_{1}x) \} \]
  one has \( \mu(A') > 0 \). Set \( A_{1} = h_{1}A' \),
  \( B_{1} = \pi_{\mathcal{W}}^{-1}(\{ c \in \mathcal{C} : \mu_{c}(A_{1}) > 0 \}) \cap B \) and note
  that \( A_{1} \) and \( B_{1} \) are non-negligible \( \mathcal{W} \)-proportionate sets.
  Moreover, \( A_{1} \subseteq V \cdot \mathcal{C} \) by construction.

  Repeat the same steps for \( B_{1} \) and find \( h_{2} \in H \) such that for
  \[ B_{1}' = \{ x \in B_{1} : h_{2}x \in V\cdot \mathcal{C} \textrm{ and } \pi_{\mathcal{W}}(x) =
    \pi_{\mathcal{W}}(h_{2}x) \} \]
  we have \( \mu(B_{1}') > 0 \). Set \( B_{2} = h_{2}B_{1}' \) and
  \( A_{2} = A_{1} \cap \pi_{\mathcal{W}}^{-1}(\{ c \in \mathcal{C} : \mu_{c}(B_{2}) > 0\}) \). Once
  again, sets \( A_{2} \) and \( B_{2} \) are non-negligible, \( \mathcal{W} \)-proportionate and
  are both contained in \( V \cdot \mathcal{C} \).

  We now apply Lemma~\ref{lem:shift-proportionate-sets-open} to sets \(A_{2}, B_{2}\) and
  \(\mathcal{W}\), viewed as a \(V\)-lacunary tessellation, yielding an open \( U \subseteq V^{3} \)
  such that \( \mu(gA_{2} \cap B_{2}) > 0 \) for all \( g \in U \). Note that since
  \( U \subseteq V^{3} \) and \( \mathcal{W} \) is, in fact, \( V^{4} \)-lacunary, the equality
  \( \pi_{\mathcal{W}}(x) = \pi_{\mathcal{W}}(gx) \) holds for all \( x \in V \cdot \mathcal{C} \)
  and \( g \in U \). We conclude that \(\mu(h_{2}^{-1}gh_{1}A \cap B) > 0\) for all \( g \in U \)
  and hence any \( h \in h_{2}^{-1}Uh_{1} \cap H \) satisfies the conclusion of the lemma.
\end{proof}

  A measure-preserving partial transformation \( T : A \to B \) is \textbf{\( \mathcal{W} \)-coherent} if
  \( \mu \)-almost surely one has \( \pi_{\mathcal{W}}(x) = \pi_{\mathcal{W}}(Tx) \).

\begin{lemma}
  \label{lem:equimeasurable-match}
  For all \( \mathcal{W} \)-equimeasurable Borel sets \( A, B \subseteq X \), there exists a
  \( \mathcal{W} \)-coherent \( H \)-decomposable measure-preserving bijection \( T : A \to B \).
\end{lemma}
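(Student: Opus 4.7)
The plan is to use Lemma~\ref{lem:shift-proportionate-sets-dense} as the engine of an exhaustion argument, producing the bijection $T$ as a countable disjoint union of maps $h_n \restriction_{E_n}$ where the $E_n \subseteq A$ are pairwise disjoint Borel sets and the $h_n \in H$ act $\mathcal{W}$-coherently on each $E_n$. The $H$-decomposability of $T$ then follows from its definition, and $\mathcal{W}$-coherence and measure preservation are inherited tile by tile.

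The first step I would establish is a preservation invariant: if $E \subseteq A$ and $T' : E \to B$ is a $\mathcal{W}$-coherent measure-preserving injection, then $A \setminus E$ and $B \setminus T'(E)$ are $\mathcal{W}$-equimeasurable. Indeed, for any Borel $Z \subseteq \mathcal{C}$, $\mathcal{W}$-coherence gives $T'^{-1}\bigl(T'(E) \cap \pi_{\mathcal{W}}^{-1}(Z)\bigr) = E \cap \pi_{\mathcal{W}}^{-1}(Z)$, so combining measure preservation with the disintegration formula from Theorem~\ref{thm:disintegration-of-measure} yields
\[
  \int_{Z} \mu_{c}(E)\, d\nu_{\mathcal{W}}(c) = \int_{Z} \mu_{c}(T'(E))\, d\nu_{\mathcal{W}}(c)
\]
for every Borel $Z \subseteq \mathcal{C}$, whence $\mu_{c}(E) = \mu_{c}(T'(E))$ for $\nu_{\mathcal{W}}$-a.e.\ $c$. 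Subtracting from the assumed $\mathcal{W}$-equimeasurability of $A$ and $B$ gives the equimeasurability of the complements.

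With this invariant in hand, I construct $(E_{n}, h_{n})$ inductively. Set $A_{n} = A \setminus \bigsqcup_{k < n} E_{k}$ and $B_{n} = B \setminus \bigsqcup_{k < n} h_{k} E_{k}$, and let $\alpha_{n}$ denote the supremum of $\mu(E)$ over all Borel $E \subseteq A_{n}$ for which some $h \in H$ satisfies $hE \subseteq B_{n}$ and $\pi_{\mathcal{W}}(x) = \pi_{\mathcal{W}}(hx)$ for every $x \in E$. Pick any such pair $(E_{n}, h_{n})$ with $\mu(E_{n}) \ge \alpha_{n}/2$. By the invariant, $A_{n}$ and $B_{n}$ remain $\mathcal{W}$-equimeasurable at every stage, hence in particular $\mathcal{W}$-proportionate; Lemma~\ref{lem:shift-proportionate-sets-dense} applied to $A_{n}$ and $B_{n}$ then guarantees $\alpha_{n} > 0$ whenever $\mu(A_{n}) > 0$, so the inductive choice is always possible as long as the exhaustion is incomplete.

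Finally, since $\sum_{n} \mu(E_{n}) \le \mu(A) < \infty$, we have $\mu(E_{n}) \to 0$ and thus $\alpha_{n} \to 0$. Write $A_{\infty} = A \setminus \bigsqcup_{n} E_{n}$ and $B_{\infty} = B \setminus \bigsqcup_{n} h_{n} E_{n}$; if $\mu(A_{\infty}) > 0$ then by the preserved equimeasurability $\mu(B_{\infty}) > 0$ as well, and Lemma~\ref{lem:shift-proportionate-sets-dense} applied to $(A_{\infty}, B_{\infty})$ would produce an $h \in H$ together with a positive-measure Borel $E \subseteq A_{\infty}$ with $hE \subseteq B_{\infty}$ and $h$ coherent on $E$, contradicting $\alpha_{n} \to 0$. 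Hence $A = \bigsqcup_{n} E_{n}$ and $B = \bigsqcup_{n} h_{n} E_{n}$ modulo null sets, and defining $T(x) = h_{n} x$ for $x \in E_{n}$ gives the required bijection. The main technical point is the preservation invariant of the second paragraph; it is precisely what allows Lemma~\ref{lem:shift-proportionate-sets-dense} to be reapplied at every step and ultimately to the residuals.
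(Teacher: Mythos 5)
Your proof is correct and follows essentially the same route as the paper's: an exhaustion argument whose termination is guaranteed by Lemma~\ref{lem:shift-proportionate-sets-dense} applied to the residual sets, which remain $\mathcal{W}$-equimeasurable because coherent measure-preserving pieces are removed at each stage. The only differences are cosmetic — the paper runs the exhaustion along a fixed enumeration of $H$ (taking the maximal admissible set for each $h_k$ in turn) rather than greedily by measure, and it leaves the preservation invariant implicit where you verify it via the disintegration formula, which is a worthwhile addition.
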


\begin{proof}
  Let \( (h_{n})_{n \in \mathbb{N}} \) be an enumeration of \( H \).
  Consider the set
  \[ A_{0} = \bigl\{x \in A : h_{0}x \in B \textrm{ and } \pi_{\mathcal{W}}(x) =
    \pi_{\mathcal{W}}(h_{0}x) \bigr\}, \]
  and let \( B_{0} = h_{0}A_{0} \). Note that the sets \( A \setminus A_{0} \) and
  \( B \setminus B_{0} \) are \( \mathcal{W} \)-equimeasurable, so we may continue in the same
  fashion and construct sets \( A_{k} \) such that
  \[ A_{k} = \Bigl\{x \in A \setminus \bigsqcup_{i<k}A_{i} : h_{k}x \in B\setminus \bigsqcup_{i<k}
    B_{i} \textrm{ and } \pi_{\mathcal{W}}(x) = \pi_{\mathcal{W}}(h_{k}x)\Bigr\}. \]
  We define \( T : \bigsqcup_{k \in \mathbb{N}} A_{k} \to \bigsqcup_{k \in \mathbb{N}} B_{k} \) by
  the condition \( Tx = h_{k}x \) for \( x \in A_{k} \).

  Sets \(A \setminus \bigsqcup_{k \in
    \mathbb{N}} A_{k}\) and \(B \setminus \bigsqcup_{k \in \mathbb{N}} B_{k}\) are
  \(\mathcal{W}\)-equimeasurable. If either one of them (and thus necessarily both of them) were
  non-negligible, Lemma~\ref{lem:shift-proportionate-sets-dense} would yields an element \(h \in H\)
  that moves a portion of \(A \setminus \bigsqcup_{k \in
    \mathbb{N}} A_{k}\) into \(B \setminus \bigsqcup_{k \in \mathbb{N}} B_{k}\), contradicting the
  construction. We conclude that
  \[ \mu(A \setminus \bigsqcup_{k \in \mathbb{N}} A_{k}) = 0 = \mu(B \setminus \bigsqcup_{k \in
      \mathbb{N}} B_{k}) \] and \(T\) is therefore as required.
\end{proof}

\begin{lemma}
  \label{lem:equimeasurable-approximation}
  Suppose that \( \mathcal{W} \) is a cocompact tessellation over the cross-section \(\mathcal{C}\).
  Let \( A, B \subseteq X \) be \( \mathcal{W} \)-equimeasurable Borel sets. For any
  \( \epsilon > 0 \), and any \( \mathcal{W} \)-coherent partial transformation \( T : A \to B \), there
  exists a \( \mathcal{W} \)-coherent \( H \)-decomposable \( \tilde{T} : A \to B \) such that
  \( \esssup_{x \in A} D(Tx, \tilde{T}x) < \epsilon \).
\end{lemma}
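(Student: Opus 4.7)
The plan is to partition $A$ by approximate values of the cocycle of $T$ and on each piece run a restricted version of the greedy construction from Lemma~\ref{lem:equimeasurable-match}, using only group elements close to the cocycle's range.

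Since $\mathcal{W}$ is cocompact, there exists a compact $K \subseteq G$ with $\mathcal{W}_c \subseteq K \cdot c$ for every $c \in \mathcal{C}$. Applying Jankov--von Neumann uniformization to the Borel set $\{(x,g) \in A \times G : g \cdot x = T(x),\ g \in KK^{-1}\}$ produces a Borel cocycle $g_T : A \to L := KK^{-1}$ with $g_T(x) \cdot x = T(x)$. Assuming without loss of generality that the norm is proper (as noted at the start of Section~\ref{sec:l1-locally-compact}), compactness of $L$ lets us cover it by finitely many open sets $V_1, \ldots, V_N \subseteq G$ satisfying $\snorm{g_1 g_2^{-1}} < \epsilon$ for all $g_1, g_2 \in V_i$. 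Disjointifying yields a partition $A = \bigsqcup_{i=1}^N A_i$ with $A_i = g_T^{-1}(V_i \setminus \bigcup_{j<i} V_j)$, and correspondingly $B = \bigsqcup_i B_i$ with $B_i = T(A_i)$. Each restriction $T|_{A_i} : A_i \to B_i$ is a $\mathcal{W}$-coherent measure-preserving bijection with cocycle in $V_i$, and in particular $B_i \subseteq V_i \cdot A_i$.

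For each $i$, I will construct an $H$-decomposable, $\mathcal{W}$-coherent measure-preserving bijection $\tilde{T}_i : A_i \to B_i$ using only elements of $H \cap V_i$, which is dense in $V_i$. This will automatically secure the approximation $D(Tx, hx) \le \snorm{g_T(x) h^{-1}} < \epsilon$ for $x \in A_i$ and $h \in V_i$. Concretely, enumerate $H \cap V_i = \{h^{(i)}_n\}_{n \in \mathbb{N}}$ and mirror the greedy construction from Lemma~\ref{lem:equimeasurable-match}: inductively take $A_i^{(n)}$ to be the set of $x$ in the residual part of $A_i$ for which $h^{(i)}_n x$ lies in the residual part of $B_i$ with $\pi_{\mathcal{W}}(x) = \pi_{\mathcal{W}}(h^{(i)}_n x)$, and put $\tilde{T}_i|_{A_i^{(n)}} = h^{(i)}_n$. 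Assembling $\tilde{T} = \bigsqcup_i \tilde{T}_i$ then yields the desired bijection.

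The hard part will be showing that this restricted construction exhausts $A_i$ up to a null set, which amounts to a refinement of Lemma~\ref{lem:shift-proportionate-sets-dense}: for any non-negligible $\mathcal{W}$-proportionate Borel sets $A_0 \subseteq A_i$ and $B_0 \subseteq B_i$, some $h \in H \cap V_i$ and a positive-measure $A_0' \subseteq A_0$ should satisfy $h \cdot A_0' \subseteq B_0$ coherently. I expect the proofs of Lemmas~\ref{lem:shift-one-set-open}, \ref{lem:shift-proportionate-sets-open}, and~\ref{lem:shift-proportionate-sets-dense} to adapt with $H$ replaced by $H \cap V_i$ throughout: the inclusion $B_i \subseteq V_i \cdot A_i$ confines the shift elements needed for coherent matchings between subsets of $A_i$ and $B_i$ to a small neighborhood of $V_i V_i^{-1}$, and in that local range $H \cap V_i$ remains dense enough for the suitability-based openness argument of Lemma~\ref{lem:shift-proportionate-sets-open} to carry over.
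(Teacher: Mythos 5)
The construction up through the partition \( A = \bigsqcup_i A_i \) by approximate cocycle values is fine, but the step you defer --- that the greedy matching of \( A_i \) to \( B_i \) using only \( H \cap V_i \) exhausts \( A_i \) --- is not merely hard: it fails, and the ``refinement of Lemma~\ref{lem:shift-proportionate-sets-dense}'' you invoke is false. The inclusion \( B_i \subseteq V_i \cdot A_i \) is a pointwise relation along the graph of \( T \) and does not pass to the residual pairs produced by the greedy algorithm. Concretely, for a free flow with a cross section of gap \( 10 \), take \( V_i = (0.9,1.1) \) and, on a positive measure set of \( \mathcal{W} \)-classes identified with \( [0,10) \), let \( A_i \) contain \( [0,2) \) with \( T \) translating \( [0,1) \) by \( 1 \) and \( [1,2) \) by \( 1.05 \), so \( B_i \) contains \( [1,2) \sqcup [2.05,3.05) \). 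If the first element of your enumeration of \( H \cap V_i \) is \( 1.05 \), the first greedy step matches \( [0,0.95) \sqcup [1,2) \) onto \( [1.05,2) \sqcup [2.05,3.05) \), leaving the residual pair \( A_0 = [0.95,1) \), \( B_0 = [1,1.05) \); these are non-negligible and \( \mathcal{W} \)-equimeasurable, yet \( h + [0.95,1) \subseteq [1.85,2.1) \) for every \( h \in (0.9,1.1) \), so no element of \( H \cap V_i \) moves any part of \( A_0 \) into \( B_0 \). The construction stalls on a set of positive measure. (Your remark that the needed shifts lie near \( V_iV_i^{-1} \) is also incorrect: matching a residual point of \( B_0 \) to a point of \( A_0 \) in the same \( \mathcal{W} \)-class requires composing an element of \( V_i \) with an arbitrary displacement inside that class, bounded only by the cocompactness constant of \( \mathcal{W} \), not by \( \epsilon \).)

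The paper's proof avoids this by constraining the \emph{target region} rather than the group elements used in the matching. One introduces a second cocompact tessellation \( \mathcal{V} \) whose cells have diameter less than \( \epsilon \), splits its cross section into finitely many sufficiently lacunary pieces via Lemma~\ref{lem:lacunary-partition} so that each \( \mathcal{W} \)-cell meets at most one \( \mathcal{V} \)-cell from each piece, and partitions \( A \) into finitely many sets \( A_k \) according to which pieces contain \( \pi_{\mathcal{V}}(x) \) and \( \pi_{\mathcal{V}}(Tx) \). Within each \( \mathcal{W} \)-class, \( A_k \) then lies in a single \( \mathcal{V} \)-cell and \( B_k = T(A_k) \) lies in a single \( \mathcal{V} \)-cell of diameter less than \( \epsilon \) that contains \( Tx \) for every relevant \( x \). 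Lemma~\ref{lem:equimeasurable-match} then applies with no restriction on which elements of \( H \) appear: any coherent matching of \( A_k \) onto \( B_k \) automatically sends \( x \) into the small cell containing \( Tx \), so the bound \( D(Tx,\tilde{T}x) < \epsilon \) comes for free. Salvaging your route would require a genuinely constrained matching lemma (the constrained transport problem is feasible, since \( T \) itself is a feasible plan with displacements in \( V_i \), but producing an \( H \)-decomposable feasible plan is exactly the difficulty), and that is a harder statement than the lemma you are proving.
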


\begin{proof}
  Let \( \mathcal{V} \) be a \( K' \)-cocompact tessellation over some cross-section
  \( \mathcal{C}' \) such that the diameter of each region in \( \mathcal{V} \) is less than
  \( \epsilon \). Suppose \( \mathcal{W} \) is \( K \)-cocompact. By
  Lemma~\ref{lem:lacunary-partition}, we can find a finite partition of
  \( \mathcal{C}' = \bigsqcup_{i \le n} \mathcal{C}'_{i} \) such that each \( \mathcal{C}'_{i} \)
  is \( K'K^{2}K' \)-lacunary, which guarantees that, for each \( i \), every \( \mathcal{W}_{c} \)
  intersects at most one class \( \mathcal{V}_{c'} \), \( c' \in \mathcal{C}'_{i} \). For each
  \( i,j < n \) set
  \( A_{(i,j)} = \{ x \in A : \pi_{\mathcal{V}}(x) \in \mathcal{C}'_{i}, \pi_{\mathcal{V}}(Tx) \in
  \mathcal{C}_{j}'\} \) and \( B_{(i,j)} = TA_{(i,j)} \). We re-enumerate sets \( A_{(i,j)} \) and
  \( B_{(i,j)} \) as a sequence \( A_{k} \), \( B_{k} \), \( k \le n^{2} \) and note that for all
  \( x,y \in A_{k} \) one has
  \[ \pi_{\mathcal{W}}(x) =\pi_{\mathcal{W}}(y) \implies \bigl(\pi_{\mathcal{V}}(x) =
    \pi_{\mathcal{V}}(y) \textrm{ and } \pi_{\mathcal{V}}(Tx) = \pi_{\mathcal{V}}(Ty)\bigr). \]
  Moreover, sets \( A_{k} \) and \( T(A_{k}) \) are \( \mathcal{W} \)-equimeasurable, so
  Lemma~\ref{lem:equimeasurable-match} yields \( \mathcal{W} \)-coherent \( H \)-decomposable
  partial transformations \( T_{k} : A_{k} \to T(A_{k}) \). The transformation
  \( \tilde{T} : A \to B \) can now be defined by the condition \( \tilde{T}x = T_{k}x\) whenever
  \( x \in A_{k} \). It is easy to check that \( \tilde{T} \) is as claimed.
\end{proof}

\begin{proof}[Proof of Theorem~\ref{thm:approximation-by-cocycle-in-dense-set}]
  By Proposition~\ref{prop:union-of-smooth}, we have a sequence \((\mathcal V_k)_{k}\) of cocompact
  tessellations such that \(\mathcal R_G=\bigcup_k \mathcal R_{\mathcal V_k}\).  Let
  \( A_{0} = \{ x \in X : \pi_{\mathcal{V}_{0}}(x) = \pi_{\mathcal{V}_{0}}(Tx) \} \). Use
  Lemma~\ref{lem:equimeasurable-approximation} to find an \( H \)-decomposable
  partial transformation \( T_{0} : A_{0} \to T(A_{0}) \) that satisfies the inequality
  \( \esssup_{x \in A_{0}}D(T_{0}x, Tx) < \epsilon \). Set
  \[ A_{k} = \Bigl\{ x \in X : \pi_{\mathcal{V}_{k}}(x) = \pi_{\mathcal{V}_{k}}(Tx) \textrm{ and } x
    \not \in \bigsqcup_{l < k}A_{l} \Bigr\} \]
  and note that \( A_{k} \), \( k \in \mathbb{N} \), form a partition of \( X \) because
  \(\mathcal R_G=\bigcup_k \mathcal R_{\mathcal V_k}\). Construct partial transformations
  \( T_{k} : A_{k} \to T(A_{k}) \) via repeated applications of
  Lemma~\ref{lem:equimeasurable-approximation} to the tessellations \( \mathcal{V}_{k} \).  The
  element \( S \in \fgr{G \acts X} \) defined for \(x \in A_{k}\) by \( Sx = T_{k}x \) satisfies the
  conclusion of the theorem.
\end{proof}

\section{Orbital transformations}
\label{sec:orbit-transf}

Let \(G \acts X\) be a \emph{free} measure-preserving action of a locally compact Polish group on a
standard probability space. Recall that the identification of \(G\) with its orbits induces the
cocycle\index{Cocycle} map \(\rho : \mathcal{R}_G \to G\), defined by \(\rho(x, gx) = g\). Moreover,
every \(T \in [\mathcal{R}_G]\) has an associated cocycle \(\rho_T : X \to G\) determined by the
condition \(T(x) = \rho_T(x)x\) for all \(x \in X\).

Fix a \emph{right}-invariant Haar measure \(\lambda\) on \(G\). Since any orbit \( [x]_{\eqr_{G}} \)
can be identified with the group \(G\) via the map \( G \ni g \mapsto gx \in [x]_{\eqr_{G}} \), the
measure \(\lambda\) can be pushed forward through this identification to define a collection of
measures \((\lambda_{x})_{x \in X}\) on \(X\). These measures are given by
\( \lambda_{x}(A) = \lambda(\{ g \in G : gx \in A\}) \). The right invariance of \(\lambda\) ensures
that \(\lambda_{x}\) depends only on the orbit \( [x]_{\eqr_{G}} \) and is independent of the choice
of the base point; that is, \(\lambda_{x} = \lambda_{y}\) whenever \( x \eqr_{G} y \).

This section focuses on two main facts: the so-called mass-transport principle, given in
Eq.~\eqref{eq:mass-transport-principle} below, and the non-singularity of the transformations
induced by elements of \(\fgr{G \acts X}\) onto orbits of the action, formulated in
Proposition~\ref{prop:orbital-transformations}.  Both of these topics have been discussed in the
literature in various related contexts, including, for instance,~\cite[Appen.~A]{MR3748570} and the
treatise~\cite{MR1799683}.  However, we are not aware of any specific reference from which
Eq.~\eqref{eq:mass-transport-principle} and Proposition~\ref{prop:orbital-transformations} can be
readily deduced.  The following derivations are therefore included for the reader's convenience,
with the disclaimer that these results are likely to be known to experts.

The freeness of the action allows us to identify the equivalence relation \( \eqr_{G} \) with
\( X \times G \) via \( \Phi : X \times G \to \eqr_{G} \), \( \Phi(x, g) = (x, gx) \). The
push-forward \( \Phi_{*} (\mu \times \lambda) \) of the product measure is denoted by \( M \) and
can equivalently be defined by
\[ M(A) = \int_{X} \lambda_{x}(A_{x})\, d\mu(x), \]
where \( A \subseteq \eqr_{G} \) and \( A_{x} = \{ y \in X : (x,y) \in A\} \).

In general, the flip transformation \( \sigma : \eqr_{G} \to \eqr_{G} \), \( \sigma(x,y) = (y,x) \),
does not preserve the measure \( M \). Set \( \Psi : X \times G \to X \times G \) to be
the involution \( \Psi = \Phi^{-1} \circ \sigma \circ \Phi \),
which simplifies to \( \Psi(x, g) = (gx, g^{-1}) \).
Following the computation as in~\cite[Prop.~A.11]{MR3748570}, one can easily check that
\( \Psi_{*}(\mu \times \lambda) = \mu \times \widehat{\lambda} \), where \( \widehat{\lambda} \) is
the associated \emph{left}-invariant measure, \( \widehat{\lambda}(A) = \lambda(A^{-1}) \). If we
define the measure \( \widehat{M} \) on \( \eqr_{G} \) to be
\[ \widehat{M}(A) = \Phi_{*}(\mu \times \widehat{\lambda}) = \int_{X} \widehat{\lambda}_{x}(A_{x})\,
  d\mu(x), \]
then \( \sigma_{*}M = \widehat{M} \), and also \(\sigma_{*}\widehat{M} = M\), since
\(\sigma^{-1} = \sigma\). In particular, \( \sigma \) is \( M \)-invariant if and only if
\( \lambda = \widehat{\lambda} \), i.e., \( G \) is unimodular.

A function \( f : \eqr_{G} \to \mathbb{R} \) is \( M \)-integrable if and only if
\( X \times G \ni (x,g) \mapsto f(x, gx) \) is \( (\mu \times \lambda) \)-integrable.  Using
Fubini's theorem and noting that \(\Phi\circ\Psi\inv=\Phi\circ\Psi=\sigma\circ\Phi\),
we get the following chain of identities for such a function \(f\):
\begin{align*}
\int_X\int_Gf(x,gx)\,d\lambda(g)d\mu(x)&=\int_{X \times G} f(x, gx) \, d(\mu
\times \lambda)(x,g)\\
&=\int_{X\times G}f\circ \Phi\, d(\Psi\inv_*(\mu\times \widehat\lambda))\\
&=\int_{X\times G}f\circ \Phi \circ\Psi\inv\, d(\mu\times \widehat\lambda)\\
&=\int_{X\times G}f\circ \sigma\circ\Phi\, d(\mu\times \widehat\lambda)\\
\int_X\int_Gf(x,gx)\,d\lambda(g)d\mu(x)
&=\int_X \int_G f(gx,x)\,d\widehat\lambda(g)d\mu(g).
\end{align*}

Let \( \Delta : G \to \mathbb{R}^{>0} \) be the \emph{left Haar modulus}\index{Haar modulus} given
for \( g \in G \) by \( \lambda(gA) = \Delta(g)\lambda(A) \). Recall that
\( \Delta : G \to \mathbb{R}^{>0} \) is a continuous homomorphism
(see~\cite[Prop.~7]{MR0175995}). The measures \( \lambda \) and \( \widehat{\lambda} \) belong to
the same measure class, with the Radon--Nikodym derivative
\( \frac{d\widehat{\lambda}}{d\lambda} (g) = \Delta(g^{-1}) \) for all \( g \in G \)
(see~\cite[p.~79]{MR0175995}).  The identities above translate into the following:
\begin{equation}
  \label{eq:mass-transport-principle}
  \int_{X} \int_{G} f(x, g \cdot x)\, d\lambda(g) d\mu(x) =
  \int_{X} \int_{G} \Delta(g^{-1}) f(g \cdot x, x)\, d\lambda(g) d\mu(x).
\end{equation}
When the group \( G \) is unimodular, this expression attains a very symmetric form and is known as
the \textbf{mass-transport principle}\index{Mass-transport principle}:
\begin{equation}
  \label{eq:mass-transport-principle-unimodular}
  \int_{X} \int_{G} f(x, g \cdot x)\, d\lambda(g) d\mu(x) =
  \int_{X} \int_{G}  f(g \cdot x, x)\, d\lambda(g) d\mu(x).
\end{equation}

Any automorphism \( T \in \fgr{G \acts X} \) induces, for each \( x \in X \), a transformation of
the \( \sigma \)-finite measure space \( (X, \lambda_{x}) \). In general, \( T \) does not preserve
\( \lambda_{x} \); however, it is always non-singular, and the Radon--Nikodym derivative
\( \frac{dT_{*}\lambda_{x}}{d\lambda_{x}} \) can be described explicitly. Note that the full group
\( \fgr{G \acts X} \) admits two natural actions on the equivalence relation \( \eqr_{G} \): the
\emph{left} action \( l \) is given by \( l_{T}(x,y) = (Tx,y) \), and the \emph{right} action
\( r \) is defined as \( r_{T}(x,y) = (x,Ty) \). A straightforward verification
(see~\cite[Lem.~A.9]{MR3748570}) shows that \( l \) is always \( M \)-invariant. Since
\( r_{T} \circ \sigma = \sigma \circ l_{T} \), for all \(T \in \fgr{G \acts X}\), we have
\[ (r_{T})_{*} \widehat{M} = (r_{T} \circ \sigma)_{*} M = (\sigma \circ l_{T})_{*} M = \sigma_{*} M = \widehat{M}.\]

Let \( \Theta = \Phi^{-1} \circ r_{T} \circ \Phi \), i.e., \( \Theta(x,g) = (x, \rho_{Tg}(x)) \).
The equality \( (r_{T})_{*} \widehat{M} = \widehat{M} \) is equivalent to
\( \Theta_{*}(\mu \times \widehat{\lambda}) \) = \( \mu \times \widehat{\lambda} \).  The latter
implies that for each Borel set \( B \subseteq G \) and all measurable sets \( A \subseteq X \), we
have
\begin{displaymath}
  \begin{aligned}
    \int_{A} \widehat{\lambda}(B) \, d\mu
    &=  (\mu \times \widehat{\lambda})(A \times B) = \Theta_{*}(\mu \times \widehat{\lambda})(A \times B) \\
    &= (\mu \times \widehat{\lambda})\bigl(\{ (x, g) \in X \times G : (x, \rho_{Tg}(x)) \in A \times B\}\bigr) \\
    \textrm{Fubini's theorem} &= \int_{A} \widehat{\lambda}(\{ g \in G : \rho_{Tg}(x) \in B \})\, d\mu(x) \\
    &= \int_{A} \widehat{\lambda}(\{ g \in G : gx \in T^{-1}Bx \})\, d\mu(x),
  \end{aligned}
\end{displaymath}
which is possible only if
\( \widehat{\lambda}(\{ g \in G : gx \in T^{-1}Bx \}) = \widehat{\lambda}(B) \) for \( \mu \)-almost
all \( x \). Passing to the measures on the orbits, this translates for each \( B \) into
\( \widehat{\lambda}_{x}(T^{-1}Bx) = \widehat{\lambda}_{x}(Bx) \). If
\( (B_{n})_{n \in \mathbb{N}} \) is a countable algebra of Borel sets in \( G \) that generates the
whole Borel \( \sigma \)-algebra, then for each \( x \in X \), \( (B_{n}x)_{n \in \mathbb{N}} \) is
an algebra of Borel subsets of the orbit \( [x]_{\mathcal{R}_{G}} \), which generates the Borel
\( \sigma \)-algebra on it. We have established that for \( \mu \)-almost all \( x \in X \), the two
measures, \( \widehat{\lambda}_{x} \) and \( T_{*}\widehat{\lambda}_{x} \), coincide on each
\( B_{n}x \), \( n \in \mathbb{N} \), thus \( \mu \)-almost surely
\( \widehat{\lambda}_{x} = T_{*}\widehat{\lambda}_{x} \).

The equality \( \frac{d\widehat{\lambda}}{d\lambda} (g) = \Delta(g^{-1}) \) translates into
\( \frac{d\widehat{\lambda}_{x}}{d\lambda_{x}} (y) = \Delta(\rho(x,y)^{-1}) = \Delta(\rho(y,x)) \),
and the Radon--Nikodym derivative \( \frac{dT_{*}\lambda_{x}}{d\lambda_{x}} \) can now be computed as
follows:
\begin{displaymath}
  \begin{aligned}
    \frac{dT_{*}\lambda_{x}}{d\lambda_{x}}(y)
    &= \frac{dT_{*}\lambda_{x}}{dT_{*}\widehat{\lambda}_{x}}(y) \cdot
      \frac{dT_{*}\widehat{\lambda}_{x}}{d\widehat{\lambda}_{x}}(y) \cdot
      \frac{d\widehat{\lambda}_{x}}{d\lambda_{x}}(y) \\
    \textrm{\( T \) preserves \( \widehat{\lambda}_{x} \)}
    &= \frac{dT_{*}\lambda_{x}}{dT_{*}\widehat{\lambda}_{x}}(y) \cdot
      \frac{d\widehat{\lambda}_{x}}{d\lambda_{x}}(y)
      =\frac{d\lambda_{x}}{d\widehat{\lambda}_{x}}(T^{-1}y) \cdot
      \frac{d\widehat{\lambda}_{x}}{d\lambda_{x}}(y) \\
    &= \Bigl(\frac{d\widehat{\lambda}_{x}}{d\lambda_{x}}(T^{-1}y)\Bigr)^{-1}
      \cdot \frac{d\widehat{\lambda}_{x}}{d\lambda_{x}}(y) \\
    &= \Delta(\rho(x, T^{-1}y)^{-1})^{-1} \Delta(\rho(x, y)^{-1}) \\
    &= \Delta\bigl(\rho(x, T^{-1}y) \cdot \rho(y, x)\bigr) = \Delta(\rho_{T^{-1}}(y)).
  \end{aligned}
\end{displaymath}
We summarize the content of this section into a proposition.

\begin{proposition}
  \label{prop:orbital-transformations}
  Let \( G \) be a locally compact Polish group acting freely \( G \acts X \) on a standard
  probability space \( (X, \mu) \). Let \( \lambda \) be a right Haar measure on \( G \),
  \( \Delta : G \to \mathbb{R}^{>0} \) be the corresponding Haar modulus, and let
  \( (\lambda_{x})_{x \in X} \) be the family of measures obtained by pushing \( \lambda \) onto
  orbits via the action map. Each \( T \in \fgr{G \acts X} \) induces a non-singular transformation
  of \( (X, \lambda_{x}) \) for almost every \( x \in X \), and moreover, one has
  \( \lambda_{x} (T^{-1}A) = \int_{A} \Delta(\rho_{T^{-1}}(y)) \, d\lambda_{x}(y)\) for all Borel
  sets \( A \subseteq X \). If \( G \) is unimodular, then \( T_{*}\lambda_{x} = \lambda_{x} \) for
  \(\mu\)-almost all \( x \in X \).
\end{proposition}

For future reference, we isolate a simple lemma, which is an immediate consequence of Fubini's
theorem.

\begin{lemma}
  \label{lem:zero-set-fubini}
  Let \( G \) be a locally compact Polish group acting freely on a standard probability space
  \( (X, \mu) \). Let \( \lambda \), \( \widehat{\lambda} \), \( (\lambda_{x})_{x \in X} \), and
  \( (\widehat{\lambda}_{x})_{x \in X} \) be as above. For any Borel set \( A \subseteq X \), the
  following are equivalent:
  \begin{enumerate}
    \item\label{item:mA-zero} \( \mu(A) = 0 \);
    \item\label{item:lx-zero} \( \lambda_{x}(A) = 0 \) for \( \mu \)-almost all \( x \in X \);
    \item\label{item:lhx-zero} \( \widehat{\lambda}_{x}(A) = 0 \) for \( \mu \)-almost all \( x \in X \).
  \end{enumerate}
\end{lemma}

\begin{proof}
  \noindent~\eqref{item:mA-zero} \( \iff \) {}~\eqref{item:lx-zero} Using Fubini's Theorem on
  \( (X \times G, \mu \times \lambda) \) to rearrange the order of quantifiers, one has:
    \begin{multline*}
      \mu(A) = 0 \iff \forall g \in G\ \forall^{\mu} x \in X\ gx \not \in A \\
      \iff \forall^{\mu} x \in X\ \forall^{\lambda} g \in G\ gx \not \in A\iff \forall^{\mu}
      x \in X\ \lambda_{x}(A) = 0.
\end{multline*}

\noindent~\eqref{item:lx-zero} \( \iff \) {}~\eqref{item:lhx-zero} is evident, since \( \lambda \)
and \( \widehat{\lambda} \) are equivalent measures, hence so are \( \lambda_{x} \) and
\( \widehat{\lambda}_{x} \) for all \( x \in X \).
\end{proof}

\section{The Hopf decomposition of elements of the full group}
\label{sec:hopf-decomp-elem}

Fix an element \( T \in \fgr{G \acts X} \) of the full group of a free measure-preserving action of
a locally compact Polish group \(G\).  As explained in Section~\ref{sec:orbit-transf}, \(T\) acts
naturally in a non-singular manner on each \(G\)-orbit.  This action thus has a Hopf decomposition
(see Appendix~\ref{sec:hopf-decomposition-appendix}). We will now explain how to interpret this
decomposition globally, thereby generalizing the fact that, when \(G\) is discrete, any element of
the full group decomposes the space into periodic and aperiodic parts.

Let \( \mathcal{C} \) be a cocompact cross-section, and let \( \mathcal{V}_{\mathcal{C}} \) be
the Voronoi tessellation associated with some proper norm on \(G\) (see
Appendix~\ref{sec:tessellations}). Set \( \pi_{\mathcal{C}} : X \to \mathcal{C} \) to be the
projection map given by the condition \( (\pi_{\mathcal{C}}(x), x) \in \mathcal{V}_{\mathcal{C}}\)
for all \( x \in X \). The dissipative and conservative sets of the transformation \(T\) are defined
as follows:
\begin{displaymath}
  \begin{aligned}
    D_T &= \bigl\{ x \in X : \exists n \in \mathbb{N}\ \forall k \in \mathbb{Z} \textrm{ such that
        } |k| \ge n \textrm{ one has } \pi_{\mathcal{C}}(x) \ne \pi_{\mathcal{C}}(T^{k}x) \bigr\}, \\
    C_T &= \bigl\{ x \in X: \forall n \in \mathbb{N}\ \exists k_{1}, k_{2} \in \mathbb{Z} \textrm{
        such that } \\
      & \qquad \qquad k_{1} \le -n, n \le k_{2} \textrm{ and
        } \pi_{\mathcal{C}}(T^{k_{1}}x) = \pi_{\mathcal{C}}(x) = \pi_{\mathcal{C}}(T^{k_{2}}x) \bigr\}.
  \end{aligned}
\end{displaymath}
In plain words, the dissipative set \( D_T \) consists of those points \( x \) whose orbit has a
finite intersection with the Voronoi region of \( x \). The conservative set \( C_T \), on the other
hand, collects all the points whose orbit is bi-recurrent in the region. We argue in
Proposition~\ref{prop:hopf-decomposition-full-group} that the sets \( D_T \) and \( C_T \) induce the Hopf
decomposition for \( T \restriction_{[x]_{\eqr_{T}}} \) for almost every \( x \in X \); in
particular, \( D_T \sqcup C_T \) is a partition of \( X \), which is independent of the choice of the
cross-section \( \mathcal{C} \).

\begin{lemma}
  \label{lem:dc-partition}
  The sets \( D_T \) and \( C_T \) partition the phase space: \( X = D_T \sqcup C_T \).
\end{lemma}
\begin{proof}
  Define sets \( N_{+} \) and \( N_{-} \) according to
  \begin{displaymath}
    \begin{aligned}
      N_{+} &= \{x \in X \setminus (D_T \sqcup C_T) : \forall k \ge 1\ \pi_{\mathcal{C}}(T^{k}x)
              \ne \pi_{\mathcal{C}}(x)\}, \\
      N_{-} &= \{x \in X \setminus (D_T \sqcup C_T) : \forall k \ge 1\ \pi_{\mathcal{C}}(T^{-k}x)
              \ne \pi_{\mathcal{C}}(x)\}, \\
    \end{aligned}
  \end{displaymath}
  and note that
  \( X \setminus (D_T \sqcup C_T) \subseteq \bigcup_{k \in \mathbb{Z}} T^{k}(N_{+} \cup N_{-}) \). To
  show that \( X = D_T \sqcup C_T \) it is enough to verify that \( \mu(N_{+}) = 0 = \mu(N_{-}) \).

  This is done by noting that these sets admit pairwise disjoint copies using piecewise translations
  by powers of~\( T \). In view of the fact that \( T \) is measure-preserving, this implies that
  \( N_{+} \) and \( N_{-} \) are null. To be more precise, set \( N_{-}^{0} = N_{-} \) and define
  inductively \( N^{n}_{-} = \{T^{k(x)}x : x \in N^{n-1}_{-}\} \), where \( k(x) \ge 1 \) is the
  smallest natural number such that \( \pi_{\mathcal{C}}(T^{k(x)}x) = \pi_{\mathcal{C}}(x) \). Note
  that \( k(x) \) is well-defined, for otherwise \( x \) would belong to \( D_{T} \). Sets
  \( N^{n}_{-} \), \( n \in \mathbb{N} \), are pairwise disjoint, and have the same measure since
  \( T \) is measure-preserving. We conclude that \( \mu(N_{-}) = 0 \). The argument for
  \( \mu(N_{+}) = 0 \) is similar.
\end{proof}

\begin{proposition}[Hopf decomposition\index{Hopf decomposition}]
  \label{prop:hopf-decomposition-full-group}
  Let \( G \acts X \) be a free measure-preserving action of a locally compact Polish group on a
  standard probability space \( (X, \mu) \). Let \( \lambda \) be a right Haar measure on \( G \)
  and \( (\lambda_{x})_{x \in X} \) be the push-forward of \( \lambda \) onto the orbits as
  described in Section~\ref{sec:orbit-transf}. For any element \( T \in \fgr{G \acts X} \), the
  measurable \( T \)-invariant partition \( X = D_T \sqcup C_T \) defined above satisfies that for
  \( \mu \)-almost all \( x \in X \) the partition
  \( [x]_{\eqr_{G}} = ([x]_{\eqr_{G}} \cap D_T) \sqcup ([x]_{\eqr_{G}}\cap C_T)\) is the Hopf
  decomposition for \( T\restriction_{[x]_{\eqr_{G}}} \) on \( ([x]_{\eqr_{G}}, \lambda_{x}) \).
  Moreover, there is only one partition \( X = D_T \sqcup C_T \) satisfying this property up to null
  sets.
\end{proposition}

\begin{proof}
  According to Proposition~\ref{prop:orbital-transformations}, we may assume that for all
  \( x \in X \) the map \( T\restriction_{[x]_{\eqr_{G}}} : [x]_{\eqr_{G}} \to [x]_{\eqr_{G}} \) is
  a non-singular transformation with respect to \( \lambda_{x} \) and satisfies
  \( \lambda_{x}(TA) = \int_{A} \Delta(\rho_{T}(y))\, d\lambda_{x}(y) \) for all Borel
  \( A \subseteq X \).

  Let \( [x]_{\mathcal{R}_{G}} = D_{x} \sqcup C_{x} \), \( x \in X \), denote the Hopf
  decomposition for \( T\restriction_{[x]_{\eqr_{G}}} \). For any \( c \in \mathcal{C} \), the set
  \[ \widetilde{W}_{c} = \bigl\{ x \in (\mathcal{V}_{\mathcal{C}})_{c} : T^{k}x \not \in
    (\mathcal{V}_{\mathcal{C}})_{c} \textrm{ for all } k \ge 1 \bigr\} \]
  is a wandering set and therefore \( \widetilde{W}_{c} \subseteq D_{x} \) up to a null set. If
  \( x \in D_T \) satisfies \( x \in (\mathcal{V}_{\mathcal{C}})_{c} \), \( c \in \mathcal{C} \),
  then \( [x]_{\eqr_{G}} \cap (\mathcal{V}_{\mathcal{C}})_{c} \) is finite, and therefore
  \( [x]_{\eqr_{G}} \cap (\mathcal{V}_{\mathcal{C}})_{c} \subseteq \bigcup_{k \in \mathbb{Z}}
  T^{k}\widetilde{W}_{c} \), whence also
  \[ [x]_{\eqr_{G}} \cap D_T \subseteq \bigcup_{c \in \mathcal{C} \cap [x]_{\eqr_{G}}} \bigcup_{k \in
      \mathbb{Z}}T^{k}\widetilde{W}_{c} \subseteq D_{x}. \]

  \begin{claim*}
    We have \( \lambda_{x}([x]_{\eqr_{G}}\cap C_T \cap D_{x}) = 0 \) for each \( x \in X \).
  \end{claim*}
  \begin{cproof}
    Otherwise we can find \( c \in \mathcal{C} \cap [x]_{\eqr_{G}} \) and a wandering set
    \( W \subseteq [x]_{\eqr_{G}}\cap (\mathcal{V}_{\mathcal{C}})_{c} \cap C_{T} \) of positive measure,
    \( \lambda_{x}(W) > 0 \). Construct a sequence of sets \( W_{n} \) by setting \( W_{0} = W \)
    and
    \begin{multline*}
      W_{n} = \bigl\{T^{k_{n}(y)}y : y \in W_{0} \textrm{ and } k_{n}(y) \textrm{ is minimal such
        that } \\
      \pi_{\mathcal{C}}(T^{k_{n}(y)}) = \pi_{\mathcal{C}}(y) \textrm{ and } T^{k_{n}(y)}y
      \not \in \bigcup_{k < n} W_{k}\bigr\},
    \end{multline*}
    where the value of \( k_{n}(y) \) is well-defined for each \( y \in W_{0} \) and
    \( n \in \mathbb{N} \), since all points in \( C_{T} \) return to their Voronoi domain infinitely
    often. Define a transformation \( S_{n} : W_{0} \to W_{n} \) as \( S_{n}(y) = T^{k_{n}(y)}y \),
    and note that for all \( n \in \mathbb{N} \) one has
    \( \rho_{S_{n}}(y) \in \rho((\mathcal{V}_{\mathcal{C}})_{c}, (\mathcal{V}_{\mathcal{C}})_{c})
    \), where, as earlier, \(\rho\) and \(\rho_{S_{n}}\) denote the cocycle maps.  The region
    \( \rho((\mathcal{V}_{\mathcal{C}})_{c}, (\mathcal{V}_{\mathcal{C}})_{c}) \) is precompact,
    since \( \mathcal{C} \) is cocompact, and therefore using continuity of the Haar modulus
    \( \Delta : G \to \mathbb{R}^{>0} \) one can pick \( \epsilon > 0 \) such that
    \( \Delta(\rho_{S_{n}}(y)) > \epsilon \) for all \( y \in W_{0} \) and all
    \( n \in \mathbb{N} \).

    Since \( S_{n} \) is composed of powers of \( T \),
    Proposition~\ref{prop:orbital-transformations} ensures that
    \[ \lambda_{x}(S_{n}W_{0}) = \int_{W_{0}} \Delta(\rho_{S_{n}}(y))\, d\lambda_{x}(y), \]
    whence \( \lambda_{x}(S_{n}W_{0}) \ge \epsilon \lambda_{x}(W_{0}) \) for each
    \( n \in \mathbb{N} \).  We now arrive at a contradiction, as \( W_{n} \),
    \( n \in \mathbb{N} \), form a pairwise disjoint infinite family of subsets of
    \( (\mathcal{V}_{\mathcal{C}})_{c} \) whose measure is uniformly bounded away from zero by
    \( \epsilon \lambda_{x}(W_{0}) \), which is impossible, since
    \( \lambda_{x}((\mathcal{V}_{\mathcal{C}})_{c}) < \infty \) by cocompactness of
    \( \mathcal{C} \). This finishes the proof of the claim.
  \end{cproof}

  We have established by now that \( D_T \cap [x]_{\eqr_{G}} \subseteq D_{x} \) and, up to a null set,
  \( C_T \cap [x]_{\eqr_{G}} \subseteq C_{x} \) by the claim above. Finally,
  \(\mu(X \setminus (D_T \sqcup C_T)) = 0\) implies via Lemma~\ref{lem:zero-set-fubini}
  \( \lambda_{x}((D_T \cap [x]_{\eqr_{G}}) \sqcup (C_T \cap [x]_{\eqr_{G}})) = 0 \) for \( \mu \)-almost
  all \( x \in X \), and therefore
  \[ \lambda_{x}((D_T \cap [x]_{\eqr_{G}}) \triangle D_{x}) = 0 = \lambda_{x}((C_T \cap [x]_{\eqr_{G}})
    \triangle C_{x}) \]
  \( \mu \)-almost surely.  Sets \( D_{T} \) and \( C_{T} \) thus satisfy the conclusion
  of the proposition.

  For the uniqueness part of the proposition, suppose \( D_T, C_T \) and \( D'_T, C'_T \) are two partitions
  of \( X \) such that
  \[ \lambda_{x}(D_T \triangle D_{x}) = 0 = \lambda_{x} (D'_T \triangle D_{x}) \textrm{ and }
    \lambda_{x}(C_T \triangle C_{x}) = 0 = \lambda_{x} (C'_T \triangle C_{x}) \]
  for \( \mu \)-almost all \( x \in X \). One therefore also has
  \( \forall^{\mu}x \in X\ \lambda_{x}(D_T \triangle D'_T) = 0 = \lambda_{x}(C_T \triangle C'_T) \), and
  hence \( \mu(D_T \triangle D'_T) = 0 \) and \( \mu(C_T \triangle C'_T) = 0 \) by Lemma~\ref{lem:zero-set-fubini}.
\end{proof}

We end this section with a natural definition which will be useful for analyzing elements of the full group.

\begin{definition}
  Let \( G \acts X \) be a free measure-preserving action of a locally compact Polish group on a
  standard probability space \( (X, \mu) \), and let \( T\in [G\acts X] \).  Consider the
  \(T\)-invariant partition \(X=D_T\sqcup C_T\) provided by the Hopf decomposition of \(T\) as per
  the previous proposition.  We say that \(T\) is
  \textbf{dissipative}\index{Transformation!dissipative}\index{Dissipative transformation} when \(D_T=X\) and that \(T\) is
  \textbf{conservative}\index{Transformation!conservative}\index{Conservative transformation} when \(C_T=X\).
\end{definition}

When \(G\) is discrete, observe that \(T\) is dissipative if and only if it is aperiodic (all its
orbits are infinite), and that \(T\) is conservative if and only if it is periodic (all its orbits
are finite).

\begin{example}
  Let us give a general example of dissipative elements of the full group.  Let
  \( G \overset{\alpha}{\acts} X \) be a free measure-preserving action of a locally compact Polish
  group on a standard probability space \( (X, \mu) \). If \(g\in G\) generates a discrete infinite
  subgroup, then the element of the full group \(\alpha(g)\) is dissipative.  Indeed, the action of
  \(\alpha(g)\) on each orbit is isomorphic to the \(g\)-action by left translation on \(G\) endowed
  with its right Haar measure, which is dissipative since it admits a Borel fundamental domain and
  has only infinite orbits.  For instance, if \(G = \mathbb{R}\), such a domain is given by the
  interval \([0,g)\) (or \((g, 0]\), if \(g\) is negative).
\end{example}

In Chapter~\ref{chap:example-transf}, we build an interesting example of a conservative element in
the full group of any free measure-preserving flow: its action on each orbit is actually ergodic, and
its cocycle is bounded.

\section{\texorpdfstring{\(\LL^1\)}{L1} full groups and \texorpdfstring{\(\LL^1\)}{L1} orbit
  equivalence}
\label{sec:texorpdfstr-full-gro}

We now restrict ourselves to the setup where the acting group \(G\) is locally compact Polish and
\emph{compactly generated}, endowed with a maximal compatible norm \(\norm\cdot\) (the existence of
such a norm for locally compact Polish group is equivalent to being compactly generated,
see~\cite[Cor.~2.8 and Thm.~2.53]{MR4327092}).  For such a group, as explained in
Section~\ref{sec:l1-full-groups-bound}, it makes sense to talk about the associated \(\LL^1\) full
group by endowing the group with a maximal norm.

The following definition is the natural extension of the notion of \(\LL^1\) orbit equivalence to the locally compact case, stated in terms of full groups.

\begin{definition}\label{def:l1oe}
  Let \(\alpha\) and \(\beta\) be the respective measure-preserving actions of two locally compact
  Polish compactly generated groups \(G\) and \(H\) on a standard probability space \((X,\mu)\).  We
  say that \(\alpha\) and \(\beta\) are \textbf{\(\LL^1\) orbit
    equivalent}\index{L1@\(\LL^{1}\)!orbit equivalence} when there is a measure-preserving
  transformation \(S\in\Aut(X,\mu)\) such that for all \(g\in G\) and all \(h\in H\),
  \[
    S\alpha(g)S\inv\in [H\overset{\beta}{\acts} X]_1\text{ and }
    S\inv\beta(h)S\in[G\overset{\alpha}{\acts} X]_1.
  \]
  In other words, up to conjugating \(\alpha\) by \(S\), we have that the image of \(\alpha\) is
  contained in the \(\LL^1\) full group of \(\beta\), and the image of \(\beta\) is contained in the
  \(\LL^1\) full group of \(\alpha\).
\end{definition}

We now show that \(\LL^1\) full groups do remember actions up to \(\LL^1\) orbit equivalence as
abstract groups. This is done by finding a spatial realization of the isomorphism between the full
groups.  Such techniques originated in the work of H.~Dye~\cite{dyeGroupsMeasurePreserving1959} and
have been greatly generalized by D.~H.~Fremlin~\cite[384D]{MR2459668}.  We recall that a subgroup
\(G\) of \(\Aut(X, \mu)\) is said to have \textbf{many involutions}\index{Transformation group!with
  many involutions} if for any non-trivial measurable \(A \subseteq X\) there exists a non-trivial
involution \(U \in G\) such that \(\supp U \subseteq A\). The group of quasi-measure-preserving
transformations of \((X, \mu)\) is denoted by \(\Aut^{*}(X, \mu)\).

\begin{theorem}[Fremlin]
  \label{thm:fremlin-reconstruction}
  Let \(G, H\) be subgroups of \(\Aut(X,\mu)\) with many involutions. For any isomorphism
  \(\psi : G \to H\) there exists \(S \in \Aut^{*}(X, \mu)\) such that \(\psi(T) = STS^{-1}\) for
  all \(T \in G\).
\end{theorem}

\begin{proposition}
  \label{prop:l1-full-oe-implies-l1-oe}
  If the \(\LL^{1}\) full groups of two ergodic measure-preserving actions of locally compact
  compactly generated Polish groups are isomorphic as abstract groups, then the two actions are
  \(\LL^1 \) orbit equivalent.
\end{proposition}

\begin{proof}
  Denote by \(G\overset{\alpha}{\acts}\) and \(H\overset{\beta}{\acts}\) the two actions on the same
  standard probability space \((X, \mu)\). Since the \(\LL^{1}\) full groups of ergodic actions have
  many involutions (see, for example, Lemma~\ref{lem:many-involutions}), any isomorphism
  \(\psi : \lfgr{G\overset{\alpha}{\acts}X} \to \lfgr{H\overset{\beta}{\acts}X}\) admits a spatial
  realization by some \(S \in \Aut^{*}(X, \mu)\). The Radon--Nikodym derivative of \(S_{*}\mu\) with
  respect to \(\mu\) is easily seen to be preserved by every element of
  \(\lfgr{H\overset{\beta}{\acts}X}\), and hence must be constant by ergodicity. We conclude that
  \(S \in \Aut(X, \mu)\), and therefore by the definition the actions \(\alpha\) and \(\beta\) are
  \(\LL^1\) orbit equivalent.
\end{proof}

\begin{remark}
  Similarly to the finitely generated case~\cite[Sec.~8.1]{MR4398251}, one could define \(\LL^1\)
  full orbit equivalence between actions as equality of \(\LL^1\) full groups up to conjugacy, which
  is a strengthening of \(\LL^1\) orbit equivalence (indeed the latter only requires inclusion of
  each acting group in the \(\LL^1\) full group of the other acting group).  It would be interesting
  to have examples of actions which are \(\LL^1\) orbit equivalent, but not \(\LL^1\) fully orbit
  equivalent.
\end{remark}

We end this section by showing that \(\LL^1\) orbit equivalence is equivalent to a stronger
definition where we ask that, up to conjugating \(\alpha\) by \(S\), we moreover have that, on a
full measure set \(X_0\subseteq X\), the \(\alpha\) and \(\beta\) orbits coincide.  This will be a
direct consequence of the following proposition. The proof of this proposition is the same as that
of~\cite[Prop.~3.8]{MR3464151} which was not stated in the level of generality we need. Since it is
short, we reproduce it here. We emphasize that when the acting groups are not discrete,
the full measure set \(X_0\) may very well fail to be \(\alpha\)-invariant or \(\beta\)-invariant.
In particular, when we say that the orbits coincide on \(X_0\) we simply mean that
they induce the same partition on \(X_0\).

\begin{proposition}
  \label{prop:oeincl}
  Let \(G\) and \(H\) be two locally compact Polish groups acting in a Borel measure-preserving
  manner on a standard probability space \((X,\mu)\), denote by \(\alpha\) the \(G\)-action and
  suppose that \(\alpha(G)\leq [H \acts X]\). Then there is a full measure Borel subset
  \(X_0\subseteq X\) such that
  \[\mathcal R_G\cap \left(X_0\times X_0\right)\subseteq \mathcal R_H.\]
\end{proposition}
\begin{proof}
  Let \(\lambda\) be the Haar measure on \(G\). Since \(\alpha(G)\leq[H \acts X]\), for all
  \(g\in G\) and almost all \(x\in X\), we have \(g x\in Hx\). By Fubini's theorem, this implies
  that the Borel set
  \[X_0 =\{x\in X: \text{ for }\lambda\text{-almost all }g\in G,\text{ we have } gx\in Hx\}\]
  has full measure. Now let \(x\in X_0\), and let \(g_1\in G\) be such that \(g_1x\in X_0\). We want
  to show that \(g_1x\in Hx\).

  Since \(x\) and \(g_1x\) are in \(X_0\), the sets
  \[A =\{g\in G:gx\in Hx\} \quad \textrm{and} \quad B =\{g\in G:gx\in Hg_1x\}\]
  have full measure and so \(A \cap B\) has full measure. Take \(g\in A\cap B\), and note that
  \(gx\in Hx\cap Hg_1x\), so the two orbits \(Hx\) and \(Hg_1x\) intersect, hence \(g_1x\in Hx\).
\end{proof}

\begin{corollary}\label{cor:def-l1-oe}
  Let \(G\) and \(H\) be compactly generated locally compact Polish groups, and let
  \(\norm{\cdot}_{G}\) and \(\norm{\cdot}_{H}\) be maximal norms on \(G\) and \(H\), respectively.
  Two measure-preserving actions of \(G\) and \(H\) on a standard probability space \((X,\mu)\) are
  \(\LL^1\) orbit equivalent if and only if they can be conjugated so as to share the same orbits on
  a full measure Borel subset \(X_0\subseteq X\), i.e.,
  \(\eqr_{G} \cap (X_{0} \times X_{0}) = \eqr_{H} \cap (X_{0} \times X_{0})\), and there exist Borel
  maps \(\gamma_{G} : G \times X_{0}\to H \) and \(\gamma_H : H \times X_{0}\to G \) such that:
  \begin{enumerate}
  \item\label{item:orbit-map} for all \(x\in X_0\), \( g\cdot x = \gamma_{G}(g,x)\cdot x\) and
    \(h \cdot x = \gamma_{H}(h,x)\cdot x\) whenever \( gx \in X_{0}\) and \(hx \in X_{0} \);
  \item\label{item:integrability} \( \int_{X_{0}} \norm{\gamma_{G}(g,x)}_{H}\, d\mu(x) < +\infty \)
    and \( \int_{X_{0}}\norm{\gamma_{H}(h,x)}_{G}\, d\mu(x)<+\infty\) for all \(g \in G\) and all
    \(h \in H\).
  \end{enumerate}
\end{corollary}

\begin{proof}
  We may assume that the actions \(G \acts X\) and \(H \acts X\) are Borel.  By the definition of
  \(\LL^1\) full groups, the conditions stated in the corollary are sufficient to establish
  \(\LL^1\) orbit equivalence.  Conjugating the two actions, we may also assume that they share the
  same full group. Since the \(\LL^1\) full groups contain the acting groups, we can apply
  Proposition~\ref{prop:oeincl} twice to obtain a full measure Borel subset \(X_0\subseteq X\) on
  which the orbits of the two actions coincide.

  Let \(\mathcal{F}(G)\) and \(\mathcal{F}(H)\) denote the Effros Borel spaces associated with \(G\)
  and \(H\), respectively.  The orbit equivalence relations \(\eqr_{G}\) and \(\eqr_{H}\) are Borel,
  and consequently, the maps
  \begin{displaymath}
    \begin{aligned}
      G \times X_{0} \ni (g, x) &\mapsto \{h \in H: gx = hx\} \in \mathcal{F}(H), \\
      H \times X_{0} \ni (h, x) &\mapsto \{g \in G: hx = gx\} \in \mathcal{F}(G)
    \end{aligned}
  \end{displaymath}
  are also Borel~\cite[Thm.~7.1.2]{MR1425877}.  Note that \(\{h \in H: gx = hx\} \ne \varnothing\)
  whenever \(gx \in X_{0}\), and similarly, \(\{g \in G: hx = gx\} \ne \varnothing\) provided that
  \(hx \in X_{0}\).  By applying the Kuratowski--Ryll-Nardzewski
  selectors~\cite[12.13]{kechris_classical_1995}, we can find Borel maps \(\gamma_{G}\) and
  \(\gamma_{H}\) that satisfy item~\eqref{item:orbit-map} and the
  inequalities
  \[\snorm{\gamma_{G}(g,x)}_{H} < D_{H}(x,gx) + 1, \quad
    \snorm{\gamma_{H}(h,x)}_{G} < D_{G}(x,hx) + 1, \]
  where \(D_{H}\) and \(D_{G}\) are the metrics induced on the orbits by the respective actions.
  The integrability condition~\eqref{item:integrability} now follows from the assumption that the
  actions have been conjugated to satisfy \(G \le \lfgr{H \acts X}\) and \(H \le \lfgr{G \acts X}\).
\end{proof}

We will demonstrate in the final chapter that there exist free ergodic \(\R\)-flows that are not
\(\LL^1\) orbit equivalent. This result will be established by connecting \(\LL^1\) orbit
equivalence to flip Kakutani equivalence. In the discrete amenable setting, a key result due to
T.~Austin shows that entropy is preserved under \(\LL^1\) orbit
equivalence~\cite{austinBehaviourEntropyBounded2016}.

\begin{question}
  Let \(G\) be an amenable non-discrete non-compact compactly generated locally compact Polish
  group. Are there free measure-preserving ergodic actions of \(G\) which are not \(\LL^1\) orbit
  equivalent?
\end{question}



\chapter[Derived \texorpdfstring{\( \LL^{1} \)}{L1} full groups]{Derived
  \texorpdfstring{\( \LL^{1} \)}{L1} full groups for locally compact amenable groups}
\label{chap:derived-l1-full-group}

Given a measure-preserving action of a Polish normed group \((G,\norm\cdot)\) on \( (X,\mu) \), the
\textbf{derived \( \LL^1 \) full group}\index{L1@\(\LL^{1}\)!full group!derived}\index{Derived L1@Derived \(\LL^{1}\) full group}
\( \derived(\lfgr{G\acts X}) \) of the action is, by definition, the (topological) derived subgroup
of the \(\LL^1\) full group \(\lfgr{G\acts X}\).  Recall that this means that
\( \derived(\lfgr{G\acts X}) \) is the closure in \( \lfgr{G\acts X} \) of the subgroup generated by
commutators, i.e.,~elements of the form \( TUT\inv U\inv \), where \( T,U\in \lfgr{G\acts X} \).
Provided the \(G\)-action is aperiodic, the latter can be described as a subgroup of
\(\lfgr{G\acts X}\) in three different ways, using the fact that \( \lfgr{G\acts X} \) is
\emph{induction friendly}, as explained in Section~\ref{sec:derived-equals-symmetric} (see
Corollary~\ref{cor:all-subgroups-are-equal}):
\begin{itemize}
\item \( \derived(\lfgr{G\acts X}) \) is the closure of the group generated by involutions;
\item \( \derived(\lfgr{G\acts X}) \) is the closure of the group generated by \(3\)-cycles;
\item \( \derived(\lfgr{G\acts X}) \) is the closure of the group generated by periodic elements.
\end{itemize}
In particular, all periodic elements of \( \lfgr{G\acts X} \) actually belong to
\( \derived(\lfgr{G\acts X}) \) (see Lemma~\ref{lem:periodic-elements-are-in-S(G)} for the proof of
this specific statement).

Compared to the previous chapter, we impose one further restriction on the acting group, and
consider actions of a locally compact \emph{amenable} Polish normed group
\( (G, \snorm{ \cdot } ) \).
Appendix~G of~\cite{bekkaKazhdanProperty2008}\index{Polish group!amenable} contains an excellent
review of amenability for both general topological groups and locally compact groups.
As before, we fix a measure-preserving
action \( G \acts X \) on a standard probability space \( (X,\mu) \), and let
\( D : \eqr_{G} \to \mathbb{R}^{\ge 0} \) denote the family of metrics induced onto the orbits by
the norm. To ensure our results encompass both the non-compactly generated case and the situation in
which the \(\LL^1\) full group coincides with the entire full group of the action (as
in~\cite{MR3748570}), we do not impose the condition that the norm be either proper or maximal. In
particular, the norm may be bounded, which in turn implies that the metric \(D\) on the orbits is
also bounded.

In Section~\ref{sec:dense-chain-subgr}, we construct a dense increasing chain of subgroups in
\(\derived(\lfgr{G\acts X})\). This dense chain is utilized in the subsequent sections. In
Section~\ref{sec:whirly-amenability}, we show that the amenability of the group \(G\) is reflected
in the \emph{whirly amenability} of \(\derived(\lfgr{G\acts X})\). Meanwhile, in
Section~\ref{sec:topol-gener}, we prove, by a Baire category argument, that
\(\derived(\lfgr{G\acts X})\) contains a dense \(2\)-generated subgroup.

\section{Dense chain of subgroups}
\label{sec:dense-chain-subgr}

An equivalence relation \( \eqr \subseteq \eqr_{G} \) is said to be \textbf{uniformly bounded} if
there is \( M > 0 \) and \( X' \subseteq X \) such that \( \mu(X \setminus X') = 0 \) and
\( \sup_{(x_{1}, x_{2}) \in \mathcal{R}'} D(x_{1}, x_{2}) \le M \), where
\( \eqr' = \eqr \cap X' \times X' \).

\begin{lemma}
  \label{lem:exhaustive-bounded-smooth-atomless}
  Let \( (G, \snorm \cdot) \) be a locally compact amenable Polish normed group acting on a standard
  probability space \( (X, \mu) \). There exists a sequence of cross-sections \( \mathcal{C}_{n} \),
  \(n \in \mathbb{N}\), and tessellations \( \mathcal{W}_{n} \) over \( \mathcal{C}_{n} \) such that
  for all \( n \in \mathbb{N} \)
  \begin{enumerate}
  \item \( \mathcal{R}_{\mathcal{W}_{n}} \subseteq \mathcal{R}_{\mathcal{W}_{n+1}} \) and
    \( \bigcup_{k \in \mathbb{N}} \mathcal{R}_{\mathcal{W}_{k}} = \mathcal{R}_{G} \) (up to a null
    set);
  \item \( \mathcal{R}_{\mathcal{W}_{n}} \) is uniformly bounded.
  \end{enumerate}
\end{lemma}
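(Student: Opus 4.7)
The plan is to realize \( \mathcal{R}_G \) as an increasing union of uniformly bounded sub-equivalence relations arising from a nested sequence of tessellations over cross sections of increasing lacunarity. Amenability will enter essentially through the Ornstein--Weiss quasi-tiling theorem in its measurable form for actions of locally compact amenable groups.

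First I would fix an exhausting increasing sequence of compact symmetric neighborhoods of the identity \( U_1 \subseteq U_2 \subseteq \cdots \) in \( G \) with \( \bigcup_n U_n = G \). Next I would inductively construct a decreasing sequence of cocompact cross sections \( \mathcal{C}_0 \supseteq \mathcal{C}_1 \supseteq \cdots \) with each \( \mathcal{C}_n \) being \( U_n \)-lacunary while still remaining \( V_n \)-cocompact for some compact \( V_n \subseteq G \). This is where amenability is essential: the Ornstein--Weiss quasi-tiling theorem (adapted to measure-preserving actions of locally compact amenable groups) is what allows one to iteratively thin out a cocompact cross section to arbitrarily strong lacunarity while still intersecting every orbit; for non-amenable \( G \) such sequences cannot exist, as the orbit relation then fails to be hyperfinite.

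With the cross sections in hand, I would build the tessellations inductively. Set \( \mathcal{W}_0 \) to be the Voronoi tessellation over \( \mathcal{C}_0 \). Given \( \mathcal{W}_{n-1} \) over \( \mathcal{C}_{n-1} \), use Jankov--von Neumann uniformization to pick a Borel attractor map \( \varphi_n : \mathcal{C}_{n-1} \to \mathcal{C}_n \) with \( \varphi_n(c) \in V_n \cdot c \) for all \( c \) (possible by cocompactness of \( \mathcal{C}_n \)), and set
\[
\mathcal{W}_{n,c'} = \bigsqcup \bigl\{\mathcal{W}_{n-1,c} : c \in \mathcal{C}_{n-1},\ \varphi_n(c) = c'\bigr\}.
\]
By construction each cell of \( \mathcal{W}_n \) is a union of cells of \( \mathcal{W}_{n-1} \), giving the nesting \( \mathcal{R}_{\mathcal{W}_{n-1}} \subseteq \mathcal{R}_{\mathcal{W}_n} \). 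Uniform boundedness of \( \mathcal{R}_{\mathcal{W}_n} \) follows by a telescoping triangle-inequality estimate whose bound involves \( \mathrm{diam}(\mathcal{W}_0) \) and \( \sup_{v \in V_k}\snorm{v} \) for \( k \le n \)---finite at each stage \( n \). For exhaustion: given \( (x,y) \in \mathcal{R}_G \) with \( D(x,y) = R \), choose \( n \) large enough that \( U_n \) contains a ball of radius substantially larger than \( R \) plus the current cell diameter; the strong \( U_n \)-lacunarity of \( \mathcal{C}_n \) then forces \( x \) and \( y \) to share the same cross-section attractor, hence to lie in the same cell of \( \mathcal{W}_n \).

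The main obstacle is the second step: constructing nested cocompact cross sections whose lacunarity grows without bound. This is exactly where amenability enters and cannot be dropped, and the argument relies on Ornstein--Weiss quasi-tilings for locally compact amenable group actions (or equivalently on Connes--Feldman--Weiss hyperfiniteness). The remaining steps---measurable selection of attractor maps, diameter estimates, and exhaustion---are routine once the sequence of cross sections is in hand, though the diameter bookkeeping and the verification that lacunarity indeed forces pairs of nearby orbit points into the same cell require some care.
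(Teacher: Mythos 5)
There is a genuine gap, and it sits exactly where you wave your hands: the exhaustion step, and with it the role of amenability.

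First, constructing a nested sequence of cocompact cross sections \( \mathcal{C}_0 \supseteq \mathcal{C}_1 \supseteq \cdots \) with \( \mathcal{C}_n \) being \( U_n \)-lacunary does \emph{not} require amenability or Ornstein--Weiss quasi-tilings; such sequences exist for every measure-preserving action of a locally compact Polish group (this is a standard marker-type construction, cf.\ the existence of \( K \)-cocompact cross sections in Appendix B). So your stated reason why the construction fails for non-amenable \( G \) is not the right one. Second, and more seriously, the exhaustion claim is false as stated: \( U_n \)-lacunarity of \( \mathcal{C}_n \) controls the separation between cross-section points, but says nothing about where the \emph{boundaries} of the cells lie. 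Two orbit-equivalent points \( x, y \) with \( D(x,y) = R \) can be separated by a cell boundary at every stage \( n \), no matter how large the lacunarity; in your construction \( x \) and \( y \) lie in the same cell of \( \mathcal{W}_n \) if and only if the iterated attractor maps \( \varphi_n \circ \cdots \circ \varphi_1 \) eventually merge \( \pi_{\mathcal{W}_0}(x) \) and \( \pi_{\mathcal{W}_0}(y) \), and nothing in the construction forces this. Indeed, an increasing sequence of uniformly bounded subrelations exhausting \( \mathcal{R}_G \) restricted to a cross section is essentially equivalent to hyperfiniteness of that countable relation, so the exhaustion is precisely the point where amenability must be used, not a routine consequence of lacunarity.

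The paper's proof fixes a \emph{single} cocompact cross section \( \mathcal{C} \), restricts \( \mathcal{R}_G \) to it to obtain a countable Borel equivalence relation \( E \), and invokes Connes--Feldman--Weiss to write \( E = \bigcup_n E_n \) as an increasing union of finite Borel equivalence relations; this is where amenability enters. A second issue you do not address then arises: finite \( E_n \)-classes need not have uniformly bounded \( D \)-diameter. The paper handles this by a Borel--Cantelli truncation (the sets \( A_{n,m} \) of points whose \( E_n \)-class has diameter at most \( m \), the choice of \( m_n \) with \( \nu(\mathcal{C}\setminus A_{n,m_n}) < 2^{-n} \), and \( B_n = \bigcap_{k\ge n} A_{k,m_k} \)), which produces uniformly bounded relations \( F_n \) that still exhaust \( E \) up to a null set. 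The tessellations \( \mathcal{W}_n \) are then obtained by merging Voronoi cells along \( F_n \). To repair your argument you would need to import both of these ingredients; the lacunarity bookkeeping alone cannot deliver the exhaustion.
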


\begin{proof}
  Let \( \mathcal{C} \) be a cocompact cross-section, \( \mathcal{V}_{\mathcal{C}} \) be the Voronoi
  tessellation over~\( \mathcal{C} \), \( \pi_{\mathcal{V}_{\mathcal{C}}} : X \to \mathcal{C} \) be
  the associated reduction, and \( \nu = (\pi_{\mathcal{V}_{\mathcal{C}}})_{*}\mu \) be the
  push-forward measure on \( \mathcal{C} \). Recall that
  \( \mathcal{R}_{\mathcal{V}_{\mathcal{C}}} \) is uniformly bounded since \( \mathcal{C} \) is
  cocompact. Let \( E \) be the equivalence relation obtained by restricting \( \mathcal{R}_{G} \)
  onto \( \mathcal{C} \). By a theorem of A.~Connes, J.~Feldman, and B.~Weiss~\cite{MR662736},
  \( E \) is hyperfinite on an invariant set of \( \nu \)-full measure. Throwing away a
  \( G \)-invariant null set, we may write \( E = \bigcup_{n} E_{n} \), where
  \( (E_{n})_{n \in \mathbb{N}} \) is an increasing sequence of Borel equivalence relations with
  finite classes. For \( m, n \in \mathbb{N} \), define \( A_{n, m} \) to be the set of points in
  the cross-section whose \( E_{n} \)-class is bounded in diameter by \( m \):
  \[ A_{n, m} = \bigl\{ c \in \mathcal{C} : D(c_{1},c_{2}) \le m \textrm{ for all } c_{1}, c_{2} \in
    \mathcal{C} \textrm{ such that } c_{1} E_{n} c \textrm{ and } c_{2} E_{n} c \bigr\}. \]
  Note that the sets \( A_{n,m} \) are \( E_{n} \)-invariant, nested, and
  \( \bigcup_{m} A_{n,m} = \mathcal{C} \) for every \( n \in \mathbb{N} \). Pick \( m_{n} \) so
  large as to ensure \( \nu(\mathcal{C} \setminus A_{n,m_{n}}) < 2^{-n} \) and let
  \( B_{n} = \bigcap_{k \ge n} A_{k,m_{k}} \). The sets \( B_{n} \) are \( E_{n} \)-invariant,
  increasing, and \( \lim_{n}\nu(B_{n}) = \nu(\mathcal{C}) \). Define equivalence relations
  \( F_{n} \) on \( \mathcal{C} \) by setting \( c_{1} F_{n} c_{2} \) whenever \( c_{1} = c_{2} \)
  or \( c_{1}, c_{2} \in B_{n} \) and \( c_{1} E_{n} c_{2} \). Note that
  \( D(c_{1}, c_{2}) \le m_{n} \) whenever \( c_{1} F_{n} c_{2} \). Let
  \( \mathcal{C}_{n} \subseteq \mathcal{C} \) be a Borel transversal for \( F_{n} \) and define
  \( \mathcal{W}_{n} = \{ (c, x) \in \mathcal{C}_{n} \times X: c F_{n}
  \pi_{\mathcal{V}_{\mathcal{C}}}(x) \} \).  It is straightforward to check that each
  \( \mathcal{W}_{n} \) is a tessellation over \( \mathcal{C}_{n} \), and the equivalence relations
  \( \mathcal{R}_{\mathcal{W}_{n}} \) satisfy the conclusions of the lemma.
\end{proof}

The equivalence relations \( \mathcal{R}_{\mathcal{W}_{n}} \) produced by
Lemma~\ref{lem:exhaustive-bounded-smooth-atomless} give rise to a nested chain of groups
\( \fgr{\mathcal{R}_{\mathcal{W}_{0}}} \leq \fgr{\mathcal{R}_{\mathcal{W}_{1}}} \leq \cdots \). Some
basic facts about such groups can be found in Appendix~\ref{sec:tessellations}. The following lemma
establishes that such a chain is dense in the \emph{derived} \( \LL^{1} \) full group.

\begin{lemma}
  \label{lem:exhaustive-chain-subgroups}
  Let \( (G, \snorm \cdot) \) be a locally compact amenable Polish normed group acting on a standard
  probability space \( (X, \mu) \) and let \( (\mathcal{R}_{n})_{n \in \mathbb{N}} \) be a sequence
  of equivalence relations as in Lemma~\ref{lem:exhaustive-bounded-smooth-atomless}. If the action
  is aperiodic, then the union \( \bigcup_{n} [\mathcal{R}_{n}] \) is contained in the derived
  \( \LL^{1} \) full group \( \derived(\lfgr{G \acts X}) \) and is dense therein.
\end{lemma}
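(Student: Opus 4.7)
The plan is to establish the containment $\bigcup_n \fgr{\mathcal{R}_n} \subseteq D(\lfgr{\mathcal{R}_G})$ and the density separately.

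For the containment, observe first that since $\mathcal{R}_n$ is uniformly bounded by some constant $M_n$, every $T \in \fgr{\mathcal{R}_n}$ satisfies $D(x, Tx) \le M_n$ almost surely, so $\snorm{T}_1 \le M_n$ and $\fgr{\mathcal{R}_n} \subseteq \lfgr{\mathcal{R}_G}$. To push this into the derived subgroup, I would argue that aperiodicity of $G \acts X$ together with cocompactness of the Voronoi structure underlying $\mathcal{W}_n$ force the fiber disintegration $(\mu_c)_{c \in \mathcal{C}_n}$ to be atomless for $\nu_{\mathcal{W}_n}$-almost every $c$. Thus $\fgr{\mathcal{R}_n}$ qualifies as an aperiodic finitely full group in the sense of Appendix~\ref{sec:app-fin-ful}, and Corollary~\ref{cor:all-subgroups-are-equal} applied to it yields $\fgr{\mathcal{R}_n} \subseteq D(\fgr{\mathcal{R}_n}) \subseteq D(\lfgr{\mathcal{R}_G})$.

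For the density, by Corollary~\ref{cor:all-subgroups-are-equal} applied now to $\lfgr{\mathcal{R}_G}$ (equivalently Theorem~\ref{thmi: derived gen by invol}), the topological derived group $D(\lfgr{\mathcal{R}_G})$ equals the $\snorm{\cdot}_1$-closure of the subgroup generated by involutions of $\lfgr{\mathcal{R}_G}$. In view of the joint continuity of multiplication guaranteed by Theorem~\ref{thm:l1-full-groups-are-Polish}, it suffices to approximate each involution $I \in \lfgr{\mathcal{R}_G}$ by involutions lying in $\bigcup_n \fgr{\mathcal{R}_n}$, and then splice approximations of products.

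Given such an involution $I$, set $A_n = \{x \in X : (x, Ix) \in \mathcal{R}_n\}$. The symmetry of $\mathcal{R}_n$ combined with $I^2 = \mathrm{id}$ makes $A_n$ an $I$-invariant set, so
\[ I_n(x) = \begin{cases} I(x), & x \in A_n, \\ x, & x \notin A_n, \end{cases} \]
defines a measure-preserving involution that belongs to $\fgr{\mathcal{R}_n}$. Since $I_n^{-1} = I_n$, and since $I \cdot I_n$ reduces to the identity on $A_n$ and to $I$ on $A_n^c$, one gets
\[ \snorm{I \cdot I_n^{-1}}_1 = \int_{A_n^c} D(x, Ix)\, d\mu(x). \]
The hypothesis $\bigcup_n \mathcal{R}_n = \mathcal{R}_G$ modulo a null set forces $A_n \nearrow X$, and integrability of $x \mapsto D(x, Ix)$ together with dominated convergence drives the right-hand side to $0$. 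A finite product $I^{(1)} \cdots I^{(k)}$ of involutions in $\lfgr{\mathcal{R}_G}$ is then approximated by $I^{(1)}_N \cdots I^{(k)}_N \in \fgr{\mathcal{R}_N}$ for $N$ large enough, thanks to the nesting $\fgr{\mathcal{R}_n} \subseteq \fgr{\mathcal{R}_m}$ for $n \le m$ and continuity of multiplication.

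The main obstacle lies in the containment step, where I have to invoke the appendix's finitely-full-group machinery to promote $\fgr{\mathcal{R}_n}$ to its own topological derived subgroup; this relies on a Dye-style cut-and-paste argument exploiting atomlessness of the fiber measures. The density step, by contrast, reduces cleanly to the cut-off construction $I_n$ for involutions once Theorem~\ref{thmi: derived gen by invol} is at hand.
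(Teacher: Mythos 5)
Your density argument coincides with the paper's own: you restrict an involution $U$ to the $U$-invariant set $\{x : x\,\mathcal{R}_n\,Ux\}$, observe these sets increase to $X$, and invoke dominated convergence; that half is correct.

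The containment step, however, contains a genuine gap, in two places. First, the claim that aperiodicity of $G \acts X$ forces the fiber measures $\mu_c$ of the tessellations $\mathcal{W}_n$ to be atomless is false at the stated level of generality: the lemma assumes only that the action is aperiodic, which permits countable orbits (e.g.\ $G = \Z$ acting by an aperiodic automorphism), in which case the $\eqr_{\mathcal{W}_n}$-classes are finite, the disintegration is purely atomic, and $\fgr{\mathcal{R}_n}$ is a periodic --- hence non-aperiodic --- subgroup of $\Aut(X,\mu)$, so the appendix machinery cannot be applied to it. (Atomlessness is exactly why the hypothesis ``almost every orbit is uncountable'' appears later, in Corollary~\ref{cor:dense-chain-subgroups}, but not here.) Second, even granting atomlessness, Corollary~\ref{cor:all-subgroups-are-equal} asserts $D(\mathbb{G}) = \mathfrak{A}(\mathbb{G}) = \mathfrak{S}(\mathbb{G})$; it does not assert $\mathbb{G} = D(\mathbb{G})$, so it cannot by itself deliver $\fgr{\mathcal{R}_n} \subseteq D(\fgr{\mathcal{R}_n})$. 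To get that you would additionally need that $\fgr{\mathcal{R}_n}$ is topologically generated by its involutions (a Dye-type fact), which you neither prove nor cite. The paper sidesteps both problems by staying inside the ambient group: $\fgr{\mathcal{R}_n}$ is topologically generated by periodic transformations (Proposition~\ref{prop:full-group-smooth-generated-periodic}, valid with or without atoms), periodic elements of $\lfgr{G \acts X}$ lie in $\mathfrak{S}(\lfgr{G \acts X})$ by induction friendliness (Lemma~\ref{lem: periodic elements are in S(G)}), and $\mathfrak{S}(\lfgr{G \acts X}) = D(\lfgr{G \acts X})$ by Corollary~\ref{cor:all-subgroups-are-equal} applied to the aperiodic group $\lfgr{G \acts X}$ itself. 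Replacing your perfectness claim with this chain repairs the argument without any extra hypotheses.
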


\begin{proof}
  By definition, \( \fgr{\mathcal{R}_{n}} \) is a subgroup of \( \fgr{\mathcal{R}_{G}} \). Since
  equivalence relations \( \mathcal{R}_{n} \) are uniformly bounded, we actually have
  \( \fgr{\mathcal{R}_n} \leq \lfgr{ G \acts X} \), and the topology induced by the \( \LL^{1} \)
  metric on \( \fgr{\mathcal{R}_{n}} \) coincides with the topology induced from
  \( \fgr{\mathcal{R}_{G}} \). Moreover, in view of
  Proposition~\ref{prop:full-group-smooth-generated-periodic}, \( \fgr{\mathcal{R}_{n}} \) is
  topologically generated by periodic transformations, so we actually have
  \( \fgr{\mathcal{R}_{n}} \leq \derived(\lfgr{G \acts X}) \) as a consequence of
  Lemma~\ref{lem:periodic-elements-are-in-S(G)} and Corollary~\ref{cor:all-subgroups-are-equal}.

  It remains to verify that the union \( \bigcup_{n} \fgr{\mathcal{R}_{n}} \) is dense in
  \( \derived(\lfgr{G \acts X}) \). To this end, recall that by
  Corollary~\ref{cor:all-subgroups-are-equal}, the derived \( \LL^{1} \) full group
  \( \derived(\lfgr{G \acts X}) \) is topologically generated by involutions. So let
  \( U \in \derived(\lfgr{G \acts X}) \) be an involution and set
  \( X_{n} = \{x \in X : (x,U(x))\in \mathcal{R}_{n} \} \), \( n \in \mathbb{N} \). Note that
  \( X_{n} \) is \( U \)-invariant since \( U \) is an involution. Moreover, \( \mu(X_{n}) \to 1 \)
  as \( \bigcup_{n} \mathcal{R}_{n} = \mathcal{R}_{G} \), and thus the induced transformations
  \( U_{X_{n}} \in \fgr{\eqr_{n}} \) converge to \( U \) in the topology of \( \lfgr{G \acts X} \).
  We conclude that \( \bigcup_{n}\fgr{\mathcal{R}_{n}} \) is dense in the derived \( \LL^{1} \) full
  group.
\end{proof}

\begin{corollary}
  \label{cor:dense-chain-subgroups}
  Let \( (G, \snorm \cdot) \) be a locally compact amenable Polish normed group acting on a standard
  probability space \( (X, \mu) \). Suppose that almost every orbit of the action is uncountable.
  There exists a chain \( H_{0} \le H_{1} \le \cdots \le \derived(\lfgr{G \acts X}) \) of closed
  subgroups such that \( \bigcup_{n} H_{n} \) is dense in \( \derived(\lfgr{G \acts X}) \), and each
  \( H_{n} \) is isomorphic to \( \LL^{0}(Y_{n}, \nu_{n}, \Aut([0,1], \lambda)) \) for some standard
  Lebesgue space \( (Y_{n}, \nu_{n}) \). If, moreover, each orbit of the action has measure zero,
  then one can assume that all \( (Y_{n}, \nu_{n}) \) are atomless and each \( H_{n} \) is
  isomorphic to \( \LL^{0}([0,1], \lambda, \Aut([0,1], \lambda)) \).
\end{corollary}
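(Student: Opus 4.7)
The plan is to take \( H_n := \fgr{\eqr_{\mathcal{W}_n}} \), where \( (\mathcal{W}_n)_{n \in \mathbb{N}} \) is the exhaustive sequence of uniformly bounded tessellations provided by Lemma~\ref{lem:exhaustive-bounded-smooth-atomless}. Lemma~\ref{lem:exhaustive-chain-subgroups} already furnishes the required nested chain \( H_0 \le H_1 \le \cdots \le D(\lfgr{G \acts X}) \) of closed subgroups with dense union, so all that remains is to identify each \( H_n \) with \( \LL^{0}(Y_n, \nu_n, \Aut([0,1], \lambda)) \) for a suitable standard Lebesgue space \( (Y_n, \nu_n) \).

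For fixed \( n \), I would take \( Y_n := \mathcal{C}_n \) with the pushed-forward measure \( \nu_n := (\pi_{\mathcal{W}_n})_{*}\mu \) and disintegrate \( \mu = \int_{\mathcal{C}_n} \mu_c\, d\nu_n(c) \) along \( \pi_{\mathcal{W}_n} \) as in Appendix~\ref{sec:disintegration-measure}. Any element of \( \fgr{\eqr_{\mathcal{W}_n}} \) preserves the fibration into cells \( \mathcal{W}_{n,c} \) and, by essential uniqueness of the disintegration, decomposes into a measurable field \( c \mapsto T_c \) with \( T_c \in \Aut(\mathcal{W}_{n,c}, \mu_c) \); conversely, every such field assembles into an element of the full group. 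Selecting via Kuratowski--Ryll-Nardzewski a Borel family of measure-space isomorphisms \( (\mathcal{W}_{n,c}, \mu_c) \to ([0, m(c)], \lambda) \) with \( m(c) := \mu_c(\mathcal{W}_{n,c}) \), and rescaling each fiber to unit mass, transports the group isomorphically onto \( \LL^{0}(\mathcal{C}_n, \nu_n, \Aut([0,1], \lambda)) \) as Polish groups.

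The main obstacle is establishing atomlessness of the fiber measures \( \mu_c \) for \( \nu_n \)-almost every \( c \), since this is exactly what makes each fiber isomorphic to \( ([0,1], \lambda) \). Here the hypothesis that almost every orbit is uncountable enters: a Polish discrete group is countable and thus produces only countable orbits, so \( G \) must be non-discrete, and for each \( x \) the stabilizer \( G_x \) is a closed subgroup with empty interior. Consequently \( G/G_x \) carries a non-atomic quasi-invariant measure descended from the Haar measure of \( G \), and the disintegration associated to the cross section places \( \mu_c \) in the same measure class on \( \mathcal{W}_{n,c} \), hence \( \mu_c \) is non-atomic. Finally, when every orbit additionally has \( \mu \)-measure zero, the inequality \( \nu_n(\{c_0\}) = \mu(\mathcal{W}_{n,c_0}) \le \mu(G \cdot c_0) = 0 \) shows that \( (\mathcal{C}_n, \nu_n) \) is itself atomless; since the \( \LL^{0} \) construction depends only on the measure class of the base, we may rescale to obtain \( H_n \cong \LL^{0}([0,1], \lambda, \Aut([0,1], \lambda)) \), completing the strengthened conclusion.
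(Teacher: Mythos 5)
Your overall architecture matches the paper's: the groups \( H_n = \fgr{\eqr_{\mathcal{W}_n}} \) come from Lemma~\ref{lem:exhaustive-bounded-smooth-atomless}, the chain and its density in \( D(\lfgr{G \acts X}) \) from Lemma~\ref{lem:exhaustive-chain-subgroups}, and the identification of each \( \fgr{\eqr_{\mathcal{W}_n}} \) with \( \LL^{0}(Y_n,\nu_n,\Aut([0,1],\lambda)) \) via disintegration over \( \pi_{\mathcal{W}_n} \) plus a measurable selection of fiber isomorphisms is exactly the content of Corollary~\ref{cor:maharam-identification-with-l0}. Your treatment of the ``moreover'' clause (an atom of \( \nu_n \) would force a cell of positive \( \mu \)-measure inside a null orbit) is also the paper's.

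The gap is in your atomlessness argument for the fiber measures \( \mu_c \). You assert that ``the disintegration associated to the cross section places \( \mu_c \) in the same measure class'' as the quasi-invariant, Haar-descended measure on \( G/G_c \), and then deduce non-atomicity from the stabilizers having empty interior. But that local product structure is precisely the nontrivial content of the step and is nowhere justified: the measures \( \mu_c \) are produced abstractly by disintegrating \( \mu \) over the smooth relation \( \eqr_{\mathcal{W}_n} \), and nothing in that construction relates them a priori to Haar measure on the orbit. Establishing such a relation requires a Fubini/mass-transport argument of the sort the paper develops only for \emph{free} actions (Appendix~\ref{sec:orbit-transf}), whereas the corollary concerns arbitrary measure-preserving actions with uncountable orbits. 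The paper avoids the issue entirely: the set \( X_a \) of atoms of the disintegration meets almost every orbit in a countable set (countably many atoms per fiber, countably many fibers per orbit), and Lemma~\ref{lem:countable-section-measure-zero} --- proved by manufacturing infinitely many pairwise disjoint images of such a set of equal measure inside Voronoi cells, using a weakly dense sequence in \( G \) together with the nowhere-density of stabilizers and finiteness of \( \mu \) --- shows that any such set is null. Your conclusion is correct (it is Corollary~\ref{cor:tessellation-atomless}), but the assertion you lean on is exactly what must be proved, so as written the argument is incomplete.
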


\begin{proof}
  Apply Lemmas~\ref{lem:exhaustive-bounded-smooth-atomless} and~\ref{lem:exhaustive-chain-subgroups}
  to get a dense chain of subgroups
  \( \fgr{\mathcal{R}_{0}} \le \fgr{\mathcal{R}_{1}} \le \cdots \le \derived(\lfgr{G \acts X}) \)
  and use Corollary~\ref{cor:maharam-identification-with-l0} to deduce that each
  \( \fgr{\mathcal{R}_{n}} \) has the desired form.
\end{proof}

\begin{corollary}\label{cor:amenable-periodic-dense}
  Let \( (G, \snorm \cdot) \) be a locally compact amenable Polish normed group acting on a standard
  probability space \( (X, \mu) \).  If the action is aperiodic, then the set of periodic elements
  is dense in the derived \( \LL^1 \) full group \( \derived(\lfgr{G\acts X}) \).
\end{corollary}
\begin{proof}
  Consider a chain of subgroups \(\fgr{\eqr_{n}}\) given by
  Lemma~\ref{lem:exhaustive-chain-subgroups}. Periodic elements are dense in these groups for their
  natural topology (see Proposition~\ref{prop:full-group-smooth-generated-periodic} and the
  discussion preceding it).  These topologies are compatible with the standard Borel structure of
  \(\Aut(X,\mu)\) induced by the weak topology and therefore must refine the \(\LL^1\) topology by
  the standard automatic continuity arguments~\cite[Sec.~1.6]{MR1425877}.  Hence, periodic elements
  are dense in all of \( \derived(\lfgr{G\acts X}) \), as claimed.
\end{proof}

Corollary~\ref{cor:amenable-periodic-dense}, together with
Proposition~\ref{prop:induction-friendly-maximal-norm}, shows that the \( \LL^1 \) norm is maximal
on derived \( \LL^1 \) full groups of aperiodic measure-preserving actions of locally compact
amenable Polish normed groups (see Section~\ref{sec:l1-full-groups-bound} for a brief reminder on
the maximality of norms).  In particular, such groups are boundedly generated
by~\cite[Thm.~2.53]{MR4327092}.

\begin{theorem}\label{thm:lc-amenable-derived-maximal}
  Let \( (G, \snorm \cdot) \) be a locally compact amenable Polish normed group acting on a standard
  probability space \( (X, \mu) \). If the action is aperiodic, then the \(\LL^1\) norm is maximal
  on the derived \(\LL^1\) full group \( \derived(\lfgr{G\acts X}) \).
\end{theorem}

We do not know if the amenability hypothesis can be removed, even when \( G \) is discrete and the
action is free.

\section{Whirly amenability}
\label{sec:whirly-amenability}

Lemma~\ref{lem:exhaustive-chain-subgroups} is a powerful tool to deduce various dynamical properties
of derived \( \LL^{1} \) full groups. Recall that a Polish group \( G \) is said to be
\textbf{whirly amenable}\index{Polish group!whirly amenable} if it is amenable and, for any
continuous action of \( G \) on a compact space, any invariant measure is supported on the set of
fixed points of the action. In particular, each such action has to have some fixed points, so whirly
amenable groups are extremely amenable\index{Polish group!extremely amenable}, meaning
that all their continuous actions on compact spaces have fixed points.

\begin{proposition}
  \label{prop:whirly-amenability-full-groups-smooth-relation}
  Let \( \mathcal{R} \) be a smooth measurable equivalence relation on a standard Lebesgue space
  \( (X, \mu) \). If \( \mu \) is atomless, then the full group \( \fgr{\mathcal{R}} \) is whirly
  amenable.
\end{proposition}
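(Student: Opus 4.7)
The plan is to identify $\fgr{\mathcal{R}}$ with a group of the form $\LL^0(Y, \nu, \Aut([0,1], \lambda))$ over an atomless base $(Y, \nu)$ and then invoke a known whirly amenability result for $\LL^0$-groups.

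First I would use smoothness of $\mathcal{R}$ to pick a Borel transversal and disintegrate $\mu = \int_Y \mu_y\, d\nu(y)$ along the quotient map $\pi: X \to Y$, setting $\nu = \pi_*\mu$. I would observe that an automorphism $T \in \Aut(X,\mu)$ lies in $\fgr{\mathcal{R}}$ precisely when it preserves each $\mathcal{R}$-class, which by the disintegration amounts to $T$ being a measurable field $(T_y)_{y \in Y}$ of measure-preserving automorphisms of the fibers $([y]_{\mathcal{R}}, \mu_y)$. I would then apply the Maharam-type identification packaged as Corollary~\ref{cor:maharam-identification-with-l0} to obtain an isomorphism of topological groups $\fgr{\mathcal{R}} \cong \LL^0(Y', \nu', \Aut([0,1], \lambda))$ for some standard Lebesgue space $(Y',\nu')$. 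The hypothesis that $\mu$ is atomless enters precisely here: by splitting $Y$ according to whether the fibers $\mu_y$ are atomic or atomless and regrouping, I would arrange that the base $(Y',\nu')$ is also atomless, since any putative atom of $\nu'$ would correspond to a single $\mathcal{R}$-class of positive $\mu$-measure whose $\mu_y$ must therefore be atomless --- and such a class can be unfolded across an atomless parameter strip using its own atomless conditional measure.

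Once the identification is established, I would conclude by invoking the theorem of Glasner and Pestov (and its strengthenings due to Glasner-Tsirelson-Weiss and Pestov-Schneider) to the effect that $\LL^0(Y', \nu', G)$ is whirly amenable whenever $(Y', \nu')$ is an atomless standard probability space and $G$ is an amenable Polish group. Since $\Aut([0,1], \lambda)$ is extremely amenable by the Giordano-Pestov theorem --- and in particular amenable --- the proposition would follow. The hard part will be the identification step: making rigorous the claim that atomlessness of $\mu$ lets one absorb any atomic part of the quotient $(Y, \nu)$ into an atomless representing base by leveraging atomlessness of the conditional measures on positive-weight fibers, and correspondingly handling atomic fibers (where the fiberwise automorphism group is a symmetric group rather than $\Aut([0,1], \lambda)$) through a further compatible decomposition.
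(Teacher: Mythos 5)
Your overall strategy (decompose \( \fgr{\mathcal{R}} \) via disintegration, then apply the Pestov--Schneider theorem on whirly amenability of \( \LL^{0} \)-groups with amenable targets) is the same as the paper's, but the identification step as you describe it contains a genuine error. An atom of the base \( (Y',\nu') \) corresponds, as you say, to a single \( \mathcal{R} \)-class \( C \) with \( \mu(C)>0 \) and atomless conditional measure; the restriction of the full group to \( C \) is then \emph{all} of \( \Aut(C,\mu_{C}) \cong \Aut([0,1],\lambda) \). This factor cannot be ``unfolded across an atomless parameter strip'': rewriting it as \( \LL^{0}(Z,\eta,\Aut([0,1],\lambda)) \) over an atomless \( (Z,\eta) \) would force its elements to preserve the fibers of the unfolding, yielding a strictly smaller group (indeed \( \Aut([0,1],\lambda) \) is topologically simple, while \( \LL^{0}(Z,\eta,G) \) over an atomless base has many proper closed normal subgroups, so the two are never isomorphic). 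The correct move, which is what the paper does via Proposition~\ref{prop:full-group-smooth-representation}, is to keep these as standalone factors \( \Aut([0,1],\lambda)^{\kappa_{0}} \) and invoke that \( \Aut([0,1],\lambda) \) is itself whirly amenable because it is a L\'evy group; note that extreme amenability, which is all the Giordano--Pestov theorem gives you, is not enough for these factors, since whirly amenability is strictly stronger and you are no longer shielded by an \( \LL^{0} \)-construction.

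Two smaller points. First, the atomic-fiber part that you defer to ``a further compatible decomposition'' is genuinely needed: classes whose conditional measures have atoms can occur even when \( \mu \) is atomless (e.g.\ a two-point extension of \( [0,1] \)), and they contribute factors \( \LL^{0}([0,1],\lambda,\mathfrak{S}_{n}) \); atomlessness of \( \mu \) is what guarantees their bases are atomless (it kills the \( \mathfrak{S}_{n}^{\kappa_{n}} \) factors), after which Pestov--Schneider applies since \( \mathfrak{S}_{n} \) is finite. Second, Corollary~\ref{cor:maharam-identification-with-l0} is not the right reference here, as it presupposes uncountable orbits; the relevant statement for an arbitrary smooth \( \mathcal{R} \) is Proposition~\ref{prop:full-group-smooth-representation}. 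Once the decomposition is taken in that form, your concluding appeal to closure of whirly amenability under products finishes the argument exactly as in the paper.
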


\begin{proof}
  In view of Proposition~\ref{prop:full-group-smooth-representation}, the full group
  \( \fgr{\mathcal{R}} \) is isomorphic to
  \[ \LL^{0}([0,1], \lambda, \Aut([0,1], \lambda))^{\epsilon_{0}} \times \Aut([0,1],
    \lambda)^{\kappa_{0}} \times \prod_{n \ge 1} \LL^{0}([0,1], \lambda,
    \mathfrak{S}_{n})^{\epsilon_{0}}, \]
  where \( \mathfrak{S}_{n} \) is the group of permutations of an \( n \)-element set, and
  \( \epsilon_{n} \in \{0,1\} \), \( \kappa_{0} \le \aleph_{0} \). Since a product of whirly
  amenable groups is whirly amenable, it suffices to show that the groups appearing in the
  decomposition above, namely \( \LL^{0}([0,1], \lambda, \Aut([0,1], \lambda)) \),
  \( \Aut([0,1], \lambda) \), and \( \LL^{0}([0,1], \lambda, \mathfrak{S}_{n}) \), \( n \ge 1 \),
  are whirly amenable.

  The group \( \Aut([0,1], \lambda) \) is whirly amenable by~\cite{MR1891002} (it is, in fact, a
  so-called Levy group). Finally, we apply a theorem of V.~Pestov and
  F.~M.~Schneider~\cite{MR3711882}, which asserts that a group \( \LL^{0}([0,1], \lambda, G) \) is
  whirly amenable if and only if \( G \) is amenable. This readily implies the whirly amenability of
  \( \LL^{0}([0, 1], \lambda, \mathfrak{S}_{n}) \) and
  \( \LL^{0}([0,1], \lambda, \Aut([0,1], \lambda)) \).
\end{proof}

\begin{remark}
  The assumption of \( \mu \) being atomless cannot be omitted in the proposition above. Indeed,
  \( \fgr{\mathcal{R}} \) will factor onto \( \mathfrak{S}_{n} \) for some \( n \ge 2 \), as long as
  an \( \mathcal{R} \)-class contains at least \( 2 \) atoms of \( \mu \) of the same measure.
  However, if all \( \mu \)-atoms within each \( \mathcal{R} \)-class have distinct measures, then
  the restriction of \( \fgr{\eqr} \) onto the atomic part of \( X \) is trivial, which suffices to
  conclude the whirly amenability of the group \( \fgr{\mathcal{R}} \).
\end{remark}

\begin{theorem}
  \label{thm:derived-whirly-amenable}
  Let \( G \acts X \) be a measure-preserving action of an amenable locally compact Polish normed
  group on a standard probability space \( (X, \mu) \). If the action is aperiodic, then the derived
  \( \LL^{1} \) full group \( \derived(\lfgr{G \acts X}) \) is whirly amenable. In particular,
  \( \lfgr{G\acts X} \) is amenable.
\end{theorem}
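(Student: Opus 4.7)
The plan is to exhibit $D(\lfgr{G \acts X})$ as the closure of an increasing union of closed whirly amenable subgroups, and then to argue that this structural feature transfers whirly amenability to the ambient Polish group.

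First, I would invoke Lemma~\ref{lem:exhaustive-bounded-smooth-atomless} to produce an increasing sequence $(\mathcal{R}_n)_{n\in\mathbb{N}}$ of uniformly bounded smooth equivalence relations exhausting $\mathcal{R}_G$. Since the standard probability space $(X,\mu)$ is atomless, Proposition~\ref{prop:whirly-amenability-full-groups-smooth-relation} implies that each $\fgr{\mathcal{R}_n}$ is whirly amenable, and Lemma~\ref{lem:exhaustive-chain-subgroups} places these as a dense chain of closed subgroups inside $D(\lfgr{G\acts X})$.

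The heart of the argument is the following transfer principle, which I would establish: whenever a Polish group $H$ contains a dense union of an increasing chain of closed whirly amenable subgroups $(H_n)_{n\in\mathbb{N}}$, the group $H$ itself is whirly amenable. To prove this I would fix a continuous action $H \acts K$ on a compact Hausdorff space and consider the closed sets $\mathrm{Fix}(H_n) \subseteq K$. Each $\mathrm{Fix}(H_n)$ is non-empty because whirly amenability of $H_n$ forces every $H_n$-invariant Radon probability measure (which exists by amenability of $H_n$) to be concentrated on it. Since the family is decreasing inside a compact space, $\bigcap_n \mathrm{Fix}(H_n)$ is non-empty; and any point $k$ in this intersection has closed stabilizer containing the dense subgroup $\bigcup_n H_n$, and is therefore fixed by all of $H$. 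This already gives a fixed point for $H \acts K$, establishing (extreme) amenability of $H$. For the whirly property, any $H$-invariant Radon probability measure $\nu$ is in particular $H_n$-invariant for each $n$, hence concentrated on $\mathrm{Fix}(H_n)$; intersecting over $n$ and using density one concludes that $\nu$ is supported on $\mathrm{Fix}(H)$.

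For the final ``in particular'' statement, whirly amenability of $D(\lfgr{G \acts X})$ immediately implies amenability, and the quotient $\lfgr{G\acts X}/D(\lfgr{G\acts X})$ is abelian (hence amenable) by construction of the topological derived subgroup; since an extension of an amenable Polish group by an amenable Polish group is amenable, this yields amenability of $\lfgr{G\acts X}$. The main technical point is the transfer principle, which I expect to handle cleanly through the compactness-plus-closedness-of-stabilizers argument sketched above; the rest of the proof is essentially an assembly of already-established results.
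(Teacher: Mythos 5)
Your proposal is correct and follows essentially the same route as the paper: a dense increasing chain of whirly amenable subgroups $\fgr{\mathcal{R}_n}$ obtained from Lemmas~\ref{lem:exhaustive-bounded-smooth-atomless} and~\ref{lem:exhaustive-chain-subgroups} together with Proposition~\ref{prop:whirly-amenability-full-groups-smooth-relation}, followed by the transfer of whirly amenability along the chain and the extension argument for the ``in particular'' part. Your write-up of the transfer principle (non-empty decreasing closed fixed-point sets in a compact space, closedness of stabilizers) is in fact more detailed than the paper's one-sentence version of the same step, which is a welcome clarification rather than a deviation.
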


\begin{proof}
  Lemma~\ref{lem:exhaustive-chain-subgroups} shows that \( \derived(\lfgr{G \acts X}) \) has an
  increasing dense chain of subgroups \( H_{n} \) of the form \( \fgr{\mathcal{R}_{n}} \), where
  \( \mathcal{R}_{n} \) are smooth measurable equivalence relations on \( X \).
  Proposition~\ref{prop:whirly-amenability-full-groups-smooth-relation} applies and shows that the
  groups \( H_{n} \) are whirly amenable. The latter is sufficient to conclude the whirly
  amenability of \( \derived(\lfgr{G \acts X}) \), as any invariant measure for the action of the
  derived group is also invariant for the induced \( H_{n} \)-actions.  Hence, it has to be
  supported on the intersection of the fixed points of all \( H_{n} \), which coincides with the set
  of fixed points for the action of \( \derived(\lfgr{G \acts X}) \).

  The fact that \( \lfgr{G\acts X} \) is amenable now follows from the fact that every abelian group
  is amenable and that every amenable extension of an amenable group must itself be amenable (for
  instance, see~\cite[Prop.~G.2.2]{bekkaKazhdanProperty2008}).
\end{proof}

\begin{remark}
  Note that, in general, \( \lfgr{G\acts X} \) is not extremely amenable. For flows, it factors onto
  \( \R \) via the index map (see Chapter~\ref{chap:index-map}).  Since \( \R \) admits continuous
  actions on compact spaces without fixed points, \( \lfgr{\mathbb{R} \acts X} \) is not extremely
  amenable (and in particular, it is not whirly amenable) for any free measure-preserving flow.
\end{remark}

\begin{corollary}\label{cor:amenability-equivalences}
  Let \( G \acts X \) be a free measure-preserving action of a unimodular locally compact Polish
  group on a standard probability space \( (X,\mu) \). The following are equivalent:
  \begin{enumerate}
  \item\label{item:G-amenable} \( G \) is amenable.
  \item\label{item:l1-G-amenable} \( \lfgr{ G \acts X} \) is amenable.
  \item\label{item:d-l1-amenable} The derived \( \LL^1 \) full group
    \( \derived(\lfgr{G \acts X}) \) is amenable.
  \item\label{item:d-l1-extremely-amenable} The derived \( \LL^1 \) full group
    \( \derived(\lfgr{G \acts X}) \) is extremely amenable.
  \item\label{item:d-l1-whirly-amenable} The derived \( \LL^1 \) full group
    \( \derived(\lfgr{G \acts X}) \) is whirly amenable.
  \end{enumerate}
\end{corollary}
\begin{proof}
  We established the
  implication~\eqref{item:G-amenable}\( \implies \)\eqref{item:d-l1-whirly-amenable} in
  Theorem~\ref{thm:derived-whirly-amenable}. The chain of
  implications~\eqref{item:d-l1-whirly-amenable}\( \implies
  \)\eqref{item:d-l1-extremely-amenable}\( \implies \)\eqref{item:d-l1-amenable} is straightforward,
  and~\eqref{item:d-l1-amenable}\( \implies \)\eqref{item:l1-G-amenable} follows from the stability
  of amenability under group extensions, which was already discussed in
  Theorem~\ref{thm:derived-whirly-amenable}.

  For the last implication~\eqref{item:l1-G-amenable}\( \implies \)~\eqref{item:G-amenable}, first
  recall that the orbit full group of the action is generated by involutions.  It follows that the
  orbit full group is topologically generated by involutions whose cocycles are integrable
  (actually, one can even ask that the cocycles are bounded). In particular, the \( \LL^1 \) full
  group \( \lfgr{G \acts X} \) is dense in the orbit full group, and so,
  assuming~\eqref{item:l1-G-amenable}, we conclude that the orbit full group \( \fgr{ G \acts X } \)
  is amenable. The amenability of \( G \) then follows from~\cite[Thm.~5.1]{MR3748570}.
\end{proof}

\begin{remark}
  We have to require unimodularity to apply~\cite[Thm.~5.1]{MR3748570}. It seems likely that the
  unimodularity hypothesis can be dropped in this result, but we do not pursue this question
  further.
\end{remark}

\section{Topological generators}
\label{sec:topol-gener}

We now concern ourselves with the question of determining the topological rank of derived
\(\LL^{1}\) full groups. Our approach will be based on the dense chain of subgroups established in
Corollary~\ref{cor:dense-chain-subgroups}, and the first step is to study the topological rank of
the group \( \LL^{0}([0,1], \Aut([0,1])) \).

Let \( (Y, \nu) \) and \( (Z, \lambda) \) be standard Lebesgue spaces. Consider the product space
\( Y \times Z \) equipped with the product measure \( \nu \times \lambda \) and let
\( \mathcal{R} \) be the product of the discrete equivalence relation on \( Y \) and the
anti-discrete on \( Z \); in other words, \( (y_{1}, z_{1}) \mathcal{R} (y_{2}, z_{2}) \) if and
only if \( y_{1} = y_{2} \). As discussed in Appendix~\ref{sec:disintegration-measure}, the
following two groups are one and the same:
\begin{enumerate}
  \item the full group \( \fgr{\mathcal{R}} \);
  \item the topological group \( \LL^{0}(Y, \nu, \Aut(Z, \lambda)) \).
\end{enumerate}
In particular, we may and do endow \(\fgr{\mathcal{R}}\) with the Polish group topology
induced by its natural identification with \( \LL^{0}(Y, \nu, \Aut(Z, \lambda)) \).

Suppose additionally that \( (Z, \lambda) \) is atomless. Pick a hyperfinite ergodic
measure-preserving equivalence relation \( E \) on \( (Z,\lambda) \).  We claim that
\( \mathrm{APER}(Z) \cap \fgr{E} \) is dense in \( \Aut(Z, \lambda) \), where \(\mathrm{APER}(Z)\)
stands for the collection of aperiodic automorphisms of \(Z\).  Indeed, first note that
by~\cite[Prop.~3.1]{MR2583950}, the full group \(\fgr{E}\) is weakly dense in \(\Aut(Z,\lambda)\).
Let us then pick any aperiodic \(T\in \fgr{E}\).  It follows from~\cite[Thm.~2.4]{MR2583950} that
the \(\Aut(Z,\lambda)\)-conjugacy class of \( T \) is weakly dense in \(\Aut(X,\lambda)\).  By the
continuity of the conjugacy action and weak density, the \(\fgr{E}\)-conjugacy class of \( T \) is
weakly dense as well, which proves our claim since this conjugacy class is clearly contained in
\( \mathrm{APER}(Z) \cap \fgr{E} \).

Now set \( \mathcal{R}_{0} = \mathrm{id}_{Y} \times E \) to be the equivalence relation on
\( Y \times Z \) given by the condition \( (y_{1}, z_{1}) \mathcal{R}_{0} (y_{2}, z_{2}) \) whenever
\( y_{1} = y_{2} \) and \( z_{1} E z_{2} \). A standard application of the Jankov--von Neumann
uniformization theorem yields the following lemma.

\begin{lemma}
  \label{lem:aperiodic-R0-dense-R}
  \( \mathrm{APER}(Y \times Z) \cap \fgr{\mathcal{R}_{0}} \) is dense in
  \( \fgr{\mathcal{R}} \ismph \LL^{0}(Y, \nu, \Aut([0,1], \lambda)) \).
\end{lemma}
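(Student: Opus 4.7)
The plan is to leverage the identification \( \fgr{\mathcal{R}} \ismph \LL^{0}(Y, \nu, \Aut(Z, \lambda)) \), picking a compatible bounded complete metric \( d \) on \( \Aut(Z, \lambda) \) so that \( \tilde{d}(f, g) = \int_{Y} d(f(y), g(y))\, d\nu(y) \) metrizes the Polish topology on the \( \LL^{0} \) side. The idea is then to approximate elements fiberwise in \( y \): for each \( y \), the hypothesized density of \( \mathrm{APER}(Z) \cap \fgr{E} \) in \( \Aut(Z, \lambda) \) lets us choose a good approximator; and Jankov--von Neumann uniformization makes that choice measurable in \( y \).

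Concretely, given \( T \in \fgr{\mathcal{R}} \) corresponding to the Borel map \( y \mapsto T_{y} \), and \( \epsilon > 0 \), I would consider the set
\[ \Omega = \bigl\{ (y, U) \in Y \times \Aut(Z, \lambda) : U \in \fgr{E} \cap \mathrm{APER}(Z),\ d(U, T_{y}) < \epsilon \bigr\}. \]
This set is Borel: \( \fgr{E} \) is Borel in \( \Aut(Z, \lambda) \), \( \mathrm{APER}(Z) \) is \( G_{\delta} \), and \( (y, U) \mapsto d(U, T_{y}) \) is Borel by measurability of \( y \mapsto T_{y} \). The hypothesis on \( E \) ensures that every vertical section \( \Omega_{y} \) is non-empty, so Jankov--von Neumann yields a universally (hence \( \nu \)-) measurable selector \( y \mapsto S_{y} \) with \( (y, S_{y}) \in \Omega \); after modification on a null set, the selector is Borel, and defines \( S \in \fgr{\mathcal{R}} \).

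It remains to verify three properties of \( S \). First, \( S \in \fgr{\mathcal{R}_{0}} \) because each \( S_{y} \in \fgr{E} \), so \( S \) maps each \( \mathcal{R}_{0} \)-class \( \{y\} \times [z]_{E} \) into itself. Second, \( \tilde{d}(T, S) \le \epsilon \) directly from \( d(T_{y}, S_{y}) < \epsilon \) pointwise, so \( S \) approximates \( T \) in \( \fgr{\mathcal{R}} \) as required. Third, \( S \) is aperiodic on \( (Y \times Z, \nu \times \lambda) \): its orbit of \( (y, z) \) equals \( \{(y, S_{y}^{n} z) : n \in \mathbb{Z}\} \), which is infinite whenever the \( S_{y} \)-orbit of \( z \) is infinite, and this holds on a \( \lambda \)-conull set of \( z \) for every \( y \) since each \( S_{y} \in \mathrm{APER}(Z) \); Fubini then gives aperiodicity a.e. on \( Y \times Z \).

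There is no real obstacle here: all the substantive content is absorbed into the hypothesis that \( \mathrm{APER}(Z) \cap \fgr{E} \) is dense in \( \Aut(Z, \lambda) \), together with the standard Borel structure of \( \Aut(Z, \lambda) \) and the uniformization theorem. The only point requiring a moment of care is the measurability of \( \Omega \), for which one needs to know that \( \fgr{E} \) and \( \mathrm{APER}(Z) \) are Borel in \( \Aut(Z, \lambda) \); both are classical.
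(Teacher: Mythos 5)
Your proof is correct and is precisely the ``standard application of the Jankov--von Neumann uniformization theorem'' that the paper invokes without further detail: fiberwise approximation of \( T_{y} \) by elements of \( \mathrm{APER}(Z) \cap \fgr{E} \), made measurable in \( y \) via uniformization of the Borel set \( \Omega \), followed by the routine verifications that the resulting \( S \) lies in \( \fgr{\mathcal{R}_{0}} \), is aperiodic, and is \( \epsilon \)-close to \( T \). Nothing further is needed.
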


Our first goal is to establish that the topological rank of \( \fgr{\mathcal{R}} \) is \( 2 \). We
do so by first verifying this under the assumption that \( (Y, \nu) \) is atomless and then deducing
the general case.

We say that a Polish group \( G \) is
\textbf{generically \( k \)-generated}\index{Polish group!generically \(k\)-generated}, where
\( k \in \mathbb{N} \), if the set of \( k \)-tuples \( (g_{1}, \ldots, g_{k}) \in G^{k} \) that
generate a dense subgroup of \( G \) is dense in \( G^{k} \). Note that the set of such tuples is
always a \( G_{\delta} \) set, so if \( G \) is generically \( k \)-generated, then a comeager set
of \( k \)-tuples generates a dense subgroup of \( G \).

\begin{proposition}\label{prop:generic-2-generated-R}
  Suppose that \( (Y, \nu) \) is atomless.  The Polish group \( \fgr{\mathcal{R}} \) is generically
  \( 2 \)-generated.
\end{proposition}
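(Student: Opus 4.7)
The plan is a Baire category argument. Fix a countable basis $\{V_n\}_{n \in \mathbb{N}}$ of open sets of $\fgr{\mathcal{R}}$. The set of topologically generating pairs equals the $G_\delta$ subset $\bigcap_n U_n$ of $\fgr{\mathcal{R}}^{2}$, where $U_n = \{(T_1, T_2) : \langle T_1, T_2 \rangle \cap V_n \neq \emptyset\}$ is open. Since $\fgr{\mathcal{R}}^{2}$ is Polish, it suffices by Baire's theorem to show each $U_n$ is dense, which also yields the stronger conclusion that topologically 2-generating pairs form a comeager set.

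Two ingredients drive the density of $U_n$. First, $\Aut([0,1], \lambda)$ is itself generically 2-generated, a classical result. Second, Lemma~\ref{lem:aperiodic-R0-dense-R} tells us that aperiodic elements of $\fgr{\mathcal{R}_0}$ are dense in $\fgr{\mathcal{R}} \ismph \LL^{0}(Y, \nu, \Aut([0,1], \lambda))$; these correspond to measurable functions $Y \to \Aut([0,1], \lambda)$ taking values in $\fgr{E}$ for a fixed ergodic hyperfinite equivalence relation $E$ on $Z$.

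Given $(g_1, g_2) \in \fgr{\mathcal{R}}^{2}$ and a neighborhood $\mathcal{O}$ of it, I would choose a small piece $A \subseteq Y$ and modify $(g_1, g_2)$ only on $A \times Z$. On $A$, I would replace $g_i(y)$ by values $T_i(y) \in \Aut([0,1], \lambda)$ such that $(T_1(y), T_2(y))$ topologically generates $\Aut([0,1], \lambda)$ for each $y \in A$, using a Borel selection from the comeager set of generating pairs. Because modifications on a set of small $\nu$-measure are small in the topology of convergence in measure, $(T_1, T_2) \in \mathcal{O}$ will hold provided $A$ is small enough. To complete the argument, I would exhibit a single word $w$ in two letters with $w(T_1, T_2) \in V_n$: for any prescribed target $h \in V_n$ and each $y \in A$ there is a word $w_y$ with $w_y(T_1(y), T_2(y))$ close to $h(y)$, and a Borel selection combined with the pigeonhole principle on the countable set of words in two letters yields a single word that works on a $\nu$-positive subset of $A$, which places $w(T_1, T_2)$ inside a suitable refinement of $V_n$.

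The main obstacle is precisely this last passage from fiberwise generic 2-generation of $\Aut([0,1], \lambda)$ to genuine $\LL^{0}$-approximation of a prescribed measurable target in $V_n$: one must carefully select the modifications on $A \times Z$, measurably choose words fiberwise, and control the error both on the modified and unmodified parts. The technical core parallels, in spirit, the approach developed in~\cite{MR3810253} for discrete group actions, suitably adapted to the $\LL^{0}$ framework.
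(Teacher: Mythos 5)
There is a genuine gap, concentrated exactly at the step you flag as the ``main obstacle,'' and the pigeonhole device you propose does not close it. Two problems. First, you only modify $(g_1,g_2)$ on $A\times Z$ for a set $A\subseteq Y$ of \emph{small} measure; on $Y\setminus A$ the fibers of the pair are the unchanged $(g_1(y),g_2(y))$, which need not generate anything dense. Since a basic open set $V_n$ in $\LL^{0}(Y,\nu,\Aut(Z,\lambda))$ constrains the function on a set of $\nu$-measure close to $1$, no word $w$ evaluated at $(T_1,T_2)$ can land in a typical $V_n$: outside $A$ the value $w(T_1(y),T_2(y))=w(g_1(y),g_2(y))$ is whatever it happens to be. Second, even if you arranged fiberwise dense generation on \emph{all} of $Y$, that still would not give density of $\langle T_1,T_2\rangle$ in $\LL^{0}(Y,\nu,\Aut(Z,\lambda))$: to approximate a target $h$ in measure you need a \emph{single} word $w$ with $w(T_1(y),T_2(y))$ close to $h(y)$ for $y$ outside a small set, whereas fiberwise generation only guarantees, for each $y$, \emph{some} word $w_y$ depending on $y$. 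Pigeonhole over the countable set of words produces one word working on a $\nu$-positive subset, but ``positive measure'' is not ``measure close to $1$,'' so this does not place $w(T_1,T_2)$ in $V_n$. This fiberwise-versus-global issue is precisely why the topological rank of $\LL^{0}(Y,\nu,G)$ is a nontrivial question even when $G$ is generically $2$-generated.

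The paper avoids all of this by quoting a genuinely ergodic-theoretic input: Theorem~5.1 of~\cite{MR3568978}, which asserts that for the aperiodic hyperfinite equivalence relation $\mathcal{R}_0=\mathrm{id}_Y\times E$, the pairs $(S,T)$ with $S$ aperiodic that topologically generate $\fgr{\mathcal{R}_0}$ form a dense $G_\delta$ set in $(\mathrm{APER}\cap\fgr{\mathcal{R}_0})\times\fgr{\mathcal{R}_0}$ for the \emph{uniform} topology. Combined with Lemma~\ref{lem:aperiodic-R0-dense-R} (density of $\mathrm{APER}\cap\fgr{\mathcal{R}_0}$ in $\fgr{\mathcal{R}}$) and the fact that the uniform topology refines the one induced from $\fgr{\mathcal{R}}$, this yields generic $2$-generation of $\fgr{\mathcal{R}}$. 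The mechanism in that cited theorem that overcomes the single-word problem is not fiberwise generation plus pigeonhole but a dynamical construction (in the spirit of Kittrell--Tsankov) in which the two generators are built so that the group they generate contains enough ``localized'' elements to cut targets along $Y$. If you want a self-contained proof you would need to reproduce an argument of that type; your outline as written does not.
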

\begin{proof}
  By~\cite[Thm~5.1]{MR3568978}, the set of pairs
  \[ (S,T) \in (\mathrm{APER}(Y \times Z) \cap \fgr{\mathcal{R}_{0}}) \times
    \fgr{\mathcal{R}_{0}} \]
  such that \( \overline{\langle S,T \rangle} = \fgr{\mathcal{R}_{0}} \) is dense \( G_{\delta} \)
  for the uniform topology. In view of Lemma~\ref{lem:aperiodic-R0-dense-R}, this implies that
  \( \fgr{\mathcal{R}} \) is generically \( 2 \)-generated.
\end{proof}

\begin{lemma}
  \label{lem:rank-product-ge-rank-factors}
  For all Polish groups \( G \) and \( H \), one has
  \[ \tprank(G \times H) \ge \max\{\tprank(G), \tprank(H)\}. \]
  If \( G \times H \) is generically \( k \)-generated, then so are \( G \) and \( H \) as well.
\end{lemma}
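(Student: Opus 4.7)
The plan is to reduce both claims to a single standard observation: a continuous surjective group homomorphism sends a topologically generating set to a topologically generating set. Specifically, if \( \phi : K \twoheadrightarrow L \) is a continuous surjective homomorphism and \( S \subseteq K \) satisfies \( \overline{\langle S \rangle} = K \), then
\[ L = \phi(K) = \phi(\overline{\langle S \rangle}) \subseteq \overline{\phi(\langle S \rangle)} = \overline{\langle \phi(S) \rangle}, \]
using continuity of \( \phi \) for the middle inclusion. I would apply this to the two coordinate projections \( \pi_{G} : G \times H \to G \) and \( \pi_{H} : G \times H \to H \), which are continuous surjective homomorphisms.

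For the rank inequality, suppose \( ((g_{1}, h_{1}), \ldots, (g_{k}, h_{k})) \) topologically generates \( G \times H \). Applying the observation to \( \pi_{G} \) yields that \( (g_{1}, \ldots, g_{k}) \) topologically generates \( G \), so \( \tprank(G) \le k \); symmetrically \( \tprank(H) \le k \). Taking the infimum over \( k \ge \tprank(G \times H) \) gives \( \max\{\tprank(G), \tprank(H)\} \le \tprank(G \times H) \).

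For the generic \( k \)-generation claim, fix arbitrary non-empty open sets \( U_{1}, \ldots, U_{k} \subseteq G \) and consider the non-empty open sets \( U_{i} \times H \subseteq G \times H \). By the assumption that \( G \times H \) is generically \( k \)-generated, there exist \( (g_{i}, h_{i}) \in U_{i} \times H \) with \( \overline{\langle (g_{1}, h_{1}), \ldots, (g_{k}, h_{k}) \rangle} = G \times H \). Pushing forward through \( \pi_{G} \) as above shows that \( (g_{1}, \ldots, g_{k}) \) topologically generates \( G \) with \( g_{i} \in U_{i} \), so \( G \) is generically \( k \)-generated; the symmetric argument handles \( H \).

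There is no real obstacle here: the entire content of the lemma lies in the functoriality of topological generation under continuous surjections, and both claims follow essentially by inspection once this is made explicit.
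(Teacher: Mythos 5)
Your proof is correct and follows essentially the same route as the paper: the rank inequality and the generic generation claim both come from pushing generating tuples through the coordinate projections, with density in \( G^{k} \) checked on (basic) open sets. The only cosmetic difference is that you verify density on product boxes \( U_{1} \times \cdots \times U_{k} \) while the paper works with an arbitrary open \( U \subseteq G^{k} \); both are equivalent.
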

\begin{proof}
  The inequality on ranks is immediate from the trivial observation that if
  \( \langle (g_{1}, h_{1}), \ldots, (g_{k}, h_{k}) \rangle \) is dense in \( G \times H \), then
  \( \langle g_{1}, \ldots, g_{k} \rangle \) is dense in \( G \) and
  \( \langle h_{1}, \ldots, h_{k} \rangle \) is dense in \( H \).

  Suppose \( G \times H \) is generically \( k \)-generated.  Pick an open set
  \( U \subseteq G^{k} \) and note that \( U \times H^{k} \) corresponds to an open subset of
  \( (G \times H)^{k} \) via the isomorphism \( (G \times H)^{k} \ismph G^{k} \times H^{k} \).
  Since \( G \times H \) is generically \( k \)-generated, there is a tuple
  \( (g_{i}, h_{i})_{i=1}^{k} \in (G \times H)^{k} \) that generates a dense subgroup and
  \( (g_{i}, h_{i})_{i=1}^{k} \in U \times H^{k} \). We conclude that \( (g_{i})_{i=1}^{k} \in U \)
  generates a dense subgroup of \( G \), and the lemma follows.
\end{proof}

\begin{lemma}
  \label{lem:L0-times-Gn-has-same-rank}
  For any Polish group \( G \)
  \[ \tprank(\LL^{0}([0,1], \lambda, G)) = \tprank\bigl(\LL^{0}([0,1], \lambda, G) \times
    G^{\mathbb{N}}\bigr). \]
  If \( \LL^{0}([0,1], \lambda, G) \) is generically \( k \)-generated for some
  \( k \in \mathbb{N} \), then so is the group
  \( \LL^{0}([0,1], \lambda, G) \times G^{\mathbb{N}} \).
\end{lemma}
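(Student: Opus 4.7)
The inequality $\tprank(\LL^{0}([0,1], \lambda, G) \times G^{\mathbb{N}}) \geq \tprank(\LL^{0}([0,1], \lambda, G))$, and the corresponding downward transfer of generic $k$-generation from the product to the $\LL^{0}$-factor, both follow at once from Lemma~\ref{lem:rank-product-ge-rank-factors} applied to the projection $\LL^{0} \times G^{\mathbb{N}} \twoheadrightarrow \LL^{0}$. My plan is therefore to prove the upward transfer of generic $k$-generation, which simultaneously yields the reverse rank inequality.

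Abbreviate $\LL^{0} := \LL^{0}([0,1], \lambda, G)$. The key structural fact is the topological group isomorphism $\LL^{0} \cong \LL^{0} \times (\LL^{0})^{\mathbb{N}}$, obtained from a countable measurable partition $[0,1] = \bigsqcup_{n \in \mathbb{N}} A_{n}$ into pieces of positive measure together with the identification $\LL^{0}(A_{n}, \lambda|_{A_{n}}, G) \cong \LL^{0}$. Under this isomorphism, the closed subgroup of "constants-on-each-$A_{n}$" functions realizes $G^{\mathbb{N}}$ as a closed subgroup of $\LL^{0}$, and hence $\LL^{0} \times G^{\mathbb{N}}$ sits as a closed subgroup of $\LL^{0} \times (\LL^{0})^{\mathbb{N}} \cong \LL^{0}$.

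The plan is then to show generic $k$-generation of $\LL^{0} \times G^{\mathbb{N}}$ by Baire category. Given a nonempty basic open set $U = \prod_{i \leq k}(A'_{i} \times B_{i}) \subseteq (\LL^{0} \times G^{\mathbb{N}})^{k}$ with $B_{i} = C_{i} \times G^{\mathbb{N}}$ depending on the first $N$ coordinates only ($C_{i} \subseteq G^{N}$ open, tails unconstrained), the set of $k$-tuples in $U$ generating a dense subgroup of $\LL^{0} \times G^{\mathbb{N}}$ is $G_{\delta}$. By the Baire category theorem in the Polish space $U$, it is enough to show that for each element $d$ of a fixed countable dense subset of $\LL^{0} \times G^{\mathbb{N}}$ and each $\varepsilon > 0$, the open set $V_{d,\varepsilon} \subseteq U$ of $k$-tuples admitting a group word within $\varepsilon$ of $d$ is dense in $U$. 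For this density, I would use generic $k$-generation of $\LL^{0} \cong \LL^{0} \times (\LL^{0})^{\mathbb{N}}$ to produce tuples hitting the $\LL^{0}$-component of $d$ by a word, while using the unconstrained tails $(g_{n}^{(i)})_{n > N}$ to simultaneously match the $G^{\mathbb{N}}$-component of $d$.

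The main obstacle is precisely this coordination: a single word in the $k$-tuple must approximate both the $\LL^{0}$-component and the $G^{\mathbb{N}}$-component of $d$. The continuity of the group operations is crucial here: for any fixed finite collection of words $w_{1}, \ldots, w_{\ell}$, evaluation is jointly continuous in the generators, so small enough perturbations preserve all relevant $\varepsilon$-approximations simultaneously. Replacing $(\LL^{0})^{N}$-coordinates in a generically $k$-generating tuple by nearby constants in $G$ within the prescribed $C_{i}$ thus yields a tuple in $\LL^{0} \times G^{N}$ approximating finitely many targets at once, and iterating this construction along an enumeration of the countable dense subset of $\LL^{0} \times G^{\mathbb{N}}$ delivers the desired generic $k$-generating $k$-tuple inside $U$.
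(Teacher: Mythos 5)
Your reduction of the problem --- the downward direction via Lemma~\ref{lem:rank-product-ge-rank-factors}, the embedding of \( \LL^{0}([0,1],\lambda,G)\times G^{\mathbb{N}} \) as the closed subgroup of functions constant on the pieces of a countable partition, and the identification of the coordination between the \( \LL^{0} \)-component and the \( G^{\mathbb{N}} \)-component as the crux --- matches the paper's setup. But the step you offer to resolve that coordination does not work. You propose to take a generating \( k \)-tuple in \( \LL^{0}([0,1],\lambda,G) \) and replace its components on the pieces \( A_{n} \) by \emph{nearby constants}. In the topology of convergence in measure, a general element of \( \LL^{0}(A_{n},\lambda,G) \) has no nearby constant: the constants form a closed copy of \( G \), which is nowhere dense as soon as \( G \) is nontrivial and \( \lambda\restriction_{A_{n}} \) is atomless (a function taking two values \( g,h \) each on half of \( A_{n} \) stays at distance bounded below from every constant). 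So there is nothing to perturb to, and the continuity-of-word-evaluation argument has no input. The closing ``iterate along an enumeration of the countable dense subset'' is also unjustified: each stage would re-perturb the generators, and you give no reason the process converges to a tuple that still generates a dense subgroup.

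The missing device is the one the paper uses. For each target \( \xi_{l}\times h_{\alpha} \) in a countable dense family, choose words \( w^{l,\alpha}_{n} \) with \( w^{l,\alpha}_{n}(f_{1},\dots,f_{k})\to \xi_{l}\times h_{\alpha} \) in measure, pass to subsequences converging pointwise almost everywhere, intersect the resulting conull sets over all countably many targets to obtain a single conull set \( P \), and then produce the constants by \emph{point evaluation} \( f_{i}\mapsto f_{i}(t_{j}) \) at points \( t_{j}\in P \) chosen in the respective pieces. Since word evaluation commutes with evaluation at a point, the very same words then approximate both the \( \LL^{0} \)-component (in measure) and every \( G \)-coordinate (pointwise at \( t_{j} \)); this resolves the coordination in one step, with no iteration, and a little extra care in choosing the \( t_{j} \) inside the prescribed open conditions yields the generic statement. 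Even if you keep your Baire-category framing, establishing density of each set \( V_{d,\varepsilon} \) in \( U \) would still require this point-evaluation trick, so it cannot be bypassed.
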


\begin{proof}
  In view of Lemma~\ref{lem:rank-product-ge-rank-factors},
  \( \tprank(\LL^{0}([0,1], \lambda, G)) \le \tprank\bigl(\LL^{0}([0,1], \lambda, G) \times
  G^{\mathbb{N}}\bigr) \), and since the group \( G \) is separable, we only need to consider the
  case when the rank \( \tprank(\LL^{0}([0,1], \lambda, G)) \) is finite.

  It is notationally convenient to shrink the interval and work with the group
  \[ \LL^{0}([0,1/2], \lambda, G) \times G^{\mathbb{N}} \]
  instead, as it can naturally be viewed as a closed subgroup of \( \LL^{0}([0,1], \lambda, G) \) via
  the identification \( f \times (g_{i})_{i \in \mathbb{N}} \mapsto \zeta \), where
  \begin{displaymath}
    \zeta(t) =
    \begin{cases}
      f(t) & \textrm{if \( 0 \le t < 1/2 \)}, \\
      g_{i} & \textrm{if \( 1-2^{-i-1} \le t < 1-2^{-i-2} \) for \( i \in \mathbb{N} \)}.
    \end{cases}
  \end{displaymath}
  Pick families \( (\xi_{l})_{l \in \mathbb{N}} \) dense in \( \LL^{0}([0,1/2], \lambda, G) \), and
  \( (h_{m})_{m \in \mathbb{N}} \) dense in \( G \).

  Let us call a function \( \alpha : \mathbb{N} \to \mathbb{N} \) a multi-index if
  \( \alpha(i) = 0 \) for all but finitely many \( i \in \mathbb{N} \). We use
  \( \mathbb{N}^{<\mathbb{N}} \) to denote the set of all multi-indices. Given
  \( \alpha \in \mathbb{N}^{< \mathbb{N}} \), we define
  \( h_{\alpha} = (h_{\alpha(i)})_{i \in \mathbb{N}} \in G^{\mathbb{N}} \). Note that
  \( \{h_{\alpha} : \alpha \in \mathbb{N}^{<\mathbb{N}}\} \) is dense in \( G^{\mathbb{N}} \), and
  thus \( \{\xi_{l} \times h_{\alpha} : l \in \mathbb{N}, \alpha \in \mathbb{N}^{<\mathbb{N}}\} \)
  is a dense family in \( \LL^{0}([0,1/2], \lambda, G) \times G^{\mathbb{N}} \).

  Pick a tuple \( f_{1}, \ldots, f_{k} \in \LL^{0}([0,1], \lambda, G) \) that generates a dense
  subgroup. For each pair \( (l,\alpha) \in \mathbb{N} \times \mathbb{N}^{<\mathbb{N}} \), there
  exists a sequence of reduced words \( (w_{n}^{l, \alpha})_{n \in \mathbb{N}} \) in the free group
  on \( k \) generators such that \( w^{l,\alpha}_{n}(f_{1}, \ldots, f_{k}) \) converges to
  \( \xi_{l} \times h_{\alpha} \) in measure. By passing to a subsequence, we may assume that
  \( w^{l,\alpha}_{n}(f_{1}, \ldots, f_{k}) \to \xi_{l} \times h_{\alpha} \) pointwise almost
  surely. In other words, the set
  \[ P_{l,\alpha} = \bigl\{ t \in [0,1] : w^{l,\alpha}_{n}(f_{1}, \ldots, f_{k})(t) \to (\xi_{l}
    \times h_{\alpha})(t) \bigr\} \]
  has Lebesgue measure \( 1 \) for each
  \( (l, \alpha) \in \mathbb{N} \times \mathbb{N}^{<\mathbb{N}} \), and hence so does the set
  \[ P = \bigcap_{l \in \mathbb{N}} \bigcap_{\alpha \in \mathbb{N}^{<\mathbb{N}}} P_{l,\alpha}. \]
  Pick some \( t_{j} \in P \cap [1-2^{-j-1}, 1-2^{-j-2}) \), \( j \in \mathbb{N} \), and set
  \begin{displaymath}
    \tilde{f}_{i}(t) =
    \begin{cases}
      f_{i}(t) & \textrm{for \( 0 \le t < 1/2 \)}, \\
      f_{i}(t_{j}) & \textrm{for \( 1-2^{-j-1} \le t < 1-2^{-j-2} \) for \( j \in \mathbb{N} \)}.
    \end{cases}
  \end{displaymath}
  Elements \( \tilde{f}_{i} \) naturally belong to
  \( \LL^{0}([0,1/2], \lambda, G) \times G^{\mathbb{N}} \), and we claim that they generate a dense
  subgroup therein, witnessing
  \( \tprank(\LL^{0}([0,1/2], \lambda, G) \times G^{\mathbb{N}}) \le k \). To this end, recall that
  \( w_{n}^{l,\alpha}(f_{1}, \ldots, f_{k}) \to \xi_{l} \times h_{\alpha} \) pointwise almost
  surely. In particular,
  \[ w_{n}^{l,\alpha}(f_{1}, \ldots, f_{k})\restriction_{[0,1/2]} \to \xi_{l} \times
    h_{\alpha}\restriction_{[0,1/2]} \]
  in measure and, for each \( j \in \mathbb{N} \),
  \[ w_{n}^{l,\alpha}(f_{1}, \ldots, f_{k})(t_{j}) \to (\xi_{l} \times h_{\alpha})(t_{j}) =
    h_{\alpha(j)} \] is guaranteed by choosing \( t_{j} \in P \). We conclude that
  \[ w_{n}^{l,\alpha}(\tilde{f}_{1}, \ldots, \tilde{f}_{k}) \to \xi_{l} \times h_{\alpha} \] in
  \( \LL^{0}([0,1/2], \lambda, G) \times G^{\mathbb{N}} \), and therefore
  \[ \tprank(\LL^{0}([0,1/2], \lambda, G) \times G^{\mathbb{N}}) \le k. \]

  Finally, suppose that \( \LL^{0}([0,1], \lambda, G) \) is generically \( k \)-generated. Choose
  open sets \( U_{i} \subseteq \LL^{0}([0,1/2], \lambda, G) \times G^{\mathbb{N}}
  \),\( 1 \le i \le k \).  Shrinking them if necessary, we may assume that all \(U_{i}\) have the
  form \(U_{i} = A_{0}^{i} \times A_{1}^{i} \times \cdots \times A_{n}^{i} \times G^{\mathbb{N}}\),
  where \(A_{0}^{i}\) is open in \(\LL^{0}([0,1/2], \lambda, G)\), and \(A_{j}^{i}\), \(j \ge 1\),
  are open in \(G\).

  Pick \( V_{i} \subseteq \LL^{0}([0,1], \lambda, G) \), \( 1 \le i \le k \), to consist of those
  functions \(f\) satisfying \(f|_{[0, 1/2]} \in A_{0}\) and \(f(t) \in A_{j}\) for all
  \(t \in [1-2^{-j-1}, 1-2^{-j-2})\), \(1 \le j \le n\). Note that
  \( V_{i} \cap \LL^{0}([0,1/2], \lambda, G) \times G^{\mathbb{N}} = U_{i} \).

  Since \( \LL^{0}([0,1], \lambda, G) \) is assumed to be generically \( k \)-generated, there is a
  tuple \( (f_{1}, \ldots, f_{k}) \) generating a dense subgroup in \( \LL^{0}([0,1], \lambda, G) \)
  such that \( f_{i} \in V_{i} \) for each \( i \). Running the above construction, we get a tuple
  \[ (\tilde{f}_{1}, \ldots, \tilde{f}_{k}) \in \LL^{0}([0,1/2], \lambda, G) \times G^{\mathbb{N}}
  \]
  such that \( \tilde{f}_{i} \in U_{i} \), \( 1 \le i \le k \), whence
  \( \LL^{0}([0,1/2], \lambda, G) \times G^{\mathbb{N}} \) is generically \( k \)-generated.
\end{proof}

Lemma~\ref{lem:L0-times-Gn-has-same-rank} remains valid if we take the product with a finite power
of \( G \), which follows from Lemma~\ref{lem:rank-product-ge-rank-factors}.

\begin{corollary}
  \label{cor:L0-times-Gm-has-same-rank}
  For any Polish group \( G \) and any \( m \in \mathbb{N} \), one has
  \[ \tprank(\LL^{0}([0,1], \lambda, G)) = \tprank(\LL^{0}([0,1], \lambda, G)) \times G^{m}. \]
  If \( \tprank(\LL^{0}([0,1], \lambda, G)) \) is generically \( k \)-generated for some
  \( k \in \mathbb{N} \), then so is the group \( \LL^{0}([0,1], \lambda, G) \times G^{m} \).
\end{corollary}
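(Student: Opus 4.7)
The plan is to reduce to Lemma~\ref{lem:L0-times-Gn-has-same-rank} by observing that $\LL^{0}([0,1], \lambda, G) \times G^{m}$ is a continuous quotient of $\LL^{0}([0,1], \lambda, G) \times G^{\mathbb{N}}$. Namely, let $\pi : \LL^{0}([0,1], \lambda, G) \times G^{\mathbb{N}} \to \LL^{0}([0,1], \lambda, G) \times G^{m}$ denote the surjective continuous homomorphism that projects the $G^{\mathbb{N}}$-factor onto its first $m$ coordinates. Since a continuous surjective homomorphism sends a topologically generating tuple to a topologically generating tuple, this projection will give the upper bound on the rank, while Lemma~\ref{lem:rank-product-ge-rank-factors} provides the lower bound.

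For the rank equality, I would argue as follows. On the one hand, Lemma~\ref{lem:rank-product-ge-rank-factors} yields
\[
\tprank\bigl(\LL^{0}([0,1], \lambda, G) \times G^{m}\bigr) \;\ge\; \tprank\bigl(\LL^{0}([0,1], \lambda, G)\bigr).
\]
On the other hand, if $(f_{1}, \ldots, f_{k}) \in (\LL^{0}([0,1], \lambda, G) \times G^{\mathbb{N}})^{k}$ generates a dense subgroup, then $(\pi(f_{1}), \ldots, \pi(f_{k}))$ generates a dense subgroup of the quotient. Combined with Lemma~\ref{lem:L0-times-Gn-has-same-rank}, this gives
\[
\tprank\bigl(\LL^{0}([0,1], \lambda, G) \times G^{m}\bigr) \;\le\; \tprank\bigl(\LL^{0}([0,1], \lambda, G) \times G^{\mathbb{N}}\bigr) \;=\; \tprank\bigl(\LL^{0}([0,1], \lambda, G)\bigr),
\]
completing the proof of the equality.

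For the generic generation statement, assume $\LL^{0}([0,1], \lambda, G)$ is generically $k$-generated. By Lemma~\ref{lem:L0-times-Gn-has-same-rank}, so is $\LL^{0}([0,1], \lambda, G) \times G^{\mathbb{N}}$. Given nonempty open sets $U_{1}, \ldots, U_{k} \subseteq \LL^{0}([0,1], \lambda, G) \times G^{m}$, continuity of $\pi$ makes each $\pi^{-1}(U_{i})$ open and nonempty, so generic $k$-generation of $\LL^{0}([0,1], \lambda, G) \times G^{\mathbb{N}}$ yields a topologically generating tuple $(f_{1}, \ldots, f_{k})$ with $f_{i} \in \pi^{-1}(U_{i})$. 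The projected tuple $(\pi(f_{1}), \ldots, \pi(f_{k}))$ then lies in $U_{1} \times \cdots \times U_{k}$ and topologically generates $\LL^{0}([0,1], \lambda, G) \times G^{m}$.

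There is no real obstacle in this argument; it is a straightforward packaging of Lemma~\ref{lem:L0-times-Gn-has-same-rank} together with the elementary Lemma~\ref{lem:rank-product-ge-rank-factors}. The only conceptual point worth flagging is the asymmetry: the $G^{\mathbb{N}}$ case required the explicit construction with a dense family $\{h_{\alpha}\}$ in the proof of Lemma~\ref{lem:L0-times-Gn-has-same-rank}, whereas the finite product $G^{m}$ is handled almost for free by passing through $G^{\mathbb{N}}$ and using that projections preserve both topological generation and the openness of parameter sets.
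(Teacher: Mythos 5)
Your proof is correct and follows essentially the same route as the paper: the paper disposes of this corollary in one line by writing \( \LL^{0}([0,1],\lambda,G)\times G^{\mathbb{N}} \cong (\LL^{0}([0,1],\lambda,G)\times G^{m})\times G^{\mathbb{N}} \) and invoking Lemma~\ref{lem:rank-product-ge-rank-factors} together with Lemma~\ref{lem:L0-times-Gn-has-same-rank}, which is exactly the projection-onto-a-direct-factor mechanism you spell out. Your quotient-homomorphism phrasing and the paper's direct-factor phrasing are the same argument, since the relevant quotient map is precisely the coordinate projection used in the proof of Lemma~\ref{lem:rank-product-ge-rank-factors}.
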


We may now strengthen Proposition~\ref{prop:generic-2-generated-R} by dropping the assumption on
\( (Y,\nu) \) being atomless.

\begin{proposition}
  \label{prop:L0-std-Lebesgue-generically-2-generated}
  Let \( (Y,\nu) \) be a standard Lebesgue space and \( (Z,\lambda) \) be a standard probability
  space.  The Polish group \( \LL^{0}(Y,\nu, \Aut(Z,\lambda)) \) is generically \( 2 \)-generated.
\end{proposition}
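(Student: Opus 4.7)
The plan is to reduce the general case to the already-established Proposition~\ref{prop:generic-2-generated-R} via a decomposition of $(Y,\nu)$ into its atomless and purely atomic parts. Write $Y = Y_c \sqcup Y_a$, where $Y_c$ is the (possibly null) atomless part and $Y_a$ is the at most countable collection of atoms, and let $m \in \{0, 1, 2, \ldots\} \cup \{\aleph_0\}$ denote the cardinality of $Y_a$. Since the $\LL^0$ construction turns disjoint unions into direct products, we obtain a topological group isomorphism
\[
  \LL^0(Y, \nu, \Aut(Z, \lambda)) \ismph \LL^0(Y_c, \nu\restriction_{Y_c}, \Aut(Z, \lambda)) \times \Aut(Z, \lambda)^m.
\]
Because convergence in measure depends only on the underlying measure algebra, whenever $\nu(Y_c) > 0$ the first factor is isomorphic as a topological group to $\LL^0([0,1], \lambda, \Aut(Z, \lambda))$, regardless of the actual total mass $\nu(Y_c)$.

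In the first case, when $\nu(Y_c) > 0$, we invoke Proposition~\ref{prop:generic-2-generated-R} with parameter space $([0,1], \lambda)$ to obtain that $\LL^0([0,1], \lambda, \Aut(Z, \lambda))$ is generically $2$-generated. Multiplying in the atomic factor is then handled by Corollary~\ref{cor:L0-times-Gm-has-same-rank} when $m$ is finite and by Lemma~\ref{lem:L0-times-Gn-has-same-rank} when $m = \aleph_0$; in either case the product $\LL^0([0,1], \lambda, \Aut(Z, \lambda)) \times \Aut(Z, \lambda)^m$ is generically $2$-generated, which is exactly what the proposition requires.

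In the remaining case $\nu(Y_c) = 0$, the space $Y$ is purely atomic and $\LL^0(Y, \nu, \Aut(Z, \lambda)) \ismph \Aut(Z, \lambda)^m$, so we must establish generic $2$-generation of $\Aut(Z, \lambda)^m$ on its own. The idea is to artificially re-introduce an atomless factor: the product $\LL^0([0,1], \lambda, \Aut(Z, \lambda)) \times \Aut(Z, \lambda)^m$ is generically $2$-generated by the previous case, and Lemma~\ref{lem:rank-product-ge-rank-factors} then transfers generic $2$-generation down to each factor individually, so $\Aut(Z, \lambda)^m$ is generically $2$-generated as desired.

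All the substantive work has been carried out in Proposition~\ref{prop:generic-2-generated-R} and Lemma~\ref{lem:L0-times-Gn-has-same-rank}; the argument here is essentially a bookkeeping exercise that splices those results together along the atomless/atomic decomposition. The only minor point that deserves explicit verification is that the isomorphism $\LL^0(Y_c, \nu\restriction_{Y_c}, \Aut(Z, \lambda)) \ismph \LL^0([0,1], \lambda, \Aut(Z, \lambda))$ indeed holds as topological groups regardless of the total mass $\nu(Y_c) > 0$, which follows from the fact that the topology of convergence in measure is an invariant of the measure algebra rather than of the specific measure.
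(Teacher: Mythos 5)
Your proof is correct and follows essentially the same route as the paper: decompose $Y$ into its atomless part and its atoms, identify the group with $\LL^{0}(Y_{0},\nu_{0},\Aut(Z,\lambda))\times\Aut(Z,\lambda)^{|Y_{a}|}$, and combine Proposition~\ref{prop:generic-2-generated-R} with Lemma~\ref{lem:L0-times-Gn-has-same-rank} or Corollary~\ref{cor:L0-times-Gm-has-same-rank}. Your explicit treatment of the purely atomic case $\nu(Y_{c})=0$ via Lemma~\ref{lem:rank-product-ge-rank-factors} is a small but welcome addition that the paper leaves implicit.
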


\begin{proof}
  Let \( Y_{a} \) be the set of atoms of \( Y \), put \( Y_{0} = Y \setminus Y_{a} \) and
  \( \nu_{0} = \nu\restriction_{Y_{0}} \). The group \( \LL^{0}(Y,\nu, \Aut(Z,\lambda)) \) is
  naturally isomorphic to
  \[ \LL^{0}(Y_{0}, \nu_{0}, \Aut(Z,\lambda)) \times \Aut(Z,\lambda)^{|Y_{a}|}. \]
  An application of Proposition~\ref{prop:generic-2-generated-R} together with
  Lemma~\ref{lem:L0-times-Gn-has-same-rank} or Corollary~\ref{cor:L0-times-Gm-has-same-rank}
  (depending on whether \( Y_{a} \) is infinite or not) finishes the proof.
\end{proof}

\begin{proposition}
  \label{prop:increasing-union-k-generated} Let \( G \) be a Polish group and let
  \( H_{0} \le H_{1} \le \cdots \le G \) be a dense chain of Polish subgroups,
  \( \overline{\bigcup_{n}H_{n}} = G \). If each \( H_{n} \) is generically \( k \)-generated, then
  \( G \) is generically \( k \)-generated.
\end{proposition}

\begin{proof}
  We need to show that for any open \( U \subseteq G^{k} \) and any open \( V \subseteq G \) there
  is a tuple \( (g_{1}, \ldots, g_{k}) \in U \) such that
  \( \langle g_{1}, \ldots, g_{k} \rangle \cap V \ne \varnothing \). Since groups \( H_{n} \) are
  nested and \( \bigcup_{n}H_{n} \) is dense in \( G \), there is \( n \) so large that
  \( U \cap H_{n}^{k} \ne \varnothing \) and \( V \cap H_{n} \ne \varnothing \). It remains to use
  the fact that \( H_{n} \) is generically \( k \)-generated to find the required tuple.
\end{proof}

\begin{theorem}
  \label{thm:generically-2-generated-derived}
  Let \( G \acts X \) be a measure-preserving action of a locally compact amenable Polish normed
  group on a standard probability space \( (X, \mu) \). If almost every orbit of the action is
  uncountable, then the derived \( \LL^{1} \) full group \( \derived(\lfgr{G \acts X}) \) is
  generically \( 2 \)-generated and has topological rank \(2\).
\end{theorem}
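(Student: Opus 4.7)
The plan is to assemble the three ingredients the authors have just developed: the dense chain of subgroups from Corollary~\ref{cor:dense-chain-subgroups}, the generic $2$-generation of groups of the form $\LL^{0}(Y,\nu,\Aut([0,1],\lambda))$ established in Proposition~\ref{prop:L0-std-Lebesgue-generically-2-generated}, and the promotion of generic $k$-generation along dense chains from Proposition~\ref{prop:incresing-union-k-generated}. No genuinely new argument is required; the theorem is a direct corollary.

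First I would invoke Corollary~\ref{cor:dense-chain-subgroups}: since every orbit of $G \acts X$ is assumed to be uncountable, there exists a chain
\[
H_{0} \le H_{1} \le \cdots \le D(\lfgr{G \acts X})
\]
of closed subgroups whose union is dense in $D(\lfgr{G \acts X})$, and such that each $H_{n}$ is (topologically) isomorphic to $\LL^{0}(Y_{n}, \nu_{n}, \Aut([0,1], \lambda))$ for some standard Lebesgue space $(Y_{n}, \nu_{n})$. These $H_{n}$ are Polish in the induced topology, as Corollary~\ref{cor:dense-chain-subgroups} guarantees that they are closed in $D(\lfgr{G \acts X})$.

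Next I would apply Proposition~\ref{prop:L0-std-Lebesgue-generically-2-generated} to each $H_{n}$: this proposition says exactly that $\LL^{0}(Y, \nu, \Aut(Z, \lambda))$ is generically $2$-generated for any standard Lebesgue space $(Y, \nu)$ and any standard probability space $(Z, \lambda)$, so every $H_{n}$ in our chain is generically $2$-generated.

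Finally I would conclude by Proposition~\ref{prop:incresing-union-k-generated} with $k = 2$, applied to the Polish group $G := D(\lfgr{G \acts X})$ and its dense chain $(H_{n})_{n \in \mathbb{N}}$ of Polish subgroups: the hypotheses are met, so $D(\lfgr{G \acts X})$ is generically $2$-generated. There is no real obstacle here beyond keeping track of the bookkeeping; the substantive work was already carried out in the preceding subsections, and the present theorem is simply the payoff of threading those results together.
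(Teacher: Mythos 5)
Your proposal is correct and follows exactly the paper's own argument: invoke Corollary~\ref{cor:dense-chain-subgroups} to obtain the dense chain of closed subgroups isomorphic to \( \LL^{0}(Y_{n},\nu_{n},\Aut([0,1],\lambda)) \), apply Proposition~\ref{prop:L0-std-Lebesgue-generically-2-generated} to see each is generically \( 2 \)-generated, and conclude via Proposition~\ref{prop:incresing-union-k-generated}. Nothing is missing.
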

\begin{proof}
  In view of Corollary~\ref{cor:dense-chain-subgroups}, there is a chain of subgroups
  \[ H_{0} \le H_{1} \le \cdots \le \derived(\lfgr{G \acts X}), \quad \overline{\bigcup_{n}H_{n}} =
    \derived(\lfgr{G \acts X}),\]
  where each \( H_{n} \) is isomorphic to \( \LL^{0}(Y_{n},\nu_{n}, \Aut([0,1], \lambda)) \) for
  some standard Lebesgue space \( (Y_{n}, \nu_{n}) \). By
  Proposition~\ref{prop:L0-std-Lebesgue-generically-2-generated}, every \( H_{n} \) is generically
  \( 2 \)-generated, and we may apply Proposition~\ref{prop:increasing-union-k-generated} to
  conclude that \( \derived(\lfgr{G \acts X}) \) is generically \( 2 \)-generated.  In particular,
  its topological rank is at most \( 2 \).  To see that its topological rank is actually equal to
  \( 2 \), simply note that \( \derived(\lfgr{G \acts X}) \) is not abelian (e.g., by the proof of
  Proposition~\ref{prop:symmetric-equals-alternating}).
\end{proof}

The assumption for orbits to be uncountable is essential, and
Theorem~\ref{thm:generically-2-generated-derived} is in striking contrast to the dynamical
interpretation of the topological rank of derived \( \LL^{1} \) full groups for actions of discrete
groups. As shown in~\cite[Thm.~4.3]{MR4398251}, given an aperiodic measure-preserving action of a
finitely generated group \( \Gamma \acts X \), the topological rank of
\( \derived(\lfgr{\Gamma \acts X}) \) is finite if and only if the action has finite Rokhlin
entropy.



\chapter{The index map for \texorpdfstring{\(\LL^1\)}{L1} full groups of flows}
\label{chap:index-map}

We now turn our attention to \textbf{flows}\index{Flow}, i.e., measure-preserving actions of \(\R\).
Since the group of reals is locally compact, amenable, unimodular, and, of course, Polish, all of
the results in the previous chapters apply to \(\mathbb{R}\)-flows.  A much more in-depth
understanding of \(\LL^{1}\) full groups of flows is possible and is based on the existence of the
so-called \emph{index map}, which we define and investigate in this chapter. This map is a
continuous homomorphism from the \( \LL^{1} \) full group of the flow to the additive group of
reals, which can be thought of as measuring the average shift distance. When the flow is ergodic,
such averages are the same across orbits. By taking the ergodic decomposition of the flow
\(\mathcal{F}\), we can adopt a slightly more general vantage point and view the index map
\( \ind \) as a homomorphism into the \( \LL^{1} \) space of functions on the space of invariant
measures \( (\mathcal{E}, p) \), \( \ind : \lfgr{\mathcal{F}} \to \LL^{1}(\mathcal{E}, p, \R) \).

Understanding the kernel of the index map is a task of fundamental importance. We will subsequently
identify \( \ker \ind \) with the topological derived subgroup of \( \lfgr{\mathcal{F}} \)
(Theorem~\ref{thm:index-kernel-is-derived-subgroup}). This will allow us to describe the
abelianizations of \( \LL^{1} \) full groups of flows and estimate the number of their topological
generators.

It has already been mentioned that any element \( T \) of a full group of a flow induces Lebesgue
measure-preserving transformations on orbits (Section~\ref{sec:orbit-transf}). When~\( T \)
furthermore belongs to the \( \LL^{1} \) full group, these transformations are special---they leave
``half-lines'' invariant up to a set of finite measure. Such transformations form the so-called
\textbf{commensurating group}\index{Transformation group!commensurating}. Let us therefore begin
with a more formal treatment of this group, which has already appeared in the literature before, for
instance in~\cite{robertsonNegativedefinitekernels1998}.

\section{Self-commensurating automorphisms of a subset}
\label{sec:comm-transf}

Consider an infinite measure space \( (Z, \lambda) \). We say that two measurable sets
\( A, B \subseteq Z \) are \textbf{commensurate}\index{Commensurate sets} if the measure of their
symmetric difference is finite, \( \lambda(A \triangle B) < \infty \). The relation of being
commensurate is an equivalence relation, and all sets of finite measure fall into a single
class. Note also that if \( A \) and \( B \) are both commensurate to some \( C \), then so is the
intersection \( A \cap B \); in other words, all equivalence classes of commensurability are closed
under finite intersections.

Let \( \comm(B) \) denote the set of all measurable \( A \subseteq Z \) that are commensurate
to \( B \). Fix some \( Y \subseteq Z \) and consider the semigroup of measure-preserving
transformations between elements of \( \comm(Y) \). More precisely, let \( \isom{}(Y, \lambda) \) be
the set of measure-preserving partial bijections \( T : A \to B \) between sets \( A, B \in \comm(Y) \),
which we call the \textbf{self-commensurating semigroup} of \((Y,\lambda)\).

Recall that we denote the domain of \( T \) as \( \dom T \) and its range as \( \rng T \). For
partial transformations
\( S: A \to B \) and \( T: A' \to B' \), the composition \( T \circ S \) has a domain given by
\( A \cap S^{-1}(A') \). As always, we identify two maps if they differ only on a null set. Since
the classes of commensurability are closed under finite intersections, the set
\( \isom{}(Y, \lambda) \) forms a semigroup with respect to composition.

This semigroup carries a natural equivalence relation: \( T \sim S \) whenever the transformations
disagree on a set of finite measure, \( \lambda(\{ x : Tx \ne Sx \}) < \infty \). This equivalence
is, moreover, a congruence, i.e., if \( T_{1} \sim S_{1} \) and \( T_{2} \sim S_{2} \), then
\( T_{1} \circ T_{2} \sim S_{1} \circ S_{2} \). One may therefore push the semigroup structure from
\( \isom{}(Y, \lambda) \) onto the set of equivalence classes, which we denote by
\( \Autm{}(Y, \lambda) \). An important observation is that \( \Autm{}(Y, \lambda) \) is a group.
Indeed, the identity corresponds to the map \( x \mapsto x \) on \( Y \), and for a representative
\( T \in \isom{}(Y, \lambda) \), its inverse inside \( \Autm{}(Y, \lambda) \) is, naturally, given
by \( T^{-1} : \rng T \to \dom T \). We call \( \Autm{}(Y, \lambda) \) the
\textbf{self-commensurating automorphism group} of \( Y \).

The self-commensurating semigroup admits an important homomorphism into the reals,
\(\ind : \isom{}(Y, \lambda) \to \mathbb{R}\), called the \textbf{index map}\index{Index map!for self-commensurating automorphisms} and
defined by
\[ \ind(T) = \lambda(\dom T \setminus \rng T) - \lambda(\rng T \setminus \dom T). \]

\begin{lemma}
  \label{lem:properties-of-the-index-map} For all \( T \in \isom{}(Y, \lambda) \), the index map
  satisfies the following:
  \begin{enumerate}
    \item\label{lem:ambient-set} if \( A \in \comm(Y) \) is such that
      \( \dom T \subseteq A \) and \( \rng T \subseteq A \), then
      \[ \ind(T) = \lambda(A \setminus \rng T) - \lambda(A \setminus \dom T) ;\]
    \item\label{lem:restriction-preserves-index} if \( T' \in \isom{}(Y, \lambda) \) is a
      restriction of \( T' \), that is \( T' = T\restriction_{\dom T'} \), then
      \( \ind(T') = \ind(T) \).
  \end{enumerate}
\end{lemma}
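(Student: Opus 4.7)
The plan is to prove both items by direct set-theoretic decomposition, with finiteness of all relevant measures ensured by the commensurability hypothesis.

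For \eqref{lem:ambient-set}, I would first note that since $A, \dom T, \rng T \in \comm(Y)$, all pairwise symmetric differences are finite, and so in particular the sets $A \setminus \dom T$, $A \setminus \rng T$, $\dom T \setminus \rng T$ and $\rng T \setminus \dom T$ all have finite measure (as each is contained in one of these symmetric differences, or in the case of $A\setminus \dom T$ in $A \triangle \dom T$). Then I would write the disjoint decompositions
\begin{align*}
A \setminus \rng T &= \bigl(A \setminus (\dom T \cup \rng T)\bigr) \sqcup (\dom T \setminus \rng T),\\
A \setminus \dom T &= \bigl(A \setminus (\dom T \cup \rng T)\bigr) \sqcup (\rng T \setminus \dom T).
\end{align*}
Taking measures and subtracting, the common finite piece $A \setminus (\dom T \cup \rng T)$ (which is contained in $A \setminus \dom T$, hence of finite measure) cancels, and what remains is exactly $\lambda(\dom T \setminus \rng T) - \lambda(\rng T \setminus \dom T) = \ind(T)$.

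For \eqref{lem:restriction-preserves-index}, I would apply part~\eqref{lem:ambient-set} with the common ambient set $A = \dom T \cup \rng T$, which lies in $\comm(Y)$ and contains both $\dom T'$ and $\rng T'$ since $T'$ is a restriction of $T$. Because $T$ is measure-preserving and sends $\dom T \setminus \dom T'$ bijectively onto $\rng T \setminus \rng T'$, one has
\[
\lambda(\dom T \setminus \dom T') = \lambda(\rng T \setminus \rng T'),
\]
and this common value is finite since $\dom T \setminus \dom T' \subseteq \dom T \triangle \dom T'$ with both sets in $\comm(Y)$. Using the disjoint decompositions
\[
A \setminus \rng T' = (A \setminus \rng T) \sqcup (\rng T \setminus \rng T'), \qquad A \setminus \dom T' = (A \setminus \dom T) \sqcup (\dom T \setminus \dom T'),
\]
and applying part~\eqref{lem:ambient-set} to both $T$ and $T'$ with this ambient set, the two extra finite pieces have equal measure and cancel in the difference, so $\ind(T') = \ind(T)$.

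There is no real obstacle in this proof; the only mild point is bookkeeping of which quantities are finite so that every subtraction is legitimate, and this is entirely controlled by the closure of $\comm(Y)$ under finite intersections together with the finiteness of $\lambda(B_1 \triangle B_2)$ for any $B_1, B_2 \in \comm(Y)$.
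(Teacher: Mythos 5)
Your proof is correct and follows essentially the same route as the paper: item \eqref{lem:ambient-set} via the decomposition of $A\setminus\rng T$ and $A\setminus\dom T$ over the common piece $A\setminus(\dom T\cup\rng T)$, and item \eqref{lem:restriction-preserves-index} via the observation that $T$ maps $\dom T\setminus\dom T'$ onto $\rng T\setminus\rng T'$ measure-preservingly, so the extra finite pieces cancel. Your explicit bookkeeping of finiteness is if anything slightly more careful than the paper's.
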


\begin{proof}
  {}\eqref{lem:ambient-set}~If \( A \subseteq Z \) is commensurate to \( Y \) and
  \( \dom T \subseteq A \), \( \rng T \subseteq A \), then
  \begin{displaymath}
    \begin{aligned}
      \ind(T) &= \lambda(\dom T \setminus \rng T) - \lambda(\rng T \setminus \dom T) \\
              &= \lambda(A \setminus \rng T) - \lambda (A \setminus (\dom T \cup \rng T)) \\
      & \qquad - (\lambda(A \setminus \dom T) - \lambda (A \setminus (\dom T \cup \rng T))) \\
      &= \lambda(A \setminus \rng T) - \lambda(A \setminus \dom T).
    \end{aligned}
  \end{displaymath}

  {}\eqref{lem:restriction-preserves-index}~If \( T' \in \isom{}(Y, \lambda) \) is a restriction of
  \( T \), then
  \[ T(\dom T \setminus \dom T') = \rng T \setminus \rng T'. \]
  Thus, for any \( A \in \comm(Y) \) containing both \( \dom T \) and \( \rng T \),
  item~\eqref{lem:ambient-set} implies
  \begin{displaymath}
    \begin{aligned}
      \ind(T) &= \lambda(A \setminus \dom T) - \lambda(A \setminus \rng T) \\
              &= \lambda(A \setminus \dom T') - \lambda(\dom T \setminus \dom T') \\
      & \qquad
      - (\lambda(B \setminus \rng T') - \lambda(\rng T \setminus \rng T')) \\
      &= \lambda(A \setminus \dom T') - \lambda(A \setminus \rng T') = \ind(T'),
    \end{aligned}
  \end{displaymath}
  where the equality \(\lambda(\dom T \setminus \dom T') = \lambda(\rng T \setminus \rng T') \) is
  based on \(T\) being measure-preserving.
\end{proof}

\begin{proposition}
  \label{prop:index-map-homomorphism}
  The index map \( \ind : \isom{}(Y, \lambda) \to \mathbb{R} \) is a homomorphism. Moreover, if
  \( T, S \in \isom{}(Y, \lambda) \) are equivalent, \( T \sim S \), then \( \ind(T) = \ind(S) \).
\end{proposition}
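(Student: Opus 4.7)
The plan is to address the two assertions separately. The equivalence claim will reduce to the restriction-invariance part of Lemma~\ref{lem:properties-of-the-index-map}, while the homomorphism property is essentially a telescoping identity once the composition is arranged so that the range of $T_{2}$ coincides with the domain of $T_{1}$.

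For the implication $T \sim S \implies \ind(T) = \ind(S)$, I would introduce the common restriction set $A = \{x \in \dom T \cap \dom S : Tx = Sx\}$. Since commensurability classes are closed under finite intersection, $\dom T \cap \dom S \in \comm(Y)$; and because $T$ and $S$ disagree on a set of finite measure, $A$ differs from $\dom T \cap \dom S$ by a finite-measure set, so $A \in \comm(Y)$ as well. The same reasoning applied to $T(A) = S(A)$ shows that $T\restriction_A, S\restriction_A \in \isom{}(Y,\lambda)$ and agree as functions. Two applications of Lemma~\ref{lem:properties-of-the-index-map}\eqref{lem:restriction-preserves-index} then yield $\ind(T) = \ind(T\restriction_A) = \ind(S\restriction_A) = \ind(S)$.

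For the homomorphism property $\ind(T_1 \circ T_2) = \ind(T_1) + \ind(T_2)$, I would first align the transformations by setting $D = \dom T_1 \cap \rng T_2 \in \comm(Y)$, and then $T_1' = T_1\restriction_D$ and $T_2' = T_2\restriction_{T_2^{-1}(D)}$. One has $T_1 \circ T_2 = T_1' \circ T_2'$ with $\rng T_2' = \dom T_1' = D$, and restriction-invariance gives $\ind(T_i') = \ind(T_i)$. Now choose an ambient set $A \in \comm(Y)$ containing the three sets $\dom T_2'$, $D$, and $\rng T_1'$ (for instance, their union), and apply Lemma~\ref{lem:properties-of-the-index-map}\eqref{lem:ambient-set} to $T_2'$, $T_1'$, and $T_1' \circ T_2'$ separately, all relative to $A$. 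This gives
\[
\ind(T_2') + \ind(T_1') = \bigl(\lambda(A \setminus D) - \lambda(A \setminus \dom T_2')\bigr) + \bigl(\lambda(A \setminus \rng T_1') - \lambda(A \setminus D)\bigr),
\]
and the two $\lambda(A \setminus D)$ terms cancel, leaving exactly the expression that the same lemma assigns to $\ind(T_1' \circ T_2')$, since $\dom(T_1' \circ T_2') = \dom T_2'$ and $\rng(T_1' \circ T_2') = \rng T_1'$.

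I do not foresee any real obstacle here: the only bookkeeping point is to keep every intermediate set inside $\comm(Y)$, which follows from closure under finite intersection already observed in the text together with the trivial fact that finite unions of elements of $\comm(Y)$ remain commensurate with $Y$. Once the ambient-set formula is in hand, both assertions come out by very short computations.
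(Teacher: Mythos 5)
Your proof is correct and follows essentially the same route as the paper: both parts reduce to the restriction-invariance and ambient-set formulas of Lemma~\ref{lem:properties-of-the-index-map}, with the composition aligned so that \( \rng T_2 = \dom T_1 \) and the telescoping cancellation carried out relative to a common ambient set. The only difference is that you spell out the bookkeeping (the explicit restrictions \( T_1', T_2' \) and the membership of the intermediate sets in \( \comm(Y) \)) that the paper leaves implicit.
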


\begin{proof}
  In view of Lemma~\ref{lem:properties-of-the-index-map}\eqref{lem:restriction-preserves-index}, to
  check that \( \ind(T_{1} \circ T_{2}) = \ind(T_{1}) + \ind(T_{2}) \), we may pass to restrictions
  of these transformations and assume that \( \rng T_{2} = \dom T_{1} \). Pick a set
  \( A \in \comm(Y) \) large enough to contain the domains and ranges of \( T_{1} \) and
  \( T_{2} \). By Lemma~\ref{lem:properties-of-the-index-map}\eqref{lem:ambient-set},
  \begin{displaymath}
    \begin{aligned}
      \ind(T_{1} \circ T_{2}) &= \lambda(A \setminus \rng T_{1}) - \lambda( A \setminus \dom T_{2}) \\
      &= \lambda(A \setminus \rng T_{1}) - \lambda(A \setminus \dom T_{1})
      + \lambda(A \setminus \rng T_{2}) - \lambda( A \setminus \dom T_{2}) \\
      &= \ind(T_{1}) + \ind(T_{2}).
    \end{aligned}
  \end{displaymath}

  For the moreover part, suppose that \( T, S \in \isom{Y}(Y, \lambda) \) are equivalent. Let
  \( U \) be the restriction of \( T \) and \( S \) onto the set \( \{x : Tx = Sx\} \). Using
  Lemma~\ref{lem:properties-of-the-index-map}\eqref{lem:restriction-preserves-index} once again, we
  get \( \ind(T) = \ind(U) = \ind(S) \). Hence, the index map is invariant under the equivalence
  relation \(\sim\).
\end{proof}

The proposition above implies that the index map respects the relation \( \sim \), and hence gives
rise to a map from \( \Autm{}(Y, \lambda) \) to the reals.

\begin{corollary}
  \label{cor:index-map-commensurate-group}
  The index map factors to a group homomorphism \[ \ind : \Autm{}(Y, \lambda) \to \mathbb{R}. \]
\end{corollary}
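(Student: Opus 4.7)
The proof plan is essentially a packaging exercise: all the substantive work has already been carried out in Proposition~\ref{prop:index-map-homomorphism}, and what remains is to check that the factorization through the quotient $\isom{}(Y,\lambda)/{\sim} = \Autm{}(Y,\lambda)$ produces a group homomorphism rather than just a set map.

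First I would define $\tilde{\ind} : \Autm{}(Y,\lambda) \to \mathbb{R}$ by $\tilde{\ind}([T]) = \ind(T)$ for any representative $T \in \isom{}(Y,\lambda)$ of the equivalence class $[T]$. Well-definedness is immediate from the ``moreover'' part of Proposition~\ref{prop:index-map-homomorphism}, which states that $T \sim S$ implies $\ind(T) = \ind(S)$. That $\tilde{\ind}$ respects the semigroup operation then follows because the operation on $\Autm{}(Y,\lambda)$ is the one induced by composition on $\isom{}(Y,\lambda)$, and $\ind$ is already a semigroup homomorphism on the latter by Proposition~\ref{prop:index-map-homomorphism}.

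It remains to confirm that $\tilde{\ind}$ is a \emph{group} homomorphism, i.e., that it sends the neutral element of $\Autm{}(Y,\lambda)$ to $0$ and inverses to inverses. The neutral element is represented by $\mathrm{id}_{Y} : Y \to Y$, whose index is $\lambda(Y \setminus Y) - \lambda(Y \setminus Y) = 0$. For inversion, recall that $[T]^{-1}$ is represented by the genuine inverse $T^{-1} : \rng T \to \dom T$; the definition of the index map gives
\[
\ind(T^{-1}) = \lambda(\rng T \setminus \dom T) - \lambda(\dom T \setminus \rng T) = -\ind(T),
\]
so $\tilde{\ind}([T]^{-1}) = -\tilde{\ind}([T])$, as required. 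Alternatively, one could simply invoke the general fact that a semigroup homomorphism from a group into a group is automatically a group homomorphism, obviating the explicit verification. There is no genuine obstacle here; the point of recording this statement as a separate corollary is that in the sequel it is the group-valued map $\tilde{\ind}$ on $\Autm{}(Y,\lambda)$ that will be used in the analysis of the index map for flows.
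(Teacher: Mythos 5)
Your proposal is correct and matches the paper's treatment: the corollary is recorded as an immediate consequence of Proposition~\ref{prop:index-map-homomorphism}, with well-definedness coming from the ``moreover'' part and the homomorphism property from the first part. Your extra verifications of the identity and inverses (or the general fact about semigroup homomorphisms between groups) are exactly the routine details the paper leaves implicit.
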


\section{The commensurating automorphism group}
\label{sec:comm-autom}

Let us again consider an infinite measure space \( (Z, \lambda) \) and \( Y \subseteq Z \) a
measurable subset. We now define the \textbf{commensurating automorphism group of \( Y \) in
  \( Z \)} as the group of all measure-preserving transformations \( T \in \Aut(Z, \lambda) \) such
that \( \lambda(Y \triangle T(Y)) < \infty \).  We denote this group by \( \Aut_Y(Z,\lambda) \).

Every \( T \in \Aut_Y(Z,\lambda) \) naturally gives rise to an element of \( \Autm{}(Y, \lambda) \)
by considering its restriction \( T \restriction_{Y} \).  The following lemma shows that in this
case we may use any other set \( A \) commensurate to \( Y \) instead without changing the
corresponding element of the commensurating group.

\begin{lemma}
  \label{lem:commensurate-condition-is-invariant}
  Let \( T \in \Aut(Z, \lambda) \) be a measure-preserving automorphism. If
  \( T\restriction_{A} \in \isom{}(Y, \lambda) \) for some \( A \in \comm(Y) \), then
  \( T\restriction_{B} \in \isom{}(Y, \lambda) \) and \( T\restriction_{B} \sim T\restriction_{A} \)
  for all \( B \in \comm(Y) \).
\end{lemma}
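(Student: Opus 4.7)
The plan is to run two short measure-theoretic observations. First I would note that commensurability of $A$ and $B$ with $Y$ implies $\lambda(A\triangle B)\le \lambda(A\triangle Y)+\lambda(B\triangle Y)<\infty$. Since $T$ is measure-preserving, $T(A)\triangle T(B)=T(A\triangle B)$ also has finite measure. The hypothesis says $T(A)\in\comm(Y)$, so transitivity of commensurability gives $T(B)\in\comm(Y)$. Together with $B\in\comm(Y)$, this shows that $T\restriction_B$ is a measure-preserving bijection between two elements of $\comm(Y)$, i.e., $T\restriction_B\in\isom{}(Y,\lambda)$.

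Next I would compare the two restrictions. On $A\cap B$ both restrictions agree with $T$, so they coincide there. The set on which $T\restriction_A$ and $T\restriction_B$ fail to coincide (including points where only one of them is defined) is contained in $A\triangle B$, which we have already seen is of finite measure. This is exactly the condition $T\restriction_A\sim T\restriction_B$.

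The only subtle point is how to read ``disagree on a set of finite measure'' when the two representatives of the $\sim$-class have different domains $A$ and $B$. The only coherent interpretation compatible with how $\sim$ is defined as a congruence on $\isom{}(Y,\lambda)$ and used in Proposition~\ref{prop:index-map-homomorphism} is to measure the symmetric difference of their graphs (equivalently, $\lambda(A\triangle B)$ together with the set in $A\cap B$ where the values differ). Under that reading the argument above is immediate; there is no real obstacle, and the whole proof is essentially a bookkeeping exercise with finiteness of $\lambda(A\triangle B)$ and invariance of $\lambda$ under $T$.
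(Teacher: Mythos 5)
Your proof is correct and follows essentially the same route as the paper: both arguments reduce everything to the finiteness of \( \lambda(A \triangle B) \), the measure-preservation of \( T \), and the transitivity of commensurability, and both conclude \( T\restriction_{A} \sim T\restriction_{B} \) from agreement on \( A \cap B \). Your explicit remark on how to read the relation \( \sim \) for maps with different domains is a fair clarification of a point the paper leaves implicit, but it does not change the substance of the argument.
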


\begin{proof}
  Since commensuration is an equivalence relation and \(A\) is commensurate to \(Y\), the assumption
  \( T\restriction_{A} \in \isom{}(Y, \lambda) \) is equivalent to
  \( \lambda(A \triangle T(A)) < \infty \). Moreover, given \( B \in \comm(Y) \), we only need to
  show that \( \lambda(B \triangle T(B)) \) is finite in order to conclude that
  \( T\restriction_{B} \in \isom{}(Y,\lambda) \). So we compute
  \begin{displaymath}
    \begin{aligned}
      \lambda(B \triangle T(B))
      =& \lambda(B \setminus T(B)) + \lambda(T(B) \setminus B) \\
      \le& \lambda(A \setminus T(A)) + \lambda(B \setminus A) + \lambda(T(A \setminus B)) \\
       & \qquad + \lambda(T(A) \setminus A) + \lambda(A \setminus B) + \lambda(T(B \setminus A)) \\
      =&\lambda(A \triangle T(A)) + 2 \lambda(A \triangle B ) < \infty.
    \end{aligned}
  \end{displaymath}
  Thus, the measure \( \lambda(B \triangle T(B)) \) is finite. Hence
  \( T\restriction_{B} \in \isom{}(Y, \lambda) \) for all \( B \in \comm(Y) \). Finally,
  \( T\restriction_{A} \sim T\restriction_{B} \), since these transformations agree on
  \( A \cap B \).
\end{proof}

To summarize, if \( T\restriction_{A} \in \isom{}(Y, \lambda) \) for some \( A \in \comm(Y) \), then
all restrictions \( T\restriction_{B} \), \( B \in \comm(Y) \), are pairwise equivalent. Hence, they
correspond to the same element \( T\restriction_{Y} \in \Autm{}(Y, \lambda) \). According to
Proposition~\ref{prop:index-map-homomorphism}, the index \( \ind(T\restriction_{Y}) \) of this
element can be computed as
\( \ind(T\restriction_{Y}) = \lambda(B \setminus T(B)) - \lambda(B \setminus T^{-1}(B)) \) for any
\( B \in \comm(Y) \).

\section[Index map on L1 full groups]{Index map on \texorpdfstring{\( \LL^{1} \)}{L1} full groups
of \texorpdfstring{\( \mathbb{R} \)-flows}{R-flows}}
\label{chap:index-map-l1-full-groups}

Let \( \mathcal{F} = \mathbb{R} \acts X \) be a free measure-preserving Borel flow, let
\( \lfgr{\mathcal{F}} \) be the associated \( \LL^{1} \) full group, where we endow \( \mathbb{R} \)
with the standard Euclidean norm, and let \( T \in \lfgr{\mathcal{F}} \). The action of
\(r \in \mathbb{R}\) upon \(x \in X\) is denoted additively by \(x + r\). Recall that the cocycle of
\( T \) is denoted by \( \rho_{T} : X \to \mathbb{R} \) and is defined by the equality
\(T(x)=x+\rho_T(x)\) for all \(x\in X\). We are going to argue that, on every orbit, \( T \) induces
a measure-preserving transformation that belongs to the commensurate group of
\( \mathbb{R}^{\ge 0} \), when the orbit is identified with the real line.

Consider the function \( f : \mathcal{R}_{\mathcal{F}} \to \{-1,0, 1\} \) defined by
\begin{displaymath}
  f(x,y) =
  \begin{cases}
    1 & \textrm{if } x < y < T(x),\\
    -1 & \textrm{if } T(x) < y < x, \\
    0 & \textrm{otherwise}.
  \end{cases}
\end{displaymath}
One can think of \( f \) as a ``charge function'' that spreads charge \( +1 \) over each interval
\( (x, T(x)) \) and \( -1 \) over \( (T(x), x) \). Note that we have both
\[ \int_{\mathbb{R}}f(x, x + r)\, d\lambda(r) = \rho_{T}(x) \quad \textrm{and} \quad
  \int_{\mathbb{R}}\abs{f(x, x + r)}\, d\lambda(r) = \abs{\rho_{T}(x)}. \]
Since \( T \) belongs to the \( \LL^{1} \) full group, its cocycle is integrable, which means that
\( f \) is \( M \)-integrable (see Section~\ref{sec:orbit-transf}). By the mass-transport principle,
the following integrals are equal and finite:
\[ \int_{X} \int_{\mathbb{R}} \abs{f(x, x + r)}\, d\lambda(r) d\mu(x) = \int_{X} \int_{\mathbb{R}}
  \abs{f(x+r, x)}\, d\lambda(r) d\mu(x). \]
In particular, the integral \( \int_{\mathbb{R}}\abs{f(x +r,x)}\, d\lambda(r) \) is finite for
almost all \(x\).

Let \( T_{x} \in \Aut(\mathbb{R}, \lambda) \) denote the transformation induced by \( T \) onto the
orbit of \( x \) obtained by identifying the origin of the real line with \( x \), which is
measure-preserving by Proposition~\ref{prop:orbital-transformations}.  One can reinterpret the
integral \( \int_{\mathbb{R}}\abs{f(x +r,x)}\, d\lambda(r) \) as follows:
\begin{displaymath}
  \begin{aligned}
    \int_{\mathbb{R}}|f(x +r,x)| \, d\lambda(r) &
    = \lambda\bigl(\mathbb{R}^{\ge 0} \setminus T_{x}(\mathbb{R}^{\ge 0})\bigr)
                                                  + \lambda\bigl(T_{x}(\mathbb{R}^{\ge 0})\setminus \mathbb{R}^{\ge 0}\bigr) \\
    &=\lambda (\mathbb{R}^{\ge 0}\bigtriangleup T_x(\mathbb{R}^{\ge 0})).
  \end{aligned}
\end{displaymath}
In particular, \( T_x\restriction_{\mathbb{R}^{\ge 0}} \) belongs to the commensurating group of
\( \mathbb{R}^{\ge 0} \).
Observe that we also have
\[
\int_{\mathbb{R}}f(x +r,x) \, d\lambda(r)
= \lambda\bigl(\mathbb{R}^{\ge 0} \setminus T_{x}(\mathbb{R}^{\ge 0})\bigr)
- \lambda\bigl(T_{x}(\mathbb{R}^{\ge 0})\setminus \mathbb{R}^{\ge 0}\bigr),
\]
which is equal to the index of
\( T_{x}\restriction_{\mathbb{R}^{\ge 0}} \). By Section~\ref{sec:comm-autom},
\( \ind(T_{x}\restriction_{\mathbb{R}^{\ge 0}}) = \ind(T_{y}\restriction_{\mathbb{R}^{\ge 0}}) \)
whenever \( x \mathcal{R}_{\mathcal{F}} y \). For any \( T \in \lfgr{\mathcal{F}} \), we therefore
have an \(\mathcal{F}\)-invariant measurable map \( h_{T} : X \to \mathbb{R} \) given by
\( h_{T}(x) = \int_{\mathbb{R}}f(x +r,x) \, d\lambda(r) \).  Note that for any
\( \mathcal{F} \)-invariant set \( Y \subseteq X \), the mass-transport principle yields
\begin{equation}
  \label{eq:integral-cocycle-h}
  \int_{Y}\rho_{T}(x)\, d\mu(x) = \int_{Y}h_{T}(x)\, d\mu(x).
\end{equation}

Let \( (\mathcal{E}, p) \), \( X \ni x \mapsto \nu_{x} \in \mathcal{E} \), be the ergodic
decomposition of \( (X, \mu, \mathcal{F}) \) (see Appendix~\ref{sec:ergod-decomp}). Since the map
\( h_{T} \) is \( \mathcal{F} \)-invariant, it produces a map
\( \tilde{h}_{T} : \mathcal{E} \to \mathbb{R} \) via \( \tilde{h}(\nu) = h(x) \) for any \( x \)
such that \( \nu = \nu_{x} \) or, equivalently, via
\[ \tilde{h}_{T}(\nu) = \int_{X}\int_{\mathbb{R}}f(x+r, x)\, d\lambda(r)d\nu(x). \]
Note also that
\begin{equation}\label{eq:index-is-a-contraction}
  \int_{\mathcal{E}} \abs{\tilde{h}_{T}(\nu)}\, dp(\nu)
  \leq \int_{X}\int_{\mathbb{R}}\abs{f(x+r, x)}\, d\lambda(r)d\mu(x)
  =\int_{X}\abs{\rho_T(x)}\,d\mu(x),
\end{equation}
thus \( \tilde{h}_{T} \in \LL^{1}(\mathcal{E},p, \mathbb{R}) \). We can now define the index map of
a (possibly non-ergodic) flow as a function
\( \ind : \lfgr{\mathcal{F}} \to \LL^{1}(\mathcal{E}, p,\mathbb{R}) \).

\begin{definition}
  \label{def:index-map}
  Let \( \mathcal{F} = \mathbb{R} \acts X \) be a free measure-preserving flow on a standard
  probability space \( (X, \mu) \); let also \( (\mathcal{E}, p) \) be the space of
  \( \mathcal{F} \)-invariant ergodic probability measures, where \( p \) is the probability measure
  yielding the disintegration of \( \mu \). The \textbf{index map}\index{Index map!on the \( \LL^1 \) full group} is the function
  \( \ind : \lfgr{\mathcal{F}} \to \LL^{1}(\mathcal{E},p, \mathbb{R}) \) given by
  \[ \ind(T)(\nu) = \tilde{h}_{T}(\nu) = \int_{X}\int_{\mathbb{R}}f(x+r, x)\, d\lambda(r)d\nu(x). \]
\end{definition}

\begin{proposition}
  \label{prop:index-map-is-surjective}
  For any free measure-preserving flow \( \mathcal{F} = \mathbb{R} \acts X \), the index map
  \( \ind : \lfgr{\mathcal{F}} \to \LL^{1}(\mathcal{E},p, \mathbb{R}) \) is a continuous and
  surjective homomorphism. Furthermore, its kernel consists of all \(T\in \lfgr{\mathcal{F}}\)
  satisfying, for almost all \(y\in X\),
  \begin{equation}\label{eq:kernel-of-index}
  \lambda_y(\{x \in \supp T : x < y \le Tx\}) =
  \lambda_y(\{x \in \supp T : Tx < y \le x\}).
  \end{equation}
\end{proposition}

\begin{proof}
  The index map is a homomorphism, since, as we have discussed earlier, \( h_{T}(x) \) is equal to
  the index of \( T_{x}\restriction_{\mathbb{R}^{\ge 0}} \). Continuity follows from the fact that
  \( \ind \) is \(1\)-Lipschitz as a direct consequence of Eq.~\eqref{eq:index-is-a-contraction}.
  To see surjectivity, pick any \( \tilde{h} \in \LL^{1}(\mathcal{E},p, \mathbb{R}) \) and view it
  as a map \( h : X \to \mathbb{R} \) via the identification \( h(x) = \tilde{h}(\nu_{x}) \). Define
  the automorphism \( T \in \Aut(X, \mu) \) by \( T(x) = x + h(x) \). It is straightforward to check
  that \( T \in \lfgr{\mathcal{F}} \) and \( \ind(T) = h \).

  Finally, according to the definition of the index map, the kernel is the set of all
  \(T\in\lfgr{\mathcal{F}}\) for which, almost everywhere in \(X\), the condition \(h_{T}(y) = 0 \)
  holds.  By the definition of \(h_T\) and the charge function \(f\), this translates to the
  relationship
  \[\lambda_y(\{x \in \supp T : x < y < Tx\}) = \lambda_y(\{x \in \supp T : Tx < y < x\}).\]
  Since \(\lambda_y\) is atomless, the above equality is equivalent to the desired condition.
\end{proof}

The quotient group \( \lfgr{\mathcal{F}}/ \ker{\ind} \) naturally inherits the quotient norm given
by
\[ \norm{T \ker{\ind}}_{1} = \inf_{S \in \ker{\ind}} \norm{T S}_{1}. \]
By Proposition~\ref{prop:index-map-is-surjective}, the index map induces an isomorphism between
\( \lfgr{\mathcal{F}}/ \ker{\ind} \) and \( \LL^{1}(\mathcal{E},p, \mathbb{R}) \). We argue that this
isomorphism is, in fact, an isometry.

\begin{proposition}\label{prop:index-map}
  The index map \( \ind \) induces an isometric isomorphism from
  \( \lfgr{\mathcal{F}}/ \ker{\ind} \) onto \( \LL^{1}(\mathcal{E},p, \mathbb{R}) \), where the
  former is endowed with the quotient norm and the latter bears the usual \( \LL^{1} \) norm.
\end{proposition}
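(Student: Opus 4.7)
The plan is to establish, for every $T \in \lfgr{\mathcal F}$, the equality
\[
\norm{T\ker\ind}_1 \;=\; \norm{\ind(T)}_{\LL^1(\mathcal{E},\mathbb{R})}.
\]
Since Proposition~\ref{prop:index-map-is-surjective} already provides an algebraic isomorphism $\lfgr{\mathcal F}/\ker\ind \ismph \LL^1(\mathcal{E},\mathbb{R})$, this identity upgrades it to an isometric one. I would prove each inequality separately.

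For the inequality $\ge$, the key step is a per-ergodic-component version of equation~\eqref{eq:integral-cocycle-h}. Applying it with $Y = \nu_\cdot^{-1}(E)$ for arbitrary Borel $E \subseteq \mathcal{E}$ and using the ergodic disintegration $\mu = \int \nu \, dp(\nu)$ on both sides, I deduce that for every $T' \in \lfgr{\mathcal F}$ one has $\int_X \rho_{T'}(x)\, d\nu(x) = \tilde{h}_{T'}(\nu)$ for $p$-almost every $\nu$. Now fix $T' \in T\ker\ind$, so that $\tilde h_{T'} = \tilde h_T = \ind(T)$ as functions on $\mathcal E$. The pointwise triangle inequality $|\ind(T)(\nu)| \le \int |\rho_{T'}|\, d\nu$ holds for $p$-a.e.~$\nu$, and integrating against $p$ together with the disintegration formula yields $\norm{\ind(T)}_{\LL^1} \le \int_X |\rho_{T'}|\,d\mu = \norm{T'}_1$. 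Taking the infimum over $T' \in T\ker\ind$ gives the first direction.

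For the reverse inequality I exhibit a distinguished representative in the coset $T\ker\ind$ that realizes equality, so that the infimum is in fact attained. Following the surjectivity construction in the proof of Proposition~\ref{prop:index-map-is-surjective}, set $h(x) := \ind(T)(\nu_x)$, and define $T_0 \in \Aut(X,\mu)$ by $T_0(x) = x + h(x)$. Since $h$ is $\mathcal F$-invariant and $\mu$-integrable, $T_0$ acts as a constant translation on each ergodic component, hence preserves $\mu$, belongs to $\lfgr{\mathcal F}$ with cocycle $\rho_{T_0} = h$, and satisfies $\ind(T_0) = \ind(T)$, so that $T_0 \in T\ker\ind$. But then
\[
\norm{T_0}_1 \;=\; \int_X |h(x)|\, d\mu(x) \;=\; \int_{\mathcal E} |\ind(T)(\nu)|\, dp(\nu) \;=\; \norm{\ind(T)}_{\LL^1(\mathcal E, \mathbb R)},
\]
whence $\norm{T\ker\ind}_1 \le \norm{T_0}_1 = \norm{\ind(T)}_{\LL^1}$.

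The argument is quite direct and I do not anticipate a substantial obstacle; the only delicate point is the disintegration step in the first paragraph, which requires justifying that the exceptional set of $\nu$'s where $\int \rho_{T'}\, d\nu \ne \tilde h_{T'}(\nu)$ is genuinely $p$-null. This follows routinely from the ergodic decomposition machinery together with the fact that $h_{T'}$ is orbit-invariant and hence $p$-a.e.\ constant on each ergodic component.
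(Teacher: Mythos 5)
Your proof is correct and follows essentially the same route as the paper: both directions hinge on the identity \( \norm{\ind(T)}_{\LL^{1}(\mathcal{E},p)} = \int_X |h_T|\,d\mu \), with the lower bound obtained from the triangle inequality applied to the cocycle of an arbitrary coset representative and the upper bound from the distinguished representative \( x \mapsto x + h_T(x) \). In fact your treatment of the lower bound is slightly more careful than the paper's: the paper argues globally and as literally written only obtains \( \norm{TS}_1 \ge \bigl|\int_X h_T\,d\mu\bigr| \), and the localization to ergodic components (equivalently, to flow-invariant sets on which \( h_T \) has constant sign) that you carry out via the disintegration is exactly the refinement needed to upgrade this to \( \int_X |h_T|\,d\mu \).
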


\begin{proof}
  Since \( \int_{X}|h_{T}(x)| \, d\mu(x) = \int_{\mathcal{E}} |\tilde{h}_{T}(\nu)| \, dp(\nu) \), it
  suffices to show that for all \( T \in \lfgr{\mathcal{F}} \)
  \[ \inf_{S \in \ker\ind} \norm{T S}_{1} = \int_{X}|h_{T}|\, d\mu. \]
  Let \(T\in \lfgr{\mathcal{F}}\).  We first show the inequality
  \( \displaystyle\inf_{S \in \ker\ind} \norm{T S}_{1} \ge \int_{X}|h_{T}|\, d\mu \).

  Pick any \(S \in \ker \ind\).  For any \(\mathcal{F}\)-invariant measurable \(Y \subseteq X\),
  \(\int_{Y} \rho_{S} \, d\mu = 0\) and
  \[\int_{Y}\rho_{TS}\, d\mu = \int_{Y} \rho_{T}(S(x))\, d\mu(x) + \int_{Y} \rho_{S}(x)\, d\mu(x) =
    \int_{Y} \rho_{T}\, d\mu = \int_{Y} h_{T} \, d\mu,\]
  where we rely on Eq.~\eqref{eq:integral-cocycle-h} and \(S\) being measure-preserving.  Consider
  the \(\mathcal{F}\)-invariant sets
  \[Y^{<0} = \{x \in X : h_{T}(x) < 0\} \quad
    \textrm{and} \quad Y^{\ge0} =\{x \in X : h_{T}(x) \ge 0\}.\]
  The norm \(\norm{TS}_{1}\) can be estimated from below as follows.
  \begin{displaymath}
    \begin{aligned}
      \norm{TS}_{1}
      &= \int_{X}|\rho_{TS}|\, d\mu  = \int_{Y^{< 0}}|\rho_{TS}|\, d\mu + \int_{Y^{\ge 0}}|\rho_{TS}|\, d\mu \\
      &\geq \abs{\int_{Y^{< 0}}\rho_{TS}\, d\mu} + \abs{\int_{Y^{\ge 0}}\rho_{TS}\, d\mu} \\
      &= \abs{\int_{Y^{< 0}}h_{T}\, d\mu} + \abs{\int_{Y^{\ge 0}}h_{T}\, d\mu} \\
      &= -\int_{Y^{< 0}}h_{T}\, d\mu + \int_{Y^{\ge 0}}h_{T}\, d\mu = \int_{X}|h_{T}|\, d\mu. \\
    \end{aligned}
  \end{displaymath}
  We conclude that
  \[ \inf_{S \in \ker{\ind}} \norm{TS}_{1} \ge \int_{X}|h_{T}|\, d\mu. \]

  For the other direction, consider a transformation \( T' \) defined by \( T'(x) = x + h_{T}(x) \);
  note that \( T' \in \lfgr{\mathcal{F}} \), \( \rho_{T'}(x) = h_{T'}(x) = h_{T}(x) \) for all
  \( x \in X \), and \( T^{-1}T' \in \ker \ind \). Therefore
  \[ \inf_{S \in \ker{\ind}} \norm{TS}_{1} \le \norm{TT^{-1}T'}_{1} = \norm{T'}_{1} =
    \int_{X}|h_{T'}|\, d\mu = \int_{X}|h_{T}|\, d\mu, \]
  and the desired equality of norms follows.
\end{proof}

Using similar reasoning, we obtain the following characterization of the \(\LL^1\) full group and
the index map, where for all \(T\in[\mathcal R_{\mathcal F}]\), we let \(r_{T}\) be the
measure-preserving transformation of \((\mathcal R_{\mathcal F},M)\) given by
\(r_{T}(x,y)=(x,T(y))\) (see Section~\ref{sec:orbit-transf}).

\begin{proposition}\label{prop:commensuration-of-L1}
  Let \(\mathcal F = \R \acts X\) be a free measure-preserving \(\R\)-flow.  Consider the set
  \(\mathcal R^{\geq 0}=\{(x,y)\in\mathcal R_{\mathcal F} : x\geq y\}\).  Then for every
  \(T\in [\R \acts X]\), we have
  \[
    \norm{T}_1=M\left(\mathcal R^{\geq 0}\bigtriangleup r_T(\mathcal R^{\geq 0})\right).
  \]
  In particular, the \(\LL^1\) full group of \(\mathcal F\) can be seen as the commensurating group
  of \(\mathcal R^{\geq 0}\) inside the full group of \(\mathcal R\).  Moreover, in the ergodic
  case, the index of \(T\) as defined above is equal to its index as a commensurating transformation
  of the set \(\mathcal R^{\geq 0}\) in the sense of Section~\ref{sec:comm-transf}.
\end{proposition}
\begin{proof}
  Through the identification \((x,t)\mapsto(x,x+t)\), the measure-preserving transformation \(r_T\)
  is acting on \(X\times \R\) as \(\mathrm{id}_{X}\times T_x\), and the set \(\mathcal R^{\geq 0}\)
  becomes \(X\times \R^{\geq 0}\).  We then have
  \begin{align*}
    M( \mathcal R^{\geq 0}\bigtriangleup r_T(\mathcal R^{\geq 0}))
    &=\int_X \lambda(\R^{\geq 0}\bigtriangleup (T_x(\R^{\geq 0})))\, d\mu(x)\\
    &=\int_X\abs{\rho_T}\, d\mu
  \end{align*}
  by the mass-transport principle, which yields the conclusion, since by the definition of the norm
  \(\norm{T}_1=\int_X\abs{\rho_T}\, d\mu\).

  The moreover part follows from a similar computation.
\end{proof}

\begin{remark}
  The full group of \(\mathcal R\) embeds via \(T\mapsto r_T\) into the group of measure-preserving
  transformations of \((\mathcal R,M)\).  One could use this and the fact that the commensurating
  automorphism group of \(\mathcal R^{\geq 0}\) is a Polish group in order to give another proof
  that \(\LL^1\) full groups of measure-preserving \(\mathbb{R}\)-flows are themselves Polish.
\end{remark}



\chapter[Bounded elements of the full group]{Orbitwise ergodic bounded elements of full groups}
\label{chap:example-transf}

The purpose of this chapter is to contrast some of the differences in the dynamics of the elements
of full groups of \( \mathbb{Z} \)-actions and those arising from \( \mathbb{R} \)-flows.  Let
\( S \in \fgr{\mathbb{Z} \acts X} \) be an element of the full group of a measure-preserving
aperiodic transformation, and let \( \rho_{S^{k}} : X \to \mathbb{Z} \) be the cocycle associated
with \( S^{k} \) for \( k \in \mathbb{Z} \). Since \( \mathbb{Z} \) is a discrete group, the
conservative part in the Hopf decomposition for \( S \) (see
Appendix~\ref{sec:hopf-decomposition-appendix}) reduces to the set of periodic orbits. In
particular, an aperiodic \( S \in \fgr{\mathbb{Z} \acts X} \) has to be dissipative, hence
\( |\rho_{S^{k}}(x)| \to \infty \) as \( k \to \infty \). When \( S \) belongs to the \( \LL^{1} \)
full group of the action, a theorem of R.~M.~Belinskaja~\cite[Thm.~3.2]{MR0245756} strengthens this
conclusion and asserts that for almost all \( x \) in the dissipative component of \( S \), either
\( \rho_{S^{k}}(x) \to +\infty \) or \( \rho_{S^{k}}(x) \to -\infty \).

Given an arbitrary free measure-preserving flow \( \mathbb{R} \acts X \), we construct an example of
an aperiodic \( S \in \lfgr{\mathbb{R} \acts X} \) for which the signs in
\( \{\rho_{S^{k}}(x) : k \in \mathbb{N} \} \) keep alternating indefinitely for almost all
\( x \in X \). In fact, we present a transformation that acts ergodically on each orbit of the flow
(in particular, it is conservative and globally ergodic as soon as the flow is ergodic).  Moreover,
we ensure it has a uniformly bounded cocycle. Our argument uses a variant of the well-known cutting
and stacking construction adapted for infinite measure spaces. Additional technical difficulties
arise from the necessity to work across all orbits of the flow simultaneously.  The transformation
will arise as a limit of special partial transformations we call \emph{castles}, which we now define.

The \textbf{pseudo full group}\index{Pseudo full group} of the flow is the set of injective Borel
maps \( \varphi: \dom\varphi \to \rng\varphi \) between Borel sets \( \dom \varphi \subseteq X \),
\( \rng \varphi \subseteq X \), for which there exists a countable Borel partition
\( (A_n)_{n \in \mathbb{N}} \) of the domain \( \dom\varphi \) and a countable family of reals
\( (t_n)_{n \in \mathbb{N}} \) such that \( \varphi(x)=x+t_n \) for every \( x\in A_n \). Such maps
are measure-preserving isomorphisms between \( (\dom \varphi, \mu \restriction_{\dom \varphi}) \)
and \( (\rng \varphi, \mu \restriction_{\rng \varphi}) \), in other words they are
partial transformations. The
\textbf{support}\index{Transformation!support} of \( \varphi \) is
the set
\[
  \supp\varphi = \{ x\in \dom\varphi : \varphi(x)\neq x\}\cup\{x\in\rng\varphi : \varphi^{-1}(x)\neq
  x \}.
\]
Given \( \varphi \) in the pseudo full group and a Borel set \( A\subseteq X \), we let
\[ \varphi(A)=\{\varphi(x) : x\in A\cap \dom\varphi\}. \]
In particular, \( \varphi(A)=\varnothing \) if \( A \) is disjoint from \( \dom\varphi \). A
\textbf{castle} is an element \( \varphi \) of the pseudo full group of the flow such that for
\( B=\dom \varphi\setminus\rng\varphi \) the sequence \( (\varphi^k(B))_{k\in\N} \) consists of
pairwise disjoint subsets which cover its support. Since \( \varphi \) is measure-preserving, for
almost every \( x\in B \) there is \( k\in\N \) such that \( \varphi^k(x)\not\in\dom \varphi \). It
follows that \( \varphi^{-1} \) is also a castle. The set \( B \) is called the \textbf{basis} of the
castle, and the basis of its inverse \( C \) is called its \textbf{ceiling}, which is equal to
\( \rng\varphi\setminus\dom\varphi \). Observe that if two castles have disjoint supports, then
their union is also a castle. We denote by \( \vec\varphi:B\to C \) the element of the pseudo full
group which takes every element of the basis of \( \varphi \) to the corresponding element of the
ceiling.

\begin{remark}
  Equivalently, one could define a castle as an element \( \varphi \) of the pseudo full group which
  induces a \emph{graphing} consisting of finite segments only
  (see~\cite[Sec.~17]{kechrisTopicsOrbitEquivalence2004} for the definition of a graphing). It
  induces a partial order \( \le_\varphi \) defined by \( x\leq _\varphi y \) if and only if there
  is \( k\in \N \) such that \( y=\varphi^k(x) \). The basis of the castle is the set of minimal
  elements, while the ceiling is the set of maximal ones. Finally, \( \vec\varphi \) is the map
  which takes a minimal element to the unique maximal element above it.
\end{remark}

\begin{theorem}
  \label{thm:example-conservative-ergodic-unbounded}
  Let \( \mathbb{R} \acts X \) be a free measure-preserving flow. There exists
  \( S\in \fgr{\mathbb{R} \acts X} \) that acts ergodically on every orbit of the flow and whose
  cocycle is bounded by \( 4 \). Moreover, the signs in
  \( \{\rho_{S^{k}}(x) : k \in \mathbb{N} \} \) keep changing indefinitely for almost all
  \( x \in X \).
\end{theorem}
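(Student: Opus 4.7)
The plan is to build $S$ as the pointwise limit of a nested sequence of castles $\varphi_n$ in the pseudo full group of the flow, obtained via a fiberwise cutting-and-stacking procedure in the spirit of Chacon's rank-one construction, adapted to the infinite Lebesgue measure carried by each orbit. As a preparatory step, I would fix a cross section $\mathcal{C}$ of the flow with $\gap_{\mathcal{C}}(c) \in [1,2]$ for every $c \in \mathcal{C}$, whose existence is standard. The associated tessellation partitions each orbit into tiles of length in $[1,2]$, and the first-return map $\sigma_{\mathcal{C}}$ yields a bi-infinite adjacency structure on tiles along each orbit; this will let me implement every ``jump'' of the castle as a measure-preserving bijection between a tile and one of its neighbors (after a Borel cut to adjust for differing tile lengths), guaranteeing each jump has length at most twice the maximal tile diameter, hence at most $4$.

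At stage $0$, take $\varphi_0$ to be a measure-preserving bijection between two adjacent tiles on each orbit, chosen in a Borel-coherent way across $X$. At stage $n{+}1$, apply a Chacon-type refinement to $\varphi_n$: cut the base of each column of $\varphi_n$ into two pieces of equal measure, re-stack the two resulting subcolumns on top of each other, and insert a single extra tile drawn from outside $\supp \varphi_n$ as a spacer between the two subcolumns. The spacer tile is selected to be adjacent to the top of the lower subcolumn and the bottom of the upper subcolumn, so the newly introduced jumps still have length at most $4$. By picking spacers on alternating sides of the current support, the supports $\supp \varphi_n$ grow to cover all tiles of each orbit, so $\bigcup_n \supp \varphi_n = X$ in the limit.

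The sequence $\varphi_n$ stabilizes on sets of increasing measure, yielding a well-defined limit $S \in \fgr{\mathbb{R} \acts X}$ with $|\rho_S| \leq 4$ almost everywhere. The classical fiberwise Chacon argument shows that $S$ acts ergodically on the Lebesgue measure of each orbit, since the columns of $\varphi_n$ approximate any measurable subset of the orbit arbitrarily well in the base of a sufficiently tall column. The alternating-signs clause then follows automatically: on each orbit, $S$ is conservative (being ergodic on an infinite-measure space) with cocycle bounded by $4$, so if $\rho_{S^k}(x)$ eventually stabilized at one sign, the forward orbit of $x$ would drift to $\pm\infty$, contradicting Poincar\'e recurrence of $S$ on the orbit.

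The main technical obstacle is performing the Chacon refinement while maintaining the bounded-jump condition: the usual Chacon construction on a finite-measure space allows arbitrary spacer placement, but here the spacer at each stage must be chosen adjacent to the current support, which forces the castle to grow outward in a disciplined way. One must also arrange all of this as a Borel recipe that works uniformly on every orbit, via Borel selectors for cutting tiles into equal-measure halves, matching tile-pieces of slightly differing lengths in a measure-preserving way, and choosing adjacent spacer tiles coherently across $X$; assembling these selectors so that the resulting $\varphi_n$ remain genuine elements of the pseudo full group is the crux of the construction.
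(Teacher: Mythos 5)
Your overall strategy --- a fiberwise cutting-and-stacking producing a rank-one, hence ergodic, transformation on each orbit, with ergodicity then forcing conservativity and the alternation of signs --- is exactly the strategy of the paper. But there is a genuine gap at the restacking step, and it is precisely the point where the infinite-measure, bounded-cocycle setting departs from the classical Chacon construction. When you cut a column of $\varphi_n$ into two subcolumns and stack one on top of the other, you must map the ceiling of the lower subcolumn to (a spacer adjacent to) the base of the upper subcolumn. In this geometric setting the levels of a column are actual intervals sitting at fixed positions along the orbit: the ceiling of the lower subcolumn lies near the far end of the orbit segment currently occupied by the column, while the base of the upper subcolumn is a piece of the original base, near the segment's other end. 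These two locations are separated by roughly the full length of the segment covered by the column, which grows without bound in $n$. No single tile can be ``adjacent to the top of the lower subcolumn and the bottom of the upper subcolumn'' simultaneously, so the connecting jump you introduce at each refinement has length comparable to the column's spatial extent, destroying the bound $|\rho_S|\le 4$. You flag this as ``the main technical obstacle'' but the proposed resolution (one well-placed spacer) does not work.

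The paper resolves this by maintaining, alongside $\varphi_n$, a second castle $\psi_n$ supported on a reserved set of measure $2^{-n}\mu(Y)$ whose job is to travel \emph{back} from the ceiling region to the base region in short steps (each of length at most $3$). At the inductive step one interleaves half of $\psi_n$ between the two subcolumns of $\varphi_n$, so the ``return trip'' from ceiling to base is realized as a long chain of bounded jumps rather than one long jump; the translation conditions on $\vec\varphi_n$ and $\vec\psi_n$ are what keep the bases and ceilings in canonical positions ($\mathcal C_n+[0,2^{-n})$ and $\mathcal D_n+[-\tfrac12-2^{-n},-\tfrac12)$) so that this gluing can be iterated. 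Some such mechanism --- a reserved return path whose measure tends to zero --- is unavoidable in your construction as well, and without it the proof does not close. Your deduction of the alternating signs from ergodicity of $S$ on each orbit is fine (conservativity forces infinitely many returns to any set of positive finite measure on either side of $x$), as is the final extension from $Y$ to all of $X$, which the paper also handles by a single bounded rewiring.
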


\begin{proof}
  Fix a free measure-preserving flow \( \mathbb{R} \acts X \), and let \( \mathcal{C} \subset X \)
  be a cross-section.

  We recall some notation from Section~\ref{sec:prelim-flows}.  Since \( \mathcal{C} \) is lacunary,
  for any \( c \in \mathcal{C} \), the function
  \( \gap_{\mathcal{C}}(c)=\min\{ r > 0 : c+r \in \mathcal{C}\} \) is well-defined. This gives the
  first return map \( \sigma_{\mathcal{C}} : \mathcal{C} \to \mathcal{C} \) via
  \( \sigma_{\mathcal{C}}(c) = c + \gap_{\mathcal{C}}(c) \), which is Borel.  There is also a
  natural bijective correspondence between \( X \) and the set
  \( \{(c, t) \in \mathcal{C} \times \mathbb{R}^{\ge 0} : c \in \mathcal{C}, 0 \le t <
  \gap_{\mathcal{C}}(c)\} \).  Let \( \lambda_{c}^{\mathcal{C}} \) be the ``Lebesgue measure'' on
  \( c + [0, \gap_{\mathcal{C}}(c)) \) given by
  \[ \lambda_{c}^{\mathcal{C}}(A) = \lambda(\{r \in \mathbb{R} : 0 \le r < \gap_{\mathcal{C}}(c),
    c+r \in A\}). \]
  The measure \( \mu \) on \( X \) can then be disintegrated as
  \( \mu(A) = \int_{\mathcal{C}} \lambda_{c}^{\mathcal{C}}(A)\, d\nu(c) \) for some finite (but not
  necessarily probability) measure \( \nu \) on \( \mathcal{C} \), as explained at the end of
  Section~\ref{sec:prelim-flows}.

  Let \( (\mathcal{C}_{n})_{n \in \mathbb{N}} \) be a vanishing sequence of markers---a sequence of
  nested cross-sections \( \mathcal{C}_{1} \supset \mathcal{C}_{2} \supset \mathcal{C}_{3} \cdots \)
  with an empty intersection: \( \bigcap_{n \in \mathbb{N}} \mathcal{C}_{n} = \varnothing \). We may
  arrange \( \mathcal{C}_{1} \) to be such that
  \( \gap_{\mathcal{C}_{1}}(c) \in \mathopen (2, 3 \mathclose ) \) for all
  \( c \in \mathcal{C}_{1} \). Put
  \[ \mathcal{C}_{0} = \{ c + k : c \in \mathcal{C}_{1}, k\in\{0,1, 2\} \} \]
  and \( Y = \mathcal{C}_{1} + [0,2) \). Note that \( \mu(X \setminus Y) \leq \frac{1}{3} \). Our
  first goal is to define an element \( \varphi \) of the pseudo full group with domain and range
  equal to \( Y \) such that for almost every \( x \in Y \), the action of \( \varphi \) on the
  intersection of the orbit of \( x \) with \( Y \) is ergodic and has a cocycle bounded by \( 3
  \). It will then be easy to modify \( \varphi \) to an element of the full group whose action on
  each orbit of the flow is ergodic at the cost of increasing the cocycle bound to \( 4 \). \medskip

  Our first partial transformation \( \varphi \) will arise as the limit of a sequence of castles
  \( (\varphi_n)_{n \in \mathbb{N}} \), with each \( \varphi_n \) belonging to the pseudo full group
  of \( \mathcal{R}_{\mathcal{C}_n} \). We also use another family of castles
  \( (\psi_n)_{n \in \mathbb{N}} \) which allows us to extend \( \varphi_n \) by ``going back'' from
  its ceiling to its basis while keeping the cocycle bound (this is our main adjustment compared to
  the usual cutting and stacking procedure). Both sequences of castles will have their cocycles
  bounded by \( 3 \). Here are the basic constraints that these sequences have to satisfy:

  \begin{enumerate}
  \item for all \( n\geq 1 \), \( Y=\supp \varphi_n\sqcup \supp\psi_n \);
  \item for all \( n\geq 1 \), \( \varphi_{n+1} \) extends \( \varphi_n \);
  \item \( \mu(\supp\psi_n) \) tends to \( 0 \) as \( n \) tends to \( +\infty \).
  \end{enumerate}

  The bases and ceilings of \( (\varphi_{n})_{n \in \mathbb{N}} \) and
  \( (\psi_{n})_{n \in \mathbb{N}} \) will satisfy additional constraints that will enable us to
  make the induction work and ensure ergodicity on each orbit of the flow.  In order to specify
  these constraints properly, we introduce the following notation.

  Each orbit of the flow comes with the linear order \( < \) inherited from \( \mathbb{R} \) via
  \( x<y \) if and only if \( y=x+t \) for some \( t>0 \). Set \( \kappa_{\mathcal{C}_n}(x) \) to be
  the minimum of the intersection of \( \mathcal{C}_n \) with the cone \( \{y \in X : y \geq x\} \).

  Let \( \mathcal{D}_1 = \mathcal{C}_1+2 \subseteq \mathcal{C}_0 \) and \( \mathcal{D}_n \) be the set
  of those \( x\in \mathcal{D}_1 \) which are maximal in \( \kappa_{\mathcal{C}_{n}}^{-1}(c) \) among
  points of \( \mathcal{D}_{1} \) for some \( c \in \mathcal{C}_{n} \); in other words,
  \[ \mathcal{D}_{n} = \{ x \in \mathcal{D}_{1} : (x, \kappa_{\mathcal{C}_{n}}(x)) \cap
    \mathcal{C}_{0} = \varnothing\}. \]
  Note that by construction, the distance between \( x \) and \( \kappa_{\mathcal{C}_{n}}(x) \) is
  less than \( 1 \) for each \( x \in \mathcal{D}_{n} \). Let \( \iota_n \) be the map
  \( \mathcal{C}_n\to \mathcal{D}_n \) which assigns to \( c \in \mathcal{C}_n \) the \( < \)-least
  element of \( \mathcal{D}_n \) that is greater than \( c \).

  \begin{figure}[hbt]
    \centering
    \begin{tikzpicture}
      \filldraw (0.5,0) circle (2pt) node[anchor=south,yshift=0.5mm](C11) {\( \mathcal{C}_{1} \)}
      node [anchor=north,yshift=-1mm](C21) {\( \mathcal{C}_{2} \)};
      \filldraw (1.3,0) circle (0.8pt);
      \filldraw (2.1,0) circle (0.8pt) node[anchor=south,yshift=0.5mm](D11) {\( \mathcal{D}_{1} \)};
      \filldraw (2.6,0) circle (1.5pt)  node[anchor=south,yshift=0.5mm](C12) {\( \mathcal{C}_{1} \)};
      \filldraw (3.1,0) circle (0.8pt);
      \filldraw (3.7,0) circle (0.8pt) node[anchor=south,yshift=0.5mm](D12) {\( \mathcal{D}_{1} \)};
      \filldraw (4.4,0) circle (1.5pt) node[anchor=south,yshift=0.5mm](C13) {\( \mathcal{C}_{1} \)};
      \filldraw (5.0,0) circle (0.8pt);
      \filldraw (5.7,0) circle (0.8pt) node[anchor=south,yshift=0.5mm](D13) {\( \mathcal{D}_{1} \)}
      node[anchor=north,yshift=-1mm](D21) {\( \mathcal{D}_{2} \)};
      \filldraw (6.3,0) circle (2pt) node[anchor=south,yshift=0.5mm](C14) {\( \mathcal{C}_{1} \)}
      node [anchor=north,yshift=-1mm](C22) {\( \mathcal{C}_{2} \)};
      \filldraw (6.9,0) circle (0.8pt);
      \filldraw (7.5,0) circle (0.8pt)  node[anchor=south,yshift=0.5mm](D14) {\( \mathcal{D}_{1} \)};
      \filldraw (8.2,0) circle (1.5pt)  node[anchor=south,yshift=0.5mm](C15) {\( \mathcal{C}_{1} \)};
      \filldraw (9,0) circle (0.8pt);
      \filldraw (9.6,0) circle (0.8pt)  node[anchor=north,yshift=-1mm](D22) {\( \mathcal{D}_{2} \)}
      node[anchor=south,yshift=0.5mm](D15) {\( \mathcal{D}_{1} \)};
      \filldraw (10.4,0) circle (2pt) node[anchor=south,yshift=0.5mm](C11) {\( \mathcal{C}_{1} \)}
      node [anchor=north,yshift=-1mm](C21) {\( \mathcal{C}_{2} \)};
      \filldraw (11.0,0) circle (0.8pt);
    \end{tikzpicture}
    \caption{An example of cross-sections \( \mathcal{C}_{0} \) (all points), \( \mathcal{C}_{1} \)
      (dots of size \raisebox{0.2mm}{\protect\tikz\protect\filldraw (0,0) circle (1.5pt);} and
      above), \( \mathcal{C}_{2} \) (marked as {\protect\tikz \protect\filldraw (0,0) circle
        (2.5pt);}) and \( \mathcal{D}_{1} \), \( \mathcal{D}_{2} \).}
    \label{fig:Cn-and-Dn}
  \end{figure}

  The bases and ceilings of \( \varphi_n \) and \( \psi_n \) are as follows:
  \begin{itemize}
  \item the basis of \( \varphi_n \) is \( A_n = \mathcal{C}_n+\left[0,\frac{1}{2^n}\right) \);
  \item the ceiling of \( \varphi_n \) is
    \( B_n = \mathcal{D}_n+\left[-\frac{1}{2}-\frac{1}{2^n},-\frac{1}{2}\right) \);
  \item the basis of \( \psi_n \) is
    \( E_n = \mathcal{D}_n+\left [-\frac{1}{2},-\frac{1}{2}+\frac{1}{2^n}\right) \);
  \item the ceiling of \( \psi_n \) is
    \( F_n = \mathcal{C}_n+\left [\frac{1}{2}, \frac{1}{2}+\frac{1}{2^n}\right) \).
  \end{itemize}
  Furthermore, we impose two \textbf{translation conditions}, which help us to preserve the above
  concrete definitions of the bases and ceilings at the inductive step when we construct
  \( \varphi_{n+1} \) and \( \psi_{n+1} \):
  \begin{itemize}
  \item \( \vec\varphi_n(c+t)=\iota_n(c)+t-\frac{1}{2}-\frac{1}{2^n} \) for all
    \( c\in\mathcal{C}_n \) and all \( t\in \left[0,\frac{1}{2^n}\right) \).
  \item \( \vec \psi_n(d+t)=\iota_n^{-1}(d)+t+1 \) for all \( d\in\mathcal{D}_n \) and all
    \( t\in \left[-\frac{1}{2},-\frac{1}{2}+\frac{1}{2^n}\right) \).
  \end{itemize}

  The first step of the construction consists of the castle \( \varphi_1 : x \mapsto x +1 \), which
  has basis \( A_1=\mathcal{C}_1+[0,\frac{1}{2}) \) and ceiling \( B_1=\mathcal{D}_1 +[-1,-\frac{1}{2}) \),
  and the castle \( \psi_1 : x \mapsto x -1 \) with basis \( E_{1} = \mathcal{D}_1+[-\frac{1}{2},0) \)
  and ceiling \( F_1=\mathcal{C}_1+[\frac{1}{2},1) \).

  We now concentrate on the induction step: suppose \( \varphi_n \) and \( \psi_n \) have been built
  for some \( n\geq 1 \); let us construct \( \varphi_{n+1} \) and \( \psi_{n+1} \).

  \medskip

  The strategy is to split the bases of \( \varphi_{n} \) and \( \psi_{n} \) into two equal
  intervals and ``interleave'' the ``two halves'' of \( \varphi_{n} \) with ``one half'' of
  \( \psi_{n} \) followed by ``gluing'' adjacent ceilings and bases within the same
  \( \mathcal{C}_{n+1} \) segment (see Figure~\ref{fig:inductive-step}). To this end, we introduce
  two intermediate castles \( \tilde \varphi_n \) and \( \tilde \psi_n \) that will ensure that
  \( \varphi_{n+1} \) ``wiggles'' more than \( \varphi_n \), yielding ergodicity of the final
  transformation.

  Define two new half-measure subsets of the bases \( A_n \) and \( E_n \) respectively:
  \begin{itemize}
  \item \( A_n^1 = \mathcal{C}_n+\left[0,\frac{1}{2^{n+1}}\right) \);
  \item
    \( E_n^0 = \mathcal{D}_n+ \left[-\frac{1}{2}+ \frac{1}{2^{n+1}}, -\frac{1}{2}+\frac{1}{2^n}\right) \);
  \end{itemize}
  and let
  \[ B_n^0 = \vec\varphi_n(A_n^1) =
    \mathcal{D}_n+\left[-\frac{1}{2}-\frac{1}{2^{n}},-\frac{1}{2}-\frac{1}{2^{n+1}}\right) ,\] and
  \[ F_n^0 = \vec\psi_n(E_n^0)= \mathcal{C}_n +
    \left[\frac{1}{2}+\frac{1}{2^{n+1}},\frac{1}{2}+\frac{1}{2^n}\right) ,\]
  where the two equalities are consequences of the translation conditions. Let \( G_n \) be the
  \( \psi_n \)-saturation of \( E_n^0 \), and note that the restriction of \( \psi_n \) to \( G_n \)
  is a castle with support \( G_n \), whose basis is \( E^0_n \) and whose ceiling is \( F_n^0 \).
  Finally, let
  \[ A_n^0 = A_n\setminus A_n^1=\mathcal{C}_n+\left[\frac{1}{2^{n+1}},\frac{1}{2^n}\right). \]

  We define the partial transformation
  \( \xi_n:B_n^0\sqcup F_n^0\to E_n^0\sqcup A_n^0 \) to be used for ``gluing together''
  \( \varphi_n \) and the restriction of \( \psi_n \) to \( G_n \):
  \begin{itemize}
        \item \( \xi_n(b)=b+\frac{3}{2^{n+1}}\in E_n^0 \) for all \( b\in B_n^0 \) and
        \item \( \xi_n(f)=f-\frac{1}{2}\in A_n^0 \) for all \( f\in F_n^0 \).
  \end{itemize}
  Set \( \tilde \varphi_n= \varphi_n \sqcup \xi_n \sqcup \psi_{n\restriction G_n} \), whereas
  \( \tilde \psi_n \) is simply the restriction of \( \psi_n \) onto the complement of \( G_n
  \). Observe that \( \tilde{\varphi}_n \) has basis \( A_n^1 \) and ceiling
  \[ B_n^1= B_n\setminus B_n^0=\mathcal{D}_n + \left[-\frac{1}{2}-\frac{1}{2^{n+1}},-\frac{1}{2}\right) ,\]
  while \( \tilde \psi_n \) has basis
  \[ E^1_n= E_n\setminus E_n^0=\mathcal{D}_n+\left[-\frac{1}{2},-\frac{1}{2}+\frac{1}{2^{n+1}}\right) \]
  and ceiling
  \[ F_n^1= F_n\setminus
    F_n^0=\mathcal{C}_n+\left[\frac{1}{2},\frac{1}{2}+\frac{1}{2^{n+1}}\right). \]
  We continue to have \( Y=\supp \tilde \varphi_n\sqcup \supp \tilde \psi_n \), but the support of
  \( \tilde{\psi}_{n} \) is half the support of \( \psi_{n} \), meaning that
  \( \mu(\supp \tilde \psi_n)=\frac{1}{2} \mu(\supp \psi_n) \).

  \begin{figure}[hbt]
 \centering
 \begin{tikzpicture}
   \draw (0,0) rectangle (0.55, 1.2);
   \draw (0.9,0) rectangle (1.45, 1.2);
   \draw (1.8,0) rectangle (2.35, 1.2);
   \draw (2.7,0) rectangle (3.25, 1.2);
   \draw (3.6,0) rectangle (4.15, 1.2);
   \draw (4.5,0) rectangle (5.05, 1.2);
   \draw (0.275, 1.3) node[anchor=south] {\( A_{n} \)};
   \draw (4.775, 1.3) node[anchor=south] {\( B_{n} \)};
   \draw (4.775, 0.9) node {\( B^{0}_{n} \)};
   \draw (4.775, 0.3) node {\( B^{1}_{n} \)};
   \draw (2.525, 1.3) node[anchor=south] {\( \longrightarrow \)};
   \draw (2.525, 1.5) node[anchor=south] {\( \varphi_{n} \)};
   \draw (0, -2) rectangle (0.55, -0.8);
   \draw (0.9,-2) rectangle (1.45, -0.8);
   \draw (1.8,-2) rectangle (2.35, -0.8);
   \draw (2.7,-2) rectangle (3.25, -0.8);
   \draw (0.275, -2.1) node[anchor=north] {\( F_{n} \)};
   \draw (1.625, -2.2) node[anchor=north] {\( \longleftarrow \)};
   \draw (1.625, -2.4) node[anchor=north] {\( \psi_{n} \)};
   \draw (2.975, -2.1) node[anchor=north] {\( F_{n} \)};
   \draw (7.1,0) rectangle (7.65, 1.2);
   \draw (8.0,0) rectangle (8.55, 1.2);
   \draw (8.9,0) rectangle (9.45, 1.2);
   \draw (9.8,0) rectangle (10.35, 1.2);
   \draw (10.7,0) rectangle (11.25, 1.2);
   \draw (7.375, 1.3) node[anchor=south] {\( A_{n} \)};
   \draw (10.975, 1.3) node[anchor=south] {\( B_{n} \)};
   \draw (9.175, 1.3) node[anchor=south] {\( \longrightarrow \)};
   \draw (9.175, 1.5) node[anchor=south] {\( \varphi_{n} \)};
   \draw (7.1, -2) rectangle (7.65, -0.8);
   \draw (8.0,-2) rectangle (8.55, -0.8);
   \draw (8.9,-2) rectangle (9.45, -0.8);
   \draw (9.8,-2) rectangle (10.35, -0.8);
   \draw (7.35, -2.1) node[anchor=north] {\( F_{n} \)};
   \draw (8.725, -2.2) node[anchor=north] {\( \longleftarrow \)};
   \draw (8.725, -2.4) node[anchor=north] {\( \psi_{n} \)};
   \draw (10.075, -2.1) node[anchor=north] {\( E_{n} \)};
   \draw[dashed] (-0.4, 0.6) -- (5.4, 0.6);
   \draw[dashed] (-0.4, -1.4) -- (3.7, -1.4);
   \draw[dashed] (6.7, 0.6) -- (11.7, 0.6);
   \draw[dashed] (6.7, -1.4) -- (10.7, -1.4);
   \draw[->, rounded corners=3mm] (5.05,0.9) -- (5.5, 0.9) -- (5.5, -1.1) -- (3.25,-1.1);
   \draw[->, rounded corners=3mm] (0,-1.1) -- (-0.4, -1.1) -- (-0.4, 0.3) -- (0,0.3);
   \draw[->, rounded corners=3mm] (11.25,0.9) -- (11.7, 0.9) -- (11.7, -1.1) -- (10.35,-1.1);
   \draw[->, rounded corners=3mm] (7.1,-1.1) -- (6.6, -1.1) -- (6.6, 0.3) -- (7.1,0.3);
   \draw[->] (7.1, -1.7) -- (3.25, -1.7);
   \draw[->, rounded corners=3mm] (5.05, 0.3) -- (6.05, 0.3) -- (6.05, 0.9) -- (7.1, 0.9);
   \draw (0.275,0.9) node {\( A^{1}_{n} \)};
   \draw (0.275,0.3) node {\( A^{0}_{n} \)};
   \draw (0.275,-1.1) node {\( F_{n}^{0} \)};
   \draw (0.275,-1.7) node {\( F_{n}^{1} \)};
   \draw (2.975,-1.1) node {\( E_{n}^{0} \)};
   \draw (2.975,-1.7) node {\( E_{n}^{1} \)};
   \draw (-0.1,-0.4) node {\( \xi_{n} \)};
   \draw (5.25,-0.4) node {\( \xi_{n} \)};
   \draw (7.375,0.9) node {\( A^{1}_{n} \)};
   \draw (7.375,0.3) node {\( A^{0}_{n} \)};
   \draw (10.975, 0.9) node {\( B^{0}_{n} \)};
   \draw (10.975, 0.3) node {\( B^{1}_{n} \)};
   \draw (7.375,-1.1) node {\( F_{n}^{0} \)};
   \draw (7.375,-1.7) node {\( F_{n}^{1} \)};
   \draw (10.075,-1.1) node {\( E_{n}^{0} \)};
   \draw (10.075,-1.7) node {\( E_{n}^{1} \)};
   \draw (6.9,-0.4) node {\( \xi_{n} \)};
   \draw (11.4,-0.4) node {\( \xi_{n} \)};
   \draw (6.05,1) node[anchor=south] {\( \xi'_{n} \)};
   \draw (5.025,-1.7) node[anchor=north] {\( \xi''_{n} \)};
 \end{tikzpicture}
 \caption{Inductive step.}
 \label{fig:inductive-step}
\end{figure}

  \medskip

  The ceiling of \( \tilde \varphi_n \) is equal to
  \( B_n^1=\mathcal{D}_n+\left[-\frac{1}{2}-\frac{1}{2^{n+1}},-\frac{1}{2}\right) \), whereas we
  need the ceiling of \( \varphi_{n+1} \) to be equal to
  \( B_{n+1}=\mathcal{D}_{n+1}+\left[-\frac{1}{2}-\frac{1}{2^{n+1}},-\frac{1}{2}\right) \). We
  obtain the required \( \varphi_{n+1} \) and \( \psi_{n+1} \) out of \( \tilde \varphi_n \) and
  \( \tilde \psi_n \), respectively, by ``passing through each element of
  \( \mathcal{C}_n\setminus\mathcal{C}_{n+1} \) ''.

  Note that \( \mathcal{D}_{n+1} \) is equal to the set of \( d\in \mathcal{D}_n \) such that
  \( \kappa_{\mathcal{C}_{n}}(d)\in \mathcal{C}_{n+1} \). Each \( x\in B_{n}^1\setminus B_{n+1} \)
  can be written uniquely as \( x=d+t \) where \( d\in\mathcal{D}_n\setminus \mathcal{D}_{n+1} \) and
  \( t\in \left[-\frac{1}{2}-\frac{1}{2^{n+1}},-\frac{1}{2}\right) \). Set
  \[
        \xi'_n(x)=\kappa_{\mathcal{C}_{n}}(d)+t+\frac{1}{2}+\frac{1}{2^{n+1}},
  \]
  and note that \( \xi'_n(x) \) belongs to
  \( (\mathcal{C}_n\setminus\mathcal{C}_{n+1})+\left[0,\frac{1}{2^{n+1}}\right)=A_{n}^1\setminus
  A_{n+1} \), hence \( \xi'_n \) is a measure-preserving bijection from \( B_n^1\setminus B_{n+1} \)
  onto \( A_n^1\setminus A_{n+1} \).

  The transformation \( \varphi_{n+1} \) is set to be \( \tilde \varphi_n\sqcup \xi'_n \), and we
  claim that it is a castle with basis \( A_{n+1} \) and ceiling \( B_{n+1} \). This amounts to
  showing that for all \( x\in A_{n+1} \), there is \( k\in\N \) such that \( \varphi_{n+1}^k(x) \)
  is not defined. Pick \( x\in A_{n+1} \) and write it as \( c_0+t \) for some
  \( c_0\in\mathcal{C}_{n+1} \) and \( t\in [0,\frac{1}{2^{n+1}}) \). Let \( c_1 \) be the successor
  of \( c_0 \) in \( \mathcal{C}_n \), which we suppose is not an element of
  \( \mathcal{C}_{n+1} \). By the construction of \( \tilde \varphi_n \) and \( \xi'_n \), there is
  \( k\in\N \) such that \( \xi'_n(\tilde \varphi_n^k(x))\in c'+[0,\frac{1}{2^{n+1}}) \), which
  means that \( \varphi_{n+1}^{k+1}(x)\in c'+[0,\frac{1}{2^{n+1}}) \). Iterating this argument, we
  eventually find \( k_{0}, p\in\N \) such that
  \( \varphi_{n+1}^{k_{0}}(x)\in c_p+[0,\frac{1}{2^{n+1}}) \) for some \( c_p\in\mathcal{C}_n \)
  such that the successor \( c_{p+1} \) of \( c_p \) in \( \mathcal{C}_n \) belongs to
  \( \mathcal{C}_{n+1} \). By the definition of \( \tilde \varphi_n \), we must have some
  \( l\in\N \) such that
  \( \varphi_{n+1}^{k_{0}+l}(x) = \tilde \varphi_n^l(\varphi_{n+1}^{k_{0}}(x))\in B_{n+1} \),
  whereas \( \varphi_{n+1}^{k_{0}+l+1}(x) \) is not defined. Thus, \( \varphi_{n+1} \) is indeed a
  castle.

  The extension \( \psi_{n+1} \) of \( \tilde{\psi}_{n} \) is defined similarly by connecting
  adjacent segments of \( F^{1}_{n} \) and \( E_{n}^{1} \) by a translation. More specifically, each
  \( x\in F^1_n\setminus F_{n+1} \) can be written uniquely as \( x=c+t \) for some
  \( c\in \mathcal{C}_n\setminus \mathcal{C}_{n+1} \) and
  \( t \in [\frac{1}{2},\frac{1}{2}+\frac{1}{2^{n+1}}) \). The restriction of
  \( \kappa_{\mathcal{C}_{n}} \) to \( \mathcal{D}_n \) is a bijection
  \( \mathcal{D}_n\to \mathcal{C}_n \). We denote its inverse by \( p_n \) and let
  \( \xi''_n(x)=p_n(c)+t-1 \). The map \( \psi_{n+1} =\tilde\psi_n\sqcup \xi''_n \) can be checked
  to be a castle with basis \( E_{n+1} \) and ceiling \( F_{n+1} \) as desired. It also follows that
  the translation conditions continue to be satisfied by both \( \varphi_{n+1} \) and
  \( \psi_{n+1} \).

  The transformations \( \varphi_{n} \) extend each other, so \( \varphi=\bigcup_n \varphi_n \) is
  an element of the pseudo full group supported on \( Y = \supp\varphi_{n} \sqcup \supp\psi_{n} \).
  Note also that \[ \mu(\supp\psi_{n+1}) = \mu(\supp\psi_{n})/2, \]
  and therefore \( \dom \varphi = Y = \rng\varphi \). We claim that \( \varphi \), seen as a
  measure-preserving transformation of \( Y \), induces an ergodic measure-preserving transformation
  on \( (y+\mathbb R)\cap Y \) for almost all \( y\in Y \), where \( y+\mathbb R \) is endowed with
  the Lebesgue measure. This follows from the fact that \( \varphi \) induces a
  \textbf{rank-one}\index{Transformation!rank-one}
  transformation of the infinite measure space \( (y+\mathbb R)\cap Y \): for all Borel
  \( A \subseteq (y+\mathbb R)\cap Y \) of finite Lebesgue measure and all \( \epsilon>0 \), there
  are \( B\subseteq(y+\mathbb R)\cap Y \), \( k\in\N \), and a subset
  \( F\subseteq \{0,\dots, k\} \) such that \( B,\varphi(B),\dots,\varphi^k(B) \) are pairwise
  disjoint and
  \[
        \lambda( A\bigtriangleup (\bigsqcup_{f\in F} \varphi^f(B)))<\epsilon.
  \]
  Indeed, at each step \( n \) for every \( c \in\mathcal{C}_n \), the iterates of
  \( c+[0,\frac{1}{2^n}) \) by the restriction of \( \varphi_n \) to the interval
  \( [c, \iota_n(c)) \) are disjoint ``intervals of size \( 2^{-n} \)'', i.e., sets of the form
  \( t+[0,\frac{1}{2^n}) \), and these iterates cover a proportion \( 1-\frac{1}{2^n} \) of
  \( [c, \iota_n(c)) \) (the rest of this interval being \( [c, \iota_n(c)) \cap \supp \psi_{n} \)).

  It remains to extend \( \varphi \) supported on \( Y \) to a measure-preserving transformation
  \( S \) with \( \supp S = X \). Let \( Z = X \setminus Y \) be the leftover set,
  \[ Z = \{ c + t : c \in \mathcal{C}_{1} : 2 \le t < \gap_{\mathcal{C}_{1}}(c)\} ,\]
  and put
  \[ Z'= \{ c + t : c \in \mathcal{C}_{1},\ 2- \gap_{\mathcal{C}_{1}}(c) \le t < 2 \}. \]
  Figure~\ref{fig:construction-of-S} illustrates an interval between \( c \in \mathcal{C}_{1} \) and
  \( c' = \sigma_{\mathcal{C}_{1}}(c) \). Within this gap, \( Z \) corresponds to
  \( [c+2, c+2+\gap_{\mathcal{C}_{1}}(c)) \), and \( Z' \) is an interval of the exact same length
  adjacent to it on the left. Note that \( Z' \subseteq Y \) by construction. Let
  \( \eta : Z' \to Z \) be the natural translation map,
  \( \eta(x) = x + \gap_{\mathcal{C}_{1}}(c) \) for all \( x \in Z' \) satisfying
  \( x \in c + [0, \gap_{\mathcal{C}_{1}}(c)) \). Observe that \( \eta \) is a measure-preserving
  bijection, and its cocycle is bounded by \( 1 \).

  \begin{figure}[htb]
    \centering
    \begin{tikzpicture}
        \draw (-1.0,0) -- (0,0);
        \draw (1.0,0) -- (5.5,0);
        \draw (0.5, 0) node {\( \cdots \)};
        \filldraw (-0.5,0) circle (1pt);
        \draw (-0.5,0) node[anchor=north] {\( c \)};
        \draw (3.55,0) circle (1pt);
        \filldraw (5, 0) circle (1pt);
        \draw (5,0) node[anchor=north] {\( c' \)};
        \draw [->,domain=80:15] plot ({2.65 + 0.9*cos(\x)}, {0.7*sin(\x) - 0});
        \draw [<-,domain=-165:-15] plot ({2.65 + 0.9*cos(\x)}, {0.6*sin(\x) - 0});
        \draw[dashed] [->,domain=165:15] plot ({4.00 + 0.4*cos(\x)}, {0.4*sin(\x) - 0});
        \draw[dashed] [<-,domain=-155:-5] plot ({3.10 + 1.3*cos(\x)}, {0.8*sin(\x) - 0});
        \draw [decorate,decoration={brace,amplitude=4pt},yshift=7mm] (4.15,0) -- (4.95, 0)
        node [black, midway, anchor=south, yshift=1.5mm] {\( Z \) };
        \draw [decorate,decoration={brace,amplitude=4pt},yshift=7mm] (3.25,0) -- (4.05, 0)
        node [black, midway, anchor=south, yshift=1.5mm] {\( Z' \) };
        \draw (4.1,-0.1) -- (4.1, 0.1);
        \draw (2.6, -0.35) node {\( \varphi \)};
        \draw (4.3, -0.65) node {\( S \)};
    \end{tikzpicture}
    \caption{Construction of the transformation \( S \).}
    \label{fig:construction-of-S}
  \end{figure}

  We now rewire the orbits of \( \varphi \) and define \( S : X \to X \) as follows (see
  Figure~\ref{fig:construction-of-S}):
  \begin{displaymath}
    S(x) =
    \begin{cases}
        \varphi(x) & \textrm{if \( x \not \in Z \cup Z' \)}; \\
        \eta(x) & \textrm{if \( x \in Z' \)}; \\
        \varphi(\eta^{-1}(x)) & \textrm{if \( x \in Z \)}.
    \end{cases}
  \end{displaymath}

  It is straightforward to verify that \( S \) is a free measure-preserving transformation, and the
  distance \( D(x, Sx) \le 4 \) for all \( x \in X \) because \( |\rho_{\varphi}(x)| \le 3 \) and
  \( |\rho_{\eta}(x)| \le 1 \) for all \( x \) in their domains. Note that for every \( y \in Y \),
  the intersection of the \( S \)-orbit with \( Y \) coincides with its \( \varphi \)-orbit. Since
  \( \varphi \) is ergodic on each orbit of the flow intersected with \( Y \), and considering that
  \( X = Y \sqcup Z \) and \( S^{-1}(Z) \subseteq Y \), it follows that \( S \) is ergodic on every
  orbit of the flow. Therefore, \( S \) satisfies the conclusion of the theorem.
\end{proof}

\begin{remark}
  The bound \( 4 \) in the formulation of Theorem~\ref{thm:example-conservative-ergodic-unbounded}
  is of no significance, as by rescaling the flow, it can be replaced with any \( \epsilon>0 \).
\end{remark}



\chapter[Conservative transformations]{Conservative and intermitted transformations}
\label{chap:intermitted-transformations}

Interesting dynamics of conservative transformations is present only in the non-discrete case, as it
reduces to periodicity for countable group actions. Chapter~\ref{chap:example-transf} provides an
illustrative construction of a conservative automorphism and shows that they exist in \( \LL^{1} \)
full groups of all free flows. The present chapter is devoted to the study of such elements. The
central role is played by the concept of an intermitted transformation, which is related to the
notion of induced transformation. Using this tool, we show that all conservative elements of
\( \lfgr{\mathbb{R} \acts X} \) can be approximated by periodic automorphisms, and hence belong to
the derived \( \LL^1 \) full group of \( \mathbb{R} \acts X \); see
Corollary~\ref{cor:conservative-belong-to-derived-full-group}.

Throughout the chapter, we fix a free measure-preserving flow \( \mathbb{R} \acts X \) on a standard
Lebesgue space \( (X, \mu) \). Given a cross-section \( \mathcal{C} \subset X \), recall that we
defined an equivalence relation \(\eqr_{\mathcal{C}}\) by declaring \( x \eqr_{\mathcal{C}} y \)
whenever there is \( c \in \mathcal{C} \) such that both \( x \) and \( y \) belong to the gap
between \( c \) and \( \sigma_{\mathcal{C}}(c) \). More formally, \( x \eqr_{\mathcal{C}} y \) if
there is \( c \in \mathcal{C} \) such that \( \rho(c, x) \ge 0 \), \( \rho(c, y) \ge 0 \) and
\( \rho(x, \sigma_{\mathcal{C}}(c)) > 0 \), \( \rho(y, \sigma_{\mathcal{C}}(c)) > 0 \).  Such an
equivalence relation is smooth.

Now let \( T \in \fgr{\mathbb{R} \acts X}\) be a conservative transformation. Under the action of
\( T \), almost every point returns to its \( \eqr_{\mathcal{C}} \)-class infinitely often, which
suggests the idea of the first return map.

\begin{definition}
  \label{def:intermitted-transformation}
  The \textbf{intermitted transformation}\index{Transformation!intermitted}
  \( T_{\eqr_{\mathcal{C}}} : X \to X \) is defined by
\[ T_{\eqr_{\mathcal{C}}}x = T^{n(x)}x,\quad \textrm{where
  } n(x) = \min\{n \ge 1 : x \eqr_{\mathcal{C}}T^{n(x)}x\}.\]
\end{definition}

The map \( T_{\eqr_{\mathcal{C}}} \) is well-defined, since \( T \) is conservative, and it preserves
the measure \( \mu \), since \( T_{\eqr_{\mathcal{C}}} \) belongs to the full group of \( T \).

\begin{remark}
  \label{rem:intermitted-transformation-for-equivalence-relations}
  The concept of an intermitted transformation \( T_{E} \) makes sense for any equivalence relation
  \( E \) for which the intersection of any orbit of \( T \) with any \( E \)-class is either empty
  or infinite. In particular, intermitted transformations can be considered for any conservative
  \( T \in \fgr{ G \acts X } \) in a full group of a locally compact group action. For instance,
  with a cocompact cross-section \( \mathcal{C} \) we can associate an equivalence relation of lying
  in the same cell of the Voronoi tessellation (see Appendix~\ref{sec:tessellations}). Such an
  equivalence relation does have the aforementioned transversal property, and hence the intermitted
  transformation is well-defined.

  Note also the following connection with the more familiar construction of the induced
  transformation. Let \( T \in \Aut(X, \mu) \), and let \( A \subseteq X \) be a set of positive
  measure.  Recall that the induced transformation \(T_{A} \in \Aut(X,\mu)\) is supported on the set
  \(A\) and is defined for \(x \in A\) by \(T_{A}x = T^{n(x)}x\), where
  \(n(x) = \min\{n \ge 1 : T^{n}x \in A\}\).  Define \( \mathcal{A} \) to be the equivalence
  relation with two classes: \( A \) and \( X \setminus A \). The induced transformations
  \( T_{A} \) and \( T_{X \setminus A} \) commute and satisfy
  \( T_{A} \circ T_{X \setminus A} = T_{\mathcal{A}} \).
\end{remark}

The next lemma forms the core of this chapter. It shows that the operation of taking an intermitted
transformation does not increase the norm. As we discuss later in
Remark~\ref{rem:counterexample-R2}, the analog of this statement is false even for
\( \mathbb{R}^{2} \)-flows, which perhaps justifies the technical nature of the argument.

\begin{lemma}
  \label{lem:TEC-estimate}
  Let \( T \in \lfgr{\mathbb{R} \acts X} \) be a conservative automorphism, and let
  \( \mathcal{C} \) be a cross-section. Let also \( Y \) be the set of points where \( T \) and
  \( T_{\eqr_{\mathcal{C}}} \) differ: \( Y = \{x \in X : Tx \ne T_{\eqr_{\mathcal{C}}}x\} \). One
  has \( \int_{Y}|\rho_{T_{\eqr_{\mathcal{C}}}}|\, d\mu \le \int_{Y} |\rho_{T}|\, d\mu \).
\end{lemma}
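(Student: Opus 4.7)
My plan is to first reduce the lemma to $\norm{T_{\eqr_{\mathcal{C}}}}_1 \leq \norm{T}_1$, since $T$ and $T_{\eqr_{\mathcal{C}}}$ agree pointwise on $X \setminus Y$ by definition of $Y$, so the two integrals over $Y$ differ from the respective full $\LL^1$ norms by the same quantity. To prove this reduced inequality, I would invoke the mass transport formalism of Subsection~\ref{sec:index-map-l1-full-groups} together with Proposition~\ref{prop: commensuration of L1}, rewriting $\norm{S}_1 = M(E_S)$ for any $S \in \lfgr{\mathcal{F}}$, where $E_S = \{(x, y) \in \eqr_{\mathcal{F}} : y \text{ strictly between } x \text{ and } Sx\}$ and $M$ is the canonical $\sigma$-finite measure on $\eqr_\mathcal{F}$. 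Splitting $E_S = E_S^+ \sqcup E_S^-$ according to the sign of the charge function $f_S$, it suffices to show $M(E_{T_{\eqr_{\mathcal{C}}}}^\pm) \leq M(E_T^\pm)$ separately.

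The main structural input is the telescoping formula
\[ f_{T_{\eqr_{\mathcal{C}}}}(x, y) = \sum_{i=0}^{n(x) - 1} f_T(T^i x, y), \]
valid for all $(x, y) \in \eqr_\mathcal{F}$, which expresses the signed net crossing of $y$ by the excursion trajectory $x, Tx, \ldots, T^{n(x)}x$ as the sum of step-by-step signed crossings. Because both excursion endpoints lie in the same interval $[x]_{\eqr_\mathcal{C}}$, the left-hand side equals $0$ when $y$ lies outside this interval and takes values in $\{+1, -1\}$ otherwise; in particular, whenever it is nonzero, at least one term on the right must share its sign.

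Based on this, I would construct an injection $\Phi : E_{T_{\eqr_{\mathcal{C}}}}^+ \hookrightarrow E_T^+$ by $\Phi(x, y) = (T^{\tau(x, y)}x, y)$, where $\tau(x, y) \in \{0, \ldots, n(x) - 1\}$ is the smallest index at which the trajectory actually crosses $y$, and symmetrically for the minus version. That $\Phi$ lands in $E_T^+$ follows because for $(x, y) \in E_{T_{\eqr_{\mathcal{C}}}}^+$ the trajectory starts at $x < y$ and ends at $T_{\eqr_{\mathcal{C}}}(x) > y$, so its first crossing is upward. For injectivity, suppose $\Phi(x_1, y) = \Phi(x_2, y) = (z, y)$ with $\tau_1 > \tau_2$; then $x_2 = T^{\tau_1 - \tau_2}x_1$, and both $x_1$ and $x_2$ lie in $[y]_{\eqr_\mathcal{C}}$. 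Since the $T$-excursion from $x_1$ first returns to its own equivalence class only at step $n(x_1)$, we must have $\tau_1 - \tau_2 \geq n(x_1)$. On the other hand, the existence of an upward crossing of $y$ within the excursion forces $\tau_1 \leq n(x_1) - 1$, whence $\tau_2 \leq -1$, contradicting $\tau_2 \geq 0$.

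Once injectivity of $\Phi$ is established, measure-preservation follows by partitioning $E_{T_{\eqr_{\mathcal{C}}}}^+$ into the measurable level sets $\{\tau = k\}$: on each such set, $\Phi$ acts as the $M$-measure-preserving shift $(x, y) \mapsto (T^k x, y)$, and the images over different $k$ are pairwise disjoint by injectivity. Hence $M(E_{T_{\eqr_{\mathcal{C}}}}^+) \leq M(E_T^+)$, and an identical argument yields $M(E_{T_{\eqr_{\mathcal{C}}}}^-) \leq M(E_T^-)$; summing completes the proof. I expect the most delicate step to be the injectivity argument, which rests on the interplay between the ``first-crossing'' definition of $\tau$ and the combinatorics of return times to the equivalence class $[y]_{\eqr_\mathcal{C}}$.
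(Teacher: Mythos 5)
Your proof is correct, and it takes a genuinely different route from the paper's. You first note that, since \( T \) and \( T_{\eqr_{\mathcal{C}}} \) agree off \( Y \), the stated inequality is equivalent to \( \snorm{T_{\eqr_{\mathcal{C}}}}_1 \le \snorm{T}_1 \) (which the paper deduces as Corollary~\ref{cor:norm-intermitted-estimate} \emph{from} the lemma), and you prove the norm inequality by injecting the ``charge region'' of \( T_{\eqr_{\mathcal{C}}} \) into that of \( T \) inside \( (\eqr_{\mathcal{F}}, M) \) via \( (x,y) \mapsto (T^{\tau(x,y)}x, y) \), where \( \tau \) is the first crossing of \( y \) by the excursion \( x, Tx, \dots, T^{n(x)}x \). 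The steps all check out: the telescoping identity follows from writing \( f_T(z,y) \) as the difference of the indicators of \( y < Tz \) and of \( y < z \); an excursion starting below \( y \) and ending above it must cross upward first, so \( \Phi \) lands in \( E_T^{+} \); your injectivity argument (comparing \( \tau_1 - \tau_2 \ge n(x_1) \) with \( \tau_1 \le n(x_1)-1 \)) is sound, using that \( \eqr_{\mathcal{C}} \)-classes are order-convex so that \( y \in [x_j]_{\eqr_{\mathcal{C}}} \); and measure preservation on the level sets \( \{\tau = k\} \) is exactly the \( M \)-invariance of the left action \( (x,y) \mapsto (T^k x, y) \) from Appendix~\ref{sec:orbit-transf}. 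The paper instead argues pointwise on \( X \): it splits \( |\rho_T| = \alpha + \beta \) (distance to the first crossed point of \( \mathcal{C} \), plus the remainder), partitions \( Y = Y' \sqcup Y'' \) according to whether \( \rho_T \) and \( \rho_{T_{\eqr_{\mathcal{C}}}} \) have the same sign, compares \( Y' \) directly with \( \alpha \), and handles \( Y'' \) through a finite-to-one map \( \eta \) and a gap-counting estimate against \( \beta \). Your version is shorter and more conceptual, meshes with the commensuration picture of Proposition~\ref{prop: commensuration of L1}, and gives the marginally stronger conclusion that the positive and negative charge regions do not grow separately; the paper's version stays entirely at the level of cocycles and avoids the \( M \)-measure formalism. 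Both exploit one-dimensionality in an essential way (order convexity and the first-crossing/intermediate-value step), consistent with the failure for \( \mathbb{R}^2 \)-flows noted in Remark~\ref{rem:counterexample-R2}.
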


\begin{proof}
  By the definition of \( Y \), for any \( x \in Y \), the arc from \( x \) to \( Tx \) crosses at
  least one point of \( \mathcal{C} \). We may therefore represent \( |\rho_{T}(x)| \) as the sum of
  the distance from \( x \) to the first point of \( \mathcal{C} \) along the arc plus the rest of
  the arc. More formally, for \( x \in X \), let \( \pi_{\mathcal{C}}(x) \) be the unique
  \( c \in \mathcal{C} \) such that \( x \in c + [0, \gap_{\mathcal{C}}(c)) \). Define
  \( \alpha : Y \to \mathbb{R}^{\ge 0} \) by
  \begin{displaymath}
    \alpha(x) =
    \begin{cases}
      |\rho(x, \sigma_{\mathcal{C}}(\pi_{\mathcal{C}}(x)))|, & \textrm{if \( \rho(x, Tx) > 0 \)}, \\
      |\rho(x, \pi_{\mathcal{C}}(x))| & \textrm{if \( \rho(x, Tx) < 0\)}.
    \end{cases}
  \end{displaymath}
  Note that \( \alpha(x) \le |\rho_{T}(x)| \), and set \( \beta(x) = |\rho_{T}(x)| - \alpha(x) \), so that
  \[ \int_{Y}|\rho_{T}|\, d\mu = \int_{Y}\alpha\, d\mu + \int_{Y} \beta\, d\mu. \]
  For instance, in the context of Figure~\ref{fig:conservative-dynamics},
  \( \alpha(x_{4}) = \rho(x_{4}, c_{2}) \) and \( \beta(x_{4}) = \rho(c_{2}, x_{5}) \). Let us
  partition \( Y = Y' \sqcup Y'' \), where
  \[ Y' = \bigl\{x \in Y : \rho(x, Tx) \textrm{ and } \rho(x, T_{\eqr_{\mathcal{C}}}x) \textrm{ have
      the same sign or \( T_{\eqr_{\mathcal{C}}}x = x \)}\,\bigr\}, \]
  and \( Y'' = Y \setminus Y' \) consists of those \( x \in Y \) for which the signs of
  \( \rho(x, Tx) \) and \( \rho(x, T_{\eqr_{\mathcal{C}}}x) \) are different. For example, referring to
  the same figure, \( x_{0} \in Y'' \), while \( x_{2} \in Y' \).

  To prove the lemma, it is enough to show two inequalities:
  \begin{equation}
    \label{eq:alpha-estimate}
   \int_{Y'}|\rho_{T_{\eqr_{\mathcal{C}}}}(x)|\, d\mu(x) \le \int_{Y}\alpha(x)\, d\mu(x),
  \end{equation}
  \begin{equation}
    \label{eq:beta-estimate}
    \int_{Y''}|\rho_{T_{\eqr_{\mathcal{C}}}}(x)|\, d\mu(x) \le \int_{Y}\beta(x)\, d\mu(x).
  \end{equation}

  Eq.~\eqref{eq:alpha-estimate} is straightforward, since the equality of signs of \( \rho(x, Tx) \)
  and \( \rho(x, T_{\eqr_{\mathcal{C}}}x) \) implies that \( T_{\eqr_{\mathcal{C}}}x \) is closer
  than \( x \) to the point \( c \in \mathcal{C} \), which is crossed by the arc from \( x \) to
  \( Tx \). For example, the point \( x_{2} \) in Figure~\ref{fig:conservative-dynamics} satisfies
  \[ |\rho_{T_{\eqr_{\mathcal{C}}}}(x_{2})| = \rho(x_{2},x_{4}) \le \rho(x_{2},c_{2}) =
    \alpha(x_{2}).\]
  Thus \( |\rho_{T_{\eqr_{\mathcal{C}}}}(x)| \le \alpha(x) \) for all \( x \in Y' \), and so
  \[ \int_{Y'} |\rho_{T_{\eqr_{\mathcal{C}}}}|\, d\mu \le \int_{Y'} \alpha\, d\mu \le \int_{Y}
    \alpha\, d\mu, \]
  which establishes~\eqref{eq:alpha-estimate}. The other inequality will require a bit more work.

  For \( x \in Y'' \), let \( N(x) \ge 1 \) be the smallest integer such that the sign of
  \( \rho(x, T^{N(x)+1}x) \) is opposite to that of \( \rho_{T}(x) \). In less formal terms,
  \( N(x) \) is the smallest integer such that the arc from \( T^{N(x)}x \) to \( T^{N(x)+1}x \)
  jumps over \( x \). In particular, points \( T^{k}x \), \( 1 \le k \le N(x) \), are all on the
  same side relative to \( x \), while \( T^{N(x)+1}x \) is on the other side of it. We consider the
  map \( \eta : Y'' \to X \) given by \( \eta(x) = T^{N(x)}x \). The properties of this map will be
  crucial for establishing the inequality~\eqref{eq:beta-estimate}, so let us provide some
  explanations first.

  \begin{figure}[htb]
    \centering
    \begin{tikzpicture}
      \filldraw (-0.4,0.1) -- (-0.4, -0.1) node[anchor=north] {\( c_{0} \)};
      \filldraw (1.8,0.1) -- (1.8, -0.1) node[anchor=north] {\( c_{1} \)};
      \filldraw (4.5,0.1) -- (4.5, -0.1) node[anchor=north] {\( c_{2} \)};
      \filldraw (7.1,0.1) -- (7.1, -0.1) node[anchor=north] {\( c_{3} \)};
      \filldraw (8.7,0.1) -- (8.7, -0.1) node[anchor=north] {\( c_{4} \)};

      \filldraw (1,0) circle (1pt) node[anchor=north] {\( x_{0} \)};
      \filldraw (2.3,0) circle (1pt) node[anchor=north] {\( x_{1} \)};
      \filldraw (3.3,0) circle (1pt) node[anchor=north] {\( x_{2} \)};
      \filldraw (5.2,0) circle (1pt) node[anchor=north] {\( x_{3} \)};
      \filldraw (4.0,0) circle (1pt) node[anchor=north] {\( x_{4} \)};
      \filldraw (7.8,0) circle (1pt) node[anchor=north] {\( x_{5} \)};
      \filldraw (6.2,0) circle (1pt) node[anchor=north] {\( x_{6} \)};
      \filldraw (9.6,0) circle (1pt) node[anchor=north] {\( x_{7} \)};
      \filldraw (-1.5,0) circle (1pt) node[anchor=north] {\( x_{8} \)};
      \filldraw (0.3,0) circle (1pt) node[anchor=north] {\( x_{9} \)};
      \draw[->,domain=165:15] plot ({1.65 + 0.65*cos(\x)}, {0.4*sin(\x) - 0});
      \draw[->,domain=165:15] plot ({2.8 + 0.5*cos(\x)}, {0.4*sin(\x) - 0});
      \draw[->,domain=165:10] plot ({4.25 + 0.95*cos(\x)}, {0.6*sin(\x) - 0});
      \draw[->,domain=-15:-165] plot[yshift=-3mm] ({4.6 + 0.55*cos(\x)}, {0.4*sin(\x) - 0});
      \draw[->,domain=165:10] plot ({5.88 + 1.9*cos(\x)}, {0.7*sin(\x) - 0});
      \draw[->,domain=-15:-170] plot[yshift=-3mm] ({7.0 + 0.75*cos(\x)}, {0.5*sin(\x) - 0});
      \draw[->,domain=165:10] plot ({7.9 + 1.7*cos(\x)}, {0.6*sin(\x) - 0});
      \draw[->,domain=-8:-175] plot[yshift=-3mm] ({4.05 + 5.45*cos(\x)}, {1.1*sin(\x) - 0});
      \draw[->,domain=165:15] plot ({-0.6 + 0.9*cos(\x)}, {0.4*sin(\x) - 0});
    \end{tikzpicture}
    \caption{Dynamics of a conservative orbit.}
    \label{fig:conservative-dynamics}
  \end{figure}

  Consider once again Figure~\ref{fig:conservative-dynamics}, which shows a partial orbit of a point
  \( x_{0} \) for \( x_{i} = T^{i}x_{0} \) up to \( i \le 9 \) and several points
  \( c_{i} \in \mathcal{C} \). First, as we have already noted before, \( x_{0} \in Y \), since
  \( \neg x_{0} \eqr_{\mathcal{C}} x_{1} \); moreover, \( x_{0} \in Y'' \), since
  \( x_{9} = T_{\eqr_{\mathcal{C}}}x_{0} \) is to the left of \( x_{0} \), while \( x_{1} \) is to
  the right of it, so \( \rho(x_{0}, x_{1}) \) and \( \rho(x_{0}, x_{9}) \) have opposite signs.
  Also, \( N(x_{0}) = 7 \), because \( x_{8} \) is the first point in the orbit to the left of
  \( x_{0} \), thus \( \eta(x_{0}) = x_{7} \). In general, we have
  \( T^{N(x)+1}x \ne T_{\eqr_{\mathcal{C}}}x \). However, the equality
  \( T^{N(x)+1}x = T_{\eqr_{\mathcal{C}}}x \) holds when \( x \in Y'' \) and the points
  \( T^{N(x)+1}x \) and \( x \) are \( \eqr_{\mathcal{C}} \)-equivalent.

  The next point in the orbit \( x_{1} \) is not in \( Y \), whereas \( x_{2} \in Y \) but
  \( x_{2} \not \in Y'' \), because \( T_{\eqr_{\mathcal{C}}}x_{2} = x_{4} \) and both
  \( \rho(x_{2}, x_{3}) \) and \( \rho(x_{2}, x_{4}) \) are positive. The point \( x_{3} \) belongs
  to \( Y'' \) and has \( N(x_{3}) = 1 \) with \( \eta(x_{3}) = x_{4} \). The points
  \( x_{4}, x_{5}, x_{6} \) are in \( Y \), but whether any of them are elements of \( Y'' \) is not clear
  from Figure~\ref{fig:conservative-dynamics}, as the orbit segment is too short to clarify the
  values of \( T_{\eqr_{\mathcal{C}}}x_{i} \), \( i = 4,5,6 \). However, if \( x_{4}, x_{5}, x_{6} \)
  happen to lie in \( Y'' \), then \( N(x_{5}) = 1 \) with \( \eta(x_{5}) = x_{6} \), and
  \( N(x_{4}) = 3 \), \( N(x_{6}) = 1 \), \( \eta(x_{4}) = \eta(x_{6}) = x_{7} = \eta(x_{0}) \). In
  particular, the function \( x \mapsto \eta(x) \) is not necessarily one-to-one, but we are going
  to argue that it is always finite-to-one.

 \begin{claim}\textit{If \( x, y \in Y'' \) are distinct points such that
    \( \eta(x) = \eta(y) \), then \( \neg x \eqr_{\mathcal{C}} y \).}
 \end{claim}
 \begin{cproof}
        Suppose \( x, y \in Y'' \)
        satisfy \( \eta(x) = \eta(y) \).
        The definition of \( \eta \) implies that \( x \) and \( y \)
  must belong to the same orbit of \( T \), and we may assume without loss of generality that
  \( y = T^{k_{0}}x \) for some \( k_{0} \ge 1 \). If the orbit of \( x \) and \( y \) is aperiodic,
  it implies that \( N(x) > k_{0} \) and \( N(y) + k_{0} = N(x) \), \( N(y) \ge 1 \). However,
  even if the orbit is periodic, either \( N(y) + k_{0} = N(x) \) for the smallest positive integer
  \( k_{0} \) such that \( y = T^{k_{0}}x \) or \( N(x) + k_{0}' = N(y) \) for the smallest positive
  integer \( k_{0}' \) such that \( x = T^{k_{0}'}y \). Interchanging the roles of \( x \) and
  \( y \) if necessary, we may therefore assume that \( N(y) + k_{0} = N(x) \) holds for some
  \( k_{0} \ge 1 \), \( T^{k_{0}}x = y \), regardless of the type of orbit we consider.

  Suppose \( x \) and \( y \) are \( \eqr_{\mathcal{C}} \)-equivalent. Let \( k \ge 1 \) be the
  smallest natural number for which \( x \) and \( T^{k}x \) are \( \eqr_{\mathcal{C}} \)-equivalent.
  By the assumption \( x \eqr_{\mathcal{C}} y \) and the choice of \( k_{0} \) we have
  \( k \le k_{0} < N(x) \). By the definition of \( N(x) \), all points \( T^{i}x \),
  \( 1 \le i \le N(x) \), are on the same side of \( x \). In particular, this applies to \( Tx \)
  and \( T^{k}x \), which shows that \( \rho(x, Tx) \) and \( \rho(x, T_{\eqr_{\mathcal{C}}}x) \) have
  the same sign, thus \( x \not \in Y'' \).
  \end{cproof}

  The above claim implies that the function \( x \mapsto \eta(x) \) is finite-to-one, for the arc
  from \( \eta(x) \) to \( T\eta(x) \) intersects only finitely many
  \( \eqr_{\mathcal{C}} \)-equivalence classes, and the preimage of \( \eta(x) \) picks at most one
  point from each such class. Note also that \( \eta(x) \in Y \) for all \( x \in Y'' \), but
  \( \eta(x) \) may not be an element of \( Y'' \). Among the \( \eqr_{\mathcal{C}} \)-equivalence
  classes that the arc from \( \eta(x) \) to \( T\eta(x) \) crosses, two are special: the intervals
  that contain \( T\eta(x) \) and \( \eta(x) \), respectively. Our goal will be to bound the sum of
  \( |\rho_{T_{\eqr_{\mathcal{C}}}}(x)| \) over the points \( x \) with the same \( \eta(x) \) value
  by \( \beta(\eta(x)) \) (see Claim 3 below). For a typical point \( x \), we can bound
  \( |\rho_{T_{\eqr_{\mathcal{C}}}}(x)| \) simply by the length of the interval of its
  \( \eqr_{\mathcal{C}} \)-class. For example, Figure~\ref{fig:conservative-dynamics} does not
  specify \( T_{\eqr_{\mathcal{C}}}x_{4} \), but we can be sure that
  \( |\rho_{T_{\eqr_{\mathcal{C}}}}(x_{4})| \le \rho(c_{1},c_{2}) \). In view of Claim 1, such an
  estimate comes close to showing that the sum of \( |\rho_{T_{\eqr_{\mathcal{C}}}}(x)| \) over
  \( x \) with the same image \( \eta(x) \) is bounded by \( |\rho(\eta(x), T\eta(x))| \). It merely
  comes close due to the two special \( \eqr_{\mathcal{C}} \)-classes mentioned above, where our
  estimate needs to be improved. The next claim shows that one of these special cases is of no
  concern as \( x \) is never \( \eqr_{\mathcal{C}} \)-equivalent to \( \eta(x) \).

  \begin{claim} \textit{For all \( x \in Y'' \), we have \( \neg x \eqr_{\mathcal{C}} \eta(x) \).}
  \end{claim}
  \begin{cproof}
  Suppose, towards a contradiction, that
  \( x \eqr_{\mathcal{C}} \eta(x) \), and let \( k \ge 1 \) be the smallest integer for which
  \( x \eqr_{\mathcal{C}} T^{k}(x) \); in particular, \( T_{\eqr_{\mathcal{C}}}x = T^{k}x \). Note that
  \( k \le N(x) \) by the assumption, and by the definition of \( N(x) \), \( \rho(T^{k}x, x) \) has
  the same sign as \( \rho_{T}(x) \), whence \( x \not \in Y'' \).
  \end{cproof}

  Pick some \( y \in Y \) with non-empty preimage \( \eta^{-1}(y) \), and let
  \( z_{1}, \ldots, z_{n} \in Y'' \) be all the elements in \( \eta^{-1}(y) \). For instance, in the
  situation depicted in Figure~\ref{fig:conservative-dynamics}, we may have \( n = 3 \) and
  \( z_{1} = x_{0} \), \( z_{2} = x_{4} \), \( z_{3} = x_{6} \), and \( y = x_{7} \). The following
  claim unlocks the path toward the inequality~\eqref{eq:beta-estimate}.

  \begin{claim} \textit{In the above notation,
    \( \sum_{i=1}^{n}|\rho_{T_{\eqr_{\mathcal{C}}}}(z_{i})| \le \beta(y) \).}
  \end{claim}
  \begin{cproof}
    Recall that the arc from \( y \) to \( Ty \) crosses at least one point in \( \mathcal{C} \). If
    \( c \in \mathcal{C} \) is the closest to \(y\) among such points, then \( \beta(y) \) is
    defined to be \( |\rho(c, Ty)| \). For instance, in the notation of
    Figure~\ref{fig:conservative-dynamics}, \( \beta(x_{7}) = |\rho(c_{4},x_{8})| \).  Each point
    \( z_{i} \) is located under the arc from \( y \) to \( Ty \), and by Claim 2, no point
    \( z_{i} \) belongs to the interval from \( c \) to \( y \). In the language of our concrete
    example, no point \( z_{i} \) can be between \( c_{4} \) and \( x_{7} \). As discussed before,
    \( |\rho_{T_{\eqr_{\mathcal{C}}}}(x)| \) is always bounded by the length of the gap to which
    \( x \) belongs. This is sufficient to prove the claim if no \( z_{i} \) is equivalent to
    \( Ty \), as in this case the whole \( \eqr_{\mathcal{C}} \)-equivalence class of every
    \( z_{i} \) is fully contained under the interval between \( c \) and \( Ty \), and distinct
    \( z_{i} \) represent distinct \( \eqr_{\mathcal{C}} \)-classes by Claim 1. This is the
    situation depicted in Figure~\ref{fig:conservative-dynamics}, and our argument boils down to the
    inequalities
  \[
    \begin{aligned}
    |\rho_{T_{\eqr_{\mathcal{C}}}}(x_{0})| + |\rho_{T_{\eqr_{\mathcal{C}}}}(x_{4})| +
    |\rho_{T_{\eqr_{\mathcal{C}}}}(x_{5})| &\le |\rho(c_{0}, c_{1})| + |\rho(c_{1}, c_{2})| +
                                             |\rho(c_{2}, c_{3})| \\
      &\le |\rho(c_{0}, c_{4})| \le \beta(x_{7}).
    \end{aligned}
  \]

  Suppose there is some \( z_{i} \) such that \( z_{i} \eqr_{\mathcal{C}} Ty \). By Claim 1, such
  \( z_{i} \) must be unique, and we assume without loss of generality that
  \( z_{1} \eqr_{\mathcal{C}}Ty \). For example, this situation would occur if in
  Figure~\ref{fig:conservative-dynamics} \( Tx_{7} \) were equal to \( x_{9} \). Let \( c' \) be the
  first element of \( \mathcal{C} \) over which goes the arc from \( z_{1} \) to \( Tz_{1} \) (it
  would be the point \( c_{1} \) in Figure~\ref{fig:conservative-dynamics}). It is enough to show
  that \( |\rho_{T_{\eqr_{\mathcal{C}}}}(z_{1})| \le |\rho(T_{\eqr_{\mathcal{C}}}z_{1}, c')| \), as
  we can use the previous estimate for all other \( |\rho_{T_{\eqr_{\mathcal{C}}}}(z_{i})| \),
  \( i \ge 2 \).  Note that \( T_{\eqr_{\mathcal{C}}}z_{1} = Ty \), and \( z_{1} \in Y'' \) by
  assumption, which implies that the signs of \( \rho(z_{1}, T_{\eqr_{\mathcal{C}}}z_{1}) \) and
  \( \rho(z_{1}, c') \) are different. The latter is equivalent to saying that \( z_{1} \) is
  between \( T_{\eqr_{\mathcal{C}}}z_{1} \) and \( c' \), i.e.,
  \( |\rho(T_{\eqr_{\mathcal{C}}}z_{1}, c')| = |\rho_{T_{\eqr_{\mathcal{C}}}}(z_{1})| + |\rho(z_{1},
  c')| \), and the claim follows.
  \end{cproof}

  We are now ready to finish the proof of this lemma. We have already shown that \( \eta \) is
  finite-to-one, so let \( Y_{n}'' \subseteq Y'' \), \( n \ge 1 \), be such that
  \( x \mapsto \eta(x) \) is \( n \)-to-one on \( Y_{n}'' \). Let \( R_{n} = \eta(Y_{n}'') \), and
  recall that \( R_{n} \subseteq Y \). The sets \( R_{n} \) are pairwise disjoint. Let
  \( \phi_{k,n} : R_{n} \to Y_{n}'' \), \( 1 \le k \le n\), be Borel bijections that pick the
  \( k \)th point in the preimage: \( Y_{n}'' = \bigsqcup_{i=1}^{n}\phi_{k,n}(R_{n}) \). Note that
  the maps \( \phi_{k,n} : R_{n} \to \phi_{k,n}(R_{n})\) are measure-preserving, since they belong
  to the pseudo full group of \( T \), and
  \( \sum_{k=1}^{n}|\rho_{T_{\eqr_{\mathcal{C}}}}(\phi_{k,n}(x))| \le \beta(x) \) for all
  \( x \in R_{n} \) by Claim 3. One now has

  \begin{displaymath}
    \begin{aligned}
      \int_{Y_{n}''}|\rho_{T_{\eqr_{\mathcal{C}}}}(x)|\, d\mu(x)
      &= \sum_{k=1}^{n}\int_{\phi_{k,n}(R_{n})}|\rho_{T_{\eqr_{\mathcal{C}}}}(x)|\, d\mu(x) \\
      \because \textrm{\( \phi_{n,k} \) are measure-preserving}
      &= \int_{R_{n}}\sum_{k=1}^{n}|\rho_{T_{\eqr_{\mathcal{C}}}}(\phi_{k,n}^{-1}(x))|\, d\mu(x) \\
      \because \textrm{Claim 3} &\le \int_{R_{n}} \beta(x) \, d\mu(x). \\
    \end{aligned}
  \end{displaymath}

  Summing these inequalities over \( n \), we get
  \begin{displaymath}
    \begin{aligned}
      \int_{Y''}|\rho_{T_{\eqr_{\mathcal{C}}}}(x)|\, d\mu(x)
      &= \sum_{n=1}^{\infty}\int_{Y_{n}''}
        |\rho_{T_{\eqr_{\mathcal{C}}}}(x)|\, d\mu(x) \\
      &\le \sum_{n=1}^{\infty} \int_{R_{n}}\beta(x)\, d\mu(x)
        \le \int_{Y}\beta(x)\, d\mu(x),
    \end{aligned}
  \end{displaymath}
  where the last inequality is based on the fact that the sets \( R_{n} \) are pairwise disjoint.
  This finishes the proof of the inequality~\eqref{eq:beta-estimate} as well as the lemma.
\end{proof}

Several important facts follow easily from Lemma~\ref{lem:TEC-estimate}.  For one, it implies that
for any cross-section \( \mathcal{C} \), the intermitted transformation \( T_{\eqr_{\mathcal{C}}} \)
belongs to \( \lfgr{\mathbb{R} \acts X} \).  In fact, we have the following inequality on the norms.

\begin{corollary}
  \label{cor:norm-intermitted-estimate}
  For any intermitted transformation \( T_{\eqr_{\mathcal{C}}} \), one has
  \( \snorm{T_{\eqr_{\mathcal{C}}}}_{1} \le \norm{T}_{1} \).
\end{corollary}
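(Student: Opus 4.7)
The plan is to derive this immediately from Lemma~\ref{lem:TEC-estimate} by partitioning the space $X$ into the set $Y = \{x \in X : Tx \ne T_{\eqr_{\mathcal{C}}}x\}$ and its complement $X \setminus Y$, and exploiting the fact that on $X \setminus Y$ the two cocycles $\rho_{T}$ and $\rho_{T_{\eqr_{\mathcal{C}}}}$ agree pointwise.

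First I would split the integral defining $\norm{T_{\eqr_{\mathcal{C}}}}_{1}$ as
\[
\norm{T_{\eqr_{\mathcal{C}}}}_{1} = \int_{X} \bigl|\rho_{T_{\eqr_{\mathcal{C}}}}(x)\bigr|\, d\mu = \int_{Y} \bigl|\rho_{T_{\eqr_{\mathcal{C}}}}(x)\bigr|\, d\mu + \int_{X \setminus Y} \bigl|\rho_{T_{\eqr_{\mathcal{C}}}}(x)\bigr|\, d\mu.
\]
On $X \setminus Y$ we have $T_{\eqr_{\mathcal{C}}}x = Tx$ by the very definition of $Y$, so $\rho_{T_{\eqr_{\mathcal{C}}}}(x) = \rho_{T}(x)$ there, and the second summand equals $\int_{X \setminus Y} |\rho_{T}(x)|\, d\mu$. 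Applying Lemma~\ref{lem:TEC-estimate} to the first summand bounds it by $\int_{Y} |\rho_{T}(x)|\, d\mu$, and combining the two contributions recovers $\int_{X} |\rho_{T}(x)|\, d\mu = \norm{T}_{1}$.

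There is really no obstacle here; the only thing to check is that $Y$ is a measurable set, which is immediate since both $T$ and $T_{\eqr_{\mathcal{C}}}$ are measurable, and that $T_{\eqr_{\mathcal{C}}}$ is actually well-defined almost everywhere, which was already noted just after Definition~\ref{def:intermitted-transformation} using conservativity of $T$. The bulk of the work lies entirely in the preceding lemma; this corollary is the clean packaging of that estimate, and in particular it shows that passing to the intermitted transformation yields a well-defined operation on $\lfgr{\mathbb{R} \acts X}$ that does not increase the $\LL^{1}$ norm.
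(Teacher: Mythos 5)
Your argument is correct and is exactly the paper's proof: split the integral over $Y$ and $X \setminus Y$, use that the cocycles of $T$ and $T_{\eqr_{\mathcal{C}}}$ agree off $Y$, and apply Lemma~\ref{lem:TEC-estimate} on $Y$. Nothing further is needed.
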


\begin{proof}
  By the definition of the set \( Y \) in Lemma~\ref{lem:TEC-estimate},
  \( \rho_{T_{\eqr_{\mathcal{C}}}}(x) = \rho_{T}(x) \) for all \( x \not \in Y \), hence
  \begin{displaymath}
    \begin{aligned}
      \int_{X}|\rho_{T_{\eqr_{\mathcal{C}}}}|\,d\mu
      &= \int_{X \setminus Y} |\rho_{T_{\eqr_{\mathcal{C}}}}|\, d\mu
        + \int_{Y} |\rho_{T_{\eqr_{\mathcal{C}}}}|\, d\mu \\
      \because \textrm{Lemma~\ref{lem:TEC-estimate}}
      &\le \int_{X \setminus Y}|\rho_{T}|\, d\mu + \int_{Y} |\rho_{T}|\, d\mu = \int_{X} |\rho_{T}|\, d\mu,
    \end{aligned}
  \end{displaymath}
  which shows \( \snorm{T_{\eqr_{\mathcal{C}}}}_{1} \le \snorm{T}_{1} \).
\end{proof}

\begin{remark}
  \label{rem:counterexample-R2}
  As we discussed in Remark~\ref{rem:intermitted-transformation-for-equivalence-relations}, the
  concept of an intermitted transformation applies more broadly than the case of one-dimensional
  flows. We mention, however, that the analog of Lemma~\ref{lem:TEC-estimate} and
  Corollary~\ref{cor:norm-intermitted-estimate} does not hold even for free measure-preserving
  \( \mathbb{R}^{2} \)-flows. Consider an annulus depicted in Figure~\ref{fig:T-circle} and let
  \( T \) be the rotation by an angle \( \alpha \) around the center of this annulus. Let the
  equivalence relation \( E \) consist of two classes, each composing half of the ring. For a point
  \( x \) such that \( \neg x E Tx \), \( T_{E}x \) will be close to the other side of the class. It
  is easy to arrange the parameters (the angle \( \alpha \) and the radii of the annulus) so that
  \( \snorm{\rho_{T_{E}}(x)} > \snorm{\rho_{T}(x)} \) for all \( x \) such that \( Tx \ne T_{E}x \).

  \def\AngleValue{18.7}
  \begin{figure}[htb]
    \centering
    \begin{subfigure}{4.9cm}
      \begin{tikzpicture}
        \draw (0,0) circle (1cm);
        \draw (0,0) circle (1.5cm);
        \draw (0, 1.0) -- (0, 1.5);
        \draw (0, -1.0) -- (0, -1.5);
        \foreach \x in {0,...,15} {
          \fill ({40+\x*\AngleValue}:1.25) circle (1pt);
        }
        \draw (1.25,0) node {\( \vdots \)};
        \draw (0,0) -- (40:1.25);
        \draw (0,0) -- ({40+\AngleValue}:1.25);
        \draw (49.35:0.8) node {\( \alpha \)};
        \draw[->,domain=160:210] plot ({2*cos(\x)}, {2*sin(\x) - 0});
        \draw (-2.25, 0) node {\( T \)};
        \draw ({40+2*\AngleValue}:1.86) node {\( x \)};
        \draw[->] ({40+2*\AngleValue}:1.7) -- ({40+2*\AngleValue}:1.3);
        \draw ({40+13*\AngleValue}:1.25) node[yshift=-7mm, xshift=2mm] {\( T_{E}x \)};
        \draw[->] ({40+13*\AngleValue}:1.7) -- ({40+13*\AngleValue}:1.3);
      \end{tikzpicture}
      \caption{}
      \label{fig:T-circle}
    \end{subfigure}
    \begin{subfigure}{4cm}
      \begin{tikzpicture}
        \draw (0,0) circle (0.7cm);
        \draw (0,0) circle (1.1cm);
        \draw (-2,-1.5) rectangle (2,1.5);
        \draw[dashed] (0,-1.5) -- (0,1.5);
        \draw[->,domain=160:210] plot ({1.5*cos(\x)}, {1.5*sin(\x) - 0});
      \end{tikzpicture}
      \caption{}
      \label{fig:T-circle-tile}
    \end{subfigure}
    \caption{Construction of a conservative transformation \( T \) with
      \( \snorm{T_{E}}_{1} > \snorm{T}_{1} \).}
    \label{fig:counterexample-intermitted-estimate-R2}
  \end{figure}

  Every free measure-preserving flow \( \mathbb{R}^{2} \acts X \) admits a tiling of its orbits by
  rectangles. The transformation \( T \in \lfgr{\mathbb{R}^{2} \acts X} \) can be defined similarly
  to Figure~\ref{fig:T-circle} on each rectangle of the tiling by splitting each tile into two
  equivalence classes as in Figure~\ref{fig:T-circle-tile}. The resulting transformation \( T \)
  will have bounded orbits and satisfy \( \snorm{T_{E}}_{1} > \snorm{T}_{1} \) relative to the
  equivalence relation \( E \) whose classes are the half tiles.
\end{remark}

When the gaps in a cross-section \( \mathcal{C} \) are large, \( x \) and \( Tx \) will often be
\( \eqr_{\mathcal{C}} \)-equivalent, and it is therefore natural to expect that
\( T_{\eqr_{\mathcal{C}}} \) will be close to \( T \). This intuition is indeed valid, and the
following approximation result is the most important consequence of Lemma~\ref{lem:TEC-estimate}.

\begin{lemma}
  \label{lem:approximation-by-intermitted}
  Let \( T \in \lfgr{\mathbb{R} \acts X} \) be a conservative transformation. For any
  \( \epsilon > 0 \), there exists \( M \) such that for any cross-section \( \mathcal{C} \) with
  \( \gap_{\mathcal{C}}(c) \ge M \) for all \( c \in \mathcal{C} \), one has
  \( \snorm{T \circ T^{-1}_{\eqr_{\mathcal{C}}}}_{1} < \epsilon \).
\end{lemma}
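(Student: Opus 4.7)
The plan is to unwind the quantity $\snorm{T \circ T^{-1}_{\eqr_{\mathcal C}}}_1$ by a change of variable, bound it by the cocycle of $T$ on the bad set $Y$ using Lemma~\ref{lem:TEC-estimate}, and then exploit two independent smallness mechanisms: integrability of $\rho_T$ on one hand, and the fact that $Y \cap \{|\rho_T| \le K\}$ sits inside a thin tubular neighborhood of $\mathcal C$ whose measure shrinks as $M \to \infty$.

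First, since $T_{\eqr_{\mathcal C}}$ is measure-preserving, the substitution $y = T_{\eqr_{\mathcal C}}(x)$ yields
\[
    \snorm{T \circ T^{-1}_{\eqr_{\mathcal C}}}_1 \;=\; \int_X D\bigl(T_{\eqr_{\mathcal C}}(x), T(x)\bigr)\, d\mu(x).
\]
Outside of $Y = \{x : Tx \ne T_{\eqr_{\mathcal C}}x\}$ the integrand vanishes, and on $Y$ the triangle inequality gives $D(T_{\eqr_{\mathcal C}}(x), Tx) \le |\rho_{T_{\eqr_{\mathcal C}}}(x)| + |\rho_T(x)|$. Applying Lemma~\ref{lem:TEC-estimate} to the first summand, I obtain
\[
    \snorm{T \circ T^{-1}_{\eqr_{\mathcal C}}}_1 \;\le\; 2 \int_Y |\rho_T(x)|\, d\mu(x).
\]

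Next I exploit the integrability of $\rho_T$. Given $\epsilon > 0$, pick $K > 0$ such that $\int_{\{|\rho_T| > K\}} |\rho_T|\, d\mu < \epsilon/4$. It remains to bound the contribution from $Y \cap \{|\rho_T| \le K\}$. The key geometric observation is that if $x \in Y$ then the arc from $x$ to $Tx$ must jump over some point of $\mathcal C$, and thus $x$ lies within distance $|\rho_T(x)|$ of $\mathcal C$ along its orbit. Consequently $Y \cap \{|\rho_T| \le K\}$ is contained in the set $A_K$ of points within orbital distance $K$ of $\mathcal C$.

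To estimate $\mu(A_K)$, disintegrate $\mu$ over the tessellation $W_{\mathcal C}$ as $\mu = \int_{\mathcal C} \lambda_c^{\mathcal C}\, d\nu(c)$, where $\nu = (\pi_{\mathcal C})_*\mu$. Within each gap $[c, c+\gap_{\mathcal C}(c))$ the set $A_K$ contributes at most Lebesgue measure $2K$ (once $M \ge 2K$), so $\mu(A_K) \le 2K\,\nu(\mathcal C)$. Since $1 = \mu(X) = \int_{\mathcal C} \gap_{\mathcal C}(c)\, d\nu \ge M\, \nu(\mathcal C)$, we get $\nu(\mathcal C) \le 1/M$, whence
\[
    \int_{Y \cap \{|\rho_T| \le K\}} |\rho_T(x)|\, d\mu \;\le\; K \mu(A_K) \;\le\; \frac{2K^2}{M}.
\]
Choosing $M \ge \max(2K, 8K^2/\epsilon)$ makes this bound at most $\epsilon/4$, so $\int_Y |\rho_T|\, d\mu < \epsilon/2$, yielding $\snorm{T \circ T^{-1}_{\eqr_{\mathcal C}}}_1 < \epsilon$ as desired.

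The main obstacle is the geometric/measure-theoretic estimate for $\mu(A_K)$ and, more importantly, the fact that we genuinely need Lemma~\ref{lem:TEC-estimate} to control $\int_Y |\rho_{T_{\eqr_{\mathcal C}}}(x)|\, d\mu$ by $\int_Y |\rho_T(x)|\, d\mu$; without it, the naive triangle-inequality bound on $D(T_{\eqr_{\mathcal C}}(x), Tx)$ would be worthless since $|\rho_{T_{\eqr_{\mathcal C}}}(x)|$ could in principle be far larger than $|\rho_T(x)|$. Once that input is available, the remainder of the argument is a standard split-by-sublevel-set combined with the observation that large-gap cross sections give thin Voronoi-type neighborhoods.
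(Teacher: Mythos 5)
Your proof is correct and follows essentially the same route as the paper's: reduce to $2\int_Y|\rho_T|\,d\mu$ via the triangle inequality and Lemma~\ref{lem:TEC-estimate}, split off the set where $|\rho_T|>K$ using integrability, and observe that the remaining part of $Y$ lies in a tubular neighborhood of $\mathcal{C}$ of measure at most $2K/M$. The only difference is cosmetic: you justify the bound $\mu(\text{tube})\le 2K/M$ explicitly via the disintegration $\mu=\int_{\mathcal C}\lambda_c^{\mathcal C}\,d\nu$ and $\nu(\mathcal C)\le 1/M$, where the paper states it directly.
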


\begin{proof}
  Let \( A_{K} = \{ x \in X: |\rho_{T}(x)| \ge K \} \), \( K \in \mathbb{R}^{\ge 0} \), be the set
  of points whose cocycle is at least \( K \) in absolute value. Since
  \( T \in \lfgr{\mathbb{R} \acts X} \), we may pick \( K \ge 1 \) so large that
  \( \int_{A_{K}}|\rho_{T}|\, d\mu < \epsilon/4 \). Pick any real \( M \) such that
  \( 2K^{2}/M < \epsilon/4 \). We claim that it satisfies the conclusion of the lemma. To verify
  this, we pick a cross-section \( \mathcal{C} \) with all gaps having a size of at least \( M
  \). Set as before \( Y = \{ x \in X : Tx \ne T_{\eqr_{\mathcal{C}}}x \} \). Since
  \[ \bigl\lVert T \circ T^{-1}_{\eqr_{\mathcal{C}}}\bigr\rVert_{1} = \int_{Y} D(Tx,
    T_{\eqr_{\mathcal{C}}}x)\, d\mu(x),\]
  our task is to estimate this integral. This can be done in a rather crude way. We can simply use
  the triangle inequality
  \( D(Tx,T_{\eqr_{\mathcal{C}}}x) \le |\rho_{T}(x)| + |\rho_{T_{\eqr_{\mathcal{C}}}}(x)| \), and
  deduce
  \begin{displaymath}
    \int_{Y}D(Tx, T_{\eqr_{\mathcal{C}}}x)\, d\mu(x) \le \int_{Y} |\rho_{T}|\, d\mu
    + \int_{Y}|\rho_{T_{\eqr_{\mathcal{C}}}}|\, d\mu \le 2 \int_{Y} |\rho_{T}|\, d\mu,
  \end{displaymath}
  where the last inequality is based on Lemma~\ref{lem:TEC-estimate}.

  It remains to show that \( \int_{Y} |\rho_{T}|\, d\mu < \epsilon/2 \).  Let
  \( \widetilde{X} = \{c + [K, \gap_{\mathcal{C}}(c)-K] : c \in \mathcal{C} \} \) be the region that
  leaves out intervals of length \( K \) on both sides of each point in \( \mathcal{C} \). Note that
  for any \( x \in \widetilde{X}\setminus A_{K} \) one has \( x\eqr_{\mathcal{C}}Tx \) and thus
  \( T_{\eqr_{\mathcal{C}}}x = Tx \) for such points. Therefore,
  \( Y \subseteq A_{K} \sqcup B_{K} \), where \( B_{K} = X \setminus (\widetilde{X} \cup A_{K}) \),
  and thus
  \[ \int\limits_{Y}|\rho_{T}|\, d\mu \le \int\limits_{A_{K}} |\rho_{T}|\, d\mu +
    \int\limits_{B_{K}}|\rho_{T}|\, d\mu < \epsilon/4 + K \cdot 2K/M < \epsilon/2. \qedhere
  \]
\end{proof}

\begin{lemma}
  \label{lem:approximation-by-periodic}
  Let \( T \in \lfgr{\mathbb{R} \acts X} \) be a conservative transformation.  For any
  \( \epsilon > 0 \) there exists a periodic transformation \( P \in \fgr{T} \) such that
  \( \snorm{T\circ P^{-1}}_{1} < \epsilon \).
\end{lemma}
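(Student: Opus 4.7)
The plan is to combine Lemma~\ref{lem:approximation-by-intermitted} with a Rokhlin tower construction applied to \( T_{\eqr_{\mathcal{C}}} \). First, I would apply Lemma~\ref{lem:approximation-by-intermitted} with \( \epsilon/2 \) to obtain \( M > 0 \) such that any cross section \( \mathcal{C} \) with \( \gap_{\mathcal{C}} \ge M \) satisfies \( \snorm{T \circ T_{\eqr_{\mathcal{C}}}^{-1}}_{1} < \epsilon/2 \). By thinning out a cocompact cross section one can additionally arrange that \( \gap_{\mathcal{C}} \) is bounded above by some \( L \), so that every \( \eqr_{\mathcal{C}} \)-class has \( D \)-diameter at most \( L \); this uniform bound will be crucial for the \( \LL^{1} \) estimate below.

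The second step is to produce a periodic \( P \in \fgr{T} \) with \( \snorm{T_{\eqr_{\mathcal{C}}} \circ P^{-1}}_{1} < \epsilon/2 \), from which the submultiplicativity of the norm will give \( \snorm{T \circ P^{-1}}_{1} \le \snorm{T \circ T_{\eqr_{\mathcal{C}}}^{-1}}_{1} + \snorm{T_{\eqr_{\mathcal{C}}} \circ P^{-1}}_{1} < \epsilon \). Note that \( T_{\eqr_{\mathcal{C}}} \in \fgr{T} \) since it is a pointwise power of \( T \), and it preserves every \( \eqr_{\mathcal{C}} \)-class. Decompose \( X = X_{p} \sqcup X_{a} \) into the periodic and aperiodic parts of \( T_{\eqr_{\mathcal{C}}} \), and apply Rokhlin's lemma to \( T_{\eqr_{\mathcal{C}}} \) restricted to \( X_{a} \) to obtain, for a large integer \( N \), a Borel set \( A \subseteq X_{a} \) such that \( A, T_{\eqr_{\mathcal{C}}} A, \ldots, T_{\eqr_{\mathcal{C}}}^{N-1} A \) are pairwise disjoint and \( \mu\bigl(X_{a} \setminus \bigsqcup_{i<N} T_{\eqr_{\mathcal{C}}}^{i} A\bigr) < 1/N \). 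Define \( P \) to agree with \( T_{\eqr_{\mathcal{C}}} \) on \( X_{p} \cup \bigsqcup_{i<N-1} T_{\eqr_{\mathcal{C}}}^{i} A \), to equal \( T_{\eqr_{\mathcal{C}}}^{-(N-1)} \) on \( T_{\eqr_{\mathcal{C}}}^{N-1} A \) (thereby closing up the Rokhlin tower into cycles of length \( N \)), and to be the identity on the leftover. Since each of these pieces is a pointwise power of \( T \), we have \( P \in \fgr{T} \), and by construction every \( P \)-orbit is finite.

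The set on which \( P \) disagrees with \( T_{\eqr_{\mathcal{C}}} \) has \( \mu \)-measure at most \( 2/N \). Because \( T_{\eqr_{\mathcal{C}}} \) preserves every \( \eqr_{\mathcal{C}} \)-class, the whole Rokhlin tower sits inside individual classes, so on the disagreement set both \( Py \) and \( T_{\eqr_{\mathcal{C}}} y \) lie in the \( \eqr_{\mathcal{C}} \)-class of \( y \); consequently \( D(Py, T_{\eqr_{\mathcal{C}}} y) \le L \). Integrating gives \( \snorm{T_{\eqr_{\mathcal{C}}} \circ P^{-1}}_{1} \le 2L/N \), which is smaller than \( \epsilon/2 \) as soon as \( N > 4L/\epsilon \). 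Combined with the first step this yields the desired estimate \( \snorm{T \circ P^{-1}}_{1} < \epsilon \).

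The main potential obstacle is arranging the cross section so that \( \gap_{\mathcal{C}} \) is contained in an interval \( [M, L] \) and then applying Rokhlin's lemma to a possibly non-ergodic measure-preserving automorphism in a Borel measurable fashion. Both are classical facts from flow theory and ergodic theory, and so should not present a serious difficulty; a minor bookkeeping point is to notice that the ``return map'' \( T_{\eqr_{\mathcal{C}}}^{-(N-1)} \) on the top of the tower automatically belongs to \( \fgr{T} \) because it is pointwise a power of \( T \), which makes the verification \( P \in \fgr{T} \) immediate.
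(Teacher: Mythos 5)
Your argument is correct and follows essentially the same route as the paper: apply Lemma~\ref{lem:approximation-by-intermitted} to a cocompact cross section with large gaps, then use Rokhlin's lemma on \( T_{\eqr_{\mathcal{C}}} \) together with the uniform bound on the diameter of \( \eqr_{\mathcal{C}} \)-classes to upgrade uniform closeness to an \( \LL^{1} \) estimate. The only difference is that you write out the Rokhlin tower and the closing-up step explicitly, where the paper simply invokes Rokhlin's lemma to get \( P \in \fgr{T_{\eqr_{\mathcal{C}}}} \) with \( \snorm{T_{\eqr_{\mathcal{C}}}\circ P^{-1}} < \epsilon/2\tilde{M} \) in the uniform metric.
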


\begin{proof}
  By Lemma~\ref{lem:approximation-by-intermitted}, we can find a cocompact cross-section
  \( \mathcal{C} \) such that \( \snorm{T\circ T^{-1}_{\eqr_{\mathcal{C}}}} < \epsilon/2 \). Let
  \( \tilde{M} \) be an upper bound for gaps in \( \mathcal{C} \). Recall that the cocycle
  \( |\rho_{T_{\eqr_{\mathcal{C}}}}(x)| \) is uniformly bounded by \( \tilde{M} \), and, in fact,
  the same is true for any element in the full group of \( T_{\eqr_{\mathcal{C}}} \). In particular,
  we may use Rokhlin's lemma to find a periodic \( P \in \fgr{T_{\eqr_{\mathcal{C}}}} \) such that
  \( \snorm{T_{\eqr_{\mathcal{C}}} \circ P^{-1}} < \epsilon/2\tilde{M} \), and conclude that
  \( \snorm{T_{\eqr_{\mathcal{C}}} \circ P^{-1}}_{1} < \epsilon/2 \). We therefore have
  \[ \snorm{T \circ P^{-1}}_{1} \le \snorm{T\circ T^{-1}_{\eqr_{\mathcal{C}}}}_{1} +
    \snorm{T_{\eqr_{\mathcal{C}}} \circ P^{-1}}_{1} < \epsilon. \qedhere\]
\end{proof}

\begin{corollary}
  \label{cor:conservative-belong-to-derived-full-group}
  If \( T \in \lfgr{\mathbb{R} \acts X} \) is conservative, then \( T \) belongs to the derived
  \( \LL^1 \) full group \( \derived(\lfgr{\mathbb{R} \acts X}) \).  In particular, its index
  satisfies \( \ind(T) = 0 \).
\end{corollary}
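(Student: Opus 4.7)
The strategy is to show $T$ lies in the closed derived subgroup $D(\lfgr{\mathbb{R}\acts X})$ by approximating it in the $\LL^1$ norm by periodic elements, each of which already belongs to the derived subgroup. The index statement then follows automatically, since the index map is a continuous homomorphism into an abelian group and hence must vanish on every element of the topological derived subgroup.

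For the approximation step, I would apply Lemma~\ref{lem:approximation-by-periodic} directly: since $T$ is assumed conservative, for every $\epsilon>0$ I obtain a periodic $P_{\epsilon}\in\fgr{T}\subseteq\lfgr{\mathbb{R}\acts X}$ with $\snorm{T\circ P_{\epsilon}^{-1}}_{1}<\epsilon$. In particular, choosing $\epsilon=1/n$, I get a sequence $(P_{n})$ of periodic elements in the $\LL^1$ full group with $P_{n}\to T$ in the $\norm{\cdot}_{1}$ topology. This uses only the conservativity of $T$, which is the hypothesis of the corollary.

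Next, I would argue that each periodic $P_{n}$ lies in the topological derived subgroup $D(\lfgr{\mathbb{R}\acts X})$. This is exactly what is established in the discussion following Lemma~\ref{lem:exhaustive-chain-subgroups}: a periodic element of the ambient full group sits in some $\fgr{\mathcal{R}_{k}}$ for a uniformly bounded smooth subequivalence relation $\mathcal{R}_{k}\subseteq\eqr_{\mathbb{R}\acts X}$, and by Lemma about periodic elements being in $S(G)$ together with Corollary~\ref{cor:all-subgroups-are-equal}, every such element lies in $D(\lfgr{\mathbb{R}\acts X})$. Since $D(\lfgr{\mathbb{R}\acts X})$ is closed by definition, and the sequence $(P_{n})$ converges to $T$ in the Polish topology of $\lfgr{\mathbb{R}\acts X}$, I conclude $T\in D(\lfgr{\mathbb{R}\acts X})$.

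For the final assertion, the index map $\ind:\lfgr{\mathbb{R}\acts X}\to\LL^{1}(\mathcal{E},\mathbb{R})$ is a continuous homomorphism into an abelian Polish group (Proposition~\ref{prop:index-map-is-surjective}), so its kernel is a closed normal subgroup containing every commutator, hence contains the whole topological derived subgroup. In particular $\ind(T)=0$. The main technical content has already been absorbed into Lemma~\ref{lem:approximation-by-periodic}, so no further obstacle arises here; the remaining work is just assembling these ingredients.
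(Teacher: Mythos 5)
Your proof is correct and follows exactly the paper's route: approximate the conservative $T$ by periodic elements via Lemma~\ref{lem:approximation-by-periodic}, place each periodic element in the closed derived subgroup via Lemma~\ref{lem: periodic elements are in S(G)} and Corollary~\ref{cor:all-subgroups-are-equal}, and deduce the vanishing of the index from continuity of the index map into an abelian group. The paper's own proof is just the one-line citation of these same two ingredients.
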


\begin{proof}
  By Lemma~\ref{lem:approximation-by-periodic}, every conservative transformation
  \(T \in \lfgr{\mathbb{R} \acts X}\) lies in the closed subgroup generated by the periodic
  elements. This subgroup is equal to the derived \(\LL^{1}\) full group
  \(\derived(\lfgr{\mathbb{R} \acts X})\) by Corollary~\ref{cor:all-subgroups-are-equal}. Since the
  range of \(\ind\) is abelian, its kernel contains all commutators, and thus
  \(\derived(\lfgr{\mathbb{R} \acts X}) \subseteq \ker \ind\).
\end{proof}



\chapter[Dissipative transformations]{Dissipative and monotone transformations}
\label{chap:peri-appr-monot}

The previous chapter studied conservative transformations, whereas this one concentrates on
dissipative ones.  Our goal will be to show that any dissipative
\( T \in \lfgr{\mathbb{R} \acts X} \) of index \( \ind(T) = 0 \) belongs to the derived subgroup
\( \derived(\lfgr{\mathbb{R} \acts X}) \).  Recall that conversely, \emph{every} element of the
(topological) derived group \( \derived(\lfgr{\mathbb{R} \acts X}) \) has index zero since the index
map is a continuous group homomorphism taking values in an abelian topological group.  We begin by
describing some general aspects of the dynamics of dissipative automorphisms.

Recall that according to Proposition~\ref{prop:hopf-decomposition-full-group}, any transformation
\( T \in \fgr{\mathbb{R} \acts X} \) induces a \( T \)-invariant partition of the phase space
\( X = C_T \sqcup D_T \) such that \( T|_{C_T} \) is conservative and \( T|_{D_T} \) is dissipative.
Formally speaking, a transformation is said to be dissipative if the partition trivializes to
\( D_T = X \).  For the purpose of this chapter, it is, however, convenient to widen this notion
just a bit by allowing \( T \) to have fixed points.

\begin{definition}
  \label{def:dissipative-transformation} A transformation \( T \in \fgr{\mathbb{R} \acts X} \) is
  said to be \textbf{dissipatively supported}\index{Transformation!dissipatively supported} if
  \( D_T = \supp T \), where \( D_T \) is the dissipative element of the Hopf decomposition for
  \( T \).
\end{definition}

\section{Orbit limits and monotone transformations}
\label{sec:orbit-limits}

We begin by showing that the dynamics of dissipatively supported transformations in \( \LL^{1} \)
full groups of \( \mathbb{R} \)-flows is similar to those in \( \LL^{1} \) full groups of
\( \mathbb{Z} \)-actions.  We do so by establishing an analog of R.~M.~Belinskaja's
result~\cite[Thm.~3.2]{MR0245756}. Recall that a sequence of reals is said to have an almost
constant sign if all but finitely many elements of the sequence have the same sign.

\begin{proposition}
  Let \(S\) be a measure-preserving transformation of the real line that commensurates the set
  \(\R^{-}\).  Suppose that \(S\) is dissipatively supported. Then for almost all \(x\in\R\), the
  sequence of reals \((S^k(x)-x)_{k\in\N}\) has an almost constant sign.
\end{proposition}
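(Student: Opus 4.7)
The plan is to reduce the claim to showing that, almost surely, $S^{k}(x) \to +\infty$ or $S^{k}(x) \to -\infty$ as $k \to \infty$. Granting this, for any fixed $x$ in the relevant conull set the tail $(S^{k}(x))_{k}$ eventually dominates (or is dominated by) the real number $x$, so $\mathrm{sign}(S^{k}(x) - x)$ is eventually constant, which is exactly the conclusion.

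Convergence of the orbit to $\pm\infty$ will come from two ingredients. First, set
\[
U = \{y < 0 : S(y) \geq 0\} \quad\text{and}\quad V = \{y \geq 0 : S(y) < 0\}.
\]
A direct computation of the symmetric difference gives $S\R^{-} \setminus \R^{-} = S(U)$ and $\R^{-} \setminus S\R^{-} = S(V)$, so by measure-preservation and the commensuration hypothesis
\[
\lambda(U) + \lambda(V) \;=\; \lambda(S\R^{-} \triangle \R^{-}) \;<\; +\infty.
\]
Second, by dissipativity there is a Borel wandering set $W$ with $\R = \bigsqcup_{n \in \Z} S^{n}(W)$, and for any $A$ of finite Lebesgue measure one has
\[
\int_{W} \#\{n \in \Z : S^{n}(w) \in A\}\, d\lambda(w) \;=\; \lambda(A) \;<\; +\infty,
\]
so almost every orbit meets $A$ only finitely many times.

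Applying this finite-visits principle to $A = U \cup V$ yields a conull set on which the tail of the orbit avoids $U \cup V$; this means that $S^{n}(x)$ and $S^{n+1}(x)$ lie on the same side of $0$ for all sufficiently large $n$, so the sign of $S^{n}(x)$ eventually stabilizes. Applying the same principle to $A = [-M, M]$ for each integer $M$, and intersecting the resulting conull sets over $M \in \N$, I get $|S^{n}(x)| \to \infty$ almost surely. These two facts combined force $S^{n}(x) \to +\infty$ or $S^{n}(x) \to -\infty$ almost surely, completing the reduction outlined in the first paragraph. I do not anticipate a substantial obstacle here; the only care required is in verifying the identities for $S\R^{-} \triangle \R^{-}$ in terms of $U$ and $V$, and the finite-visits statement for dissipative transformations is standard.
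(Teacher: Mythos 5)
Your proof is correct and rests on the same two pillars as the paper's: commensuration forces the zero-crossing set to have finite measure, and dissipativity plus the Fubini/fundamental-domain computation forces almost every orbit to visit any finite-measure set only finitely often (this is exactly the paper's Proposition on the dissipative property, whose proof is the integral identity you wrote down). The only difference is organizational — you argue directly that a.e.\ orbit eventually stops crossing $0$ and escapes to $\pm\infty$, whereas the paper runs the same mechanism as a proof by contradiction, showing a positive-measure set of sign-alternating orbits would make $\{y<0 : S(y)>0\}$ have infinite measure.
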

\begin{proof}
  Let \(Q\) be the set of reals \(x\) such that the sequence \((S^k(x)-x)_{k\in\N}\) does not have
  an almost constant sign.  Assume, to the contrary, that \(Q\) has positive measure.  Since \(S\)
  is dissipative, we can find a Borel wandering set \(A\subseteq\R\) for \(S\) that intersects \(Q\)
  non-trivially.  All the translates of \(Q' = Q\cap A\) are disjoint, and for all \(x\in Q'\), the
  sequence \((S^k(x)-x)_{k\in\N}\) does not have an almost constant sign.

  Since \(S\) is dissipatively supported, for almost all \(x\in Q'\), the sequence of absolute
  values \((|S^k(x)|)_{k\in\N}\) tends to \(+\infty\) (see
  Proposition~\ref{prop:dissipative-property}).  In particular, there are infinitely many points
  \(y\) in the \(S\)-orbit of \(x\) such that \(y<0\) but \(S(y)>0\).  Because the map
  \(Q'\times \Z\to \R\), which sends \((x,k)\) to \(S^k(x)\), is measure-preserving, it follows that
  the set of \(y<0\) such that \(S(y)>0\) has infinite measure.  This contradicts the fact that
  \(S\) commensurates the set \(\R^-\).
\end{proof}

\begin{corollary}\label{cor:finite-arcs}
  Let \( T \in \lfgr{\mathbb{R} \acts X} \) be dissipatively supported. For almost all
  \(x\in \supp T\), the sequence \( (\rho(x, T^{k}(x)))_{k \in \mathbb{N}} \) has an almost constant
  sign.
\end{corollary}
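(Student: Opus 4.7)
The plan is to deduce this corollary from the preceding proposition by a fiberwise reduction to orbits of the flow. For $\mu$-almost every $x \in X$, identify the orbit $[x]_{\eqr_{\mathcal F}}$ with $(\R, \lambda)$ via $r \mapsto x + r$ and denote by $T_x$ the measure-preserving transformation of $\R$ induced by $T$ on this orbit. Under this identification, $\rho(x+r, T^k(x+r)) = T_x^k(r) - r$ for every $k$ and $r$, so by Fubini applied through the disintegration of $\mu$ into Lebesgue measures on orbits (Appendix~\ref{sec:orbit-transf}), it suffices to show that for $\nu$-almost every orbit, the transformation $T_x$ satisfies the hypotheses of the preceding proposition.

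Three conditions must be verified. Measure-preservation of $T_x$ on $(\R, \lambda)$ is immediate from the orbit-wise disintegration of $\mu$. Commensuration of $\R^{\geq 0}$ (equivalently $\R^-$) by $T_x$ is the content of Subsection~\ref{sec:index-map-l1-full-groups}: the charge function $f$ satisfies $\int_\R |f(x+r, x)|\,d\lambda(r) = \lambda\bigl(\R^{\geq 0} \triangle T_x(\R^{\geq 0})\bigr)$, and by the mass transport principle together with the $\LL^1$ hypothesis $\snorm{T}_1<\infty$, this quantity is finite for $\mu$-almost every $x$. The remaining condition, dissipativity of $T_x$, is where the global hypothesis enters: by the orbit-wise nature of Hopf's decomposition (Appendix~\ref{sec:hopf-decomposition-appendix}), the dissipative part of $T$ on $(X,\mu)$ is the union over orbits of the dissipative parts of the $T_x$ on $(\R, \lambda)$, so the assumption that $T$ is dissipative in the sense of Definition~\ref{def:dissipative-transformation} forces $T_x$ to be dissipative for $\nu$-almost every orbit. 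Applying the preceding proposition to each such $T_x$ then yields that for $\lambda$-almost every $r\in\R$, the sequence $(T_x^k(r) - r)_{k\in\N}$ has almost constant sign; for $r \notin \supp T_x$ this is trivial since the sequence is identically zero, and for $r \in \supp T_x$ it is exactly the desired conclusion.

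The main obstacle is the fiberwise compatibility of Hopf's decomposition used in verifying dissipativity of $T_x$ almost everywhere. This should be a routine consequence of the appendix on Hopf's decomposition applied orbit-by-orbit, since each $T_x$ preserves Lebesgue measure on its orbit; but a careful check is needed to ensure that the global decomposition $X = D \sqcup C$ really does coincide with the union of the orbit-wise decompositions, so that $D = \supp T$ translates into dissipativity of $T_x$ for $\nu$-a.e. $x$.
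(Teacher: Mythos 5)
Your proposal is correct and follows essentially the same route as the paper: reduce to the orbit-wise transformations \( T_x \), use the mass transport computation from the proof of Proposition~\ref{prop: commensuration of L1} to see that \( T_x \) commensurates \( \mathbb{R}^{\ge 0} \) for almost every \( x \), and then invoke the preceding proposition. The fiberwise compatibility of Hopf's decomposition that you flag as the main obstacle is exactly the content of Proposition~\ref{prop:hopf-decomposition-full-group} in Appendix~\ref{sec:hopf-decomposition-appendix}, so no further checking is needed.
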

\begin{proof}
  Let \(T\in \lfgr{\mathbb{R} \acts X}\).  For all \(x\in X\), denote by \(T_x\) the
  measure-preserving transformation of \(\mathbb{R}\) induced by \(T\) on the \(\mathbb{R}\)-orbit
  of \(x\).  By the proof of Proposition~\ref{prop:commensuration-of-L1}, the integral
  \[
    \int_X \lambda(\R^{\geq 0}\bigtriangleup (T_x(\R^{\geq 0})))\, d\mu(x)
  \]
  is finite. In particular, for almost every \(x\in X\), the transformation \(T_x\) commensurates
  the set \(\R^{\geq 0}\). The conclusion now follows directly from the previous proposition.
\end{proof}

For any dissipatively supported transformation in an \( \LL^{1} \) full group of a free locally
compact Polish group action and for almost every \( x \in X \), \( \rho(x, T^{n}x) \to \infty \) as
\( n \to \infty \), in the sense that \( \rho(x, T^{n}x) \) eventually escapes any compact subset of
the acting group. In the context of flows, Corollary~\ref{cor:finite-arcs} strengthens this
statement and implies that \( \rho(x, T^{n}x) \) must converge to either \( +\infty \) or
\( -\infty \).

\begin{corollary}
  \label{cor:evasive-set-limit-infinity}
  If \( T \in \lfgr{\mathbb{R} \acts X} \) is dissipatively supported, then for almost every point
  \( x \in \supp T \), either \( \lim\limits_{n \to \infty} \rho(x, T^{n}x) = +\infty \) or
  \( \lim\limits_{n \to \infty}\rho(x, T^{n}x) = -\infty \). \qed
\end{corollary}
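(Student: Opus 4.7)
My plan is that this corollary follows essentially immediately by combining Corollary~\ref{cor:finite-arcs} with the standard characterization of dissipative measure-preserving transformations (Proposition~\ref{prop:dissipative-property}, already invoked in the proof of the preceding proposition). The idea is that Corollary~\ref{cor:finite-arcs} tells us that for a.e.\ \( x \in \supp T \) the sequence \( (\rho(x, T^{n}x))_{n} \) has eventually constant sign, while dissipativity forces the absolute values to escape every bounded interval; together these two pieces force the desired two-sided dichotomy.

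More concretely, I would proceed as follows. First, for each \( x \in X \) let \( T_{x} \) be the measure-preserving transformation that \( T \) induces on the \( \mathbb{R} \)-orbit of \( x \) (identified with \( \mathbb{R} \) via \( r \mapsto x+r \)), as used in Section~\ref{sec:index-map-l1-full-groups} and in the proof of Corollary~\ref{cor:finite-arcs}. Since \( T \) is dissipative on \( \supp T \), Hopf's decomposition (Appendix~\ref{sec:hopf-decomposition-appendix}) together with the fact that the decomposition is compatible with the orbital measures \( \lambda_x \) (see the discussion in Appendix~\ref{sec:orbit-transf}) implies that for a.e.\ \( x \in \supp T \) the transformation \( T_{x} \) is dissipative on a neighbourhood of \( 0 \) in \( \mathbb{R} \). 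By Proposition~\ref{prop:dissipative-property}, this yields \( |T_{x}^{n}(0)| \to +\infty \), i.e., \( |\rho(x, T^{n}x)| \to +\infty \) as \( n \to \infty \).

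Second, by Corollary~\ref{cor:finite-arcs} there is a co-null set \( X_{0} \subseteq \supp T \) such that for every \( x \in X_{0} \) the sequence \( (\rho(x, T^{n}x))_{n \in \mathbb{N}} \) has an almost constant sign, meaning all but finitely many of its terms are of the same sign. Intersecting with the co-null set from the previous step, we obtain a co-null \( X_{1} \subseteq \supp T \) on which both conclusions hold simultaneously. For \( x \in X_{1} \), the almost constant sign together with \( |\rho(x, T^{n}x)| \to +\infty \) forces either \( \rho(x, T^{n}x) \to +\infty \) or \( \rho(x, T^{n}x) \to -\infty \), which is exactly the desired conclusion.

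I do not anticipate any serious obstacle here: the content of the corollary is essentially a repackaging of Corollary~\ref{cor:finite-arcs} using that dissipativity propagates to the induced orbital transformations. The only point requiring a sentence of care is verifying that the dissipative part of \( T \) in \( X \) corresponds fibrewise to the dissipative part of each \( T_{x} \) in \( \mathbb{R} \); this follows from Hopf's decomposition being orbitally defined, or alternatively it can be read off directly from the mass-transport argument already used to derive Corollary~\ref{cor:finite-arcs}.
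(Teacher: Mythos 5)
Your proposal is correct and follows the paper's own route: the paper also obtains the corollary by combining the almost-constant-sign conclusion of Corollary~\ref{cor:finite-arcs} with the fact (via Proposition~\ref{prop:dissipative-property} applied to the orbital transformations \( T_x \), using the fibrewise compatibility of the Hopf decomposition from Proposition~\ref{prop:hopf-decomposition-full-group}) that dissipativity forces \( |\rho(x,T^{n}x)|\to\infty \) almost surely. The paper treats this as immediate and states it with a \qed, so your slightly more detailed write-up, including the care about transferring dissipativity to the induced maps on orbits, is a faithful expansion of the intended argument.
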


In view of this corollary, there is a canonical \( T \)-invariant decomposition of \( \supp T \)
into ``positive'' and ``negative'' orbits.

\begin{definition}
  \label{def:positive-negative-evasive}
  Let \( T \in \lfgr{\mathbb{R} \acts X} \) be a dissipatively supported automorphism.  Its support
  is partitioned into \( \vec{X} \sqcup \cev{X} \), where
  \begin{displaymath}
    \begin{aligned}
      \vec{X} &= \bigl\{ x \in \supp T : \lim\limits_{n\to\infty}\rho(x,T^{n}x) = +\infty\bigr\}, \\
      \cev{X} &= \bigl\{ x \in \supp T : \lim\limits_{n\to\infty}\rho(x,T^{n}x) = -\infty\bigr\}.\\
    \end{aligned}
  \end{displaymath}
  The set \( \vec{X} \) is said to be \textbf{positive evasive}\index{Set!evasive}, and
  \( \cev{X} \) is \textbf{negative evasive}.
\end{definition}

According to Corollary~\ref{cor:finite-arcs}, for almost every \( x \in \supp T \),
\emph{eventually} either all \( T^{n}x \) are to the right of \( x \) or all are to the left of
it. There are points \( x \) for which the adverb ``eventually'' can, in fact, be dropped.

\begin{corollary}
  \label{cor:monotone-induced-transformation}
  Let \( T \in \lfgr{\mathbb{R} \acts X} \) be a dissipatively supported transformation, and let
  \begin{displaymath}
    \begin{aligned}
      \vec{A} &= \{ x \in \vec{X} : \rho(x, T^{n}x) > 0 \textrm{ for all } n \ge 1 \},\\
      \cev{A} &= \{ x \in \cev{X} : \rho(x, T^{n}x) < 0 \textrm{ for all } n \ge 1 \}.\\
    \end{aligned}
  \end{displaymath}
  The set \( A = \vec{A} \sqcup \cev{A} \) is a complete section for \( T|_{\supp T} \).
\end{corollary}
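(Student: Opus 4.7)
The plan is to show that almost every $T$-orbit in $\supp T$ meets $A$ in at least one point. By Definition~\ref{def:positive-negative-evasive} one has $\supp T = \vec{X} \sqcup \cev{X}$ modulo a null set, and both $\vec{X}$ and $\cev{X}$ are $T$-invariant (a brief change-of-basepoint in the cocycle identity confirms this). I therefore reduce to showing that $\vec{A}$ is a complete section for $T|_{\vec{X}}$; the symmetric statement for $\cev{A}$ on $\cev{X}$ follows by applying the same argument to $T^{-1}$.

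Fix $x \in \vec{X}$ and set $p_n := \rho(x, T^n x)$ for $n \in \mathbb{Z}$, with $p_0 = 0$. Since fixed points of $T$ are excluded from $\supp T$ and the flow $\mathbb{R} \acts X$ is free, the orbit is free and the reals $p_n$ are pairwise distinct. The defining property of $\vec{X}$ is that $p_n \to +\infty$ as $n \to +\infty$, and this is the key ingredient. The set $\{p_n : n \ge 0\}$ is bounded below (it tends to $+\infty$ and contains $p_0 = 0$) and therefore attains its minimum, so I let $n_0 = n_0(x) \ge 0$ be the smallest integer realising this minimum, and put $y := T^{n_0} x$.

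The remaining verification is routine. For $n \ge 1$, the cocycle identity gives
\[
\rho(y, T^n y) = p_{n_0 + n} - p_{n_0},
\]
which is $\ge 0$ by minimality and strictly positive because the $p_k$ are pairwise distinct. Combined with $T$-invariance of $\vec{X}$, this places $y \in \vec{A}$. Borel measurability of $x \mapsto n_0(x)$ (it is defined by a countable Boolean combination of Borel conditions on the $p_k$) ensures that $\vec{A}$ is Borel. There is no substantial obstacle in this argument; the proof amounts to selecting, from the forward portion of the $T$-orbit, the position that sits furthest to the left on the real line, which exists precisely because of the escape to $+\infty$ guaranteed by positively evasive behaviour. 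The decomposition $A = \vec{A} \sqcup \cev{A}$ then gives a complete section for $T|_{\supp T}$.
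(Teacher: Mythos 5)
Your proof is correct and follows essentially the same route as the paper: for \( x \in \vec{X} \) one selects the forward iterate \( T^{n_{0}}x \) sitting leftmost on the line (the paper takes \( n_{0} = \max\{n : \rho(x,T^{n}x) \le 0\} \), you take the first index realizing the minimum of the \( p_{n} \), and your observation that the \( p_{n} \) are pairwise distinct correctly supplies the strict inequality). One small caveat: the negative case should be handled by the symmetric argument (argmax in place of argmin, i.e.\ reversing the orientation of the line), not literally by applying the positive case to \( T^{-1} \), since \( \cev{A} \) is defined through \emph{forward} iterates of \( T \) and the positive evasive set of \( T^{-1} \) need not coincide with \( \cev{X} \).
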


\begin{proof}
  We need to show that almost every orbit of \( T \) intersects \( A \). Let \( x \in \supp T \) and
  suppose for definiteness that \( x \in \vec{X} \). Since
  \( \lim_{n \to \infty}\rho(x, T^{n}x) = +\infty \), we can define
  \( n_{0} = \max\{n \in \mathbb{N} : \rho(x, T^{n}x) \le 0\} \), and then
  \( T^{n_{0}}x \in \vec{A} \).
\end{proof}

\begin{definition}
  \label{def:monotone-transformation}
  A dissipatively supported transformation \( T \in \lfgr{\mathbb{R} \acts X} \) is
  \textbf{monotone}\index{Transformation!monotone} if \( \rho(x, Tx) > 0 \) for almost all
  \( x \in \vec{X} \), and \( \rho(x, Tx)< 0 \) for almost all \( x \in \cev{X} \).
\end{definition}

\begin{corollary}
  \label{cor:dissipative-periodic-monotone}
  Let \( T \in \lfgr{\mathbb{R} \acts X} \) be a dissipatively supported transformation. There is a
  complete section \( A \subseteq \supp T \) and a periodic transformation
  \( P \in \lfgr{\mathbb{R} \acts X} \cap \fgr{T} \) such that \( T = P \circ T_{A} \) and
  \( T_{A} \) is monotone.
\end{corollary}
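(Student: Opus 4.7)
The plan is to take $A = \vec{A} \sqcup \cev{A}$ from Corollary~\ref{cor:monotone-induced-transformation}, define $T_A$ as the first-return transformation on $A$ extended by the identity off $A$, and set $P = T \circ T_A^{-1}$. By Proposition~\ref{prop:stability-under-first-return-map}, $T_A \in \lfgr{\mathbb{R} \acts X}$, hence $P \in \lfgr{\mathbb{R} \acts X}$ as well, and tautologically $T = P \circ T_A$. What remains is to check that $T_A$ is monotone and that $P$ is periodic and in $\fgr{T}$.

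First I would verify that $T_A$ is monotone. The key observation is that $\vec{X}$ is $T$-invariant: if $x \in \vec{X}$ then $\rho(Tx, T^{n}(Tx)) = \rho(x, T^{n+1}x) - \rho(x, Tx) \to +\infty$, and symmetrically for $\cev{X}$. Consequently, for $x \in \vec{A}$ every $T^n x$ lies in $\vec{X}$ and therefore cannot belong to $\cev{A}$, so the first return of $x$ to $A$ must land in $\vec{A}$. The shift $\rho(x, T_A x) = \rho(x, T^{n_1(x)} x)$ is positive directly from the definition of $\vec{A}$, so $T_A$ preserves $\vec{A}$ and moves points strictly to the right there; iterating, $\rho(x, T_A^k x) = \rho(x, T^{n_k(x)}x) \to +\infty$ because the return times $n_k$ tend to $+\infty$ and $x \in \vec{X}$. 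A symmetric argument applies on $\cev{A}$. Since $T$-orbits in $\supp T$ are aperiodic (by dissipativity), $\supp T_A = A$ up to a null set, and the above identifies the positive and negative evasive sets of $T_A$ with $\vec{A}$ and $\cev{A}$ respectively. Monotonicity of $T_A$ then reads off immediately.

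Next I would analyze $P = T \circ T_A^{-1}$ on each individual $T$-orbit. Enumerate the intersection of a $T$-orbit through $x$ with $A$ as $\{T^{n_i}x\}_{i\in\mathbb{Z}}$ with $n_i < n_{i+1}$. Unwinding the definitions: on a non-return point $T^j x$ with $n_i < j < n_{i+1}$ we have $T_A^{-1}(T^j x) = T^j x$, so $P(T^j x) = T^{j+1} x$; on the return point $T^{n_{i+1}} x$ we have $T_A^{-1}(T^{n_{i+1}} x) = T^{n_i}x$, so $P(T^{n_{i+1}} x) = T^{n_i+1} x$. Hence $P$ cyclically permutes each finite excursion $\{T^{n_i+1} x, \ldots, T^{n_{i+1}} x\}$, and is the identity off $\supp T$ (since $A \subseteq \supp T$ forces $T_A = \mathrm{id}$ there, whence $P = T = \mathrm{id}$). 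This yields both periodicity of $P$ and the fact that each $P$-orbit is contained in a $T$-orbit, i.e.\ $P \in \fgr{T}$.

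I do not anticipate a substantial obstacle; the argument is essentially bookkeeping about excursions between consecutive returns to $A$. The only mildly delicate point is ensuring that the first $T$-return from $\vec{A}$ lands in $\vec{A}$ rather than $\cev{A}$, but this is immediate from the $T$-invariance of $\vec{X}$ noted above.
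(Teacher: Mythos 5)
Your proposal is correct and follows exactly the route of the paper's (one-line) proof: take $A = \vec{A}\sqcup\cev{A}$ from Corollary~\ref{cor:monotone-induced-transformation} and set $P = T\circ T_A^{-1}$. The details you supply — the $T$-invariance of $\vec{X}$, the identification of the evasive sets of $T_A$ with $\vec{A}$ and $\cev{A}$, and the cyclic-excursion description of $P$ (whose bi-infinite return enumeration is justified by Poincaré recurrence applied to $T$ and $T^{-1}$) — are all sound.
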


\begin{proof}
  Take \( A \) to be as in Corollary~\ref{cor:monotone-induced-transformation} and note that
  \( P = T \circ T_{A}^{-1} \) is periodic and satisfies the conclusions of the corollary.
\end{proof}

As we discussed at the beginning of the chapter, our goal is to show that the kernel of the index
map coincides with the derived subgroup of \( \lfgr{\mathbb{R} \acts X} \). Note that if
\( T = P \circ T_{A} \) is as above, then \( \ind(T) = \ind(T_{A}) \), and coupled with the results
of Chapter~\ref{chap:intermitted-transformations}, it will suffice to show that all monotone
transformations of index zero belong to \( \derived(\lfgr{\mathbb{R} \acts X}) \). This will be the
focus of the rest of this chapter and will take some effort to achieve, but the main strategy is to
show that such transformations can be approximated by periodic transformations, which is the content of
Theorem~\ref{thm:periodic-construction} below.

\section{Arrival and departure sets}
\label{sec:arriv-depart-sets}

Throughout the rest of this chapter, we fix a cross-section \( \mathcal{C} \subset X \) and a
monotone transformation \( T \in \lfgr{\mathbb{R} \acts X} \).

Let us recall a few definitions and facts from Section~\ref{sec:prelim-flows}. The lacunarity of
\(\mathcal{C}\) provides the gap function \(\gap_{\mathcal{C}}:\mathcal{C}\to (0,+\infty)\), and the
induced map \(\sigma_{\mathcal{C}}:\mathcal{C}\to\mathcal{C}\) taking \(c\) to
\(c+\gap_{\mathcal{C}}(c)\), whose orbits are the intersections of the flow's orbits with
\(\mathcal{C}\).  We let \( \lambda_{c}^{\mathcal{C}} \) be the Lebesgue measure on
\( c + [0, \gap_{\mathcal{C}}(c)) \) given by
\[ \lambda_{c}^{\mathcal{C}}(A) = \lambda(\{t \in [0,\gap_{\mathcal{C}}(c)) : c+t \in A\}).\]
The measure \( \mu \) can be disintegrated as
\( \mu(A) = \int_{\mathcal{C}} \lambda_{c}^{\mathcal{C}}(A)\, d\nu(c) \) for some finite (but not
necessarily probability) measure \( \nu \) on \( \mathcal{C} \).  Finally, we use the convenient
notation
\( A(c)=A\cap \bigl( c+[0,\gap_{\mathcal{C}}(c)\bigr)=A\cap [c]_{\mathcal{R}_{\mathcal{C}}} \).
Note that \(\lambda_{c}(A(c)) = \lambda_{c}^{C}(A)\) for all \(c \in \mathcal{C}\), where
\(\lambda_c\) denotes the Lebesgue measure on the whole orbit of \(c\).

We now introduce some essential additional terminology concerning our fixed monotone transformation
\( T \in \lfgr{\mathbb{R} \acts X} \).  The \textbf{arrival set}\index{Set!arrival}
\( A_{\mathcal{C}} \) is the set of the first visitors to \( \eqr_{\mathcal{C}} \) classes:
\( A_{\mathcal{C}} = \{ x \in \supp T : \neg x \eqr_{\mathcal{C}} T^{-1}x\} \). Analogously, the
\textbf{departure set}\index{Set!departure} \( D_{\mathcal{C}} \) is defined to be
\( D_{\mathcal{C}} = \{ x \in \supp T : \neg x \eqr_{\mathcal{C}} Tx\} \). We also let
\( \vec{A}_{\mathcal{C}} \) denote \( A_{\mathcal{C}} \cap \vec{X} \) and
\( \cev{A}_{\mathcal{C}} = A_{\mathcal{C}} \cap \cev{X} \); likewise for \( \vec{D}_{\mathcal{C}} \)
and \( \cev{D}_{\mathcal{C}} \). Note that \( T(D_{\mathcal{C}}) = A_{\mathcal{C}} \), and thus
\( T^{-1}(A_{\mathcal{C}}) = D_{\mathcal{C}} \). There is, however, another useful map from
\( A_{\mathcal{C}} \) onto \( D_{\mathcal{C}} \).

\begin{figure}[htb]
  \centering
  \begin{tikzpicture}
    \draw (0,0) -- (10,0);
    \filldraw (1.5,0) circle (1.5pt);
    \filldraw (8.5,0) circle (1.5pt);
    \draw (1.5, 0.3) node {\( c \)};
    \draw (8.5, 0.3) node {\( c' \)};
    \foreach \x in {0.7, 2.5, 3.9, 4.7, 6.1, 7.7, 9.5} {
      \filldraw (\x,0) circle (1pt);
    }
    \draw [->,domain=90:15,yshift=1mm] plot ({0.1 + 0.6*cos(\x)}, {0.6*sin(\x) - 0});
    \draw [->,domain=165:15] plot ({1.6 + 0.9*cos(\x)}, {0.9*sin(\x) - 0});
    \draw [->,domain=165:15,yshift=0.5mm] plot ({3.2 + 0.7*cos(\x)}, {0.7*sin(\x) - 0});
    \draw [->,domain=165:15,yshift=1.2mm] plot ({4.3 + 0.4*cos(\x)}, {0.4*sin(\x) - 0});
    \draw [->,domain=165:15,yshift=0.5mm] plot ({5.4 + 0.7*cos(\x)}, {0.7*sin(\x) - 0});
    \draw [->,domain=165:15,yshift=0.3mm] plot ({6.9 + 0.8*cos(\x)}, {0.8*sin(\x) - 0});
    \draw [->,domain=165:15] plot ({8.6 + 0.9*cos(\x)}, {0.9*sin(\x) - 0});
    \draw [->,domain=165:100,yshift=1.2mm] plot ({10.0 + 0.5*cos(\x)}, {0.5*sin(\x) - 0});
    \draw (2.5, -0.3) node {\( x \in \vec{A}_{\mathcal{C}} \)};
    \draw (7.7, -0.3) node {\( T^{4}x \in \vec{D}_{\mathcal{C}} \)};
    \draw [->,domain=-165:-15, yshift=-4mm] plot ({5.1 + 2.6*cos(\x)}, {0.6*sin(\x)});
    \draw (5.1, -0.7) node {\( t_{\mathcal{C}}(x) = 4 \)};
  \end{tikzpicture}
  \caption{Arrival and departure sets.}
  \label{fig:arrival-departure}
\end{figure}

We define the \textbf{transfer value} \( t_{\mathcal{C}} : A_{\mathcal{C}} \to \mathbb{N} \) by the
condition \[ t_{\mathcal{C}}(x) = \min\{ n \ge 0: T^{n}x \in D_{\mathcal{C}} \} \]
and the \textbf{transfer function} \( \tau_{\mathcal{C}} : A_{\mathcal{C}} \to D_{\mathcal{C}} \) is
defined to be \( \tau_{\mathcal{C}}(x) = T^{t_{\mathcal{C}}(x)}x \). Note that
\( \tau_{\mathcal{C}} \) is measure-preserving. The transfer value introduces a partition of the
arrival set \( A_{\mathcal{C}} = \bigsqcup_{n \in \mathbb{N}} A_{\mathcal{C}}^{n} \), where
\( A_{\mathcal{C}}^{n} = t_{\mathcal{C}}^{-1}(n) \).  By applying the transfer function, we also
obtain a partition for the departure set:
\( D_{\mathcal{C}} = \bigsqcup_{n \in \mathbb{N}}D_{\mathcal{C}}^{n} \), where
\( D_{\mathcal{C}}^{n} = \tau_{\mathcal{C}}(A_{\mathcal{C}}^{n}) \).

In plain words, \( t_{\mathcal{C}}(x) + 1 \) is the number of points in
\( [x]_{\ceqr_{T}} \cap [x]_{\eqr_{\mathcal{C}}} \). Therefore if
\( \lambda_{c}^{\mathcal{C}}(A_{\mathcal{C}}^{n}) \ge \lambda_{c}^{\mathcal{C}}(A_{\mathcal{C}}^{m})
\) for some \( n \ge m \) then also
\( \lambda_{c}^{\mathcal{C}}([A_{\mathcal{C}}^{n}]_{\ceqr_{T}}) \ge
\lambda_{c}^{\mathcal{C}}([A_{\mathcal{C}}^{m}]_{\ceqr_{T}}) \) since
\[
  \lambda_{c}^{\mathcal{C}}([A_{\mathcal{C}}^{n}]_{\ceqr_{T}}) = (n+1)\lambda_c^{\mathcal
    C}(A^n_{\mathcal C}) \ge (m+1)\lambda_c^{\mathcal C}(A^m_{\mathcal{C}}) =
  \lambda_{c}^{\mathcal{C}}([A_{\mathcal{C}}^{m}]_{\ceqr_{T}}).
\]
In Sections~\ref{sec:coher-modif} and~\ref{sec:peri-appr}, we modify the transformation \( T \) on
the arrival and departure sets, and we want to do this in a way that affects as many orbits as
possible, as measured by \( \lambda_{c}^{\mathcal{C}} \). This amounts to using sets
\( A_{\mathcal{C}}^{n} \) (and \( D_{\mathcal{C}}^{n} \)) with as high values of \( n \) as
possible. The next lemma will be helpful in conducting such a selection in a measurable way across
all of \( c \in \mathcal{C} \).

\begin{lemma}
  \label{lem:copious-forward-arrival-sets}
  Let \( A \subseteq X \) be a measurable set with a measurable partition
  \( A = \bigsqcup_{n} A_{n} \) and let \( \xi : \mathcal{C} \to \mathbb{R}^{\ge 0}\) be a
  measurable function such that \( \xi(c) \le \lambda_{c}^{\mathcal{C}}(A) \) for all
  \( c \in \mathcal{C} \).  There are measurable \( \nu : \mathcal{C} \to \mathbb{N} \) and
  \( r : \mathcal{C} \to \mathbb{R}^{\ge 0} \) such that for any \( c \in \mathcal{C} \) for which
  \( \xi(c) > 0 \), one has
  \begin{equation}
    \label{eq:nu-r-condition}
    \lambda_{c}^{\mathcal{C}} \bigl(\bigl(\mkern-6mu \bigsqcup_{n > \nu(c)}\mkern-6muA_{n}\bigr)
    \cup \bigl( A_{\nu(c)} \cap (c + [0, r(c)])\bigr)\bigr) = \xi(c).
  \end{equation}
\end{lemma}

\begin{proof}
  For \( c \in \mathcal{C}\) such that \( \xi(c) > 0 \), set
  \[ \nu(c) = \min\bigl\{n \in \mathbb{N} : \lambda_{c}^{\mathcal{C}}\bigl(\bigsqcup_{k >
      n}A_{k}\bigr) < \xi(c) \bigr\}. \]
  Note that one necessarily has
  \( \lambda_{c}^{\mathcal{C}}(A_{\nu(c)}) \ge \xi(c) - \lambda_{c}^{\mathcal{C}}\bigl(\bigsqcup_{n
    > \nu(c)}A_{n}\bigr)>0 \).  Set
  \[ r(c) = \min\bigl\{ a \ge 0 : \lambda_{c}^{\mathcal{C}}\bigl(A_{\nu(c)} \cap (c+[0,a])\bigr) =
    \xi(c) - \lambda_{c}^{\mathcal{C}}\bigl(\mkern-6mu\bigsqcup_{n > \nu(c)}\mkern-8muA_{n}\bigr)
    \bigr\}. \] These functions \( \nu \) and \( r \) satisfy the conclusions of the lemma.
\end{proof}

\begin{remark}
  Note that Eq.~\eqref{eq:nu-r-condition} does not specify the functions \(\nu\) and \(r\) uniquely.
  For instance, although \(\lambda_{c}^{\mathcal{C}}(A_{\nu(c)})>0\), there might be \(\delta > 0\)
  such that, for some \(c\in \mathcal{C}\),
  \[ \lambda_{c}^{\mathcal{C}}\bigl(A_{\nu(c)} \cap (c+[r(c),r(c)+\delta])\bigr)=0.\]
  In this case, replacing \(r(c)\) with \(r(c)+\delta\) does not change the validity of
  Eq.~\eqref{eq:nu-r-condition}.
\end{remark}

\begin{definition}
  \label{def:copious-arrival-set}
  Consider the partition of the positive arrival set
  \( \vec{A}_{\mathcal{C}} = \bigsqcup_{n}\vec{A}_{\mathcal{C}}^{n} \) and let
  \( \xi : \mathcal{C} \to \mathbb{R}^{\ge 0} \), \( r : \mathcal{C} \to \mathbb{R}^{\ge 0} \), and
  \( \nu : \mathcal{C} \to \mathbb{N} \) be as in Lemma~\ref{lem:copious-forward-arrival-sets}. The
  set \( \vec{A}^{\copious}_{\mathcal{C}} \) defined by the condition
  \[ \vec{A}^{\copious}_{\mathcal{C}}(c) = \mkern-6mu\bigsqcup_{n >
      \vec{\nu}(c)}\mkern-6mu\vec{A}_{\mathcal{C}}^{n}(c) \cup \bigl( A_{\mathcal{C}}^{\vec{\nu}(c)}
    \cap (c + [0, \vec{r}(c)])\bigr) \quad \textrm{for all } c \in \mathcal{C} \]
  is said to be the \textbf{positive \( \xi \)-copious arrival set}\index{Set!copious}.  The
  \textbf{positive \( \xi \)-copious departure set} is given by
  \( \vec{D}_{\mathcal{C}}^{\copious} = \tau_{\mathcal{C}}(\vec{A}_{\mathcal{C}}^{\copious}) \). The
  definitions of the \textbf{negative \( \xi \)-copious arrival} and \textbf{departure sets} use the
  partition \( \cev{A}_{\mathcal{C}} = \bigsqcup_{n}\cev{A}_{\mathcal{C}}^{n} \) of the negative
  arrival set and are analogous.
\end{definition}

Copious sets maximize the measure \( \lambda_{c}^{\mathcal{C}} \) of their saturation under the
action of \( T \). In other words, among all subsets \( A' \subseteq \vec{A}_{\mathcal{C}} \) for
which \( \lambda_{c}^{\mathcal{C}}(A') = \xi(c) \), the measure
\( \lambda_{c}^{\mathcal{C}}([A']_{\ceqr_{T}}) \) is maximal when
\( A'(c) = \vec{A}_{\mathcal{C}}^{\copious}(c) \). In particular, if
\( \lambda_{c}^{\mathcal{C}}(\vec{A}_{\mathcal{C}}^{\copious}) \) is close to
\( \lambda_{c}^{\mathcal{C}}(\vec{A}_{\mathcal{C}}) \), then we expect
\( \lambda_{c}^{\mathcal{C}}([\vec{A}_{\mathcal{C}}^{\copious}]_{\ceqr_{T}}) \) to be close to
\( \lambda_{c}^{\mathcal{C}}([\vec{A}_{\mathcal{C}}]_{\ceqr_{T}}) \). The following lemma quantifies
this intuition.

\begin{lemma}
  \label{lem:saturation-of-copious-sets}
  Let \( \xi : \mathcal{C} \to \mathbb{R}^{\ge 0} \) be such that
  \( \xi(c) \le \lambda_{c}^{\mathcal{C}}(\vec{A}_{\mathcal{C}}) \) for all \( c \in \mathcal{C} \),
  and let \( \vec{A}_{\mathcal{C}}^{\copious} \) be the \( \xi \)-copious arrival set constructed in
  Lemma~\ref{lem:copious-forward-arrival-sets}. If there exists \( 1/2 > \delta > 0 \) such that
  \( \xi(c) \ge (1 - \delta) \lambda_{c}^{\mathcal{C}}(\vec{A}_{\mathcal{C}}) \) for all
  \( c \in \mathcal{C} \), then
  \[ \lambda_{c}^{\mathcal{C}}([\vec{A}_{\mathcal{C}}(c) \setminus
    \vec{A}^{\copious}_{\mathcal{C}}(c)]_{\ceqr_{T}}) \le
    \frac{\delta}{1-\delta}\lambda_{c}^{\mathcal{C}}(\vec{X}) \quad \textrm{for all } c \in
    \mathcal{C}, \]
  and therefore also
  \( \mu([\vec{A}_{\mathcal{C}} \setminus \vec{A}^{\copious}_{\mathcal{C}}]_{\ceqr_{T}}) \le
  \frac{\delta}{1-\delta}\mu(\vec{X}) \).

  An analogous statement is valid for the negative arrival set \( \cev{A}_{\mathcal{C}} \).
\end{lemma}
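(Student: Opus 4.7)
The plan is to work pointwise over $c\in\mathcal{C}$ and exploit the fact that on $[c]_{E_{\mathcal{C}}}$ the copious set collects the ``tallest'' $E_T$-towers first. The key observation is that by monotonicity of $T$ and the definition of $\vec{A}_{\mathcal{C}}^{n}$, each $T$-orbit starting at a point $x \in \vec{A}^{n}_{\mathcal{C}}(c)$ visits $[c]_{E_{\mathcal{C}}}$ exactly at the $n+1$ points $x,Tx,\dots,T^{n}x$; no earlier iterate lies in $[c]_{E_{\mathcal{C}}}$ because $x$ is a first visitor, and no later iterate returns because monotonicity on $\vec{X}$ pushes the orbit strictly forward out of $[c]_{E_{\mathcal{C}}}$. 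Since the induced action of $T$ on orbits preserves Lebesgue measure, this yields the tower decomposition
\[
\vec{X}(c) = \bigsqcup_{n \ge 0}\bigsqcup_{k=0}^{n} T^{k}\bigl(\vec{A}^{n}_{\mathcal{C}}(c)\bigr)
\quad\text{and hence}\quad
\lambda_{c}^{\mathcal{C}}\bigl(\vec{X}\bigr) = \sum_{n\ge 0}(n+1)\,\lambda_{c}^{\mathcal{C}}\bigl(\vec{A}^{n}_{\mathcal{C}}\bigr).
\]
More generally, for any measurable $B \subseteq \vec{A}^{n}_{\mathcal{C}}(c)$ one has $\lambda_{c}^{\mathcal{C}}\bigl([B]_{E_{T}}\bigr) = (n+1)\lambda_{c}^{\mathcal{C}}(B)$.

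Next I would set $N=\nu(c)$ and apply the description of the copious set from Lemma~\ref{lem:copious-forward-arrival-sets}: by construction $\vec{A}^{\bullet}_{\mathcal{C}}(c)$ is contained in $\bigsqcup_{n\ge N}\vec{A}^{n}_{\mathcal{C}}(c)$ and has $\lambda_{c}^{\mathcal{C}}$-measure exactly $\xi(c)$, whereas its complement in $\vec{A}_{\mathcal{C}}(c)$, call it $M(c)$, is contained in $\bigsqcup_{n\le N}\vec{A}^{n}_{\mathcal{C}}(c)$. Summing the tower identity over the relevant ranges of $n$ gives the twin estimates
\[
\lambda_{c}^{\mathcal{C}}\bigl([\vec{A}^{\bullet}_{\mathcal{C}}(c)]_{E_{T}}\bigr) \ge (N+1)\,\xi(c),
\qquad
\lambda_{c}^{\mathcal{C}}\bigl([M(c)]_{E_{T}}\bigr) \le (N+1)\,\lambda_{c}^{\mathcal{C}}\bigl(M(c)\bigr).
\]
The first inequality combined with $[\vec{A}^{\bullet}_{\mathcal{C}}(c)]_{E_{T}} \subseteq \vec{X}(c)$ yields $(N+1)\xi(c)\le \lambda_{c}^{\mathcal{C}}(\vec{X})$, while the hypothesis gives $\lambda_{c}^{\mathcal{C}}(M(c)) \le \delta\,\lambda_{c}^{\mathcal{C}}(\vec{A}_{\mathcal{C}})$ and $\xi(c)\ge(1-\delta)\lambda_{c}^{\mathcal{C}}(\vec{A}_{\mathcal{C}})$.

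Dividing the two bounds and canceling the common factor $(N+1)\lambda_{c}^{\mathcal{C}}(\vec{A}_{\mathcal{C}})$ then gives the pointwise estimate
\[
\lambda_{c}^{\mathcal{C}}\bigl([M(c)]_{E_{T}}\bigr) \le \frac{\delta}{1-\delta}\,\lambda_{c}^{\mathcal{C}}(\vec{X}),
\]
and integrating against the measure $\nu$ on $\mathcal{C}$ from Lemma~\ref{lem:exhaustive-bounded-smooth-atomless}/Appendix~\ref{sec:disintegration-measure} (using that both $\vec{X}$ and $[M]_{E_{T}}$ are $E_{\mathcal{C}}$-invariant in the $c$-fiber) produces the announced global inequality. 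The negative case is identical after reversing the time-direction. I expect the only subtlety to be verifying that the first-visitor property together with monotonicity really does force the $T$-orbit segments lying inside a single $E_{\mathcal{C}}$-class to be precisely $\{x,Tx,\dots,T^{n}x\}$ with no repeat excursions; this is where Corollary~\ref{cor:finite-arcs} and the definition of monotone are doing the real work, and once that structural fact is in hand, the rest is just bookkeeping with the tower heights.
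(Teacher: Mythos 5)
Your proof is correct and follows essentially the same route as the paper's: the tower decomposition of \( \vec{X}(c) \) over the sets \( \vec{A}^{n}_{\mathcal{C}}(c) \) yields exactly the two inequalities \( (\nu(c)+1)\xi(c) \le \lambda_{c}^{\mathcal{C}}(\vec{X}) \) and \( \lambda_{c}^{\mathcal{C}}([M(c)]_{E_{T}}) \le (\nu(c)+1)\lambda_{c}^{\mathcal{C}}(M(c)) \) that the paper combines with \( \lambda_{c}^{\mathcal{C}}(M(c)) \le \frac{\delta}{1-\delta}\xi(c) \), and your final cancellation is the same arithmetic in a slightly different order. The structural fact you flag at the end (that monotonicity forces each orbit to meet \( [c]_{\eqr_{\mathcal{C}}} \) in a single consecutive block starting at a first visitor) is indeed the point the paper relies on, so there is no gap.
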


\begin{proof}
  Let \( \nu \) be as in Lemma~\ref{lem:copious-forward-arrival-sets} and note that
  \[ \bigsqcup_{k > \nu(c)}\vec{A}_{\mathcal{C}}^{k}(c) \subseteq
    \vec{A}_{\mathcal{C}}^{\copious}(c) \subseteq \bigsqcup_{k \ge
      \nu(c)}\vec{A}_{\mathcal{C}}^{k}(c) \]
  whenever \( c \in \mathcal{C} \) satisfies \( \xi(c) > 0\). Recall that for
  \( x \in \vec{A}_{\mathcal{C}}^{n} \) we have \( x \eqr_{\mathcal{C}} T^{k} \) for all
  \( 0 \le k \le n \) and the sets \( T^{k}(\vec{A}_{\mathcal{C}}^{n}) \) are pairwise disjoint. In
  particular,
  \begin{equation}
    \label{eq:xi-nu-bound}
    \begin{split}
      \lambda_{c}^{\mathcal{C}}(\vec{X})
      &\ge \lambda_{c}^{\mathcal{C}}
        \bigl(\bigl[\mkern-6mu\bigsqcup_{k \ge \nu(c)} \mkern-6mu
        \vec{A}_{\mathcal{C}}^{k}(c)\bigr]_{\ceqr_{T}}\bigr)
        \ge (\nu(c) + 1) \lambda_{c}^{\mathcal{C}}\bigl(\mkern-6mu\bigsqcup_{k \ge \nu(c)}\mkern-6mu
        \vec{A}_{\mathcal{C}}^{k}(c)\bigr) \\
      &\ge (\nu(c) +1) \lambda_{c}^{\mathcal{C}}(\vec{A}_{\mathcal{C}}^{\copious}) = (\nu(c) + 1)\xi(c).
    \end{split}
  \end{equation}
  Note also that \( \xi(c) \ge (1 - \delta)\lambda_{c}^{\mathcal{C}}(\vec{A}_{\mathcal{C}}) \)
  implies
  \begin{equation}
    \label{eq:complement-copiuous-estimate}
    \lambda_{c}^{\mathcal{C}}(\vec{A}_{\mathcal{C}} \setminus \vec{A}^{\copious}_{\mathcal{C}}) \le \xi(c) \delta/(1-\delta).
  \end{equation}
  For any \( c \in \mathcal{C} \), we have
  \begin{displaymath}
    \begin{aligned}
      \lambda_{c}^{\mathcal{C}}([\vec{A}_{\mathcal{C}}(c) \setminus \vec{A}^{\copious}_{\mathcal{C}}(c)]_{\ceqr_{T}})
      &\le \lambda_{c}^{\mathcal{C}}(\{T^{k}x : x\in \vec{A}_{\mathcal{C}}(c)\setminus
        \vec{A}_{\mathcal{C}}^{\copious}(c), 0 \le k \le \vec{\nu}(c) \}) \\
      & \le ( \vec{\nu}(c) + 1)\lambda_{c}^{\mathcal{C}}(\vec{A}_{\mathcal{C}}\setminus
        \vec{A}_{\mathcal{C}}^{\copious})\\
      \because~\eqref{eq:complement-copiuous-estimate}&\le (\vec{\nu}(c)+1) \vec{\xi}(c) \delta/(1-\delta) \\
      \because~\eqref{eq:xi-nu-bound} &\le \lambda_{c}^{\mathcal{C}}(\vec{X})\delta/(1-\delta).
    \end{aligned}
  \end{displaymath}
  The inequality for the measure \( \mu \) follows by disintegrating \( \mu \) into
  \( \int_{\mathcal{C}} \lambda_{c}^{\mathcal{C}}(\, \cdot \,) \, d\nu(c) \), as discussed at the
  outset of this section.

  The argument for the negative arrival set is completely analogous.
\end{proof}

\section{Coherent modifications}
\label{sec:coher-modif}

We remind the reader that our goal is to show that any dissipatively supported transformation
\( T \in \lfgr{\mathbb{R} \acts X} \) of index \( \ind(T) = 0 \) can be approximated by periodic
transformations. One approach to ``loop'' the orbits of \( T \) is by mapping
\( \vec{D}_{\mathcal{C}}(c) \) to \( \cev{A}_{\mathcal{C}}(c) \) and \( \cev{D}_{\mathcal{C}}(c) \)
to \( \vec{A}_{\mathcal{C}}(c) \) (cf.~Figure~\ref{fig:psi-and-psip}). For such a modification to
work, the measures \( \lambda_{c}^{\mathcal{C}}(\vec{D}_{\mathcal{C}}(c)) \) and
\( \lambda_{c}^{\mathcal{C}}(\cev{A}_{\mathcal{C}}(c)) \) have to be equal. Recall that
\( \ind(T) = 0 \) implies that for almost every \( c \in \mathcal{C} \), the measure of points
\( x \) such that \( x \le c < Tx \) equals the measure of those \( y \) for which
\( Ty < c \le y \). If one could guarantee that
\( T(\vec{D}_{\mathcal{C}}(c)) = \vec{A}_{\mathcal{C}}(\sigma_{\mathcal{C}}(c)) \), then the
aforementioned modification would indeed work. In the case of \( \mathbb{Z} \)-actions, the
discreteness of the acting group allows one to find a cross-section \( \mathcal{C} \) for which this
condition does hold. For flows, however, we must deal with the possibility that
\( T(\vec{D}_{\mathcal{C}}(c)) \) can be ``scattered'' (see
Figure~\ref{fig:general-arrival-departure}) along the orbit and be unbounded, which is the key
reason for the increased complexity compared to the argument for \( \mathbb{Z} \)-actions.

Since we cannot hope to ``loop'' all the orbits of \( T \), we will do the next best thing, and apply
the modification of Figure~\ref{fig:psi-and-psip} on ``most'' orbits as measured by
\( \lambda_{c}^{\mathcal{C}} \). Copious sets discussed in Section~\ref{sec:arriv-depart-sets} have
large saturations under \( T \), but, generally speaking, fail to satisfy
\( T(\vec{D}_{\mathcal{C}}^{\copious}(c)) =
\vec{A}_{\mathcal{C}}^{\copious}(\sigma_{\mathcal{C}}(c)) \) for the same reason as do the sets
\( \vec{D}_{\mathcal{C}}(c) \). Our strategy is to leverage the ``\(\epsilon\) of room'' provided by
the difference \( \vec{D}_{\mathcal{C}}(c) \setminus \vec{D}_{\mathcal{C}}^{\copious}(c) \) to
transform \( T \) into \( T' \) that will retain the same arrival and departure sets as \( T \),
while additionally satisfying the condition
\( T'(\vec{D}_{\mathcal{C}}^{\copious}(c)) =
\vec{A}_{\mathcal{C}}^{\copious}(\sigma_{\mathcal{C}}(c)) \). In this section, we describe two
abstract modifications of dissipatively supported transformations, and the approximation strategy
outlined above will later be implemented in Section~\ref{sec:peri-appr}.

Since we are about to consider arrival and departure sets of different transformations, we use the
notation \( \vec{A}_{\mathcal{C}}[U] \) to denote the positive arrival set constructed for a
transformation \( U \); likewise for negative arrival and departure sets, etc.

\begin{lemma}
  \label{lem:departure-set-transformation}
  Let \( \phi \) and \( \phi' \) be measure-preserving transformations on \( X \) subject to the
  following conditions:
  \begin{enumerate}
  \item \( \supp(\phi) \subseteq D_{\mathcal{C}} \), \( \supp(\phi') \subseteq A_{\mathcal{C}} \);
  \item \( \phi(\vec{D}_{\mathcal{C}}) = \vec{D}_{\mathcal{C}}\),
    \( \phi(\cev{D}_{\mathcal{C}}) = \cev{D}_{\mathcal{C}} \), and
    \( \phi'(\vec{A}_{\mathcal{C}}) = \vec{A}_{\mathcal{C}}\),
    \( \phi'(\cev{A}_{\mathcal{C}}) = \cev{A}_{\mathcal{C}} \);
  \item \( x \,\eqr_{\mathcal{C}}\, \phi(x) \) and \( x \,\eqr_{\mathcal{C}}\, \phi'(x) \) for all
    \( x \in \supp T \).
  \end{enumerate}
  The transformation \( Ux = \phi' T \phi(x) \) is monotone, \( Ux = Tx \) for all
  \( x \not \in D_{\mathcal{C}} \), and the sets \( D_{\mathcal{C}} \), \( A_{\mathcal{C}} \) remain
  the same:
  \begin{align*}
    \vec{X}[U] &= \vec{X}  &  \cev{X}[U] &= \cev{X}, \\
    \vec{D}_{\mathcal{C}}[U] &= \vec{D}_{\mathcal{C}} & \cev{D}_{\mathcal{C}}[U]
                                         &=
                                           \cev{D}_{\mathcal{C}},  \\
    \vec{A}_{\mathcal{C}}[U] &= \vec{A}_{\mathcal{C}} & \cev{A}_{\mathcal{C}}[U]
                                         &=
                                           \cev{A}_{\mathcal{C}}. \\
  \end{align*}
  Moreover, the integral of lengths of ``departing arcs'' remains unchanged:
  \begin{displaymath}
    \int_{D_{\mathcal{C}}}\mkern-10mu|\rho_{U}|\, d\mu =
    \int_{D_{\mathcal{C}}}\mkern-10mu|\rho_{T}|\, d\mu, \\
  \end{displaymath}
  and the following estimate on \( \int_{X}D(Tx, Ux)\, d\mu(x) \) is available:
  \begin{displaymath}
    \int_{X} D(Tx, Ux)\, d\mu(x) \le 2 \int_{D_{\mathcal{C}}}\mkern-10mu |\rho_{T}(x)| \, d\mu(x).
  \end{displaymath}
\end{lemma}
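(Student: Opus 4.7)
The plan is to verify the claims essentially in the order listed, leveraging the three hypotheses on $\phi$ and $\phi'$ to track the trajectory $x \mapsto \phi(x) \mapsto T\phi(x) \mapsto \phi'(T\phi(x))$ stage by stage.

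First I would dispose of the pointwise statement $Ux = Tx$ off $D_{\mathcal{C}}$. Since $\supp \phi \subseteq D_{\mathcal{C}}$, any $x \notin D_{\mathcal{C}}$ satisfies $\phi(x) = x$, so $Ux = \phi'(Tx)$. But $x \notin D_{\mathcal{C}}$ means $x \mathrel{E_{\mathcal{C}}} Tx$, which rewritten for $Tx$ says exactly $Tx \notin A_{\mathcal{C}}$; since $\supp \phi' \subseteq A_{\mathcal{C}}$ this gives $\phi'(Tx) = Tx$. The monotonicity of $U$ and the preservation of $\vec X, \cev X, \vec D_{\mathcal{C}}, \cev D_{\mathcal{C}}, \vec A_{\mathcal{C}}, \cev A_{\mathcal{C}}$ will then follow once I check that for $x \in \vec D_{\mathcal{C}}$ we have $Ux > x$ and $Ux \not\mathrel{E_{\mathcal{C}}} x$: by hypothesis~(ii) $\phi(x) \in \vec D_{\mathcal{C}}$, and by~(iii) $\phi(x) \mathrel{E_{\mathcal{C}}} x$; then because $\phi(x) \in D_{\mathcal{C}} \cap \vec X$ and $T$ is monotone, $T\phi(x)$ lies strictly to the right of $\phi(x)$'s $E_{\mathcal{C}}$-class, hence strictly to the right of $x$'s class; finally $\phi'(T\phi(x)) \mathrel{E_{\mathcal{C}}} T\phi(x)$ keeps $Ux$ in that same right-hand class. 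The symmetric statement on $\cev D_{\mathcal{C}}$ is proved identically. Combined with the off-$D_{\mathcal{C}}$ identity, this yields $\supp U = \supp T$, monotonicity of $U$, and the equalities $\vec X[U] = \vec X$, $\vec D_{\mathcal{C}}[U] = \vec D_{\mathcal{C}}$, $\vec A_{\mathcal{C}}[U] = \vec A_{\mathcal{C}}$ (and their $\cev{\phantom{X}}$-analogues) via the characterization $x \in \vec D_{\mathcal{C}}[U] \iff x \in \vec X[U] \text{ and } \neg x\mathrel{E_{\mathcal{C}}} Ux$.

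The integral identity is the most substantive part and rests on the following observation: since $\phi$ restricts to a measure-preserving bijection of $\vec D_{\mathcal{C}}(c)$ for $\nu$-almost every $c \in \mathcal{C}$ and $\rho(x,\phi(x))$ is just the signed displacement within that $E_{\mathcal{C}}$-interval of finite $\lambda_c^{\mathcal{C}}$-measure,
\[
\int_{\vec D_{\mathcal{C}}(c)} \rho(x,\phi(x))\, d\lambda_c^{\mathcal{C}}(x) = 0,
\]
and integrating against $\nu$ gives $\int_{\vec D_{\mathcal{C}}} \rho(x,\phi(x))\, d\mu = 0$; the same holds for $\phi'$ on $\vec A_{\mathcal{C}}$ and for the $\cev{\phantom{X}}$-versions. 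Splitting the cocycle of $U$ as
\[
\rho_U(x) = \rho(x,\phi(x)) + \rho(\phi(x), T\phi(x)) + \rho(T\phi(x), Ux)
\]
and integrating over $\vec D_{\mathcal{C}}$, the first term vanishes, the second equals $\int_{\vec D_{\mathcal{C}}} \rho_T(y)\, d\mu$ by the $\mu$-invariance of $\phi$, and the third reduces under the measure-preserving bijection $T\phi: \vec D_{\mathcal{C}} \to \vec A_{\mathcal{C}}$ to $\int_{\vec A_{\mathcal{C}}} \rho(y,\phi'(y))\, d\mu = 0$. Since both $U$ and $T$ are monotone on $\vec X$, the cocycles are positive there, so absolute values and signed values coincide; the same analysis on $\cev D_{\mathcal{C}}$ gives the corresponding negative-sign equality, and summing yields $\int_{D_{\mathcal{C}}} |c_U(x)|\, d\mu = \int_{D_{\mathcal{C}}} |c_T(x)|\, d\mu$.

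The estimate $\int_X D(Tx,Ux)\, d\mu \le 2\int_{D_{\mathcal{C}}} |c_T(x)|\, d\mu$ is then immediate: $D(Tx, Ux) = 0$ off $D_{\mathcal{C}}$, and on $D_{\mathcal{C}}$ the triangle inequality along the orbit gives $D(Tx, Ux) \le |\rho_T(x)| + |\rho_U(x)|$, whose integral is $2\int_{D_{\mathcal{C}}} |c_T(x)|\, d\mu$ by the equality just established. I expect the only step requiring care to be the cocycle decomposition and the bookkeeping for the third term via the composite bijection $T\phi$; everything else is straightforward once the orbit picture of $\phi$ moving within a class, $T$ jumping across classes, and $\phi'$ rearranging within the destination class is kept clearly in mind.
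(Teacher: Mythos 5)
Your proposal is correct and follows essentially the same route as the paper: the heart of the argument is the cocycle decomposition \( c_{U}(x) = c_{\phi}(x) + c_{T}(\phi(x)) + c_{\phi'}(T\phi(x)) \), the vanishing of \( \int c_{\phi} \) and \( \int c_{\phi'} \) over the (setwise \(\phi\)- resp. \(\phi'\)-invariant) departure and arrival sets, and the change of variables along the measure-preserving maps \( \phi \) and \( T\phi \). The only (harmless) divergence is in the final estimate, where you bound \( D(Tx,Ux) \le |c_{T}(x)| + |c_{U}(x)| \) by the triangle inequality through \( x \) and then invoke the integral equality just proved, whereas the paper observes that the arcs from \( x \) to \( Tx \) and from \( T^{-1}Ux \) to \( Ux \) cross the same point of \( \mathcal{C} \) and changes variables; both give the factor \( 2 \).
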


\begin{figure}[htb]
  \centering
  \begin{tikzpicture}
    \filldraw (5,0) circle (1pt);
    \filldraw (5,-2) circle (1pt);
    \draw (2.3, 0) node {\( \vec{X} \)};
    \draw (2.3, -2) node {\( \cev{X} \)};
    \draw (5.0, -1) node {\( \sigma_{\mathcal{C}}(c) \)};
    \draw[white,pattern=north east lines] (3.5,-0.1) rectangle (4.5, 0.1);
    \draw[white,pattern=north west lines] (5.8,-0.1) rectangle (6.8, 0.1);
    \draw [->,domain=-50:230] plot ({4 + 0.5*cos(\x)}, {0.5*sin(\x) + 0.6});
    \draw [->,domain=-50:230] plot ({6.3 + 0.5*cos(\x)}, {0.5*sin(\x) + 0.6});
    \draw (4, 0.6) node {\( \phi \)};
    \draw (6.3, 0.6) node {\( \phi' \)};
    \draw[white,pattern=north west lines] (3.2, -2.1) rectangle (4.2, -1.9);
    \draw[white,pattern=north east lines] (5.5, -2.1) rectangle (6.5, -1.9);
    \draw[->] (5.0, -0.8) -- (5.0, -0.2);
    \draw[->] (5.0, -1.2) -- (5.0, -1.8);
    \draw [->,domain=130:410] plot ({3.7 + 0.5*cos(\x)}, {0.5*sin(\x) - 2.6});
    \draw [->,domain=130:410] plot ({6.0 + 0.5*cos(\x)}, {0.5*sin(\x) - 2.6});
    \draw (3.7, -2.6) node {\( \phi' \)};
    \draw (6.0, -2.6) node {\( \phi \)};
    \draw (4, -0.4) node {\( \vec{D}_{\mathcal{C}}(c) \)};
    \draw (3.7, -1.6) node {\( \cev{A}_{\mathcal{C}}(c) \)};
    \draw (6.3, -0.4) node {\( \vec{A}_{\mathcal{C}}(\sigma_{\mathcal{C}}(c)) \)};
    \draw (6.0, -1.6) node {\( \cev{D}_{\mathcal{C}}(\sigma_{\mathcal{C}}(c)) \)};
  \end{tikzpicture}
  \caption{The transformation \( U = \phi'T\phi \) defined in
    Lemma~\ref{lem:departure-set-transformation}.}
  \label{fig:construction-phi-prime-T-phi}
\end{figure}

\begin{proof}
  Figure~\ref{fig:construction-phi-prime-T-phi} illustrates the definition of the transformation
  \( U \). The equality of the arrival and departure sets is straightforward to verify. Note that
  \( \phi(\vec{D}_{\mathcal{C}}(c)) = \vec{D}_{\mathcal{C}}(c) \) for all \( c \in \mathcal{C} \),
  and therefore \( \int_{\vec{D}_{\mathcal{C}}} \rho_{\phi}\,d\mu = 0 \). In fact, the following
  four integrals vanish:
  \begin{equation}
    \label{eq:vanishing-cocycles}
    \int_{\vec{D}_{\mathcal{C}}} \mkern-10mu \rho_{\phi}\,d\mu
    = \int_{\cev{D}_{\mathcal{C}}}\mkern-10mu \rho_{\phi}\,d\mu
    = \int_{\vec{A}_{\mathcal{C}}}\mkern-10mu \rho_{\phi'}\,d\mu
    = \int_{\cev{A}_{\mathcal{C}}}\mkern-10mu \rho_{\phi'}\,d\mu = 0.
  \end{equation}
  Observe that \( \rho_{U} \) is positive on \( \vec{D}_{\mathcal{C}} \) and negative on
  \( \cev{D}_{\mathcal{C}} \); thus
  \begin{align*}
    \int_{D_{\mathcal{C}}}\mkern-10mu|\rho_{U}|\, d\mu =
    & \int_{\vec{D}_{\mathcal{C}}}\mkern-10mu \rho_{\phi'T\phi} \, d\mu -
      \int_{\cev{D}_{\mathcal{C}}}\mkern-10mu \rho_{\phi'T\phi}\, d\mu \\
    \because \textrm{cocycle
    identity}=
    & \int_{\vec{D}_{\mathcal{C}}}\mkern-10mu \rho_{\phi}\, d\mu + \int_{\vec{D}_{\mathcal{C}}}\mkern-10mu
      \rho_{T}(\phi(x))\, d\mu(x) + \int_{\vec{D}_{\mathcal{C}}}\mkern-10mu \rho_{\phi'}(T\phi(x))\, d\mu(x) \\
    &-\int_{\cev{D}_{\mathcal{C}}}\mkern-10mu \rho_{\phi}\, d\mu - \int_{\cev{D}_{\mathcal{C}}}\mkern-10mu
      \rho_{T}(\phi(x))\, d\mu(x) - \int_{\cev{D}_{\mathcal{C}}}\mkern-10mu \rho_{\phi'}(T\phi(x))\, d\mu(x) \\
    =& \int_{\vec{D}_{\mathcal{C}}}\mkern-10mu \rho_{\phi}\, d\mu + \int_{\vec{D}_{\mathcal{C}}}\mkern-10mu
       \rho_{T}\, d\mu + \int_{\vec{A}_{\mathcal{C}}}\mkern-10mu \rho_{\phi'}\, d\mu \\
    &-\int_{\cev{D}_{\mathcal{C}}}\mkern-10mu \rho_{\phi}\, d\mu - \int_{\cev{D}_{\mathcal{C}}}\mkern-10mu
      \rho_{T}\, d\mu - \int_{\cev{A}_{\mathcal{C}}}\mkern-10mu \rho_{\phi'}\, d\mu \\
    \because~\textrm{Eq.~\eqref{eq:vanishing-cocycles}}=
    & \int_{\vec{D}_{\mathcal{C}}}\mkern-10mu \rho_{T} \, d\mu -
      \int_{\cev{D}_{\mathcal{C}}}\mkern-10mu \rho_{T} \, d\mu
      = \int_{D_{\mathcal{C}}}\mkern-10mu |\rho_{T}|\, d\mu.
  \end{align*}
  Finally, note that for any \( x \in D_{\mathcal{C}} \), the arc from \( x \) to \( Tx \)
  intersects the arc from \( T^{-1}\phi'T\phi(x) \) to \( \phi'T\phi(x) \) (both arcs go over the
  same point of \( \mathcal{C} \)), and therefore
  \begin{displaymath}
    \begin{aligned}
      D(Tx, Ux) \le |\rho_{T}(x)| + |\rho_{T}(T^{-1}\phi'T\phi(x))|.
    \end{aligned}
  \end{displaymath}
  Integration over \( D_{\mathcal{C}} \) yields
  \[ \int_{X} D(Tx, Ux)\, d\mu(x) = \int_{D_{\mathcal{C}}} D(Tx, Ux)\, d\mu(x) \le 2
    \int_{D_{\mathcal{C}}}|\rho_{T}(x)| \, d\mu(x). \qedhere\]
\end{proof}

\begin{lemma}
  \label{lem:gluing-arriving-and-departure-sets}
  Let \( T \in \lfgr{\mathbb{R} \acts X} \) be a monotone transformation.  Let
  \( F \subseteq D_{\mathcal{C}} \) be such that
  \( \lambda_{c}^{\mathcal{C}}(\vec{F}) = \lambda_{c}^{\mathcal{C}}(\cev{F}) \) for all
  \( c \in \mathcal{C} \), and the function
  \( \mathcal{C} \ni c \mapsto \lambda_{c}^{\mathcal{C}}(F) \) is
  \( \sigma_{\mathcal{C}} \)-invariant (i.e.,
  \(\lambda_{c}^{\mathcal{C}}(F) = \lambda^{\mathcal{C}}_{c'}(F)\) whenever \( c \) and \( c' \)
  belong to the same orbit of the flow).  Let \( Z \subseteq A_{\mathcal{C}} \) be the arrival
  subset that corresponds to \( F \), i.e., \( Z = T(F) \).  Let \( \psi : \vec{F} \to \cev{Z} \)
  and \( \psi' : \cev{F} \to \vec{Z} \) be any measure-preserving transformations such that
  \( \psi(x) \eqr_{\mathcal{C}} x \) and \( \psi'(x) \eqr_{\mathcal{C}} x \) for all \( x \) in the
  corresponding domains.  Define \( V : X \to X \) by the following formula:
  \begin{displaymath}
    Vx =
    \begin{cases}
      \psi(x) & \textrm{if } x \in \vec{F}, \\
      \psi'(x) & \textrm{if } x \in \cev{F}, \\
      Tx & \textrm{otherwise.}
    \end{cases}
  \end{displaymath}
  The transformation \( V \) is a measure-preserving automorphism from the full group
  \( \fgr{\mathbb{R} \acts X} \), and \( Vx = Tx \) for all \( x \not \in F \).  The integral of
  distances \( D(Tx, Vx) \) can be estimated as follows:
  \begin{displaymath}
    \int_{X}D(Tx, Vx)\, d\mu(x) \le 2 \int_{D_{\mathcal{C}}} \mkern-10mu |\rho_{T}(x)|\, d\mu(x).
  \end{displaymath}
\end{lemma}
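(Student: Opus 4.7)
First, I will check that $V$ is a measure-preserving bijection of $X$ that lies in $\fgr{\mathbb{R}\acts X}$ and agrees with $T$ off $F$. The restriction $V|_F = \psi \sqcup \psi'$ is a measure-preserving bijection from $F = \vec{F} \sqcup \cev{F}$ onto $Z = \cev{Z} \sqcup \vec{Z}$, while $V|_{X \setminus F} = T|_{X \setminus F}$ is a measure-preserving bijection from $X \setminus F$ onto $X \setminus Z$. Assembling these pieces gives a measure-preserving bijection of $X$; the hypotheses $\psi(x)\, E_{\mathcal{C}}\, x$ and $\psi'(x)\, E_{\mathcal{C}}\, x$ guarantee that $Vx$ lies on the $\mathbb{R}$-orbit of $x$, so $V \in \fgr{\mathbb{R}\acts X}$. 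The identity $Vx = Tx$ for $x \notin F$ is by construction.

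The crux of the distance estimate is a triangle inequality anchored at a point of $\mathcal{C}$. Fix $x \in \vec{F}$ and let $[c, \sigma_{\mathcal{C}}(c))$ be its $E_{\mathcal{C}}$-gap. Monotonicity of $T$ on $\vec{X}$ gives $Tx > x$, and since $x \in D_{\mathcal{C}}$ the point $Tx$ lies in a different gap, forcing $Tx \geq \sigma_{\mathcal{C}}(c)$. Hence $\sigma_{\mathcal{C}}(c)$ lies on the arc from $x$ to $Tx$, and $D(Tx, \sigma_{\mathcal{C}}(c)) \leq |c_T(x)|$. Now set $y = \psi(x) \in \cev{Z}$ and $y' = T^{-1}y \in \cev{F}$. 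Since $y\, E_{\mathcal{C}}\, x$, the point $y$ lies in $[c, \sigma_{\mathcal{C}}(c))$; monotonicity on $\cev{X}$ yields $y < y'$, and $y' \in D_{\mathcal{C}}$ places $y'$ in a different gap, so $y' \geq \sigma_{\mathcal{C}}(c)$. Thus $\sigma_{\mathcal{C}}(c)$ also lies on the arc from $y$ to $y'$ and $D(\sigma_{\mathcal{C}}(c), y) \leq |c_T(y')|$. The triangle inequality then delivers
\[ D(Tx, Vx) = D(Tx, y) \leq |c_T(x)| + |c_T(T^{-1}Vx)|. \]
The symmetric argument for $x \in \cev{F}$, with the left endpoint $c$ of the gap playing the role of $\sigma_{\mathcal{C}}(c)$, yields the analogous bound.

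Finally, I will integrate. Since $V$ is a measure-preserving bijection $\vec{F} \to \cev{Z}$ and $T^{-1}$ is one from $\cev{Z}$ to $\cev{F}$, the composition $x \mapsto T^{-1}Vx$ is measure-preserving from $\vec{F}$ onto $\cev{F}$, giving
\[ \int_{\vec{F}} D(Tx, Vx)\, d\mu \leq \int_{\vec{F}} |c_T(x)|\, d\mu + \int_{\cev{F}} |c_T(x)|\, d\mu = \int_F |c_T(x)|\, d\mu. \]
The symmetric inequality holds over $\cev{F}$. Since $Vx = Tx$ for $x \notin F$, adding the two yields
\[ \int_X D(Tx, Vx)\, d\mu \leq 2\int_F |c_T(x)|\, d\mu \leq 2\int_{D_{\mathcal{C}}} |c_T(x)|\, d\mu. \]
The main work lies in the geometric observation that both arcs straddle the same cross-section point; once monotonicity of $T$ and the $E_{\mathcal{C}}$-coherence of $\psi, \psi'$ are brought to bear, the argument parallels that of Lemma~\ref{lem:departure-set-transformation}.
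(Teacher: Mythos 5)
Your proof is correct and follows essentially the same route as the paper: the pointwise bound \( D(Tx,Vx) \le |c_{T}(x)| + |c_{T}(T^{-1}Vx)| \) obtained by passing through the cross-section point straddled by both arcs, followed by a change of variables using that \( x \mapsto T^{-1}Vx \) is a measure-preserving bijection of \( \vec{F} \) onto \( \cev{F} \). You supply the geometric justification of the pointwise inequality in more detail than the paper does, but the argument is the same.
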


\noindent The following figure illustrates the notions of
Lemma~\ref{lem:gluing-arriving-and-departure-sets}.

\begin{figure}[htb]
  \centering
  \begin{tikzpicture}
    \filldraw (5,0) circle (1pt);
    \filldraw (5,-1) circle (1pt);
    \draw (2.3, 0) node {\( \vec{X} \)};
    \draw (2.3, -1.0) node {\( \cev{X} \)};
    \draw (5.0, -0.5) node {\( c \)};
    \draw[white,pattern=north east lines] (3.5,-0.1) rectangle (4.5, 0.1);
    \draw[white,pattern=north west lines] (5.8,-0.1) rectangle (6.8, 0.1);
    \draw [->,domain=140:40] plot ({5.15 + 1.2*cos(\x)}, {1.2*sin(\x) - 0.5});
    \draw (5.15, 1.0) node {\( T \)};
    \draw[white,pattern=north west lines] (3.2, -1.1) rectangle (4.2, -0.9);
    \draw[white,pattern=north east lines] (5.4, -1.1) rectangle (6.4, -0.9);
    \draw [->,domain=-40:-140] plot ({4.8 + 1.2*cos(\x)}, {1.2*sin(\x) - 0.5});
    \draw (4.8, -2.0) node {\( T \)};
    \draw[->,dashed] (4.0, -0.2) -- (3.75, -0.85);
    \draw[->,dashed] (5.95, -0.85) -- (6.25, -0.15);
    \draw (3.5,-0.5) node {\( \psi \)};
    \draw (6.5,-0.5) node {\( \psi' \)};
    \draw (3.7, 0.4) node {\( \vec{F} \)};
    \draw (3.4, -1.4) node {\( \cev{Z} \)};
    \draw (6.7, 0.4) node {\( \vec{Z} \)};
    \draw (6.2, -1.4) node {\( \cev{F} \)};
  \end{tikzpicture}
  \caption{The transformation \( V \) defined in
    Lemma~\ref{lem:gluing-arriving-and-departure-sets}.}
  \label{fig:construction-outline}
\end{figure}

\begin{proof}
  It is straightforward to verify that \( V \) is a measure-preserving transformation.  For the
  integral inequality, note that for any \( x \in \vec{F} \) one has
  \[ D(Tx,Vx) \le |\rho_{T}(x)| + |\rho_{T}(T^{-1}x)| ,\] and therefore
  \begin{displaymath}
    \begin{aligned}
      \int_{\vec{F}}D(Tx,Vx)\, d\mu(x) \le \int_{\vec{F}}|\rho_{T}|\, d\mu +
      \int_{\cev{F}}|\rho_{T}|\, d\mu = \int_{F} |\rho_{T}| \, d\mu \le \int_{D_{\mathcal{C}}}\mkern-10mu
      |\rho_{T}|\, d\mu.
    \end{aligned}
  \end{displaymath}
  A similar inequality holds for \( \int_{\cev{F}}D(Tx,Vx)\, d\mu \), and the lemma follows.
\end{proof}

\section{Periodic approximations}
\label{sec:peri-appr}

We now have all the ingredients necessary to prove that monotone transformations can be approximated
by periodic automorphisms. Our arguments follow the approach outlined at the beginning of
Section~\ref{sec:coher-modif}.

In the following lemma, we assume that the Lebesgue measure of those \( x \in \vec{X} \) that jump
over any given \( c \in \mathcal{C} \) is bounded from above by some \( \beta \), and that most of
such jumps --- of measure at least \( \gamma \) --- are between adjacent
\( \eqr_{\mathcal{C}} \)-classes. We are going to construct a periodic approximation \( P \) of the
transformation \( T \) with an explicit bound on \( \int_{X}D(Tx,Px)\, d\mu(x) \), which can be made
small for a sufficiently sparse cross-section \( \mathcal{C} \). When the flow is ergodic, this
lemma alone suffices to conclude that \( T \in \derived(\lfgr{\mathbb{R} \acts X}) \).
Theorem~\ref{thm:periodic-construction} builds upon
Lemma~\ref{lem:periodic-construction-one-cross-section} and treats the general case.

\begin{lemma}
  \label{lem:periodic-construction-one-cross-section}
  Let \( T \in \lfgr{\mathbb{R} \acts X} \) be a monotone transformation, let \( K > 0 \) be a
  positive real, and let \( J = \{ x \in \supp T : |\rho_{T}(x)| \ge K \} \).  Let \( \mathcal{C} \)
  be a cross-section such that \( \gap_{\mathcal{C}}(c) > K \) for all \( c \in \mathcal{C} \).  Let
  \( 0 < \gamma < \beta \) be reals such that for all \( c \in \mathcal{C} \):
  \begin{align*}
    \lambda_{c}^{\mathcal{C}}&(\{x \in \vec{X} : x < \sigma_{\mathcal{C}}(c) \le Tx,
                               Tx\,\eqr_{\mathcal{C}}\,\sigma_{\mathcal{C}}(c)\}) > \gamma,\\
    \lambda_{c}^{\mathcal{C}}&(\{x \in \cev{X} : Tx < c \le x,
                               Tx\,\eqr_{\mathcal{C}}\,\sigma^{-1}_{\mathcal{C}}(c)\}) > \gamma,\\
    \lambda_{c}^{\mathcal{C}}&(\{x \in \vec{X} : x < \sigma_{\mathcal{C}}(c) \le Tx\}) < \beta,\\
    \lambda_{c}^{\mathcal{C}}&(\{x \in \cev{X} : Tx < c \le x\}) < \beta.
  \end{align*}
  There exists a periodic transformation \( P \in \lfgr{\mathbb{R} \acts X} \) such that
  \( \supp P \subseteq \supp T \) and
  \[ \int_{X} D(Tx, Px)\, d\mu(x) \le 5 \int_{D_{\mathcal{C}}} \mkern-10mu |\rho_{T}|\, d\mu +
    \int_{J}|\rho_{T}|\, d\mu + \frac{K(\beta-\gamma)}{\gamma}\mu(\supp T). \]
\end{lemma}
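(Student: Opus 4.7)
The plan is to build $P$ by modifying $T$ only on $D_{\mathcal{C}}$, rerouting a sufficient bulk of positive departures at each cell $c$ back into negative arrivals at the same cell (and symmetrically), so as to close orbits into finite cycles confined to a single $E_{\mathcal{C}}$-class; on the residual $E_T$-saturated ``bad'' set $B$ of non-copious arrivals, together with the set $J$ of long jumps, $P$ will simply be the identity. Since $\gap_{\mathcal{C}} > K$ guarantees $|c_T| \leq K$ off $J$, setting $P = \mathrm{id}$ on $B \setminus J$ contributes at most $K \mu(B) \leq K(\beta-\gamma)\gamma^{-1}\mu(\supp T)$, which is precisely the third term in the target bound. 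The copious-set machinery of Subsection~\ref{sec:arriv-depart-sets} supplies the needed control on $\mu(B)$ through the ratio $\gamma/\beta$.

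Concretely, I would set $\xi \equiv \gamma$---which lies below $\lambda_{c}^{\mathcal{C}}(\vec{A}_{\mathcal{C}})$ and $\lambda_{c}^{\mathcal{C}}(\cev{A}_{\mathcal{C}})$ for every $c \in \mathcal{C}$, since the adjacency hypotheses force both to be at least $\gamma$---and form the copious sets $\vec{A}^{\bullet}_{\mathcal{C}}, \cev{A}^{\bullet}_{\mathcal{C}}$ of Definition~\ref{def:copious-arrival-set} together with $\vec{D}^{\bullet}_{\mathcal{C}} := \tau_{\mathcal{C}}(\vec{A}^{\bullet}_{\mathcal{C}})$ and $\cev{D}^{\bullet}_{\mathcal{C}} := \tau_{\mathcal{C}}(\cev{A}^{\bullet}_{\mathcal{C}})$, each of $\lambda_{c}^{\mathcal{C}}$-measure exactly $\gamma$. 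Lemma~\ref{lem:saturation-of-copious-sets} applied with $\delta = (\beta-\gamma)/\beta$ then yields $\mu(B) \leq \frac{\beta-\gamma}{\gamma}\mu(\supp T)$ for $B$ the $E_T$-saturation of $(\vec{A}_{\mathcal{C}} \setminus \vec{A}^{\bullet}_{\mathcal{C}}) \cup (\cev{A}_{\mathcal{C}} \setminus \cev{A}^{\bullet}_{\mathcal{C}})$. Next, using that the set of adjacent-jumping departures in $\vec{D}_{\mathcal{C}}(c)$ has $\lambda_{c}^{\mathcal{C}}$-measure at least $\gamma$, Lemma~\ref{lem:departure-set-transformation} produces $\phi, \phi'$ for which $U := \phi' T \phi$ satisfies $U(\vec{D}^{\bullet}_{\mathcal{C}}(c)) = \vec{A}^{\bullet}_{\mathcal{C}}(\sigma_{\mathcal{C}}(c))$ and $U(\cev{D}^{\bullet}_{\mathcal{C}}(c)) = \cev{A}^{\bullet}_{\mathcal{C}}(\sigma_{\mathcal{C}}^{-1}(c))$, at cost $\int D(Tx, Ux)\, d\mu \leq 2\int_{D_{\mathcal{C}}}|c_T|\, d\mu$. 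Lemma~\ref{lem:gluing-arriving-and-departure-sets} applied to $U$ with $\vec{F} := \vec{D}^{\bullet}_{\mathcal{C}}$ and $\cev{F} := \cev{D}^{\bullet}_{\mathcal{C}}$ (whose per-cell measures coincide at the constant $\gamma$, trivially $E$-invariant) then delivers $V$ rerouting $\vec{D}^{\bullet}_{\mathcal{C}}(c) \to \cev{A}^{\bullet}_{\mathcal{C}}(c)$ and symmetrically, at additional cost $\int D(Ux, Vx)\, d\mu \leq 2\int_{D_{\mathcal{C}}}|c_U|\, d\mu = 2\int_{D_{\mathcal{C}}}|c_T|\, d\mu$. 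By the triangle inequality, $\int D(Tx, Vx)\, d\mu \leq 4\int_{D_{\mathcal{C}}}|c_T|\, d\mu$.

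The principal technical obstacle is ensuring \emph{strict} periodicity of the final $P$: for $[c]_{E_{\mathcal{C}}}$ disjoint from $B$, the transformation $V$ preserves $[c]_{E_{\mathcal{C}}}$, but a measure-preserving transformation of a finite-measure interval need not be periodic. I would resolve this either by engineering the gluings $\psi, \psi'$ as countable piecewise translations selected from a Rokhlin decomposition of $V|_{[c]_{E_{\mathcal{C}}}}$, thereby forcing $V$ to cycle through a countable family of $\gamma$-blocks, or, more robustly, by postcomposing $V$ with a small periodic adjustment on each $[c]_{E_{\mathcal{C}}}$ at cost absorbed into a further $\int_{D_{\mathcal{C}}}|c_T|\, d\mu$. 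Setting $P := V$ (so adjusted) on $\supp T \setminus (B \cup J)$ and $P := \mathrm{id}$ on $B \cup J$ gives a periodic $P \in \lfgr{\R \acts X}$ with $\supp P \subseteq \supp T$. Summing the four contributions---$4\int_{D_{\mathcal{C}}}|c_T|$ from Lemmas~\ref{lem:departure-set-transformation} and~\ref{lem:gluing-arriving-and-departure-sets}, $\int_{D_{\mathcal{C}}}|c_T|$ from the periodicity fix, $K\mu(B\setminus J) \leq \frac{K(\beta-\gamma)}{\gamma}\mu(\supp T)$ on $B\setminus J$, and $\int_J|c_T|$ on $J$---yields the claimed bound.
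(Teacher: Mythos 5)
Your architecture coincides with the paper's: constant $\xi\equiv\gamma$, the copious arrival and departure sets, the rewiring $U=\phi'T\phi$ from Lemma~\ref{lem:departure-set-transformation}, the gluing $V$ from Lemma~\ref{lem:gluing-arriving-and-departure-sets}, and finally $P=V$ on the $E_V$-saturation of $D^{\bullet}_{\mathcal{C}}$ and the identity elsewhere, with Lemma~\ref{lem:saturation-of-copious-sets} at $\delta=1-\gamma/\beta$ controlling the residual set. Two steps of your sketch, however, are not right as written, and both concern exactly the places you left vague.

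First, periodicity is not the obstacle you make it out to be, and neither of your proposed fixes is adequate. The resolution is simply to choose $\psi:\vec{D}^{\bullet}_{\mathcal{C}}\to\cev{A}^{\bullet}_{\mathcal{C}}$ arbitrarily (cell-preserving) and then \emph{define} $\psi':=\tau_{\mathcal{C}}^{-1}\circ\psi^{-1}\circ\tau_{\mathcal{C}}^{-1}$ rather than picking it independently: the return map of $V$ to $\vec{A}^{\bullet}_{\mathcal{C}}$ is then $\psi'\circ\tau_{\mathcal{C}}\circ\psi\circ\tau_{\mathcal{C}}=\mathrm{id}$, so every $V$-orbit inside $[D^{\bullet}_{\mathcal{C}}]_{E_{V}}$ is a finite cycle and $P$ is periodic with no correction whatsoever. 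Your fallback of ``postcomposing $V$ with a small periodic adjustment at cost absorbed into a further $\int_{D_{\mathcal{C}}}|c_T|\,d\mu$'' is unjustified: an arbitrary periodic adjustment on a cell of $\mathcal{C}$ has no a priori relation to $\int_{D_{\mathcal{C}}}|c_T|\,d\mu$, and the Rokhlin-tower alternative is not developed enough to check.

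Second, your cost accounting on the residual set undercounts. Where $P=\mathrm{id}$ the price is $|c_V(x)|$, not $|c_T(x)|$, and these differ exactly on $D_{\mathcal{C}}$: for $x\in D_{\mathcal{C}}$ the arc from $x$ to $\phi'T\phi(x)$ consists of a full $T$-jump flanked by two within-cell moves, so $|c_U(x)|$ is not bounded by $K$ even off $J$. The correct treatment splits the bad set into its part off $D_{\mathcal{C}}$ (where $c_V=c_T$, yielding your terms $\int_J|c_T|\,d\mu$ and $K\cdot\frac{\beta-\gamma}{\gamma}\mu(\supp T)$) and its part inside $D_{\mathcal{C}}$, which is handled by the conservation identity $\int_{D_{\mathcal{C}}}|c_U|\,d\mu=\int_{D_{\mathcal{C}}}|c_T|\,d\mu$ supplied by Lemma~\ref{lem:departure-set-transformation}. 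That identity is the true source of the fifth $\int_{D_{\mathcal{C}}}|c_T|\,d\mu$ in the stated bound --- not a ``periodicity fix''. Your two inaccuracies cancel numerically, so you land on the right constant, but the argument as written does not establish the inequality; both repairs are available within your own setup.
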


\begin{proof}
  Let \( D_{\mathcal{C}} \) and \( A_{\mathcal{C}} \) be the departure and arrival sets of \( T \).
  Figure~\ref{fig:general-arrival-departure} depicts the arrival set \( \vec{A}_{\mathcal{C}}(c) \) and
  the departure set \( \vec{D}_{\mathcal{C}}(c) \) for an element \( c \) of the cross-section
  \( \mathcal{C} \).  Note that the preimages \( T^{-1}(\vec{A}_{\mathcal{C}}(c)) \) may come from
  different (possibly infinitely many) \( \eqr_{\mathcal{C}} \)-equivalence classes; likewise,
  the images \( T(\vec{D}_{\mathcal{C}}(c)) \) of the departure set may visit several
  \( \eqr_{\mathcal{C}} \)-equivalence classes.

  \begin{figure}[htb]
    \centering
    \begin{tikzpicture}[scale=0.85]
      \draw (-2.0,0) node {\( \vec{X} \)};
      \draw[->] (5.3,0) -- (6.7,0) node[pos=0.5,anchor=south] {\( \tau_{\mathcal{C}} \)};
      \draw (-0.55, 0) node {\( \ldots \)};
      \draw (0.9, 0) node {\( \ldots \)};
      \draw (11.1, 0) node {\( \ldots \)};
      \filldraw (-1.1,0) circle (1.5pt);
      \filldraw (0.4,0) circle (1.5pt);
      \filldraw (3,0) circle (1.5pt);
      \filldraw (9,0) circle (1.5pt);
      \filldraw (11.6,0) circle (1.5pt);
      \draw[white, pattern=north east lines] (-1.5, -0.1) rectangle (-1.2, 0.1);
      \draw[white, pattern=north east lines] (0.0, -0.1) rectangle (0.3, 0.1);
      \draw[white, pattern=north east lines] (1.9, -0.1) rectangle (2.9, 0.1);
      \draw[white, pattern=north west lines] (3.1, -0.1) rectangle (4.4, 0.1);
      \draw [->,domain=165:15, yshift=+1mm] plot ({2.0 + 1.8*cos(\x)}, {0.5*sin(\x)});
      \draw [->,domain=165:15, yshift=+1mm] plot ({1.0 + 2.4*cos(\x)}, {0.5*sin(\x)});
      \draw [->,yshift=-2mm] (2.55,0) to [out=-30,in=210] (4.0,0);
      \draw[white, pattern=north east lines] (7.6, -0.1) rectangle (8.9, 0.1);
      \draw[white, pattern=north west lines] (9.1, -0.1) rectangle (10.2, 0.1);
      \draw[white, pattern=north west lines] (11.7, -0.1) rectangle (12.2, 0.1);
      \draw [->,yshift=-2mm] (8.5,0) to [out=-40,in=220] (9.5,0);
      \draw [->,domain=165:15, yshift=+1mm] plot ({10.0 + 2.0*cos(\x)}, {0.5*sin(\x)});
      \draw [->,yshift=+2mm] (2.1,0) to [out=30,in=150] (10.0,0);
      \draw [decorate,decoration={brace,mirror,amplitude=4pt},yshift=-5mm] (3.15, 0) -- (4.35, 0)
      node [black, midway, anchor=north, yshift=-1.5mm] {\(\vec{A}_{\mathcal{C}}(c) \) };
      \draw [decorate,decoration={brace,mirror,amplitude=4pt},yshift=-5mm] (7.65, 0) -- (8.85, 0)
      node [black, midway, anchor=north, yshift=-1.5mm] {\(\vec{D}_{\mathcal{C}}(c) \) };
      \draw[dotted] (3,1.0) -- (3,0);
      \draw (3,1.25) node {\( c \)} ;
      \draw[dotted] (9,1.0) -- (9,0);
      \draw (9,1.25) node {\( \sigma_{\mathcal{C}}(c) \)} ;
    \end{tikzpicture}
    \caption{The arrival set \( \vec{A}_{\mathcal{C}}(c) \) and the departure
      set \( \vec{D}_{\mathcal{C}}(c) \) for some \( c \in \mathcal{C} \).}
    \label{fig:general-arrival-departure}
  \end{figure}

  Set \( \xi(c) = \gamma \) to be the constant function.  In view of the assumptions on
  \( \gamma \), we may apply Lemma~\ref{lem:copious-forward-arrival-sets} to get positive and
  negative \( \xi \)-copious arrival sets
  \( \vec{A}^{\copious}_{\mathcal{C}} \subseteq A_{\mathcal{C}} \) and
  \( \cev{A}^{\copious}_{\mathcal{C}} \subseteq A_{\mathcal{C}} \), as well as the corresponding
  departure sets
  \( \vec{D}^{\copious}_{\mathcal{C}} = \tau_{\mathcal{C}}(\vec{A}^{\copious}_{\mathcal{C}}) \) and
  \( \cev{D}^{\copious}_{\mathcal{C}} = \tau_{\mathcal{C}}(\cev{A}^{\copious}_{\mathcal{C}}) \). For
  the sets
  \( A^{\copious}_{\mathcal{C}} = \vec{A}^{\copious}_{\mathcal{C}} \sqcup
  \cev{A}^{\copious}_{\mathcal{C}} \) and
  \( D^{\copious}_{\mathcal{C}} = \vec{D}^{\copious}_{\mathcal{C}} \sqcup
  \cev{D}^{\copious}_{\mathcal{C}} \), we have
  \( \lambda^{\mathcal{C}}_{c}(A^{\copious}_{\mathcal{C}}(c)) = 2\gamma =
  \lambda^{\mathcal{C}}_{c}(D^{\copious}_{\mathcal{C}}(c)) \) for all \( c \in \mathcal{C} \). Let
  \begin{displaymath}
    \begin{aligned}
      A^{\circ}_{\mathcal{C}}
      &= \bigl\{ x \in \vec{A}_{\mathcal{C}} : T^{-1}x \,\eqr_{\mathcal{C}}\,
        \sigma_{\mathcal{C}}^{-1}(\pi_{\mathcal{C}}(x))\bigr\} \cup \bigl\{ x \in
        \cev{A}_{\mathcal{C}} : T^{-1}x \,\eqr_{\mathcal{C}}\,
        \sigma_{\mathcal{C}}(\pi_{\mathcal{C}}(x))\bigr\}, \\
      D^{\circ}_{\mathcal{C}}
      &= \bigl\{ x \in \vec{D}_{\mathcal{C}} : Tx
        \,\eqr_{\mathcal{C}}\,\sigma_{\mathcal{C}}(\pi_{\mathcal{C}}(x)) \bigr\}
        \cup \bigl\{ x \in \cev{D}_{\mathcal{C}} : Tx
        \,\eqr_{\mathcal{C}}\,\sigma_{\mathcal{C}}^{-1}(\pi_{\mathcal{C}}(x)) \bigr\},
    \end{aligned}
  \end{displaymath}
  be the set of arcs that jump from/to \emph{the next} \( \eqr_{\mathcal{C}} \)-equivalence class.
  By the assumptions of the lemma, we have
  \( \lambda^{\mathcal{C}}_{c}(\vec{D}^{\circ}_{\mathcal{C}}(c)) \ge \gamma \) and
  \(\lambda^{\mathcal{C}}_{c}(\vec{A}^{\circ}_{\mathcal{C}}(c) ) \ge \gamma \) for all \( c \in \mathcal{C} \).
  Let \( \phi \) be any measure-preserving transformation such that:
  \begin{itemize}
  \item \( \phi \) is supported on \( D_{\mathcal{C}} \);
  \item \( \phi(\vec{D}_{\mathcal{C}}) = \vec{D}_{\mathcal{C}} \) and
    \( \phi(\cev{D}_{\mathcal{C}}) = \cev{D}_{\mathcal{C}} \);
  \item \( \phi(x) \,\eqr_{\mathcal{C}}\, x \) for all \( x \in X \);
  \end{itemize}
  and moreover
  \begin{equation}
    \label{eq:phi-copious-set}
    \phi(D^{\copious}_{\mathcal{C}}) \subseteq D^{\circ}_{\mathcal{C}}.
  \end{equation}
  Select a transformation \( \phi' \) such that
  \begin{itemize}
  \item \( \phi' \) is supported on \( A_{\mathcal{C}} \);
  \item \( \phi'(\vec{A}_{\mathcal{C}}) = \vec{A}_{\mathcal{C}} \) and
    \( \phi'(\cev{A}_{\mathcal{C}}) = \cev{A}_{\mathcal{C}} \);
  \item \( \phi'(x)\, \eqr_{\mathcal{C}}\, x \) for all \( x \in X \);
  \end{itemize}
  and moreover
  \begin{equation}
    \label{eq:phip-copious-set}
    \phi'(T\circ\phi(D^{\copious}_{\mathcal{C}})) = A^{\copious}_{\mathcal{C}}.
  \end{equation}
  Figure~\ref{fig:phi-and-phip} illustrates these maps. Note that while in general
  \( \tau_{\mathcal{C}}\bigl(\vec{A}^{\circ}(c)\bigr) \ne \vec{D}^{\circ}(c) \), one has
  \( \tau_{\mathcal{C}}\bigl(\vec{A}^{\copious}(c)\bigr) = \vec{D}^{\copious}(c) \) for all
  \( c \in \mathcal{C} \) by the definition of the \( \xi \)-copious departure set.

  \begin{figure}[htb]
    \centering
    \begin{tikzpicture}
      \draw (-0.2,0) node {\( \vec{X} \)};
      \filldraw (2,0) circle (1.5pt);
      \filldraw (8,0) circle (1.5pt);
      \draw (2,0) node[yshift=2.5mm] {\( c \)};
      \draw[white,pattern=north east lines] (0.5,-0.1) rectangle (1.9,0);
      \draw[white,pattern=north west lines] (2.1,-0.1) rectangle (3.5,0);
      \draw[white,pattern=north west lines] (2.8,0.0) rectangle (3.8,0.1);
      \draw [->,yshift=-2mm] (1.2,0) to [out=-30,in=210] (2.8,0);
      \draw[white,pattern=north east lines] (6.5,-0.1) rectangle (7.9,0);
      \draw[white,pattern=north west lines] (8.1,-0.1) rectangle (9.5,0);
      \draw[white,pattern=north east lines] (6.2,0.0) rectangle (7.2,0.1);
      \draw [->,yshift=-2mm] (7.2,0) to [out=-30,in=210] (8.8,0);
      \draw [decorate,decoration={brace,mirror,amplitude=4pt},yshift=-5mm] (2.15, 0) -- (3.45, 0)
      node [black, midway, anchor=north, yshift=-1.5mm] {\(\vec{A}^{\circ}_{\mathcal{C}}(c) \)
      };
      \draw [decorate,decoration={brace,mirror,amplitude=4pt},yshift=-5mm] (6.55, 0) -- (7.85, 0)
      node [black, midway, anchor=north, yshift=-1.5mm] {\(\vec{D}^{\circ}_{\mathcal{C}}(c) \)
      };
      \draw [decorate,decoration={brace,amplitude=4pt},yshift=+2.5mm] (2.85, 0) -- (3.75, 0)
      node [black, midway, anchor=south, yshift=+1.5mm] {\(\vec{A}^{\copious}_{\mathcal{C}}(c) \)
      };
      \draw [decorate,decoration={brace,amplitude=4pt},yshift=+2.5mm] (6.25, 0) -- (7.15, 0)
      node [black, midway, anchor=south, yshift=+1.5mm] {\(\vec{D}^{\copious}_{\mathcal{C}}(c) \)
      };

      \draw[->] (4.3, 0.7) -- (5.7,0.7) node[pos=0.5, anchor=south] {\( \tau_{\mathcal{C}}
        \)};
      \draw[<-, dashed] (3.8,-0.05) .. controls (4.3,-0.3) and (3.5,-0.5) .. (3,-0.2);
      \draw (4.25, -0.3) node {\( \phi' \)};
      \draw[->, dashed] (6.2,-0.05) .. controls (5.7,-0.3) and (6.5,-0.5) .. (7,-0.2);
      \draw (4.25, -0.3) node {\( \phi' \)};
      \draw (5.75, -0.3) node {\( \phi \)};
    \end{tikzpicture}
    \caption{Automorphism \( \phi \) maps \( D^{\copious}_{\mathcal{C}}(c) \) into
      \( D^{\circ}_{\mathcal{C}}(c) \) and \( (\phi')^{-1} \) sends
      \( A^{\copious}_{\mathcal{C}}(c) \) into \( A^{\circ}_{\mathcal{C}}(c) \).}
    \label{fig:phi-and-phip}
  \end{figure}

  Let \( U \) be the transformation obtained by applying
  Lemma~\ref{lem:departure-set-transformation} to \( T \), \( \phi \), and \( \phi' \). The
  automorphism \( U \) satisfies
  \( U(\vec{D}^{\copious}_{\mathcal{C}}(c)) =
  \vec{A}^{\copious}_{\mathcal{C}}(\sigma_{\mathcal{C}}(c)) \) and
  \( U(\cev{D}^{\copious}_{\mathcal{C}}(c)) =
  \cev{A}^{\copious}_{\mathcal{C}}(\sigma^{-1}_{\mathcal{C}}(c)) \) for all \( c \in \mathcal{C}
  \). Choose a measure-preserving transformation
  \( \psi : \vec{D}^{\copious}_{\mathcal{C}} \to \cev{A}^{\copious}_{\mathcal{C}} \) such that
  \( x\, \eqr_{\mathcal{C}}\, \psi(x) \) for all \( x \) in the domain of \( \psi \). Set
  \( \psi' = \tau_{\mathcal{C}}^{-1} \circ \psi^{-1} \circ \tau_{\mathcal{C}}^{-1} :
  \cev{D}^{\copious}_{\mathcal{C}} \to \vec{A}^{\copious}_{\mathcal{C}} \).  Let \( V \) be the
  transformation produced by Lemma~\ref{lem:gluing-arriving-and-departure-sets} applied to \( U \),
  \( \psi \), and \( \psi' \) (see Figure~\ref{fig:psi-and-psip}).
  \begin{figure}[htb]
    \centering
    \begin{tikzpicture}
      \draw (-0.5,0.05) node {\( \vec{X} \)};
      \draw (-0.5,-0.95) node {\( \cev{X} \)};
      \filldraw (2,0) circle (1.5pt);
      \filldraw (9,0) circle (1.5pt);
      \draw (2,-0.5) node {\( c \)};
      \draw[white,pattern=north east lines] (0.5,-0.1) rectangle (1.9,0.1);
      \draw[white,pattern=north west lines] (2.1,-0.1) rectangle (3.5,0.1);
      \draw [->,yshift=2mm] (1.2,0) to [out=30,in=150] (2.8,0);
      \draw[white,pattern=north east lines] (7.5,-0.1) rectangle (8.9,0.1);
      \draw[white,pattern=north west lines] (9.1,-0.1) rectangle (10.5,0.1);
      \draw [->,yshift=2mm] (8.2,0) to [out=30,in=150] (9.8,0);
      \draw [decorate,decoration={brace,amplitude=4pt},yshift=5mm] (2.15, 0) -- (3.45, 0)
      node [black, midway, anchor=south, yshift=+1.5mm] {\(\vec{A}^{\copious}_{\mathcal{C}}(c) \)
      };
      \draw [decorate,decoration={brace,amplitude=4pt},yshift=5mm] (7.55, 0) -- (8.85, 0)
      node [black, midway, anchor=south, yshift=+1.5mm] {\(\vec{D}^{\copious}_{\mathcal{C}}(c) \)
      };
      \draw [->,yshift=+2mm] (3.5,0) to [out=20,in=160] (7.5,0);
      \draw (5.5, 0.9) node {\( \tau_{\mathcal{C}} \)};
      \draw[white,pattern=north west lines] (0.5,-1.1) rectangle (1.9,-0.9);
      \draw[white,pattern=north east lines] (2.1,-1.1) rectangle (3.5,-0.9);
      \draw [->,yshift=-2mm] (2.8,-1) to [out=-150,in=-30] (1.2,-1);
      \draw[white,pattern=north west lines] (7.5,-1.1) rectangle (8.9,-0.9);
      \draw[white,pattern=north east lines] (9.1,-1.1) rectangle (10.5,-0.9);
      \draw [->,yshift=-2mm] (9.8,-1) to [out=-150,in=-30] (8.2,-1);
      \filldraw (2,-1) circle (1.5pt);
      \filldraw (9,-1) circle (1.5pt);
      \draw [decorate,decoration={brace,mirror,amplitude=4pt},yshift=-5mm] (2.15, -1.0) -- (3.45, -1.0)
      node [black,midway, anchor=north, yshift=-1.5mm] {\(\cev{D}^{\copious}_{\mathcal{C}}(c) \)
      };
      \draw [decorate,decoration={brace,mirror,amplitude=4pt},yshift=-5mm] (7.55, -1.0) -- (8.85, -1.0)
      node [black,midway, anchor=north, yshift=-1.5mm] {\(\cev{A}^{\copious}_{\mathcal{C}}(c) \)
      };
      \draw [->,yshift=-2mm] (7.5,-1) to [out=-160,in=-20] (3.5,-1);
      \draw (5.5, -1.9) node {\( \tau_{\mathcal{C}} \)};
      \draw[->, dashed] (8.2, -0.2) -- (8.2, -0.8) node[pos=0.5, anchor=east] {\( \psi \)};
      \draw[->, dashed] (2.8, -0.8) -- (2.8, -0.2) node[pos=0.5, anchor=west] {\( \psi' =
        \tau_{\mathcal{C}}^{-1} \circ \psi^{-1} \circ \tau_{\mathcal{C}}^{-1} \)};
    \end{tikzpicture}
    \caption{Construction of the automorphism \( V \) from \( U \), \( \psi \), and \( \psi' \).}
    \label{fig:psi-and-psip}
  \end{figure}
  Finally, set \( P : X \to X \) to be
  \begin{displaymath}
    Px =
    \begin{cases}
      Vx & \textrm{if } x \in [D^{\copious}_{\mathcal{C}}]_{\ceqr_{V}},\\
      x  & \textrm{otherwise}. \\
    \end{cases}
  \end{displaymath}
  We claim that \( P \) satisfies the conclusions of the lemma. It is periodic, since the
  transformation \( \psi'\circ \tau_{\mathcal{C}} \circ \psi \circ \tau_{\mathcal{C}} \) is the
  identity map, and \( \supp P \subseteq \supp T \) by construction. It remains to estimate
  \( \int_{X} D(Tx, Px)\, d\mu(x) \).
  \begin{displaymath}
    \begin{aligned}
      \int_{X}D(Tx, Px)\, d\mu(x)
      &\le \int_{X}D(Tx,Ux)\, d\mu(x) +
        \int_{X}D(Ux, Vx)\, d\mu(x) \\
      & \qquad + \int_{X}D(Vx, Px)\, d\mu(x) \\
      &\le [\textrm{Estimates of Lemma~\ref{lem:departure-set-transformation} and
        Lemma~\ref{lem:gluing-arriving-and-departure-sets}}] \\
      &\le 4\int_{D_{\mathcal{C}}} \mkern-10mu |\rho_{T}|\, d\mu + \int_{X}D(Vx, Px)\, d\mu(x).
    \end{aligned}
  \end{displaymath}
  We concentrate on estimating \( \int_{X}D(Vx, Px)\, d\mu(x) \). Recall that \( Tx = Ux = Vx \) for
  all \( x \not \in D_{\mathcal{C}} \); hence, \( \rho_{T}(x) = \rho_{V}(x) \) for
  \( x \not \in D_{\mathcal{C}} \). Set
  \( \Psi = (\vec{X} \cup \cev{X}) \setminus [D^{\copious}_{\mathcal{C}}]_{\ceqr_{V}} \) and note
  that \( Vx = Ux \) for \( x \in \Psi \).  Therefore, using the conclusion of
  Lemma~\ref{lem:departure-set-transformation}, we have
  \begin{equation}
    \label{eq:cocycle-bound-T-and-V}
    \int_{D_{\mathcal{C}} \cap \Psi} |\rho_{V}|\, d\mu
    = \int_{D_{\mathcal{C}} \cap \Psi} |\rho_{U}|\, d\mu \le
    \int_{D_{\mathcal{C}}}|\rho_{U}|\, d\mu = \int_{D_{\mathcal{C}}}|\rho_{T}|\, d\mu.
  \end{equation}
  The integral \( \int_{X}D(Vx, Px) \, d\mu(x) \) can now be estimated as follows.
  \begin{displaymath}
    \begin{aligned}
      \int_{X}D(Vx, Px) \, d\mu(x)
      &= \int_{\Psi} |\rho_{V}|\, d\mu \\
      &\le \int_{\Psi \setminus D_{\mathcal{C}}} |\rho_{V}|\, d\mu + \int_{D_{\mathcal{C}} \cap \Psi}
        |\rho_{V}|\, d\mu \\
      \because \textrm{\( Tx = Vx \) for \( x \not \in D_{\mathcal{C}} \) and
      Eq.~\eqref{eq:cocycle-bound-T-and-V}}
      &\le \int_{\Psi \setminus D_{\mathcal{C}}} |\rho_{T}|\, d\mu + \int_{D_{\mathcal{C}}}
        |\rho_{T}|\, d\mu. \\
    \end{aligned}
  \end{displaymath}
  Finally, we consider the integral \( \int_{\Psi \setminus D_{\mathcal{C}}} |\rho_{T}|\, d\mu \)
  and partition its domain \( \Psi \setminus D_{\mathcal{C}} \) as
  \( (J \cap (\Psi \setminus D_{\mathcal{C}})) \sqcup ((\Psi \setminus D_{\mathcal{C}}) \setminus
  J)\), which yields
  \begin{displaymath}
    \begin{aligned}
      \int_{\Psi \setminus D_{\mathcal{C}}} |\rho_{T}|\, d\mu
      &\le \int_{J}|\rho_{T}|\, d\mu + K\mu(\Psi) \\
      &\le \int_{J}|\rho_{T}|\, d\mu + \frac{K(\beta - \gamma)}{\gamma}\mu(\supp T),
    \end{aligned}
  \end{displaymath}
  where the last inequality follows from Lemma~\ref{lem:saturation-of-copious-sets} with
  \( \delta = 1 - \gamma/\beta\).  Combining all the inequalities together, we get
  \begin{displaymath}
    \int_{X} D(Tx, Px)\, d\mu(x) \le 5 \int_{D_{\mathcal{C}}} |\rho_{T}|\, d\mu +
    \int_{J}|\rho_{T}|\, d\mu +
    \frac{K(\beta - \gamma)}{\gamma}\mu(\supp T). \qedhere
  \end{displaymath}
\end{proof}

Lemma~\ref{lem:periodic-construction-one-cross-section} allows us to approximate, with a periodic
transformation, a monotone \( T \) for which the Lebesgue measure of points jumping over any given
\( c \in X \) is roughly constant across orbits.  To deal with the general case, we simply need to
split the phase space \( X \) into countably many segments that are invariant under the flow and
apply Lemma~\ref{lem:periodic-construction-one-cross-section} on each of them separately. Small care
needs to be taken to ensure that the values \( (\beta-\gamma)/\gamma \), which appear in the
formulation of Lemma~\ref{lem:periodic-construction-one-cross-section}, remain uniformly small
across the partition of \( X \). Details are presented in the following theorem.  Let us recall that
\(\lambda_c\) denotes the Lebesgue measure on the entire orbit of \(c\), as discussed in
Section~\ref{sec:orbit-transf} (the measure \(\lambda_c^{\mathcal C}\), which we have used
throughout this chapter, corresponds to the Lebesgue measure restricted to the interval
\(c+[0,\gap_{\mathcal{C}}(c))\,\)).

\begin{theorem}
  \label{thm:periodic-construction}
  Let \( T \in \lfgr{\mathbb{R} \acts X} \) be a monotone transformation that belongs to the kernel
  of the index map. For any \( \epsilon > 0 \), there exists a periodic transformation
  \( P \in \lfgr{\mathbb{R} \acts X} \) such that \( \supp P \subseteq \supp T \) and
  \( \int_{X} D(Tx, Px)\, d\mu(x) < \epsilon \).
\end{theorem}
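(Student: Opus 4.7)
Fix $\epsilon > 0$. The strategy is to reduce to countably many applications of Lemma~\ref{lem:periodic-construction-one-cross-section}, carried out on a partition of $\supp T$ into flow-invariant pieces on each of which the hypotheses of that lemma can be verified with uniform constants $\gamma_n < \beta_n$ that are close in ratio. First I trim the large-cocycle portion: since $T \in \lfgr{\mathbb R \acts X}$, choose $K > 0$ so large that $\int_J |c_T(x)|\,d\mu < \epsilon/7$, where $J := \{x \in \supp T : |c_T(x)| \geq K\}$. Any cross section with gaps exceeding $K$ will then have $D_{\mathcal C} \subseteq J$, since an arc of length less than $K$ cannot cross a cross-section point.

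For $c \in X$ introduce the ``flux functions''
\[
 A^{\sharp}(c) := \lambda(\{x\in\vec X\setminus J: x<c\le Tx\}), \qquad A^{\flat}(c) := \lambda(\{x\in\vec X\cap J: x<c\le Tx\}),
\]
and the analogous $B^{\sharp}, B^{\flat}$ on the negative side. These are measurable and, by Fubini together with the identification used to define the index map, $A^{\sharp} + A^{\flat}, B^{\sharp}+B^{\flat} \in \LL^1(X,\mu)$; the hypothesis $T\in\ker\ind$ gives $A^{\sharp}+A^{\flat} = B^{\sharp}+B^{\flat}$ almost everywhere. The quantities $\gamma$, $\beta$ demanded by Lemma~\ref{lem:periodic-construction-one-cross-section} are bracketed by these functions: for any cross section with gap $> K$ one has, at points $\sigma_{\mathcal C}(c)$ (and symmetrically at $c$ on the negative side), $A^{\sharp}(\sigma_{\mathcal C}(c))$ bounding $\gamma$ from below and $A^{\sharp}(\sigma_{\mathcal C}(c)) + A^{\flat}(\sigma_{\mathcal C}(c))$ bounding $\beta$ from above, so $\beta-\gamma$ is controlled by the long-arc flux $A^{\flat}$ alone.

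The crux is therefore to partition $\supp T$ into countably many flow-invariant Borel sets $(X_n)$ together with cross sections $\mathcal C_n \subseteq X_n$ and constants $0<\gamma_n<\beta_n$ such that (i) $\gap_{\mathcal C_n}(c)>K$ for every $c\in\mathcal C_n$, (ii) the inequalities demanded by Lemma~\ref{lem:periodic-construction-one-cross-section} hold on $\mathcal C_n$ with these constants, and (iii) $K(\beta_n-\gamma_n)/\gamma_n < \epsilon/7$. To achieve this, I would first partition by ergodic decomposition of the flow, on each component $\nu$ computing the spatial averages $\bar A^{\sharp}_{\nu} := \int A^{\sharp}\,d\nu$, etc. Birkhoff's theorem combined with Egoroff's theorem allows, on a flow-invariant subset of arbitrarily large measure inside each ergodic component, a uniform-in-$x$ lower bound for $\frac{1}{R}\int_0^R A^{\sharp}(x+r)\,dr$ and a uniform upper bound for $\frac{1}{R}\int_0^R A^{\flat}(x+r)\,dr$ when $R$ is large. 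Refining further by stratifying the density band $\bar A^{\sharp}_{\nu} \in [a_n, 2a_n)$ (for a dyadic scale $a_n$) and controlling the long-arc flux on each stratum, one then chooses $\mathcal C_n$ inside a ``good set'' of points whose $A^{\sharp}$-value is bounded below and $A^{\flat}$-value is bounded above, with gap $> \max(K, G_n)$ for a suitably chosen $G_n$. This is the main obstacle: although the averages behave well, we need \emph{pointwise} control on $\mathcal C_n$, which requires choosing the cross section to avoid the $\nu_{\mathcal C}$-null exceptional set where $A^{\sharp}$, $A^{\flat}$ fail to be close to their Birkhoff averages.

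Assuming the partition produced, the conclusion is a straightforward assembly: on each $X_n$, Lemma~\ref{lem:periodic-construction-one-cross-section} supplies a periodic $P_n\in\lfgr{\mathbb R\acts X}$ with $\supp P_n\subseteq\supp(T\restriction_{X_n})$ and
\[
 \int_{X_n} D(Tx,P_n x)\,d\mu \le 5\int_{D_{\mathcal C_n}}|c_T|\,d\mu + \int_{J\cap X_n}|c_T|\,d\mu + \frac{K(\beta_n-\gamma_n)}{\gamma_n}\mu(X_n).
\]
Setting $P := \bigsqcup_n P_n$ (a periodic automorphism in $\lfgr{\mathbb R\acts X}$ since the bounds are summable), using $D_{\mathcal C_n}\subseteq J\cap X_n$ from the gap-$>K$ condition, and summing over $n$ gives
\[
 \int_X D(Tx,Px)\,d\mu \le 6\int_J |c_T|\,d\mu + \frac{\epsilon}{7}\mu(\supp T) \le \frac{6\epsilon}{7} + \frac{\epsilon}{7} = \epsilon,
\]
which finishes the proof after replacing $\epsilon$ by $\epsilon/2$ at the outset. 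All routine verifications (that $P$ preserves $\mu$, is periodic, and has support inside $\supp T$) follow from the corresponding properties of the $P_n$ and the disjointness of the $X_n$.
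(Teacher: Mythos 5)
Your overall architecture --- trim the long arcs, partition \( \supp T \) into flow-invariant pieces on which Lemma~\ref{lem:periodic-construction-one-cross-section} applies with \( K(\beta_n-\gamma_n)/\gamma_n \) small, and sum --- is the same as the paper's, and the final assembly would be fine if the two inputs below were available. They are not.

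First, a repairable error: the claim that a cross section with gaps exceeding \( K \) satisfies \( D_{\mathcal C}\subseteq J \) is false. An arc of length less than \( K \) whose starting point lies within distance \( K \) of a point of \( \mathcal C \) still crosses that point, so \( D_{\mathcal C}\setminus J \) has positive measure in general, and the step \( 5\int_{D_{\mathcal C_n}}|c_T|\,d\mu\le 5\int_{J\cap X_n}|c_T|\,d\mu \) in your final computation is unjustified. The fix is to estimate \( \int_{D_{\mathcal C}\setminus J}|c_T|\,d\mu\le K\,\mu(D_{\mathcal C}\setminus J) \) and to note that \( D_{\mathcal C}\setminus J \) meets each \( \eqr_{\mathcal C} \)-class in a set of measure at most \( 2K \), so taking gaps of order \( K^{2}/\epsilon \) makes this term small; this is exactly the paper's inequality~\eqref{eq:gaps-in-C-are-large}.

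Second, and this is the genuine gap, the ``main obstacle'' you name is not overcome by the route you sketch. Lemma~\ref{lem:periodic-construction-one-cross-section} requires its four inequalities to hold \emph{at every point of the chosen cross section}, and in your bracketing this amounts to the ratio \( A^{\flat}(c)/A^{\sharp}(c) \) being pointwise small there. With a single global cutoff \( K \) this can fail on a positive-measure set of \( c \) (even on entire orbits): nothing prevents essentially all of the flux \( \zeta(c)=A^{\sharp}(c)+A^{\flat}(c) \) at some points from being carried by arcs longer than \( K \), so that \( A^{\sharp}(c) \) is tiny or zero while \( A^{\flat}(c) \) is not, and then no admissible \( \gamma_n>0 \) exists. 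Birkhoff plus Egoroff controls spatial averages of \( A^{\sharp} \) and \( A^{\flat} \), and Chebyshev controls the measure of \( \{A^{\flat}>t\} \), but the exceptional sets involved are of small positive measure rather than null, and in any case average control gives neither a pointwise lower bound on \( A^{\sharp} \) along the cross section nor a bound on the ratio; stratifying by the ergodic average \( \bar A^{\sharp}_{\nu} \) does not help since the pointwise value varies within a component. The paper resolves this with two moves your argument lacks: it stratifies by the \emph{pointwise} value of the total flux \( \zeta(c) \) into multiplicatively narrow bands \( I_n=(\alpha_n-\delta_n/2,\alpha_n+\delta_n/2) \) with \( 2\delta_n/(\alpha_n-\delta_n)<\epsilon/3K_\epsilon \), and, crucially, it replaces the global cutoff by a \emph{per-point} scale \( \chi_n(c) \), the least \( m \) such that arcs staying within distance \( m \) of \( c \) already carry all but \( \delta_n/2 \) of the flux at \( c \); it then stratifies further by the value of \( m \) and passes to a sub-cross-section with gaps exceeding \( m \) on that stratum. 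This per-point choice of scale is what makes \( \gamma=\alpha_n-\delta_n \) and \( \beta=\alpha_n+\delta_n \) legitimate at every cross-section point, and it is the missing ingredient in your construction.
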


\begin{proof}
  Without loss of generality, we assume \(\epsilon\le 1\).  Let \( K_{\epsilon} \ge 1 \) be such
  that for the set
  \[ J_{\epsilon} = \{x \in \supp T : |\rho_{T}(x)| \ge K_{\epsilon}\} \]
  one has \( \int_{J_{\epsilon}}|\rho_{T}|\, d\mu < \epsilon/18 \). Pick a cross-section
  \( \mathcal{C} \) with gaps so large that
  \[ 2K_{\epsilon}^{2}/\gap_{\mathcal{C}}(c) < \epsilon/15 \]
  for all \( c \in \mathcal{C} \), which ensures
  \begin{equation}
    \label{eq:gaps-in-C-are-large}
    K_{\epsilon} \cdot \mu(D_{\mathcal{C}}\setminus J_{\epsilon}) \le \epsilon/15.
  \end{equation}
  Note also that Eq.~\eqref{eq:gaps-in-C-are-large} holds for any cross-section
  \( \mathcal{C}' \subseteq \mathcal{C} \), since \( D_{\mathcal{C}'} \subseteq D_{\mathcal{C}} \)
  and \( \gap_{\mathcal{C}'}(c) \ge \gap_{\mathcal{C}}(c) \) for all \( c \in \mathcal{C}' \).

  For any positive real \( \alpha > 0 \), the positive real
  \( \delta(\alpha) = \epsilon\alpha/(5\cdot 3K_{\epsilon})\) satisfies
  \( \delta(\alpha) < \alpha \) and
  \( 2\delta(\alpha)/(\alpha - \delta(\alpha)) < \epsilon/3K_{\epsilon} \).  We may therefore pick
  countably many positive reals \( \alpha_{n} > 0 \), \( \delta_{n} >0 \), \( n \ge 1 \), such that
  \( \mathbb{R}^{>0} = \bigcup_{n}(\alpha_{n} - \delta_{n}/2, \alpha_{n} + \delta_{n}/2) \) and
  \begin{equation}
    \label{eq:alpha-delta-condition}
    \Bigl(\frac{2\delta_{n}}{\alpha_{n} - \delta_{n}}\Bigr) <
    \frac{\epsilon}{3K_{\epsilon}} \quad \forall n \ge 1.
  \end{equation}
  Define intervals \( I_{n} = (\alpha_{n} - \delta_{n}/2, \alpha_{n} + \delta_{n}/2) \),
  \( n \ge 1 \).

  Let \( \zeta : \mathcal{C} \to \mathbb{R}^{\ge 0} \) be the map that measures the set of forward
  arcs over its argument:
  \[ \zeta(c) = \lambda_c\bigl(\{x \in \vec{X} : x < c \le Tx \}\bigr). \]
  Our assumption that \(T\) lies in the kernel of the index map implies that \(\zeta\) also measures
  the set of backward arcs over its argument.  Specifically, by Eq.~\eqref{eq:kernel-of-index} from
  Proposition~\ref{prop:index-map-is-surjective}, after discarding an invariant null set, we have
  for every \(c\in\mathcal C\),
  \[
    \lambda_c(\{x \in \supp T : x < c \le Tx\}) = \lambda_c(\{ x \in \supp T : Tx < c \le x\}).
  \]

  Set \( \mathcal{C}_{1} = \zeta^{-1}(I_{1}) \) and construct inductively
  \( \mathcal{C}_{n} = \zeta^{-1}(I_{n}) \setminus \bigl[\bigcup_{k < n}\mathcal{C}_{k}\bigr]_{\eqr}
  \), where \(\eqr\) denotes the orbit equivalence relation induced by the flow
  \(\mathbb{R}\acts X\).  The sets \( \mathcal{C}_{n} \) are pairwise disjoint, and moreover,
  \( ( c_{1}, c_{2} )\notin \eqr \) for all \( c_{1} \in \mathcal{C}_{n_{1}} \),
  \( c_{2} \in \mathcal{C}_{n_{2}} \), \( n_{1} \ne n_{2} \).  Let
  \( \chi_{n} : \mathcal{C}_{n} \to \mathbb{N} \), \( n \ge 1 \), be the function defined by
  \begin{displaymath}
    \begin{aligned}
      \chi_{n}(c) &= \min\Bigl\{
      m \in \mathbb{N}: \\
      &\lambda_c\bigl(\bigl\{x \in \vec{X} : x < c
        \le Tx, D(x,c) \le m, D(Tx,c) \le m\bigr\}\bigr) > \zeta(c) - \delta_{n}/2\\
      \text{and }
      &\lambda_c\bigl(\bigl\{x \in \cev{X} : Tx < c
        \le x, D(x,c) \le m, D(Tx,c) \le m\bigr\}\bigr) > \zeta(c) - \delta_{n}/2\Bigr\}.
    \end{aligned}
  \end{displaymath}
  Set \( \mathcal{C}'_{n,1} = \chi^{-1}_{n}(1) \) and define inductively
  \( \mathcal{C}'_{n,m} = \chi^{-1}(m) \setminus \bigl[\bigcup_{k <
    m}\mathcal{C}'_{n,k}\bigr]_{\eqr}\).  Let \( X_{n,m} \) denote the saturated set
  \( [\mathcal{C}'_{n,m}]_{\eqr} \).  Finally, for all \( m,n \ge 1 \), let
  \( \mathcal{C}_{n,m} \subseteq \mathcal{C}'_{n,m} \) be a sub-section satisfying
  \( \gap_{\mathcal{C}_{n,m}}(c) > m \) for all \( c \in \mathcal{C}_{n,m} \).  The sets
  \( \mathcal{C}_{n,m} \) and \( X_{n,m} \) satisfy the following conditions:
  \begin{enumerate}
  \item \( \mathcal{C}_{n,m} \) is a cross-section for the restriction of the flow onto
    \( X_{n,m} \);
  \item the sets \( X_{n,m} \), \( m, n \ge 1 \), are pairwise disjoint.
  \item\label{item:size-of-departure-set} \( \zeta(c) \in I_{n} \) and, for all
    \( c \in \mathcal{C}_{n,m} \), we have
    \begin{align*}
      \lambda_{c}^{\mathcal{C}_{n,m}}\bigl(\{x \in \vec{X} : x < \sigma_{\mathcal{C}_{n,m}}(c) \le Tx\}\bigr)
      &> \alpha_{n} - \delta_{n} \text{ and } \\
      \lambda_{c}^{\mathcal{C}_{n,m}}\bigl(\{x \in \cev{X} : Tx < c \le x\}\bigr)
      &> \alpha_{n} - \delta_{n}.
    \end{align*}
  \end{enumerate}

  Let \( T_{n,m} \) denote the restriction of \( T \) onto \( X_{n,m} \). Apply
  Lemma~\ref{lem:periodic-construction-one-cross-section} to the transformation \( T_{n,m} \),
  cross-section \( \mathcal{C}_{n,m} \) with gaps of size at least \( K_{\epsilon} \), and
  \( \beta = \alpha_{n} + \delta_{n} \), \( \gamma = \alpha_{n} - \delta_{n} \). Let \( P_{n,m} \)
  be the resulting periodic transformation on \( X_{n,m} \). Set \( P = \bigsqcup_{n,m}P_{n,m}
  \). We claim that \( P \) satisfies the conclusions of the theorem. Set
  \( \mathcal{C}' = \bigsqcup_{n,m} \mathcal{C}_{n,m} \) and note that
  \( \mathcal{C}' \subseteq \mathcal{C} \), whence \( D_{\mathcal{C}'} \subseteq D_{\mathcal{C}} \).
  We can now split the integral \( \int_{X} D(Tx, Px)\, d\mu(x)\) as:
  \[
    \int_{X}D(Tx, Px)\, d\mu(x) = \sum_{n,m} \int_{X_{n,m}}D(T_{n,m}x, P_{n,m}x)\, d\mu(x). \]
  Applying Lemma~\ref{lem:periodic-construction-one-cross-section}, we obtain the following
  chain of inequalities:
  \begin{align*}
    \int_{X}D(Tx, Px)\, d\mu(x)
    &\le  5 \sum_{n,m}\int_{D_{\mathcal{C}_{n,m}}} |\rho_{T}|\, d\mu +
      \sum_{n,m}\int_{J_{\epsilon} \cap X_{n,m}}|\rho_{T}|\, d\mu\\
    &\quad+\sum_{n,m}K_{\epsilon}\Bigl(\frac{2\delta_{n}}{\alpha_{n}-\delta_{n}}\Bigr)\mu(X_{n,m}) \\
    \because \textrm{Eq.~\eqref{eq:alpha-delta-condition}}
    & \le 5 \int_{D_{\mathcal{C}}} |\rho_{T}|\, d\mu + \int_{J_{\epsilon}}|\rho_{T}|\, d\mu +
      (\epsilon/3)\mu(X) \\
    & \le 5 \int_{D_{\mathcal{C}}\setminus J_{\epsilon}} |\rho_{T}|\, d\mu +
      6 \int_{J_{\epsilon}}|\rho_{T}|\, d\mu +  \epsilon/3 \\
    \because \textrm{choice of \( K_{\epsilon} \)}
    & < 5K_{\epsilon}\mu(D_{\mathcal{C}}\setminus J_{\epsilon}) + \epsilon/3 + \epsilon/3 \\
    \because \textrm{Eq.~\eqref{eq:gaps-in-C-are-large}} &\le \epsilon,
  \end{align*}
  and the theorem follows.
\end{proof}

\begin{corollary}
  \label{cor:dissipative-index-zero-derived-subgroup}
  Let \( \mathbb{R} \acts X \) be a measure-preserving flow and
  \( T \in \lfgr{\mathbb{R} \acts X} \) be a dissipatively supported transformation.  If
  \( \ind(T) = 0 \), then \( T \in \derived(\lfgr{\mathbb{R} \acts X}) \).
\end{corollary}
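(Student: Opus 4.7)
The strategy is to reduce the claim to the monotone case already handled by Theorem~\ref{thm:periodic-construction}, combined with the fact that periodic elements of the $\LL^{1}$ full group lie in the derived subgroup.

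First I would invoke Corollary~\ref{cor:dissipative-periodic-monotone} to decompose $T$ as $T = P \circ T_{A}$, where $P \in \lfgr{\mathbb{R} \acts X} \cap \fgr{T}$ is periodic and $T_{A}$ is monotone with $T_{A} \in \lfgr{\mathbb{R} \acts X}$ (the induced map satisfies $\snorm{T_A}_1 \le \snorm{T}_1$ by Proposition~\ref{prop:stability-under-first-return-map}). Since $P$ is periodic, it belongs to the closed subgroup generated by periodic transformations, and hence to $D(\lfgr{\mathbb{R} \acts X})$ by Corollary~\ref{cor:all-subgroups-are-equal} (together with the lemma asserting that periodic elements lie in the relevant derived subgroup). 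In particular $\ind(P) = 0$, so by the homomorphism property of the index map (Proposition~\ref{prop:index-map-is-surjective})
\[
\ind(T_{A}) \;=\; \ind(T) - \ind(P) \;=\; 0.
\]

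Second, I would apply Theorem~\ref{thm:periodic-construction} to the monotone transformation $T_{A}$: for every $\epsilon>0$ there exists a periodic $Q_{\epsilon} \in \lfgr{\mathbb{R} \acts X}$ with $\supp Q_{\epsilon} \subseteq \supp T_{A}$ and $\int_{X} D(T_{A}x, Q_{\epsilon}x)\, d\mu < \epsilon$, i.e.\ $\snorm{T_{A} Q_{\epsilon}^{-1}}_{1} < \epsilon$. Thus the periodic elements $Q_{\epsilon}$ converge to $T_{A}$ in the $\LL^{1}$ norm, and since each $Q_{\epsilon}$ lies in $D(\lfgr{\mathbb{R} \acts X})$ (again by Corollary~\ref{cor:all-subgroups-are-equal}) and the derived group is closed by definition, we obtain $T_{A} \in D(\lfgr{\mathbb{R} \acts X})$.

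Finally, since $D(\lfgr{\mathbb{R} \acts X})$ is a subgroup containing both $P$ and $T_{A}$, the product $T = P \circ T_{A}$ also belongs to it, finishing the proof. There is essentially no obstacle here beyond assembling the earlier results: the technical heavy lifting is already done by Theorem~\ref{thm:periodic-construction} (for the monotone part) and by Corollary~\ref{cor:conservative-belong-to-derived-full-group} / the appendix results used to show periodic elements lie in the derived group.
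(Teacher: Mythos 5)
Your proposal is correct and follows essentially the same route as the paper: decompose \( T \) via Corollary~\ref{cor:dissipative-periodic-monotone} into a periodic part (which lies in \( D(\lfgr{\mathbb{R} \acts X}) \) by Corollary~\ref{cor:all-subgroups-are-equal}) and a monotone part of index zero, then approximate the latter by periodic transformations using Theorem~\ref{thm:periodic-construction} and conclude by closedness of the derived subgroup. The only difference is that you spell out the approximation-and-limit step that the paper leaves implicit.
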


\begin{proof}
  By Corollary~\ref{cor:dissipative-periodic-monotone}, there is a monotone transformation \( U \)
  and a periodic transformation \( P \) such that \( T = U \circ P \). Since
  \( P \in \derived(\lfgr{\mathbb{R} \acts X}) \) by Corollary~\ref{cor:all-subgroups-are-equal}, it
  remains to show that \( U \) belongs to the derived subgroup. The latter follows from
  Theorem~\ref{thm:periodic-construction}, since \( \ind(U) = \ind(T) - \ind(P) = 0 \).
\end{proof}



\chapter{Conclusions}
\label{chap:conclusions}

Our objective in this last chapter is to draw several conclusions regarding the structure of the
\( \LL^{1} \) full groups of measure-preserving flows. The analysis conducted in
Chapters~\ref{chap:intermitted-transformations} and~\ref{chap:peri-appr-monot} leads to the most
technically challenging result of our work, which is the following theorem.

\begin{theorem}
  \label{thm:index-kernel-is-derived-subgroup}
  Let \( \mathcal{F}: \mathbb{R} \acts X \) be a free measure-preserving flow on a standard
  probability space. The kernel of the index map coincides with the derived subgroup
  \( \derived(\lfgr{\mathcal{F}}) \).
\end{theorem}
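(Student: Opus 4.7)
The plan is to prove the two inclusions separately. For the easy direction, $D(\lfgr{\mathcal{F}}) \subseteq \ker\ind$ follows immediately: by Proposition~\ref{prop:index-map-is-surjective} the index map is a continuous homomorphism into the abelian Polish group $\LL^{1}(\mathcal{E}, \mathbb{R})$, so every commutator in $\lfgr{\mathcal{F}}$ lies in $\ker\ind$, and since $\ker\ind$ is closed the topological derived subgroup is contained in it.

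For the reverse inclusion, I would fix $T \in \ker\ind$ and apply Hopf's decomposition (Appendix~\ref{sec:hopf-decomposition-appendix}) to split the phase space into $T$-invariant measurable sets $X = C \sqcup D$ on which $T$ is respectively conservative and dissipative. Define $T_{C}, T_{D} \in \fgr{\mathcal{F}}$ by setting $T_{C}$ to agree with $T$ on $C$ and to be the identity on $D$, and analogously for $T_{D}$. These transformations have disjoint supports, so they commute and satisfy $T = T_{C} \circ T_{D}$. Because the cocycle of $T_{C}$ coincides with that of $T$ on $C$ and vanishes elsewhere (and similarly for $T_{D}$), we have $\norm{T_{C}}_{1} + \norm{T_{D}}_{1} = \norm{T}_{1} < \infty$, so both pieces belong to $\lfgr{\mathcal{F}}$. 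Moreover $T_{C}$ is conservative and $T_{D}$ is dissipative in the sense of Definition~\ref{def:dissipative-transformation}.

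The remaining step is then to invoke the two structural results already established in the excerpt. Corollary~\ref{cor:conservative-belong-to-derived-full-group} gives $T_{C} \in D(\lfgr{\mathcal{F}})$ and in particular $\ind(T_{C}) = 0$; since $\ind$ is a homomorphism and $\ind(T) = 0$ by assumption, it follows that $\ind(T_{D}) = 0$ as well. Now Corollary~\ref{cor:dissipative-index-zero-derived-subgroup} applies to $T_{D}$ and yields $T_{D} \in D(\lfgr{\mathcal{F}})$. Composing, $T = T_{C} \circ T_{D} \in D(\lfgr{\mathcal{F}})$, which proves the reverse inclusion.

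In this sense the theorem is a relatively clean consequence of the two corollaries mentioned, so the real difficulty is not in the present argument but in the extensive preparation in Sections~\ref{sec:intermitted-transformations} and~\ref{sec:peri-appr-monot}: the conservative case requires the intermitted-transformation machinery culminating in Lemma~\ref{lem:approximation-by-periodic}, while the dissipative case requires the coherent modifications based on copious arrival/departure sets leading to Theorem~\ref{thm:periodic-construction}. The only subtle point in the proof sketched above is the need to verify that the Hopf decomposition of a measure-preserving transformation yields \emph{measurable} $T$-invariant pieces so that $T_{C}$ and $T_{D}$ are themselves elements of $\fgr{\mathcal{F}}$, but this is exactly the content of Proposition~\ref{prop:hopf-decomposition-full-group} cited at the beginning of Section~\ref{sec:peri-appr-monot}.
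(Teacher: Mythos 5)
Your proposal is correct and follows essentially the same route as the paper: the easy inclusion from abelianness of the image of $\ind$, then the Hopf decomposition $T = T_{C}\circ T_{D}$ combined with Corollary~\ref{cor:conservative-belong-to-derived-full-group} for the conservative part and Corollary~\ref{cor:dissipative-index-zero-derived-subgroup} for the dissipative part. The extra verifications you include (that $T_{C}, T_{D}$ lie in $\lfgr{\mathcal{F}}$ and that the decomposition is measurable via Proposition~\ref{prop:hopf-decomposition-full-group}) are exactly the points the paper relies on implicitly.
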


\begin{proof}
  The inclusion \( \derived(\lfgr{\mathcal{F}}) \subseteq \ker{\ind} \) is automatic since the image
  of \( \ind \) is abelian. For the other direction, pick a transformation \( T \in \ker{\ind} \)
  and consider its Hopf decomposition \( X = C \sqcup D \) provided by
  Proposition~\ref{prop:hopf-decomposition-full-group}. We have \( T = T_{C} \circ T_{D} \), where
  \( T_{C} \in \lfgr{\mathcal{F}} \) is conservative and \( T_{D} \in \lfgr{\mathcal{F}} \) is
  dissipatively supported. According to
  Corollary~\ref{cor:conservative-belong-to-derived-full-group}, \( \ind(T_{C}) = 0 \) and
  \( T_{C} \in \derived(\lfgr{\mathcal{F}}) \), whence
  \( \ind(T_{D}) = \ind(T) - \ind(T_{C}) = 0 \). Therefore, the dissipative part \( T_{D} \)
  satisfies the assumptions of Corollary~\ref{cor:dissipative-index-zero-derived-subgroup}, which
  yields \( T_{D} \in \derived(\lfgr{\mathcal{F}}) \), and hence
  \( T \in \derived(\lfgr{\mathcal{F}}) \) as desired.
\end{proof}

\section{Topological ranks of \texorpdfstring{\(\LL^{1}\)}{L1} full groups}
\label{sec:topological-ranks}

Empowered with the result above and Theorem~\ref{thm:generically-2-generated-derived}, we can
estimate the topological ranks of \( \LL^{1} \) full groups of flows.  We recall the following
well-known inequalities.

\begin{proposition}
  \label{prop:topo-rank-surjective-homomorphism}
  Let \( \phi : G \to H \) be a surjective continuous homomorphism of Polish groups. The topological
  rank \( \tprank(G) \) satisfies
  \[ \tprank(H) \le \tprank(G) \le \tprank(H) + \tprank(\ker{\phi}).  \]
\end{proposition}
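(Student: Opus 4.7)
For the first inequality \( \tprank(H) \le \tprank(G) \), the strategy is completely routine. I would pick a finite or countable family \( (g_i) \) that generates a dense subgroup of \( G \), apply \( \phi \) elementwise, and observe that \( \phi(\langle (g_i) \rangle) = \langle (\phi(g_i)) \rangle \) is the image of a dense subgroup under a continuous surjection, hence dense in \( H \). So the images \( (\phi(g_i)) \) witness \( \tprank(H) \le \tprank(G) \).

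For the more interesting second inequality, the plan is to combine lifted topological generators from \( H \) with topological generators of \( \ker\phi \). Concretely, choose \( h_1, \ldots, h_m \) generating a dense subgroup of \( H \) with \( m = \tprank(H) \), choose \( k_1, \ldots, k_n \) generating a dense subgroup of \( \ker\phi \) with \( n = \tprank(\ker\phi) \), and use surjectivity to pick lifts \( g_i \in \phi^{-1}(h_i) \). I will then let \( G_0 \) denote the closure of the subgroup algebraically generated by \( g_1, \ldots, g_m, k_1, \ldots, k_n \) and argue that \( G_0 = G \).

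The key tool will be the open mapping theorem for Polish groups, which guarantees that the continuous surjective homomorphism \( \phi \) is open. Granted that, here is the core argument I would carry out: fix any \( g \in G \) and any open neighborhood \( V \) of the identity in \( G \); since \( \phi \) is open, \( \phi(gV) \) is an open neighborhood of \( \phi(g) \) in \( H \), and since \( \phi(G_0) \supseteq \langle h_1, \ldots, h_m \rangle \) is dense in \( H \), we can find \( g_{0} \in G_{0} \) and \( v \in V \) with \( \phi(g_0) = \phi(gv) \). Then \( g_0^{-1} g v \) lies in \( \ker\phi \), which is contained in \( G_0 \) by the choice of the \( k_i \), and hence \( gv = g_0 \cdot (g_0^{-1} g v) \in G_0 \). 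This shows \( gV \cap G_0 \ne \varnothing \) for every open neighborhood \( V \) of the identity, so \( g \in \overline{G_0} = G_0 \).

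I do not anticipate any genuine obstacle; the only point worth flagging is the invocation of the open mapping theorem, which needs both \( G \) and \( H \) to be Polish (a hypothesis we have). Once openness of \( \phi \) is in hand, the density-plus-kernel argument closes cleanly, giving \( \tprank(G) \le m + n = \tprank(H) + \tprank(\ker\phi) \).
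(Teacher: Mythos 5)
Your argument is correct; note that the paper states this proposition as a recalled well-known fact and supplies no proof of its own, so there is nothing to compare against. Your two steps --- pushing topological generators forward through \( \phi \) for the left inequality, and for the right inequality lifting generators of \( H \), adjoining generators of \( \ker\phi \), and using the open mapping theorem for Polish groups to see that the resulting closed subgroup meets every translate \( gV \) --- constitute exactly the standard proof, and the appeal to openness of \( \phi \) is legitimate since both groups are Polish.
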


\begin{proposition}
  \label{prop:rank-inequality-l1-full-group}
  Let \( \mathcal{F} : \mathbb{R} \acts X \) be a free measure-preserving flow on a standard
  probability space \( (X, \mu) \). The topological rank \( \tprank(\lfgr{\mathcal{F}}) \) is finite
  if and only if the flow has finitely many ergodic components. Moreover, if \( \mathcal{F} \) has
  exactly \( n \) ergodic components, then
  \[ n+1 \le \tprank(\lfgr{\mathcal{F}}) \le n+3. \]
\end{proposition}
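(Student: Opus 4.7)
The plan is to combine Theorem~\ref{thm:index-kernel-is-derived-subgroup} with the abelianization computation and the known rank of the derived group from Corollary~\ref{cor:topological-rank}. Specifically, by that theorem together with Proposition~\ref{prop:index-map-is-surjective}, we have a short exact sequence of Polish groups
\[
1 \longrightarrow D(\lfgr{\mathcal{F}}) \longrightarrow \lfgr{\mathcal{F}} \xrightarrow{\;\ind\;} \LL^1(\mathcal{E}, \mathbb{R}) \longrightarrow 1,
\]
and Proposition~\ref{prop:topo-rank-surjective-homomorphism} yields
\[
\tprank\bigl(\LL^1(\mathcal{E},\mathbb{R})\bigr)\le \tprank\bigl(\lfgr{\mathcal{F}}\bigr)\le \tprank\bigl(\LL^1(\mathcal{E},\mathbb{R})\bigr)+\tprank\bigl(D(\lfgr{\mathcal{F}})\bigr).
\]
Since orbits of a free $\R$-flow are uncountable and $\R$ is amenable, Corollary~\ref{cor:topological-rank} applies and gives $\tprank(D(\lfgr{\mathcal{F}}))=2$. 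Thus the entire question reduces to computing $\tprank(\LL^1(\mathcal{E},\mathbb{R}))$.

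Next I would carry out this computation in terms of the number of ergodic components, i.e., the number of atoms of $(\mathcal{E},p)$. If $\mathcal{F}$ has exactly $n<\infty$ ergodic components, then $(\mathcal{E},p)$ is a finite atomic space with $n$ atoms, so $\LL^1(\mathcal{E},\mathbb{R})$ is isomorphic as a topological group to $\R^n$. The classical fact $\tprank(\R^n)=n+1$ then gives the bounds $n+1\le\tprank(\lfgr{\mathcal{F}})\le n+3$. The lower bound $\tprank(\R^n)\ge n+1$ follows because any $n$ elements of $\R^n$ generate a closed subgroup contained in the $\mathbb{R}$-span together with a lattice direction, hence a proper closed subgroup, while an explicit choice (the $n$ basis vectors plus one vector with suitably irrational coordinates) gives a dense subgroup on $n+1$ generators, proving $\tprank(\R^n)\le n+1$.

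For the converse direction, if $\mathcal{F}$ has infinitely many ergodic components, I would argue that $\LL^1(\mathcal{E},\mathbb{R})$ is not topologically finitely generated, which combined with the lower bound above forces $\tprank(\lfgr{\mathcal{F}})=\infty$. The argument splits: either $(\mathcal{E},p)$ has infinitely many atoms, in which case integration against indicator functions of $n$ distinct atoms gives a continuous surjection $\LL^1(\mathcal{E},\mathbb{R})\to\R^n$ for every $n$; or $(\mathcal{E},p)$ has a non-atomic part, in which case partitioning this part into $n$ pieces of equal measure produces continuous surjections onto $\R^n$ for every $n$. In both cases $\tprank(\LL^1(\mathcal{E},\mathbb{R}))\ge \tprank(\R^n)=n+1$ for all $n$, so the rank is infinite.

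The main obstacle is essentially bookkeeping rather than conceptual, since the hard work has already been done in Theorem~\ref{thm:index-kernel-is-derived-subgroup} and Corollary~\ref{cor:topological-rank}. The one point requiring genuine (if routine) care is the identification $\LL^1(\mathcal{E},\mathbb{R})\cong\R^n$ as \emph{topological} groups when there are $n$ ergodic components of weights $p_1,\dots,p_n$: the induced norm is $\sum_i p_i|x_i|$, which is equivalent to the standard Euclidean norm on $\R^n$, so the rank computation transfers. With this in hand both the equivalence and the numerical bounds in the statement follow immediately from the displayed rank inequality.
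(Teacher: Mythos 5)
Your proof is correct and follows essentially the same route as the paper: sandwich $\tprank(\lfgr{\mathcal{F}})$ between $\tprank(\LL^1(\mathcal{E},p))$ and $\tprank(\LL^1(\mathcal{E},p))+\tprank(\ker\ind)$ via Proposition~\ref{prop:topo-rank-surjective-homomorphism}, identify $\ker\ind$ with the derived group of rank $2$, and compute $\tprank(\LL^1(\mathcal{E},p))=\dim+1$. The only difference is that you spell out the facts $\tprank(\R^n)=n+1$ and the failure of finite generation in the infinite-dimensional case, which the paper simply cites as standard properties of Banach spaces.
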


\begin{proof}
  Let \( \mathcal{E} \) be the space of probability invariant ergodic measures of the flow, and let
  \( p \) be the probability measure on \( \mathcal{E} \) such that
  \( \mu = \int_{\mathcal{E}}\nu \, dp(\nu) \) (see Appendix~\ref{sec:ergod-decomp}).
  Proposition~\ref{prop:index-map-is-surjective} shows that the index map
  \( \ind : \lfgr{\mathcal{F}} \to \LL^{1}(\mathcal{E}, p) \) is continuous and surjective. An
  application of Proposition~\ref{prop:topo-rank-surjective-homomorphism} yields
  \begin{equation}
    \label{eq:tprank-inequality}
    \tprank(\LL^{1}(\mathcal{E}, p)) \le \tprank(\lfgr{\mathcal{F}}) \le
    \tprank(\LL^{1}(\mathcal{E}, p)) + \tprank(\ker \ind) = \tprank(\LL^{1}(\mathcal{E}, p)) + 2,
  \end{equation}
  where the last equality is based on Theorem~\ref{thm:index-kernel-is-derived-subgroup} and
  Theorem~\ref{thm:generically-2-generated-derived}. Since \( \LL^{1}(\mathcal{E}, p) \) is a Banach
  space, its topological rank is finite if and only if its dimension is finite, which is equivalent
  to \( (\mathcal{E}, p) \) being purely atomic with finitely many atoms. We have shown that
  \( \tprank(\lfgr{\mathcal{F}}) \) is finite if and only if the flow has only finitely many ergodic
  components. The moreover part of the proposition follows from the
  inequality~\eqref{eq:tprank-inequality} and the observation that
  \( \tprank(\LL^{1}(\mathcal{E}, p)) = \dim(\LL^{1}(\mathcal{E}, p)) + 1 \).
\end{proof}

As already mentioned in the introduction, we conjecture that the topological rank completely
remembers the number of ergodic components.

\begin{conjecture}
  Let \( \mathcal F \) be a free measure-preserving flow. If it has exactly \( n \) ergodic components,
  then \( \tprank(\lfgr{\mathcal F}) = n+1 \).
\end{conjecture}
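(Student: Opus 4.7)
\bigskip

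\noindent\textbf{Proof proposal for the conjecture.}

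The plan is to close the gap of \( 2 \) in the inequality \( n+1 \le \tprank(\lfgr{\mathcal{F}}) \le n+3 \) established in Proposition~\ref{prop:rank-inequality-l1-full-group} by exhibiting, in a generic fashion, a tuple \( (T_{1}, \ldots, T_{n+1}) \in \lfgr{\mathcal{F}}^{n+1} \) that topologically generates the whole \( \LL^{1} \) full group. The lower bound is already tight, so the task reduces to proving the upper bound \( \tprank(\lfgr{\mathcal{F}}) \le n+1 \); the natural route is to show that \( \lfgr{\mathcal{F}} \) is generically \( (n+1) \)-generated, mirroring Theorem~\ref{thm:generically-2-generated-derived}.

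First, I would dispose of the ``index part''. Since \( \ind : \lfgr{\mathcal{F}} \to \LL^{1}(\mathcal{E}, p) \cong \mathbb{R}^{n} \) is a continuous surjective homomorphism (Proposition~\ref{prop:index-map-is-surjective}), and since a Kronecker-type argument shows that the set of \( (n+1) \)-tuples in \( \mathbb{R}^{n} \) topologically generating \( \mathbb{R}^{n} \) is dense \( G_{\delta} \), the set
\[
  \mathcal{G}_{\ind} = \bigl\{ (T_{1}, \ldots, T_{n+1}) \in \lfgr{\mathcal{F}}^{n+1} :
  \overline{\langle \ind(T_{1}), \ldots, \ind(T_{n+1}) \rangle} = \LL^{1}(\mathcal{E},p) \bigr\}
\]
is dense \( G_{\delta} \) in \( \lfgr{\mathcal{F}}^{n+1} \) by continuity and surjectivity of \( \ind \). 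So the whole difficulty concentrates in forcing the commutator structure inside the tuple to topologically generate the kernel \( \ker \ind = D(\lfgr{\mathcal{F}}) \) (using Theorem~\ref{thm:index-kernel-is-derived-subgroup}).

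Next, I would attempt to prove a companion density statement: for some fixed word map
\[
  \Phi : \lfgr{\mathcal{F}}^{n+1} \longrightarrow D(\lfgr{\mathcal{F}})^{2},
  \qquad \Phi(T_{1}, \ldots, T_{n+1}) = \bigl( [T_{1}, T_{2}],\ T_{1}[T_{1}, T_{2}]T_{1}^{-1}\bigr)
\]
(or a similar choice involving more of the \( T_{i} \) when \( n \ge 2 \)), the set
\[
  \mathcal{G}_{\mathrm{der}} = \bigl\{(T_{1}, \ldots, T_{n+1}) : \overline{\langle \Phi(T_{1}, \ldots, T_{n+1}) \rangle} = D(\lfgr{\mathcal{F}}) \bigr\}
\]
is dense in \( \lfgr{\mathcal{F}}^{n+1} \). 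If this holds, then a tuple in \( \mathcal{G}_{\ind} \cap \mathcal{G}_{\mathrm{der}} \) (non-empty by the Baire category theorem) generates a dense subgroup of \( \lfgr{\mathcal{F}} \), since its image and its intersection with the kernel are both dense, yielding the desired bound \( \tprank(\lfgr{\mathcal{F}}) \le n+1 \).

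The hard part is establishing the density of \( \mathcal{G}_{\mathrm{der}} \), and this is where I expect the main obstacle to lie. Theorem~\ref{thm:generically-2-generated-derived} provides a dense \( G_{\delta} \) set of topologically generating pairs \emph{inside} \( D(\lfgr{\mathcal{F}})^{2} \), but to transfer this to \( \mathcal{G}_{\mathrm{der}} \) one needs some form of openness or density of the commutator map \( (T_{1}, T_{2}) \mapsto [T_{1}, T_{2}] \), or more subtly of the joint map \( \Phi \). A plausible route is to imitate the dense chain construction from Corollary~\ref{cor:dense-chain-subgroups}: work on the dense subgroups \( H_{n} \cong \LL^{0}(Y_{n}, \nu_{n}, \Aut([0,1],\lambda)) \), where explicit cutting-and-stacking manipulations (analogous to those used for \( \mathbb{Z} \)-actions in~\cite{MR3568978}) might allow one to perturb a given tuple so that its commutators realise any prescribed pair in the dense generating set. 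An alternative, and possibly cleaner, strategy is to exploit the whirly amenability of \( D(\lfgr{\mathcal{F}}) \) (Theorem~\ref{thm:derived-whirly-amenable}) together with the intermitted-transformation machinery of Section~\ref{sec:intermitted-transformations} to approximate an arbitrary element of \( D(\lfgr{\mathcal{F}}) \) by an element that is manifestly a word in the commutators \( [T_{i}, T_{j}] \). Either way, the core difficulty is to arrange two (and only two) commutators of \( T_{i} \)'s to be jointly dense generators of \( D(\lfgr{\mathcal{F}}) \) without sacrificing the index-side density needed for \( \mathcal{G}_{\ind} \).
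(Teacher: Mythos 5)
This statement is not proved in the paper --- it is stated as an open conjecture, and the paper only establishes the bounds \( n+1 \le \tprank(\lfgr{\mathcal{F}}) \le n+3 \) (Proposition~\ref{prop:rank-inequality-l1-full-group}). So there is no proof in the paper to compare against, and your proposal must stand on its own. It does not: it is a strategy with the decisive step left open, which you yourself acknowledge.

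The framework you set up is sound as far as it goes. Since a continuous surjective homomorphism of Polish groups is open, the preimage under \( \ind \) of the (dense \( G_{\delta} \)) set of topologically generating \( (n+1) \)-tuples of \( \mathbb{R}^{n} \cong \LL^{1}(\mathcal{E},p) \) is indeed dense \( G_{\delta} \), so \( \mathcal{G}_{\ind} \) is comeager; and since \( \Phi \) is continuous, \( \mathcal{G}_{\mathrm{der}} \) is \( G_{\delta} \), so density of \( \mathcal{G}_{\mathrm{der}} \) would suffice by Baire category. The reduction ``image dense in the quotient plus closure contains the kernel implies dense'' is also correct, using Theorem~\ref{thm:index-kernel-is-derived-subgroup} to identify \( \ker\ind \) with \( D(\lfgr{\mathcal{F}}) \).

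The genuine gap is the density of \( \mathcal{G}_{\mathrm{der}} \), and this is not a technical loose end but the entire content of the conjecture. Theorem~\ref{thm:generically-2-generated-derived} gives a comeager set of generating \emph{pairs} inside \( D(\lfgr{\mathcal{F}})^{2} \), but the map \( \Phi \) is a word map, not an open map, and its image in \( D(\lfgr{\mathcal{F}})^{2} \) is a priori meager; comeagerness of generating pairs says nothing about whether the fibre structure of \( \Phi \) meets that set densely in the source. Neither of the two routes you sketch (perturbation inside the dense chain \( H_{n} \cong \LL^{0}(Y_{n},\nu_{n},\Aut([0,1],\lambda)) \), or whirly amenability plus intermitted transformations) is developed to the point where one could check it; in particular whirly amenability is a fixed-point property of actions on compact spaces and gives no evident handle on realising prescribed commutators. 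Moreover, you would also need to arrange the commutator condition \emph{simultaneously} with membership of the tuple in a prescribed open set (to get density rather than mere existence), and simultaneously with the index condition on each ergodic component separately; none of this is addressed. As it stands the proposal reformulates the conjecture rather than proving it.
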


Provided the conjecture holds, we have a priori no way of distinguishing \(\LL^1\) full groups of
ergodic flows as topological groups.  For \(\Z\)-actions, it is a consequence of Belinskaja's
theorem that there are many \(\LL^1\) full groups.  The following two sections explore analogues of
her result for flows, demonstrating the existence of numerous \(\LL^1\) full groups associated with
free ergodic flows. While we currently lack a concrete method to distinguish these groups, their
geometric properties---discussed in the final section---may provide valuable insights in this
direction.

\section{Katznelson's conjugation theorem}
\label{sec:katzn-conj-theor}

R.~M.~Belinskaja~\cite{MR0245756} showed that if measure-preserving transformations
\(T, U \in \Aut(X,\mu)\) generate the same orbit equivalence relation, i.e.,
\(\eqr_{T} = \eqr_{U}\), and \(U \in \lfgr{T}\), then \(T\) and \(U\) are conjugated.  Y.~Katznelson
found a different argument and isolated a sufficient condition for the conjugacy of
measure-preserving transformations (see~\cite[Theorem~A.1]{2201.06662}).  In the following, for
\(T \in \Aut(X, \mu)\), \(x \in X\), and \(A \subseteq \mathbb{Z}\), we let \(T^{A}x\) denote the set
\(\{T^{k}x : k \in A\}\).

\begin{theorem}[Katznelson]
  \label{thm:kaznelson-conjugations-automorphisms}
  Suppose \(T, U \in \Aut(X,\mu)\) are measure-preserving transformations that generate the same
  orbit equivalence relation, \(\eqr_{T} = \eqr_{U}\). If the symmetric difference
  \(T^{\mathbb{N}}x \bigtriangleup U^{\mathbb{N}}x\) is finite for almost all \(x\), then \(T\) and
  \(U\) are conjugated by an element from the full group \(\fgr{T} = \fgr{U}\).
\end{theorem}

The analog of this result for free measure-preserving flows will be proved shortly in
Theorem~\ref{thm:katznelson-conjugation-flows}.  But first, we discuss an important application of
Theorem~\ref{thm:kaznelson-conjugations-automorphisms}. Consider a free measure-preserving flow
\(\mathcal{F} : \mathbb{R} \acts X\).  Given a dissipatively supported transformation
\(T \in \fgr{\mathcal{F}} \) (in the sense of Definition~\ref{def:dissipative-transformation}),
Proposition~\ref{prop:dissipative-property} implies that almost every non-trivial \(T\)-orbit
\([x]_{\eqr_{T}}\) is a discrete subset of \([x]_{\eqr}\) unbounded both from below and from
above. The order induced on \([x]_{\eqr_{T}}\) by the flow may disagree with the \(T\)-order of
points.  One may therefore define the \textbf{\(\mathcal{F}\)-reordering} of \(T\) to be the first
return transformation \(\reor{T}\) induced by the ordering of the flow on the orbits of \(T\):
\[\reor{T}x = x + \min\{ r > 0 : x + r \in [x]_{\eqr_{T}}\} \qquad \textrm{for } x \in \supp T.\]
Note that \(T\) and \(\reor{T}\) generate the same orbit equivalence relation,
\(\eqr_{T} = \eqr_{\reor{T}}\).

If \(T\) belongs to the \(\LL^{1}\) full group of the flow, either
\(T^{\mathbb{N}}x \bigtriangleup \tilde{T}^{\mathbb{N}}x\) or
\(T^{\mathbb{N}}x \bigtriangleup \tilde{T}^{-\mathbb{N}}x\) is finite, depending on whether
\(\lim_{n}\rho(x,T^{n}x) = +\infty\) or \(\lim_{n}\rho(x,T^{n}x) = -\infty\)
(cf.~Corollary~\ref{cor:evasive-set-limit-infinity}).  Which symmetric difference is finite may
depend on the point \(x \in X\), and Theorem~\ref{thm:kaznelson-conjugations-automorphisms} can be
used to show that \(T\) and its reordering \(\tilde{T}\) are flip-conjugate.

\begin{definition}
  \label{def:flip-conjugacy}
  Let \((X_{1}, \mu_{1})\) and \((X_{2}, \mu_{2})\) be standard probability spaces, and let
  \(T_{i} \in \Aut(X_{i}, \mu_{i})\), \(i = 1, 2\). Measure-preserving transformations \(T_{1} \)
  and \(T_{2}\) are \textbf{flip-conjugate}\index{Transformation!flip-conjugate} if there exist an
  isomorphism of measure spaces \(S : X_{1} \to X_{2} \) and measurable partitions
  \(X_{1} = X_{1}^{-}\sqcup X_{1}^{+} \), \( X_{2} = X_{2}^{-}\sqcup X_{2}^{+}\) such that
  \begin{enumerate}
  \item \(S(X_{1}^{-}) = X_{2}^{-}\) and \(S(X_{1}^{+}) = X_{2}^{+}\);
  \item \(X_{1}^{-}, X_{1}^{+}\) are \(T_{1}\)-invariant, and \(X_{2}^{-}, X_{2}^{+}\) are
    \(T_{2}\)-invariant;
  \item \(ST_{1}\restriction_{X_{1}^{+}}S^{-1} = T_{2}\restriction_{X_{2}^{+}}\) and
    \(ST_{1}\restriction_{X_{1}^{-}}S^{-1} = T^{-1}_{2}\restriction_{X_{2}^{-}}\).
  \end{enumerate}
\end{definition}

Note that when one of the \(T_i\)'s is ergodic, our definition of flip-conjugacy coincides with the
standard one, which requires \(X_i^-\) or \(X_i^+\) to have full measure.

\begin{proposition}
  \label{prop:conjugated-reordering}
  Any dissipatively supported \(T \in \lfgr{\mathcal{F}}\) and its \(\mathcal{F}\)-reordering
  \(\reor{T}\) are flip-conjugated by an element from the full group \(\fgr{T} = \fgr{\tilde{T}}\).
\end{proposition}

\begin{proof}
  Consider the decomposition \(\supp T = \cev{X} \sqcup \vec{X}\) into the positive and negative
  orbits as in Definition~\ref{def:positive-negative-evasive}.  In particular,
  \(T^{\mathbb{N}}x \triangle \reor{T}^{\mathbb{N}}x \) and
  \(T^{\mathbb{N}}x \triangle \reor{T}^{-\mathbb{N}}x \) are finite for \(x \in \vec{X}\) and
  \(x \in \cev{X}\), respectively.  Theorem~\ref{thm:kaznelson-conjugations-automorphisms} implies
  that there exist automorphisms \(S_{1} \in \fgr{T \restriction_{{\vec{X}}}}\) and
  \(S_{2} \in \fgr{T \restriction_{{\cev{X}}}}\) such that
  \(S_{1}T\restriction_{\vec{X}}S_{1}^{-1} = \reor{T}\restriction_{\vec{X}}\) and
  \(S_{2}T\restriction_{\cev{X}}S_{2}^{-1} = \reor{T}^{-1}\restriction_{\cev{X}}\).  The
  transformation \(S\) given by
  \begin{displaymath}
    Sx =
    \begin{cases}
      S_{1}x & \textrm{if \(x \in \vec{X}\)},\\
      S_{2}x & \textrm{if \(x \in \cev{X}\)},\\
      x & \textrm{otherwise}\\
    \end{cases}
  \end{displaymath}
  belongs to the full group \(\fgr{T}\) and witnesses the flip-conjugacy of \(T\) and \(\reor{T}\).
\end{proof}

The transformation conjugating \(T\) and \(U\) in
Theorem~\ref{thm:kaznelson-conjugations-automorphisms} can be written fairly explicitly. This is
done in terms of the function \(\delta\) defined as follows. Suppose \( (\Omega, \lambda) \) is a
(possibly infinite) measure space, and let \( A, B \subseteq \Omega \) be measurable sets such that
\( \lambda(A \bigtriangleup B) < +\infty \). We set
\( \delta(A, B) = \lambda(A \setminus B) - \lambda(B \setminus A) \).  This function satisfies a few
properties which the reader can easily verify.

\begin{proposition}
  \label{prop:nabla-properties}
  Suppose \( (\Omega, \lambda) \) is a measure space. For all \( A, B, C, a \subseteq \Omega \)
  such that
  \( \lambda(A \bigtriangleup B), \lambda(B \bigtriangleup C), \lambda(A \bigtriangleup C),
  \lambda(a) < +\infty \), the following holds:
  \begin{enumerate}
  \item \( \delta(A, C) = \delta(A, B) + \delta(B, C) \);
  \item \( \delta(A, A) = 0 \) and \( \delta(A, B) = - \delta(B,A) \);
  \item \( \delta(A \bigtriangleup a, B) = \delta(A,B) + (\lambda(a) - 2 \lambda(a \cap A)) \).
  \end{enumerate}
\end{proposition}

Any orbit of a measure-preserving transformation can be endowed with a counting measure. Given \(T\)
and \(U\) as in the statement of Theorem~\ref{thm:kaznelson-conjugations-automorphisms}, set
\(\tau(x) = \delta(U^{\mathbb{N}}x, T^{\mathbb{N}}x)\) and define \(Sx = U^{\tau(x)}x\).  One can
verify that \(S \in \fgr{U} = \fgr{T}\) and \(STS^{-1} = U\) (further details can be found
in~\cite[Theorem~A.1]{2201.06662}).

Let now \( \mathcal{F}_{1} \) and \( \mathcal{F}_{2} \) be measure-preserving flows on a standard
probability space \( (X, \mu) \); we denote the actions of \( r \in \mathbb{R} \) upon \( x \in X \)
by \( x +_{1} r \) and \( x +_{2} r \), respectively. Suppose that their full groups coincide,
\( \fgr{\mathcal{F}_{1}} = \fgr{\mathcal{F}_{2}} \), and so the flows share the same orbits,
\(\eqr_{\mathcal{F}_{1}} = \eqr_{\mathcal{F}_{2}}\).  For \(x \in X\), let
\(s_{i}(x) = x +_{i} [0,\infty)\), \(i = 1,2\), denote the ``right half-orbit'' of \(x\).  A natural
analog of the condition \(|T^{\mathbb{N}}x \bigtriangleup U^{\mathbb{N}}x| < \infty\) from
Theorem~\ref{thm:kaznelson-conjugations-automorphisms} would be to require finiteness of the
Lebesgue measure of \(s_{1}(x) \bigtriangleup s_{2}(x)\) for all \(x \in X\). This condition alone,
however, is not sufficient for the conjugacy of \(\mathcal{F}_{1}\) and \(\mathcal{F}_{2}\).

Each flow induces a copy of the Lebesgue measure onto orbits via
\[\lambda_{i,x}(A) = \lambda(\{r \in \mathbb{R} : x+_{i} r \in A\}).\]
Since we assume \(\fgr{\mathcal{F}_{1}} = \fgr{\mathcal{F}_{2}}\), and so
\( \mathcal{F}_{2} \subseteq \fgr{\mathcal{F}_{1}} \), \( \lambda_{1,x} \) is a translation
invariant measure relative to the action of \( \mathcal{F}_{2} \), and therefore must differ from
\( \lambda_{2,x} \) by a constant: there is an orbit invariant measurable function
\( c : X \to \mathbb{R}^{>0} \) such that \( \lambda_{2,x} = c(x)\lambda_{1,x} \).  Any element in
\(\fgr{\mathcal{F}_{1}} = \fgr{\mathcal{F}_{2}}\) preserves \(\lambda_{i,x}\), \(i = 1,2\), and
therefore cannot conjugate \(\mathcal{F}_{1}\) into \(\mathcal{F}_{2}\) unless \(c(x)\) is
constantly equal to \(1\).

When the flows are ergodic, \(c(x) = c\) is a constant, and one may renormalize the flows without
changing the full groups.  Let \( \mathcal{F}_{2}' \) be the rescaling of \( \mathcal{F}_{2} \)
given by \( x +_{2}' r = x +_{2} cr \). It is straightforward to check that
\( \lambda_{2,x}'(A) = c^{-1}\lambda_{2,x}(A) = \lambda_{1,x}(A) \), and the flows \( \mathcal{F}_{1} \)
and \( \mathcal{F}_{2}' \) induce the same measure onto orbits.

After this renormalization, the finiteness of the measure \(s_{1}(x) \bigtriangleup s_{2}(x)\) for
all \(x \in X\) is indeed sufficient to establish the conjugacy of the flows.

\begin{theorem}
  \label{thm:katznelson-conjugation-flows}
  Let \( \mathcal{F}_{i} \), \( i = 1,2 \), be free measure-preserving flows that share the same
  orbits, \(\eqr_{\mathcal{F}_{1}} = \eqr_{\mathcal{F}_{2}}\), and induce the same measures
  \((\lambda_{x})_{x \in X}\) onto orbits.  If
  \( \lambda_{x}(s_{1}(x)\bigtriangleup s_{2}(x)) < +\infty \), \( x \in X \), then the flows are
  conjugate by a measure-preserving transformation \(S \in \fgr{\mathcal{F}_{1}}\).
\end{theorem}

\begin{proof}
  Let \( n : X \times \mathbb{R} \to \mathbb{R} \) be the
  \(\mathcal{F}_{1}, \mathcal{F}_{2}\)-cocycle defined by \( x +_{2} r = x +_{1} n(x,r) \). Since
  \(\mathcal{F}_{1}\) and \(\mathcal{F}_{2}\) induce the same measure on the orbits,
  \( n(x, \cdot) : \mathbb{R} \to \mathbb{R} \) is a Lebesgue measure-preserving automorphism:
  \begin{displaymath}
    \begin{aligned}
      \lambda(n(x, A)) &= \lambda_{1,x}(\{x +_{1} n(x,r) : r \in A\})  \\
                       &= \lambda_{2,x}(\{x +_{2} r : r \in A\}) = \lambda(A).
    \end{aligned}
  \end{displaymath}
  For \( x \in X \) and \( r \in \mathbb{R} \cup \{+ \infty\} \) let
  \begin{displaymath}
    s_{i,r}(x) =
    \begin{cases}
      x +_{i} [0,r) & \textrm{if } r \ge 0, \\
      x +_{i} [r,0) & \textrm{if } r < 0.
    \end{cases}
  \end{displaymath}
  In particular, \( s_{i}(x) = s_{i, +\infty}(x) \).  Note that
  \begin{equation}
    \label{eq:shift-half-orbit}
    \begin{split}
      s_{1}(x +_{2} r) &=  s_{1}(x) \bigtriangleup s_{1, n(x,r)}(x), \\
      s_{2}(x +_{2} r) &=  s_{2}(x) \bigtriangleup s_{2, r}(x). \\
    \end{split}
  \end{equation}
  Also, considering the cases \(r < 0\) and \(r \ge 0\) separately, one can easily verify that for
  all \(r \in \mathbb{R}\) and \(i = 1,2\)
  \[ \lambda_{i,x}(s_{i, r}(x)) - 2 \lambda_{i,x}(s_{2}(x) \cap s_{2, r}(x)) = - r. \]
  and, in particular,
  \begin{equation}
    \label{eq:cocycle-correction}
    \begin{split}
      \lambda_{1,x}(s_{1, n(x,r)}(x)) - 2 \lambda_{1,x}(s_{1}(x) \cap s_{1, n(x,r)}(x)) &= - n(x,r), \\
      \lambda_{2,x}(s_{2, r}(x)) - 2 \lambda_{2,x}(s_{2}(x) \cap s_{2, r}(x)) &= - r.
    \end{split}
  \end{equation}
  Put \( \tau(x) = \delta(s_{1}(x), s_{2}(x)) \), then
  \begin{equation}
    \label{eq:tau-shift}
    \begin{split}
      \tau(x+_{2}r)
      &= \delta(s_{1}(x+_{2}r), s_{2}(x+_{2}r)) \\
      \because \textrm{Eq.}~\eqref{eq:shift-half-orbit}
      &= \delta(s_{1}(x) \bigtriangleup s_{1,n(x,r)}(x), s_{2}(x+_{2}r)) \\
      \because \textrm{Prop.}~\ref{prop:nabla-properties}
      &= \delta(s_{1}(x), s_{2}(x+_{2}r)) + \\
      & \qquad \lambda_{1,x}(s_{1, n(x,r)}(x)) - 2
        \lambda_{1,x}(s_{1}(x) \cap s_{1, n(x,r)}(x)) \\
      \because \textrm{Eq.}~\eqref{eq:cocycle-correction}
      &= \delta(s_{1}(x), s_{2}(x+_{2}r)) - n(x,r) \\
      \because \textrm{Prop.}~\ref{prop:nabla-properties}
      &= -\delta(s_{2}(x+_{2}r),s_{1}(x)) - n(x,r) = \delta(s_{1}(x), s_{2}(x+_{2}r)) - \\
      &\qquad (\lambda_{2,x}(s_{2, r}(x)) - 2 \lambda_{2,x}(s_{2}(x) \cap s_{2, r}(x))) - n(x,r)\\
      \because \textrm{Eq.}~\eqref{eq:cocycle-correction}
      &= \delta(s_{1}(x), s_{2}(x)) - n(x,r) + r.\\
    \end{split}
  \end{equation}
  The required transformation \( S : X \to X \) is given by \( Sx = x +_{1} \tau(x) \).
  \begin{displaymath}
    \begin{aligned}
      S(x +_{2} r)
      &= (x +_{2} r) +_{1} \tau(x +_{2} r) = (x +_{1} n(x,r)) +_{1} \tau(x +_{2} r) \\
      \because \textrm{Eq.~\eqref{eq:tau-shift}}
      &= x +_{1} (n(x,r) + \tau(x) - n(x,r) + r) = Sx +_{1} r.
    \end{aligned}
  \end{displaymath}
  Thus \(S\) conjugates \(\mathcal{F}_{1}\) and \(\mathcal{F}_{2}\). It therefore remains to check
  that \( S \) is a measure-preserving bijection. First, note that \( Sx \) satisfies
  \( \delta(s_{1}(Sx), s_{2}(x)) = 0 \). Indeed,
  \( s_{1}(Sx) = s_{1}(x) \bigtriangleup s_{1,\tau(x)}(x) \) (by the analog of
  Eq.~\eqref{eq:shift-half-orbit}), and therefore
  \begin{equation}
    \label{eq:delta-zero}
    \delta(s_{1}(Sx), s_{2}(x)) = \tau(x) - \tau(x) = 0
  \end{equation}
  by Proposition~\ref{prop:nabla-properties}.

  To show injectivity, suppose that \( Sx = Sy \). In view of Eq.~\eqref{eq:delta-zero} and
  Proposition~\ref{prop:nabla-properties},
  \[ \delta(s_{2}(x), s_{2}(y)) = \delta(s_{2}(x), s_{1}(Sx)) + \delta(s_{1}(Sy), s_{2}(y)) = 0. \]
  However, if \( y = x +_{2} r \), then \( s_{2}(y) = s_{2}(x) \bigtriangleup s_{2,r}(x) \) and so
  \( \delta(s_{2}(x), s_{2}(y)) = r \). One concludes that \( r = 0 \) and \( x = y \). We have
  already established that \( S(x +_{2} r) = Sx +_{1} r \), which shows that the range of \( S \) is
  orbit invariant, yielding surjectivity.

  Finally, to show that \( S \) is measure-preserving, it suffices to check that \( S \) preserves
  the Lebesgue measure \( \lambda_{1,x} = \lambda_{2,x} \) on all the orbits.  To this end, let
  \( n' : X \times \mathbb{R} \to \mathbb{R} \) be the \( \mathcal{F}_{1} \)-cocycle (i.e.,
  \( x +_{1} r = x+_{2} n'(x,r) \)). For all \( r' \in \mathbb{R} \), one has
  \begin{displaymath}
    \begin{aligned}
      \lambda_{1,x}(Ss_{1, r'}(x)) &=\lambda_{1,x}(\{ y +_{1} \tau(y) : y \in s_{1,r'}(x)\}) \\
                                   &= \lambda_{1,x}(\{(x +_{1} r) +_{1} \tau(x +_{1} r) : 0 \le r < r'\}) \\
                                   &= \lambda(\{ r + (\tau(x) - r + n'(x,r)) : 0 \le r < r' \}) \\
                                   &= \lambda(\{n'(x,r) : 0 \le r < r'\}) = \lambda(n'(x, [0,r'))) = r'.
    \end{aligned}
  \end{displaymath}
  Hence, \( S \in \Aut(X,\mu) \) is the required conjugation between \( \mathcal{F}_{1} \) and
  \( \mathcal{F}_{2} \).
\end{proof}

In the \(\Z\) case, the above result is the key to Belinskaja's flip-conjugacy result for \(\LL^1\)
orbit equivalence.  Unfortunately, we do not know if it can be useful for proving an analogous
result for flows.  In the next section, we nevertheless obtain a weaker result that shows there are
many \(\LL^1\) full groups.  We leave the following question open.

\begin{question}\label{qu:does-commensurating-always-hold}
  Given two ergodic flows with equal \(\LL^1\) full groups, does there exist a rescaling under
  which they satisfy the hypothesis of the above theorem?
\end{question}

\section{\texorpdfstring{\(\LL^1\)}{L1} orbit equivalence implies flip Kakutani equivalence}
\label{sec:kakut-equiv-isom}

A measure-preserving action of a compactly generated locally compact Polish group can always be
twisted by a continuous automorphism of the group without affecting the \( \LL^1 \) full group.

In the case of \( \Z \)-actions, this takes a particularly simple form since the only non-trivial
automorphism of \( \Z \) is given by \( n\mapsto -n \).  It follows from the results of
R.~M.~Belinskaja~\cite{MR0245756} that this is, up to conjugacy, the only way to get an \(\LL^1\)
orbit equivalence for ergodic \( \Z \)-actions~\cite[Theorem~4.2]{MR3810253}: if \(T_1,T_2\) are two
ergodic measure-preserving transformations that are \(\LL^1\) orbit equivalent, then they are
flip-conjugate: \(T_1\) is conjugate to either \(T_2\) or \(T_2^{-1}\).

As mentioned before, we do not know whether a variant of such rigidity holds when we replace \(\Z\)
by \(\R\) (see Question~\ref{quest:belinskaya-for-flows} below), but, as shown in
Theorem~\ref{thm:flip-kakutani-equivalence}, \(\LL^1\) orbit equivalent free measure-preserving
flows must at least be flip Kakutani equivalent. In particular, there are uncountably many \(\LL^1\)
full groups of free ergodic flows up to abstract group isomorphism.

Let us first define the notion of (flip) Kakutani equivalence of flows.  For the main results about
this concept, the reader may
consult~\cite{katokTimeChangeMonotone1975,katokMonotoneEquivalenceErgodic1977}, where it is called
\textbf{monotone equivalence} of flows.  Given a measure-preserving automorphism
\(T \in \Aut(Z, \nu)\) and a positive integrable function \(f \in \LL^{1}(Z, \nu)\), one can define
the so-called \textbf{suspension flow}\index{Suspension flow} or \textbf{flow under a function} on the space
\[
  X = \{(z, t) : z \in Z,\ 0 \le t < f(z)\}.
\] For \(r \ge 0\), the action \((z, t) + r\) is given by
\[ (z, t) + r = \bigl(T^{k}z, t + r - \sum\limits_{i=0}^{k-1}f(T^{i}z)\bigr), \]
where \( k \ge 0 \) is defined uniquely by the condition
\( \sum_{i=0}^{k-1}f(T^{i}z) \le t + r < \sum_{i=0}^{k}f(T^{i}z) \); similarly, for \( r \le 0 \)
the action is
\[
  (z, t) + r = \bigl(T^{-k}z, t + r + \sum\limits_{i=1}^{k}f(T^{-i}z)\bigr),
\]
where \( k \ge 0 \) satisfies \( 0 \le t + r + \sum\limits_{i=1}^{k}f(T^{-i}z) < f(T^{-k}z) \).
Such a flow preserves the restriction onto \(X\) of the product measure \(\nu \times \lambda\).  The
space \((X, \mu)\), where
\[
  \mu = \dfrac{\nu \times \lambda}{\int_{Z}f\, d\nu}\restriction_{X},
\] is a standard probability space.  The automorphism \(T\) in the suspension flow construction is
called the \textbf{base automorphism}.

\begin{definition}\label{def:flip-kak-eq}
  Two flows are \textbf{flip Kakutani equivalent}\index{Flip Kakutani equivalence} if they are
  isomorphic to suspension flows over flip-conjugate base automorphisms.
\end{definition}

It is important to note that the construction of suspension flows can be reversed through the use of
cross-sections\footnote{In full generality, the definition of a cross-section should actually be
  relaxed, replacing lacunarity with discreteness in each orbit, and only requiring the gap function
  of the cross-section to be integrable.}.  Given a free flow on \((X,\mu)\) and a cocompact
\(U\)-lacunary cross-section \(\mathcal{C} \subseteq X\) for a precompact neighborhood of the
identity \(U\subseteq G\), there is a unique finite measure \(\nu\) on \(\mathcal C\) such that the
map \(U\times \mathcal C\to\mathcal C+U\subseteq X\) taking \((t,c)\) to \(c+t\) is
measure-preserving (see~\cite[Prop.~4.3]{kyedBettiNumbersLocally2015} for the general construction).
The first-return map \(\sigma_\mathcal C: \mathcal C\to\mathcal C\) is measure-preserving, and our
initial flow can be seen as the flow built under the gap function \(\gap_{\mathcal C}\) with the
base transformation \(\sigma_{\mathcal{C}}\).

We require the following key result, established by
D.~Rudolph~\cite{rudolphTwovaluedStepCoding1976}. In light of the preceding discussion, it can be
restated as follows: every free measure-preserving flow is conjugate to a suspension flow with a
two-valued function.

\begin{theorem}[Rudolph]
  \label{thm:rudolph-2-valued-step-coding}
  Let \(t_0\in \R^{>0} \setminus \Q\) be a positive irrational number.  Any free measure-preserving
  flow on a standard probability space admits a cross-section whose gap function takes only the
  values \(1\) and \(t_0\) almost surely.
\end{theorem}

\begin{remark}
  The second-named author has obtained a generalization of this result in the purely Borel context;
  see~\cite{slutskyRegularCrossSections2019}.
\end{remark}

\begin{theorem}
  \label{thm:flip-Kakutani-equivalence-same-orbits}
  Let \(\mathcal{F}, \mathcal{F}'\) be free measure-preserving flows on \((X,\mu)\) that share the
  same orbits, namely \(\eqr_{\mathcal{F}} = \eqr_{\mathcal{F}'}\).  If
  \(\mathcal{F}' \leq \lfgr{\mathcal{F}}\), then \(\mathcal{F}\) and \(\mathcal{F}'\) are flip
  Kakutani equivalent.
\end{theorem}

\begin{proof}
  We denote the flow \(\mathcal{F}\) using our usual notation, \((x,t) \mapsto x + t\).  As
  explained right after Definition~\ref{def:flip-kak-eq}, it suffices to find cross-sections for
  \(\mathcal{F}\) and \(\mathcal{F}'\) such that the corresponding first return automorphisms are
  flip-conjugate.

  Fix some irrational \(t_{0} > 1\), and let \(\mathcal{C} \subseteq X\) be a Borel cross-section
  for \(\mathcal{F}\) such that, outside a Borel \(\mathcal{F}\)-invariant null set, we have
  \(\gap_{\mathcal{C}}(c) \in\{1,t_0\} \) for all \(c \in \mathcal{C}\), as provided by
  Theorem~\ref{thm:rudolph-2-valued-step-coding}. Define the automorphism \(T : X \to X \) by
  \begin{displaymath}
    Tx =
    \begin{cases}
      \sigma_{\mathcal{C}}(c) + \alpha
      & \textrm{if \(x = c + \alpha\) for some \(c \in \mathcal{C}\), \(\alpha \in [0,1]\)},\\
      x & \textrm{otherwise}.
    \end{cases}
  \end{displaymath}
  The transformation \(T\) is obtained by gluing together the identity map, \(x\mapsto x+1\), and
  \(x\mapsto x+t_0\).  Since all these belong to \(\lfgr{\mathcal{F}}\), which is finitely full, we
  have \(T\in \lfgr{\mathcal{F}}\) as well.  Note that \(T\) is dissipatively supported and is
  therefore flip-conjugate to its \(\mathcal{F}'\)-reordering \(\reor{T}\) by
  Proposition~\ref{prop:conjugated-reordering}.  In other words, there is a \(T\)-invariant Borel
  set \(Z \subseteq X\) of full measure, \(\mu(Z) = 1 \), and a \(T\)-invariant Borel partition
  \(Z = Z^{+} \sqcup Z^{-}\) such that \(T\restriction_{Z^{+}}\) is conjugate to
  \(\reor{T}\restriction_{Z^{+}}\) and \(T\restriction_{Z^{-}}\) is conjugate to
  \(\reor{T}^{-1}\restriction_{Z^{-}}\).

  Let \(\nu\) be the measure on \(\mathcal{C}\) given for a Borel \(A \subseteq \mathcal{C}\) by
  \( \nu(A) = \mu(A + [0,1))\).  The measure \(\mu \restriction_{\mathcal{C} +[0,1)}\) is naturally
  isomorphic to \((\nu \times \lambda)\restriction_{\mathcal{C} + [0,1)} \), where \(\lambda\) is
  the Lebesgue measure on \([0,1]\), and we therefore have
  \[\forall^{\nu \times \lambda} (c, \lambda) \in \mathcal{C} \times [0,1)\quad c + \alpha \in Z.\]
  By Fubini's theorem, this is equivalent to
  \[\forall^{\lambda}\alpha \in [0,1)\ \forall^{\nu} c \in \mathcal{C}\quad (c + \alpha \in Z).\]
  Therefore, there exists some \(\alpha_{0} \in [0,1)\) such that
  \(\nu (\{ c \in \mathcal{C} : c + \alpha_{0} \in Z \}) = 1 \).  Note that
  \(T\restriction_{\mathcal{C} + \alpha_{0}}\) is the first return map on
  \(\mathcal{C} + \alpha_{0}\) in the order of the flow \(\mathcal{F}\), whereas
  \(\reor{T} \restriction_{\mathcal{C} + \alpha_{0}}\) is the first return map in the order induced
  on the orbits by \(\mathcal{F}'\).  Since \(T\restriction_{\mathcal{C} + \alpha_{0}}\) and
  \(\reor{T} \restriction_{\mathcal{C} + \alpha_{0}}\) are flip-conjugate, the flows are flip
  Kakutani equivalent.
\end{proof}

Theorem~\ref{thm:flip-Kakutani-equivalence-same-orbits} has the following straightforward
consequences.

\begin{corollary}
  \label{thm:flip-kakutani-equivalence}
  If two free ergodic measure-preserving flows are \(\LL^{1}\) orbit equivalent, then they are also
  flip Kakutani equivalent.
\end{corollary}
\begin{proof}
  This now follows from the definition of \(\LL^1\) orbit equivalence, see Definition~\ref{def:l1oe}
  and the paragraph thereafter.
\end{proof}

\begin{corollary}
  \label{cor:flip-kakutani-equivalence-same-l1}
  If two free ergodic measure-preserving flows have abstractly isomorphic \(\LL^{1}\) full groups,
  then they are also flip Kakutani equivalent.
\end{corollary}

\begin{proof}
  We have seen in Proposition~\ref{prop:l1-full-oe-implies-l1-oe} that the isomorphism of \(\LL^1\)
  full groups of ergodic flows implies \(\LL^1\) orbit equivalence, so the result follows from the
  previous corollary.
\end{proof}

Kakutani equivalence is a highly non-trivial equivalence relation
(see~\cite{ornsteinEquivalenceMeasurePreserving1982}
or~\cite{MR4850604,kundeAnticlassificationResultsWeakly2023}). It seems likely, however, that
\(\LL^{1}\) full groups of flows contain even more information about the action.  The only
continuous automorphisms of \( \R \) are multiplications by nonzero scalars, and we ask whether the
isomorphism of \(\LL^{1}\) full groups necessarily recovers the action up to such an automorphism.

\begin{question}
  \label{quest:belinskaya-for-flows}
  Let \( \mathcal{F}_{1} \) and \( \mathcal{F}_{2} \) be free ergodic measure-preserving flows with
  isomorphic \( \LL^{1} \) full groups.  Is it true that there exists \( \alpha\in\R^* \) such that
  \( \mathcal{F}_{1} \) and \( \mathcal{F}_{2}\circ m_\alpha \) are isomorphic, where \( m_\alpha \)
  denotes the multiplication by \( \alpha \)?
\end{question}

Note that a positive answer to Question~\ref{qu:does-commensurating-always-hold} would imply a
positive answer to the above question.

\section{Maximality of the \texorpdfstring{\(\LL^1\)}{L1} norm and
  geometry}\label{sec:geom-l1-full-groups}

In this last section, we show that the \(\LL^1\) norm is maximal on \(\LL^1\) full groups of flows.
In particular, it defines their quasi-isometry type.  Exploring this quasi-isometry type further
might lead to topological group invariants capable of distinguishing some ergodic flows.

\begin{theorem}\label{thm:l1-norm-maximal}
  Let \(\mathcal F\) be a free measure-preserving flow.  The \(\LL^1\) norm on \(\lfgr{\mathcal{F}}\) is
  maximal.
\end{theorem}
\begin{proof}
  We have already shown that the \(\LL^1\) norm on the derived \(\LL^1\) full group is maximal (see
  Theorem~\ref{thm:lc-amenable-derived-maximal}).  Denote by \((\mathcal E,p)\) the space of
  \(\mathcal F\)-invariant ergodic probability measures, where \(p\) is the probability measure
  arising from the disintegration of \(\mu\), which we write as \(x\mapsto \nu_x\) (see
  Section~\ref{sec:ergod-decomp}).  The derived \(\LL^1\) full group is equal to the kernel of the
  surjective index map \(\ind : \lfgr{\mathcal F} \to \LL^1(\mathcal E, p, \R)\) and the quotient
  norm on \(\lfgr{\mathcal{F}}/\ker I\) is equal to the \(\LL^1\) norm on
  \(\LL^1(\mathcal E,p, \R)\) by Proposition~\ref{prop:index-map}.  The latter norm is maximal, as
  is the case for any Banach space norm.

  Given a function \(f \in \LL^1(\mathcal E,p, \R) \), let \(U_{f} \in \lfgr{\mathcal{F}}\) be given by
  \(U_{f}(x) = x + f(\nu_{x})\).  The cocycle \(\rho_{U_{f}}(x) = f(\nu_{x})\) is constant on each
  ergodic component and \(\snorm{U_{f}}_{1} = \snorm{f}_{1}\).  Furthermore, \(\ind(U_{f}) = f\). We
  show that \(\snorm{\,\cdot\,}\) is both large-scale geodesic and coarsely proper (see
  Appendix~\ref{sec:maximal-norms} and Proposition~\ref{prop:chara-maximal-norm}, in particular).

  Any \(T \in \lfgr{\mathcal{F}}\) can be written as \(T = (TU_{\ind(T)}^{-1}) U_{\ind(T)}\), where
  the transformation \(TU_{\ind(T)}^{-1} \in \ker \ind = \derived(\lfgr{\mathcal{F}})\), and
  \(\snorm{U_{\ind(T)}}_{1} \le \snorm{T}_{1}\).  In particular, we have
  \(\snorm{TU_{\ind(T)}^{-1}}_{1} \le 2 \snorm{T}_{1}\).

  Since the \(\LL^{1}\) norm is maximal on \(\derived(\lfgr{\mathcal{F}})\), it is large-scale
  geodesic.  In fact, Proposition~\ref{prop:induction-friendly-maximal-norm} establishes that it is
  large-scale geodesic with constant \(K = 2\).  We may therefore express \(TU_{\ind(T)}^{-1}\) as a
  product \(V_{1} \cdots V_{n}\) of elements \(V_{i} \in \derived(\lfgr{\mathcal{F}})\), where each
  \(V_{i}\) has norm at most \(K\) and
  \[
    \sum_{i=1}^n\snorm{V_i}_1\leq K\snorm{TU_{\ind(T)}^{-1}}_{1} \leq 2K\snorm{T}_1.
  \]
  The transformation \(U_{\ind(T)}\) can, for any \(m \ge 1\), also be expressed as a product
  \[U_{\ind(T)} = U_{\ind(T)/m} \cdots U_{\ind(T)/m} = U_{\ind(T)/m}^{m}.\]
  By taking \(m\) sufficiently large, we can ensure that
  \(\snorm{U_{\ind(T)/m}}_{1} = \snorm{\ind(T)/m}_{1} \le K\).  Therefore,
  \(T = (V_{1} \cdots V_{n})(U_{\ind(T)/m} \cdots U_{\ind(T)/m})\), and
  \[\sum_{i=1}^{n}\snorm{V_{i}}_{1} + \sum_{j=1}^{m}\snorm{U_{\ind(T)/m}}_{1} \le 2K \snorm{T}_{1} +
    \snorm{U_{\ind(T)}}_{1} \le 3K\snorm{T}_{1}.\]
  We conclude that the norm \(\snorm{\,\cdot\,}\) on \(\lfgr{\mathcal{F}}\) is large-scale geodesic
  with \(K' = 3K = 6\).

  It remains to prove coarse properness. Let \(\epsilon>0\) and \(R>0\) be positive reals.  By
  Theorem~\ref{thm:lc-amenable-derived-maximal}, there exists \(n\in\N\) so large that every element in
  the derived \(\LL^1\) full group of norm at most \(2R\) is a product of \(n\) elements of norm at
  most \(\epsilon\). Let \(N\) be any integer greater than \(R/\epsilon\). We argue that every
  element of \(\lfgr{\mathcal{F}}\) of norm at most \(R\) is a product of \(2n+N\) elements of norm
  at most \(\epsilon\).

  Indeed, if \(T=(TU_{\ind(T)}\inv) U_{\ind(T)}\) has norm at most \(R\), then
  \[\snorm{TU_{\ind(T)}\inv}_{1} \leq 2\norm T_1\leq 2R,\]
  and \(TU_{\ind(T)}\inv\) can therefore by written as a product of \(n\) elements of
  \(\derived(\lfgr{F})\) each of norm \(\le \epsilon\).  Moreover, \(U_{\ind(T)} = U_{\ind(T)/N}^{N}\)
  and \(\snorm{U_{\ind(T)/N}}_{1} \le \epsilon\) by the choice of \(N\).  The conclusion follows.
\end{proof}

\begin{remark}
  While the proposition above states that \(\LL^1\) full groups of flows are quite large, one can
  use Proposition~\ref{prop:commensuration-of-L1} to show that they satisfy the \emph{Haagerup
    property}\index{Haagerup property}.  In other words, such groups admit a coarsely proper affine
  action on a Hilbert space (namely, the affine Hilbert space
  \(\chi_{\mathcal R^{\geq 0}}+\LL^2(\mathcal R,M)\)).
\end{remark}

Corollary~\ref{cor:flip-kakutani-equivalence-same-l1} along
with~\cite[Sec.~12]{ornsteinEquivalenceMeasurePreserving1982} implies that there are uncountably
many \(\LL^1\) full groups of ergodic free flows up to topological group isomorphism.  It would be
interesting if their geometry allowed us to distinguish these groups.  However, we do not even know
the answer to the following question.

\begin{question}
  Are there two free ergodic measure-preserving flows with non-quasi-isometric \(\LL^1\) full
  groups?
\end{question}


\appendix

\chapter{Normed groups}
\label{chap:normed-groups}

We chose to present our work in the framework of groups equipped with compatible norms rather than
metrics.  These two frameworks are equivalent, but the former has some stylistic advantages, in our
opinion. In Appendix~\ref{chap:normed-groups}, we remind the reader the concept of a norm on a group
(Section~\ref{sec:norms-on-groups}) and state C.~Rosendal's results on maximal norms
(Section~\ref{sec:maximal-norms}).

\section{Norms on groups}
\label{sec:norms-on-groups}

\begin{definition}
  \label{def:group-norm}
  A \textbf{norm}\index{Norm} on a group \( G \) is a map
  \( \snorm{\cdot}: G \to \mathbb{R}^{\ge 0} \) such that for all \( g, h \in G \)
  \begin{enumerate}
  \item \( \snorm g = 0 \) if and only if \( g = e \);
  \item \( \snorm g = \snorm{g^{-1}} \);
  \item \( \snorm{gh} \le \snorm{g} + \snorm{h} \).
  \end{enumerate}
  If \(G\) is moreover a topological group, a norm \(\norm{\cdot}\) on \(G\) is called
  \textbf{compatible}\index{Norm!compatible} if the balls \(\{g \in G : ||g|| < r\}\), \( r > 0\),
  form a basis of neighborhoods of the identity.  When \( G \) is a Polish group equipped with a
  compatible norm \(\norm\cdot\), we refer to the pair \( (G,\norm\cdot) \) as a \textbf{Polish
    normed group}\index{Polish group!normed}.
\end{definition}

There is a correspondence between (compatible) left-invariant metrics on a group and (compatible)
norms on it.  Indeed, given a left-invariant metric \( d \) on \( G \), the function
\( \snorm{g} = d(e,g) \) is a norm. Conversely, from a norm \( \snorm{\cdot} \) one can recover the
left-invariant metric \( d \) via \( d(g,h) = \snorm{g^{-1}h} \).  Analogously, there is a
correspondence between norms and right-invariant metrics given by \( d(g,h) = \snorm{hg^{-1}} \).

The language of group norms thus contains the same information as the formalism of left-invariant
(or right-invariant) metrics, but it has the stylistic advantage of removing the need of making a
choice between the invariant side, when such a choice is immaterial.

\begin{remark}
  \label{rem:norms-from-non-invariant-metrics}
  Note, however, that there are metrics that are neither left- nor right-invariant, which
  nonetheless induce a group norm via the same formula \( \snorm{g} = d(g,e) \).  Consider for
  example a Polish group \( G \) with a compatible left-invariant metric \( d' \) on it.  If \( G \)
  is not a CLI group, the metric \( d' \) is not complete, but the metric
  \[ d(f,g) = \frac{d'(f,g) + d'(f^{-1},g^{-1})}{2} \]
  is complete. Since \( d(g,e) = d'(g,e) \), we see that \( d \) induces the same norm
  \( \snorm{\cdot} \) as does the left-invariant metric \( d' \).
\end{remark}

There is a canonical way to push a norm onto a factor group.

\begin{proposition}[{see~\cite[Thm.~2.2.10]{MR2459668}}]
  \label{prop:quotient-norm}
  Let \( (G, \snorm{\cdot}) \) be a Polish normed group, and let \( H \trianglelefteq G \) be a
  closed normal subgroup of \( G \). The function
  \[ \snorm{gH}^{G/H} = \inf\{\snorm{gh} : h\in H\} \]
  is a norm on \( G/H \) which is compatible with the quotient topology. In particular,
  \( (G/H, \snorm{\cdot}^{G/H}) \) is a Polish normed group.
\end{proposition}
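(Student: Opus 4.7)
The plan is to verify the three norm axioms for $\snorm{\cdot}^{G/H}$ and then separately verify topological compatibility, so that Polishness follows from the standard fact that $G/H$ is Polish when $H$ is closed normal in a Polish group $G$.

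First I would check the axioms. Non-degeneracy: if $\snorm{gH}^{G/H}=0$, pick $h_n\in H$ with $\snorm{gh_n}\to 0$, so $h_n\to g^{-1}$ in $G$; closedness of $H$ gives $g^{-1}\in H$, hence $gH=H$. Symmetry: by normality of $H$, $\{gh:h\in H\}=\{h'g:h'\in H\}$, so
\[
\snorm{gH}^{G/H}=\inf_{h'\in H}\snorm{h'g}=\inf_{h'\in H}\snorm{g^{-1}(h')^{-1}}=\snorm{g^{-1}H}^{G/H}.
\]
Triangle inequality: for any $h_1,h_2\in H$, normality gives $g_2^{-1}h_1 g_2\in H$, so $g_2^{-1}h_1g_2 h_2\in H$ and $g_1g_2(g_2^{-1}h_1g_2h_2)=g_1h_1g_2h_2$, whence
\[
\snorm{g_1g_2 H}^{G/H}\le \snorm{g_1h_1g_2h_2}\le \snorm{g_1h_1}+\snorm{g_2h_2};
\]
taking the infimum over $h_1,h_2\in H$ yields $\snorm{g_1g_2H}^{G/H}\le \snorm{g_1H}^{G/H}+\snorm{g_2H}^{G/H}$.

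Next I would handle topological compatibility by showing that the norm-ball $B_{G/H}(eH,\varepsilon)=\{gH:\snorm{gH}^{G/H}<\varepsilon\}$ coincides with $\pi(B_G(e,\varepsilon))$, where $\pi\colon G\to G/H$ is the quotient map. Indeed, $\snorm{gH}^{G/H}<\varepsilon$ holds iff some $h\in H$ satisfies $\snorm{gh}<\varepsilon$, which happens iff $gH\cap B_G(e,\varepsilon)\neq\varnothing$, i.e.\ $gH\in\pi(B_G(e,\varepsilon))$. Since $\pi$ is an open map (because $\pi^{-1}(\pi(U))=UH=\bigcup_{h\in H}Uh$ is open for every open $U\subseteq G$), these norm-balls are open in the quotient topology. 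Conversely, given a neighborhood $V$ of $eH$ in the quotient topology, $\pi^{-1}(V)$ is a neighborhood of $e$ in $G$, and compatibility of $\snorm{\cdot}$ on $G$ gives $\varepsilon>0$ with $B_G(e,\varepsilon)\subseteq \pi^{-1}(V)$, whence $B_{G/H}(eH,\varepsilon)=\pi(B_G(e,\varepsilon))\subseteq V$. Thus the norm induces exactly the neighborhood filter of $eH$ in the quotient topology, and left-translating shows that the two topologies agree everywhere.

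Finally, since $G$ is Polish and $H$ is a closed normal subgroup, $G/H$ with the quotient topology is a Polish group by the standard quotient theorem (see e.g.\ \cite[Sec.~1.5]{MR1425877}), so $(G/H,\snorm{\cdot}^{G/H})$ is a Polish normed group.

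The only place where the hypotheses are used in a nontrivial way is in the triangle inequality (where normality is essential) and in non-degeneracy (where closedness of $H$ is essential). Neither is a deep obstacle; the main thing to be careful about is verifying that the quotient map is open, which underlies the coincidence of the norm topology with the quotient topology.
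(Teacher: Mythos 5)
Your proof is correct, and since the paper offers no argument of its own for this proposition (it only cites \cite[Thm.~2.2.10]{MR2459668}), your verification of the three norm axioms together with the identity \( B_{G/H}(eH,\varepsilon)=\pi(B_G(e,\varepsilon)) \) and the openness of \( \pi \) is exactly the standard argument behind the cited result. Nothing is missing: closedness of \( H \) is correctly isolated as the source of non-degeneracy, normality as the source of the triangle inequality, and Polishness of \( G/H \) is correctly delegated to the standard quotient theorem.
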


\begin{definition}
  \label{def:proper-norm}
  A compatible norm \( \snorm{\cdot} \) on a locally compact Polish group \( G \) is
  \textbf{proper}\index{Norm!proper} if all balls \( \{ g \in G : \snorm{g} \le r \} \) are compact.
\end{definition}
R.~A.~Struble~\cite{MR348037} showed that all locally compact Polish groups admit a compatible
proper norm.

\section{Maximal norms}
\label{sec:maximal-norms}

As we noted in Lemma~\ref{lem:quasi-metric-structure-same-L1}, quasi-isometric norms yield the same
\(\LL^{1}\) full groups. C.~Rosendal identified the class of Polish groups that admit maximal norms,
which are unique up to quasi-isometry.  In this section, we state some results from C.~Rosendal's
treatise~\cite{MR4327092}, which are relevant to our work.  For the reader's convenience, we
formulate the following definitions and propositions in the language of group norms as opposed to
left-invariant metrics or écarts, as in the original reference.

\begin{definition}[{\cite[Def.~2.68]{MR4327092}}]
  \label{def:maximal-norm}
  A compatible norm \( \norm\cdot \) on a Polish group~\( G \) is said to be \textbf{maximal} if for any
  compatible norm \( \norm{\cdot}' \) there is a constant \( C>0 \) such that
  \( \snorm{g}' \leq C \snorm{g} + C\) for all \(g \in G\).
\end{definition}

\begin{definition}[{\cite[Prop.~2.15]{MR4327092}}]\label{def:boundedly-generated}
  Let \( G \) be a Polish group. A subset \( A\subseteq G \) is \textbf{coarsely
    bounded}\index{Set!coarsely bounded} if for
  every continuous isometric action of \( G \) on a metric space \( (M,d_M) \), the set
  \( A\cdot m \) is bounded for each \( m\in M \), i.e., there is \( K>0 \) such that
  \[ d_{M}(a_1\cdot m,a_2\cdot m)\leq K \quad \textrm{for all } a_1,a_2\in A. \]
  A Polish group \( G \) is \textbf{boundedly generated}\index{Polish group!boundedly generated} if
  it is generated by a coarsely bounded set.
\end{definition}

\begin{theorem}[{\cite[Thm.~2.73]{MR4327092}}]
  \label{thm:rosendal-boundedly-generated-maximal}
  A Polish group admits a maximal compatible norm if and only if it is boundedly generated.
\end{theorem}

The following characterization is available to establish maximality of a given norm.

\begin{definition}[{\cite[Def.~2.62]{MR4327092}}]
  \label{def:large-scale-geodesic}
  A norm \(\norm{\cdot}\) on a group \(G\) is called \textbf{large-scale
    geodesic}\index{Norm!large-scale geodesic} if there is \(K>0\) such that for any \(g\in G\),
  there are \(g_1,\ldots,g_n\in G\) of norm \(\snorm{g_{i}} \le K\), \(1 \le i \le n\), such that
  \(g=g_1\cdots g_n\) and
  \[
    \sum_{i=1}^{n}\norm{g_i}\leq K \norm g.
  \]
\end{definition}

\begin{definition}[{\cite[Lem.~2.39(2) and Prop.~2.7(5)]{MR4327092}}]
  \label{def:coarsely-proper}
  A norm \(\snorm\cdot\) on a group \(G\) is called
  \textbf{coarsely proper}\index{Norm!coarsely proper} if for every \(\epsilon>0\) and every \(R>0\), there are a finite subset
  \(F\subseteq G\) and \(n\in\N\) such that every element \(g\in G\) of norm at most \(R\) can be
  written as a product
  \[
    g=f_1g_1\cdots f_ng_n,
  \]
  where \(f_1,\dots,f_n\in F\) and each \(g_i\) has norm at most \(\epsilon\).
\end{definition}

\begin{proposition}[{\cite[Prop.~2.72]{MR4327092}}]
  \label{prop:chara-maximal-norm}
  A compatible norm \(\norm\cdot\) on a Polish group \(G\) is maximal if and only if it is both
  large-scale geodesic and coarsely proper.
\end{proposition}


\chapter{Spaces and groups of measurable maps}\label{sec:spaces-and-groups-of-measurable-maps}

\section{\texorpdfstring{\(\LL^0\)}{L0} spaces and convergence in measure}

In this section, we introduce the topology of convergence in measure for spaces of measurable
functions and explore its connections to an \(\LL^1\)-type metric as well as its relationship with
the group \(\Aut(X,\mu)\).

\begin{definition}
  Let \((X,\mu)\) be a standard probability space, and let \(Y\) be a Polish space. The space
  \(\LL^0(X,\mu,Y)\)\index{L0 space@\(\LL^{0}\) space}, often denoted by \(\LL^0(X,Y)\) for brevity,
  consists of equivalence classes of measurable maps \(f : X \to Y\), where functions are identified
  up to null sets. This space is equipped with the \textbf{topology of convergence in
    measure}\index{Convergence in measure}, which is generated by the sets
  \(\tilde{U}_{\epsilon,A}\) defined as follows: for every measurable subset \(A \subseteq X\),
  every open subset \(U \subseteq Y\), and every \(\epsilon > 0\),
  \[
    \tilde U_{\epsilon,A} = \{f \in \LL^0(X,Y) : \mu(f^{-1}(U) \cap A) > \epsilon\}.
  \]
\end{definition}

The topology of convergence in measure is Polish. The justification of this fact is postponed to
Proposition~\ref{prop:polish-topo-cv-measure}. First, we take a brief detour to justify why this
topology is appropriately named the topology of convergence in measure.

\begin{lemma}\label{lem:L0-nbhd-basis}
  Let \( d_{Y} \) be a compatible metric on \( Y \). For \( f_{0} \in \LL^{0}(X,Y) \), a
  neighborhood basis of \( f_{0} \) is given by the sets
  \[
    \bigl\{ f \in \LL^{0}(X,Y) : \mu\bigl(\{ x \in X : d_{Y}(f(x), f_{0}(x)) \ge \epsilon \}\bigr) <
    \epsilon \bigr\}, \quad \epsilon > 0.
  \]
\end{lemma}

\begin{proof}
  Fix \(\epsilon > 0\) and a dense sequence \( (y_{n})_{n} \) in \( Y \). For each \( n \), define
  \[
    A_n = \bigl\{x \in X : d_Y(f_0(x), y_n) < \tfrac{\epsilon}{2}\bigr\}.
  \]
  By the density of \((y_{n})_{n}\), we have \( X = \bigcup_n A_n \). Consequently, there exists
  \( N \ge 1 \) such that \( \mu\bigl(\bigcup_{n < N} A_n\bigr) > 1 - \tfrac{\epsilon}{2}
  \). Without loss of generality, by re-enumerating the sequence \((y_{n})_{n}\) if necessary, we
  may assume that \(\mu(A_{n}) > 0\) for all \(n < N\). Let \( U^n \) denote the open ball of radius
  \( \tfrac{\epsilon}{2} \) centered at \( y_n \), and set
  \[
    \epsilon_{n} = \min\bigl\{\tfrac{\mu(A_{n})}{2}, \tfrac{\epsilon}{2N}\bigr\}.
  \]
  For any \(f \in \LL^{0}(X, Y)\), the set
  \( B = \{x \in X : d_{Y}(f_{0}(x), f(x)) \ge \epsilon\} \) is contained in
  \[
    \Bigl(X \setminus \bigcup_{n < N} A_{n}\Bigr) \cup \bigcup_{n < N} \{x \in A_{n} : d_{Y}(f(x),
    y_{n}) \ge \tfrac{\epsilon}{2}\},
  \]
  and therefore
  \begin{displaymath}
    \begin{aligned}
      \mu(B)
      &\le \mu\Bigl(X \setminus \bigcup_{n < N} A_{n}\Bigr) +
        \sum_{n < N} \mu\bigl(\{x \in A_{n} : d_{Y}(f(x), y_{n}) \ge \tfrac{\epsilon}{2}\}\bigr) \\
      \because \textrm{choice of \(N\)}
      &< \frac{\epsilon}{2} +
        \sum_{n < N} \mu\bigl(\{x \in A_{n} : d_{Y}(f(x), y_{n}) \ge \tfrac{\epsilon}{2}\}\bigr) \\
      &= \frac{\epsilon}{2} + \sum_{n < N} \mu\bigl(A_{n} \setminus f^{-1}(U^{n})\bigr) \\
      &= \frac{\epsilon}{2} + \sum_{n < N} \bigl(\mu(A_{n}) - \mu(f^{-1}(U^{n}) \cap A_{n})\bigr).
    \end{aligned}
  \end{displaymath}
  If \( f \in \tilde{U}^n_{\mu(A_n) - \epsilon_{n}, A_n} \), then
  \[
    \mu(A_{n}) - \mu(f^{-1}(U^{n}) \cap A_{n}) < \epsilon_{n} \le \tfrac{\epsilon}{2N},
  \]
  and therefore the open set \( \bigcap_{n < N} \tilde{U}^n_{\mu(A_n) - \epsilon_{n}, A_n} \)
  satisfies the desired inclusion
  \[
    f_{0} \in \bigcap_{n < N} \tilde{U}^n_{\mu(A_n) - \epsilon_{n}, A_n} \subseteq \bigl\{ f \in
    \LL^0(X, Y) : \mu\bigl(\{x \in X : d_Y(f(x), f_0(x)) \ge \epsilon\}\bigr) < \epsilon \bigr\}.
  \]

  Conversely, given any \(f_0 \in \tilde U_{\epsilon,A}\), we need to show the existence of
  \(\delta > 0\) such that
  \[
    \left\{ f \in \LL^0(X,Y) : \mu\left(\{x \in X : d_Y(f(x), f_0(x)) \ge \delta\}\right) < \delta
    \right\} \subseteq U_{\epsilon, A}.
  \]
  Since \(U\) is open, we have
  \[
    f_0^{-1}(U) \cap A = \bigcup_{n} \left\{x \in A : d_Y(f_0(x), Y \setminus U) >
      \tfrac{1}{n}\right\}.
  \]
  Thus, there exists \(n \in \N\) sufficiently large such that
  \[
    \mu\bigl(\{x \in A : d_Y(f_0(x), Y \setminus U) > \tfrac{1}{n}\}\bigr) > \epsilon.
  \]
  Let
  \begin{equation}
    \label{eq:condition-on-delta}
    \delta = \min\bigl\{\tfrac{1}{n}, \mu\bigl(\{x \in A : d_Y(f_0(x), Y \setminus U) >
    \tfrac{1}{n}\}\bigr) - \epsilon\bigr\}.
  \end{equation}
  Now, consider \(f \in \LL^0(X,Y)\) satisfying
  \begin{equation}
    \label{eq:condition-on-f}
    \mu\left(\left\{x \in X : d_Y(f_0(x), f(x)) \ge \delta\right\}\right) < \delta.
  \end{equation}
  Then,
  \[
    f^{-1}(U) \cap A \supseteq \left\{x \in A : d_Y(f_0(x), f(x)) < \delta \text{ and } d_Y(f_0(x),
      Y \setminus U) > \tfrac{1}{n}\right\},
  \]
  and the latter set can be rewritten as
  \[
    A \setminus \bigl(\{x \in A : d_Y(f_0(x), f(x)) \ge \delta\} \cup \{x \in A : d_Y(f_0(x), Y
    \setminus U) \le \tfrac{1}{n}\}\bigr).
  \]
  Consequently,
  \begin{multline*}
    \mu(f^{-1}(U) \cap A) \ge \mu(A) - \mu\bigl(\{x \in A : d_Y(f_0(x), f(x)) \ge
    \delta\}\bigr) \\ - \mu\bigl(\{x \in A : d_Y(f_0(x), Y \setminus U) \le
    \tfrac{1}{n}\}\bigr).
  \end{multline*}
  Since \(f\) is assumed to satisfy Eq.~\eqref{eq:condition-on-f} and \(\delta\) is chosen according
  to Eq.~\eqref{eq:condition-on-delta}, we get
  \begin{multline*}
    \mu(f^{-1}(U) \cap A) > \mu(A) - \mu\bigl(\{x \in A : d_Y(f_0(x), Y \setminus U) >
    \tfrac{1}{n}\}\bigr) + \epsilon \\
    - \mu\bigl(\{x \in A : d_Y(f_0(x), Y \setminus
    U) \le \tfrac{1}{n}\}\bigr).
  \end{multline*}
  The two negative terms sum to \(-\mu(A)\), and thus we have \(\mu(f^{-1}(U) \cap A) > \epsilon\).
  We conclude that
  \[
    \bigl\{ f \in \LL^0(X,Y) : \mu(\{x \in X : d_Y(f_0(x), f(x)) \ge \delta\}) < \delta \bigr\}
    \subseteq \tilde U_{\epsilon,A}.\qedhere
  \]
\end{proof}

Now suppose that \( d_Y \) is a compatible \emph{bounded} metric on \(Y\); for instance,
\[d_{Y}(y_{1}, y_{2}) = \min\{1, d'_{Y}(y_{1}, y_{2})\}, \quad y_{1}, y_{2} \in Y,\]
for an arbitrary compatible metric \(d'_{Y}\). We can then equip \( \LL^0(X,Y) \) with the metric
\( \tilde d_Y \), defined by
\begin{equation}\label{eq:d-tilde}
  \tilde d_Y(f,g) = \int_X d_Y(f(x), g(x)) \, d\mu(x).
\end{equation}
The following properties of convergence in measure and \( \tilde d_Y \) are well-known.
\begin{lemma}\label{lem:basic-facts-on-L0}
  Let \(d_Y\) be a compatible bounded metric on \(Y\). The following properties hold:
  \begin{enumerate}
  \item\label{item:compatible-metric-L0} The metric \(\tilde d_Y\) is compatible with the topology
    of convergence in measure.
  \item\label{item:cv-in-measure-vs-pointwise} A sequence of functions \((f_n)_{n}\) converges in
    measure to \(f\) if and only if every subsequence of \((f_n)_{n}\) has a further subsequence
    that converges pointwise to \(f\).
  \end{enumerate}
\end{lemma}

\begin{proof}
  \eqref{item:compatible-metric-L0} We employ the neighborhood basis established in
  Lemma~\ref{lem:L0-nbhd-basis}. For any \(\epsilon > 0\), Markov's inequality yields
  \[
    \epsilon \cdot \mu(\{x \in X : d_Y(f(x), f_0(x)) \geq \epsilon\}) \leq \int_X d_Y(f(x), f_0(x))
    \, d\mu(x) = \tilde d_Y(f, f_0),
  \]
  demonstrating that the topology induced by \(\tilde d_Y\) refines the topology of convergence in
  measure.

  Conversely, let \(K > 0\) be a bound on \(d_Y\). If
  \[
    \mu(\{x \in X : d_Y(f(x), f_0(x)) \geq \frac{\epsilon}{K}\}) < \frac{\epsilon}{K},
  \]
  then \(\tilde d_Y(f, f_0) < \frac{\epsilon}{K} + \epsilon\). This shows that the topology of
  convergence in measure refines the topology induced by \(\tilde d_Y\). Consequently, the two
  topologies coincide.

  \eqref{item:cv-in-measure-vs-pointwise} We begin with the direct implication. Assume that
  \(f_n \to f\) in measure and consider a subsequence \((f_{n_k})_k\). By passing to a further
  subsequence (still denoted \((f_{n_k})_k\) for simplicity), Lemma~\ref{lem:L0-nbhd-basis} lets us
  assume that for all \(k \in \N\), the set
  \[
    A_k = \{x \in X : d_Y(f_{n_k}(x), f(x)) \geq 2^{-k}\}
  \]
  has measure less than \(2^{-k}\). By the Borel--Cantelli lemma, almost every \(x \in X\) belongs
  to only finitely many \(A_k\). This implies that \(f_{n_k}\) converges pointwise to \(f\) almost
  surely.

  For the converse, assume that every subsequence of \((f_n)_n\) admits a further subsequence that
  converges pointwise to \(f\). Note that \(\tilde d_Y(f_n, f) \to 0\) holds if and only if every
  subsequence \((f_{n_{k}})_{k}\) of \((f_n)_n\) has a further subsequence \((f_{n_{k_{i}}})_i\)
  such that \(\tilde d_Y(f_{n_{k_{i}}}, f) \to 0\). Thus, it suffices to show that if \((f_n)_{n}\)
  converges to \(f\) pointwise, then also \(\tilde d_Y(f_n, f) \to 0\). This follows directly from
  the Lebesgue's dominated convergence theorem, in view of the boundedness of \(d_Y\).
\end{proof}

We finish this section with the following lemma.

\begin{lemma}\label{lem:continuity-on-L0}
  Let \(G\) be a Polish group acting continuously on a Polish space \(X\), and let \(\mu\) be a
  Borel probability measure on \(X\). Then the map
  \begin{displaymath}
    \begin{aligned}
      \Phi: \LL^{0}(X,G) &\to \LL^{0}(X,X)\\
      \LL^{0}(X,G) \ni f  &\mapsto (x\mapsto f(x)\cdot x)\\
    \end{aligned}
  \end{displaymath}
  is continuous.
\end{lemma}
\begin{proof}
  We use the pointwise characterization of convergence in measure, as stated in
  item~\eqref{item:cv-in-measure-vs-pointwise} of Lemma~\ref{lem:basic-facts-on-L0}. Suppose
  \(f_n \to f\) in measure, where \(f_n, f \in \LL^0(X,G)\). To show that \(\Phi(f_n) \to \Phi(f)\)
  in measure, consider an arbitrary subsequence \((\Phi(f_{n_k}))_k\).

  Since \(f_n \to f\) in measure, there exists a further subsequence \((f_{n_{k_i}})_i\) that
  converges pointwise to \(f\). By the continuity of the action, we have
  \(f_{n_{k_i}}(x) \cdot x \to f(x) \cdot x\) for all \(x \in X\). This means
  \((\Phi(f_{n_{k_i}}))_i\) converges pointwise to \(\Phi(f)\), as required.
\end{proof}

\section{\texorpdfstring{\( \LL^{1} \)}{L1} spaces of pointed metric spaces}\label{sec:l1-spaces-with-metric-values}

We now restrict our attention to integrable maps. To this end, we require the target space to be a
\textbf{Polish pointed metric space}, which we define as a separable complete metric space
\( (Y, d_Y) \) equipped with a distinguished basepoint \( e \in Y \).

\begin{definition}\label{def:pointed-L1}
  Let \( (X, \mu) \) be a standard probability space, and let \( (Y, d_Y, e) \) be a Polish pointed
  metric space. We define the \textbf{\( e \)-pointed \( \LL^1 \) space}\index{L1@\(\LL^{1}\)!space}
  \( \LL^{1}_{e}(X,\mu, Y) \), often denoted by \( \LL^{1}_{e}(X, Y) \) for brevity, as the pointed
  metric space consisting of all measurable functions \( f: X \to Y \) satisfying
  \[
    \int_X d_Y(e, f(x)) \, d\mu(x) < +\infty,
  \]
  equipped with the metric
  \[
    \tilde{d}_Y(f_1, f_2) = \int_X d_Y\bigl(f_1(x), f_2(x)\bigr) \, d\mu(x),
  \]
  and with the constant function \( \hat{e}: x \mapsto e \) as its basepoint. The finiteness of the
  integral in the definition of \(\tilde{d}_{Y}\) follows from the triangle inequality, using
  \( \hat{e} \) as an intermediate point.
\end{definition}

\begin{remark}
  To simplify notation, we omit explicit reference to the metric \( d_Y \) in the notation for the
  \( \LL^1 \) space \( \LL^{1}_{e}(X, Y) \). However, it is important to note that the definition of
  this space fundamentally depends on the choice of \( d_Y \).
\end{remark}

\begin{proposition}\label{prop:l1-is-Polish-space}
  Let \( (X, \mu) \) be a standard probability space and \( (Y,d_{Y},e) \) be a Polish pointed
  metric space.  Then \( (\LL^{1}_{e}(X,Y),\tilde{d}_{Y}) \) is a Polish metric space.
\end{proposition}

\begin{proof}
  The proof follows the classical argument establishing that
  \( (\LL^{1}(X, \mathbb{R}), \tilde{d}_{\mathbb{R}}) \) is a Polish metric space. To prove
  completeness, consider a Cauchy sequence \( (f_{n})_{n} \) in \( \LL^{1}_e(X, Y) \). Without loss
  of generality, we may assume that \( \tilde{d}_{Y}(f_{n}, f_{n+1}) < 2^{-n} \),
  \( n \in \mathbb{N} \). Define the sets
  \[ A_{n} = \{x \in X : d_{Y}(f_{n}(x), f_{n+1}(x)) \ge 1/n^{2} \}, \quad n \ge 1.\]
  By Markov's inequality, \( \mu(A_{n}) \le n^{2} 2^{-n} \), and thus
  \( \sum_{n} \mu(A_{n}) < \infty \). The Borel--Cantelli lemma implies that \( (f_n(x))_{n} \) is
  pointwise Cauchy for almost every \( x \in X \). Since \( (Y, d_{Y}) \) is complete, the pointwise
  limit \(f(x) = \lim_{n}f_{n}(x)\) exists almost surely.

  Define functions \( h_{n}, h : X \to \mathbb{R}^{\ge 0} \) by
  \[ h_{n}(x) = \sum_{i < n}d_{Y}(f_{i}(x), f_{i+1}(x)), \quad h(x) = \sum_{i \in \mathbb{N}}
    d_{Y}(f_{i}(x), f_{i+1}(x)) = \lim_{n \to \infty} h_{n}(x), \]
  and note that \( h \in \LL^{1}(X, \mathbb{R}) \) by Fatou's lemma. Finally, we conclude that
  \begin{displaymath}
    \begin{aligned}
      \tilde{d}_{Y}(f_{n}, f)
      &= \int_{X}d_{Y}(f_{n}(x),f(x))\, d\mu(x) \le
        \int_{X} \sum_{k=n}^{\infty} d_{Y}(f_{k}(x), f_{k+1}(x))\, d\mu(x) \\
      &= \int_{X} (h(x) - h_{n}(x))\, d\mu(x) \to 0,
    \end{aligned}
  \end{displaymath}
  where the last convergence follows from Lebesgue's dominated convergence theorem.

  To establish separability, let \( D \subseteq Y \) be a countable dense set. The subspace of maps
  taking values in \( D \) is \( \tilde{d}_{Y} \)-dense (in fact, dense even in the much stronger
  \emph{sup} metric). The set of functions taking only finitely many values in \( D \) remains
  dense. By further restricting to functions measurable with respect to a dense countable subalgebra
  of the measure algebra on \( X \), we obtain a countable dense subset of \( \LL^{1}_{e}(X, Y) \).
\end{proof}
Note that \( \LL^{1}_{e}(X,Y) \) is a subset of \( \LL^{0}(X,Y) \). If \(d_Y\) is bounded, the
integrability condition becomes trivial, and we have \( \LL^1_e(X,Y) = \LL^0(X,Y)\). Combining
item~\eqref{item:compatible-metric-L0} from Lemma~\ref{lem:basic-facts-on-L0} with the preceding
proposition, we obtain the following well-known result.

\begin{proposition}\label{prop:polish-topo-cv-measure} If \( Y \) is a Polish space, then the
  topology of convergence in measure on \(\LL^0(X,Y)\) is Polish,
  and it is induced by the \( \LL^1 \) metric \( \tilde d_Y \) for any bounded compatible
  metric \( d_Y \) on \( Y \). \qed
\end{proposition}

More generally, we can establish a relationship between the \(\LL^1\) topology and the topology of
convergence in measure as follows.

\begin{proposition}\label{prop:L1-in-L0} Let \((Y,d_Y,e)\) be a possibly unbounded Polish pointed
  metric space. The inclusion map \(\LL^1_{e}(X,Y)\hookrightarrow \LL^0(X,Y)\) is continuous.
\end{proposition}
\begin{proof}
  Define \(d^b_Y(y_{1}, y_{2}) = \min\{d_Y(y_{1},y_{2}),1\}\), \(y_{1}, y_{2} \in Y\), to be the
  bounded complete metric on \(Y\) obtained by capping \(d_{Y}\). By
  Lemma~\ref{lem:basic-facts-on-L0}, \(\tilde{d}^b_Y\) induces the topology of convergence in
  measure on \(\LL^0(X,Y)\). Since \(d^b_Y \leq d_Y\), one also has
  \( \tilde{d}^b_Y \leq \tilde{d}_Y\). Thus, the inclusion map
  \[
    (\LL^1_{e}(X,Y),\tilde{d}_Y) \hookrightarrow (\LL^0(X,Y),\tilde{d}^b_Y)
  \]
  is \(1\)-Lipschitz and, in particular, continuous.
\end{proof}

\begin{remark}\label{rem:L1-in-L0-may-not-be-embedding}
The inclusion \(\LL^1_{e}(X,Y) \hookrightarrow \LL^0(X,Y)\) is not, in general, an embedding. To see
this, suppose \(d_{Y}\) is unbounded. Then there exist elements \(y_{n} \in Y\) such that
\(d_{Y}(y_{n},e) \ge 2^{n}\) for all \(n\). Partition the space \(X\) into disjoint sets
\(\bigsqcup_{n=1}^{\infty} A_{n}\), where \(\mu(A_{n}) = 2^{-n}\). Define functions \(f_{n}\) for
\(n \ge 1\) as follows:
\begin{displaymath}
  f_{n}(x) =
  \begin{cases}
    y_{n} & \text{if } x \in A_{n}, \\
    e     & \text{otherwise}.
  \end{cases}
\end{displaymath}
In the notation of Proposition~\ref{prop:L1-in-L0}, we have
\(\tilde{d}_{Y}^{b}(f_{n},\hat{e}) = \mu(A_{n}) \to 0\) as \(n \to \infty\), which implies
\(f_{n} \to \hat{e}\) in \(\LL^{0}\). However, since \(\tilde{d}_{Y}(f_{n},\hat{e}) \ge 1\) for all
\(n \ge 1\), this convergence does not hold in \(\LL^{1}\).
\end{remark}

The group of measure-preserving automorphisms \( \Aut(X, \mu) \) acts naturally on
\( \LL^{1}_{e}(X, Y) \) by composition, i.e., \( (T \cdot f)(x) = f(T^{-1}x) \). This action fixes
the basepoint \(\hat e\). Moreover, each automorphism acts by an isometry, as it preserves the
measure \(\mu\).

\begin{proposition}\label{prop:aut-acts-continuously}
  Let \( (X, \mu) \) be a standard probability space, and let \( (Y, d_{Y}, e) \) be a Polish
  pointed metric space. The action of \( \Aut(X, \mu) \) on \( \LL_{e}^{1}(X, Y) \) is continuous.
\end{proposition}

\begin{proof}
  The argument follows a similar approach to that in~\cite[Prop.~2.9(1)]{MR3464151}. Consider
  sequences \( T_{n} \to T \) and \( f_{n} \to f \). We need to show that
  \( T_{n} \cdot f_{n} \to T \cdot f \). Since the action is by isometries, we have
  \[
    \tilde{d}_{Y}(T_{n}\cdot f_{n}, T \cdot f) = \tilde{d}_{Y}(f_{n}, T_{n}^{-1} T \cdot f ) \le
    \tilde{d}_{Y}(f_{n}, f) + \tilde{d}_{Y}(f, T_{n}^{-1} T \cdot f).
  \]
  Thus, it suffices to prove that for any \( f \in \LL^{1}_{e}(X, Y) \) and any convergent sequence
  of automorphisms \( T_{n} \to T \), the term \( \tilde{d}_{Y}(f, T_{n}^{-1} T \cdot f) \) tends to
  0 as \( n \to \infty \).

  To establish this, it is enough to verify the claim for functions that take only finitely many
  values, as such functions are dense in \( \LL^{1}_{e}(X, Y) \). Suppose \( f \) is a step function
  defined over a partition \( X = \bigsqcup_{i=1}^{m}A_{i} \). The convergence \( T_{n} \to T \)
  implies \( \mu(T_{n}^{-1}T(A_{i}) \triangle A_{i}) \to 0 \) for all \( 1 \le i \le m \), which
  readily gives \( \tilde{d}_{Y}(f, T_{n}^{-1} T \cdot f) \to 0 \).
\end{proof}

In what follows, we identify \(\Aut(X,\mu)\) with a subset of \(\LL^0(X,X)\).

\begin{proposition}\label{prop:Aut-embeds}
  Let \(\tau\) be a Polish topology on a standard probability space \((X,\mu)\) compatible with its
  Borel structure. The inclusion map
  \[
    \Aut(X,\mu) \hookrightarrow \LL^{0}(X,X)
  \]
  is a topological embedding when \(\Aut(X,\mu)\) is equipped with the weak topology and
  \(\LL^{0}(X,X)\) is equipped with the topology of convergence in measure associated with \(\tau\).
\end{proposition}

\begin{proof}
  Fix a bounded complete metric \(d\) compatible with \(\tau\). By
  Lemma~\ref{lem:basic-facts-on-L0}, \(\tilde{d}\) induces the topology of convergence in measure on
  \(\LL^{0}(X,X)\).  The topological group \(\Aut(X,\mu)\) acts on \(\LL^0(X,X)\) by
  \(T \cdot f = f \circ T^{-1}\) and this action is continuous by
  Proposition~\ref{prop:aut-acts-continuously}.  Furthermore, for every \(T \in \Aut(X,\mu)\), we
  have \(T^{-1} \cdot \mathrm{id}_X = T \in \LL^0(X,X)\), which shows that the inclusion map is
  continuous.

  The metric \(\tilde{d}\) induces a right-invariant metric on \(\Aut(X,\mu)\). To prove that the
  inclusion map is a topological embedding, it suffices to show that if
  \(\tilde{d}(T_n, \mathrm{id}_X) \to 0\), then \(T_n \to \mathrm{id}_X\) weakly in
  \(\Aut(X,\mu)\). By Lemma~\ref{lem:basic-facts-on-L0}, \(\tilde{d}(T_n, \mathrm{id}_X) \to 0\)
  implies \(T_n \to \mathrm{id}_X\) in measure. In particular, for every \(\tau\)-open set
  \(U \subseteq X\), the definition of convergence in measure yields
  \[
    \liminf_{n \to \infty} \mu(T_n^{-1}(U) \cap U) \geq \mu(\mathrm{id}_X^{-1}(U) \cap U) = \mu(U).
  \]
  We conclude that
  \[\mu(U) \ge \limsup_{n \to \infty} \mu(T_{n}^{-1}(U) \cap U) \ge \liminf_{n \to \infty}
    \mu(T_n^{-1}(U) \cap U) \geq \mu(U),\]
  and therefore \(\lim_{n}\mu(T_{n}^{-1}(U) \cap U) = \mu(U) \).  Since each \(T_n\) is
  measure-preserving, it follows that \(\lim_{n} \mu(T_n^{-1}(U) \bigtriangleup U) = 0\) for every
  \(U \in \tau\). By the regularity of \(\mu\), this implies \(T_n \to \mathrm{id}_X\) weakly, as
  required.
\end{proof}

Let us now return to \(\LL^1\) spaces associated with possibly unbounded pointed metric spaces
\((Y, d_Y, e)\). When \(Y\) is a Polish group, there is a natural choice for \(e\), namely the
identity element of the group, which we also denote by \(e\). In this case, we simplify the notation
further and write \(\LL^1(X, Y)\).

Recall that a \textbf{Polish normed group}\index{Polish group!normed} is a Polish group equipped
with a compatible norm (see Appendix~\ref{sec:norms-on-groups}). In particular, if
\((G, \snorm{\cdot})\) is a Polish normed group, there is a canonical choice of a compatible
complete metric on \(G\), namely
\[
  d_G(u, v) = \frac{\snorm{u^{-1}v} + \snorm{vu^{-1}}}{2}.
\]
The corresponding space \(\LL^1(X, G)\) is Polish by
Proposition~\ref{prop:l1-is-Polish-space}. Furthermore, it forms a Polish group under pointwise
operations, as we now demonstrate.

\begin{proposition}\label{prop:l1-is-a-Polish-group}
  Let \( (G, \snorm{\cdot}) \) be a Polish normed group. The space \( \LL^{1}(X, G) \) is a Polish
  normed group under the pointwise operations,
  \[ (f \cdot g)(x) = f(x) g(x), \quad f^{-1}(x) = f(x)^{-1},\]
  and the norm \( \snorm{f}_{1}^{\LL^{1}(X, G)} = \int_{X} \snorm{f(x)} \, d\mu(x) \).
\end{proposition}

\begin{proof}
  The space \(\LL^{1}(X, G)\) consists of all measurable functions \(f : X \to G\) with finite norm,
  \(\snorm{f}_{1}^{\LL^{1}(X, G)} < \infty\). Using the properties of the norm \(\snorm{\cdot}\) on
  \(G\), we have
  \begin{displaymath}
    \begin{aligned}
      \snorm{f \cdot g}_{1}^{\LL^{1}(X, G)}
      &= \int_{X} \snorm{f(x) g(x)} \, d\mu(x) \\
      &\le \int_{X} \bigl(\snorm{f(x)} + \snorm{g(x)}\bigr) \, d\mu(x) \\
      &= \snorm{f}_{1}^{\LL^{1}(X, G)} + \snorm{g}_{1}^{\LL^{1}(X, G)}, \\
      \snorm{f^{-1}}_{1}^{\LL^{1}(X, G)}
      &= \int_{X} \snorm{f(x)^{-1}} \, d\mu(x) \\
      &= \int_{X} \snorm{f(x)} \, d\mu(x) = \snorm{f}_{1}^{\LL^{1}(X, G)}. \\
    \end{aligned}
  \end{displaymath}
  Thus, \(\LL^{1}(X, G)\) is closed under the group operations, and
  \(\snorm{\,\cdot\,}_{1}^{\LL^{1}(X, G)}\) defines a group norm on it.

  To verify the continuity of the group operations, it suffices to show that for any
  \(g \in \LL^{1}(X, G)\) and any sequence \(f_{n} \in \LL^{1}(X, G)\), \(n \in \mathbb{N}\),
  converging to the identity function \(\hat{e}\) (i.e.,
  \(\snorm{f_{n}}_{1}^{\LL^{1}(X, G)} \to 0\)), there exists a subsequence \((f_{n_{k}})_{k}\) such
  that
  \[\snorm{g \cdot f_{n_{k}} \cdot g^{-1}}_{1}^{\LL^{1}(X, G)} \to 0 \textrm{ as } k \to \infty,\]
  (see, for example,~\cite[Thm~3.4 and Lem.~3.5]{MR2743093}).

  By Proposition~\ref{prop:L1-in-L0} and the fact that convergence in measure implies pointwise
  convergence of a subsequence (see item~\eqref{item:cv-in-measure-vs-pointwise} of
  Lemma~\ref{lem:basic-facts-on-L0}), there exists a subsequence \((f_{n_{k}})_{k}\) such that
  \(f_{n_{k}}(x) \to e\) for almost all \(x \in X\). By passing to a further subsequence, we may
  assume without loss of generality that
  \(\sum_{k} \snorm{f_{n_{k}}}_{1}^{\LL^{1}(X, G)} < +\infty\). The function
  \(M(x) = \sum_{k} \snorm{f_{n_{k}}(x)}\) belongs to \(\LL^{1}(X, G)\), and for all
  \(k \in \mathbb{N}\) and \(x \in X\),
  \[
    \snorm{g(x) f_{n_{k}}(x) g(x)^{-1}} \le 2\snorm{g(x)} + \snorm{f_{n_{k}}(x)} \le 2\snorm{g(x)} +
    M(x).
  \]
  The continuity of the group operations on \(G\) ensures that
  \(g \cdot f_{n_{k}} \cdot g^{-1} \to \hat{e}\) pointwise. It remains to apply Lebesgue's dominated
  convergence theorem and conclude that
  \(\snorm{g \cdot f_{n_{k}} \cdot g^{-1}}_{1}^{\LL^{1}(X, G)} \to 0\), as required.
\end{proof}

\begin{remark}
  If the chosen compatible norm on \(G\) is bounded, then \(\LL^1(X,G) = \LL^0(X,G)\), and the
  topology under consideration coincides with the topology of convergence in measure by
  Lemma~\ref{lem:basic-facts-on-L0}. Consequently, we recover the well-known fact that
  \(\LL^0(X,G)\) is a Polish group when equipped with the topology of convergence in measure.
\end{remark}


\chapter{Hopf decomposition}\label{sec:hopf-decomposition-appendix}

An important tool in the theory of invertible non-singular transformations on \( \sigma \)-finite
measure spaces is the Hopf decomposition, which partitions the phase space into the so-called
\emph{dissipative} and \emph{recurrent} parts, reflecting different dynamics of the transformation.
In this appendix, we recall the relevant definitions and indicate what happens for
measure-preserving transformations of a \( \sigma \)-finite space.  The reader may
consult~\cite[Sec.~1.3]{MR797411} for further details on the following definitions.

\begin{definition}\label{def:dissipative-recurrent}
  Let \( S \) be an invertible non-singular transformation of a \( \sigma \)-finite measure space
  \( (\Omega, \lambda) \). A measurable set \( A \subseteq \Omega \) is said to be:
  \begin{itemize}
  \item \textbf{wandering} if \( A \cap S^{k}(A) = \varnothing \) for all \( k \ge 1 \);
  \item \textbf{recurrent} if \( A \subseteq \bigcup_{k \ge 1}S^{k}(A) \);
  \item \textbf{infinitely recurrent} if
    \( A \subseteq \bigcap_{n \ge 1} \bigcup_{k \ge n} S^{k}(A) \).
  \end{itemize}
  The inclusions above are understood to hold up to a null set.  The transformation \( S \) is:
  \begin{itemize}
  \item \textbf{dissipative}\index{Transformation!dissipative}\index{Dissipative transformation} if the phase space \( \Omega \) is a countable union of wandering sets;
  \item \textbf{conservative}\index{Transformation!conservative}\index{Conservative transformation} if there are no wandering sets of positive measure;
  \item \textbf{recurrent} if every set of positive measure is recurrent;
  \item \textbf{infinitely recurrent} if every set of positive measure is infinitely recurrent.
  \end{itemize}
\end{definition}

It turns out that the properties of being conservative, recurrent, and infinitely recurrent are all
mutually equivalent.
\begin{proposition}\label{prop:equivalence-conservative-recurrent}
  Let \( S \) be an invertible non-singular transformation of a \( \sigma \)-finite measure space
  \( (\Omega, \lambda) \).  The following are equivalent:
  \begin{enumerate}
  \item \( S \) is conservative;
  \item \( S \) is recurrent;
  \item \( S \) is infinitely recurrent.
  \end{enumerate}
\end{proposition}

Among the properties introduced in Definition~\ref{def:dissipative-recurrent}, only recurrence and
dissipativity are therefore different.  In fact, any non-singular transformation admits a canonical
decomposition, known as the Hopf decomposition, into these two types of action.

\begin{proposition}[Hopf decomposition]\label{prop:hopf-decomposition}
  Let \( S \) be an invertible non-singular transformation of a \( \sigma \)-finite measure space
  \( (\Omega, \lambda) \). There exists an \( S \)-invariant partition \( \Omega = D \sqcup C \)
  such that \( S\restriction_{D} \) is dissipative and \( S\restriction_{C} \) is recurrent
  (equivalently, conservative). Moreover, if \( \Omega = D' \sqcup C' \) is another partition with
  this property then \( \lambda(D \triangle D') = 0 \) and \( \lambda(C \triangle C') = 0\).
\end{proposition}

We also note the following consequence of dissipativity in case the measure is preserved.

\begin{proposition}\label{prop:dissipative-property}
  Let \( S \) be an invertible measure-preserving transformation of a \( \sigma \)-finite measure
  space \( (\Omega, \lambda) \) and let \( \Omega = D \sqcup C \) be its Hopf decomposition. For
  every set \( A \subseteq \Omega \) of finite measure, almost every point in \( D \) eventually
  escapes \( A \): \[ \forall^{\lambda}x \in D\ \exists N\ \forall n \ge N\ T^{n}x \not \in A. \]
\end{proposition}

\begin{proof}
  We may as well assume \(D=\Omega\). Let \(A\subseteq \Omega\) have finite measure.  Let \(Q\) be a
  wandering set whose translates cover \(\Omega\)
  as provided by~\cite[Prop.~1.1.2]{aaronsonIntroductionInfiniteErgodic1997}.
  Consider the map \(\Phi:Q\times \Z\to \Omega\) which
  maps \((x,n)\) to \(T^n(x)\), and observe that \(\Phi\) is measure-preserving if we endow \(Q\times\Z\)
  with the product of the measure induced by \(\lambda\) on \(Q\) and the counting measure on \(\Z\).

  If the set of \(x \in Q\) for which \(S^n(x) \in A\) for infinitely many \(n \in \mathbb{N}\) has
  positive measure, then, by Fubini's theorem, the set \(A\) must have infinite measure, which
  contradicts the assumption. The same reasoning applies to any \(S\)-translate of \(Q\). Since
  these translates cover \(\Omega\), the proof is complete.
\end{proof}


\chapter{Disintegration of measure}\label{sec:disintegration-measure}

Let \( \mathcal{R} \) be a smooth measurable equivalence relation on a standard Lebesgue space
\( (X, \mu) \), and let \( \pi : X \to Y \) be a measurable reduction to the identity relation on
some standard Lebesgue space \( (Y, \nu) \), \( \pi(x) = \pi(y) \) if and only if
\( x \mathcal{R} y \). Suppose that \( \nu \) is a \(\sigma\)-finite measure on \( Y \) that is
equivalent to the push-forward \( \pi_{*}\mu \). A \textbf{disintegration} of \( \mu \) relative to
\( (\pi, \nu) \) is a collection of measures \( (\mu_{y})_{y \in Y} \) on \( X \) such that for all
Borel sets \( A \subseteq X \):
\begin{enumerate}
\item\label{item:measure-on-fiber} \(\mu_{y}(X \setminus \pi^{-1}(y)) = 0 \) for \( \nu \)-almost
  all \( y \in Y \);
\item\label{item:measurability-condition} the map \( Y \ni y \mapsto \mu_{y}(A) \in \mathbb{R} \) is
  measurable;
\item\label{item:disintegration-formula} \( \mu(A) = \int_{Y} \mu_{y}(A)\, d\nu(y) \).
\end{enumerate}

A theorem of D.~Maharam~\cite{MR36817} asserts that \( \mu \) can be disintegrated over any
\( (\pi, \nu) \) as above. In fact, the existence of a disintegration can be proved in a
considerably more general setup (see, for example, D.~H.~Fremlin~\cite[Thm.~452I]{MR2462372}), but
in the framework of standard Lebesgue spaces, disintegration is essentially unique. While the
context of our work is purely ergodic-theoretical, we note that the disintegration result holds in
the descriptive set-theoretical setting as well, as discussed in~\cite{MR786682}
and~\cite{MR1012985}.  Without striving for generality, we formulate here a specific version that
suits our needs.

\begin{theorem}[Disintegration of Measure\index{Disintegration of measure}]\label{thm:disintegration-of-measure}
  Let \( (X,\mu) \) be a standard Lebesgue space, \( (Y, \nu) \) be a \( \sigma \)-finite standard
  Lebesgue space, and let \( \pi : X \to Y \) be a measurable function. If \( \pi_{*}\mu \) is
  equivalent to \( \nu \), then there exists a disintegration \( (\mu_{y})_{y \in Y} \) of \( \mu \)
  over \( (\pi, \nu) \). Moreover, such a disintegration is essentially unique in the sense that if
  \( (\mu'_{y})_{y \in Y} \) is another disintegration, then \( \mu_{y} = \mu_{y}' \) for
  \( \nu \)-almost all \( y \in Y \).
\end{theorem}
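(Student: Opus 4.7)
The plan is to establish existence via a conditional-expectation construction combined with the Riesz representation theorem on a compact realization of \( X \), and to obtain uniqueness through a countable separating argument.

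First I would reduce to the case of probability measures. Since \( \nu \) is \( \sigma \)-finite and equivalent to \( \pi_*\mu \), I partition \( Y \) into countably many Borel pieces on which both \( \nu \) is finite and the Radon--Nikodym derivative \( d\pi_*\mu/d\nu \) is bounded. On each piece \( \mu \) restricted to \( \pi^{-1}(Y_n) \) is finite, so it suffices to build the disintegration separately on each piece and paste the results together. Replacing the eventual family \( (\mu_y) \) by \( (d\nu/d\pi_*\mu)(y) \cdot \mu_y \) then further reduces to the case \( \nu = \pi_*\mu \), which can be normalized to a probability measure.

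For existence I would realize the standard Borel space \( (X, \mathcal{B}_X) \) as a Borel subset of a compact metrizable space \( K \) and push \( \mu \) to \( K \). Fix a countable \( \mathbb{Q} \)-vector subspace \( \mathcal{D} \subseteq C(K) \) containing the constant function \( 1 \), closed under \( f \mapsto \max(f,0) \), and uniformly dense in \( C(K) \). For each \( f \in \mathcal{D} \), the map \( B \mapsto \int_{\pi^{-1}(B)} f\, d\mu \) is a signed measure on \( Y \) absolutely continuous with respect to \( \nu = \pi_*\mu \), so the Radon--Nikodym theorem yields a measurable function \( F_f : Y \to \mathbb{R} \) characterized (up to \( \nu \)-null sets) by
\[
  \int_{\pi^{-1}(B)} f\, d\mu = \int_B F_f\, d\nu \qquad \text{for every Borel } B \subseteq Y.
\]
Since \( \mathcal{D} \) is countable, I can choose representatives so that outside a single \( \nu \)-null set \( N \subseteq Y \), the assignment \( f \mapsto F_f(y) \) is \( \mathbb{Q} \)-linear, positive, and sends \( 1 \) to \( 1 \). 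For \( y \notin N \) this extends uniquely to a positive linear functional of norm one on \( C(K) \), which by the Riesz representation theorem corresponds to a Borel probability measure \( \mu_y \) on \( K \); set \( \mu_y \) arbitrarily on \( N \).

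The three disintegration properties are then verified in sequence. Measurability of \( y \mapsto \mu_y(A) \) and the formula \( \mu(A) = \int_Y \mu_y(A)\, d\nu(y) \) hold by construction on a countable algebra generating \( \mathcal{B}_X \), and a monotone class argument propagates both to all Borel \( A \). Concentration on fibers follows by applying the disintegration identity to \( A = \pi^{-1}(B) \) for \( B \) running over a countable family of Borel sets separating points of \( Y \): each such \( B \) gives \( \mu_y(\pi^{-1}(B)) = \mathbf{1}_B(y) \) for \( \nu \)-a.e.\ \( y \), and intersecting the countably many exceptional null sets yields \( \mu_y(X \setminus \pi^{-1}(y)) = 0 \) off a \( \nu \)-null set. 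Uniqueness is then a formality: given another disintegration \( (\mu'_y) \), for each \( A \) in a fixed countable generating algebra of \( \mathcal{B}_X \) and each Borel \( B \subseteq Y \) one has \( \int_B \mu_y(A)\, d\nu = \mu(A \cap \pi^{-1}(B)) = \int_B \mu'_y(A)\, d\nu \), so \( \mu_y(A) = \mu'_y(A) \) outside a \( \nu \)-null set depending on \( A \); the countable union of these null sets is null, and agreement on the generating algebra propagates to full equality of \( \mu_y \) and \( \mu'_y \) for \( \nu \)-a.e.\ \( y \). The main obstacle is the step that turns the countable family of Radon--Nikodym densities into a genuine countably additive measure \( \mu_y \) for almost every \( y \); this is precisely where the compact-Polish realization of \( X \) is essential, so that Riesz representation can do the work instead of attempting a direct Carathéodory-style extension of a finitely additive set function.
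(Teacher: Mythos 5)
The paper does not actually prove this theorem: it is stated as a classical fact, with existence attributed to D.~Maharam (and to Fremlin in greater generality), and only the reduction from \( \pi_*\mu \sim \nu \) to \( \pi_*\mu = \nu \) is spelled out, in Remark~\ref{rem:radon-nykodim-disintegration}. So there is nothing in the text to compare your argument against line by line; what you have written is the standard textbook proof (compact metrizable realization of \( X \), Radon--Nikodym densities \( F_f \) for a countable dense \( \mathbb{Q} \)-lattice in \( C(K) \), removal of a single null set to get a positive unital \( \mathbb{Q} \)-linear functional, Riesz representation, then monotone class and a countable point-separating family in \( Y \) for fiber concentration, and a countable generating \( \pi \)-system for uniqueness), and it is sound. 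One small correction: in your reduction step the conversion factor is inverted. If \( (\mu_y) \) disintegrates \( \mu \) over \( (\pi, \pi_*\mu) \), then the family \( \bigl(\frac{d\pi_*\mu}{d\nu}(y)\cdot\mu_y\bigr) \) --- not \( \bigl(\frac{d\nu}{d\pi_*\mu}(y)\cdot\mu_y\bigr) \) --- disintegrates \( \mu \) over \( (\pi,\nu) \), since \( \int_Y \mu_y(A)\,d(\pi_*\mu)(y) = \int_Y \mu_y(A)\,\frac{d\pi_*\mu}{d\nu}(y)\,d\nu(y) \); this is exactly the formula in the paper's remark. You might also make explicit that the identity \( \mu(X) = \int_Y \mu_y(X)\,d\nu \) forces \( \mu_y(K\setminus X)=0 \) almost everywhere before you argue concentration on fibers, since the Riesz measures a priori live on \( K \); this is implicit in your monotone class step but worth a sentence.
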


\begin{remark}\label{rem:radon-nykodim-disintegration}
  It is more common to formulate the disintegration theorem with the assumption that
  \( \pi_{*}\mu = \nu \), when one can additionally ensure that \( \mu_{y}(X) = \mu(X) \) for
  \( \nu \)-almost all \( y \). Weakening the equality \( \pi_{*}\mu = \nu \) to mere equivalence is
  a simple consequence, for if \( (\mu_{y})_{y \in Y} \) is a disintegration of \( \mu \) over
  \( (\pi, \pi_{*}\mu) \), then
  \( \bigl(\frac{d\pi_{*}\mu}{d\nu}(y) \cdot \mu_{y}\bigr)_{y \in Y} \) is a disintegration of
  \( \mu \) over \( (\pi, \nu) \).
\end{remark}

Let \( X_{a} \subseteq X \) be the set of atoms of the disintegration, i.e.,
\[ X_{a} = \{ x \in X: \mu_{y}(x) > 0 \textrm{ for some } y \in Y \} ,\]
and let \( F \) be the equivalence relation on \( X_{a} \), where two atoms within the same fiber
are equivalent whenever they have the same measure: \( x_{1} F x_{2} \) if and only if
\( \mu_{\pi(x_{1})}(x_{1}) = \mu_{\pi(x_{2})}(x_{2}) \) and \( \pi(x_{1}) = \pi(x_{2}) \). The
equivalence relation \( F \) is measurable and has finite classes \( \mu \)-almost surely. Let
\( X_{n} \), \( n \ge 1 \), be the union of \( F \)-equivalence classes of size exactly \( n \),
thus \( X_{a} = \bigsqcup_{n \ge 1} X_{n} \). Set also \( X_{0} = X \setminus X_{a} \) to be the
atomless part of the disintegration and let \( \mathcal{R}_{n} \) denote the restriction of
\( \mathcal{R} \) onto \( X_{n} \).

Consider the group \( \fgr{\mathcal{R}} \le \Aut(X, \mu) \) of measure-preserving bijections \(T\)
such that \( x \mathcal{R} Tx \) holds \( \mu \)-almost surely.  Every \( T \in \fgr{\mathcal{R}} \)
preserves \( \nu \)-almost all measures \( \mu_{y} \), since \( (T_{*}\mu_{y})_{y \in Y} \) is a
disintegration of \( T_{*}\mu = \mu \), which has to coincide with \( (\mu_{y})_{y \in Y} \) by the
uniqueness of the disintegration. In particular, the partition
\( X = \bigsqcup_{n \in \mathbb{N}} X_{n} \) is invariant under the full group
\( \fgr{\mathcal{R}} \), and for any \( T \in \fgr{\mathcal{R}} \), the restriction
\( T\restriction_{X_{n}} \in \fgr{\mathcal{R}_{n}} \) for every \( n \in \mathbb{N} \). Conversely,
for a sequence \( T_{n} \in \fgr{\mathcal{R}_{n}} \), \( n \in \mathbb{N} \), one has
\( T = \bigsqcup_{n} T_{n} \in \fgr{\mathcal{R}} \). We therefore have an isomorphism of (abstract)
groups \( \fgr{\mathcal{R}} \cong \prod_{n \in \mathbb{N}} \fgr{\mathcal{R}_{n}} \).

The groups \( \fgr{\mathcal{R}_{n}} \) can be described quite explicitly. First, consider the case
\( n \ge 1 \); thus \( X_{n} \subseteq X_{a} \). All equivalence classes of the restriction of
\( F \) onto \( X_{n} \) have size \( n \). Let \( Y_{n} \subseteq X_{n} \) be a measurable
transversal, i.e., a measurable set intersecting every \( F \)-class in a single point, and let
\( \nu_{n} = \mu\restriction_{Y_{n}} \). Every \( T \in \fgr{\mathcal{R}_{n}} \) produces a
permutation of \( \mu \)-almost every \( F \)-class, so we can view it as an element of
\( \LL^{0}(Y_{n}, \nu_{n}, \mathfrak{S}_{n}) \), where \( \mathfrak{S}_{n} \) is the group of
permutations of an \( n \)-element set. This identification works in both directions and produces an
isomorphism \( \fgr{\mathcal{R}_{n}} \cong \LL^{0}(Y_{n}, \nu_{n}, \mathfrak{S}_{n}) \). Note also
that all \( \nu_{n} \) are atomless if so is \( \mu \). We allow for \( \mu(X_{n}) = 0 \), in which
case \( \LL^{0}(Y_{n}, \nu_{n}, \mathfrak{S}_{n}) \) is the trivial group.

Let us now return to the equivalence relation
\( \mathcal{R}_{0} = \mathcal{R} \cap X_{0} \times X_{0} \), and recall that the measures
\( \mu_{y}\restriction_{X_{0}} \) are atomless. Let \( Y_{0} = \{ y : \mu_{y}(X_{0}) > 0 \} \) be
the encoding of fibers with non-trivial atomless components, and put
\( \nu_{0} = \nu\restriction_{Y_{0}} \). In particular, for every \( y \in Y_{0} \), the space
\( (X_{0}, \mu_{y}) \) is isomorphic to the interval \( [0, \mu_{y}(X_{0})] \) endowed with the
Lebesgue measure. In fact, one can select such isomorphisms in a measurable way across all
\( y \in Y_{0} \). More precisely, there is a measurable isomorphism
\[ \psi : X_{0} \to \{ (y, r) \in Y_{0} \times \mathbb{R} : 0 \le r \le \mu_{y}(X_{0}) \} \]
such that for all \( y \in Y_{0} \),
\begin{itemize}
\item \( \psi(\pi^{-1}(y) \cap X_{0}) = \{ y \} \times [0, \mu_{y}(X_{0})] \);
\item \( \psi_{*}(\mu_{y}\restriction_{X_{0}}) \) coincides with the Lebesgue measure on
  \( \{ y \} \times [0, \mu_{y}(X_{0})] \).
\end{itemize}
The reader may find further details in~\cite[Thm.~2.3]{MR1012985}, where the same construction is
discussed in a more refined setting of Borel disintegrations.

Using the isomorphism \( \psi \), each \( \pi^{-1}(y) \cap X_{0} \), \( y \in Y_{0} \), is
identified with \( [0,\mu_{y}(X_{0})] \). Since every \( T \in \fgr{\mathcal{R}_{0}} \) preserves
\( \nu \)-almost every \( \mu_{y} \), we may rescale these intervals and view any
\( T \in \fgr{\mathcal{R}_{0}} \) as an element of
\( \LL^{0}(Y_{0}, \nu_{0}, \Aut([0,1], \lambda)) \). Conversely, every
\( f \in \LL^{0}(Y_{0}, \nu_{0}, \Aut([0,1], \lambda)) \) gives rise to
\( T_{f} \in \fgr{\mathcal{R}_{0}} \) via the notationally convoluted but natural
\[ T_{f}(x) = \psi^{-1}\bigl(\pi(x), \bigl(f(\pi(x)) \cdot \proj_{2}(\psi(x))/
  \mu_{\pi(x)}(X_{0})\bigr) \mu_{\pi(x)}(X_{0})\bigr), \]
which, in plain words, simply applies \( f(\pi(x)) \) upon the corresponding fiber identified with
\( [0,1] \) using \( \psi \). This map is an isomorphism between the groups
\( \fgr{\mathcal{R}_{0}} \) and \( \LL^{0}(Y_{0}, \nu_{0}, \Aut([0,1], \lambda)) \).

Let us say that \( \mathcal{R} \) has \textbf{atomless classes} if \( \mu_{y} \) is atomless
\( \nu \)-almost surely or, equivalently, if \( \mu(X_{a}) = 0 \) in the notation above. We may
summarize the discussion so far into the following proposition.

\begin{proposition}\label{prop:full-group-smooth-L0}
  Let \( \mathcal{R} \) be a smooth measurable equivalence relation on a standard Lebesgue space
  \( (X, \mu) \). There are (possibly empty) standard Lebesgue spaces \( (Y_{n}, \nu_{n}) \),
  \( n \in \mathbb{N} \), such that the full group \( \fgr{\mathcal{R}} \le \Aut(X, \mu) \) is
  (abstractly) isomorphic to
  \[ \LL^{0}(Y_{0}, \nu_{0}, \Aut([0,1], \lambda)) \times \prod_{n \ge 1} \LL^{0}(Y_{n}, \nu_{n},
    \mathfrak{S}_{n}), \]
  where \( \mathfrak{S}_{n} \) is the group of permutations of an \( n \)-element set. If \( \mu \)
  is atomless, then so are the spaces \( (Y_{n}, \nu_{n}) \), \( n \ge 1 \). If \( \mathcal{R} \)
  has atomless classes, then all \( (Y_{n}, \nu_{n}) \), \( n \ge 1 \), are negligible and
  \( \fgr{\mathcal{R}} \) is isomorphic to \( \LL^{0}(Y_{0}, \nu_{0}, \Aut([0,1], \lambda)) \).
\end{proposition}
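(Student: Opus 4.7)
The strategy is simply to assemble into a single statement the analysis carried out immediately before the proposition. Using the reduction $\pi : X \to Y$ witnessing smoothness of $\mathcal{R}$ (and choosing $\nu = \pi_*\mu$, which we may by Remark~\ref{rem:radon-nykodim-disintegration}), apply Maharam's disintegration theorem~\ref{thm:disintegration-of-measure} to obtain the family $(\mu_y)_{y \in Y}$. The first observation to record is that since for any $T \in \fgr{\mathcal{R}}$ the family $(T_*\mu_y)_{y \in Y}$ is again a disintegration of $\mu$ over $(\pi,\nu)$, uniqueness forces $T_* \mu_y = \mu_y$ for $\nu$-almost all $y$, so that $\fgr{\mathcal{R}}$ preserves each fiber measure $\mu_y$.

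Next, I would split $X$ according to the atomic structure of the disintegration: let $X_a$ be the union of atoms of the $\mu_y$'s, $X_0 = X \setminus X_a$, and $F$ the fiberwise equivalence relation on $X_a$ that declares two atoms equivalent when they lie in the same fiber and carry the same $\mu_y$-mass. Since every element of $\fgr{\mathcal{R}}$ preserves $\mu_y$, it permutes $F$-classes within each fiber, so the partition $X = X_0 \sqcup \bigsqcup_{n \geq 1} X_n$, where $X_n$ gathers the $F$-classes of cardinality exactly $n$, is $\fgr{\mathcal{R}}$-invariant. This yields at once the product decomposition $\fgr{\mathcal{R}} \cong \fgr{\mathcal{R}_0} \times \prod_{n \geq 1} \fgr{\mathcal{R}_n}$ as abstract groups.

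For each $n \geq 1$, I would pick a measurable transversal $Y_n \subseteq X_n$ for $F\restriction_{X_n}$ (which exists by smoothness of $F\restriction_{X_n}$, whose classes are finite), set $\nu_n = \mu\restriction_{Y_n}$, and send $T \in \fgr{\mathcal{R}_n}$ to the map $y \mapsto \sigma_T(y) \in \mathfrak{S}_n$ recording how $T$ permutes the $n$ atoms of the $F$-class through $y$, after fixing once and for all a measurable labelling of the atoms in each $F$-class. This is an isomorphism $\fgr{\mathcal{R}_n} \cong \LL^0(Y_n, \nu_n, \mathfrak{S}_n)$, routine to check. For the atomless part, let $Y_0 = \{y : \mu_y(X_0) > 0\}$ and $\nu_0 = \nu\restriction_{Y_0}$. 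The main technical ingredient, which is where I expect the real work to reside, is the existence of a measurable fiberwise isomorphism $\psi : X_0 \to \{(y,r) : y \in Y_0, 0 \leq r \leq \mu_y(X_0)\}$ sending each $\mu_y\restriction_{X_0}$ to the Lebesgue measure on the corresponding interval. This is a measurable selection result available in the literature (e.g., [MR1012985, Thm.~2.3]), which I would invoke rather than prove. Once $\psi$ is fixed, rescaling each fiber to $[0,1]$ turns any $T \in \fgr{\mathcal{R}_0}$ into a measurable map $Y_0 \to \Aut([0,1],\lambda)$, and conversely, giving the desired isomorphism $\fgr{\mathcal{R}_0} \cong \LL^0(Y_0, \nu_0, \Aut([0,1], \lambda))$.

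Finally, the two refinements fall out by bookkeeping. If $\mu$ is atomless, then no $\nu_n$ for $n \geq 1$ can have an atom $y_0$: the $F$-class above $y_0$ would consist of finitely many $\mu$-atoms, contradicting atomlessness of $\mu$. If $\mathcal{R}$ has atomless classes, then $\mu_y$ is atomless for $\nu$-almost all $y$, so $\mu(X_a) = 0$, hence each $\nu_n$ with $n \geq 1$ is null and the corresponding factor $\LL^0(Y_n,\nu_n,\mathfrak{S}_n)$ is trivial, leaving only the $\LL^0(Y_0,\nu_0,\Aut([0,1],\lambda))$ factor.
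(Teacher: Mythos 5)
Your proposal is correct and follows essentially the same route as the paper: disintegrate $\mu$ over the reduction, use uniqueness of the disintegration to see that $\fgr{\mathcal{R}}$ preserves the fiber measures, split off the finite $F$-classes of same-mass atoms to get the $\LL^{0}(Y_{n},\nu_{n},\mathfrak{S}_{n})$ factors, and handle the atomless part by invoking the measurable fiberwise trivialization of~\cite[Thm.~2.3]{MR1012985}. The paper likewise treats this last selection result as the only nontrivial ingredient and cites it rather than proving it, so there is nothing to add.
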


We can further refine the product in Proposition~\ref{prop:full-group-smooth-L0} by decomposing the
spaces \( (Y_{n}, \nu_{n}) \) into individual atoms and the atomless remainders.  This relies on the
following general result.

\begin{proposition}\label{prop:l0-partition-of-base}
  Let \( (Z, \nu_{Z}) \) be a standard Lebesgue space, and let \( G \) be a Polish group. For any
  finite or countably infinite measurable partition \( Z = \bigsqcup_{n \in I} Z_{n} \), there
  exists an isomorphism of topological groups between \( \LL^{0}(Z, \nu_{Z}, G) \) and
  \( \prod_{n \in I} \LL^{0}(Z_{n}, \nu_{Z,n}, G) \), where \( \nu_{Z,n} \) is the restriction of
  \( \nu_{Z} \) onto \( Z_{n} \).
\end{proposition}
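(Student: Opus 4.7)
The plan is to exhibit the obvious restriction/assembly map as the desired isomorphism. Define \( \Phi : \LL^{0}(Z, \nu_{Z}, G) \to \prod_{n \in I}\LL^{0}(Z_{n}, \nu_{Z,n}, G) \) by \( \Phi(f) = (f\restriction_{Z_{n}})_{n \in I} \). That \( \Phi \) is a group homomorphism is immediate since operations on both sides are pointwise. For bijectivity: injectivity follows because the \( Z_{n} \) cover \( Z \); surjectivity follows by pasting, i.e., given \( (f_{n})_{n \in I} \) the function \( f(z) = f_{n}(z) \) for \( z \in Z_{n} \) is Borel because each \( Z_{n} \) is measurable and \( I \) is at most countable.

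The only substantive step is verifying that \( \Phi \) is a homeomorphism, where the left-hand side carries the topology of convergence in measure and the right-hand side carries the product topology of the same topologies on each factor. Recall that on a standard Lebesgue space convergence in measure for \( G \)-valued maps is metrized by a pseudometric of the form \( \tilde\rho_{G}(f, g) = \int_{Z}\min(d_{G}(f(z),g(z)), 1)\, d\nu_{Z}(z) \), and the analogous pseudometric works on each \( Z_{n} \).

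For the forward direction, if \( f_{k} \to f \) in measure on \( Z \), then for each fixed \( n \) and each \( \epsilon > 0 \) one has \( \nu_{Z,n}(\{d_{G}(f_{k}, f) > \epsilon\}) \le \nu_{Z}(\{d_{G}(f_{k}, f) > \epsilon\}) \to 0 \), so coordinate-wise convergence in the product holds. For the reverse direction, suppose \( \Phi(f_{k}) \to \Phi(f) \) coordinate-wise and fix \( \epsilon, \delta > 0 \). If \( I \) is finite, sum the coordinate estimates directly. If \( I \) is countably infinite, first choose a finite set \( F \subset I \) with \( \nu_{Z}\bigl(Z \setminus \bigsqcup_{n \in F}Z_{n}\bigr) < \delta/2 \), then choose \( k_{0} \) such that for every \( n \in F \) and every \( k \ge k_{0} \) one has \( \nu_{Z,n}(\{d_{G}(f_{k}, f) > \epsilon\}) < \delta/(2|F|) \); summing over \( n \in F \) and adding the tail yields \( \nu_{Z}(\{d_{G}(f_{k},f) > \epsilon\}) < \delta \).

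There is no genuine obstacle; the only point requiring minimal care is the tail estimate in the countably infinite case, which forces the choice of a finite truncation of \( I \) before selecting the index \( k_{0} \). Combining the two directions with the algebraic bijectivity established above yields that \( \Phi \) is an isomorphism of topological groups, proving the proposition.
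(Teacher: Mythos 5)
Your proof is correct and follows the same route the paper takes: the paper simply asserts that the restriction map \( f \mapsto (f\restriction_{Z_{n}})_{n} \) is an isomorphism of topological groups (for a two-set partition, with the remark that the same works for countable partitions) and offers no further argument. Your verification of the homeomorphism property, in particular the finite-truncation tail estimate needed for the countably infinite case over the finite measure space \( (Z,\nu_{Z}) \), supplies exactly the details the paper leaves implicit.
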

\begin{proof}
  Consider the map \(\Phi\) that assigns to each \( f \in \LL^{0}(Z, \nu_{Z}, G) \) the sequence of
  its restrictions \( f\restriction_{Z_{n}} \in \LL^{0}(Z_{n}, \nu_{Z,n}, G) \), where \(n \in
  I\). It is straightforward to verify that \(\Phi\) is a group isomorphism, and its continuity
  follows directly from the definition of convergence in measure. Automatic continuity implies that
  \(\Phi\) is a homeomorphism, as it is a Borel group isomorphism between Polish groups (see
  \cite[Sec.~1.6]{MR1425877}). Alternatively, the continuity of \(\Phi^{-1}\) can easily be checked
  directly.
\end{proof}

Applying Proposition~\ref{prop:l0-partition-of-base} to the partition of \( (Z, \nu_{Z}) \) into the
atomless part \( Z_{0} \) and individual atoms \( Z_{k} = \{ z_{k} \} \) (if any), and noting that
for a singleton \( Z_{k} \) the group \( \LL^{0}(Z_{k}, \nu_{Z,k}, G) \) is naturally isomorphic to
\( G \), we get the following corollary.

\begin{corollary}\label{cor:l0-split-base-atoms}
  Let \( (Z, \nu_{Z}) \) be a standard Lebesgue space, and let \( G \) be a Polish group. Let
  \( Z_{a} \subseteq Z \) be the set of atoms of \( Z \), and let \( Z_{0} = Z \setminus Z_{a} \) be
  the atomless part. The group \( \LL^{0}(Z, \nu_{Z}, G) \) is isomorphic to
  \( \LL^{0}(Z_{0}, \nu_{Z} \restriction_{Z_{0}}, G) \times G^{|Z_{a}|} \).
\end{corollary}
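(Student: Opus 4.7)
The plan is to simply unpack the hint already given in the paragraph preceding the corollary, relying entirely on Proposition~\ref{prop:l0-partition-of-base}. The first step is to recall that, since \( (Z,\nu_Z) \) is a standard Lebesgue space, the set of atoms \( Z_a \) is at most countable; enumerate it as \( Z_a = \{z_k\}_{k \in I} \) for some (possibly empty, finite, or countably infinite) index set \( I \). Then \( Z \) admits the measurable partition
\[
Z = Z_0 \sqcup \bigsqcup_{k \in I} \{z_k\},
\]
which is exactly the kind of partition covered by Proposition~\ref{prop:l0-partition-of-base}.

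Next, I would apply Proposition~\ref{prop:l0-partition-of-base} to obtain the isomorphism of topological groups
\[
\LL^{0}(Z, \nu_{Z}, G) \;\cong\; \LL^{0}(Z_{0}, \nu_{Z}\restriction_{Z_{0}}, G) \times \prod_{k \in I} \LL^{0}(\{z_k\}, \nu_{Z}\restriction_{\{z_k\}}, G).
\]
The only remaining point is to identify the singleton factors with \( G \). A measurable function \( f : \{z_k\} \to G \) is determined by the single value \( f(z_k) \in G \), and convergence in measure on a one-point space (of positive mass) reduces to convergence in \( G \). Hence the evaluation map \( f \mapsto f(z_k) \) is a topological group isomorphism \( \LL^{0}(\{z_k\}, \nu_{Z}\restriction_{\{z_k\}}, G) \cong G \).

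Substituting these isomorphisms into the decomposition above yields
\[
\LL^{0}(Z, \nu_{Z}, G) \;\cong\; \LL^{0}(Z_{0}, \nu_{Z}\restriction_{Z_{0}}, G) \times G^{|I|} \;=\; \LL^{0}(Z_{0}, \nu_{Z}\restriction_{Z_{0}}, G) \times G^{|Z_a|},
\]
which is the required statement. There is no real obstacle here: the whole argument is bookkeeping on top of Proposition~\ref{prop:l0-partition-of-base}, and the only mildly delicate point is making sure one allows for all three cases (\( Z_a \) empty, finite, or countably infinite) uniformly, which is handled automatically by the statement of Proposition~\ref{prop:l0-partition-of-base}.
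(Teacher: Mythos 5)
Your proposal is correct and is exactly the argument the paper gives: the corollary is stated as an application of Proposition~\ref{prop:l0-partition-of-base} to the partition of \( Z \) into its atomless part and the (at most countably many) singleton atoms, together with the observation that \( \LL^{0} \) over a one-point space of positive mass is canonically \( G \). Nothing is missing.
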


Combining the above discussion with Proposition~\ref{prop:full-group-smooth-L0}, we obtain a very
concrete representation for \( \fgr{\eqr} \). In the formulation below, \( G^{0} \) is understood to
be the trivial group.

\begin{proposition}\label{prop:full-group-smooth-representation}
  Let \( \mathcal{R} \) be a smooth measurable equivalence relation on a standard Lebesgue space
  \( (X, \mu) \). There exist cardinals \( \kappa_{n} \le \aleph_{0} \) and
  \( \epsilon_{n} \in \{ 0,1 \} \) such that
  \begin{multline*}
    \fgr{\eqr} \cong \LL^{0}([0,1], \lambda, \Aut([0,1], \lambda))^{\epsilon_{0}} \times
    \Aut([0,1], \lambda)^{\kappa_{0}} \\ \times
    \Bigl( \prod_{n \ge 1} \LL^{0}([0,1], \lambda, \mathfrak{S}_{n})^{\epsilon_{n}} \times
    \mathfrak{S}_{n}^{\kappa_{n}}\Bigr).
  \end{multline*}

  If \( \mu \) is atomless, then \( \kappa_{n} = 0 \) for all \( n \ge 1 \); if \( \eqr \) has
  atomless classes, then \( \epsilon_{n} = 0 \) for all \( n \ge 1 \).
\end{proposition}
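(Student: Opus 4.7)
The plan is to combine Proposition~\ref{prop:full-group-smooth-L0} with Corollary~\ref{cor:l0-split-base-atoms}, and then invoke the classification of atomless standard Lebesgue spaces up to measure isomorphism. First I would apply Proposition~\ref{prop:full-group-smooth-L0} to obtain the initial decomposition
\[
  \fgr{\eqr} \cong \LL^{0}(Y_{0}, \nu_{0}, \Aut([0,1], \lambda)) \times \prod_{n \ge 1} \LL^{0}(Y_{n}, \nu_{n}, \mathfrak{S}_{n}),
\]
where $(Y_{n}, \nu_{n})$ are standard Lebesgue spaces (separable finite measure spaces). Next I would split each coordinate $(Y_{n},\nu_{n})$ into its atomless part $Y_{n,0}$ and its at most countable set of atoms $Y_{n,a}$, and invoke Corollary~\ref{cor:l0-split-base-atoms} to get
\[
  \LL^{0}(Y_{n}, \nu_{n}, G_{n}) \cong \LL^{0}(Y_{n,0}, \nu_{n}\restriction_{Y_{n,0}}, G_{n}) \times G_{n}^{|Y_{n,a}|}
\]
for $G_{0} = \Aut([0,1], \lambda)$ and $G_{n} = \mathfrak{S}_{n}$ when $n \ge 1$. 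Setting $\kappa_{n} := |Y_{n,a}| \le \aleph_{0}$ produces the countable-power factors appearing in the statement.

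The remaining step is the normalization of the atomless factor. Since $(Y_{n,0}, \nu_{n}\restriction_{Y_{n,0}})$ is either negligible or an atomless finite measure space, it is isomorphic up to rescaling of the measure to $([0,1], \lambda)$ in the latter case. Because the Polish group structure on $\LL^{0}(Z, \nu_{Z}, G)$ depends only on the measure class of $\nu_{Z}$ (rescaling by a positive constant does not alter convergence in measure), we obtain $\LL^{0}(Y_{n,0}, \nu_{n}\restriction_{Y_{n,0}}, G_{n}) \cong \LL^{0}([0,1], \lambda, G_{n})$ exactly when $\nu_{n}(Y_{n,0}) > 0$, and the trivial group otherwise. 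Declaring $\epsilon_{n} \in \{0,1\}$ accordingly, and interpreting $G_{n}^{0}$ as the trivial group, collects the factors into the required product. The two conditional clauses drop out by transport from Proposition~\ref{prop:full-group-smooth-L0}: if $\mu$ is atomless then $(Y_{n},\nu_{n})$ is atomless for $n \ge 1$, forcing $\kappa_{n} = 0$; if $\eqr$ has atomless classes then $(Y_{n},\nu_{n})$ is negligible for $n \ge 1$, forcing both $\epsilon_{n} = 0$ and $\kappa_{n} = 0$.

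I do not anticipate a genuine obstacle here: the argument is essentially a bookkeeping exercise on top of the prior decomposition. The only mild point that warrants an explicit sentence is the invariance of $\LL^{0}$ under measure rescaling, which is immediate from the definition of convergence in measure but is worth recording to justify the replacement of $(Y_{n,0}, \nu_{n}\restriction_{Y_{n,0}})$ with $([0,1], \lambda)$.
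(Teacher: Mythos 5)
Your argument is correct and follows the same route as the paper: apply Proposition~\ref{prop:full-group-smooth-L0}, split each base space $(Y_{n},\nu_{n})$ into its atomless part and its at most countably many atoms via Corollary~\ref{cor:l0-split-base-atoms}, and normalize the atomless part to $([0,1],\lambda)$. Your explicit remark that the topology of convergence in measure is unchanged under rescaling of the measure is the one point the paper leaves implicit, and it is handled correctly.
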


So far, we have considered \( \fgr{\mathcal{R}} \) as an abstract group. This is because neither of
the two natural topologies on \( \Aut(X, \mu) \) interacts well with the full group
construction---\( \fgr{\mathcal{R}} \) is generally not closed in the weak topology, and is not
separable in the uniform topology whenever \( \mu(X_{0}) > 0 \). Nonetheless, the isomorphism given
in Proposition~\ref{prop:full-group-smooth-L0} shows that there is a natural Polish topology on
\( \fgr{\mathcal{R}} \), which arises when we view the groups
\( \LL^{0}(Y_{0}, \nu_{0}, \Aut([0,1], \lambda)) \) and
\( \LL^{0}(Y_{n}, \nu_{n}, \mathfrak{S}_{n}) \) as Polish groups in the topology of convergence in
measure. It is with respect to this topology that we formulate
Proposition~\ref{prop:full-group-smooth-generated-periodic}.

\begin{proposition}\label{prop:full-group-smooth-generated-periodic}
  Let \( \mathcal{R} \) be a smooth measurable equivalence relation on a standard Lebesgue space
  \( (X, \mu) \). The set of periodic elements is dense in \( \fgr{\mathcal{R}} \).
\end{proposition}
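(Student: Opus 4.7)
The plan is to exploit the concrete product representation of $\fgr{\eqr}$ provided by Proposition~\ref{prop:full-group-smooth-representation}, which reduces the task to verifying density of periodic elements in each of the factors. The topology on $\fgr{\eqr}$ corresponds to the product topology on the $\LL^{0}$ decomposition, so its open basis consists of cylinders fixing only finitely many coordinates. Since the identity of a group is periodic (of order $1$), it suffices to prove density of periodic elements in each of the following three building blocks: the group $\mathfrak{S}_n$, the group $\Aut([0,1],\lambda)$, and the groups $\LL^{0}([0,1],\lambda,\Aut([0,1],\lambda))$ and $\LL^{0}([0,1],\lambda,\mathfrak{S}_n)$ with the topology of convergence in measure.

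The finite factor $\mathfrak{S}_n$ is trivially handled since every element has order dividing $n!$, and this automatically gives that every element of $\LL^{0}([0,1],\lambda,\mathfrak{S}_n)$ is periodic too: any $f$ satisfies $f(y)^{n!}=e$ for a.e.\ $y$. For $\Aut([0,1],\lambda)$, density of periodic transformations with respect to the weak topology is the classical Rokhlin--Halmos theorem: given any $T\in\Aut([0,1],\lambda)$ and $\epsilon>0$, Rokhlin's lemma produces a tower with small error, and collapsing the tower yields a periodic approximation of $T$.

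For the remaining factor $\LL^{0}([0,1],\lambda,\Aut([0,1],\lambda))$, the plan is a two-step approximation: first approximate a given $f$ in measure by a simple function $g=\sum_{i=1}^{k}T_{i}\mathbf{1}_{A_{i}}$ taking only finitely many values $T_{i}\in\Aut([0,1],\lambda)$; then, by the previous paragraph, approximate each $T_{i}$ in the weak topology by a periodic $P_{i}$ of order $m_{i}$. The resulting simple function $h=\sum_{i=1}^{k}P_{i}\mathbf{1}_{A_{i}}$ is close to $g$ in measure, hence close to $f$ in measure by the triangle inequality, and it satisfies $h(y)^{m}=\mathrm{id}$ for $m=\mathrm{lcm}(m_{1},\dots,m_{k})$ and almost every $y$; in particular $h$ is periodic as an element of the $\LL^{0}$-group under pointwise composition.

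The main (and essentially only) technical point is to ensure that the weak-topology approximations $T_{i}\to P_{i}$ combine into an approximation in the $\LL^{0}$ sense; this is immediate because the sets $A_{i}$ form a finite partition of a probability space and a compatible metric on $\Aut([0,1],\lambda)$ is bounded, so individual errors are summable with weights $\mu(A_{i})$. Combining the three cases with the cylinder-basis argument (set all but finitely many coordinates of the product in Proposition~\ref{prop:full-group-smooth-representation} to the identity and apply the above density results coordinatewise on the finitely many remaining factors) yields that periodic elements are dense in $\fgr{\eqr}$.
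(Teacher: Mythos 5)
Your argument is correct, but it takes a genuinely different route from the paper's. The paper's proof is a two-line argument that bypasses the structural decomposition entirely: given $T \in \fgr{\mathcal{R}}$, Rokhlin's lemma (applied to the aperiodic part of $T$) produces periodic elements of $\fgr{T} \subseteq \fgr{\mathcal{R}}$ converging to $T$ in the \emph{uniform} topology, and since the uniform topology refines the Polish topology on $\fgr{\mathcal{R}}$, these approximants also converge in the topology of the proposition. Your proof instead goes coordinatewise through the representation of Proposition~\ref{prop:full-group-smooth-representation}: the symmetric-group factors and their $\LL^{0}$ versions consist entirely of periodic elements, the $\Aut([0,1],\lambda)$ factors are handled by Rokhlin--Halmos in the weak topology, and the factor $\LL^{0}(Y_{0},\nu_{0},\Aut([0,1],\lambda))$ is handled by first passing to finitely-valued functions and then replacing each value by a finite-order approximant; the cylinder-basis argument then assembles a globally periodic transformation (finite orbits on each $X_{n}$, identity on all but finitely many pieces). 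Both approaches are sound and both ultimately rest on Rokhlin's lemma; what yours buys is that it makes the needed topology completely explicit (convergence in measure factor by factor) and never invokes the comparison between the uniform and the Polish topologies, at the cost of carrying the full structure theorem through the proof. One small point worth making explicit in your write-up: the approximants you build have finite \emph{order} on each of the finitely many modified factors, which is stronger than periodicity; that is harmless, but the statement only requires almost every orbit to be finite, which is exactly what the paper's $\fgr{T}$-approximants provide without any bound on the periods.
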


\begin{proof}
  Rokhlin's Lemma implies that any \( T \in \fgr{\mathcal{R}} \) can be approximated in the uniform
  topology by periodic elements from \( \fgr{T} \subseteq \fgr{\mathcal{R}} \). Since the uniform
  topology is stronger than the Polish topology on \( \fgr{\mathcal{R}} \), the proposition follows.
\end{proof}



\chapter{Actions of locally compact Polish groups}
\label{chap:appendix-actions-lcsc}

In this chapter of the appendix, we collect some well-known facts related to the actions of locally
compact Polish groups. This exposition is provided for the reader's convenience and completeness. We
recall that, by a result of G.~W.~Mackey~\cite{MR143874}, any Boolean measure-preserving action of a
locally compact Polish group can be realized as a spatial Borel action. Thus, we may switch to
pointwise formulations whenever convenient for the exposition.

\section{Ergodic decomposition}
\label{sec:ergod-decomp}

Let \( G \acts X \) be a measure-preserving action of a locally compact Polish group on a standard
probability space \( (X, \mu) \).  The space \( \mathcal{E} = \einv(G \acts X) \) of invariant
ergodic probability measures of this action possesses the structure of a standard Borel space. The
Ergodic Decomposition theorem\index{Ergodic decomposition} of
V.~S.~Varadarajan~\cite[Thm.~4.2]{MR159923} asserts that there exist an essentially unique Borel
\( G \)-invariant surjection \( X \ni x \mapsto \nu_{x} \in \mathcal{E} \) and a probability measure
\( p \) on \( \mathcal{E} \) such that \( \mu = \int_{\mathcal{E}}\nu\, dp(\nu) \).  This equality
holds in the sense that \( \mu(A) = \int_{\mathcal{E}}\nu(A)\, dp(\nu) \) for all Borel
\( A \subseteq X \).

There is a one-to-one correspondence between measurable \( G \)-invariant functions
\( h : X \to \mathbb{R} \) and measurable functions \( \tilde{h} : \mathcal{E} \to \mathbb{R} \),
given by \( \tilde{h}(\nu_{x}) = h(x) \). For measures \( \mu \) and \( p \) as above, this
correspondence gives an isometric isomorphism between \( \LL^{1}(\mathcal{E}, \mathbb{R}) \) and the
subspace of \( \LL^{1}(X, \mathbb{R}) \) consisting of \( G \)-invariant functions.

\section{Tessellations}
\label{sec:tessellations}
An important feature of locally compact group actions is the fact that they all admit Lebesgue measurable
cross-sections. This was proved by J.~Feldman, P.~Hahn, and C.~Moore in~\cite{MR492061}, whereas a
Borel version of the result was obtained by A.~S.~Kechris in~\cite{MR1176624}.

\begin{definition}
  \label{def:cross-section}
  Let \( G \acts X \) be a Borel action of a locally compact Polish group. A
  \textbf{cross-section}\index{Cross-section} is a Borel set \( \mathcal{C} \subseteq X \) which is
  both
  \begin{itemize}
  \item a \textbf{complete section} for \(\mathcal{R}_{G}\): it intersects every orbit of the
    action; and
  \item \textbf{lacunary}\index{Cross-section!lacunary}: there is a neighborhood of the identity \( 1_{G} \in U \subseteq G \)
  such that \(\mathcal C\) is \( U \)-lacunary, namely one has
    \( U \cdot c \cap U \cdot c' = \varnothing \) for all distinct \( c, c' \in \mathcal{C} \).
  \end{itemize}
  A cross-section \( \mathcal{C} \) is \( K \)-\textbf{cocompact}\index{Cross-section!cocompact},
  where \( K \subseteq G \) is a compact set, if \( K \cdot \mathcal{C} = X \); a cross-section is
  \textbf{cocompact} if it is \( K \)-cocompact for some compact \( K \subseteq G \).
\end{definition}
Any action \( G \acts X \) admits a \( K \)-cocompact cross-section, whenever \( K \subseteq G \) is
a compact neighborhood of the identity (see~\cite[Thm.~2.4]{MR3681992}). We also recall the
following well-known lemma on the possibility of partitioning a cross-section into pieces with a
prescribed lacunarity parameter.

\begin{lemma}
  \label{lem:lacunary-partition}
  Let \( G \acts X \) be a Borel action of a locally compact Polish group, and let \( \mathcal{C} \)
  be a cross-section for the action. For any compact neighborhood of the identity
  \( V \subseteq G \), there exists a finite Borel partition
  \( \mathcal{C} = \bigsqcup_{i} \mathcal{C}_{i} \) such that each \( \mathcal{C}_{i} \) is
  \( V \)-lacunary.
\end{lemma}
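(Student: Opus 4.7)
The plan is to reduce the statement to a Borel graph coloring problem. Specifically, I would define a symmetric Borel graph $\Gamma$ on $\mathcal{C}$ by declaring $c \sim c'$ whenever $c \neq c'$ and $Vc \cap Vc' \neq \varnothing$, equivalently $c' \in V^{-1}Vc$. A subset of $\mathcal{C}$ is then $V$-lacunary precisely when it is $\Gamma$-independent, so any finite Borel proper coloring of $\Gamma$ yields the required partition.

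The heart of the argument is to show that $\Gamma$ has uniformly bounded vertex degree. I would first shrink the witness of lacunarity so that $\mathcal{C}$ becomes $U$-lacunary for some open, precompact, symmetric neighborhood $U$ of the identity. Fix a right Haar measure $\rho$ on $G$. For any $c \in \mathcal{C}$, Jankov--von Neumann uniformization provides a Borel selector assigning to each $\Gamma$-neighbor $c'$ of $c$ some element $g \in V^{-1}V$ with $c' = gc$. Lacunarity then forces the translates $Ug$ corresponding to distinct neighbors to be pairwise disjoint in $G$, since $u_1 g_1 = u_2 g_2$ would give $u_1 c'_1 = u_2 c'_2$ and hence $Uc'_1 \cap Uc'_2 \neq \varnothing$. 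All such translates lie inside the compact set $U \cdot V^{-1}V$, and each has $\rho$-measure $\rho(U) > 0$, so the number of neighbors of $c$ is bounded uniformly by $\rho(U \cdot V^{-1}V)/\rho(U)$.

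I would conclude by invoking the theorem of Kechris--Solecki--Todorcevic, which guarantees that any Borel graph of uniformly bounded vertex degree admits a Borel coloring with finitely many colors; the color classes give the desired Borel partition $\mathcal{C} = \bigsqcup_i \mathcal{C}_i$ into $V$-lacunary pieces. The main subtlety I anticipate is the Borel selection of representatives $g \in V^{-1}V$ with $gc = c'$, which is not unique when the action has non-trivial stabilizers. This is handled by uniformization, and the disjointness argument relies only on the lacunarity of the images $Uc'$ in $X$, so the conclusion is not affected by any failure of freeness.
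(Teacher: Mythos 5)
Your proposal is correct and follows essentially the same route as the paper: both define a Borel graph on \( \mathcal{C} \) whose independent sets are \( V \)-lacunary (the paper uses adjacency \( c \in K\cdot c' \) with \( K = (V\cup V^{-1})^2 \), you use \( c' \in V^{-1}Vc \)), bound its degree by packing right-translates \( Uk \) of a lacunarity witness \( U \) inside a fixed compact set using a right Haar measure, and then apply the Kechris--Solecki--Todorcevic finite Borel coloring theorem for bounded-degree Borel graphs. The only cosmetic difference is your appeal to Jankov--von Neumann uniformization, which is unnecessary since the degree bound is a pointwise counting argument at each fixed \( c \) and requires no measurable selection.
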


\begin{proof}
  Set \( K = (V \cup V^{-1})^{2} \), and let \( U \subseteq G \) be a compact neighborhood of the
  identity small enough for \( \mathcal{C} \) to be \( U \)-lacunary. Define a binary relation
  \( \mathcal{G} \) on \( \mathcal{C} \) by declaring \( (c, c') \in \mathcal{G} \) whenever
  \( c \in K \cdot c' \) and \( c \ne c' \). Note that \( \mathcal{G} \) is symmetric since
  \( K \) is. We view \( \mathcal{G} \) as a Borel graph on \( \mathcal{C} \) and claim that it is
  locally finite. More specifically, if \( \lambda \) is a right Haar measure, then the degree of
  each \( c \in \mathcal{C} \) is at most
  \( \bigl\lfloor \frac{\lambda(U\cdot K)}{\lambda(U)} \bigr\rfloor - 1 \).

  Indeed, let \( c_{0}, \ldots, c_{N} \in \mathcal{C} \) be distinct elements such that
  \( c_{i} \in K \cdot c_{0} \) for all \( i \le N \); in particular
  \( (c_{i},c_{0}) \in \mathcal{G} \) for \( i \ge 1 \). Let \( k_{i} \in K \) be such that
  \( k_{i} \cdot c_{0} = c_{i} \). The lacunarity of \( \mathcal{C} \) asserts that the sets
  \( U\cdot c_{i} = Uk_{i} \cdot c_{0} \) are supposed to be pairwise disjoint, which necessitates
  \( Uk_{i} \) to be pairwise disjoint for \( 0 \le i \le N \). Clearly, \( Uk_{i} \subseteq UK \)
  since \( k_{i} \in K \). Using the right-invariance of \(\lambda\), we have
  \( \lambda(UK) \ge \lambda\bigl(\bigsqcup_{i \le N} Uk_{i}\bigr) = (N+1)\lambda(U) \), and thus
  \( N+1 \le \frac{\lambda(UK)}{\lambda(U)} \), as claimed.

  We may now use~\cite[Prop.~4.6]{MR1667145} to deduce the existence of a finite partition
  \( \mathcal{C} = \bigsqcup_{i} \mathcal{C}_{i} \) such that no two points in \( \mathcal{C}_{i} \)
  are adjacent. In other words, if \( c, c' \in \mathcal{C}_{i} \) are distinct, then
  \( c \not \in K\cdot c' \), and therefore \( V\cdot c \cap V \cdot c' = \varnothing \), which
  shows that each \( \mathcal{C}_{i} \) is \( V \)-lacunary.
\end{proof}

Every cross-section \( \mathcal{C} \) gives rise to a smooth subrelation of \( \mathcal{R}_{G} \) by
associating to \( x \in X \) ``the closest point'' of \( \mathcal{C} \) in the same orbit. Such a
subrelation is known as the Voronoi tessellation. For the purposes of
Chapter~\ref{chap:derived-l1-full-group}, we need a slightly more abstract concept of a
tessellation, which may not correspond to Voronoi domains. While far from being the most general,
the following treatment is sufficient for our needs.

\begin{definition}
  \label{def:tessellation}
  Let \( G \acts X \) be a Borel action of a locally compact Polish group on a standard Borel space
  and let \( \mathcal{C} \subseteq X \) be a cross-section. A
  \textbf{tessellation}\index{Tessellation} over \( \mathcal{C} \) is a Borel set
  \( \mathcal{W} \subseteq \mathcal{C} \times X \) such that

  \begin{enumerate}
  \item all fibers \( \mathcal{W}_{c} = \{ x \in X : (c,x) \in \mathcal{W} \} \) are pairwise
    disjoint for \( c \in \mathcal{C} \);
  \item\label{item:refines-R_G} for all \( c \in \mathcal{C} \), elements of \( \mathcal{W}_{c} \)
    are \( \mathcal{R}_{G} \)-equivalent to \( c \), i.e.,
    \( \{c\} \times \mathcal{W}_{c} \subseteq \mathcal{R}_{G} \);
  \item\label{item:fibers-cover-phase-space} fibers cover the phase space,
    \( X = \bigsqcup_{c \in \mathcal{C}}\mathcal{W}_{c} \).
  \end{enumerate}

  A tessellation \(\mathcal{W}\) over \(\mathcal{C}\) is called \textbf{\(N\)-lacunary} for an open
  subset \(N \subseteq G\) if, for every \(c \in \mathcal{C}\), the inclusion
  \(N \cdot c \subseteq \mathcal{W}_c\) holds. It is said to be \textbf{\(K\)-cocompact}, where
  \(K \subseteq G\) is a compact subset, if \(\mathcal{W}_c \subseteq K \cdot c\) for all
  \(c \in \mathcal{C}\). We say that \(\mathcal{W}\) is \textbf{cocompact} if it is \(K\)-cocompact
  for some compact subset \(K \subseteq G\).
\end{definition}

Any tessellation \( \mathcal{W} \) can be viewed as a (flipped) graph of a function, since for any
\( x \in X \), there is a unique \( c \in \mathcal{C} \) such that \( (c, x) \in \mathcal{W} \). We
denote such \( c \) by \( \pi_{\mathcal{W}}(x) \), which produces a Borel map
\( \pi_{\mathcal{W}} : X \to \mathcal{C} \). There is a natural equivalence relation
\( \mathcal{R}_{\mathcal{W}} \) associated with the tessellation. Namely, \( x_{1} \) and
\( x_{2} \) are \( \mathcal{R}_{\mathcal{W}} \)-equivalent whenever they belong to the same fiber,
i.e., \( \pi_{\mathcal{W}}(x_{1}) = \pi_{\mathcal{W}}(x_{2}) \). In view of
item~\eqref{item:refines-R_G}, \( \mathcal{R}_{\mathcal{W}} \subseteq \mathcal{R}_{G} \), and
moreover, every \( \mathcal{R}_{G} \)-class consists of countably many
\( \mathcal{R}_{\mathcal{W}} \)-classes.

\section{Voronoi tessellations}

Voronoi tessellations provide a specific method for constructing tessellations over a given
cross-section. Suppose that the group \( G \) is endowed with a compatible \emph{proper} norm
\( \snorm{\cdot} \). Let \( D : \mathcal{R}_{G} \to \mathbb{R}^{\ge 0} \) be the associated metric
on the orbits of the action (as in Section~\ref{sec:l1-full-groups}), and let
\( \preceq_{\mathcal{C}} \) be a Borel linear order on \( \mathcal{C} \). The \textbf{Voronoi
  tessellation}\index{Tessellation!Voronoi}\index{Voronoi tesselation} over the cross-section \( \mathcal{C} \) relative to a
proper norm \( \norm{\cdot} \) is the set
\( \mathcal{V}_{\mathcal{C}} \subseteq \mathcal{C} \times X \) defined by
\begin{displaymath}
  \begin{aligned}
    \mathcal{V}_{\mathcal{C}} = \bigl\{(c,x) \in \mathcal{C} \times X\ :\
    &c \mathcal{R}_{G} x \textrm{
      and for all } c' \in \mathcal{C} \textrm{ such that } c' \mathcal{R}_{G} x, \textrm{ either } \\
    & D(c,x) <  D(c',x) \textrm{ or } \\
    &( D(c,x) = D(c',x) \textrm{ and } c \preceq_{\mathcal{C}} c')\bigr\}.
  \end{aligned}
\end{displaymath}
The properness of the norm ensures that for each \( x \in X \), there are only finitely many
candidates \( c \) that minimize \( D(c,x) \), and hence each \( x \in X \) is associated with a
unique \( c \in \mathcal{C} \). Here is a basic application of Voronoi tessellations.

\begin{proposition}\label{prop:union-of-smooth}
  Let \( G \) be a locally compact Polish group acting in a Borel manner on a standard Borel space
  \( X \).  There exists a sequence of cocompact tessellations \( (\mathcal{V}_{n})_{n\in\N} \) such
  that \(\mathcal{R}_{G} = \bigcup_{n} \mathcal{R}_{\mathcal{V}_{n}}\).
\end{proposition}
\begin{remark}
  It is essential that in the statement above, the equivalence relations
  \(\mathcal R_{\mathcal V_n}\) are \emph{not} required to be nested. For a related statement where
  these relations are indeed nested and the acting group is amenable, the reader is referred to
  Lemma~\ref{lem:exhaustive-bounded-smooth-atomless}.
\end{remark}
\begin{proof}[Proof of Proposition~\ref{prop:union-of-smooth}]
  Let \( \mathcal{C} \) be a cocompact cross-section, and let \(\norm\cdot\) be a proper norm on
  \( G \) inducing the metric \( D \) on the orbits. For each \( n \in \mathbb{N} \), define
  \( U_{n} \) to be the open ball of radius \( n \) centered at \( 1_{G} \). We denote the Voronoi
  tessellation over \( \mathcal{C} \) by \( \mathcal{V}_{\mathcal{C}} \).

  Lemma~\ref{lem:lacunary-partition} lets us pick a finite sequence of cocompact cross-sections
  \( \mathcal{C}^{n}_{1}, \ldots, \mathcal{C}^{n}_{k_{n}} \) such that each
  \( \mathcal{C}^{n}_{i} \) is \( U_{n} \)-lacunary and
  \( \mathcal{C} = \bigsqcup_{i=1}^{k_{n}}\mathcal{C}^{n}_{i} \).  Notably, the \(U_{n}\)-lacunarity
  condition ensures that for distinct \(c, c' \in \mathcal{C}^{n}_{i}\), we have \(D(c,c')\geq n\).
  Let \( \mathcal{V}_{i}^{n} \) denote the Voronoi tessellation over \( \mathcal{C}_{i}^{n} \).

  We claim that the tessellations \(\mathcal{C}_{i}^{n}\) constitute the desired sequence. Let
  \( x,y \in X \) be such that \( (x,y) \in \mathcal{R}_{G} \).  Set
  \(c=\pi_{\mathcal{V}_{\mathcal{C}}}(x)\).  If \(n\) is so large that \(D(x,c) < n\) and
  \(D(y,c) < n \), then \(\pi_{\mathcal{V}_{i}^{2n}}(y) = c = \pi_{\mathcal{V}_{i}^{2n}}(x)\) for
  the index \(i\) satisfying \(c \in \mathcal{C}_{i}^{2n}\).  We conclude that
  \((x,y)\in\mathcal{R}_{\mathcal{V}_{i}^{2n}}\), and the claim follows.
\end{proof}

For the sake of Chapter~\ref{chap:derived-l1-full-group}, we also need a definition of the Voronoi
tessellation for norms that may not be proper. The set \( \mathcal{V}_{\mathcal{C}} \) specified as
above may, in this case, fail to satisfy item~\eqref{item:fibers-cover-phase-space} of the
definition of a tessellation.  For some \( x \in X \), there may be infinitely many
\( c \in \mathcal{C} \) that minimize \( D(c,x) \), none of which are
\( \preceq_{\mathcal{C}} \)-minimal. We therefore need a different way to resolve the points on the
``boundary'' between the regions, which can be achieved, for example, by delegating this task to a
proper norm.

\begin{definition}
  \label{def:voronoi-tessellation-non-proper}
  Let \( \norm{\cdot} \) be a compatible norm on \( G \) and let \( \mathcal{C} \) be a
  cross-section. Pick a compatible proper norm \( \norm{\cdot}' \) on \( G \) and a Borel linear
  order \( \preceq_{\mathcal{C}} \) on \( \mathcal{C} \). Let \( D \) and \( D' \) be the metrics on
  the orbits of the action associated with the norms \( \norm{\cdot} \) and \( \norm{\cdot}' \),
  respectively.  The \textbf{Voronoi tessellation}\index{Tessellation!Voronoi}\index{Voronoi tesselation} over the
  cross-section \( \mathcal{C} \) relative to the norm \( \norm{\cdot} \) is the set
  \( \mathcal{V}_{\mathcal{C}} \subseteq \mathcal{C} \times X \) defined by
  \begin{displaymath}
    \begin{aligned}
      \mathcal{V}_{\mathcal{C}} = \bigl\{(c,x) \in\
      &\mathcal{C} \times X\ :\ c \mathcal{R}_{G}x \textrm{ and for all } c' \in \mathcal{C}
        \textrm{ such that } c' \mathcal{R}_{G} x \textrm{ either } \\
      & D(c,x) < D(c',x) \textrm{ or } \\
      &( D(c,x) = D(c',x) \textrm{ and } D'(c,x) < D'(c', x)) \textrm{ or } \\
      &( D(c,x) = D(c',x) \textrm{ and } D'(c,x) = D'(c', x) \textrm{ and }
        c \preceq_{\mathcal{C}} c')\bigr\}.
    \end{aligned}
  \end{displaymath}
\end{definition}

The definition of the Voronoi tessellation does depend on the choice of the norm \( \norm{\cdot}' \)
and the linear order \( \preceq_{\mathcal{C}} \) on the cross-section, but its key properties remain
the same regardless of these choices. We therefore often do not explicitly specify which
\( \norm{\cdot}' \) and \( \preceq_{\mathcal{C}} \) are picked. Note also that if the cross-section
is cocompact, then every region of the Voronoi tessellation is bounded, i.e.,
\( \sup_{x \in X} D(x, \pi_{\mathcal{V}_{\mathcal{C}}}(x)) < +\infty \).

Our goal is to show that the equivalence relations \( \mathcal{R}_{\mathcal{W}} \) are atomless in
the sense of Section~\ref{sec:disintegration-measure}, provided that each orbit of the action is
uncountable. To this end, we first need the following lemma.

\begin{lemma}
  \label{lem:countable-section-measure-zero}
  Let \( G \) be a locally compact Polish group acting on a standard Lebesgue space \( (X, \mu) \)
  by measure-preserving automorphisms. Suppose that almost every orbit of the action is
  uncountable. If \( \mathcal{A} \subseteq X \) is a measurable set such that the intersection of
  \( \mathcal{A} \) with almost every orbit is countable, then \( \mu(\mathcal{A}) = 0 \).
\end{lemma}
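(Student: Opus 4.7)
The plan is to derive a contradiction from the assumption $\mu(\mathcal{A}) > 0$ by a Fubini argument on $G \times X$ using a right Haar measure $\lambda$ on $G$. Fix a compact set $K \subseteq G$ with $\lambda(K) > 0$ and consider the Borel set
\[
  B = \{(g, x) \in K \times X : g \cdot x \in \mathcal{A}\}.
\]
Since the action preserves $\mu$, Fubini gives $(\lambda \times \mu)(B) = \int_{K} \mu(g^{-1} \mathcal{A}) \, d\lambda(g) = \lambda(K)\, \mu(\mathcal{A}) > 0$. The goal is to reach a contradiction by computing $(\lambda \times \mu)(B)$ in the opposite order and showing the inner integral vanishes $\mu$-almost everywhere.

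For each $x \in X$ let $S(x) = \{g \in G : g \cdot x \in \mathcal{A}\}$. Then $S(x)$ is the preimage of $\mathcal{A} \cap G \cdot x$ under the orbit map $g \mapsto g \cdot x$, hence $S(x)$ is a union of left cosets of the stabilizer $\mathrm{Stab}(x)$ indexed by points of $\mathcal{A} \cap G \cdot x$. By hypothesis this is a countable union of cosets for almost every $x$, so it suffices to show that $\lambda(\mathrm{Stab}(x)) = 0$ for almost every $x$.

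This is where the uncountability of orbits enters. By Miller's theorem, invoked earlier in the paper when introducing the metric on orbits, $\mathrm{Stab}(x)$ is a closed subgroup of $G$. Were $\lambda(\mathrm{Stab}(x)) > 0$, the standard fact that $H \cdot H^{-1}$ contains a neighborhood of the identity when $H$ has positive Haar measure would force $\mathrm{Stab}(x)$ to be open, hence of countable index in the second countable group $G$, hence the orbit $G \cdot x \cong G / \mathrm{Stab}(x)$ would be countable, contradicting the hypothesis on almost every orbit. Consequently $\lambda(\mathrm{Stab}(x)) = 0$, so $\lambda(S(x)) = 0$, and thus $\lambda(K \cap S(x)) = 0$ for a.e.\ $x$. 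Integrating then yields $(\lambda \times \mu)(B) = 0$, contradicting the earlier computation.

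No step here is a real obstacle; the only subtlety is making sure one invokes the closedness of stabilizers (Miller's theorem) and the second countability of $G$ to transform ``uncountable orbit'' into ``stabilizer has Haar measure zero''. Once this is in place the proof is a clean Fubini argument on $K \times X$.
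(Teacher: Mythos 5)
Your proof is correct, and it takes a genuinely different route from the paper's. The paper fixes a cocompact cross section with its Voronoi tessellation, reduces to the case where \( \mathcal{A} \) meets each tessellation cell in at most one point, and then uses a countable dense subset of \( G \) to manufacture infinitely many pairwise disjoint translates of (a piece of) \( \mathcal{A} \) of equal measure inside the finite measure space; the uncountability of orbits enters through the observation that stabilizers are closed and therefore \emph{nowhere dense} in a small ball, which is what makes the disjointification possible. You instead run a single Fubini computation on \( K \times X \): one iterated integral is \( \lambda(K)\,\mu(\mathcal{A}) \) by measure preservation, the other is \( \int_X \lambda(K \cap S(x))\, d\mu(x) \), and the section \( S(x) \) is a countable union of cosets of \( \mathrm{Stab}(x) \), which is Haar-null by Miller's theorem plus Steinhaus--Weil once the orbit is uncountable. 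The two arguments use the same underlying dichotomy for closed subgroups of a locally compact group (open versus negligible), phrased categorically in the paper and measure-theoretically by you. Your version is shorter, avoids the cross-section and tessellation machinery entirely, and is essentially the non-free analogue of the paper's Lemma~\ref{lem:zero-set-fubini}; the paper's version stays within the piecewise-translation toolkit it reuses elsewhere in the appendix and does not need the Steinhaus theorem. All the steps you flag as potentially delicate (Borel measurability of \( B \) via the spatial realization of the action, measurability of \( x \mapsto \lambda(K \cap S(x)) \) via Fubini, and the quasi-invariance of right Haar measure under left translation so that cosets of a null subgroup are null) do go through.
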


\begin{proof}
  Pick a proper norm \( \norm{\cdot} \) on \( G \).  Let \( \mathcal{C} \) be a cross-section for
  the action, and let \( B_{2r} \subseteq G \) be an open ball around the identity of sufficiently
  small radius \( 2r > 0 \) such that \( B_{2r} \cdot c \cap B_{2r} \cdot c' = \varnothing \)
  whenever \( c, c' \in \mathcal{C} \) are distinct.  Let \( \mathcal{V}_{\mathcal{C}} \) be the
  Voronoi tessellation over \( \mathcal{C} \) relative to \( \norm{\cdot} \). Note that
  \( B_{2r}\cdot c \) is fully contained in the \( \mathcal{R}_{\mathcal{V}_{\mathcal{C}}} \)-class
  of \( c \).

  We claim that it is enough to consider the case when \( \mathcal{A} \) intersects each
  \( \mathcal{R}_{\mathcal{V}_{\mathcal{C}}} \)-class in at most one point. Indeed, the restriction
  of \( \mathcal{R}_{\mathcal{V}_{\mathcal{C}}} \) onto \( \mathcal{A} \) is a smooth countable
  equivalence relation, so one can write
  \( \mathcal{A} = \bigsqcup_{n \in \mathbb{N}} \mathcal{A}_{n}' \), where each
  \( \mathcal{A}_{n}' \) intersects each \( \mathcal{R}_{\mathcal{V}_{\mathcal{C}}} \)-class in at
  most one point. To simplify notation, we assume that \( \mathcal{A} \) already possesses this
  property.

  Let \( Y = B_{r} \cdot \mathcal{C} \), and let \( (g_{n})_{n\in \mathbb{N}} \) be a countable
  dense subset of~\( G \).  We define the function \( \gamma : X \to \mathbb{N} \) by
  \[ \gamma(x) = \min\{ n \in \mathbb{N}: x\, \mathcal{R}_{\mathcal{V}_{\mathcal{C}}}\, g_{n}x
    \textrm{ and } g_{n}x \in Y \}. \]
  Let \( \mathcal{A}_{n} = \mathcal{A} \cap \gamma^{-1}(n) \), and note that the sets
  \( \mathcal{A}_{n} \) partition \( \mathcal{A} \). It is therefore enough to show that
  \( \mu(\mathcal{A}_{n}) = 0 \) for any \( n \in \mathbb{N} \). Pick \( n_{0} \in \mathbb{N} \).
  The action is measure-preserving, and therefore
  \( \mu(\mathcal{A}_{n_{0}}) = \mu(g_{n_{0}}\mathcal{A}_{n_{0}}) \). Set
  \( \mathcal{B}_{0} = g_{n_{0}}\mathcal{A}_{n_{0}} \) and note that for any
  \( x \in \mathcal{B}_{0} \) and \( g \in B_{r} \subseteq G \), one has
  \( gx \mathcal{R}_{\mathcal{V}_{\mathcal{C}}} x \). If the action were free, we could easily
  conclude that \( \mu(\mathcal{B}_{0}) = 0 \), since the sets \( g\mathcal{B}_{0} \),
  \( g \in B_{r} \), would be pairwise disjoint. In general, we need to exhibit a little more care
  and construct a countable family of pairwise disjoint sets \( \mathcal{B}_{n} \) as follows. For
  \( x \in \mathcal{B}_{0} \), let
  \[ \tau_{n}(x) = \min \bigl\{ m \in \mathbb{N}: x \mathcal{R}_{\mathcal{V}_{\mathcal{C}}} g_{m}x
    \textrm{ and } g_{m}x \not \in \bigcup_{k \le n} \mathcal{B}_{n} \bigr\}. \]
  The value \( \tau_{n}(x) \) is well-defined because the stabilizer of \( x \) is closed and must
  be nowhere dense in \( B_{r} \) due to the orbit \( G \cdot x \) being uncountable. Put
  \( \mathcal{B}_{n+1} = \{ g_{\tau_{n}(x)}x : x \in \mathcal{B}_{0} \} \) and note that
  \( \mu(\mathcal{B}_{n}) = \mu(\mathcal{B}_{0}) \). We get a pairwise disjoint infinite family of
  sets \( \mathcal{B}_{n} \), all having the same measure. Since \( \mu \) is finite, we conclude
  that \( \mu(\mathcal{B}_{0}) = 0 \), and the lemma follows.
\end{proof}

\begin{corollary}
  \label{cor:tessellation-atomless}
  Let \( G \) be a locally compact Polish group acting on a standard Lebesgue space \( (X, \mu) \)
  by measure-preserving automorphisms.  Let \( \mathcal{C} \) be a cross-section for the action, and
  let \( \mathcal{W} \subseteq \mathcal{C} \times X \) be a tessellation. If \( \mu \)-almost every
  orbit of \( G \) is uncountable, then \( \mathcal{R}_{\mathcal{W}} \) is atomless.
\end{corollary}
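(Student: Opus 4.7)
The strategy is to apply Lemma~\ref{lem:countable-section-measure-zero} to the set of atoms of the disintegration of \( \mu \) along \( \pi_{\mathcal{W}} \). Concretely, let \( \nu = (\pi_{\mathcal{W}})_{*}\mu \) and disintegrate \( \mu = \int_{\mathcal{C}} \mu_{c}\, d\nu(c) \). Let
\[ X_{a} = \{x \in X : \mu_{\pi_{\mathcal{W}}(x)}(\{x\}) > 0\} \]
be the atomic part of the disintegration. By definition, \( \mathcal{R}_{\mathcal{W}} \) is atomless precisely when \( \mu_{c} \) is atomless for \( \nu \)-a.e.\ \( c \), and since each \( \mu_{c} \) has an atom if and only if it charges \( X_{a} \), this reduces to proving \( \mu(X_{a}) = 0 \).

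To apply Lemma~\ref{lem:countable-section-measure-zero}, the key step is to verify that \( X_{a} \) meets almost every \( G \)-orbit in at most countably many points. This follows from two observations. First, the intersection of any \( G \)-orbit with the cross section \( \mathcal{C} \) is countable: if \( U \subseteq G \) is an open neighborhood of the identity witnessing the lacunarity of \( \mathcal{C} \), and \( \mathrm{Stab}(x) \leq G \) is the stabilizer of \( x \) (which is closed by Miller's theorem as already invoked in Section~\ref{sec:l1-full-groups}), then the elements of \( \mathcal{C} \cap G\cdot x \) correspond to distinct cosets \( g \mathrm{Stab}(x) \) in the second countable Hausdorff quotient \( G/\mathrm{Stab}(x) \), separated by the image of \( U \); any such discrete set is countable. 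Consequently each orbit decomposes as a countable union \( \bigsqcup_{c \in \mathcal{C} \cap G\cdot x} \mathcal{W}_{c} \). Second, each finite measure \( \mu_{c} \) admits at most countably many atoms, so \( X_{a} \cap \mathcal{W}_{c} \) is countable. Combining these, \( X_{a} \cap G\cdot x \) is a countable union of countable sets, hence countable.

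With this in hand, Lemma~\ref{lem:countable-section-measure-zero} applied to \( \mathcal{A} = X_{a} \) yields \( \mu(X_{a}) = 0 \). From \( \int_{\mathcal{C}} \mu_{c}(X_{a})\, d\nu(c) = \mu(X_{a}) = 0 \) we deduce \( \mu_{c}(X_{a}) = 0 \) for \( \nu \)-a.e.\ \( c \), which by the definition of \( X_{a} \) is equivalent to \( \mu_{c} \) being atomless.

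The only genuinely subtle point is the countability of \( \mathcal{C} \cap G\cdot x \); once this is in place the rest is a direct bookkeeping argument. No further machinery beyond Miller's theorem and second countability is required, and the freeness or otherwise of the action plays no role (the uncountability of almost every orbit is the sole dynamical input, and it enters only through Lemma~\ref{lem:countable-section-measure-zero}).
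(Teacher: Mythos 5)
Your proof is correct and follows essentially the same route as the paper: disintegrate \( \mu \) over \( (\pi_{\mathcal{W}}, \nu) \), observe that the atomic part \( X_{a} \) meets almost every orbit countably (countably many tiles per orbit, countably many atoms per finite fiber measure), and conclude \( \mu(X_{a}) = 0 \) via Lemma~\ref{lem:countable-section-measure-zero}. The only difference is that you spell out why \( \mathcal{C} \cap G\cdot x \) is countable, a fact the paper takes as already established when introducing tessellations.
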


\begin{proof}
  Consider the disintegration \( (\mu_{c})_{c \in \mathcal{C}} \) of \( \mathcal{R}_{\mathcal{W}} \)
  relative to \( (\pi_{\mathcal{W}}, \nu) \), where \( \pi_{\mathcal{W}} : X \to \mathcal{C} \) and
  \( \nu = (\pi_{\mathcal{W}})_{*}\mu \). Let \( X_{a} \subseteq X \) be the set of atoms of the
  disintegration. Since \( \nu \)-almost every \( \mu_{c} \) is finite, the fibers
  \( \pi^{-1}_{\mathcal{W}}(c) \) have countably many atoms. Since every tessellation has only
  countably many tiles within each orbit, we conclude that \( X_{a} \) has a countable intersection
  with almost every orbit of the action. Lemma~\ref{lem:countable-section-measure-zero} applies and
  shows that \( \mu(X_{a}) = 0 \).  Hence, \( \mathcal{R}_{\mathcal{W}} \) is atomless as required.
\end{proof}

Consider the full group \( \fgr{\mathcal{R}_{\mathcal{W}}} \), which, by
Proposition~\ref{prop:full-group-smooth-L0} and Corollary~\ref{cor:tessellation-atomless}, is
isomorphic to \( \LL^{0}(Y, \nu, \Aut([0,1], \lambda)) \) for some standard Lebesgue space
\( (Y, \nu) \). This full group can naturally be viewed as a subgroup of
\( \fgr{\mathcal{R}_{G}} \), and the topology induced on \( \fgr{\mathcal{R}_{\mathcal{W}}} \) from
the full group \( \fgr{\mathcal{R}_{G}} \) coincides with the topology of convergence in measure on
\( \LL^{0}(Y, \nu, \Aut([0,1], \lambda)) \) (see Section~3 of~\cite{MR3464151}). We therefore have
the following corollary.

\begin{corollary}
  \label{cor:maharam-identification-with-l0}
  Let \( G \) be a locally compact Polish group acting on a standard Lebesgue space \( (X, \mu) \)
  by measure-preserving automorphisms. Let \( \mathcal{C} \) be a cross-section for the action, let
  \( \mathcal{W} \subseteq \mathcal{C} \times X \) be a tessellation, and let
  \( \pi_{\mathcal{W}} : X \to \mathcal{C} \) be the corresponding reduction.  If \( \mu \)-almost
  every orbit of \( G \) is uncountable, then the subgroup
  \( \fgr{\mathcal{R}_{\mathcal{W}}} \le \fgr{\mathcal{R}_{G}} \) is isomorphic as a topological
  group to \( \LL^{0}(\mathcal{C}, (\pi_{\mathcal{W}})_{*}\mu, \Aut([0,1], \lambda)) \). If moreover
  all orbits of the action have measure zero, then \( (\pi_{\mathcal{W}})_{*}\mu \) is non-atomic,
  and \( \fgr{\mathcal{R}_{\mathcal{W}}} \) is isomorphic to
  \( \LL^{0}([0,1], \lambda, \Aut([0,1],\lambda)) \).
\end{corollary}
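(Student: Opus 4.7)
The plan is to chain together the structural results established earlier in the appendix, so the proof will be relatively short. First I would note that a tessellation $\mathcal{W}$ makes $\mathcal{R}_{\mathcal{W}}$ smooth by definition: the Borel map $\pi_{\mathcal{W}} : X \to \mathcal{C}$ is a reduction to the identity relation on $\mathcal{C}$, and the pushforward $\nu := (\pi_{\mathcal{W}})_*\mu$ is a finite measure on the standard Lebesgue space $\mathcal{C}$ equivalent to itself, so Theorem~\ref{thm:disintegration-of-measure} furnishes a disintegration $(\mu_c)_{c \in \mathcal{C}}$ supported on the fibers $\mathcal{W}_c$.

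Next I would invoke Corollary~\ref{cor:tessellation-atomless}: since $\mu$-almost every $G$-orbit is uncountable, $\mathcal{R}_{\mathcal{W}}$ has atomless classes in the sense of the disintegration discussion. Thus the second clause of Proposition~\ref{prop:full-group-smooth-L0} applies and yields an abstract group isomorphism $\fgr{\mathcal{R}_{\mathcal{W}}} \cong \LL^0(\mathcal{C}, \nu, \Aut([0,1], \lambda))$ through the identification discussed in that proposition. To promote this to a topological isomorphism, I would appeal to the remark made just before the corollary statement: the topology induced on $\fgr{\mathcal{R}_{\mathcal{W}}}$ as a subgroup of $\fgr{\mathcal{R}_G}$ (with its Polish topology of convergence in measure from~\cite{MR3464151}) agrees with the topology of convergence in measure on $\LL^0(\mathcal{C}, \nu, \Aut([0,1], \lambda))$, which is Section~3 of~\cite{MR3464151}.

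For the moreover part, suppose additionally that all $G$-orbits have measure zero. Then for any $c \in \mathcal{C}$, the fiber $\pi_{\mathcal{W}}^{-1}(c)$ is contained in the $G$-orbit of $c$, so $\nu(\{c\}) = \mu(\pi_{\mathcal{W}}^{-1}(c)) = 0$, showing that $\nu$ has no atoms. Since $(\mathcal{C}, \nu)$ is a standard Lebesgue space with a finite atomless measure, it is measure-isomorphic to $([0,1], \alpha \cdot \lambda)$ for $\alpha = \nu(\mathcal{C})$, hence in particular the measure algebras coincide, and therefore $\LL^0(\mathcal{C}, \nu, \Aut([0,1], \lambda)) \cong \LL^0([0,1], \lambda, \Aut([0,1], \lambda))$ as topological groups.

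There is no real obstacle here; the only delicate point is the matching of the two Polish topologies in step three, but this is handled by the reference to \cite{MR3464151} and is consistent with how the isomorphism in Proposition~\ref{prop:full-group-smooth-L0} is constructed from the disintegration together with a measurable trivialization $\psi$ of the atomless fibers.
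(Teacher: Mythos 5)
Your proof is correct and follows the same route as the paper: combine Corollary~\ref{cor:tessellation-atomless} with Proposition~\ref{prop:full-group-smooth-L0} for the abstract isomorphism, and use the identification of the induced topology with convergence in measure from Section~3 of~\cite{MR3464151} to make it topological. Your argument for the ``moreover'' part (each fiber \( \mathcal{W}_{c} \) lies in a null orbit, so \( (\pi_{\mathcal{W}})_{*}\mu \) has no atoms) is exactly the intended, if unstated, justification.
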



\chapter{Conditional measures}\label{chap:conditional-measures-appendix}

The ergodic decomposition theorem, as formulated in Section~\ref{sec:ergod-decomp}, is not available
for general probability measure-preserving actions of Polish groups.  Conditional measures provide a
useful framework to remedy this.  As before, \(\Aut(X,\mu)\) stands for the group of
measure-preserving automorphisms of a standard probability space.  It is more useful, however, to
view \(\Aut(X,\mu)\) as the group of measure-preserving automorphisms of the \textbf{measure
  algebra}\index{MAlg@\(\MAlg\)} \(\MAlg(X,\mu)\) of \((X,\mu)\), i.e., the Boolean algebra of
equivalence classes of Borel subsets of \(X\), identified up to measure zero.  The measure algebra
is endowed with a natural metric \(d_\mu\) given by \( d_{\mu}(A,B) = \mu (A \bigtriangleup B) \).
The completeness of \( (\MAlg(X,\mu),d_\mu) \) follows directly from its natural isometric
identification with \( (\LL^1(X,\mu,\Z/2\Z),\tilde{d}_{Y}) \), where \(\Z/2\Z\) is endowed with the
discrete metric \(d_{Y}\), and the metric \(\tilde{d}_{Y}\) is given in Definition~\ref{def:pointed-L1}.

\begin{proposition}\label{prop:malg-complete}
  The metric space \( (\MAlg(X,\mu),d_{\mu}) \) is complete.\qed
\end{proposition}

Note that closed subalgebras of \(\MAlg(X,\mu)\) are in a one-to-one correspondence with complete
(in the measure-theoretical sense) \(\sigma\)-subalgebras of the \(\sigma\)-algebra of Lebesgue
measurable sets.

\section{Conditional expectations}

We give a concise overview of how conditional expectations can be defined without the need for
disintegration.

Let \( M\) be a closed subalgebra of \(\MAlg(X,\mu)\) and let \(\LL^2( M,\mu)\) denote the
\(\LL^{2}\) space of real-valued \(M\)-measurable functions.  Note that \(\LL^{2}(M, \mu)\) is a
closed subspace of \(\LL^{2}(X,\mu) = \LL^{2}(\MAlg(X,\mu),\mu)\). The \(M\)-\textbf{conditional
  expectation}\index{Conditional expectation} is the orthogonal projection
\(\mathbb E_M:\LL^2(X,\mu)\to \LL^2(M,\mu)\).  It is uniquely defined by the condition
\begin{equation}\label{eq:cond-exp}
  \int_X fg \, d\mu= \int_X\mathbb E_M(f)g\, d\mu \quad
  \textrm{for all \(f\in \LL^2(X,\mu)\) and all \(g\in \LL^2(M,\mu)\)}.
\end{equation}
Due to the density of step functions in \(\LL^2(M,\mu)\), the conditional expectation can equivalently
be defined as the linear contraction \(\LL^2(X,\mu)\to \LL^2(M,\mu)\) satisfying
\begin{equation}\label{eq:cond-exp-on-chara}
  \int_{A} f \, d\mu=\int_{A} \mathbb{E}_{M}(f)\, d\mu \quad \textrm{for all \(A\in M\) and all
    \(f \in \LL^{2}(X,\mu)\)}.
\end{equation}

Positive functions are precisely those that yield a non-negative dot product with any characteristic
function. By allowing \(g\) in Eq.~\eqref{eq:cond-exp} to vary over the set of all characteristic
functions of subsets in \(M\), we can see that the conditional expectation \(\mathbb{E}_{M}\)
preserves positivity.

\begin{proposition}\label{prop:cond-exp-positivity-preserving}
  If \(f \in \LL^{2}(X,\mu)\) is non-negative, \(f\geq 0\), then \(\mathbb E_M(f)\geq 0\).
\end{proposition}

While we defined conditional expectations as operators on \(\LL^{2}(X,\mu)\), their domain can be
extended to all of \(\LL^{1}(X,\mu)\), making \(\mathbb{E}_{M}\) a contraction from
\(\LL^{1}(X,\mu) \) to \(\LL^{1}(M,\mu)\).  This is justified by the following proposition.

\begin{proposition}
  The conditional expectation \(\mathbb{E}_{M} : \LL^{2}(X,\mu) \to \LL^{2}(M,\mu)\) is a
  contraction when the domain and the range are endowed with the \(\LL^1\) norms.
\end{proposition}

\begin{proof}
  If \(f\in\LL^2(X,\mu)\) is non-negative, \(f\geq 0\), then Eq.~\eqref{eq:cond-exp} yields
  \[
    \norm f_1=\int_X f\, d\mu = \int_Xf \cdot 1\, d\mu = \int_X\mathbb E_M(f)\cdot 1\, d\mu =
    \int_X\mathbb E_M(f) \, d\mu.
  \]
  Since \(\mathbb E_M(f) \geq 0\) by Proposition~\ref{prop:cond-exp-positivity-preserving}, we
  conclude that \(\norm{\mathbb E_M(f)}_1=\norm{f}_1\) for all non-negative \(f\in\LL^2(X,\mu)\).

  An arbitrary \(f \in \LL^{2}(X,\mu)\) can be written as the difference \(f^{+} - f^{-}\) of
  non-negative functions \(f^{+} = \max\{f,0\}\) and \(f^{-} = \max\{-f,0\}\). Note that
  \(f^{+}, f^{-} \in \LL^{2}(X,\mu)\) and \(\norm{f^+}_1+\norm{f^-}_1=\norm{f}_1\).  We therefore
  have
    \begin{displaymath}
      \begin{aligned}
        \snorm{\mathbb{E}_{M}(f)}_{1}
        &= \snorm{\mathbb{E}_{M}(f^+-f^-)}_{1} \leq
           \snorm{\mathbb{E}_{M}(f^+)}_1+\snorm{\mathbb{E}_{M}(f^-)}_{1} \\
        &= \snorm{f^+}_1+\snorm{f^-}_1 = \snorm{f}_1,
      \end{aligned}
    \end{displaymath}
  showing that \(\mathbb{E}_{M}\) is a contraction in the \(\LL^{1}\) norm.
\end{proof}

\begin{remark}
  By the previous proposition, \(\mathbb{E}_{M}\) admits a (necessarily unique) extension to a contraction
  \[
    \mathbb{E}_{M} : \LL^{1}(X,\mu)\to \LL^{1} (M,\mu).
  \]
  Moreover, since every non-negative integrable function can be written as an increasing limit of
  bounded non-negative functions, the analog of
  Proposition~\ref{prop:cond-exp-positivity-preserving} continues to hold for
  \(f\in\LL^{1}(X,\mu)\).
\end{remark}

\section{Conditional measures}\label{sec:conditional-measures}

Throughout this section, we let \(\chi_A: X\to \{0,1\}\) denote the characteristic function of
\(A \subseteq X\).

\begin{definition}
  Let \(M\) be a closed subalgebra of \(\MAlg(X,\mu)\).  The \textbf{\(M\)-conditional
    measure}\index{Conditional measure} of \(A \in \MAlg(X,\mu)\), denoted by \(\mu_M(A)\), is the
  conditional expectation of the characteristic function of \(A\), i.e.,
  \(\mu_M(A) = \mathbb E_M(\chi_A)\).
\end{definition}

In particular, the conditional measure \(\mu_M(A)\) is an \(M\)-measurable function. It enjoys the
following natural properties.

\begin{proposition}\label{prop:cond-measure}
  Let \(M\subseteq \MAlg(X,\mu)\) be a closed subalgebra. The following properties hold for all
  \(A \in \MAlg(X,\mu)\):
  \begin{enumerate}
  \item \(\mu_M(\varnothing)=0\) and \(\mu_M(X)=1\), where \(0\) and \(1\) denote the constant maps;
  \item \(\mu_M(A)\) takes values in \([0,1]\) and \( \int_X\mu_M(A)=\mu(A)\);
  \item\label{item:additivity} \(\mu_M\) is \(\sigma\)-additive: if \(A = \bigsqcup_{n}A_n\),
    \(A_{n} \in \MAlg(X,\mu)\), is a partition, then
    \[\mu_M(A)=\sum_{n\in\N} \mu_M(A_n),\]
    where the convergence holds in \(\LL^1(M,\mu)\);
  \item\label{item:preserve-cond-meas} if \(T\in\Aut(X,\mu)\) fixes every element of \(M\), then
    \( \mu_M(A)=\mu_M(T(A))\).
  \end{enumerate}
\end{proposition}

\begin{proof}
  The first item is clear from the fact that both \(\varnothing\) and \(X\) belong to \(M\), so
  their characteristic functions are fixed by \(\mathbb E_M\).  The second item follows from the
  first and positivity of the conditional expectation; the equality is a direct consequence of
  Eq.~\eqref{eq:cond-exp-on-chara}.  The third one is a consequence of the \(\LL^1\) continuity of
  \(\mathbb E_M\) and its linearity, noting that \(\chi_A=\sum_n \chi_{A_n}\) in \(\LL^1(M,\mu)\).

  Finally, the last item follows from the uniqueness of conditional expectation given by
  Eq.~\eqref{eq:cond-exp-on-chara}. Indeed, if an automorphism \(T\) fixes every element of \(M\),
  then
  \[
    \int_{B} f\circ T\inv \, d\mu = \int_{T(B)} \mkern-20mu f\, d\mu =\int_B f\, d\mu \quad \textrm{ for all
      \(B \in \MAlg(X,\mu)\)},
  \]
  so \(\mathbb E_M(f\circ T\inv)=\mathbb E_M(f)\). Taking \(f=\chi_{A}\) for \(A\in\MAlg(X,\mu)\), we
  conclude that \(\mu_M(T(A))=\mu_M(A)\).
\end{proof}

\section{Conditional measures and full groups}\label{sec:cond-meas-full-groups}

Conditional measures, as defined in Section~\ref{sec:conditional-measures}, are associated with
closed subalgebras of \(\MAlg(X,\mu)\).  Each subgroup \(\mathbb{G} \le \Aut(X,\mu)\) gives rise to
the subalgebra of \(\mathbb{G}\)-invariant sets, and we may therefore associate a conditional
measure with the group \(\mathbb{G}\) itself.

\begin{definition}
  Let \(\mathbb G\) be a subgroup of \(\Aut(X,\mu)\).  The closed \textbf{subalgebra of
    \(\mathbb{G}\)-invariant sets} is denoted by \( M_{\mathbb G}\) and consists of all
  \(A\in\MAlg(X,\mu)\) such that \(gA = A\) for all \(g \in \mathbb{G}\).
\end{definition}

By definition, \(\mathbb{G} \leq \Aut(X,\mu)\) is \textbf{ergodic} if
\( M_{\mathbb G}=\{\varnothing,X\}\). Since \(\{\varnothing, X\}\)-measurable functions are
constants, the \(M_{\mathbb G}\)-conditional measure corresponds to the measure \(\mu\) when
\(\mathbb{G}\) is ergodic.  The following lemma is an easy consequence of the definitions of the
full group generated by a subgroup (Section~\ref{sec:full-and-finfull}) and the weak topology on
\(\Aut(X,\mu)\).

\begin{lemma}\label{lem:conditional-measure-full-and-dense}
  Let \(\mathbb G\leq\Aut(X,\mu)\) be a group.
  \begin{enumerate}
  \item If \([\mathbb{G}]\) is the full group generated by \(\mathbb{G}\), then
    \(M_{\mathbb{G}}=M_{[\mathbb{G}]}\).
  \item If \(\Gamma \le \mathbb{G}\) is dense in the weak topology, then
    \(M_\Gamma=M_{\mathbb{G}}\).
  \end{enumerate}
\end{lemma}

Given a subgroup \(\mathbb G\leq \Aut(X,\mu)\), we denote the \(M_{\mathbb{G}}\)-conditional measure
simply by \(\mu_{\mathbb G}\).

Recall that a \textbf{partial measure-preserving automorphism} of \((X,\mu)\) is a
measure-preserving bijection \(\varphi : \dom \varphi \to \rng \varphi\) between measurable subsets
of \(X\), called the domain and the range of \(\varphi\), respectively. The \textbf{pseudo full
  group} generated by a group \(\Gamma \le \Aut(X,\mu)\) is denoted by \(\pafgr{\Gamma}\) and
consists of all partial automorphisms \(\varphi: \dom \varphi \to \rng \varphi \) for which there
exists a partition \(\dom \varphi = \bigsqcup_{n}A_n\) and elements \(\gamma_{n} \in \Gamma\) such
that \(\varphi\restriction_{A_{n}} = \gamma_{n}\restriction_{A_{n}}\) for all \(n\).  Elements of
\(\pafgr{\Gamma}\) automatically preserve the conditional measure \(\mu_\Gamma\) in the sense that
if \(A \subseteq \dom \varphi\), then \(\mu_{\Gamma}(\varphi(A)) = \mu_{\Gamma}(A)\).  Indeed,
\begin{displaymath}
  \begin{aligned}
    \mu_{\Gamma}(\varphi(A))
    &= \mu_{\Gamma}\Bigr(\varphi\Bigl(\bigsqcup_{n} (A \cap A_{n})\Bigr)\Bigr) =
      \mu_{\Gamma}\Bigr(\bigsqcup_{n} \varphi(A \cap A_{n})\Bigr) \\
    \because \textrm{Prop.~\ref{prop:cond-measure}\eqref{item:additivity}}
    &= \sum_{n}\mu_{\Gamma}(\varphi(A \cap A_{n})) = \sum_{n}\mu_{\Gamma}(\gamma_{n}(A \cap
      A_{n}))\\
    \because \textrm{Prop.~\ref{prop:cond-measure}\eqref{item:preserve-cond-meas}}
    &= \sum_{n}\mu_{\Gamma}(A \cap A_{n}) = \mu_{\Gamma}(A). \\
  \end{aligned}
\end{displaymath}

\begin{lemma}\label{lem:partial-full-group-match}
  Let \( \mathbb{G} \leq\Aut(X,\mu)\) be a group.  For all \(A,B\in\MAlg(X,\mu)\) satisfying
  \(\mu_{\mathbb{G}}(A)=\mu_{\mathbb{G}}(B)\), there exists an element
  \(\varphi \in \pafgr{\mathbb{G}}\) such that \(\dom \varphi=A\) and \(\rng\varphi = B\).
\end{lemma}
\begin{proof}
  Let \(\Gamma = \{\gamma_n : n \in \mathbb{N}\}\) be a countable weakly dense subgroup of
  \(\mathbb{G}\). It follows from Lemma~\ref{lem:conditional-measure-full-and-dense} that
  \(\mu_{\Gamma}(A) = \mu_{\mathbb{G}}(A) = \mu_{\mathbb{G}}(B) = \mu_{\Gamma}(B)\), and it is
  evident that \(\pafgr{\Gamma} \le \pafgr{\mathbb{G}}\).

  We inductively define sequences \((A_n)_{n}\) and \((B_n)_{n}\) of subsets of \(A\) and \(B\),
  respectively, starting with \(A_0 = A \cap \gamma_0^{-1} B\) and \(B_0 = \gamma_0 A_0\), and for
  \(n \geq 1\), we set
  \[
    A_{n} = \Bigl(A \setminus \bigcup_{m < n} A_m\Bigr) \cap \gamma_{n}^{-1}\Bigl(B \setminus
    \bigcup_{m < n} B_m\Bigr) \quad \text{ and } \quad B_n = \gamma_n A_n.
  \]
  By construction, the sets \(A_n\) are pairwise disjoint subsets of \(A\), each set satisfies
  \(\gamma_n A_n = B_n\), and the sets \(B_n\) are pairwise disjoint subsets of \(B\). We assert
  that \(\varphi = \bigsqcup_n (\gamma_n \restriction_{A_n})\) is the desired element of
  \(\pafgr{\mathbb{G}}\).

  Suppose, towards a contradiction, that either \(\dom \varphi\neq A\) or \(\rng\varphi \neq B\).
  Since \(\Gamma\) preserves \(\mu_\Gamma\) and \(\mu_\Gamma(A) = \mu_\Gamma(B)\), the sets
  \(A\setminus \dom \varphi\) and \(B\setminus\rng\varphi\) have the same \(M_{\Gamma}\)-conditional
  measure, which is not constantly equal to zero.  The set
  \[\tilde{A} = \bigcup_{\gamma\in\Gamma} \gamma (A\setminus \dom\varphi)\] is \(\Gamma\)-invariant
  and non-zero. Its conditional measure is therefore the characteristic function
  \(\chi_{\tilde{A}}\), which must be greater than or equal to
  \(\mu_{\Gamma}(A\setminus\dom\varphi)=\mu_{\Gamma}(B\setminus\rng\varphi)\).  We conclude that
  \(B\setminus \rng\varphi\subseteq \bigcup_{\gamma\in\Gamma} \gamma (A\setminus\dom\varphi)\).  In
  particular, there is the first \(n\in\N\) such that
  \((A\setminus \dom\varphi)\cap\gamma_n^{-1} (B\setminus\rng\varphi)\) is non-zero. By
  construction, this set should be a subset of \(A_n\), yielding the desired contradiction.
\end{proof}

\begin{proposition}\label{prop:construct-involutions}
  Let \(\mathbb G\) be a full subgroup of \(\Aut(X,\mu)\).  The following conditions are equivalent
  for all \(A,B\in\MAlg(X,\mu)\):
  \begin{enumerate}
  \item\label{item:same-cond-meas} \(\mu_{\mathbb G}(A)=\mu_{\mathbb G}(B)\);
  \item\label{item:same-fg-orbit} there is \(T\in\mathbb G\) such that \(T(A)=B\);
  \item\label{item:same-fg-orbit-with-small-support} there is an involution \(T\in\mathbb G\) such
    that \(T(A)=B\) and \(\supp T= A\bigtriangleup B\).
  \end{enumerate}
\end{proposition}
\begin{proof}
  The implication~\eqref{item:same-fg-orbit}\(\Rightarrow\)\eqref{item:same-cond-meas} is a direct
  consequence of the definition of \(M_{\mathbb G}\) along with item~\eqref{item:preserve-cond-meas}
  of
  Proposition~\ref{prop:cond-measure}.
  Also,~\eqref{item:same-fg-orbit-with-small-support}\(\Rightarrow\)\eqref{item:same-fg-orbit} is
  evident.

  We now prove the
  implication~\eqref{item:same-cond-meas}\(\Rightarrow\)\eqref{item:same-fg-orbit-with-small-support}.
  The assumption \(\mu_{\mathbb{G}}(A) = \mu_{\mathbb{G}}(A)\) guarantees that
  \(\mu_{\mathbb{G}}(A\setminus B) = \mu_{\mathbb{G}}(B\setminus A)\).
  Lemma~\ref{lem:partial-full-group-match} applies and produces an element
  \(\varphi \in \pafgr{\mathbb{G}}\) such that \(\varphi(A\setminus B) = B \setminus A\).  The
  involution \(\varphi\sqcup \varphi\inv \sqcup \mathrm{id}_{X\setminus(A \bigtriangleup B )}\)
  meets the required conditions.
\end{proof}

\section{Aperiodicity}\label{sec:aperiodicity}

A countable subgroup \(\Gamma \leq \Aut(X,\mu)\) is called
\textbf{aperiodic}\index{Transformation group!aperiodic}\index{Aperiodic!subgroup of \(\Aut(X,\mu)\)}
if almost all the orbits of some
(equivalently, any) realization of its action on \((X,\mu)\) are infinite.  The so-called Maharam's
lemma provides a characterization of aperiodicity in a purely measure-algebraic way.  We begin by
formulating a variant of the standard marker lemma for countable Borel equivalence relations (see,
for instance,~\cite[Lemma~6.7]{kechrisTopicsOrbitEquivalence2004}).

\begin{lemma}\label{lem:marker-lemma}
  Let \(\Gamma\acts X\) be a Borel action of a countable group on a standard Borel space \(X\).  For
  every Borel set \(C\subseteq X\), there is a decreasing sequence \((C_n)_{n}\) of Borel subsets of
  \(C\) such that \(C \subseteq \Gamma \cdot C_n\) for each \(n\), and the set \(\bigcap_{n} C_n\)
  intersects the \(\Gamma\)-orbit of every \(x\in X\) in at most one point. Furthermore, if all
  orbits of \(\Gamma\) are infinite, the sets \(C_{n}\) can be chosen to have an empty intersection,
  \(\bigcap_{n} C_{n} = \varnothing\).
\end{lemma}

The following result is essentially due to H.~Dye~\cite{dyeGroupsMeasurePreserving1959}, where it is
called Maharam's lemma.

\begin{theorem}[Maharam's lemma\index{Maharam's lemma}]\label{thm:maharam-lemma}
  Let \(\Gamma\leq\Aut(X,\mu)\) be a countable subgroup.  The following are equivalent:
  \begin{enumerate}
  \item\label{item:Gamma-aperiodic} \(\Gamma\) is aperiodic;
  \item\label{item:Maharam-lemma}for any \(A\in\MAlg(X,\mu)\) and any \(M_\Gamma\)-measurable
    function \(f:X\to[0,1]\) satisfying \(f\leq \mu_\Gamma(A)\), there is \(B\subseteq A\),
    \(B \in \MAlg(X,\mu)\), such that \(\mu_\Gamma(B)=f\).
  \end{enumerate}
\end{theorem}
\begin{proof}
  Let us start with the simpler
  implication~\eqref{item:Maharam-lemma}\(\Rightarrow\)\eqref{item:Gamma-aperiodic}, which is proved
  using a contrapositive argument.  Assume that~\eqref{item:Gamma-aperiodic} fails, meaning that
  \(\Gamma\) is not aperiodic.  Let \(n \in \N\) be an integer such that the \(\Gamma\)-invariant
  set \(X_n = \{x\in X : \abs{\Gamma\cdot x} = n \}\) has non-zero measure.  We may assume that
  \(X\) bears a Borel total order (for instance, by identifying \(X\) with \([0,1]\)). Let
  \(A = \{x\in X_n : x=\max \{ \Gamma\cdot x\} \}\) be the set of maximal points of the
  \(n\)-element \(\Gamma\)-orbits and set \(\varphi\), \(\dom \varphi = X_{n} \setminus A\), to be
  the element of the pseudo full group \(\pafgr{\Gamma}\) that takes every \(x\in X_n\setminus A\)
  to its \(<\)-successor in the orbit \(\Gamma\cdot x\).  Given any \(B\subseteq A\), the set
  \(\bigsqcup_{k=0}^{n-1} \varphi^{-k}(B)\) is \(\Gamma\)-invariant, hence
  \(\mu_\Gamma(\bigsqcup_{k=0}^{n-1}\varphi^{-k}(B))\) takes values in \(\{0,1\}\).  Also
  \[
    \mu_\Gamma\big(\bigsqcup_{k=0}^{n-1}\varphi^{-k}(B)\big)
    =\sum_{k=0}^{n-1}\mu_\Gamma\big(\varphi^{-k}(B)\big)=n \mu_\Gamma(B),
  \]
  where the last equality is a consequence of Proposition~\ref{prop:cond-measure}.  We conclude that
  \(\mu_\Gamma(B)\) necessarily takes values in \(\{0,\frac 1n\}\), which
  contradicts~\eqref{item:Maharam-lemma}.

  We now assume that \(\Gamma\) is aperiodic and prove the direct
  implication~\eqref{item:Gamma-aperiodic}\(\Rightarrow\)\eqref{item:Maharam-lemma}.  The argument
  is based on the following claim.

  \begin{claim*}
    For every \(C\in\MAlg(X,\mu)\), for every \(M_\Gamma\)-measurable not almost surely zero
    \(f:X\to[0,1]\) such that \(f\leq \mu_\Gamma(C)\), there is a non-empty \(B\subseteq C\)
    satisfying \(\mu_\Gamma(B)\leq f\).
  \end{claim*}
  \begin{cproof}
    Let \((C_n)_{n}\) be a sequence of subsets of \(C\) given by Lemma~\ref{lem:marker-lemma}.  Note
    that \(\mu_\Gamma(C_n)\to 0\) in \(\LL^1\), since \(\bigcap_n C_n=\varnothing\) and the
    \(C_n\)'s are decreasing. Passing to a subsequence, we may assume that the convergence
    \(\mu_\Gamma(C_n) \to 0\) holds pointwise.  Set
    \( B_{n} = \{x\in C_n : \mu_\Gamma(C_n)(x)\leq f(x)\}\) and note that
    \(\mu_{\Gamma}(B_{n}) \le \mu_{\Gamma}(C_{n})\) and therefore \(\mu_{\Gamma}(B_{n}) \le f\).

    Pointwise convergence \(\mu_\Gamma(C_n)\to 0\) guarantees the existence of an index \(n\) such
    that \(\mu(B_{n})>0\), and so the set \(B = B_{n}\) is as required.
  \end{cproof}
  The conclusion of the theorem now follows from a standard application of Zorn's lemma\footnote{A
    more constructive version of the whole argument can be found
    in~\cite[Prop.~D.1]{lemaitreGroupesPleinsPreservant2014}.}.  The latter provides a
  maximal family \((B_i)_{i\in I}\) of pairwise disjoint positive measure elements of
  \(\MAlg(X,\mu)\) contained in \(A\) and satisfying \(\sum_{i\in I}\mu_\Gamma(B_i)\leq f\).  The
  index set \(I\) has to be countable, and if \(B=\bigsqcup_{i\in I} B_i\) then
  \(\mu_\Gamma(B)=\sum_{i\in I}\mu_\Gamma(B_i)\leq f\). Assume towards a contradiction that
  \(\mu_\Gamma(B)\) is not equal to \(f\) almost everywhere, and use the previous claim to get a non
  null \(B'\subseteq A\setminus B\) with \(\mu_\Gamma(B')\leq f - \mu_\Gamma(B)\), contradicting the
  maximality of \((B_i)_{i\in I}\).  Therefore, \(\mu_\Gamma(B)=f\) as claimed.
\end{proof}

We conclude this appendix with a useful consequence of aperiodicity.  Recall that in
Definition~\ref{def:general-aperiodicity}, we define a potentially uncountable subgroup
\(G \leq \Aut(X, \mu)\) to be aperiodic if it contains an aperiodic countable subgroup.
Furthermore, this implies that \(G\) contains a countable weakly dense aperiodic subgroup.

\begin{lemma}\label{lem:aperiodic-action-has-many-involutions}
  Let \(\mathbb G\leq \Aut(X,\mu)\) be an aperiodic full group. For each set \(B\in\MAlg(X,\mu)\),
  there is an involution \(U\in\mathbb G\) whose support is equal to \(B\).
\end{lemma}
\begin{proof}
  Let \(\Gamma \leq \mathbb{G}\) be a countable weakly dense aperiodic subgroup of
  \(\mathbb{G}\). Then, by weak density, \(M_\Gamma = M_{\mathbb{G}}\) and
  \(\mu_{\Gamma} = \mu_{\mathbb{G}}\). Theorem~\ref{thm:maharam-lemma} thus provides
  \(A \subseteq B\) such that \(\mu_{\mathbb{G}}(A) = \mu_{\mathbb{G}}(B)/2\). We then have
  \[
    \mu_{\mathbb{G}}(B \setminus A) = \mu_{\mathbb{G}}(B) - \mu_{\mathbb{G}}(B)/2 =
    \mu_{\mathbb{G}}(A),
  \]
  and item~\eqref{item:same-fg-orbit-with-small-support} of
  Proposition~\ref{prop:construct-involutions} provides an involution \(T \in \mathbb{G}\)
  satisfying \(T(B \setminus A) = A\) and \(\supp T = (B \setminus A) \bigtriangleup A = B\).
\end{proof}

\begin{remark}
  Lemma~\ref{lem:aperiodic-action-has-many-involutions} in fact characterizes the aperiodicity of
  full groups.  If \(\mathbb G\) is not aperiodic, then there is some \(B\in \MAlg(X,\mu)\) that is
  not the support of any involution.  This is because its \(M_{\mathbb{G}}\)-conditional measure
  cannot be split in half, as shown in the proof of the direct implication in
  Theorem~\ref{thm:maharam-lemma}.
\end{remark}


\backmatter

\bibliographystyle{alphaurl}
\bibliography{references}

\printindex

\end{document}